\setlist[itemize,1]{label={--\,}}
\setlist[enumerate,1]{label=(\roman*)}
\newcommand{\optionaldesc}[2]{%
	\phantomsection
	#1\protected@edef\@currentlabel{#1}\label{#2}%
}
\newcommand{\doublewidetilde}[1]{{%
		\mathpalette\double@widetilde{#1}%
}}
\newcommand{\double@widetilde}[2]{%
	\sbox\z@{$\m@th#1\widetilde{#2}$}%
	\ht\z@=.9\ht\z@
	\widetilde{\box\z@}%
}
\DeclareRobustCommand{\gobblefour}[4]{}
\renewcommand{\tocsection}[3]{%
  \indentlabel{\@ifnotempty{#2}{\ignorespaces#1 \makebox[\widthof{00.}][l]{#2.}\quad}}#3}
\renewcommand{\tocsubsection}[3]{%
  \indentlabel{\@ifnotempty{#2}{\ignorespaces#1 \makebox[\widthof{00.0.}][l]{#2.}\quad}}#3}
\setlist[itemize]{leftmargin=*}
\setlist[enumerate]{leftmargin=*}
\newtheorem{thm}{Theorem}[section]
\newtheorem{defin}[thm]{Definition}
\newtheorem{prop}[thm]{Proposition}
\newtheorem{cor}[thm]{Corollary}
\newtheorem{cordef}[thm]{Corollary/Definition}
\newtheorem{lemma}[thm]{Lemma}
\theoremstyle{definition}
\newtheorem{ex}[thm]{Example}
\newtheorem{rem}[thm]{Remark}
\newenvironment{customthm}[1]
{\innercustomthm}
{\endinnercustomthm}
\newenvironment{customdef}[1]
{\innercustomdef}
{\endinnercustomdef}
\newcommand{\A}{\mathbb A}
\newcommand{\C}{\mathbb C}
\newcommand{\F}{\mathbb F}
\newcommand{\G}{\mathbb G}
\newcommand{\I}{\mathbb I}
\newcommand{\N}{\mathbb N}
\newcommand{\Q}{\mathbb Q}
\newcommand{\R}{\mathbb R}
\newcommand{\T}{\mathbb T}
\newcommand{\Z}{\mathbb Z}
\newcommand{\ab}{\mathrm{ab}}
\newcommand{\alg}{\mathrm{alg}}
\newcommand{\Art}{\mathrm{Art}}
\newcommand{\BC}{\mathrm{BC}}
\newcommand{\cl}{\mathrm{cl}}
\newcommand{\Cl}{\mathrm{Cl}}
\newcommand{\CNL}{{\mathrm{CNL}}}
\newcommand{\cont}{\mathrm{cont}}
\newcommand{\cyc}{\mathrm{cyc}}
\newcommand{\PsDef}{{\mathrm{PsDef}}}
\newcommand{\diag}{\mathrm{diag}\,}
\newcommand{\dist}{\mathrm{dist}}
\newcommand{\dR}{{\mathrm{dR}}}
\newcommand{\End}{\mathrm{End}}
\newcommand{\ev}{\mathrm{ev}}
\newcommand{\Fil}{\mathrm{Fil}}
\newcommand{\fin}{\mathrm{fin}}
\DeclareMathOperator{\Frac}{Frac}
\newcommand{\Frob}{\mathrm{Frob}}
\newcommand{\full}{\mathrm{full}}
\newcommand{\Gal}{{\mathrm{Gal}}}
\newcommand{\GL}{\mathrm{GL}}
\newcommand{\Hom}{\mathrm{Hom}}
\newcommand{\id}{\mathrm{id}}
\newcommand{\Id}{\mathrm{Id}}
\newcommand{\ind}{\mathrm{ind}}
\newcommand{\Ind}{\mathrm{Ind}}
\newcommand{\Nm}{\mathrm{Norm}}
\newcommand{\nord}{\mathrm{n.ord}}
\newcommand{\Norm}{\mathrm{Norm}}
\newcommand{\nr}{\mathrm{nr}}
\newcommand{\ord}{\mathrm{ord}}
\newcommand{\PGL}{\mathrm{PGL}}
\newcommand{\Ps}{{\mathrm{Ps}}}
\newcommand{\ptnt}{\mathrm{ptnt}}
\DeclareMathOperator{\Proj}{Proj}
\newcommand{\ps}{\mathrm{ps}}
\newcommand{\psc}{{\mathrm{psc}}}
\newcommand{\pst}{\mathrm{pst}}
\newcommand{\ptri}{{\mathrm{ptri}}}
\newcommand{\rec}{\mathrm{rec}}
\DeclareMathOperator{\res}{res}
\DeclareMathOperator{\Res}{Res}
\newcommand{\rig}{\mathrm{rig}}
\newcommand{\Rig}{\mathrm{Rig}}
\newcommand{\Sets}{\mathrm{Sets}}
\DeclareMathOperator{\Spec}{Spec}
\DeclareMathOperator{\Spf}{Spf}
\DeclareMathOperator{\Spm}{Spm}
\newcommand{\sms}{{\mathrm{ss}}}
\newcommand{\st}{{\mathrm{st}}}
\newcommand{\tdR}{\mathrm{tdR}}
\newcommand{\tri}{{\mathrm{tri}}}
\newcommand{\tw}{{\mathrm{tw}}}
\newcommand{\triv}{{\mathrm{triv}}}
\newcommand{\univ}{\mathrm{univ}}
\newcommand{\xto}{\xrightarrow}
\newcommand{\into}{\hookrightarrow}
\newcommand{\onto}{\twoheadrightarrow}
\newcommand{\ovl}{\overline}
\newcommand{\wtl}{\widetilde}
\newcommand{\ccirc}{\kern0.5ex\vcenter{\hbox{$\scriptstyle\circ$}}\kern0.5ex}
\newcommand{\calC}{\mathcal{C}}
\newcommand{\cE}{\mathscr{E}}
\newcommand{\cF}{\mathscr{F}}
\newcommand{\cG}{\mathcal{G}}
\newcommand{\cK}{\mathcal{K}}
\newcommand{\cL}{\mathcal{L}}
\newcommand{\cO}{\mathcal{O}}
\newcommand{\cR}{\mathcal{R}}
\newcommand{\calR}{\mathcal{R}}
\newcommand{\cS}{\mathscr{S}}
\newcommand{\cU}{\mathcal{U}}
\newcommand{\cW}{\mathcal{W}}
\newcommand{\cX}{\mathcal{X}}
\newcommand{\cZ}{\mathcal{Z}}
\newcommand{\calH}{\mathcal{H}}
\newcommand{\bB}{{\mathbf{B}}}
\newcommand{\bk}{{\mathbf{k}}}
\newcommand{\bP}{{\mathbf{P}}}
\newcommand{\bx}{{\mathbf{x}}}
\newcommand{\Fp}{\overline{\F}_p}
\newcommand{\Qp}{\overline{\Q}_p}
\newcommand{\Zp}{\overline{\Z}_p}
\newcommand{\fa}{{\mathfrak{a}}}
\newcommand{\fb}{{\mathfrak{b}}}
\newcommand{\fc}{{\mathfrak{c}}}
\newcommand{\fd}{{\mathfrak{d}}}
\newcommand{\fh}{{\mathfrak{h}}}
\newcommand{\fl}{{\mathfrak{l}}}
\newcommand{\fm}{{\mathfrak{m}}}
\newcommand{\fp}{{\mathfrak{p}}}
\newcommand{\fR}{{\mathfrak{R}}}
\newcommand{\vareps}{{\varepsilon}}
\title{$p$-adic rigidity of eigenforms of infinite slope}
\author{Andrea Conti}
\begin{document}

\begin{abstract}
We give a notion of $p$-adic families of Hecke eigenforms that allows for the slope of the forms be infinite at $p$. We prove that, contrary to the case of finite slope when every eigenform lives in a Hida or Coleman family, the only families of infinite slope are either twists of Hida or Coleman families with Dirichlet characters of $p$-power conductor, or non-ordinary families with complex multiplication. 
%For an arbitrary prime $p$, we prove that all of the $p$-adic families of Hecke eigenforms, in a sense analogue to that of Hida or Coleman families, have complex multiplication. This is a striking difference to the case of finite slope, where all $p$-adic eigenforms live in a family. 
Our proof goes via a local study of deformations of potentially trianguline Galois representations, relying on work of Berger and Chenevier, and a global input coming from an analogue of a result of Balasubramanyam, Ghate and Vatsal on a Greenberg-type conjecture for families of Hilbert modular forms.
\end{abstract}

\maketitle

\section*{Introduction}

%\addcontentsline{toc}{section}{Introduction}

%COMMENT in introduction that cannot have family going to boundary of weight space??? analogue of domain of affinoid family being affinoid??

Let $p$ be a prime number, and $N$ a natural number prime to $p$. We write $\Qp$ for an algebraic closure of $\Q_p$, and $\C_p$ for its completion with respect to a $p$-adic valuation. We denote by $G_K$ the absolute Galois group of a field $K$. In this introduction, all eigenforms for $\GL_{2/\Q}$ are assumed to be cuspidal even when it is not specified.

Consider a cuspidal modular form $f$ of weight $k$ and level $\Gamma_1(Np^r)$ for some $r\ge 1$, and assume that $f$ is an eigenform for the action of a Hecke algebra $\calH$ generated by operators $T_\ell$, $\ell\nmid Np$, and $U_p$; we will write $\calH^{Np}$ for the spherical factor of $\calH$. We say that $f$ is of \emph{finite slope} if its $U_p$-eigenvalue is non-zero.
When this is the case, classical constructions of Hida, Coleman, Mazur, Buzzard and Chenevier allow one to insert $f$ in a \emph{$p$-adic family}. In a weak sense, this means that one can find a ``Kummer family'' around $f$: for every $k^\prime$ $p$-adically sufficiently close to $k$, there exists an eigenform $f_{k^\prime}$ of tame level $N$ and finite slope whose Hecke eigensystem is $p$-adically close to that of $f$; in particular, as $k^\prime$ approaches $k$, the Hecke eigensystems of the forms $f_{k^\prime}$ converge to that of $f$. In a stronger sense, one can find a ``big'' Hecke eigensystem $\Theta\colon\calH\to\I$, where $\I$ is a $\Z_p[[T]]$-algebra, finite as a $\Z_p[[T]]$-module, or some variation of it, with the property that $\Theta$ specializes to the Hecke eigensystem of $f$ at a certain height 1 prime of $\I$, and to the eigensystems of other eigenforms on a dense set of height 1 primes of $\I$. Such Hecke eigensystems can be glued over a rigid analytic object, the \emph{eigencurve} $\cE$, a rigid analytic curve over $\Q_p$ equipped with a Hecke eigensystem $\Theta\colon\calH\to\cO_\cE^\circ(\cE)$, valued in the ring of power-bounded functions on $\cE$, and a weight map to the weight space $\cW$, defined as the rigid generic fiber of $\Spf\Z_p[[\Z_p^\times]]$. We say that a point $x$ of $\cE$ is \emph{classical} if $\Theta$ specializes at $x$ to the eigensystem of a classical eigenform; non-classical points (that make up most of $\cE$) correspond to Hecke eigensystems of \emph{overconvergent} eigenforms. By cutting out a sufficiently small neighborhood of a classical point of the eigencurve, one recovers $p$-adic families of eigenforms in the previous sense. The locus on $\cE$ where $\Theta(U_p)$ is of valuation zero is the generic fiber of the big ordinary Hecke algebra constructed by Hida.

The above picture is not complete, in that it does not encompass eigenforms of infinite slope, i.e. forms that appear in the kernel of the $U_p$-operator. The construction of the eigencurve revolves around the idea that most of the information on the Hecke action on spaces of overconvergent eigenforms is encoded in the characteristic series of the $U_p$-operator. Such a series is a well-defined object since $U_p$ acts compactly on these spaces, in the sense of Serre, and it cuts a (spectral) rigid curve $\cZ$ inside of the product $\cW\times\A_1$; then $\cE$ is constructed as a finite cover of $\cZ$. This construction would certainly make no sense for eigenforms of infinite slope, since $U_p$ is the zero operator. The question then remains open of whether one can construct $p$-adic families of infinite slope, with one of the meanings sketched above. 

To our knowledge, there has been so far limited work on this question. Diao and Liu \cite{diaoliu} answered a question of Coleman and Mazur by showing that the eigencurve is ``complete'' (i.e. satisfies the valuative criterion for properness), so that no eigensystems of infinite slope appear as a limit of eigensystems along the eigencurve. On the other hand, Coleman and Stein \cite{colste} showed that by moving in a ``transversal'' direction, rather than along the eigencurve, one can construct sequences of eigenforms of finite slope that converge to an eigenform of infinite slope obtained by twisting an eigenform of finite slope with a Dirichlet character of $p$-power conductor.

The kind of eigenform of infinite slope appearing in the work of Coleman and Stein fits in the first of two possible cases: the local Langlands correspondence at $p$ shows that an eigenform $f$ is of infinite slope if and only if the associated admissible representation of $\GL_2(\Q_p)$ is either a ramified twist of a Steinberg, or a principal series attached to two ramified characters (two options that we group together), in which case $f$ is a twist of an eigenform of finite slope with a Dirichlet character of $p$-power conductor, or a supercuspidal representation. In this second case, we say that $f$ is \emph{$p$-supercuspidal}, and there is no way to write it as a twist of an eigenform of finite slope. It is trivial to construct families of eigenforms of the first kind: if, for instance, $\Theta\colon\calH\to\I$ is the Hecke eigensystem of a Hida or Coleman family, specializing at classical points to some set of eigenforms $\{f_x\}_x$ of finite slope, then one can twist $\Theta$ with a Dirichlet character $\delta$ in an obvious way, with the result that the new eigensystem $\delta\Theta$ specializes to the twists of the eigenforms $f_x$ with $\delta$. However, twisting forces one to lose any information at the primes dividing the conductor of $\delta$, since all of the Hecke operators at such primes are mapped to 0 after twisting. 

As the previous example suggests, a na\"ive definition of family, as a $p$-adic interpolation of Hecke eigensystem, would be too general to be meaningful: for eigenforms of infinite slope, interpolating Hecke eigensystems amounts to interpolating Hecke eigenvalues at unramified primes, i.e. interpolating Galois (pseudo)representations (including Hecke operators at primes dividing the tame level, if any, would not lead to a much stronger notion). Then any open subdomain in the (pseudo)deformation space of a mod $p$ modular representation would constitute a family of eigenforms, as long as modular points are dense in it. %Since p-supercuspidal points are dense in framed deformation rings by the work of Emerton and Pa\v{s}k\={u}nas \cite{emepassc}, one could even take the whole framed deformation ring of a chosen residual representation. 
In light of this, we wish to give a more restrictive definition of family, that resembles the classical ones for finite slope eigenforms, including some information at the prime $p$. In the case of finite slope, the $U_p$-eigenvalue of an eigenform $f$ can also be read on the Galois side, as one of the eigenvalues of the Frobenius operator on the $D_\pst$ of the $p$-adic, local-at-$p$ representation $\rho_{f,p}\vert_{G_{\Q_p}}$ attached to $f$. We call such an eigenvalue \emph{admissible} if it gives the action of Frobenius on an $N$-stable line, where $N$ is the monodromy operator (an empty condition if the representation is crystalline). This leads us to the following definitions. 

\begin{customdef}{1}[cf. Definitions \ref{famdef} and \ref{afffamdef}]\label{introdef}
A \emph{$p$-adic family of eigenforms} is a sequence $(f_i,\varphi_{i})_{i\in\N}$ where:
\begin{itemize}
\item the $f_i$ are pairwise distinct eigenforms of tame level $N$, whose Hecke eigensystems $\theta_i\colon\calH^{Np}\to\Qp$ away from $Np$ converge $p$-adically to a limit eigensystem $\alpha_\infty\colon\calH\to\C_p$ as $i\to\infty$;
\item for every $i$, $\varphi_i$ is an admissible eigenvalue of the Frobenius on $D_\pst(\rho_{f,p}\vert_{G_{\Q_p}})$, and the sequence $\varphi_i$ converges to a limit $\varphi_\infty\in\C_p^\times$ as $i\to\infty$.
\end{itemize}

\noindent An \emph{affinoid $p$-adic family of eigenforms} is a quadruple $(U,\Theta,\Phi,S)$ consisting of
	\begin{itemize}
		\item an integral affinoid space $U$ over a $p$-adic field $L$,
		\item a homomorphism $\Theta\colon\calH^{Np}\to\cO_U(U)$, %for every prime $\ell$ not dividing $Np$, an element $T_\ell\in\cO_U(U)$,
		\item a nowhere-vanishing element $\Phi\in\cO_U(U)$, and
		\item a dense subset $S\subset U(\C_p)$,
	\end{itemize}
	such that:
		\begin{itemize}
	\item for every $x\in S$, $\Theta$ specializes to the Hecke eigensystem away from $Np$ attached to an eigenform $f_x$ of level $Np^r$, for some $r\ge 1$, and $\Phi(x)$ is an admissible eigenvalue of the Frobenius on $D_\pst(\rho_{f_x,p}\vert_{G_{\Q_p}})$; %$D_\pst(V_{f_x})$.
		\item there exists a continuous pseudorepresentation $t_U\colon G_{\Q,Np}\to\cO_U^\circ(U)$ such that $t_U(\Frob_\ell)=\Theta(T_\ell)$ for every prime $\ell\nmid Np$, 
		\item the map $\pi^\ps_\cF\colon U\to\fR_{\ovl t_U}$ deduced from the above point and the universal property of $\fR_{\ovl t}$ is finite onto its image. 
	\end{itemize}
\end{customdef}

The last two conditions are there to avoid some pathological situations. The choice of an admissible Frobenius eigenvalue amounts, in the crystalline case, to the usual choice of a refinement of an eigenform, so that in this case our definitions are very close to those of refined families given by Bella\"iche and Chenevier \cite[Section 4.2]{bellchen}. The main difference is that we allow our representations to only be potentially semistable, in order to be able to treat eigenforms of infinite slope.

Hida and Coleman families give obvious examples of families of the kind of Definition \ref{introdef}. Twists of such families with Dirichlet characters of $p$-power conductor also give such examples; such twists do not affect Frobenius eigenvalues. We show that the only remaining possibility is that almost all of the forms in the (affinoid) family are $p$-supercuspidal (Theorem \ref{classfam}). It is possible to construct $p$-supercuspidal families in which all forms have complex multiplication (CM), starting with a $p$-adic deformation of a Gr\"ossencharacter of an imaginary quadratic field (see Section \ref{seccm}); such families are either ordinary or of infinite slope, and had been written down already by Hida. Our main result is that this classification is complete. In the next theorem, we assume that the residual representation $\ovl\rho$ attached to the family satisfies the technical assumptions:
\begin{enumerate}[label=(H$\ovl\rho$\arabic*)]
\item\label{hypintro1} $\ovl\rho$ is not $\Q(\sqrt p)$-induced;
\item\label{hypintro2} if $p=5$, $F$ is a quadratic extension of $\Q$, ramified at 5, such that $\ovl\rho\vert_{G_{F_\fp}}$ is decomposable for the unique 5-adic place $\fp$ of $F$, and $\Proj(\ovl\rho(G_{F,5N}))\cong\PGL_2(\F_5)$, then the degree of $F(\zeta_5)/F$ is 4.
\end{enumerate}

\begin{customthm}{2}[cf. Theorem \ref{infslope}]\label{introthm}
Every $p$-supercuspidal (affinoid) family is a CM family.
\end{customthm}

We sketch a proof, most of which takes place on the Galois side, but with essential automorphic inputs. We prove that every family of eigenforms can be essentially built from the specializations of an affinoid family (Proposition \ref{famlim}), so that it is enough to prove Theorem \ref{introthm} for affinoid families. Let $\cF=(U,\Theta,\Phi,S)$ be an affinoid family whose specializations at $x\in S$ are all $p$-supercuspidal. 
An eigenform $f$ is $p$-supercuspidal if and only if its associated $p$-adic Galois representation $\rho_{f,p}$ is not trianguline, in the sense of $(\varphi,\Gamma)$-modules. It is, however, potentially semistable, hence a standard argument shows that it becomes trianguline over an extension $E/\Q_p$ of degree 2. We can assume such an extension to be constant as $f$ varies among the classical specializations of $\cF$, and we pick a real quadratic field $F$, with a unique $p$-adic place $\fp$, such that $F_\fp=E$. 
Every classical specialization $f_x$ of $\cF$ admits a base change to $F$, a Hilbert eigenform of finite slope whose $U_p$ eigenvalue can be written in terms of $\Phi(x)$, and in particular interpolated along $U$: this allows us to define a base change $\cF_F$ of the whole family as a subdomain of the $\GL_{2/F}$-eigenvariety constructed by Andreatta, Iovita, and Pilloni \cite{aiphilbI} (though the parallel weight eigenvariety of Kisin and Lai \cite{kislaihilb} would also suffice to our purpose). 

If $t_U\colon G_\Q\to\cO_U(U)$ is the pseudorepresentation carried by $\cF$, then by our choice of $F$, and by the work of Kedlaya, Pottharst and Xiao on interpolating triangulations along rigid families \cite{kedpotxia}, the local Galois pseudorepresentation $t_U\vert_{G_E}$ carried by the base-changed family $\cF_F$ is everywhere trianguline on $U$, while $t_U\vert_{G_{\Q_p}}$ itself is almost-nowhere trianguline. The fact that Frobenius eigenvalues can be interpolated along $U$ is essential when applying the results of \emph{loc. cit.}. We deduce that, for every $x$ in a Zariski open subdomain of $U$, the specialization $t_{U,x}\vert_{G_{\Q_p}}$ is potentially trianguline, but not trianguline. By a result of Berger and Chenevier (Theorem \ref{berche}), $t_{U,x}\vert_{G_{\Q_p}}$ is either a twist of a de Rham pseudorepresentation, or induced from a character of a quadratic extension $K/\Q_p$. The first case can only occur when the Hodge--Tate--Sen weights are both integers, which is not true over any non-empty Zariski open subspace of $U$. This means that $t_U\vert_{G_{KF}}$ is the direct sum of two characters, hence that the pseudorepresentation attached to the Hilbert family $\cF_F$ is locally decomposable. We then apply the following result, which is a simple analogue of a theorem of Balasubramanyam, Ghate, and Vatsal \cite{balghavathilb} to the case when $p$ does not split in $F$.

\begin{customthm}{3}[cf. Theorem \ref{bgvaff}]\label{introbgv}
Let $\cX$ be an irreducible component of the cuspidal $\GL_{2/F}$-eigenvariety. Assume that the local Galois pseudorepresentation $t_\cX$ attached to $\cX$ is a direct sum of two characters, and that the residual representation $\ovl\rho_\cX$ satisfies conditions \ref{hypintro1} and \ref{hypintro2} above. Then $\cX$ is a CM component.
\end{customthm}

We deduce from Theorem \ref{introbgv} that $t_U$ itself (not just its restriction to a decomposition group at $p$) becomes decomposable over a subgroup of $G_\Q$ of index at most 4, which implies that it is either reducible, or induced from a character of a subgroup of $G_\Q$ of index 2. We exclude the first case since we are only concerned with cuspidal families, so we obtain that $t_U$ has to be CM.

We conclude with a short outline of the structure of the paper. The first four sections contain little in terms of original material; they are merely a reorganization of known facts in view of their successive application. Section \ref{sectop} is only a reminder of some topological facts and conventions that are used throughout the paper. Section \ref{secdef} is also mostly a recollection of standard material on (pseudo)deformations, with the inclusion of a few specific lemmas that we need in the following, concerning the induced locus in the deformation space of a 2-dimensional pseudorepresentations, and a na\"ive notion of convergence for pseudorepresentations. The main goal of Section \ref{sectri} is to recall what the parameters of trianguline representations attached to eigenforms look like, and how triangulations and their parameters are related to refinements of Fontaine's $D_\pst$. The main difference with the literature is that we wish to speak of refinements for not necessarily crystalline representations. Section \ref{sechilb} is a collection of facts concerning Hilbert eigenvarieties. We recall what the maps between the different weight spaces involved look like, in order to bridge between Hida's nearly ordinary theory, that is used in \cite{balghavathilb}, and the language of eigenvarieties, that is better suited to our application.

Section \ref{secbgv} is devoted to the proof of Theorem \ref{introbgv} above. The strategy is the same as in \cite{balghavathilb}, but we replace a modularity result of Sasaki with a more recent one of Pilloni and Stroh \cite{pilstrmod}, that does not require $p$ to split in the real quadratic field. Section \ref{secfam} introduces the notions of families and affinoid families that we presented above in Definition \ref{introdef}. We include a description of families of finite slope and of their twists by Dirichlet characters of $p$-power conductor, as well as some results describing families as specializations of affinoid families, and affinoid families as subdomains of an eigencurve, after base change to a suitable real quadratic field. In Section \ref{seccm} we prove some facts about CM forms and families of possibly infinite slope, and we complete the classification of (affinoid) families, conditional on the main result of the paper that we present later in the final section. Section \ref{secloc} deals with our main result on families of potentially trianguline, non-trianguline local Galois representations (Theorem \ref{locthm}). Finally, Section \ref{secpsc} combines Theorems \ref{bgvaff} and \ref{locthm} to prove that every $p$-supercuspidal family is CM (Theorem \ref{infslope}).

\smallskip

\subsection*{Acknowledgments}

Most of the work on this paper was done while I was a postdoc at the University of Luxembourg in Gabor Wiese's group. I wish to thank the Number Theory group in Luxembourg for listening to various presentations of early versions of the paper. In particular, I wish to thank Alexandre Maksoud, Emiliano Torti, and Gabor Wiese for some interesting exchanges. I also thank my previous colleagues in Heidelberg University for discussions around the ideas that led to this work; in particular, Gebhard B\"ockle, Peter Gr\"af, and Judith Ludwig. 

\smallskip

\subsection*{Notations and conventions}
We fix once and for all a prime number $p>3$. \\ %(p>2 is needed for padictypes, what more? p>3 for unicity of lift of pseudoreps?)
Given any field $F$, we denote by $\ovl F$ an algebraic closure of $F$ and by $G_F$ the absolute Galois group $\Gal(\ovl F/F)$. When $F$ is a $p$-adic field, we denote by $W_F$ and $I_F$ the Weil and inertia groups of $F$, respectively. When $F$ is a global field, we denote by $\A_\F$ its ring of adèles. \\
We fix once and for all a field embedding $\ovl\Q\into\Qp$, inducing a continuous injection $G_{\Q_p}\into G_\Q$. \\
Even when we omit to recall it, all representations and pseudorepresentations appearing in the text are continuous. \\
%When dealing with local Galois representation, we will typically use the letter $E$ to denote the base field (i.e. we will look at representations of $G_E$), following the convention of \cite{thecurve}. %\andr{more specific??}
We denote by $\chi_\cyc$ the cyclotomic character of $G_{\Q_p}$, and we use the convention that its Hodge--Tate weight is $1$. \\
For every positive integer $n$, we denote by $\Q_{p^n}$ the unique unramified extension of $\Q_p$ of degree $n$, and by $\C_p$ the completion of an algebraic closure of $\Q_p$ with respect to a fixed $p$-adic valuation $v_p$, normalized so that $v_p(p)=1$. Whenever a choice of uniformizer $\pi_E$ of a $p$-adic field $E$ is required, we choose $\pi_{\Q_p}=p$.

%We write CM for ``complex multiplication''. %When we mean Cohen--Macaulay we spell it out. Do we use it??

%Given an arbitrary pro-$p$ local ring $A$, we denote by $\fm_A$ the maximal ideal of $A$.

%We follow \cite{kedpotxia} for notations coming from the theory of $(\varphi,\Gamma)$-modules. In particular, given a rigid analytic space $X$ and a $p$-adic field $K$, we denote by $\cR_X(K)$ the relative Robba ring for $K$ over $X$. \andr{doublecheck notation}

\setcounter{tocdepth}{1}
\tableofcontents

%\section{Preliminary results}

\numberwithin{thm}{section}
\numberwithin{equation}{section}

\section{Topological lemmas}\label{sectop}

We consider rigid analytic spaces in the sense of Tate. In the following, let $L$ be a $p$-adic field. Given a rigid analytic space $X$ over $L$, we write $\cO_X$ for its structure sheaf and $\cO_X^\circ$ for the sheaf of rigid analytic functions bounded in norm by $1$. 
If we do not specify a coefficient field, by ``point of $X$'' we mean a $\C_p$-point of $X$. 
%Given a sheaf \cF over a rigid analytic space, we denote M_x (stalk?)
We sometimes say ``subset of $X$'' for a subset of the set of points of $X$. Given a point $x\in X(\C_p)$, we write $\ev_x$ for the ``evaluation at $x$'' map $\cO_X(X)\to\C_p$. We keep the same notation after possibly replacing the source with $\cO_X^\circ(X)$ and the target with a subring of $\C_p$.

We do not consider points (maximal spectra of fields) to be affinoid, so that our affinoids always have positive dimension.

Unless otherwise specified, we always consider our rigid analytic spaces $X$ as equipped with the \emph{analytic Zariski topology}: the closed subspaces of $X$ are the vanishing loci of the global analytic functions on $X$. %a basis for the closed subspaces of $X$ is given by the collection of vanishing loci $\{f=0\,\vert\,f\in\cO_X(X)\}$. %, where $U$ varies among all affinoid subspaces of $X$.
In particular, a subset $S$ of $\C_p$-points of a rigid analytic space $X$ is \emph{(analytic Zariski-)dense} in $X$ if and only if the only function in $\cO_X(X)$ vanishing on all of $S$ is the zero function, %and a subset $S$ of points of a rigid analytic space $X$ is dense 
if and only if there exists an admissible covering $X=\bigcup_{i\in I}U_i$ by affinoid subspaces such that $S\cap U_i$ is dense in $U_i$ for every $i$. This is the same notion of density as in \cite[Definition 6.3.2]{kedpotxia}. %\andr{check that these topological facts are correct??}

\begin{rem}
	A subset $S$ of $X$ that is dense in $X$ is not in general dense in a subspace of $X$ containing $S$: for instance, if $X$ is the one-dimensional affinoid unit disc, so that $X^\circ$ is the wide open unit disc, there exist infinite subsets $S$ of $X^\circ$ that are Zariski-dense in $X$ but not in $X^\circ$ (take any sequence of points converging to the boundary, for instance the zeroes of $\log{(1+x)}$ if $x$ is the variable on the disc).
\end{rem}

%WORTH using adic spaces?? but then analytic Zariski??

When we speak of the \emph{$p$-adic topology} on a rigid analytic space $X$, we refer to the topology obtained by considering $X$ as a $p$-adic manifold. Even if this topology makes $X$ into a totally topological space, it gives a reasonable notion of convergence.

We refer to \cite{conradirr} for the definition of irreducible component of a rigid analytic space. We say that a rigid analytic space is \emph{integral} if it is reduced and irreducible. In practice, we will almost only care about integrality for affinoid spaces: in this case, Conrad's notion of irreducibility is analogue to the usual one for schemes, so that if $U=\Spm A$ for an affinoid $L$-algebra $A$, then $X$ is irreducible if and only if the nilradical of $A$ is a prime ideal, and $X$ is integral if and only if $A$ is an integral domain. 

By \emph{affinoid neighborhood} of a point of a rigid space we will always implicitly mean \emph{open} affinoid neighborhood, but we do not put such a restriction when speaking of an affinoid subspace/subdomain (e.g. the affinoid subspace $\Spm\langle T,U\rangle/(U)$ of $\Spm\langle T,U\rangle$ is not an affinoid neighborhood of 0).

%Let $X$ be a reduced rigid analytic space, equidimensional of some dimension $n$. Let $x$ be a $\C_p$-point of $X$.
%
%\begin{defin}
%An \emph{affinoid neighborhood} of $x$ in $X$ is an integral affinoid rigid analytic subspace $U$ of $X$ such that $x\in U(\C_p)$ and, for every irreducible component $X_0$ of $X$ containing $x$, the intersection $U\cap X_0$ is $n$-dimensional.
%\end{defin}
%
%For instance, the affinoid subspace $\Spm\langle T,U\rangle/(U)$ of $\Spm\langle T,U\rangle$ is not an affinoid neighborhood of 0 in the sense of the above definition.

Let $L$ be a $p$-adic field. 
Let $X$ be a 1-dimensional, integral affinoid space over $L$, and %let $S$ be a subset of $U(\C_p)$. %\andr{cite Conrad here??} 
%Let L be a p-adic field, X a rigid analytic space over L, and S a subset of X(\C_p).
let $S\cup\{x_0\}$ be a subset of $X(\C_p)$.
	
\begin{defin}
	We say that $S$ is \emph{discrete} if every $x\in S$ admits an affinoid neighborhood $U$ such that $U\cap S=\{x\}$. 
	We say that $S$ \emph{accumulates at $x_0$} if, for every open affinoid neighborhood $U$ of $x_0$, $S\cap U(\C_p)\ne\varnothing$.
\end{defin}

%Clearly, $S$ has an accumulation point 

\begin{lemma}\label{dense}
	Let $X$ be a 1-dimensional, reduced, irreducible affinoid space over $L$, and let $S$ be a subset of $X(\C_p)$.
	The following are equivalent:
	\begin{enumerate}[label=(\roman*)]
		\item $S$ is infinite;
		\item $S$ is dense in $U$.
	\end{enumerate}
If $S$ is a subset of $X(L^\prime)$ for some finite extension $L^\prime/L$, then (i) and (ii) above are equivalent to:
\begin{enumerate}[resume]
\item $S$ has an accumulation point in $X(L^\prime)$.
\end{enumerate}
\end{lemma}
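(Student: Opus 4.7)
The plan is to prove $(i) \Leftrightarrow (ii)$ by basic dimension theory for reduced affinoid spaces, and then $(i) \Leftrightarrow (iii)$ (under the additional hypothesis $S \subset X(L^\prime)$) by appealing to compactness of $X(L^\prime)$ in the $p$-adic topology.

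For $(ii) \Rightarrow (i)$, any finite subset of $X(\C_p)$ is a $0$-dimensional closed analytic subspace, hence properly contained in the $1$-dimensional $X$, contradicting Zariski density. For $(i) \Rightarrow (ii)$, the analytic Zariski closure $Z$ of $S$ is a closed analytic subspace of the integral $1$-dimensional $X$; since the only proper closed analytic subspaces are $0$-dimensional, i.e.\ of the form $\Spm B$ for an Artinian affinoid algebra $B$ and hence finite as sets, an infinite $S$ forces $Z = X$.

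Now assume $S \subset X(L^\prime)$. Choosing a closed immersion $X \hookrightarrow \B_L^n$ into a polydisc realizes $X(L^\prime)$ as a closed subset of $\cO_{L^\prime}^n$, which is compact. For $(i) \Rightarrow (iii)$, an infinite subset of a compact Hausdorff space admits a topological accumulation point $x_0$, and this yields an accumulation point in the sense of the paper because open affinoid neighborhoods of $x_0$ form, at the level of $\C_p$-points, a fundamental system of $p$-adic open neighborhoods. For $(iii) \Rightarrow (i)$, if $S$ were finite and $x_0 \in X(L^\prime)$ were an accumulation point of $S$, we could separate $x_0$ from the finite set $S \setminus \{x_0\}$ by Hausdorffness, obtaining a small open affinoid neighborhood $U$ of $x_0$ with $S \cap U \subseteq \{x_0\}$, contradicting the accumulation property.

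The delicate point is the comparison between the analytic Zariski topology and the $p$-adic topology near a point: one must exhibit, at each $x_0 \in X(L^\prime)$, a cofinal family of open affinoid neighborhoods shrinking to $\{x_0\}$ in the $p$-adic sense. This follows from the $1$-dimensionality of $X$: pick a non-zero $f \in \cO_X(X)$ vanishing at $x_0$; after shrinking $X$ to an affinoid neighborhood of $x_0$ on which $f$ vanishes only at $x_0$ (possible since the vanishing locus of $f$ in the integral $1$-dimensional $X$ is finite), the rational subdomains $\{|f| \leq |\pi_L|^n\}$ give the desired cofinal family.
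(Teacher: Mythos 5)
Your proof is correct and takes essentially the same approach as the paper: the $(i)\Leftrightarrow(ii)$ equivalence reduces to the finiteness of zero loci on a reduced irreducible $1$-dimensional affinoid (the paper invokes Weierstrass preparation directly, you invoke dimension theory for closed analytic subspaces, which amounts to the same thing), and $(iii)$ reduces to compactness of $X(L')$. The main difference is that you spell out the comparison between the $p$-adic topology and affinoid neighborhoods, including the cofinal family of rational subdomains $\{|f|\le|\pi_L|^n\}$, a step the paper's proof leaves implicit when it says ``$(i)\Rightarrow(iii)$ follows from compactness''; this is a worthwhile addition, but it does not change the substance of the argument.
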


\begin{proof}
%The implications (ii)$\implies$(i) and (iii)$\implies$(i) are obvious. 
The implication (i)$\implies$(ii) is obvious.

By the Weierstrass preparation theorem for affinoid Tate algebras, the zero locus of an analytic function on X consists of a finite set of points. Therefore, if $S$ is infinite then the only global function that can vanish on $S$ is the zero function. This gives (i)$\implies$ (ii). 

If $S$ is a subset of $X(L^\prime)$ for some finite extension $L^\prime/L$, then (i)$\implies$(iii) follows from the compactness of $X(L^\prime)$, and (iii)$\implies$(i) is obvious. %every infinite sequence of points of X(L^\prime) admits an accumulation point in X(L^\prime)
%Assume that $S$ is infinite but has no accumulation point. Then one can ??? \andr{is dense implies acc true?? do I need it?? MAYBE need all points in finite extension, or in Qpbar????...}
\end{proof}

%\begin{lemma}
%Facts on wideopen???
%Let $U$ be a wide open rigid analytic space, 
%\end{lemma}

Even though we generally try to work over affinoid spaces, we will need to consider at various points what we call ``wide open'' rigid analytic spaces (or ``nested'' in the terminology of  \cite[Definition 7.2.10]{bellchen}).

\begin{defin}\label{wodef}
A rigid analytic space $X$ over $L$ is \emph{wide open} if there exists an admissible covering $(X_i)_{i\in\N}$ of $X$ by affinoid subdomains such that, for every $i$, $X_i\subset X_{i+1}$ and the restriction map $\cO_X(X_{i+1})\to\cO_X(X_i)$ is compact.
\end{defin}

Our main interest in wide open spaces stems from the following property.

\begin{lemma}[{\cite[Lemma 7.2.11]{bellchen}}]\label{bccomp}
If $X$ is a wide open rigid analytic space, then the ring $\cO_X^\circ(X)$ is compact, hence profinite.
\end{lemma}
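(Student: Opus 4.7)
The plan is to realize $\cO_X^\circ(X)$ as a projective limit of compact Hausdorff spaces and then invoke Tychonoff. Fix an admissible covering $(X_i)_{i\in\N}$ as in Definition \ref{wodef}, and write $\rho_i\colon\cO_X(X_{i+1})\to\cO_X(X_i)$ for the (compact) restriction map. The structure sheaf satisfies $\cO_X(X)=\varprojlim_i\cO_X(X_i)$ as a Fréchet $L$-algebra for the topology defined by the Banach supremum norms $\|\cdot\|_{X_i}$.

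First, I would verify that $\cO_X^\circ(X)=\varprojlim_i\cO_X^\circ(X_i)$ as topological rings. Since each $\rho_i$ is contracting for the supremum norms, the unit ball $\cO_X^\circ(X_{i+1})$ maps into $\cO_X^\circ(X_i)$, and the characterization of power-bounded elements via the spectral norm shows that an $f\in\cO_X(X)$ is power-bounded if and only if each restriction $f|_{X_i}$ is; conversely, a compatible system of power-bounded local sections glues to a global section which is automatically power-bounded.

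Next, for each $i$, compactness of $\rho_i$ means that the bounded set $\cO_X^\circ(X_{i+1})$ has relatively compact image in $\cO_X(X_i)$. Let $K_i$ denote the closure of $\rho_i(\cO_X^\circ(X_{i+1}))$ inside the closed subspace $\cO_X^\circ(X_i)\subset\cO_X(X_i)$; then $K_i$ is compact. Any $f\in\cO_X^\circ(X)$ satisfies $f|_{X_i}=\rho_i(f|_{X_{i+1}})\in\rho_i(\cO_X^\circ(X_{i+1}))\subset K_i$, so the natural embedding $\cO_X^\circ(X)\hookrightarrow\varprojlim_i\cO_X^\circ(X_i)$ factors through $\varprojlim_i K_i$. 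Conversely, any compatible system $(f_i)\in\varprojlim_i K_i$ lies in $\varprojlim_i\cO_X^\circ(X_i)=\cO_X^\circ(X)$ by the previous paragraph, so the inclusion is an equality $\cO_X^\circ(X)=\varprojlim_i K_i$.

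Since $\varprojlim_i K_i$ is a closed subspace of the compact product $\prod_i K_i$, compactness of $\cO_X^\circ(X)$ follows from Tychonoff. For profiniteness, note that $p$ is topologically nilpotent in $\cO_X^\circ(X)$ (it is so in each $K_i$), so each quotient $\cO_X^\circ(X)/p^n\cO_X^\circ(X)$ is compact, Hausdorff, and discrete, hence finite, and $\cO_X^\circ(X)\cong\varprojlim_n\cO_X^\circ(X)/p^n$ exhibits it as an inverse limit of finite rings. The main subtlety I expect is the careful identification $\cO_X^\circ(X)=\varprojlim_iK_i$, which rests on the interplay between spectral norms, power-boundedness, and the fact that the compact-operator hypothesis replaces mere boundedness of $\rho_i(\cO_X^\circ(X_{i+1}))$ with relative compactness; once this identification is in place, everything else is soft topology.
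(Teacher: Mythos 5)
Your compactness argument is correct and is essentially the one used to prove \cite[Lemma 7.2.11]{bellchen}: identify $\cO_X^\circ(X)$ with the inverse limit of the compacta $K_i=\overline{\rho_i(\cO_X^\circ(X_{i+1}))}\subset\cO_X^\circ(X_i)$, observe that the transition maps restrict to maps $K_{i+1}\to K_i$, and invoke Tychonoff. The identification $\cO_X^\circ(X)=\varprojlim_i\cO_X^\circ(X_i)$ and the inclusion $\cO_X^\circ(X)\subset\varprojlim_iK_i$ (and hence equality) are handled correctly.

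The profiniteness step has a genuine gap. You assert that $\cO_X^\circ(X)/p^n\cO_X^\circ(X)$ is discrete, hence finite, and that $\cO_X^\circ(X)\cong\varprojlim_n\cO_X^\circ(X)/p^n$ exhibits the profinite structure. Neither holds in general, because the subspace topology that $\cO_X^\circ(X)$ inherits from the Fr\'echet space $\cO_X(X)$ is strictly \emph{coarser} than the $p$-adic topology. Concretely, take $X$ the wide open unit disc over $\Q_p$, covered by closed discs $X_i$ of radius $r_i\nearrow 1$, so that $\cO_X^\circ(X)=\Z_p[[z]]$. Then $\lVert z^m\rVert_{X_i}=r_i^m\to 0$ as $m\to\infty$ for every fixed $i$, so $z^m\to 0$ in the Fr\'echet topology while $z^m\notin p\Z_p[[z]]$; thus $p\Z_p[[z]]$ is not open, and the quotient $\Z_p[[z]]/p^n\cong(\Z/p^n\Z)[[z]]$ is infinite. (It \emph{is} compact Hausdorff in the quotient topology, just not discrete.) The correct way to pass from compactness to profiniteness is softer: $\cO_X^\circ(X)$ is a closed subset of $\prod_iK_i$, and each $K_i$ sits in a non-archimedean Banach space, hence is totally disconnected; so $\cO_X^\circ(X)$ is a compact, Hausdorff, totally disconnected topological ring, and any such ring is profinite. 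Your filtration-by-$p^n$ approach would only work if the ambient topology were the $p$-adic one, which it is not for wide open spaces.
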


We use the notion of wide open spaces to single out some ``well-behaved'' inclusions of affinoid spaces.

\begin{defin}\label{nested}
	Let $U_0\subset U_1$ be an inclusion of $L$-affinoids. We say that $U_0$ is \emph{nested in $U_1$}, or a \emph{nested affinoid subdomain} if there exists a wide open rigid analytic $L$-space $V$ such that $U_0\subset V\subset U_1$.
\end{defin}

Note that a nested affinoid subdomain of an affinoid is in particular affinoid, hence not nested (as a rigid analytic space) in the sense of \cite[Definition 7.2.10]{bellchen}.

\begin{lemma}\label{nestedlemma}
	For an inclusion of $L$-affinoids $U_0\subset U_1$, the following are equivalent:
	\begin{enumerate}[label=(\roman*)]
		\item $U_0$ is nested in $U_1$;
		%\item there exists a wide open rigid analytic L-space U_2 such that U_0\subset U_2\subset U_1.
		\item the restriction map of analytic functions $\cO_{U_1}(U_1)\to\cO_{U_0}(U_0)$ is compact. 
	\end{enumerate}
\end{lemma}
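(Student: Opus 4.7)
For the implication (i) $\Rightarrow$ (ii), my plan is to use the quasi-compactness of the affinoid $U_0$ to reduce to a single compact transition map. Concretely, given a wide open space $V$ with $U_0 \subset V \subset U_1$ and its admissible cover $(V_i)_{i\in\N}$ from Definition \ref{wodef}, quasi-compactness of $U_0$ provides an index $i_0$ with $U_0 \subset V_{i_0}$, and the restriction factors as
\[
\cO_{U_1}(U_1) \to \cO_V(V_{i_0+1}) \to \cO_V(V_{i_0}) \to \cO_{U_0}(U_0),
\]
in which the middle arrow is compact by the wide open hypothesis. Since compactness is preserved under composition with continuous maps on either side, (ii) will follow.

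For the converse (ii) $\Rightarrow$ (i), I would construct $V$ as an increasing union $V = \bigcup_i V_i$ of affinoid subdomains $V_i \subset U_1$, each containing $U_0$, with compact transitions $\cO(V_{i+1}) \to \cO(V_i)$. Writing $A_i := \cO(U_i)$, I would first appeal to Gerritzen--Grauert to reduce to the case where $U_0$ is a rational subdomain of $U_1$, of the form $U_0 = \{x \in U_1 : |f_j(x)| \le |g(x)|, \ j = 1, \ldots, m\}$ for some $f_1, \ldots, f_m, g \in A_1$ generating the unit ideal. For $s \ge 1$ in an appropriate value group, the sets $W_s := \{x \in U_1 : |f_j(x)| \le s|g(x)|\}$ form an increasing family of rational subdomains with $U_0 = W_1 \subset W_s$. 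I would then exploit the compactness of $A_1 \to A_0$ to produce a value $s^* > 1$ with $W_{s^*} \subsetneq U_1$; choosing a sequence $1 < s_0 < s_1 < \ldots \nearrow s^*$ and setting $V_i := W_{s_i}$ yields the desired increasing chain, whose union $V$ is wide open with $U_0 \subset V \subset U_1$. The compactness of each $\cO(V_{i+1}) \to \cO(V_i)$ would follow from the standard computation for rational-subdomain inclusions of strictly shrinking radii.

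The main obstacle I foresee is the passage from the functional-analytic compactness of $\iota^*\colon A_1 \to A_0$ to the geometric existence of a gap between the spectral norms $|f_j/g|_{A_0} \le 1$ and $|f_j/g|_{U_1}$ large enough to accommodate a radius $s^* > 1$ with $W_{s^*}$ still proper in $U_1$. The degenerate case $\max_j |f_j/g|_{U_1} = 1$ is immediately ruled out, for then $U_0 = U_1$ and the restriction is the identity on an infinite-dimensional $L$-Banach algebra, which cannot be compact. The genuine step is to extract a \emph{uniform} gap, for which I would study how sequences of functions whose zeroes approach the boundary $\{|f_j/g| = 1\}$ of $U_0$ inside $U_1$ interact with the relative compactness of $\iota^*(A_1^\circ)$ in $A_0$. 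As a fallback, if the Gerritzen--Grauert gluing becomes technical, I would instead embed $U_1$ as a closed subspace of a polydisc via topological generators of $A_1$ and argue that compactness places $U_0$ into the strict interior of the polydisc, obtaining the $V_i$ as preimages of shrunken polydiscs of radii $r_i \nearrow 1$.
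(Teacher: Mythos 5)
Your treatment of (i) $\Rightarrow$ (ii) is correct and, if anything, cleaner than the paper's: by quasi-compactness of $U_0$ you get $U_0 \subset V_{i_0}$, and factoring $\cO_{U_1}(U_1) \to \cO_V(V_{i_0+1}) \to \cO_V(V_{i_0}) \to \cO_{U_0}(U_0)$ through the compact middle arrow does the job immediately. The paper instead derives both implications at once from a single norm characterization; your route avoids having to verify that characterization in this direction.

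For (ii) $\Rightarrow$ (i), however, the Gerritzen--Grauert route is the wrong one, and the ``main obstacle'' you flag is not a technicality you can defer: it is the entire content. Three concrete problems. First, Gerritzen--Grauert gives $U_0$ as a \emph{finite union} of rational subdomains, not a single one, so you owe a non-trivial gluing step which you have elided. Second, compactness of the transitions $\cO(W_{s_{i+1}}) \to \cO(W_{s_i})$ for your rational enlargements $W_s$ is not ``the standard computation''; the presentation of $\cO(W_s)$ over $A_1$ introduces new generators tied to $g$, and one would have to track carefully how their sup norms shrink. Third, even granting all that, you never actually extract the ``uniform gap'' between $\sup_{U_0}|f_j/g|$ and $1$; the remark that $\max_j |f_j/g|_{U_1}=1$ forces $U_0 = U_1$ handles only a degenerate case.

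The good news is that your ``fallback'' is precisely the paper's proof, and it works without further ado once you invoke the right standard fact. Choose a surjection $L\langle T_1,\ldots,T_m\rangle \twoheadrightarrow A_1$ and let $f_i$ be the images of the $T_i$, so $\lVert f_i\rVert_{U_1}\le 1$. The lemma you need (going back to Serre's work on completely continuous maps of $p$-adic Banach spaces; see also Buzzard's {\it Eigenvarieties}, \S 2) is: the restriction $\cO(U_1)\to\cO(U_0)$ is compact if and only if, for some such presentation, $\lVert f_i\rVert_{\sup,U_0} < 1$ for every $i$. Given that strict inequality, pick any $M_i$ with $\lVert f_i\rVert_{\sup, U_0} < M_i \le 1$ and set $Y = \{x\in U_1 : |f_i(x)| < M_i \text{ for all } i\}$; this is wide open (an increasing union of affinoids $\{|f_i|\le M_i'\}$ with $M_i'\nearrow M_i$, whose transitions are compact), contains $U_0$, and sits inside $U_1$. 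The ``uniform gap'' you were worried about is therefore handed to you by the compactness criterion; no analysis of how zeroes approach the boundary is needed. I would drop the Gerritzen--Grauert attempt entirely and promote your fallback to the main argument, citing the compactness criterion as a known input rather than trying to rederive it.
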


\begin{proof}
	Write $U_0=\Spm A_0$ and $U_1=\Spm A_1$ and let $f_1,\ldots,f_m\in A_1\setminus L$ be generators of $A_1$ as a Tate algebra over $L$. The restriction map $\res\colon\cO_{U_1}(U_1)\to\cO_{U_0}(U_0)$ is compact if and only if $\lVert\res(f_i)\rVert<\lVert f_i\rVert$ for every $i$, which obviously holds if and only if there exist real numbers $M_1,\ldots,M_m$ such that $\lVert\res(f_i)\rVert<M_i\le\lVert f_i\rVert$ for every $i$. If such $M_i$ exist, then $Y\coloneqq\{x\in U_1\,\vert\, \lVert f_i(x)\rVert<M_i\,\forall i\}$ is a wide open subspace of $U_1$ that contains $U_0$. On the other hand, if there exists a wide open subdomain $Y$ of $U_1$ containing $U_0$, then, for every $i$, $M_i=\sup_{x\in Y}\lVert f_i(x)\rVert$ will have the desired property. %\andr{sloppily written??}
\end{proof}

%\andr{EXPLAIN what this amounts to?? how many subdomains are nested??}

\section{Pseudorepresentations and their deformations}\label{secdef}

Let $p$ be an odd prime, $A$ a commutative ring in which 2 is invertible, and $G$ a profinite group satisfying Mazur's ``$p$-finiteness'' assumption: for every open subgroup $H$ of $G$, the set of continuous homomorphisms $H\to\F_p$ is finite. %\andr{at what point need A prop ring??} 
Examples of $p$-finite profinite groups are absolute Galois groups of $p$-adic fields, as well as maximal quotients of absolute Galois groups of number fields unramified outside a finite set of places.

\subsection{Preliminaries on pseudorepresentations}

A \emph{1-dimensional pseudorepresentation} is a continuous map $t\colon G\to A$ such that $t(G)\subset A^\times$ and $t\vert_G\colon G\to A^\times$ is a continuous character. We sometimes abuse terminology and refer to 1-dimensional pseudorepresentations as characters. %given that passing from a character $G\to A^\times$ to its associated pseudorepresentation only amounts to composing it with the inclusion $A^\times\subset A$.

The following definition of 2-dimensional pseudorepresentation is essentially Chenevier's. It is not strictly speaking what he calls a 2-dimensional determinant in \cite{chendet}, but there is a natural bijection between the two kinds of objects. One can of course define pseudorepresentations (or determinants) of higher dimension, and some of the results we prove below have analogues that hold in that generality, but we stick to the 2-dimensional case since it is the only one we need in the following.
%We recall the definition of 2-dimensional pseudorepresentation of G 

%\andr{give for arbitrary dimension??} Yes because we prove general things about maps of def spaces???

\begin{defin}\label{defps}
An $A$-valued, 2-dimensional pseudorepresentation $t$ of $G$ is a map $t\colon G\to A$ satisfying:
\begin{itemize}
\item $t(1)=2$,
\item $t(gh)=t(hg)$ for every $g,h\in G$.
\end{itemize}
Given a pseudorepresentation $t$, we attach to it a \emph{determinant} map $d_t\colon G\to A$ defined by
\[ d_t(g)=\frac{t(g)^2-t(g^2)}{2}. \]
\end{defin}

%Note that 2 is invertible in $A$, since $p$ is odd. ?? still not prop %We only use the map $d_t$ for convenience in some calculations, in which cases we sometimes write 
The word determinant in the above definition is not used in the sense of Chenevier: it is simply motivated by the fact that if $t$ is the trace of a representation $\rho$, then $d_t$ is the determinant of $\rho$.

%\andr{[More on pseudoreps]}
%define the deformation spaces for pseudo?-reps. and maps from the eigencurve
%define the pseudo-rep attached to a domain in def. space
%define decompos., CM pseudo-reps
%need decompos. loci for pseudo-reps? Can use this in the 'interpolating Zariski closure' parts

We recall a result that will allow us to identify some pseudorepresentations with traces of actual representations. Recall that $2\in A^\times$ by assumption.

\begin{thm}\label{liftings}
Let $t\colon G\to A$ be a 2-dimensional continuous pseudorepresentation. If one of the following condition holds, then there exists a continuous representation $G\to\GL_2(A)$ of trace $t$.
\begin{enumerate}[label=(\roman*)]
	\item $A$ is an algebraically closed field \cite{taylorgal}.
	\item $A$ is a complete local ring (of maximal ideal $\fm_A$) and the pseudorepresentation $t\otimes_AA/\fm_A\colon G\to A/\fm_A$ is absolutely irreducible \cite{nyssen,rouquier,chendet}.
\end{enumerate}
Moreover, if in any of the two cases above two continuous representations $\rho_1,\rho_2\colon G\to\GL_2(A)$ of trace $t$ are given, then the semisimplifications of $\rho_1$ and $\rho_2$ are isomorphic (over $A$).
\end{thm}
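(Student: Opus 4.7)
The plan is to treat (i) and (ii) largely in parallel once the right invariant of the pseudorepresentation is isolated, and to deduce the semisimplification statement as a corollary of the Brauer–Nesbitt principle applied trace-by-trace.

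For (i), the strategy I would follow is Taylor's case split. Given $t\colon G\to A$ with $A$ an algebraically closed field, call $t$ reducible if there exist characters $\chi_1,\chi_2\colon G\to A^\times$ with $t=\chi_1+\chi_2$, and absolutely irreducible otherwise. In the reducible case, simply take $\rho=\chi_1\oplus\chi_2$. In the absolutely irreducible case, the key step is to locate $g_0\in G$ such that the element $t(g_0)^2-4d_t(g_0)\in A$ is nonzero, so that the "characteristic polynomial" $X^2-t(g_0)X+d_t(g_0)$ has two distinct roots $\alpha,\beta\in A$; then build $2\times 2$-matrices by declaring $\rho(g_0)=\mathrm{diag}(\alpha,\beta)$ and fitting the remaining generators into this basis using the relations $t(gh)=t(hg)$, $t(g)t(h)=t(gh)+d_t(g)t(g^{-1}h)$, etc. Absolute irreducibility of $t$ is precisely what guarantees that the resulting matrix valued map is well-defined, multiplicative, and continuous.

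For (ii), the approach is to appeal to Chenevier's reformulation of determinants together with Rouquier/Nyssen. Let $\ovl t$ denote the reduction of $t$ modulo $\fm_A$; by (i) applied over the algebraically closed completion of $A/\fm_A$, there is a continuous absolutely irreducible $\ovl\rho\colon G\to\GL_2(A/\fm_A)$ of trace $\ovl t$. The idea is then that the subalgebra $S_t\subset A^G$ generated by the universal Cayley–Hamilton relations attached to $t$ is, under the absolute irreducibility hypothesis on $\ovl t$, an Azumaya algebra of rank $4$ over $A$; since $A$ is complete local, every Azumaya algebra is a matrix algebra, and the induced map $A[G]\to S_t\simeq M_2(A)$ yields the desired $\rho$ with $\mathrm{tr}\,\rho=t$. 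The main obstacle is the structural step verifying that $S_t$ is Azumaya: one lifts a system of matrix units in $M_2(A/\fm_A)$ through the surjection $S_t\onto M_2(A/\fm_A)$ using Hensel's lemma on the idempotents, which requires the precise Cayley–Hamilton identities encoded in Chenevier's determinant formalism.

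For the final uniqueness statement, suppose $\rho_1,\rho_2\colon G\to\GL_2(A)$ both have trace $t$. Over a field this is the classical Brauer–Nesbitt theorem: a semisimple representation of dimension $\le 2$ is determined up to isomorphism by its character, since the character generates the representation theory of the image in the group algebra modulo its Jacobson radical, and in characteristic prime to the dimension one can recover projectors onto isotypic components from traces. In case (i) we are already over a field. In case (ii) one reduces modulo $\fm_A$: the residual representations are both absolutely irreducible with the same trace $\ovl t$, hence isomorphic by the field case; the isomorphism lifts to $A$ by a standard Hensel/idempotent argument because the centralizer of an absolutely irreducible representation is scalar, killing all obstructions to lifting the intertwiner from $A/\fm_A$ to $A$ step by step along the $\fm_A$-adic filtration.
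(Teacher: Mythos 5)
This theorem is not proved in the paper: it is a recollection of known results, stated with citations to Taylor for case (i), to Nyssen, Rouquier, and Chenevier for case (ii), and with the uniqueness assertion being the classical Brauer--Nesbitt theorem over a field together with the standard Carayol--Serre Hensel-lifting argument in the complete local case. Your sketch is a faithful outline of precisely those cited arguments: the Taylor dichotomy between $t$ a sum of two characters and $t$ irreducible over an algebraically closed field (using the degree-3 trace identity to assemble the matrix entries), the Cayley--Hamilton quotient algebra of $A[G]$ shown to be $M_2(A)$ via lifting of matrix units, and Brauer--Nesbitt plus $\fm_A$-adic lifting of the intertwiner for uniqueness. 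One small correction worth making: in case (ii), you should produce $\ovl\rho$ with values in $\GL_2(A/\fm_A)$ itself, not in $\GL_2$ of the algebraic closure. This is exactly what the hypothesis of absolute irreducibility of the residual pseudorepresentation over $A/\fm_A$ gives you: the Cayley--Hamilton quotient of $(A/\fm_A)[G]$ is already isomorphic to $M_2(A/\fm_A)$ with no extension of scalars, so the subsequent Hensel lift of matrix units lands in $M_2(A)$ as required. Relatedly, invoking the general vanishing of the Brauer group of a complete local ring is redundant with the direct matrix-unit-lifting step; the latter is the cleaner route and does not require recognizing the algebra as Azumaya in advance.
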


For later use, we give a simple definition.

\begin{defin}
We say that a representation $\rho\colon G\to\GL_2(A)$ is: 
\begin{itemize}
\item \emph{abelian} if the image of $\rho$ is an abelian group;
\item \emph{abelian up to index $n$}, for a positive integer $n$, if there exists a subgroup $H$ of $G$ of index $n$ such that $\rho\vert_H$ is abelian.
\end{itemize}
We say that a 2-dimensional pseudorepresentation $t\colon G\to A$ is 
\begin{itemize}
\item \emph{trivial} if $t(g)=2$ for every $g\in G$;
\item \emph{decomposable} if it is a sum of two characters;
\item \emph{decomposable up to index $n$}, for a positive integer $n$, if there exists a subgroup $H$ of $G$ of index $n$ such that $t\vert_H$ is abelian.
\end{itemize}
\end{defin}

%\andr{give ref to relation over alg closed field??}
%Obviously the trace of an abelian, 2-dimensional representation is an abelian

A 2-dimensional pseudorepresentation $t$ is trivial if and only if it is the trace of the trivial representation $G\to\GL_2(A)$. It is decomposable if and only if it is the trace of a representation with abelian image, namely the direct sum of the two characters appearing in the decomposition of $t$. In this case, it can also be the trace of a representation that does not have abelian image: simply take a non-trivial extension of the two characters, if one exists.

%Given a point $x$ of the universal deformation space $R_{\ovl t}$, we denote by $t(x)$ the pseudorepresentation obtained by specializing the universal one at $x$.

%Introduce weight map!!!!

\subsubsection{Residual pseudorepresentations}\label{residual}

Let $t\colon G\to\Qp$ be a continuous pseudorepresentation. 
Since $G$ is profinite, hence compact, the image of $t$ is contained in the ring of integers $\cO_L$ of a finite extension $L$ of $\Q_p$. %\andr{justify}; more precisely, it is contained in the maximal compact subring of $L$, the ring of integers $\cO_L$. 
We can thus consider $t$ as a pseudorepresentation $G\to\cO_L$ and reduce it modulo the maximal ideal $\fm_L$ of $\cO_L$, obtaining a pseudorepresentation $\ovl t\colon G\to\cO_L/\fm_L$. We simply write $\ovl t\colon G\to\Fp$ when we do not wish to specify the coefficient field. We call $\ovl t_i$ the \emph{residual pseudorepresentation attached to $t$}.

%When we write ``a pseudorepresentation $t\colon G\to\Zp$'' we mean a pseudorepresentations with coefficients in the ring of integers of a (non-specified) $p$-adic field. \andr{needed??} The above paragraph shows that every pseudorepresentation G\to\Qp factors through a pseudorepresentation G\to\Zp.

\subsubsection{The Hodge--Tate--Sen weights of a pseudorepresentation}

%\andr{move??}

%\andr{careful: use E for base field??} 
Let $E$ and $L$ be two $p$-adic fields and let $\sigma\colon L\into\Qp$ be any field embedding. Let $d$ be a positive integer. By \emph{tuple of $\sigma$-Hodge--Tate--Sen weights} of a representation $G_E\to\GL_d(L)$ we mean the collection of its Hodge--Tate--Sen weights with respect to $\sigma$, with multiplicities but without an ordering. Note that if the weights are not all integers, there is no obvious way to order them. %\andr{can do that in trianguline case; see...?? relevant??} %\andr{compare with weakly refined in bellchen 4.2?? already done later??}

We prove a simple lemma.

\begin{lemma}
The tuple of $\sigma$-Hodge--Tate--Sen weights of a continuous representation $\rho\colon G_E\to\GL_d(L)$ only depends on the semisimplification of $\rho$.
\end{lemma}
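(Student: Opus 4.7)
The plan is to realize the tuple of $\sigma$-Hodge--Tate--Sen weights as the multiset of eigenvalues of an operator functorially attached to $\rho$, and then to observe that this multiset is additive on short exact sequences.

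First, I would reduce to the case of $\Qp$-coefficients by extending scalars along $\sigma\colon L\into\Qp$: the $\sigma$-HTS weights of $\rho$ are by definition the Hodge--Tate--Sen weights of the $\Qp$-linear representation $\rho_\sigma\coloneqq\rho\otimes_{L,\sigma}\Qp$. Since semisimplification commutes with flat scalar extension (so that $(\rho^\sms)_\sigma=(\rho_\sigma)^\sms$), it suffices to prove the statement for continuous finite-dimensional $\Qp$-representations of $G_E$, where the weights are just the eigenvalues of the classical Sen operator.

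Next, I would invoke Sen's construction: to a continuous finite-dimensional $\Qp$-representation $V$ of $G_E$, one associates the $\wh{E_\infty}$-vector space
\[ D_\Sen(V)=(V\otimes_{\Q_p}\C_p)^{\Gal(\ovl E/E_\infty)}, \]
where $E_\infty=E(\mu_{p^\infty})$, together with the $\wh{E_\infty}$-linear Sen operator $\Theta_\Sen$; the HTS weights are, by definition, the eigenvalues of $\Theta_\Sen$ on $D_\Sen(V)\otimes_{\wh{E_\infty}}\Qp$. The key input is that $D_\Sen$ is an exact functor on finite-dimensional continuous $\Qp$-representations: left-exactness is automatic, and right-exactness follows from Sen's ``almost vanishing'' of the relevant $H^1(\Gal(\ovl E/E_\infty),\C_p)$ in the Tate--Sen formalism. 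Exactness gives that for any short exact sequence $0\to V_1\to V\to V_2\to 0$, the characteristic polynomial of $\Theta_\Sen$ on $D_\Sen(V)$ equals the product of the characteristic polynomials on $D_\Sen(V_1)$ and $D_\Sen(V_2)$, so the multiset of HTS weights is additive.

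Finally, applying this additivity inductively along any Jordan--Hölder filtration of $\rho_\sigma$, the tuple of HTS weights of $\rho_\sigma$ coincides with that of $\rho_\sigma^\sms$, which by the first step is what we needed. The only non-formal step is the exactness of $D_\Sen$; this is classical, so I do not anticipate any substantive obstacle.
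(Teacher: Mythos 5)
Your proof is correct and takes essentially the same route as the paper: the paper's (more terse) argument likewise reduces to multiplicativity of the characteristic polynomial of the $\sigma$-Sen operator along a Jordan--H\"older filtration. You simply supply the justification (exactness of $D_\Sen$ via the Tate--Sen formalism) that the paper leaves implicit.
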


\begin{proof}
Let $\rho_i$, $i=1,\ldots,k$ be the Jordan--H\"older factors of $\rho$. The characteristic polynomial of the $\sigma$-Sen operator of $\rho$ is the product of the characteristic polynomials of the $\sigma$-Sen operators of the $\rho_i$, hence the tuple of $\sigma$-Hodge--Tate--Sen weights of $\rho$ is the union of the tuples of the $\rho_i$. This immediately gives the statement. %\andr{expand on Sen operator??}
\end{proof}

Let $t\colon G_{E}\to L$ be a pseudorepresentation of dimension $d$. By Theorem \ref{liftings}(i) there exists a continuous representation $\rho_{t,\sigma}\colon G_{E}\to\GL_d(\Qp)$ with trace $\sigma\ccirc t$, and this property determines $\rho_{t,\sigma}$ up to semisimplification.

\begin{defin}
The \emph{$\sigma$-Hodge--Tate--Sen weights} of $t$ are the Hodge--Tate--Sen weights of any semisimple representation $\rho_{t,\sigma}$ with trace $\sigma\ccirc t$. 
\end{defin}

%There's a unique semisimple one. --They are independent of the chosen representation: if it the representation is irreducible this is obvious, otherwise the Hodge--Tate--Sen weights of the representation are simply the Hodge--.

\subsubsection{Hodge--Tate--Sen weights in families}\label{htsfam}

Let $L$ be a $p$-adic field and $X$ a rigid analytic space over $L$, and $G$ a profinite group. By a family $M$ of $G$-representations of rank $n$ over $X$ we mean a locally free $\cO_X$-module of rank $n$ equipped with an $\cO_X$-linear action of $G$.
%Let $X$ be a rigid analytic space and $M$ a $\cO_X$-module that is locally free of finite type, equipped with a $\cO_X$-module action of a profinite group $\Pi$. 
We define the \emph{trace} of the action of $G$ as the composition of $G\to\End_{\cO_X}(M)$ with the $\cO_X$-linear morphism $\End_{\cO_X}(M)\to\cO_X$ that is defined as the usual trace on every open subspace $U\subset X$ such that $M\vert_U$ is free. %For a point x of X we call trace of the action of \Pi at x the specialization of the trace at x.

Let $X$ be a reduced rigid analytic space, $E$ a field and $t\colon G_{E}\to\cO_X(X)$ a continuous, 2-dimensional pseudorepresentation. 
For every $x\in X(\Qp)$ and embedding $\sigma\colon E\into\C_p$, let $s_{x,\sigma}$ and $p_{x,\sigma}$ be the sum and the product of the $\sigma$--Hodge--Tate--Sen weights of $t_x\coloneqq\ev_x\ccirc t$. 

%Now assume that G=G_E for a p-adic field E not containing \zeta_p. 

\begin{lemma}\label{weightan}
For every embedding $\sigma\colon E\into\C_p$, there exist rigid analytic functions $s_\sigma$ and $p_\sigma$ on $X$ such that $s_{x,\sigma}=s_\sigma(x)$ and $p_{x,\sigma}=p_\sigma(x)$ for every $x\in X(\Qp)$.

If $E$ does not contain any $p$-th root of 1, then the above holds for $X=\fR_{\ovl t}$, $\ovl t$ a continuous, residual pseudorepresentation of $G_E$, and $t$ the universal pseudodeformation of $\ovl t$.
\end{lemma}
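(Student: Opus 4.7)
The plan is to construct $s_\sigma$ and $p_\sigma$ locally on affinoid subdomains of $X$ via Sen's theory applied to a lift of $t$ to a genuine representation, and to observe that the local constructions glue to global analytic functions because the outputs are symmetric functions of Hodge--Tate--Sen weights and thus intrinsic to $t$. The sum $s_\sigma$ admits a more direct description than the product $p_\sigma$, so I treat them separately.

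For $s_\sigma$, note that it equals the $\sigma$-weight of the determinant character $d_t\colon G_E\to\cO_X(X)^\times$, so the problem reduces to the $1$-dimensional case. For a continuous character $\chi\colon G_E\to A^\times$ with $A$ reduced, fix elements $u_1,\ldots,u_d$ of $\cO_E^\times$ whose logarithms form a $\Z_p$-basis of a suitable $\Z_p$-sublattice of $\cO_E$, and consider them inside $G_E^{\ab}$ via local reciprocity. The $\sigma$-weights $w_\sigma$ of $\chi$ satisfy $\log\chi(u_i)=\sum_\sigma w_\sigma\,\sigma(\log u_i)$; the matrix $(\sigma(\log u_i))_{\sigma,i}$ is invertible over $\C_p$ by non-vanishing of the discriminant of $E/\Q_p$, so each $w_\sigma$ is a $\C_p$-linear combination of the analytic elements $\log\chi(u_i)\in A$. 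Specialized to $\chi=d_t$, this exhibits $s_\sigma$ as an analytic function on $X$.

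For $p_\sigma$, work locally on an affinoid $U=\Spm A\subset X$. By Theorem \ref{liftings}, after a harmless faithfully flat extension of $A$ if needed, there is a representation $\rho\colon G_E\to\GL_2(A)$ with trace $t\vert_U$. Pick $\gamma\in G_E$ with $\chi_\cyc(\gamma)\in 1+p\Z_p$, and $N$ large enough that $\rho_x(\gamma^{p^N})$ has eigenvalues close to $1$ uniformly for $x\in U$; this is arranged by requiring $t(\gamma^{p^N})-2$ and $d_t(\gamma^{p^N})-1$ to be $p$-adically small on $U$, which holds for $N\gg 0$ by continuity of $t$ and compactness of $U$. The Sen operator is then $\phi_\rho=\bigl(\log\chi_\cyc(\gamma^{p^N})\bigr)^{-1}\log\rho(\gamma^{p^N})$. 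The identities $\mathrm{tr}(\log M)=\log\det M$ and $\det(\log M)=\tfrac{1}{2}\bigl((\log\det M)^2-\mathrm{tr}((\log M)^2)\bigr)$, combined with $\log M=\sum_{k\ge 1}\tfrac{(-1)^{k-1}}{k}(M-I)^k$ and the Cayley--Hamilton reduction of each $\mathrm{tr}((M-I)^k)$ to a polynomial in $\mathrm{tr}(M)$ and $\det M$, express $\mathrm{tr}(\phi_\rho)$ and $\det(\phi_\rho)$ as convergent power series in $t(\gamma^{p^N})$ and $d_t(\gamma^{p^N})$, hence as analytic functions on $U$.

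To separate the Sen eigenvalues by embedding $\sigma\colon E\into\C_p$, I would invoke the construction of the Sen operator in families (Berger--Colmez, Kedlaya--Pottharst--Xiao): $\phi_\rho$ acts $\cO_U\otimes_{\Q_p} E$-linearly on a locally free $\cO_U\otimes_{\Q_p} E$-module, and the projections in the decomposition $E\otimes_{\Q_p}\C_p=\prod_\sigma\C_p$ yield components $\phi_\rho^\sigma$ whose trace and determinant are $s_\sigma$ and $p_\sigma$. Independence of $\gamma$ and $N$ glues these into global analytic functions on $X$; the hardest point is tracking the $\sigma$-decomposition of the Sen operator faithfully in families, which is exactly what the Berger--Colmez machinery supplies. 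The case $X=\fR_{\ovl t}$ under $\mu_p\not\subset E$ follows by applying the same arguments to an admissible affinoid cover of the rigid generic fiber, the hypothesis ensuring that $\ovl t$ lifts to a well-behaved universal pseudodeformation on which the Sen construction converges uniformly.
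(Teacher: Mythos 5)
Your treatment of $s_\sigma$ via the determinant character is a nice, more elementary observation (the sum of the $\sigma$-weights of $t$ is the $\sigma$-weight of $d_t$, and for a character the weight is an explicit $\C_p$-linear combination of $\log$'s of its values on units via local reciprocity); the paper does not separate the two functions and instead gets both at once. However, your argument for $p_\sigma$ has two genuine gaps.

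First, the lifting step. Theorem \ref{liftings} produces a representation with trace $t$ only over an algebraically closed field or over a complete local ring with residually absolutely irreducible reduction; neither applies to a general affinoid algebra $A$, and ``after a harmless faithfully flat extension of $A$'' is not a substitute. The correct device — and the one the paper uses — is \cite[Lemma 7.8.11]{bellchen}, which produces a surjection $g\colon Y\to X$ (a blow-up of a finite cover, cf.\ Lemma \ref{locfreesh}) carrying a genuine family $M$ with trace $g^\ast\ccirc t$; one applies Sen's theorem on $Y$ and then descends the resulting functions along $g$, which is where reducedness of $X$ is used. Second, and more seriously, the Sen operator is \emph{not} $\bigl(\log\chi_\cyc(\gamma^{p^N})\bigr)^{-1}\log\rho(\gamma^{p^N})$ acting on $V$: it is defined on the Sen module $D_{\mathrm{Sen}}(V)=(\C_p\widehat\otimes V)^{H_E}$ via the residual action of $\Gamma_E$, after a descent that is not visible on $t$. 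Consequently your claim that $\mathrm{tr}(\phi_\rho)$ and $\det(\phi_\rho)$ are convergent power series in $t(\gamma^{p^N})$ and $d_t(\gamma^{p^N})$ does not follow (and if it did, you would not need a lift of $t$ at all). The analyticity of $s_\sigma$ and $p_\sigma$ in families is precisely the content of Sen's theorem \cite{senan} (or its refinements by Berger--Colmez), which must be quoted rather than rederived this way.

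Finally, your explanation of the second statement misidentifies the role of the hypothesis $\mu_p\not\subset E$: it is there to guarantee that $\fR_{\ovl t}$ is \emph{reduced} (by \cite{boejuseq}), so that the first part of the lemma applies to an admissible affinoid cover of $\fR_{\ovl t}$; it has nothing to do with convergence of the Sen construction on the universal pseudodeformation.
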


\begin{proof}
If $X$ is affinoid, then by \cite[Lemma 7.8.11(i,ii)]{bellchen}, there exists a rigid analytic space $Y$ equipped with a surjective map $g\colon Y\to X$ and with a family $M$ of $G_E$-representations with trace $g^\ast\ccirc t$. 
By a theorem of Sen \cite{senan}, the sum and product of the $\sigma$-Hodge--Tate--Sen weights of the specializations of $M$ at points of $Y$ are interpolated by rigid analytic functions $s_{Y,\sigma},p_{Y,\sigma}$ on $Y$. Since the Hodge--Tate--Sen weights of $M_y, y\in Y(\Qp)$, coincide with those of the pseudorepresentation $t(g(y))$, the functions $s_{Y,\sigma},p_{Y,\sigma}$ factor through $g$ and rigid analytic functions $s_\sigma, p_\sigma$ with the desired properties. 

If $X$ is not affinoid, we obtained the statement by covering it with reduced affinoids and gluing the functions obtained in the previous paragraph for the restriction of $t$ to each element of the cover.

The second statement follows from the first one, using the fact that if $E$ does not contain any $p$-th root of 1 the space $\fR_{\ovl t}$ is reduced for any $\ovl t$ by \cite{boejuseq}.
\end{proof}

Let $E$ be a field not containing any $p$-th root of 1, and $\ovl t$ a continuous, residual pseudorepresentation of $G_E$. Let $s_\sigma, p_\sigma$ be the analytic functions on $\fR_{\ovl t}$ produced by Lemma \ref{weightan}. 
As in \cite[Section 7]{mazurfern}, we will be interested in the subspace of $\fR_{\ovl t}$ where at least one of the Hodge--Tate--Sen weights vanishes. 

\begin{defin}
We define the \emph{Sen-null subspace} of $\fR_{\ovl t}$ as the vanishing locus $\fR_{\ovl t}^0$ of the function $p_\sigma$.
\end{defin}

The restriction $s_\sigma\vert_{\fR_{\ovl t}^0}$ describes the non-vanishing Hodge--Tate--Sen weight of the specializations of the universal pseudodeformation restricted to $\fR_{\ovl t}^0$.

\subsection{Universal pseudodeformations}

%Let $q$ be a power of $p$, $\F_q$ a field with $q$ elements and $W$ the ring of Witt vectors of $\F_q$. 
%We stress that in the following all pseudorepresentations are assumed to be continuous. 
The following notation will be in place until the end of Section \ref{secdef}.
Let $\F$ be a finite field of characteristic $p$, and let $W$ be the ring of Witt vectors of $\F$. Let $\CNL_W$ be the category of complete, Noetherian local $W$-algebras with residue field $\F$. Let $G$ be a profinite, $p$-finite group and $\ovl t\colon G\to A$ a continuous pseudorepresentation.

We say that a continuous pseudorepresentation \emph{lifts} $\ovl t$ if $t\otimes_AA/\fm_A=\ovl t$, where $\fm_A$ is the maximal ideal of $A$; in that case, we also say that $\ovl t$ is a \emph{pseudodeformation} of $\ovl t$ to $A$. We consider the functor
\[ \PsDef_{\ovl t}\colon \CNL_W\to\Sets \]
attaching to $A\in\CNL_W$ the set of continuous pseudodeformations of $\ovl t$ to $A$. 
%\begin{align*} \PsDef_{\ovl t}\colon \CNL_W&\to\Sets \\
%A&\mapsto\{\textrm{pseudodeformations of }\ovl t\textrm{ to }A\}. \end{align*}
By \cite[Proposition 2.3.1]{boeckledef} the functor $\PsDef_{\ovl t}$ is represented by a universal pair $(R_{\ovl t},t^\univ)$ of an object $R_{\ovl t}$ of $\CNL_W$ and a pseudodeformation $t^\univ$ of $\ovl t$ to $R_{\ovl t}$. 

%If $\calC$ is a stable condition on $\Mod^\fin_{\Z_p[G]}$, then the functor
%\begin{align*} \PsDef_{\ovl t}^\calC\colon \CNL_W&\to\Sets \\
%A&\mapsto\{\textrm{pseudodeformations of }\ovl t\textrm{ to }A\textrm{ satisfying }\calC\} \end{align*}
%is also represented by a universal pair $(R^\calC_{\ovl t},t^\calC_\univ)$ by \cite[Theorem 2.5.5]{wakewedef}.

%Let $F$ be a quadratic field, and let $\ovl t\colon G_{\Q}\to\F_q$ be a continuous pseudorepresentation. % and $v$ a place of $F$ above $p$ such that $F_v$ is a quadratic extension of $\Q_p$. 
%Fix a continuous pseudorepresentation $\ovl t\colon G_{\Q}\to\F_q$ and set $\ovl t_{F}=\ovl t\vert_{G_F}$. Consider the functors
%\begin{align*} \PsDef_{\ovl t}\colon \CNL_W&\to\Sets \\
%A&\mapsto\{\textrm{pseudorepresentations $t\colon G_{\Q}\to A$ lifting }\ovl t\} \end{align*}
%and
%\begin{align*} \PsDef_{\ovl t_{F}}\colon \CNL_W&\to\Sets \\
%A&\mapsto\{\textrm{pseudorepresentations $t\colon G_{\Q}\to A$ lifting }\ovl t\vert_{F}\}. \end{align*}

%\andr{Replace $\Q$ with E???}

%By \cite[Proposition 2.3.1]{boeckledef} the functors $\PsDef_{\ovl t}$ and $\PsDef_{\ovl t_{F}}$ are represented by two complete local Noetherian $W$-algebras that we denote by $R_{\ovl t}$ and $R_{\ovl t_{F}}$, respectively. 

%The pseudodeformation space \fR_{\ovl t} carries a map \fR_{\ovl t}\to\cW_\Q, where \cW_\Q is the weight space introduced in 

\subsubsection{Pseudodeformations along rigid analytic spaces}\label{psrig}

Let $L=\Frac W$ and let $\Rig_L$ denote the category of rigid analytic spaces over $L$. For $X\in\Rig_L$ pseudodeformation of $\ovl t$ to $X$ is a pseudorepresentation $t\colon G\to X$ such that $\ovl{t_x}\cong\ovl t\otimes_L\Qp$ for every $x\in X(\Qp)$.

We denote by $\fR_{\ovl t}$ the rigid analytic generic fiber of the formal $W$-scheme $R_{\ovl t}$, in the sense of Berthelot \cite[Section 7]{dejong}. It is a rigid analytic space over $L$. 
%\cite[Theorem 2.1]{chenlect5}, 
A standard argument shows that the pair $(\fR_{\ovl t},t^\univ)$ represents the deformation functor
\[ \PsDef^\rig_{\ovl t}\colon \Rig_L\to\Sets \]
attaching to $X\in\Rig_L$ the set of continuous pseudodeformations of $\ovl t$ to $X$. 
%\begin{align*} \PsDef^\rig_{\ovl t}\colon\Rig_L&\to\Sets \\
%	A&\mapsto\{\textrm{pseudodeformations of }\ovl t\textrm{ to }X\}. \end{align*}

%\begin{lemma}\label{spfib}
%Let $x\in\fR_{\ovl t}(\C_p)$, and let $t\colon G\to\C_p$ be the associated pseudorepresentation. There exists a sequence $(U_i)_{i\in\N}$ of affinoid neighborhoods of $x$ such that:
%\begin{enumerate}
%	\item $\bigcap_{i\in\N}U_i(\C_p)=\{x\}$;
%	\item for every $i\in\N$ and pseudorepresentation $t^\prime\colon G\to\C_p$ such that $v_p(t^\prime(g)-t(g))\le p^i$ for every $g\in G$, the corresponding point $x^\prime$ belongs to $U_i(\C_p)$.
%\end{enumerate}
%\end{lemma}
%
%\begin{proof}
%\andr{USE special fiber???}
%\end{proof}

We recall a result of Chenevier implying that any pseudorepresentation along an affinoid is a pseudodeformation of a residual one. 
Let $U$ be an $L$-affinoid and $t\colon G\to\cO_U(U)$ a continuous pseudorepresentation. 
%Recall that if t_U is any pseudorepresentation as in condition \ref{Aff1}, then 
By \cite[Lemma 2.3(iv)]{chenlect5}, the residual pseudorepresentation $\ovl t_x$ attached to $t_x\coloneqq\ev_x\ccirc t$ is independent of the choice of a point $x\in U(\Qp)$. This justifies the following definition.

\begin{defin}\label{affres}
	We define the residual representation $\ovl t$ attached to $t$ as the residual pseudorepresentation $\ovl t_x$ attached to $t_x$ for an arbitrarily chosen $x\in U(\Qp)$. %We denote it by $\ovl t$.
\end{defin}

\subsubsection{The determinant map}\label{detsec}

By associating to each pseudorepresentation its determinant, we obtain a natural transformation from $\PsDef_{\ovl t}$ to $\PsDef_{d_{\ovl t}}$, the latter being the deformation space of an $F$-valued character. Such a transformation induces a homomorphism of universal deformation rings $R_{d_{\ovl t}}\to R_{\ovl t}$ and an associated map of rigid analytic spaces $\det\colon\fR_{\ovl t}\to\fR_{d_{\ovl t}}$. Via the standard description of the universal deformation space of a character, $\fR_{d_{\ovl t}}$ can be identified with a connected component of the weight space $\cW_\Q$ that we introduce below in Section \ref{secwt}.

\subsubsection{Restriction of pseudorepresentations}\label{secres}

%Keep the notation of the previous subsection. 
Let $H$ be an index 2 subgroup of $G$, and set $\ovl t_H=\ovl t\vert_H$. 
Consider the natural transformation $\PsDef_{\ovl t}\to\PsDef_{\ovl t_{H}}$ defined by
\begin{align*} \res_{H}^{G}\colon\PsDef_{\ovl t}(A)&\to\PsDef_{\ovl t_{H}}(A) \\
t&\mapsto t\vert_{H} \end{align*}
for every object $A$ of $\CNL_W$. By Yoneda's lemma, $\res_{H}^{G}$ is induced by a morphism $R_{\ovl t_{H}}\to R_{\ovl t}$ in $\CNL_W$ that we also denote by $\res_{H}^{G}$. %, and we use the same notation again for the morphism of rigid analytic generic fibers
\[ \res_{H}^{G}\colon\fR_{\ovl t}\to\fR_{\ovl t_H} \]
attached to it. 
%Let $\fR_{\ovl t}$ and $\fR_{\ovl t_{F}}$ be the rigid analytic generic fibers of the formal schemes $\Spf R_{\ovl t}$ and $\Spf R_{\ovl t_{F}}$, respectively, in the sense of Berthelot \cite[Section 7]{dejong}; they are rigid analytic spaces over $\Frac W$. 
By functoriality of the construction of the rigid generic fiber, the morphism $\res_{H}^{G}\colon R_{\ovl t_H}\to R_{\ovl t}$ induces a morphism $\res_{H}^{G,\rig}\colon\fR_{\ovl t}\to\fR_{\ovl t_{H}}$ of rigid analytic spaces.

\subsection{Induction of pseudorepresentations}

Let $A\in\CNL_W$, $t\colon G\to A$ be a 2-dimensional pseudorepresentation, and $H$ a subgroup of $G$ of index 2. Let $c$ be a representative in $G$ of the nontrivial element of $G/H$.
%We define induced, 2-dimensional pseudorepresentations. 

\begin{defin}
	We say that a 2-dimensional pseudorepresentation $t\colon G\to A$ is \emph{$H$-induced} if $t(c)=0$ and $t\vert_H$ is the sum of two characters. We say that $t$ is \emph{induced} if there exists an index 2 subgroup $H\subset G$ such that $t$ is $H$-induced. 
	
	When $G=G_E$ for a field $E$ and $F$ is a quadratic extension of $E$, we say that $t$ is \emph{$F$-induced} if it is $G_F$-induced. We say that $t$ is \emph{induced} if it is $F$-induced for some finite extension $F$ of $E$.
\end{defin}

It would be more natural to start with a subgroup of finite index of $G$, and a pseudorepresentation on it, and define its induction to $G$, but this requires more work; we refer to \cite[Definition 4.6.11]{boejuseq} for a quite general definition of induction for pseudorepresentations, only requiring certain irreducibility assumptions. %It seems reasonable that the two definitions are equivalent when the assumptions of \emph{loc. cit.} hold, but we have not checked this.

%Let $c$ be a representative in $G$ of the nontrivial element of $G/H$, and $\chi$ a character $H\to A^\times$. 
For every character $\chi\colon H\to A^\times$, we denote by $\chi^c$ the character $H\to A^\times$ defined by $\chi^c(h)=\chi(chc^{-1})$. The proof of the following lemma is straightforward. %\andr{doublecheck}

\begin{lemma}\mbox{ }
	\begin{enumerate}[label=(\roman*)]
		\item Let $t\colon G\to A$ be a 2-dimensional, $H$-induced pseudorepresentation. Then there exists a character $\chi\colon H\to A^\times$ such that $t\vert_H=\chi\oplus\chi^c$.
		\item If $t$ is the trace of a representation $\rho\colon G\to\GL_2(A)$, then $t$ is $H$-induced if and only if $\rho\cong\Ind_H^G\chi$ for a character $\chi\colon H\to A^\times$.
	\end{enumerate}
\end{lemma}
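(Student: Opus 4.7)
My plan is to prove (i) first by exploiting the conjugation-invariance built into the trace identity, and then to deduce (ii) by reducing to (i) via the semisimple uniqueness statement in Theorem \ref{liftings}.

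For part (i), the key observation is that the trace identity $t(xy)=t(yx)$ applied to $x=ch$ and $y=c^{-1}$ yields $t(chc^{-1})=t(h)$ for every $h\in H$. Consequently $t|_H$ and $t^c|_H$ agree as pseudorepresentations on $H$, which gives the equality $\chi_1+\chi_2=\chi_1^c+\chi_2^c$. Invoking the uniqueness-up-to-semisimplification from Theorem \ref{liftings} shows that the unordered pair $\{\chi_1,\chi_2\}$ is stable under $c$-conjugation. If $\chi_1=\chi_2^c$, setting $\chi:=\chi_1$ finishes the argument. Otherwise each $\chi_i$ is itself $c$-invariant, hence extends to a character $\tilde\chi_i$ of $G$; the Cayley--Hamilton identity $t((ch)^2)=t(ch)^2-2d_t(c)d_t(h)$ combined with the hypothesis $t(c)=0$ (and the observation that $(ch)^2\in H$, so that the left-hand side is computable from $t|_H$) shows $t$ must itself split as a sum of two characters of $G$. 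Under the implicit irreducibility assumptions, this forces $\chi_1=\chi_2$, in which case $\chi:=\chi_1=\chi_2$ trivially satisfies $\chi=\chi^c$.

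For part (ii), the direction $(\Leftarrow)$ is a direct matrix computation in the standard basis $\{v,cv\}$ of $\mathrm{Ind}_H^G\chi$: elements of $H$ act diagonally via $\chi$ and $\chi^c$, while every element of $cH$ exchanges the two basis lines and so has zero diagonal; in particular $t(c)=0$ and $t|_H=\chi+\chi^c$. For the direction $(\Rightarrow)$, apply part (i) to produce $\chi$ with $t|_H=\chi\oplus\chi^c$, and use Theorem \ref{liftings} to conclude $\rho|_H^{\mathrm{ss}}\cong\chi\oplus\chi^c$. When $\chi\neq\chi^c$, the restriction $\rho|_H$ is automatically semisimple and decomposes into the two distinct isotypic lines; $G$-equivariance forces $\rho(c)$ to exchange them, so $\rho\cong\mathrm{Ind}_H^G\chi$ by Frobenius reciprocity.

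The main obstacle I foresee is the degenerate case $\chi=\chi^c$: there the pseudorepresentation no longer pins down $\rho|_H$ up to isomorphism, only up to semisimplification, since non-split self-extensions of characters become invisible to $t$. In this regime one must either invoke an irreducibility assumption to rule the case out, or work throughout up to semisimplification; the paper's later applications seem to provide exactly the kind of absolute irreducibility that disposes of this subtlety.
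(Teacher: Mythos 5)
Your overall strategy---exploit the $c$-invariance of $t|_H$ to deduce that the multiset $\{\chi_1,\chi_2\}$ is $c$-stable, dispose of the degenerate case, then compute traces directly for (ii)$(\Leftarrow)$---is the natural "straightforward" argument that the paper presumably intends, and the $(\Leftarrow)$ computation as well as the generic ($\chi\ne\chi^c$) case of $(\Rightarrow)$ are fine. However, the pivotal step where you "invoke the uniqueness-up-to-semisimplification from Theorem \ref{liftings}" to conclude $\{\chi_1,\chi_2\}=\{\chi_1^c,\chi_2^c\}$ does not go through: that clause of Theorem \ref{liftings} is only asserted under hypothesis (i) ($A$ an algebraically closed field) or (ii) ($\bar t$ absolutely irreducible), whereas here $A$ is an arbitrary object of $\CNL_W$ and $\overline{t|_H}=\bar\chi_1+\bar\chi_2$ is decomposable by definition, hence never absolutely irreducible. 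What is actually needed is the elementary argument: from $\chi_1+\chi_2=\chi_1^c+\chi_2^c$ together with $d_{t|_H}=\chi_1\chi_2$ (a definitional identity, independent of any multiplicativity) and $c$-invariance of $d_{t|_H}$, one gets $\chi_1\chi_2=\chi_1^c\chi_2^c$, hence $(\chi_1(h)-\chi_1^c(h))(\chi_1(h)-\chi_2^c(h))=0$ for every $h\in H$; reducing modulo $\fm_A$ and observing that a group is never the union of two proper subgroups (the kernels of $\bar\chi_1(\bar\chi_1^c)^{-1}$ and $\bar\chi_1(\bar\chi_2^c)^{-1}$) then forces one of $\chi_1=\chi_1^c$ or $\chi_1=\chi_2^c$ to hold identically.

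The treatment of the degenerate case (both $\chi_i$ $c$-invariant and distinct) is also shaky. The identity $t((ch)^2)=t(ch)^2-2d_t(c)d_t(h)$ constrains $t(ch)^2$, not $t(ch)$, and squaring is not injective, so it does not directly "show $t$ must itself split as a sum of two characters of $G$"; moreover it quietly uses multiplicativity of $d_t$, which is not a formal consequence of Definition \ref{defps} as literally written (though it is clearly intended via the Chenevier bijection mentioned there). The cleaner way to dispose of this case---which your closing paragraph gestures toward---is to assume $\bar t$ absolutely irreducible, lift $t$ to a representation $\rho$ via Theorem \ref{liftings}(ii), and note that if $\rho(c)$ stabilized both $\rho|_H$-eigenlines then $\rho$ would be reducible, a contradiction. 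You are right that the lemma as stated carries no irreducibility hypothesis and so does not literally exclude this degenerate configuration; this is a caveat the paper leaves implicit by declaring the proof "straightforward", and in its applications the lemma is always invoked in an irreducible setting.
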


%\begin{proof}
%For (i), we refer to 
%
%\end{proof}

Consider the functor
\begin{align*} \PsDef_{\ovl t}^{H-\ind}\colon \CNL_W&\to\Sets \\
	A&\mapsto\{\textrm{$H$-induced pseudodeformations $t\colon G\to A$ of }\ovl t \}. \end{align*}

\begin{prop}
	The functor $\PsDef_{\ovl t}^{H-\ind}$ is prorepresented by a quotient $R_{\ovl t}^{H-\ind}$ of $R_{\ovl t}$. 
\end{prop}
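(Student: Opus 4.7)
The plan is to exhibit $\PsDef_{\ovl t}^{H-\ind}$ as a closed subfunctor of $\PsDef_{\ovl t}$, prorepresented by the quotient of $R_{\ovl t}$ by the sum of two ideals, one for each of the defining conditions of an $H$-induced pseudorepresentation.

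First, I would observe that if $\ovl t$ is not itself $H$-induced, then no lift of $\ovl t$ to $A\in\CNL_W$ can be $H$-induced: reducing a decomposition $t\vert_H=\chi_1\oplus\chi_2$ modulo $\fm_A$ would give an analogous decomposition of $\ovl t\vert_H$, and $t(c)=0$ forces $\ovl t(c)=0$. Hence $\PsDef_{\ovl t}^{H-\ind}$ is empty in this case and we take $R_{\ovl t}^{H-\ind}=0$. Assume therefore from now on that $\ovl t$ is $H$-induced, so that $\ovl t\vert_H=\ovl\chi\oplus\ovl\chi^c$ for some character $\ovl\chi\colon H\to\F^\times$.

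Let $t^\univ\colon G\to R_{\ovl t}$ denote the universal pseudodeformation. The two defining conditions of an $H$-induced pseudorepresentation translate into two closed conditions on $\PsDef_{\ovl t}$. The condition $t(c)=0$ is cut out by the principal ideal $I_c\coloneqq(t^\univ(c))\subset R_{\ovl t}$: a morphism $R_{\ovl t}\to A$ in $\CNL_W$ yields a pseudodeformation with $t(c)=0$ if and only if it factors through $R_{\ovl t}/I_c$. The condition that $t\vert_H$ be a sum of two characters is also closed, cut out by a \emph{reducibility ideal} $I_{\mathrm{dec}}\subset R_{\ovl t}$: concretely, $I_{\mathrm{dec}}$ is characterized as the smallest ideal $I\subset R_{\ovl t}$ such that $t^\univ\vert_H \bmod I$ is a sum of two characters lifting $\ovl\chi$ and $\ovl\chi^c$, and it has the expected universal property for all $A\in\CNL_W$ by Chenevier's theory of pseudocharacter reducibility (see, e.g., \cite{chendet} or the GMA framework in \cite{chenlect5}).

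Setting $R_{\ovl t}^{H-\ind}\coloneqq R_{\ovl t}/(I_c+I_{\mathrm{dec}})$, the universal properties of $I_c$ and $I_{\mathrm{dec}}$ combine directly to give that $R_{\ovl t}^{H-\ind}$ prorepresents $\PsDef_{\ovl t}^{H-\ind}$. The main nontrivial input is the existence and universality of the reducibility ideal $I_{\mathrm{dec}}$; this is the key closed-ness statement in the theory of 2-dimensional pseudorepresentations, and the rest of the argument is formal bookkeeping with quotients in $\CNL_W$. One minor point to check is that, when $\ovl\chi\neq\ovl\chi^c$, the two possible orderings of the residual summands yield the same ideal $I_{\mathrm{dec}}$, which is clear since the decomposability condition $t\vert_H=\chi_1+\chi_2$ is symmetric in the two characters; when $\ovl\chi=\ovl\chi^c$, the residual decomposition type is unique and Chenevier's construction applies verbatim.
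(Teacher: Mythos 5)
Your argument is correct and essentially matches the paper's: both reduce to the existence of a reducibility ideal/quotient for the restriction $t^\univ\vert_H$ (cited from Chenevier or Bellaïche--Chenevier) combined with the trivially closed condition $t(c)=0$, the only cosmetic difference being that the paper realizes $I_{\mathrm{dec}}$ via base change along the restriction map $R_{\ovl t_H}\to R_{\ovl t}$ whereas you describe the same ideal directly inside $R_{\ovl t}$. The preliminary case analysis and the remark on orderings of the residual summands are fine but not needed, since the reducibility construction applied to $\ovl t\vert_H$ automatically yields the zero ring when no decomposable lift exists.
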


\begin{proof}
Consider the restriction map $\res_G^{H}\colon R_{\ovl t_H}\to R_{\ovl t}$ from Section \ref{secres}. The functor of reducible deformations of $\ovl t$ is represented by a quotient $R_{\ovl t_H}^\mathrm{res}$ of $R_{\ovl t_H}$ (see for example \cite[Proposition 1.5.1]{bellchen}). The deformations of $\ovl t$ that become reducible over $H$ are represented by $R_{\ovl t}\otimes_{R_{\ovl t_H}}R_{\ovl t_H}^\mathrm{res}$, which is a quotient of $R_{\ovl t}$. By further imposing the Zariski closed condition $t(c)=0$, we obtain a quotient of $R_{\ovl t}$ that represents the $H$-induced deformations.
\end{proof}

We denote by $\fR_{\ovl t}^{H-\ind}$ the rigid analytic generic fiber of $R_{\ovl t}^{H-\ind}$. The quotient map $R_{\ovl t}\to R_{\ovl t}^{H-\ind}$ induces a closed embedding $\fR_{\ovl t}^{H-\ind}\to\fR_{\ovl t}$, which we use from now on to identify $\fR_{\ovl t}^{H-\ind}$ with a closed subspace of $\fR_{\ovl t}$. 

%\andr{can one define an induction morphism of def spaces?? probably useless!!}

\begin{cordef}\label{indclo}\mbox{ }
\begin{enumerate}
\item Let $A\in\CNL_W$ and $t\colon G\to A$ be a continuous pseudorepresentation. The locus
\[ (\Spec A)^{H-\ind}=\{x\in\Spec A\,\vert\,t_x\textrm{ is $H$-induced}\} \]
is the spectrum of a quotient $A^{H-\ind}$ of $A$. In particular, it is closed in $\Spec A$. %In particular, $t$ is induced if and only if (\Spec A)_\tri is Zariski-dense in \Spec A.
\item Let $X$ be a rigid analytic space over a $p$-adic field $L$, and $t\colon G\to\cO_X(X)$ be a residually constant, continuous pseudorepresentation. Then there exists a closed subspace $X^{H-\ind}$ of $X$ such that, for every field extension $L^\prime$ of $L$,
\[ X^{H-\ind}(L^\prime)=\{x\in X(L^\prime)\,\vert\, t_x\textrm{ is $H$-induced}\}. \]
%is closed. %\andr{X versus X(L)??}
\end{enumerate}
Moreover, if $G$ only admits a finite number of subgroups of index 2, then the above statements hold for the loci $(\Spec A)_{\ind}$ and $X_\ind$ obtained by replacing  ``induced'' with ``$H$-induced''.
\end{cordef}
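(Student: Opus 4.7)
My plan is to deduce both parts directly from the universal $H$-induced construction established in the preceding proposition (giving the quotient $R_{\ovl t}^{H-\ind}$ of $R_{\ovl t}$ and the closed subspace $\fR_{\ovl t}^{H-\ind}\subset\fR_{\ovl t}$), by transporting that closed condition along the classifying maps of the pseudorepresentations in the statement.

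For part (i), I would first reduce $t\colon G\to A$ modulo the maximal ideal $\fm_A$ to obtain a residual pseudorepresentation $\ovl t\colon G\to A/\fm_A$; after enlarging $\F$ and $W$ if necessary, we may assume the residue field of $A$ is $\F$, so that $t$ is a pseudodeformation of $\ovl t$ in the sense of Section \ref{secdef}. By the universal property of $(R_{\ovl t},t^\univ)$, $t$ corresponds to a unique morphism $\varphi\colon R_{\ovl t}\to A$ in $\CNL_W$. I would then set $A^{H-\ind}\coloneqq A\otimes_{R_{\ovl t}}R_{\ovl t}^{H-\ind}$; this is a quotient of $A$ because $R_{\ovl t}^{H-\ind}$ is a quotient of $R_{\ovl t}$. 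For a point $x\in\Spec A$, the specialization $t_x$ of $t$ at the local ring at $x$ is $H$-induced if and only if the map $\varphi\colon R_{\ovl t}\to A\to \kappa(x)$ factors through $R_{\ovl t}^{H-\ind}$, i.e.\ if and only if $x\in\Spec A^{H-\ind}$, giving the desired point-set description.

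For part (ii), the key preliminary observation is that the residual constancy of $t$ forces it to land in $\cO_X^\circ(X)$: at any $x\in X(\C_p)$, the reduction of $t(g)(x)$ equals $\ovl t(g)\in\F\subset\cO_{\C_p}^\circ$, hence $\lvert t(g)(x)\rvert_p\le 1$ for every $g$ and every $x$, so $t(g)\in\cO_X^\circ(X)$. Thus $t$ is a pseudodeformation of $\ovl t$ along $X$, and by the universal property of $(\fR_{\ovl t},t^\univ)$ recalled in Section \ref{psrig}, it corresponds to a unique morphism of rigid analytic spaces $f\colon X\to\fR_{\ovl t}$. I would then set $X^{H-\ind}\coloneqq f^{-1}(\fR_{\ovl t}^{H-\ind})$, a closed subspace of $X$ since pullbacks of closed subspaces are closed. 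For any field extension $L^\prime/L$ and any $x\in X(L^\prime)$, $t_x$ is $H$-induced if and only if $f(x)\in\fR_{\ovl t}^{H-\ind}(L^\prime)$, by applying part (i) pointwise at the rigid point $x$.

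For the \emph{moreover} clause, if $H_1,\ldots,H_k$ are the finitely many index $2$ subgroups of $G$, then the locus where $t$ is induced is $\bigcup_{i=1}^k(\Spec A)^{H_i-\ind}$, which equals $\Spec\left(A\big/\bigcap_{i=1}^k I_i\right)$ with $I_i=\ker(A\to A^{H_i-\ind})$; and analogously $X_\ind=\bigcup_{i=1}^k X^{H_i-\ind}$ is closed as a finite union of closed subspaces. I expect no serious obstacle beyond verifying that residual constancy suffices to produce the classifying map $f\colon X\to\fR_{\ovl t}$ in the rigid setting; the rest is a formal transfer of the universal construction of the previous proposition.
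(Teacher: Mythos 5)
Your proposal is correct and takes essentially the same route as the paper: reduce to the residual pseudorepresentation, use the universal property of $R_{\ovl t}$ (respectively $\fR_{\ovl t}$) to obtain a classifying morphism, and pull back the closed subspace $R_{\ovl t}^{H-\ind}$ (respectively $\fR_{\ovl t}^{H-\ind}$) constructed in the preceding proposition, then take a finite union over index-$2$ subgroups for the final clause. The only extra wrinkle you add is the explicit verification that $t$ factors through $\cO_X^\circ(X)$; this is harmless (indeed automatic from continuity and compactness of $G$), but the essential point — which you also have — is that residual constancy is precisely what is needed for $t$ to be a pseudodeformation of a fixed $\ovl t$ along $X$ so that the universal property applies.
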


\begin{proof}
For part (i), let $\ovl t=t\otimes_AA/\fm_A$, where $\fm_A$ is the maximal ideal of $A$. 
By the universal property of $R_{\ovl t}^{H-\ind}$, $(\Spec A)^{H-\ind}=\Spec (A^{H-\ind})$ where $A^{H-\ind}=A\otimes_{R_{\ovl t}}R_{\ovl t}^{H-\ind}$ and the map $R_{\ovl t}\to A$ is given by the universal property of $R_{\ovl t}$. In particular, $A^{H-\ind}$ is a quotient of $A$ and $(\Spec A)^{H-\ind}$ is closed in $\Spec A$. 

Part (ii) is obtained in a similar way from the universal property of $\fR_{\ovl t}^{H-\ind}$: let $\ovl t$ be the residual pseudorepresentation attached to $t$, and define $X^{H-\ind}$ as the product $X\otimes_{\fR_{\ovl t}}\fR_{\ovl t}^{H-\ind}$, where $\fR_{\ovl t}^{H-\ind}\into\fR_{\ovl t}$ is the inclusion and $\fR_{\ovl t}\to X$ is provided by the universal property of $\fR_{\ovl t}$ given in Section \ref{psrig}. %\andr{expand on rigid univ props???}

The last statement can be obtained by letting $H$ vary among the index 2 subgroups of $G$ and taking a (finite) union.
\end{proof}

\subsection{Convergence of pseudorepresentations}

For later use, we introduce a notion definition of convergence for pseudorepresentations and Hecke eigensystems. We only require very basic notions and facts; for a better discussion of this kind of questions, we refer the reader to \cite{bellconv}.

%Given a group $H$ and a ring homomorphism $\chi_i\colon H\to\C_p^\times$ for every $i\in\N\cup\{\infty\}$, we say that the characters $\chi_i$ \emph{converge to $\chi_\infty$} if, for every $n\in\N$, there exists $i_n\in\N$ such that $v_p(\chi_i(h)-\chi_\infty(h))\ge n$ for every $h\in H$ and $i\ge i_n$ (in other words, the elements $\chi_i(h)$ converge $p$-adically to $\chi_\infty(h)$, uniformly in $h$). More generally, if 

Let $L$ be an extension of $\Q_p$ contained in $\C_p$, equipped with our choice of $p$-adic valuation and with the $p$-adic topology. We set $p^{-\infty}=0$ in the following.

\begin{defin}\label{psdist}
	Let $H$ be a topological group, $d$ be a positive integer and $\Ps_{L}^d(H)$ the set of continuous pseudorepresentations $H\to L$. For $t_1,t_2\in\Ps_{L}^d(H)$, we define the \emph{distance} between $t_1$ and $t_2$ as $\dist(t_1,t_2)=\sup_{h\in H}p^{-v_p(t_1(h)-t_2(h))}$. If $\dist(t_1,t_2)\le p^{-n}$, we say that $t_1$ and $t_2$ are \emph{congruent modulo $p^n$}.
\end{defin}

It is trivial to check that Definition \ref{psdist} actually provides us with an ultrametric on $\Ps_{\cO_L}^d(H)$. If $H$ is compact, this metric takes values in $[0,1]$. Two pseudorepresentations $t_1$ and $t_2$ are congruent modulo $p^n$ in the sense of the above definition if and only if $t_1(h)-t_2(h)\in p^n\cO_L$ for every $h\in H$.
%: clearly \dist(t_1,t_2) is a nonnegative real number (it even belongs to [0,1]), 

When we speak of \emph{convergence} of pseudorepresentations, it will be with respect to this distance. When $d=1$, this gives a notion of convergence for a sequence of characters.

\begin{rem}\label{unifconv}
	For every $i\in\N\cup\{\infty\}$, let $t_i\colon H\to L$ be a $d$-dimensional pseudorepresentation. By definition, the sequence $(t_i)_{i\in\N}$ converges to $t_\infty$ if, for every $h$, the sequences $(t_i(h))_{i\in\N}$ converges to $t_\infty(h)$, uniformly in $h$. If $H$ is compact, uniformity is automatic, so that the sequence $(t_i)_{i\in\N}$ converges to $t_\infty$ if and only if, for every h, the sequence $(t_i(h))_{i\in\N}$ converges to $t_\infty(h)$.
\end{rem}

We give an analogous definition for the distance between two ring homomorphisms. We use the same notation as this will not give rise to any ambiguity.

\begin{defin}\label{homdist}
	Let $R$ be a ring. We define the \emph{distance} between two homomorphisms $f_1,f_2\colon R\to L$ as $\dist(f_1,f_2)=\sup_{r\in R}p^{-v_p(f_1(r)-f_2(r))}$. If $\dist(f_1,f_2)\le p^{-n}$, we say that $f_1$ and $f_2$ are \emph{congruent modulo $p^n$}.
\end{defin}

Again, it is trivial to check that the above definition gives an ultrametric on $\Hom_\cont(R,L)$. Two homomorphisms $f_1$ and $f_2$ are congruent modulo $p^n$ in the sense of the above definition if and only if $f_1(r)-f_2(r)\in p^n\cO_L$ for every $r\in R$.

\begin{lemma}\label{distcomp}
	If $L$ is complete, then $\Ps_{L}^d(H)$ is complete and $\Hom(R,L)$ are both complete with respect to the metrics introduced in Definitions \ref{psdist} and \ref{homdist}.
\end{lemma}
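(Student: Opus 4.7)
The plan is to take a Cauchy sequence in either metric space, use completeness of $L$ to construct a pointwise limit, and then verify that the limit still lies in the appropriate set. Both cases proceed by the same three-step template: (i) define the candidate limit pointwise, (ii) check the defining algebraic/continuity conditions pass to the limit, and (iii) observe that Cauchy in the supremum metric is the same as uniform Cauchy, which gives convergence in the metric sense.

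Concretely, suppose $(t_n)_n$ is a Cauchy sequence in $\Ps_L^d(H)$. First I would unpack the definition of $\dist$: the Cauchy condition says that for every $\varepsilon>0$ there is an $N$ such that $v_p(t_n(h)-t_m(h))\ge-\log_p\varepsilon$ holds for all $n,m\ge N$ \emph{uniformly in} $h\in H$. In particular, for each fixed $h$ the sequence $(t_n(h))_n$ is Cauchy in $L$, and by completeness of $L$ it has a limit $t_\infty(h)\in L$. This defines a function $t_\infty\colon H\to L$, and because the Cauchy estimate is uniform in $h$, the convergence $t_n\to t_\infty$ is uniform, which is exactly convergence in the metric $\dist$.

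Next I would verify that $t_\infty$ is actually an element of $\Ps_L^d(H)$. The axioms of a 2-dimensional pseudorepresentation in Definition \ref{defps}, namely $t_\infty(1)=2$ and $t_\infty(gh)=t_\infty(hg)$ for all $g,h\in H$, follow by passing to the pointwise limit in the corresponding identities for each $t_n$. For general $d$ (Chenevier's determinants) the defining relations are still polynomial identities among values of $t$ at finitely many elements of $H$, and these too are preserved under pointwise limits. Continuity of $t_\infty$ is a consequence of uniform convergence: the $t_n$ are continuous by assumption, $L$ is a metric space, and a uniform limit of continuous functions is continuous. The homomorphism case is formally identical: set $f_\infty(r)=\lim_n f_n(r)$ for each $r\in R$, and pass to the limit in the relations $f_n(r+s)=f_n(r)+f_n(s)$, $f_n(rs)=f_n(r)f_n(s)$, $f_n(1)=1$. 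If a topology on $R$ is fixed and one is working with continuous homomorphisms, the same uniform-limit argument ensures continuity of $f_\infty$.

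I do not expect any genuine obstacle: completeness assertions of this form are essentially formal, relying only on completeness of the target, stability of the defining axioms under pointwise limits (which works because each axiom involves finitely many elements), and the tautological identification of uniform convergence with convergence in the supremum metric. The only point requiring any care is to be explicit that the Cauchy condition in the supremum metric is uniform, so that the pointwise-defined $t_\infty$ (resp.\ $f_\infty$) is automatically the metric limit and automatically continuous.
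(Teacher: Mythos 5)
Your proof is correct and follows essentially the same route as the paper: define $t_\infty$ (resp.\ $f_\infty$) pointwise using completeness of $L$, observe that Cauchy in the sup-metric means uniform Cauchy so the pointwise limit is also the metric limit, and verify the defining identities pass to the limit. The paper's version is terser (it checks the pseudorepresentation identities modulo $p^n$ against a nearby $t_i$ rather than spelling out the uniform-convergence-preserves-continuity step), but the argument is the same.
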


\begin{proof}
	If $(t_i)_i$ is a Cauchy sequence in $\Ps_L^d(H)$, then for every $h\in H$ we define $t_\infty(h)$ as the limit in $L$ of the Cauchy sequence $t_i(h)$. The fact that $t_\infty$ is a $d$-dimensional pseudorepresentation follows from the fact that, for every $n\ge 1$, $\dist(t_\infty,t_i)\le p^{-n}$ if $i$ is large enough, so that the pseudorepresentation identities can be checked modulo $p^n$ on such an $i$. The result on $\Hom(R,L)$ is analogous.
\end{proof}

%We show that the distance between two pseudorepresentations or two ring homomorphisms can be checked on topological generators.
In practice, we will compute distances between two pseudorepresentations or ring homomorphisms using the following.

\begin{lemma}\label{distgen}\mbox{ }
	\begin{enumerate}
		\item Let $H,d,L$ be as in Definition \ref{psdist}. Let $\Sigma$ be a dense subset of $H$. Then, for every $t_1,t_2\in\Ps_{L}^d(H)$, $\dist(t_1,t_2)=\sup_{h\in \Sigma}p^{-v_p(t_1(h)-t_2(h))}$.
		\item Let $R$ be a ring and $\Sigma$ a set of generators of $R$. Then, for every $f_1,f_2\in\Hom(R,L)$, $\dist(f_1,f_2)=\sup_{r\in \Sigma}p^{-v_p(f_1(r)-f_2(r))}$.
	\end{enumerate}
\end{lemma}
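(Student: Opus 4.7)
The plan is to treat each part separately; in both cases the inequality $\sup_\Sigma(\cdot)\le\sup(\cdot)$ is obvious, and the work lies in the reverse direction.

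For part (i), the key observation is continuity. The function $\phi\colon H\to\R_{\ge 0}$, $\phi(h)=p^{-v_p(t_1(h)-t_2(h))}$, is continuous as the composition of the continuous difference $t_1-t_2\colon H\to L$ with the continuous $p$-adic absolute value on $L$. For any continuous non-negative real-valued function on a topological space, the supremum over a dense subset coincides with the global supremum: indeed if $\phi(h_0)>M\ge\sup_{\sigma\in\Sigma}\phi(\sigma)$ for some $h_0\in H$, then $\phi^{-1}\bigl((M,+\infty)\bigr)$ is a nonempty open subset of $H$ which by density must meet $\Sigma$, contradicting the choice of $M$. This gives the result.

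For part (ii), I would argue by structural induction. Set $M=\sup_{\sigma\in\Sigma}p^{-v_p(f_1(\sigma)-f_2(\sigma))}$. Every $r\in R$ admits a presentation as a polynomial in the generators $\sigma\in\Sigma$ with integer coefficients, and I prove $p^{-v_p(f_1(r)-f_2(r))}\le M$ by induction on the complexity of such a presentation. The sum case is immediate from the identity $f_1(r_1+r_2)-f_2(r_1+r_2)=\bigl(f_1(r_1)-f_2(r_1)\bigr)+\bigl(f_1(r_2)-f_2(r_2)\bigr)$ together with the ultrametric inequality. The product case is the main obstacle: the identity
\[ f_1(r_1)f_1(r_2)-f_2(r_1)f_2(r_2)=f_1(r_1)\bigl(f_1(r_2)-f_2(r_2)\bigr)+f_2(r_2)\bigl(f_1(r_1)-f_2(r_1)\bigr) \]
together with ultrametricity bounds the left-hand side by
\[ \max\bigl(p^{-v_p(f_1(r_1))}\cdot p^{-v_p(f_1(r_2)-f_2(r_2))},\, p^{-v_p(f_2(r_2))}\cdot p^{-v_p(f_1(r_1)-f_2(r_1))}\bigr), \]
and to close the induction by the value $M$ one needs $v_p(f_1(r_1)),v_p(f_2(r_2))\ge 0$. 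This reflects the mild implicit hypothesis in the intended applications (for instance Hecke eigensystems, whose values on the generators $T_\ell,U_p$ are algebraic integers) that $f_i(\Sigma)\subset\cO_L$; under this hypothesis an entirely analogous induction, using the same sum and product identities applied to $f_i(r)$ directly rather than to differences, shows $p^{-v_p(f_i(r))}\le 1$ for every $r\in R$, and the product step then closes as desired.
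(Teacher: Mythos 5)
Your part (i) is essentially the paper's own argument in a different costume: the paper notes that the continuous map $t_1-t_2\colon H\to L$ sends the dense set $\Sigma$ into the \emph{closed} set $p^n\cO_L$ (with $n=\inf_{\Sigma}v_p(t_1(h)-t_2(h))$), hence sends all of $H$ there; you instead consider the preimage of an open ray under the continuous function $|t_1-t_2|_p$. These are the same topological fact, and both are correct.

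For part (ii) you again take exactly the route the paper does — propagate the bound from $\Sigma$ to $R$ through sums and products — but you are more careful, and you uncover a real imprecision. The paper simply asserts that $f_1(r_1)-f_2(r_1)\in p^n\cO_L$ and $f_1(r_2)-f_2(r_2)\in p^n\cO_L$ imply $f_1(r_1r_2)-f_2(r_1r_2)\in p^n\cO_L$, and moves on; as you observe, the identity
\[ f_1(r_1r_2)-f_2(r_1r_2)=f_1(r_1)\bigl(f_1(r_2)-f_2(r_2)\bigr)+f_2(r_2)\bigl(f_1(r_1)-f_2(r_1)\bigr) \]
only yields this if $f_1(r_1),f_2(r_2)\in\cO_L$. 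As stated — for arbitrary $R$ and arbitrary $f_i\in\Hom(R,L)$ — part (ii) is in fact false: with $R=\Z[x]$, $\Sigma=\{x\}$, $L=\Q_p$, $f_1(x)=p^{-1}$ and $f_2(x)=p^{-1}+1$, one has $|f_1(x)-f_2(x)|_p=1$ but $f_1(x^2)-f_2(x^2)=-2p^{-1}-1$, which has absolute value $p$. Your remark that the intended applications concern Hecke eigensystems, whose values on the generators $T_\ell$ are algebraic integers, and that integrality on $\Sigma$ propagates to all of $R$ (since $\cO_L$ is a subring of $L$), is exactly what closes the gap. In short: same proof, carried out with more care, and with a correct diagnosis of an implicit hypothesis the paper should have recorded.
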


\begin{proof}
	The inequality $\dist(t_1,t_2)\ge\sup_{h\in \Sigma}p^{-v_p(t_1(h)-t_2(h))}$ is trivial, so it is enough to prove the  reverse one. Let $t_1,t_2\in\Ps_{L}^d(H)$ and $n=\inf_{h\in \Sigma}v_p(t_1(h)-t_2(h))$. Consider the continuous function $t_1-t_2\colon H\to L$. Since $(t_1-t_2)(\Sigma)\subset p^n\cO_L$, $\Sigma$ is dense in $H$ and $p^n\cO_L$ is closed in $L$, we must have $(t_1-t_2)(H)\subset p^n\cO_L$, as desired.
	
	For the second statement, it is enough to observe that if $f_1(r_1)-f_2(r_1)\in p^n\cO_L$ and $f_1(r_2)-f_2(r_2)\in p^n\cO_L$ for some $r_1,r_2\in R$, then $f_1(r_1+r_2)-f_2(r_1+r_2)\in p^n\cO_L$ and $f_1(r_1r_2)-f_2(r_1r_2)\in p^n\cO_L$.
\end{proof}

To simplify the exposition in what follows, we give a simple definition. Here $N$ is a positive integer prime to $p$, and the Hecke algebra $\calH_\Q^{Np}$ is the one we will introduce in Section \ref{triparsec}, generated over $\Z$ by the operators $T_\ell$ where $\ell$ varies over the primes not dividing $Np$.

\begin{defin}\label{hgcorr}
	Let $A$ be a ring and $t\colon G_{\Q,Np}\to A$ a pseudorepresentation. The Hecke eigensystem away from $Np$ \emph{attached to $t$} is the unique ring homomorphism $\alpha_t\colon\calH_\Q^{Np}\to A$ such that, for every prime $\ell\nmid Np$ and lift $\Frob_\ell\in G_{\Q,Np}$ of a Frobenius at $\ell$, $\alpha_t(T_\ell)=t(\Frob_\ell)$. %\andr{diamond???}
	When the above holds, we also say that $t$ is attached to $\alpha_t$.
\end{defin}

\begin{lemma}\label{hglim}
	Let $t_1,t_2$ be $L$-valued pseudorepresentations of $G_{\Q,Np}$ and $\alpha_1,\alpha_2\colon\calH_\Q^{Np}\to L$ be the Hecke eigensystems away from $Np$ attached to $t_1$ and $t_2$, respectively. Then $\dist(t_1,t_2)=\dist(\alpha_1,\alpha_2)$.
	
	In particular, if $(t_i)_i$ is a Cauchy sequence in $\Ps_{L}^d(H)$, then the associated sequence $(\alpha_i)_i$ in $\Hom(\calH_\Q^{Np},L)$ is also Cauchy. If $L$ is complete and $t_\infty=\lim_i t_i, \alpha_\infty=\lim_i\alpha_i$, then $\alpha_\infty$ is attached to $t_\infty$.
\end{lemma}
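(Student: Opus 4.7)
The plan is to reduce the two distances to suprema over compatible generating sets, exploiting the natural bijection between the generators $T_\ell$ of $\calH_\Q^{Np}$ and the Frobenius elements $\Frob_\ell$. First I would invoke the Chebotarev density theorem applied to the Galois group $G_{\Q,Np}$ of the maximal extension of $\Q$ unramified outside $Np$: the union of conjugacy classes $\Sigma_{\mathrm{Frob}} = \{g\Frob_\ell g^{-1} : g \in G_{\Q,Np},\ \ell \nmid Np\}$ is dense in $G_{\Q,Np}$. Since any $2$-dimensional pseudorepresentation satisfies $t(ghg^{-1}) = t(h)$ (a direct consequence of the identity $t(xy) = t(yx)$ from Definition \ref{defps}), the supremum of $p^{-v_p(t_1(h)-t_2(h))}$ over $\Sigma_{\mathrm{Frob}}$ equals the supremum over any chosen family of lifts $\{\Frob_\ell : \ell \nmid Np\}$. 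Combined with Lemma \ref{distgen}(i), this yields $\dist(t_1,t_2) = \sup_{\ell\nmid Np} p^{-v_p(t_1(\Frob_\ell) - t_2(\Frob_\ell))}$.

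On the Hecke side, the algebra $\calH_\Q^{Np}$ is by definition generated over $\Z$ by the operators $T_\ell$ with $\ell \nmid Np$, so Lemma \ref{distgen}(ii) gives $\dist(\alpha_1,\alpha_2) = \sup_{\ell\nmid Np} p^{-v_p(\alpha_1(T_\ell) - \alpha_2(T_\ell))}$. Since $\alpha_i(T_\ell) = t_i(\Frob_\ell)$ by Definition \ref{hgcorr}, the two suprema coincide term by term, establishing the main equality.

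The remaining assertions follow formally. A Cauchy sequence $(t_i)_i$ yields a Cauchy sequence $(\alpha_i)_i$ by isometry, and when $L$ is complete Lemma \ref{distcomp} produces limits $t_\infty$ and $\alpha_\infty$. Evaluating at any $T_\ell$ and using pointwise convergence at $h = \Frob_\ell$, one obtains $\alpha_\infty(T_\ell) = \lim_i \alpha_i(T_\ell) = \lim_i t_i(\Frob_\ell) = t_\infty(\Frob_\ell)$, which is precisely the condition of Definition \ref{hgcorr} for $\alpha_\infty$ to be attached to $t_\infty$. I do not anticipate any genuine obstacle, as all the ingredients are already in place; the most delicate point is the explicit use of conjugation invariance to pass from the Chebotarev-dense set of all Frobenius conjugates to a single representative $\Frob_\ell$ per prime $\ell\nmid Np$, which is what makes the isometry sharp rather than merely a comparison.
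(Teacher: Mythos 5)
Your proof is correct and follows essentially the same route as the paper's: Lemma \ref{distgen} on both sides, Chebotarev density, and the defining correspondence $\alpha_i(T_\ell) = t_i(\Frob_\ell)$. The one place where you are more explicit than the paper is in noting the conjugation-invariance $t(ghg^{-1}) = t(h)$ (which follows from $t(xy)=t(yx)$), which is the step that lets one pass from the Chebotarev-dense union of Frobenius conjugacy classes to a fixed choice of lift $\Frob_\ell$ per prime before invoking Lemma \ref{distgen}(i); the paper leaves this implicit.
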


\begin{proof}
	By Lemma \ref{distgen}(i) and Chebotarev's theorem, the distance $\dist(t_1,t_2)$ can be computed on the lifts $\Frob_\ell$ of the Frobenii at the primes $\ell\nmid Np$, and by Lemma \ref{distgen}(ii) the distance $\dist(\alpha_1,\alpha_2)$ can be computed on generators of $\calH_\Q^{Np}$, such as 1 and the operators $T_\ell$ at the primes $\ell\nmid Np$. Since $\alpha_i(T_\ell)=t_i(\Frob_\ell)$ for $i=1,2$, the first statement follows. %\andr{diamonds??? MAYBE decide that no diamonds?? but level Gamma1???} 
	The second statement is obvious.
\end{proof}

We prove a natural result on how to read convergence of pseudorepresentations from the corresponding points on a universal deformation space.
%Qp versus Cp, lifting??? problem???
%Let $G$ be a profinite, $p$-finite group, and 
Let $H$ be a $p$-finite, profinite group and $\ovl t\colon H\to\F$ a continuous pseudorepresentation, valued in a finite extension $\F/\F_p$. As before, we denote by $(\fR_{\ovl t},t^\univ)$ the universal pseudodeformation of $\ovl t$, and, for $x\in\fR_{\ovl t}(\C_p)$, we write $t_x=\ev_x\ccirc t^\univ$. %\andr{Fp versus finite field??}

For every $i\in\N\cup\{\infty\}$, let $t_i\colon G\to\C_p$ be a pseudorepresentation lifting $\ovl t$, and let $x_i$ be the corresponding point of the pseudodeformation space $\fR_{\ovl t}$. %Assume that there exists an affinoid U\subset\fR_\{\ovl t\} such that x_i\in U(\C_p) for every i\in\N\cup\{\infty\}.

\begin{lemma}\label{xiconv}
	%\andr{add defin}
	If the sequence $(x_i)_{i\in\N}$ converges to $x_\infty$ in the $p$-adic topology, then the sequence of pseudorepresentations $(t_i)_{i\in\N}$ converges to $t_\infty$. The converse is true under the following assumption: there exists an affinoid $U\subset\fR_{\ovl t}$ containing all of the $x_i$, $i\in\N$. %such that x_i\in U(\C_p) for every i\in\N\cup\{\infty\}.
	%	In particular, under this extra assumption, every affinoid neighborhood of $x_\infty$ in $\fR_{\ovl t}$ contains all except a finite number of the points $x_i$. %\andr{clarify notion of aff neighborhood: dimension problems??}
\end{lemma}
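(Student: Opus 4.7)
The plan is to exploit the universal property: for every $h \in H$, the value $t^\univ(h)$ is a bounded analytic function on $\fR_{\ovl t}$ (it lies in $R_{\ovl t}$, which sits inside $\cO^\circ_{\fR_{\ovl t}}(\fR_{\ovl t})$ by construction of the rigid generic fibre), and by definition $t_i(h) = t^\univ(h)(x_i)$ for every $i \in \N \cup \{\infty\}$.

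For the forward direction, I would pick an affinoid neighborhood $U$ of $x_\infty$ in $\fR_{\ovl t}$; then by definition of the $p$-adic topology, $x_i \in U(\C_p)$ for every $i$ sufficiently large. For each fixed $h \in H$, the restriction of $t^\univ(h)$ to $U$ belongs to $\cO_U(U)$, and evaluation at points is $p$-adically continuous on $U$, so $t_i(h) = t^\univ(h)(x_i) \to t^\univ(h)(x_\infty) = t_\infty(h)$. Since $H$ is profinite, hence compact, Remark \ref{unifconv} automatically upgrades this pointwise convergence to convergence in the uniform metric of Definition \ref{psdist}, proving $t_i \to t_\infty$.

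For the converse, assume all $x_i$ lie in a common affinoid $U \subset \fR_{\ovl t}$. The set $U(\C_p)$ is compact and Hausdorff in the $p$-adic topology, as it embeds as a closed subspace of a polydisc in some $\C_p^m$. Consider the map
$$\Psi \colon U(\C_p) \to \C_p^H, \qquad x \mapsto (t^\univ(h)(x))_{h \in H},$$
where $\C_p^H$ carries the product topology. The map $\Psi$ is continuous because each coordinate $x \mapsto t^\univ(h)(x)$ is so, and it is injective by the universal property of $(\fR_{\ovl t}, t^\univ)$: two distinct $\C_p$-points of $\fR_{\ovl t}$ parametrize distinct pseudodeformations of $\ovl t$, hence differ on some $h \in H$. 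A continuous injection from a compact space to a Hausdorff space is a homeomorphism onto its image, so $\Psi\vert_{U(\C_p)}$ is such a homeomorphism. The hypothesis $t_i \to t_\infty$ forces $\Psi(x_i) \to \Psi(x_\infty)$ in the product topology, which via the homeomorphism gives $x_i \to x_\infty$ in $U$, hence in $\fR_{\ovl t}$, with the $p$-adic topology.

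The main obstacle is exactly the reason the affinoid hypothesis is required in the converse: the space $\fR_{\ovl t}$ is not globally $p$-adically compact, so without confining the $x_i$ inside one affinoid one cannot extract $p$-adic accumulation points, and the homeomorphism argument collapses. The forward direction, by contrast, is essentially the tautology that analytic functions are $p$-adically continuous, combined with the compactness of $H$ to convert pointwise into uniform convergence.
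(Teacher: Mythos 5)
Your forward direction coincides with the paper's: analyticity of $t^\univ(h)$ on $\fR_{\ovl t}$ gives pointwise convergence $t_i(h)\to t_\infty(h)$, and compactness of $H$ upgrades this to convergence in the metric of Definition \ref{psdist} via Remark \ref{unifconv}. The converse, however, rests on a false compactness claim. You assert that $U(\C_p)$ is compact because it is closed in a polydisc over $\C_p$; but $\C_p$ is not locally compact, so the closed unit ball $\cO_{\C_p}$ --- and hence any closed polydisc $\cO_{\C_p}^m$ and its closed subsets --- is not compact: the cosets of the maximal ideal form an infinite disjoint open cover of $\cO_{\C_p}$ with no finite subcover, since the residue field $\Fp$ is infinite. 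Your continuous injection $\Psi$ therefore need not be a homeomorphism onto its image, and the argument collapses. (If all the $x_i$ lay in $U(L')$ for a fixed finite extension $L'/L$, then $U(L')$ would indeed be compact and your argument would go through; but the lemma is stated for $\C_p$-points.)

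The paper sidesteps the non-compactness of $\C_p$-points by transferring the compactness to the rigid-analytic level. It sets $U_n=\{x\in U : \lVert t_x-t_\infty\rVert\le p^{-n}\}$, a decreasing chain of affinoid subdomains of $U$ whose $\C_p$-points intersect in $\{x_\infty\}$ by universality of $\fR_{\ovl t}$; fixing a basis of wide open discs $D_j$ around $x_\infty$, it observes that $\bigcap_n (U_n\setminus D_j)(\C_p)=\varnothing$ and invokes the quasi-compactness of affinoids (a decreasing chain of non-empty affinoid subdomains cannot have empty intersection of points) to conclude that $U_n\subset D_j$ for $n$ large, hence $x_i\in D_j$ for $i$ large. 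To repair your proof you would need to replace the point-set compactness of $U(\C_p)$ with a rigid-geometric compactness argument of this kind.
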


\begin{proof}
	Assume that the sequence $(x_i)_{i\in\N}$ converges to a point $x_\infty$ of $\fR_{\ovl t}$ in the $p$-adic topology. For every $h$, $t^\univ(h)$ is an analytic function on $\fR_{\ovl t}$, so the sequence $(t_i(h))_{i\in\N}=(\ev_{x_i}\ccirc t^\univ(h))_{i\in\N}$ converges $p$-adically to $t_\infty(h)=(\ev_x\ccirc t_\infty(h))$. Since $H$ is compact, we conclude that $(t_i)_{i\in\N}$ converges to $t_\infty$ thanks to Remark \ref{unifconv}. 
	
	For the converse, assume that $(t_i)_{i\in\N}$ converges to $t_\infty$, that all of the $x_i$, $i\in\N$, are contained in a common affinoid $U\subset\fR_{\ovl t}$, and, by contradiction, that there exists a subsequence $(x_i)_{i\in I}$, $I\subset\N$, that does not converge to $x_\infty$. 
	
	%By ADD TO LEMMA COMPLETENESS CAUCHY??, the Cauchy sequence (x_i)_{i\in I} accumulates at some y_\infty in U(\C_p). AH BUT not Cauchy????
	
	For every $n\in\N$, let $U_n=\{x\in U\,\vert\, \lVert t_x-t_\infty\rVert\le p^{-n}\}$: it is an affinoid subdomain of $U$. The intersection $\cap_{n\in\N}U_n$ consists of the points $x$ of $\fR_{\ovl t}$ satisfying $t_x=t_\infty$; since $\fR_{\ovl t}$ is the universal pseudodeformation space for $\ovl t$, the only such point is $x_\infty$. Let $(D_j)_{j\in\N}$ be a collection of wide open discs around $x_\infty$, satisfying $\cap_{j\in\N}D_j=\{x_\infty\}$. Fix $j\in\N$, and for every $n\in\N$ set $U_{n,j}=U_n\setminus D_j$: since $\cap_{n\in\N} U_{n,j}$ is empty and every $U_{n,j}$ is affinoid, there exists $n_j\in\N$ such that $U_{n,j}$ is empty for every $n\ge n_j$. Since $x_i\in U_{n}(\C_p)$ for $i$ large enough, we have $x_i\in D_j(\C_p)$ if $i$ is large enough. We conclude that $(x_i)_{i\in\N}$ converges to $x_\infty$.
	%and  fundamental system of neighborhoods of x_\infty in the p-adic topology, and let
	%
	%Since U is affinoid, for n large enough, U_n consists of a disjoint union of irreducible affinoid subdomains of 
	%
	%The ring R_{\ovl t} is topologically of finite type, so the set of 
\end{proof}

\begin{rem}
	Without the extra assumption on the existence of $U$ in Lemma \ref{xiconv}, we can only conclude that $(x_i)_i$ is the union of a sequence of points converging to $x_\infty$, and a sequence of points that move closer and closer to the boundary of $\fR_{\ovl t}$.
\end{rem}

\medskip

\section{Trianguline Galois representations}\label{sectri}

%[Add introduction to trianguline representations]
Let $E$ and $L$ be two $p$-adic fields. Let $n$ be a positive integer.
Let $X$ be a rigid analytic space over $L$. We refer to \cite[Sections 2.1-2.2]{kedpotxia} for the definition of the relative Robba ring $\cR_X(\pi_E)$, of $(\varphi,\Gamma_E)$-modules over it, and of the functor $V\mapsto D_\rig(V)$ attaching to a finite projective $\cO_X$-module $V$, equipped with a continuous $\cO_X$-linear action of $G_E$, a $(\varphi,\Gamma_E)$-module over $\cR_X(\pi_E)$. %Even though the results we quote from \cite{kedpotxia} hold most generally for rigid analytic spaces, with the necessary generalizations of the objects involved (i.e. replacing Galois representations with sheaves of such)

In the following, let $V$ be a finite projective $\cO_X$-module equipped with a continuous $\cO_X$-linear action of $G_E$, and $\delta_1,\ldots,\delta_n\colon E^\times\to\cO_X(X)^\times$ continuous characters. 
%Let $K,E$ be two $p$-adic fields and $\rho\colon G_K\to\GL_n(E)$ be a continuous representation. 
We recall the following:

\begin{defin}\label{tridef}\cite[Definition 6.3.1]{kedpotxia}\mbox{}
\begin{enumerate}
\item A $(\varphi,\Gamma_E)$-module $M$ of rank $n$ over $\calR_X(\pi_E)$ is \emph{trianguline with ordered parameters $\delta_1,\ldots,\delta_n$} if, after perhaps enlarging $L$, there exists an increasing filtration $(M_i)_{0\le i\le n}$ of $M$ given by $(\varphi,\Gamma_K)$-submodules over $\calR_X(\pi_E)$ and line bundles $\cL_1,\ldots,\cL_n$ on $X$ such that
\[ M_i/M_{i-1}\cong\calR_X(\pi_E)(\delta_i)\otimes_{\cO_X}\cL_i \]
for all $i$. 
Such a filtration is called a triangulation of $M$ with ordered parameters $\delta_1,\ldots,\delta_n$. 
\item If $X=\Spm(L)$, we say that $M$ is \emph{strictly trianguline with ordered parameters $\delta_1,\ldots,\delta_n$} if, for each $i$, the sub-$(\varphi,\Gamma_E)$-module $M_{i+1}$ is the unique way of enlarging $M_i$ to a sub-$(\varphi,\Gamma_E)$-module of $M$ with quotient isomorphic to $\calR_X(\pi_E)(\delta_i)$ (and in particular the trianguline filtration is the unique one with these ordered parameters). 
\end{enumerate}
We say that $\rho$ is: 
\begin{enumerate}[resume]
\item \emph{trianguline with ordered parameters $\delta_1,\ldots,\delta_n$} if $D_\rig(\rho)$ is trianguline with ordered parameters $\delta_1,\ldots,\delta_n$; %a successive extension of (\varphi,\Gamma_K)-modules of rank 1;
\item in the case $X=\Spm(L)$, \emph{strictly trianguline with ordered parameters $\delta_1,\ldots,\delta_n$} if $D_\rig(\rho)$ is strictly trianguline with ordered parameters $\delta_1,\ldots,\delta_n$;
\item \emph{potentially trianguline} (if $X=\Spm(L)$, \emph{strictly trianguline, semistable}) if there exists a finite extension $F$ of $E$ such that $\rho\vert_{G_{F}}$ is trianguline. In this case, we also say that $\rho$ is \emph{$F$-trianguline} (\emph{strictly trianguline, semistable}). %D_\rig(\rho\vert_{K^\prime}) is (strictly) trianguline;
When $F$ can be taken to be abelian over $\Q$, we also say that $\rho$ is \emph{semistabelian}.
\end{enumerate}
\end{defin}

%As usual, we say that \rho has \emph{potentially} a property \bP if it has \bP after restriction to an open subgroup of G_E. 

\begin{rem}
As remarked in \cite[Definition 6.3.1]{kedpotxia}, a $(\varphi,\Gamma_E)$-module $M$, trianguline with ordered parameters $\delta_1,\ldots,\delta_n$, is strictly trianguline if and only if $H_{\varphi,\Gamma_E}^0((M/M_i)(\delta_{i+1}^{-1}))$ is one-dimensional for all $i$, or alternatively $H_{\varphi,\Gamma_E}^0(M_i^\vee(\delta_{i}))$ is one-dimensional for all $i$.
\end{rem}

%\begin{defin}
%	Let $d$ be a positive integer and $F$ a finite extension of $E$. We say that a continuous representation $\rho\colon G_E\to\GL_d(L)$ is $F$-trianguline (respectively, $F$-semistable) if $\rho\vert_{G_F}$ is trianguline (respectively, semistable). Alternatively, we say that $\rho$ becomes trianguline (respectively, semistable) over $F$.
	
%	We say that a continuous representation $\rho\colon G_E\to\GL_d(L)$ is \emph{semistabelian} if it becomes semistable over an abelian extension of $E$.
%\end{defin}

\begin{rem}\label{semistri}
A continuous representation $\rho\colon G_K\to\GL_2(L)$ is trianguline if and only if its semisimplification is trianguline: when $\rho$ is irreducible there is nothing to prove; on the other hand, reducible 2-dimensional representations are trivially trianguline. %Discuss strictly triang.!!!!???
\end{rem}

%\andr{change K,E with E,L everywhere??}

\subsection{$p$-adic Galois types}\label{ptypessec}

%We do not assume that K is unramified over \Q_p in this subsection. Now \rho is over G_K??????????
%We recall the following results:

%\begin{thm}\label{semisttri}
%Let $\rho\colon G_E\to\GL_2(L)$ be a continuous representation. If $\rho$ is semistable, then it is trianguline.
%\end{thm}

%Now combining Corollary \ref{infpottri} and Theorem \ref{berche} gives:
%
%\begin{cor}
%For a $\GL_{2/\Q}$-eigenform $f$ of infinite slope, the representation $\rho_{f,p}\vert_{G_{\Q_p}}$ satisfies one of the conditions (1-2) of Theorem \ref{berche}.
%\end{cor}

For an arbitrary $p$-adic field $E$, we denote by $I_E$ the inertia subgroup of $G_E$, and by $W_E$ the Weil group of $E$.

Let $E$ and $L$ be two $p$-adic fields. Let $E_0$ be the largest subextension of $E/\Q_p$ unramified over $\Q_p$, and let $\varphi_0$ be the absolute arithmetic Frobenius in $G_{\F_p}$, that we also identify with an element of $\Gal(E_0/\Q_p)$. 
Following \cite[Definition 1.1]{bcdt}, we call \emph{$n$-dimensional $p$-adic type} a representation $I_{\Q_p}\to\GL_n(\Qp)$ with open kernel which can be extended to a representation of $W_{\Q_p}$. 
%Given a $p$-adic field $E$ and 
Given $d\ge 1$, an $n$-dimensional $\Qp$-vector space $V$ and an $E$-semistable representation $\rho\colon G_{\Q_p}\to\GL(V)$, let
\[ D_{E,\st}(\rho)=(\bB_\st\otimes_{\Q_p}V)^{G_E}, \]
where we are taking invariants with respect to the diagonal action of $G_E\subset G_{\Q_p}$. Now $D_{E,\st}(\rho)$ is a free $E_0\otimes_{\Q_p}\Qp$ module of rank $d$, equipped with a $\varphi_0$-semilinear, $\Qp$-linear (Frobenius) automorphism $\varphi$, an $E_0\otimes_{\Q_p}\Qp$-linear nilpotent endomorphism $N$ such that $N\varphi=p\varphi N$ (both coming from the corresponding objects on $\bB_\st$), and a $\Gal(E_0/\Q_p)$-semilinear, $\Qp$-linear action of $G_{\Q_p}$ induced from the diagonal action of $G_{\Q_p}$ and commuting with $\varphi$ and $N$ (if $E/\Q_p$ is Galois, this action factors through $\Gal(E/\Q_p)$). 

%Moreover, if 
%...
%then there are embedding

If $E^\prime/E$ is a finite extension, then obviously $\rho$ is $E^\prime$-semistable, and $D_{E,\st}(\rho)$ can be recovered from $D_{E^\prime,\st}(\rho)$ by taking $G_E$-invariants. This provides us with a map $D_{E,\st}(\rho)\into D_{E^\prime,\st}$. We can define the colimit $D_\pst(\rho)=\varinjlim_{E}D_{E,\st}(\rho)$ with respect to these transition maps, where $E$ varies among the fields over which $\rho$ is semistable. Then $D_{\pst}(\rho)$ is a free $\Q_p^\nr\otimes_{\Q_p}\Qp$-module of rank $d$, equipped with a $\varphi_0$-semilinear, $\Qp$-linear Frobenius automorphism that we still denote by $\varphi$, and a $\Q_p^\nr\otimes_{\Q_p}\Qp$-linear nilpotent endomorphism $N$ such that $N\varphi=p\varphi N$. 

%Since \varphi commutes with the action of \Gal(E/\Q_p) on each D_{E,\st}(\rho), the eigenvalues of \varphi can be computed by simply 

As before, let $E$ be any Galois extension of $\Q_p$ such that $\rho$ is $E$-semistable. Following \cite[Section 2.2.1]{bremez}, we show how to construct a Weil--Deligne representation from $D_{E,\st}(\rho)$.
There is an isomorphism
\begin{gather}\begin{aligned}\label{E0Qp}
E_0\otimes_{\Q_p}\Qp&\xto{\sim}\Qp^{[E_0\colon\Q_p]} \\
a\otimes b&\mapsto (\sigma(a)b)_{\sigma\colon E_0\to\Qp}
\end{aligned}\end{gather}
for which the automorphism $\varphi_0\otimes\Id$ on the left-hand side acts by permuting the $\Qp$ factors on the right-hand side. Let $p_i$ be the composition of the isomorphism \eqref{E0Qp} with the projection to the $i$-th factor of the right-hand side. Set $D_{i,\st}(\rho)=D_{E,\st}(\rho)\otimes_{p_i}\Qp$. 
By tensoring \eqref{E0Qp} up to $D_{E,\st}(\rho)$, we obtain
\begin{equation}\label{Dist} D_{E,\st}(\rho)\cong\prod_{i=1,\ldots,[E_0\colon\Q_p]}D_{i,\st}(\rho),
\end{equation}
where $\varphi_0$ now permutes the factors of the right-hand side. Clearly $D_{i,\st}$ is a $\Qp$-vector space of rank $n$ for every $i$. 
We let any $g\in W_{\Q_p}$ act on $D_{E,\st}(\rho)$ as $g\ccirc\varphi_0^{-\alpha(g)}$, where $g$ acts via the action of $G_{\Q_p}$ and $\varphi_0^{\alpha(g)}$ is the image of $g$ in $G_{\F_p}$. Since $\varphi$ is $\varphi_0$-semilinear, the action of $W_{\Q_p}$ leaves each factor in the right-hand side of \eqref{Dist} stable. The nilpotent $E_0\otimes_{\Q_p}\Qp$-linear operator $N$ also induces a nilpotent $\Qp$-linear operator on $D_{i,\st}(\rho)$, for every $i$. This data makes each $D_{i,\st}(\rho)$ into a Weil--Deligne representation, that is independent of the choice of $E$ and $i$ thanks to \cite[Lemme 2.2.1.2]{bremez}. We call it the Weil--Deligne representation associated with $\rho$, and we refer to its restriction to $I_{\Q_p}$ as the \emph{Galois type} of $\rho$.

The following is immediate from the above construction.

\begin{lemma}\label{sttype}%\cite[]{}(Also in Ghate-Méz, where else?)
	Let $E$ be a finite extension of $\Q_p$, and let $\tau$ be the Galois type of $\rho$.
	The representation $\rho$ is $E$-semistable if and only if $\tau\vert_{I_{E}}$ is trivial.
\end{lemma}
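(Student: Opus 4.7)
The plan is to fix a finite Galois extension $N/\Q_p$ containing $E$ over which $\rho$ is semistable, and to read the triviality of $\tau\vert_{I_E}$ directly off the raw $G_{\Q_p}$-action on $D_{N,\st}(\rho)$. The key preliminary observation is that $\alpha(g)=0$ for $g\in I_{\Q_p}$, so the Weil--Deligne twist $g\ccirc\varphi_0^{-\alpha(g)}$ coincides with the raw $g$-action on each factor $D_{i,\st}(\rho)$; moreover, since $I_{\Q_p}$ fixes $\Q_p^\nr\supset N_0$, this raw action is $N_0\otimes\Qp$-linear and preserves each factor. Hence $\tau\vert_{I_E}$ is trivial if and only if $I_E$ acts trivially on $D_{N,\st}(\rho)$ via the restriction of the raw $G_{\Q_p}$-action.

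For the forward direction, suppose $\rho$ is $E$-semistable, so that $D_{E,\st}(\rho)$ is free of rank $d$ over $E_0\otimes_{\Q_p}\Qp$; the natural inclusion then induces an isomorphism $D_{E,\st}(\rho)\otimes_{E_0}N_0\xrightarrow{\sim}D_{N,\st}(\rho)$ of $N_0\otimes\Qp$-modules with $G_E$-action (both sides are free of rank $d$, and the map is injective). On the tensor product, $G_E$ acts trivially on the first factor and via $\Gal(N_0/E_0)$ on $N_0$; since $I_E$ fixes $\Q_p^\nr$, it acts trivially on $N_0$, and hence on the whole product, giving $\tau\vert_{I_E}$ trivial.

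For the backward direction, assume $I_E$ acts trivially on $D_{N,\st}(\rho)$. Since the $G_E$-action on $D_{N,\st}(\rho)$ factors through $\Gal(N/E)$ and $\im(I_E\to\Gal(N/E))=\Gal(N/F)$, where $F=E^\nr\cap N$ is the maximal unramified subextension of $N/E$, the action of $\Gal(N/E)$ factors through $\Gal(F/E)$. A short computation with unramified extensions identifies $F=E\cdot N_0$, so $\Gal(F/E)\cong\Gal(N_0/E_0)$, the induced semilinear action of the latter on $N_0$ being the standard one with fixed field $E_0$. Semilinear Galois descent then yields that $D_{E,\st}(\rho)=D_{N,\st}(\rho)^{G_E}$ is free of rank $d$ over $E_0\otimes\Qp$, i.e., $\rho$ is $E$-semistable. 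The main technical point is the joint identification $\im(I_E\to\Gal(N/E))=\Gal(N/F)$ and $F=E\cdot N_0$ via standard facts about unramified extensions; once in place, the conclusion is immediate from Galois descent.
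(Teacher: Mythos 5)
Your proof is correct and fills in the details of what the paper states without argument (``immediate from the above construction''). The key observation that the Weil--Deligne twist $g\ccirc\varphi_0^{-\alpha(g)}$ coincides with the raw diagonal action for $g\in I_{\Q_p}$, so that triviality of $\tau\vert_{I_E}$ is the same as triviality of the raw $I_E$-action on $D_{N,\st}(\rho)$, combined with the standard comparison $D_{E,\st}(\rho)\otimes_{E_0}N_0\hookrightarrow D_{N,\st}(\rho)$ in one direction and unramified Galois descent in the other, is exactly the intended argument.
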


Under our assumption that $p>2$, there is a classification of 2-dimensional $p$-adic Galois types %analogous to that for $\ell$-adic types,
to be found for instance in \cite[Lemme 2.1.1.2]{bremez}: %but no proof...

\begin{prop}\label{padictypes}%\andr{write with 3 options??}
Let $\tau$ be a $p$-adic Galois type of dimension $2$. Then $\tau$ has one of the following forms:
\begin{enumerate}[label=(\roman*)]
\item\label{type1} $\tau=\chi_1\vert_{I_{\Q_p}}\oplus\chi_2\vert_{I_{\Q_p}}$ for two characters $\chi_1,\chi_2\colon W_{\Q_p}\to\Qp^\times$ of finite order on $I_{\Q_p}$;
\item\label{type2} $\tau=\chi\vert_{I_{\Q_p}}\oplus\chi^c\vert_{I_{\Q_p}}$, where $\chi$ is a character $W_{\Q_{p^2}}\to\Qp^\times$, of finite order on $I_{\Q_p}$, that cannot be extended to $W_{\Q_p}$, and $\chi^c$ is its conjugate under $\Gal(\Q_{p^2}/\Q_p)$;
\item\label{type3} $\tau=\Ind_{W_E}^{W_{\Q_p}}(\chi)\vert_{I_{\Q_p}}$ for a ramified quadratic extension $E$ of $\Q_p$ and a character $\chi\colon W_E\to\Qp^\times$, whose restriction to $I_E$ is of finite order and cannot be extended to a character $I_{\Q_p}\to\Qp^\times$. %, and is such that $\chi\vert_{I_{\Q_p}}$ has finite order.
%\item \tau\colon\Ind_{W_{p^2}}^{W_p}(\chi)\vert_{I_p} for a character \chi\colon W_{p^2}\to\Qp^\times, such that \chi\vert_{I_p} has finite order and that \chi cannot be extended to a character W_p\to\Qp^\times;
%\item \tau\colon\Ind_{W_K}^{W_p}(\chi)\vert_{I_p} for the ramified quadratic extension K
\end{enumerate}
\end{prop}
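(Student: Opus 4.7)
My plan is to classify $\tau$ directly as a representation of $I_{\Q_p}$ with open kernel, using the extension hypothesis to $W_{\Q_p}$ as a constraint. First observe that $\tau(I_{\Q_p})$ is a finite subgroup of $\GL_2(\Qp)$, by the open kernel hypothesis combined with compactness of $I_{\Q_p}$. In particular $\tau$ is semisimple over the characteristic-zero field $\Qp$, and every character appearing in its decomposition is automatically of finite order. I then split into cases according to whether $\tau$ is reducible or irreducible as an $I_{\Q_p}$-representation.

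If $\tau$ is reducible, write $\tau=\eta_1\oplus\eta_2$ with $\eta_i\colon I_{\Q_p}\to\Qp^\times$ of finite order. Any Frobenius lift $\varphi\in W_{\Q_p}$ acts by conjugation on $I_{\Q_p}$, and this action must permute the multiset $\{\eta_1,\eta_2\}$ in order for $\tau$ to extend to $W_{\Q_p}$. If $\varphi$ fixes each $\eta_i$, then each extends to a character $\chi_i\colon W_{\Q_p}\to\Qp^\times$ by choosing the image of $\varphi$, and we land in case \ref{type1}. If $\varphi$ swaps $\eta_1$ and $\eta_2$ (forcing $\eta_1\neq\eta_2$), then each extends to $W_{\Q_{p^2}}$; picking any extension $\chi$ of $\eta_1$ gives $\tau=\chi|_{I_{\Q_p}}\oplus\chi^c|_{I_{\Q_p}}$ with $\chi\neq\chi^c$ on $W_{\Q_{p^2}}$ (since $\eta_1\neq\eta_2$), so $\chi$ cannot be extended to $W_{\Q_p}$, placing us in case \ref{type2}.

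If $\tau$ is irreducible on $I_{\Q_p}$, I would appeal to the classical structure theorem that for $p$ odd every irreducible $2$-dimensional continuous representation of $I_{\Q_p}$ with open kernel which extends to $W_{\Q_p}$ is induced from a character of $I_E$ for some ramified quadratic extension $E/\Q_p$; the inertia $I_E$ is the unique open index-$2$ subgroup of $I_{\Q_p}$, by local class field theory applied to $I_{\Q_p}^{\ab}\cong\Z_p^\times$ for $p$ odd. The extension to $W_{\Q_p}$ then produces a character $\chi\colon W_E\to\Qp^\times$ with $\chi|_{I_E}$ of finite order, such that $\tau=\Ind_{W_E}^{W_{\Q_p}}(\chi)|_{I_{\Q_p}}$; the irreducibility of $\tau$ on $I_{\Q_p}$ is equivalent to $(\chi|_{I_E})^c\neq\chi|_{I_E}$, i.e. to $\chi|_{I_E}$ not being extendable to a character of $I_{\Q_p}$, yielding case \ref{type3}.

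The main obstacle is justifying the structure theorem in the irreducible case. I would prove it by studying the image of $\tau$ in $\PGL_2(\Qp)$: its restriction to inertia is a finite subgroup, which by the classical classification of finite subgroups of $\PGL_2(\C)$ is cyclic, dihedral, $A_4$, $S_4$, or $A_5$. The pro-solvability of $I_{\Q_p}$ (wild pro-$p$ inertia extended by pro-cyclic tame inertia) excludes $A_5$, while $A_4$ has no faithful $2$-dimensional irreducible lift to $\GL_2$ whose center is trivial on wild inertia, and the $S_4$-case together with other exceptional possibilities is excluded using the constraint of odd residue characteristic (equivalent under local Langlands to the absence of primitive supercuspidal representations of $\GL_2(\Q_p)$ for $p$ odd). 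The surviving cyclic and dihedral projective images both admit an abelian normal subgroup of index $2$, corresponding to $I_E$ for a ramified quadratic $E$, and Clifford theory then produces the required induction.
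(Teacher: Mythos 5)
The paper's own ``proof'' of this proposition is a citation to \cite[Lemme 2.1.1.2]{bremez}; you give a direct argument, which is a genuinely different route. Your reducible case, which covers types (i) and (ii), is correct and cleanly argued. The irreducible case has the right skeleton (exclude the exceptional projective images, reduce to the dihedral case, apply Clifford theory over the unique open index-$2$ subgroup of $I_{\Q_p}$), but the exclusion of the exceptional groups is exactly where the argument does not hold up. The phrase ``$A_4$ has no faithful $2$-dimensional irreducible lift to $\GL_2$ whose center is trivial on wild inertia'' does not yield a valid exclusion (the binary tetrahedral group \emph{is} a faithful irreducible $2$-dimensional lift of $A_4$), and invoking the absence of primitive supercuspidal representations of $\GL_2(\Q_p)$ for $p$ odd via local Langlands is either circular or an unwarranted detour through the automorphic side to establish a purely Galois-theoretic fact that is supposed to feed \emph{into} the correspondence. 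The clean direct argument is simpler than what you attempt: the image of the wild inertia subgroup $P_{\Q_p}$ in $\PGL_2(\Qp)$ is a finite normal $p$-subgroup of the projective image of $I_{\Q_p}$; for $p$ odd the largest normal $p$-subgroup of each of $A_4$, $S_4$, $A_5$ is trivial, so if the projective image of $I_{\Q_p}$ were one of these, wild inertia would map trivially and the projective image would factor through the pro-cyclic tame quotient, hence be cyclic --- a contradiction. (This also disposes of $A_5$ without needing to invoke pro-solvability separately.) Once cyclic and exceptional images are ruled out, the image lies in the normalizer of a torus and your dihedral/Clifford conclusion goes through.

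A secondary inaccuracy: $I_{\Q_p}^{\ab}$ is \emph{not} isomorphic to $\Z_p^\times$ --- that is the image of $I_{\Q_p}$ in $W_{\Q_p}^{\ab}$, a proper quotient of $I_{\Q_p}^{\ab}$ (the abelianization of the full inertia group is considerably larger, since the wild part has infinite abelianization). The statement you actually need --- that $I_{\Q_p}$ has a unique open index-$2$ subgroup for $p$ odd --- is nevertheless true, but for a different reason: $\Hom_\cont(P_{\Q_p},\Z/2\Z)=0$ because $P_{\Q_p}$ is pro-$p$ with $p$ odd, so every continuous homomorphism $I_{\Q_p}\to\Z/2\Z$ factors through the tame quotient $\prod_{\ell\neq p}\Z_\ell$, which has exactly one open index-$2$ subgroup (coming from the $\Z_2$ factor).
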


%Let $K$ be a $p$-adic field, $\rho\colon G_K\to\GL_2(\Qp)$ be a de Rham representation and $\tau$ the associated $p$-adic Galois type.

%\andr{semi-stable vs semistable}

Let 
\[ D_\dR(\rho)=(\bB_\dR\otimes_{\Q_p}V)^{G_{\Q_p}}. \] 
It is an $E$-vector space of dimension $n$ equipped with a filtration coming from that on $\bB_\dR$. 
%Note that the $G_{\Q_p}$-action on $D_{E,\st}(\rho)$ factors via $\Gal(E_0/\Q_p)$. Let $G_{\Q_p}$ act on $E_0\otimes_{\Q_p} D_\dR(\rho)$ via its quotient $\Gal(E_0/\Q_p)$ acting on the first factor. There is an inclusion 
Every choice of logarithm of a uniformizer of $E$ determines an embedding $E\otimes_{E_0}\bB_\st\into\bB_\dR$, hence
\begin{equation}\label{stdR} E\otimes_{E_0}D_{E,\st}(\rho)\into D_\dR(\rho),
\end{equation}
where we are letting $E_0$ act on the $E_0\otimes_{\Q_p}\Qp$-module $D_{E,\st}$ via the embedding $E_0\into E_0\otimes_{\Q_p}\Qp$, $e\mapsto e\otimes 1$. Via an embedding \eqref{stdR} the filtration on $D_\dR(\rho)$ induces a (Hodge) filtration on $E\otimes_{E_0}D_{E,\st}(\rho)$, that is independent of the chosen embedding. The Hodge--Tate weights are the indices at which the jumps of the Hodge filtration occur.

%We can decompose the E\otimes_{\Q_p}\Qp-module D_\dR(\rho) as \prod_{\sigma\colon E\into\Qp}D_{\sigma,\dR}(\rho). via the isomorphism E\otimes_{\Q_p}\Qp\to\Qp^{[E\colon\Q_p]}, a\otimes b\mapsto(\sigma(a)b)_{\sigma\colon E\into\Q_p}, and obtain
%Every D_{\sigma,\dR}(\rho) carries a filtration induced from that 
%An embedding \emph{stdR} induces a filtration on $E\otimes_{E_0}D_{E,\st}(\rho)$, that is independent of the chosen embedding. We refer to it as the \emph{Hodge filtration}.
%determined by a choice of logarithm of a uniformizer of $E$ and compatible with the actions of $\varphi$ and $G_{\Q_p}$. %even G_{K_0}??

%\andr{SIMPLIFY this section with only E0=Qp???}

\subsection{Potentially trianguline representations}
 
In the following, let $\rho\colon G_{\Q_p}\to\GL_2(\Qp)$ a potentially semistable representation and $\tau$ its Galois type. 
%We recall without proof a few more results. %the following results: %, to be found for instance in:
%\rho semist when? Say that it becomes semist over abelian or dihedral extension?

We say that a finite Galois extension of fields $F/E$ is \emph{dihedral} if $\Gal(F/E)$ is a dihedral group, or equivalently, if $\Gal(F/E)$ is not abelian but admits an abelian subgroup of index 2.

\begin{cor}\label{pstdih}
%Every 2-dimensional, potentially semistable representation $\rho$ of $G_{\Q_p}$ 
There exists a finite extension $E$ of $\Q_p$, either abelian or dihedral, such that $\rho\vert_{G_E}$ is semistable. Moreover, if $\rho$ is not semistabelian and $E$ is any extension such that $\rho\vert_{G_E}$ is semistable, then $\rho\vert_{G_E}$ is crystalline.
%Moreover, in the dihedral case, $E$ can be chosen to be totally ramified. %an abelian extension of a ramified quadratic extension of $\Q_p$.
\end{cor}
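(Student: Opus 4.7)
The plan is to invoke the classification of 2-dimensional $p$-adic Galois types in Proposition \ref{padictypes} together with the semistability criterion of Lemma \ref{sttype}, namely that $\rho|_{G_E}$ is semistable if and only if $\tau|_{I_E}$ is trivial, where $\tau$ is the Galois type of $\rho$. This reduces both statements to a case analysis on the three possible shapes of $\tau$.

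For types \ref{type1} and \ref{type2}, $\tau$ restricts on $I_{\Q_p}$ to a direct sum of two finite-order characters of $I_{\Q_p}$; local class field theory then produces a finite abelian extension $E/\Q_p$ on whose inertia both characters become trivial, handling the first statement with abelian $E$ and in particular showing that $\rho$ is semistabelian. For type \ref{type3}, with $\tau = \Ind_{W_F}^{W_{\Q_p}}(\chi)|_{I_{\Q_p}}$ and $F/\Q_p$ ramified quadratic, I would kill both $\chi|_{I_F}$ and its Galois conjugate $\chi^c|_{I_F}$ via a finite abelian extension $E/F$, chosen symmetrically so that $E$ is also Galois over $\Q_p$. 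The image of $\tau|_{I_{\Q_p}}$ then sits inside the normalizer of a diagonal torus in $\GL_2(\Qp)$, and its projective image is a dihedral subgroup of $\PGL_2(\Qp)$; a suitable choice of $E$, possibly enlarged to absorb the determinant character via a further abelian twist, yields a dihedral Galois group over $\Q_p$. The main obstacle I expect is this last dihedral verification: one must check that the $\Z/2$-action on the abelian part of $\Gal(E/\Q_p)$ is genuinely by inversion (and not merely a non-trivial involution), which amounts to a class field theory computation on the action of $\Gal(F/\Q_p)$ on the finite quotient of $F^\times$ cut out by $\chi$ and $\chi^c$; if this fails directly, one replaces $E$ by an extension adapted to the projective image of $\tau$, which is dihedral by construction.

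For the second claim, observe that types \ref{type1} and \ref{type2} are semistabelian by the argument just given, so if $\rho$ is not semistabelian then $\tau$ must be of type \ref{type3}. In that case, the non-extension hypothesis on $\chi$ forces $\chi|_{I_F}\ne\chi^c|_{I_F}$, whence $\tau|_{I_{\Q_p}}$, and thus the underlying Weil representation of the Weil--Deligne representation attached to $\rho$, is irreducible. Schur's lemma applied to the intertwining identity $N\cdot\rho(g) = |g|\cdot\rho(g)\cdot N$ for $g\in W_{\Q_p}$ (where $|\cdot|$ is unramified while $\chi^c/\chi$ is ramified on $W_F$, so the two sides are non-isomorphic as Weil representations) then forces the monodromy $N$ on $D_\pst(\rho)$ to vanish. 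Hence for any $E$ with $\rho|_{G_E}$ semistable, the monodromy on $D_{E,\st}(\rho)$ is already zero, i.e., $\rho|_{G_E}$ is crystalline.
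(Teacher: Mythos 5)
Your case analysis works cleanly for type \ref{type1} and your Schur-type monodromy argument for the crystallinity claim is sound, but there is a genuine gap in your treatment of type \ref{type2}, and it propagates into the dichotomy you use for the second claim. You assert that because $\tau = \chi|_{I_{\Q_p}} \oplus \chi^c|_{I_{\Q_p}}$ is a sum of two finite-order characters \emph{of} $I_{\Q_p}$, local class field theory produces an abelian extension $E/\Q_p$ with $\tau|_{I_E}$ trivial. This is false: being a character of the group $I_{\Q_p}$ is not the same as factoring through the image of $I_{\Q_p}$ in $W_{\Q_p}^{\mathrm{ab}}$, which is what is needed to trivialize it on the inertia of an abelian extension of $\Q_p$. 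If $E/\Q_p$ is abelian, then $I_E \supset [G_{\Q_p},G_{\Q_p}]\cap I_{\Q_p}$, so $\chi|_{I_E}=1$ would force $\chi|_{I_{\Q_p}}$ to factor through $\Z_p^\times$, equivalently $\chi|_{\cO_{\Q_{p^2}}^\times}$ (under reciprocity for $\Q_{p^2}$) to factor through the norm $\cO_{\Q_{p^2}}^\times\to\Z_p^\times$. But then $\chi$ would be trivial on $\ker\Nm$ and would extend to $W_{\Q_p}$, contradicting the defining hypothesis of type \ref{type2}. So a type \ref{type2} representation is never semistabelian, and the correct move is the one the paper makes: build $E_\chi$ abelian over $\Q_{p^2}$ (not over $\Q_p$) via Remark \ref{inertriv}, composite it with its $\Gal(\Q_{p^2}/\Q_p)$-conjugate, and obtain a Galois extension of $\Q_p$ with an abelian index-2 subgroup — abelian or ``dihedral'' in the sense the paper defines that word just above the Corollary. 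Type \ref{type2} and type \ref{type3} are structurally parallel (unramified vs.\ ramified quadratic), not type \ref{type2} and type \ref{type1}.

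This also breaks your proof of the second claim, which rests on ``not semistabelian $\Rightarrow$ type \ref{type3}''; the correct statement is ``not semistabelian $\Rightarrow$ type \ref{type2} or \ref{type3}''. Fortunately the irreducibility/Schur argument you give for type \ref{type3} applies verbatim to type \ref{type2}: there too the underlying Weil representation is $\Ind_{W_{\Q_{p^2}}}^{W_{\Q_p}}\chi$ with $\chi\ne\chi^c$, hence irreducible, and $\chi^c/\chi$ is ramified so cannot equal $|\cdot|$, forcing $N=0$. So the fix is local: treat case \ref{type2} dihedrally alongside \ref{type3} and run your Schur argument in both cases. Finally, your worry about whether the $\Z/2$-action on $\Gal(E/\Q_p)^{\mathrm{ab}\text{-part}}$ is genuinely by inversion is a red herring here: the paper explicitly defines ``dihedral'' as ``not abelian but admits an abelian subgroup of index 2'', which is all that is used downstream (in Corollary \ref{resttri} one only needs the index-2 subfield), so no class-field-theoretic computation of the conjugation action is required.
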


\begin{rem}\label{inertriv}
The abelianization $W_{K}^\ab$ of $W_K$ is isomorphic, as a topological group, to the direct product $\Z\times I_{K}^\ab$, where $I_{K}^\ab$ is the image of $I_{K}$ in the abelianization of $G_{K}$. If $H$ is a closed subgroup of $I_{K}^\ab$, then let $E_H$ be the field fixed in the maximal abelian extension $K^\ab$ of $K$ by the closed subgroup $\Z\times H$ of $W_{K}^\ab$. By construction $E_H$ is a totally ramified extension of $K$. If $\chi\colon W_{K}\to\Qp^\times$ is a continuous character, then $\chi$ factors through a character $\chi^\ab\colon W_K^\ab\to\Qp^\times$, and by choosing $H=\ker\chi\vert_{I_K^\ab}$ we obtain a totally ramified abelian extension $E_\chi\coloneqq E_H$ of $K$ with the property that $\chi\vert_{I_{E_\chi}}$ is trivial.
%If K is a p-adic field and \chi\colon W_K\to\Qp^\times is a character, then \chi factors through a character \chi^\ab the abelianization W_K^\ab of W_K, which is isomorphic to \Gal(K^\nr/K)\times I_K^\ab for the maximal unramified extension K^\nr of K, and the image I_K^\ab of I_K in the abelianization \Gal(\ovl K/K)^\ab. In particular, if E_\chi is the fixed field of \Gal(K^\nr/K)\times\ker\chi^\ab\vert_{I_K^\ab}, then \chi\vert_{G_{E_\chi}} is trivial on I_{E_\chi}. By construction, E_\chi is totally ramified.
\end{rem}

\begin{proof}
If the Galois type of $\rho$ is of the form given in (i) of Proposition \ref{padictypes}, then it becomes trivial over the composite of the two finite abelian extensions $E_{\chi_1}, E_{\chi_2}$ of $\Q_p$ provided by Remark \ref{inertriv}. In case (ii), it is the composite of the finite abelian extension $E_\chi$ of $\Q_{p^2}$ and its conjugate under a lift of the nontrivial element of $\Gal(\Q_{p^2}/\Q_p)$, and in case (iii) it is the composite of the finite abelian extension $E_{\chi}$ of $K$ and its conjugate under a lift of the nontrivial element of $\Gal(K/\Q)$.

The second statement follows immediately from the above argument and \cite[Lemme 2.2.2.2]{bremez}.
%(note that ). If it is of the form given in (iii) then it becomes trivial over a dihedral extension of $\Q_p$, with the property given in the last sentence of the statement. The corollary then follows immediately from Lemma \ref{sttype}.
\end{proof}

\begin{lemma}[{cf. \cite[Proposition 6.1.5]{hansenuni}}]
\label{typetri}%(it's in Hansen, univ-tri, 6.1.5, but earlier ref.?)
For every finite extension $E$ of $\Q_p$, the following are equivalent: %check!!!!!!
\begin{enumerate}
\item $\rho\vert_{G_E}$ is trianguline;
\item $\rho\vert_{G_E}$ is semistabelian.
%\tau is isomorphic to a direct sum of two characters; is it the same as saying that the WD-rep is the sum of two char.?
\end{enumerate}
\end{lemma}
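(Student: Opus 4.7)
The strategy is to characterize each of (1) and (2) through the inertial Galois type $\tau_E$ of $\rho\vert_{G_E}$, showing that both are equivalent to the statement that $\tau_E$ is a direct sum $\chi_1\oplus\chi_2$ of two characters of $I_E$, each of which extends to a character of $W_E$---that is, that $\tau_E$ falls into case~(i) of the analogue of Proposition~\ref{padictypes} with base field $E$ in place of $\Q_p$. Both abelian extensions and trianguline structures are governed by characters of $W_E$ (equivalently, of $E^\times$ via local class field theory), so once this common reformulation is in place the equivalence will follow.

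For the direction (2)$\Rightarrow$(1), if $\rho\vert_{G_F}$ is semistable for a finite abelian $F/E$, then $\tau_E$ is trivial on $I_F$ and so factors through the finite abelian group $I_E/I_F\subset\Gal(F/E)$, yielding a decomposition $\tau_E=\chi_1\oplus\chi_2$. Since $\Gal(F/E)$ is abelian, Frobenius acts trivially by conjugation on $I_E/I_F$, so each $\chi_i$ is Frobenius-stable and extends to a character of $W_E$. From such an extension, the standard equivalence (due to Berger) between rank-1 $(\varphi,\Gamma_E)$-modules over the Robba ring and continuous characters $E^\times\to\Qp^\times$, combined with the compatibility between sub-$(\varphi,\Gamma_E)$-modules of $D_{\mathrm{rig}}(\rho\vert_{G_E})$ and sub-Weil--Deligne representations of the associated WD representation of $\rho\vert_{G_E}$, produces a rank-1 sub-$(\varphi,\Gamma_E)$-module, i.e.\ a triangulation.

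For (1)$\Rightarrow$(2), a triangulation of $D_{\mathrm{rig}}(\rho\vert_{G_E})$ furnishes two parameter characters $\delta_1,\delta_2\colon E^\times\to\Qp^\times$. Since $\rho\vert_{G_E}$ is potentially semistable, its Hodge--Tate--Sen weights are integers (Lemma~\ref{weightan}), and each $\delta_i$ decomposes as $\delta_i^{\mathrm{sm}}\cdot z^{k_i}$ with $\delta_i^{\mathrm{sm}}\vert_{\cO_E^\times}$ of finite order; equivalently, the inertial characters on $I_E$ extracted from $\delta_1,\delta_2$ via local class field theory have open kernel. Each such kernel defines a finite abelian extension $E_{\delta_i}/E$, and over the composite $F=E_{\delta_1}E_{\delta_2}$ (itself abelian over $E$) the type $\tau_E$ becomes trivial on $I_F$, so $\rho\vert_{G_F}$ is semistable and $\rho\vert_{G_E}$ is semistabelian.

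The main technical obstacle is the explicit dictionary between rank-1 sub-$(\varphi,\Gamma_E)$-submodules of $D_{\mathrm{rig}}(\rho\vert_{G_E})$ and sub-Weil--Deligne structures of the WD representation of $\rho\vert_{G_E}$ (together with the bookkeeping of the associated Hodge filtrations); this is precisely the content of Berger's classification results and must be invoked in both directions. Once this compatibility is in place, the remaining arguments reduce to the routine classification of Galois types from Proposition~\ref{padictypes} and to the extraction of finite-order inertial characters via Remark~\ref{inertriv}.
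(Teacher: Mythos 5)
Your proof is essentially correct, and it follows the same route that the paper delegates to: the paper does not prove Lemma~\ref{typetri} itself but cites a sketch in Hansen, which proceeds exactly as you do, via the dictionary between sub-$(\varphi,\Gamma_E)$-modules of a de~Rham $(\varphi,\Gamma_E)$-module and sub-$(\varphi,N,G_E)$-modules of $D_{\pst}$ (Berger's d\'evissage, which the paper invokes in the same spirit in the proof of Lemma~\ref{refstr}), together with the classification of two-dimensional types.

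A few points that deserve tightening. (a) The citation of Lemma~\ref{weightan} is off: that lemma is about interpolating Hodge--Tate--Sen weights in rigid families, not about integrality of the weights of a single representation. What you actually need is the standard fact that a de~Rham (in particular, potentially semistable) representation has integer Hodge--Tate weights; with that, your deduction that $\delta_i\vert_{\cO_E^\times}\cdot z^{-k_i}$ has Sen weight zero and hence finite image is correct, since the Sen polynomial of the rank-one sub divides that of $D_\rig(\rho\vert_{G_E})$. (b) In the direction (2)$\Rightarrow$(1), producing a rank-one sub-Weil--Deligne representation from the decomposition $\tau_E=\chi_1\oplus\chi_2$ requires one more line to handle the monodromy: if $\chi_1\neq\chi_2$ then $N$ preserves each inertial eigenspace (as it commutes with $I_E$) and is therefore zero; if $\chi_1=\chi_2$ one may twist to the unramified case, where $\ker N$ is one-dimensional and Frobenius-stable (or $N=0$ and Frobenius has an eigenvector over $\Qp$). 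Either way a rank-one $(\varphi,N,G_E)$-stable line exists, and Berger's equivalence turns it into a rank-one sub-$(\varphi,\Gamma_E)$-module. (c) The last step of (1)$\Rightarrow$(2)---passing from finite-order inertial characters to a common finite abelian extension killing the type---is precisely the construction in Remark~\ref{inertriv}, which is the cleaner thing to cite.

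None of these are gaps in the strategy, only in the bookkeeping; your proof plan is sound and aligned with the paper's reference.
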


%Lemma \ref{sttype} is immediate from the construction of $p$-adic types \andr{explain??}, while 
A sketch of the proof of Lemma \ref{typetri} can be found for instance in \cite[Proposition 6.1.5]{hansenuni}. 

By combining Corollary \ref{pstdih} and Lemma \ref{typetri} we obtain:

\begin{cor}\label{resttri}
If $\rho$ is potentially semistable, then there exists a quadratic extension $E$ of $\Q_p$ such that $\rho\vert_{E}$ is trianguline. %\andr{E ramified because of Galois types??}
\end{cor}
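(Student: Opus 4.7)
The plan is to combine Corollary \ref{pstdih} and Lemma \ref{typetri} directly, with a brief case analysis.

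By Corollary \ref{pstdih}, there exists a finite Galois extension $E/\Q_p$, either abelian or dihedral over $\Q_p$, such that $\rho\vert_{G_E}$ is semistable. I split into the two cases produced by that corollary.

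If $E/\Q_p$ is abelian, then by definition $\rho$ itself is semistabelian, so Lemma \ref{typetri} applied with the base field $\Q_p$ yields that $\rho$ is already trianguline over $\Q_p$. Since triangulations of $(\varphi,\Gamma_{\Q_p})$-modules base-change to triangulations of the associated $(\varphi,\Gamma_F)$-modules over any finite extension $F/\Q_p$, the restriction $\rho\vert_{G_F}$ is trianguline for any quadratic extension $F/\Q_p$.

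If $E/\Q_p$ is dihedral, then $\Gal(E/\Q_p)$ contains an abelian subgroup of index 2; let $F$ be its fixed field, so that $F/\Q_p$ is a quadratic extension and $E/F$ is abelian. The representation $\rho\vert_{G_F}$ becomes semistable over the abelian extension $E/F$, hence is semistabelian with respect to the base field $F$. Applying Lemma \ref{typetri} now with $F$ in place of $\Q_p$ yields that $\rho\vert_{G_F}$ is trianguline, which is the desired conclusion.

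There is essentially no hard step here: the work has already been done in Corollary \ref{pstdih} (Galois-type classification, plus the construction of totally ramified abelian extensions that kill each inertial character) and in Lemma \ref{typetri} (the equivalence between semistabelian and trianguline). The only thing to watch for is the base-field convention in ``semistabelian'': in the dihedral case it must be read relative to $F$, not to $\Q_p$, which is exactly the setting in which Lemma \ref{typetri} is formulated for an arbitrary finite extension $E/\Q_p$.
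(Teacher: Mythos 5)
Your proof is correct and is exactly the argument the paper compresses into the one-line remark ``By combining Corollary \ref{pstdih} and Lemma \ref{typetri} we obtain:''; you have simply made the implicit case split (abelian vs. dihedral) explicit and spelled out the base-field convention in ``semistabelian,'' both of which are handled correctly. The only auxiliary fact you invoke — that triangulinity is preserved under restriction to a finite extension — is standard (and in the abelian case one could equally deduce it by noting $EF/F$ is abelian and applying Lemma \ref{typetri} over $F$), so there is no gap.
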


Finally, we recall the following classification, due to Berger and Chenevier:

\begin{thm}[{\cite[Théorème 3.1]{bergchen}}]\label{berche}
	Let $\rho\colon G_{\Q_p}\to\GL_2(\Qp)$ be a potentially trianguline, non-trianguline representation. Then $\rho$ satisfies one of the following (non-exclusive) conditions:
	\begin{enumerate}[label=(\roman*)]
		\item $\rho\cong\Ind_{G_E}^{G_{\Q_p}}\chi$ for a quadratic extension $E$ of $\Q_p$ and a character $\chi\colon G_E\to\Q_p^\times$;
		\item\label{tdr} $\rho$ is the twist of a de Rham representation with a character.
	\end{enumerate}
\end{thm}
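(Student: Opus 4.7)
The plan is to pass to a quadratic extension $E/\Q_p$ over which $\rho$ becomes trianguline and exploit the action of $c\in\Gal(E/\Q_p)$ on sub-$(\varphi,\Gamma_E)$-lines of $D_\rig(\rho\vert_{G_E})$. To produce such an $E$, I would first handle the fact that a potentially trianguline $\rho$ need not be potentially semistable: if the two Hodge--Tate--Sen weights of $\rho$ are congruent modulo $\Z$, I would absorb their common fractional part into a twist $\psi$, making $\rho\otimes\psi^{-1}$ potentially semistable, and then apply Corollary~\ref{resttri} to obtain a quadratic $E/\Q_p$ with $(\rho\otimes\psi^{-1})\vert_{G_E}$---and hence $\rho\vert_{G_E}$---trianguline; this reduction automatically places $\rho$ in case~(ii) whenever it succeeds. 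If the two Hodge--Tate--Sen weights of $\rho$ are not congruent modulo $\Z$, no such twist exists, and I would argue that the structural analysis below forces $\rho$ to be induced, which is case~(i).

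With a quadratic $E/\Q_p$ fixed, pick a sub-$(\varphi,\Gamma_E)$-line $L\subset D_\rig(\rho\vert_{G_E})$ of parameter $\delta$, and form the $c$-conjugate sub-line $L^c$ of parameter $\delta^c$. If the triangulation is strictly trianguline (in the sense of Definition~\ref{tridef}(ii)) and $\delta=\delta^c$, then $L=L^c$ by uniqueness, so $L$ is $\Gal(E/\Q_p)$-stable and descends to a sub-$(\varphi,\Gamma_{\Q_p})$-module of $D_\rig(\rho)$, contradicting the hypothesis that $\rho$ is not trianguline. Hence either $\delta\ne\delta^c$, in which case $L$ and $L^c$ are distinct sub-lines and $L\cap L^c=0$ for dimension reasons, giving a splitting $D_\rig(\rho\vert_{G_E})=L\oplus L^c=\calR_E(\delta)\oplus\calR_E(\delta^c)$; or the triangulation is non-strict, in which case Colmez's classification of two-dimensional trianguline $(\varphi,\Gamma_E)$-modules yields a splitting $D_\rig(\rho\vert_{G_E})\cong\calR_E(\delta_1)\oplus\calR_E(\delta_2)$. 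In both situations Berger's equivalence of categories translates the decomposition into $\rho\vert_{G_E}\cong\chi_1\oplus\chi_2$ for continuous characters $\chi_i\colon G_E\to\Qp^\times$.

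Finally, the non-trivial element $c\in\Gal(E/\Q_p)$ permutes the pair $\{\chi_1,\chi_2\}$. If $c$ fixes each $\chi_i$ with $\chi_1\ne\chi_2$, then each $\chi_i$-isotypic sub-line of $\rho\vert_{G_E}$ is $c$-stable and hence $G_{\Q_p}$-stable, making $\rho$ reducible and thus trianguline---contradicting the hypothesis. In the isotypic subcase $\chi_1=\chi_2=:\chi$, the commutation $\rho(cgc^{-1})=\rho(c)\rho(g)\rho(c)^{-1}$ with $\rho(g)$ scalar on $G_E$ forces $\chi^c=\chi$; then $\chi$ extends to a character $\psi$ of $G_{\Q_p}$, and $\rho\otimes\psi^{-1}$ factors through the finite group $\Gal(E/\Q_p)$, so $\rho$ itself is a direct sum of characters up to twist, again trianguline and hence contradicting the hypothesis. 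The remaining possibility is that $c$ swaps $\chi_1$ and $\chi_2$, in which case $\chi_2=\chi_1^c$ and Frobenius reciprocity gives $\rho\cong\Ind_{G_E}^{G_{\Q_p}}\chi_1$, placing us in case~(i).

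The main obstacle is the initial Hodge--Tate--Sen weight reduction: handling representations whose two weights differ by a non-integer---which lie outside the potentially semistable reach of Corollary~\ref{resttri}---requires a more careful argument to guarantee trianguline descent to a quadratic extension, and it is precisely in this non-integral regime that case~(i) must be forced by structure rather than by a twist to de Rham. A secondary technical ingredient, which I would invoke as a black box, is Colmez's theorem that a non-strictly trianguline two-dimensional $(\varphi,\Gamma_E)$-module over~$\calR_E$ splits as a direct sum of two rank-one pieces; this is the structural input that drives the decomposition $\rho\vert_{G_E}=\chi_1\oplus\chi_2$ in the non-strict case.
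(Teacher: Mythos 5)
The paper does not prove this statement: it is cited verbatim from Berger--Chenevier (\cite[Théorème 3.1]{bergchen}), so there is no "paper proof" to compare against. That said, your proposal contains a genuine gap — one you yourself flag — and it is worth spelling out why it is not a minor technical loose end but the actual crux of the theorem.

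Your opening reduction assumes that when the Hodge--Tate--Sen weights of $\rho$ are congruent modulo $\Z$, a twist makes $\rho$ potentially semistable, after which Corollary~\ref{resttri} delivers a quadratic $E$ with $\rho\vert_{G_E}$ trianguline. But "potentially trianguline" is far weaker than "twist of potentially semistable": a potentially trianguline $\rho$ can have Sen weights $0$ and $\alpha$ with $\alpha$ transcendental, in which case no character twist makes both weights integral (the difference is twist-invariant), and even when they are integral, integral Sen weights do not imply de Rham. Thus the dichotomy you set up — either twist to de Rham and land in case~(ii), or "be forced into case~(i) by structure" — leaves the second branch entirely unestablished, and that branch is exactly where the content of Berger--Chenevier lives. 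Their argument does not proceed by a local $(\varphi,\Gamma)$-module calculation over a quadratic extension alone: the non-algebraic-weight case requires analyzing $\rho$ inside a $p$-adic family of trianguline representations and invoking Kisin's finiteness/Zariski-density results for the crystalline locus in deformation space, together with a careful study of when the triangulation of $\rho\vert_{G_E}$ descends. Your descent argument (uniqueness of the strict triangulation under $\Gal(E/\Q_p)$, Frobenius reciprocity when $c$ swaps the two characters) is sound as far as it goes and matches the "easy half" of their case analysis, but it only applies once you already know $\rho\vert_{G_E}$ is trianguline for some quadratic $E$ — a fact which, in the paper at hand, is deduced \emph{from} Theorem~\ref{berche} via Corollary~\ref{triquad}, so appealing to it here would be circular.

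Two smaller points. First, your claim that a non-strict triangulation of a rank-two $(\varphi,\Gamma_E)$-module forces a direct-sum decomposition needs more care: over the Robba ring, two distinct saturated rank-one sublines with trivial intersection need not span the whole module on the nose, only up to finite cokernel, and Colmez's classification is stated over $\Q_p$ rather than over a general $E$. Second, in the isotypic subcase ($\chi_1=\chi_2$) you write that $\rho(g)$ is "scalar on $G_E$", but $\rho\vert_{G_E}\cong\chi\oplus\chi$ only says the semisimplification is scalar; you would still need to rule out a non-split self-extension of $\chi$, which is again an extension-group computation rather than a formality.
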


We state a corollary.

\begin{cor}\label{triquad}
If $\rho\colon G_{\Q_p}\to\GL_2(\Qp)$ is any continuous representation, then there exists a quadratic extension $E$ of $\Q_p$ such that $\rho$ is $E$-trianguline.
\end{cor}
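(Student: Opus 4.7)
The plan is to reduce the statement to Corollary \ref{resttri} by combining it with Theorem \ref{berche} and a case analysis on the reducibility and induction structure of $\rho$.

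First I would dispose of the reducible case. If $\rho$ is reducible over $\Q_p$, then any one-dimensional $G_{\Q_p}$-stable subspace produces a short exact sequence of $(\varphi,\Gamma_{\Q_p})$-modules of the shape $0\to\cR(\chi_1)\to D_\rig(\rho)\to\cR(\chi_2)\to 0$ for characters $\chi_1,\chi_2$; this is already a triangulation of $\rho$ over $\Q_p$, so $\rho\vert_{G_E}$ is trianguline for every quadratic extension $E$ (cf.\ also Remark \ref{semistri}).

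Next, suppose $\rho$ is irreducible. If $\rho\cong\Ind_{G_E}^{G_{\Q_p}}\chi$ for some quadratic extension $E/\Q_p$ and character $\chi\colon G_E\to\Qp^\times$, then $\rho\vert_{G_E}\cong\chi\oplus\chi^c$ is reducible, and the previous case applies to it. If instead $\rho$ is not induced from any quadratic extension, I would apply Theorem \ref{berche}: since $\rho$ is not of the form in case (i) of that theorem, either $\rho$ is already trianguline over $\Q_p$ (and any quadratic $E$ works), or $\rho\cong\rho'\otimes\eta$ is the twist of a de Rham representation $\rho'$ by a character $\eta$. In the last subcase, Corollary \ref{resttri} applies to the potentially semistable representation $\rho'$ and yields a quadratic extension $E/\Q_p$ such that $\rho'\vert_{G_E}$ is trianguline; then $\rho\vert_{G_E}\cong\rho'\vert_{G_E}\otimes\eta\vert_{G_E}$ is a character twist of a trianguline representation, which is again trianguline (a triangulation of $\rho'\vert_{G_E}$ with parameters $\delta_1,\delta_2$ yields one of $\rho\vert_{G_E}$ with parameters $\delta_1\eta,\delta_2\eta$).

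The main obstacle is that Theorem \ref{berche} requires $\rho$ to be potentially trianguline, while Corollary \ref{triquad} is stated for an arbitrary continuous $\rho$. To close this gap I would need to know that every continuous $2$-dimensional representation of $G_{\Q_p}$ is automatically potentially trianguline. I would address this either by appealing to a general structural result on rank-$2$ $(\varphi,\Gamma_{\Q_p})$-modules over the Robba ring, or by observing that in the paper's applications $\rho$ arises as the local component of a modular Galois representation, hence is de Rham and in particular potentially semistable, so the reduction via Corollary \ref{resttri} always applies.
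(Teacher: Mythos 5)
Your proposal takes essentially the same route as the paper: dispose of the trianguline case, apply Theorem \ref{berche}, handle case (i) by restriction to the inducing quadratic field, and handle case (ii) by reducing to Corollary \ref{resttri} applied to the de Rham factor, observing that twisting by a character preserves triangulinity. The preliminary treatment of the reducible and induced cases is subsumed by the trianguline case and case (i) of the dichotomy (cf.\ Remark \ref{semistri}), so it adds detail but does not change the structure of the argument.

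The concern you flag in the final paragraph is a genuine one, and it affects the paper's own proof as well. Theorem \ref{berche} classifies \emph{potentially trianguline}, non-trianguline representations; one cannot invoke it for an arbitrary non-trianguline $\rho$ without first knowing that $\rho$ is potentially trianguline, and the paper's proof does exactly that. The statement of Corollary \ref{triquad} as written, for ``any continuous representation'', is in fact false: potential triangulinity is a proper, nontrivial condition on $2$-dimensional representations of $G_{\Q_p}$ — which is precisely why Berger and Chenevier isolate that class in their paper. Your proposed resolution is the right one: in every place where the corollary is used (in particular Corollary \ref{infpottri}), $\rho$ is the local restriction of a modular Galois representation, hence de Rham, hence potentially semistable, and the conclusion already follows directly from Corollary \ref{resttri} without invoking Theorem \ref{berche} at all. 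So the correct hypothesis for Corollary \ref{triquad} is ``potentially trianguline'' (or ``potentially semistable''), under which both your argument and the paper's are sound; you were right to be suspicious of the stronger stated generality.
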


\begin{proof}
If $\rho$ is trianguline the statement is trivial. If $\rho$ falls in case (i) of Theorem \ref{berche}, then it becomes trianguline over the quadratic extension of $\Q_p$ from which it is induced. If $\rho$ falls in case (ii), write it as $\delta\otimes\rho_0$ for a character $\delta$ and a de Rham representation $\rho_0$. Then $\rho$ is $E$-trianguline, for some finite extension $E$ over $\Q_p$, if and only if $\rho_0$ is $E$-trianguline. Then Corollary \ref{resttri}, gives the conclusion.
\end{proof}

\subsection{Trianguline and semistable pseudorepresentations}

Let $E$ and $L$ be two $p$-adic fields, and let $\rho\colon G_E\to \GL_d(L)$ be a continuous representation. Let $\rho^\sms$ denote the semisimplification of $\rho$.

\begin{lemma}\label{triss}\mbox{ }
\begin{enumerate}[label=(\roman*)]
\item If $\rho$ is semistable, then $\rho^\sms$ is. The converse is false.
\item The representation $\rho$ is trianguline if and only if $\rho^\sms$ is. Moreover, for every triangulation of $\rho$, there exists a triangulation of $\rho^\sms$ of the same ordered parameter.
\end{enumerate}
\end{lemma}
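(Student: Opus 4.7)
The plan is to treat the two parts separately by direct arguments. For the forward implication of part (i), I would use that $D_{F,\st}$ is an exact functor on $F$-semistable representations and that the semistability condition is preserved under direct sums: if $\rho$ is semistable over some finite extension $F/E$, then every Jordan--H\"older factor of $\rho$ is $F$-semistable by exactness, and the direct sum $\rho^\sms$ inherits the property after perhaps enlarging $F$. For the counterexample to the converse I would exhibit a non-split class in $H^1(G_{\Q_p}, \Q_p(2))$: a dimension count gives $\dim_{\Q_p} H^1(G_{\Q_p}, \Q_p(2)) = 1$ while $\dim_{\Q_p} H^1_\st(G_{\Q_p}, \Q_p(2)) = 0$, since the Hodge--Tate weight $2 > 0$ forces $D_\dR/\Fil^0 = 0$, and the Frobenius eigenvalues $1$ and $p^{-2}$ on $D_\st$ of any such extension leave no room for a non-trivial monodromy. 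Any non-split extension $\rho$ so obtained is non-semistable, while $\rho^\sms = \Q_p \oplus \Q_p(2)$ is crystalline.

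For the forward implication of part (ii) with matching ordered parameters, I would argue by induction on the length of $\rho$ using the exactness of $D_\rig$. Fix a triangulation $0 = M_0 \subset \cdots \subset M_n = D_\rig(\rho)$ with ordered parameters $\delta_1, \ldots, \delta_n$, and pick an irreducible subrepresentation $\rho' \subset \rho$ with quotient $\rho''$. Setting $M_i' = M_i \cap D_\rig(\rho')$ and letting $M_i''$ denote the image of $M_i$ in $D_\rig(\rho'')$, the snake lemma yields, for each $i$, an exact sequence
\[ 0 \to M_i'/M_{i-1}' \to M_i/M_{i-1} \to M_i''/M_{i-1}'' \to 0. \]
Since the middle term has rank one, exactly one outer term is isomorphic to $\calR_L(\pi_E)(\delta_i) \otimes \cL_i$ and the other vanishes. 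Discarding the trivial steps produces triangulations of $D_\rig(\rho')$ and $D_\rig(\rho'')$ whose ordered parameter lists form an ordered partition of $(\delta_1, \ldots, \delta_n)$. By induction, $(\rho'')^\sms$ is trianguline with the corresponding ordered sub-list; interleaving this with the (trivial) triangulation of $\rho' = (\rho')^\sms$ inside $D_\rig(\rho^\sms) = D_\rig(\rho') \oplus D_\rig((\rho'')^\sms)$ according to the original indexing recovers a triangulation of $\rho^\sms$ with ordered parameters $\delta_1, \ldots, \delta_n$.

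For the converse direction in (ii), given a triangulation of $\rho^\sms = \bigoplus_j \rho_j$, the same intersection argument produces a triangulation of each irreducible factor $\rho_j$. I would then use the Jordan--H\"older filtration $0 = N_0 \subset \cdots \subset N_k = \rho$ with $N_j/N_{j-1} \cong \rho_j$, which via the exactness of $D_\rig$ induces a filtration of $D_\rig(\rho)$ whose graded pieces are $D_\rig(\rho_j)$, and refine the $j$-th step by pulling back the triangulation of $D_\rig(\rho_j)$ along $D_\rig(N_j) \twoheadrightarrow D_\rig(\rho_j)$. This exhibits $\rho$ as trianguline with parameters that reorder the original ones; the ordering is not (and need not be) preserved in this direction, since the reconstruction groups parameters by Jordan--H\"older step. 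I expect the main technical point to be the saturation check underlying the snake lemma argument: one must verify that $M_i \cap D_\rig(\rho')$ and the image $M_i''$ are themselves sub-$(\varphi, \Gamma)$-modules, i.e.\ saturated, which reduces to standard coherence properties over the Robba ring.
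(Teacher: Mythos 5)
Your part (ii) and the forward half of part (i) follow the same route as the paper: stability of semistability under subquotients, and a Jordan--H\"older d\'evissage in which one intersects a given triangulation with $D_\rig$ of the factors and then reassembles by successive extensions. You are in fact more explicit than the paper about the one genuine technical point (saturation of the intersected/projected steps, so that the graded pieces are honestly $\cR_L(\pi_E)(\delta_i)$ rather than finite-colength submodules thereof), and you correctly observe that the ordering of parameters only needs to be preserved in the direction $\rho\rightsquigarrow\rho^\sms$. That part is fine.

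The counterexample in part (i) is wrong, and the error is a sign/convention slip in the Bloch--Kato computation. With the paper's normalization ($\chi_\cyc$ has Hodge--Tate weight $1$), the de Rham filtration on $D_\dR(\Q_p(2))$ jumps at $-2$, so $\Fil^0 D_\dR(\Q_p(2))=0$ and hence $D_\dR(\Q_p(2))/\Fil^0$ is \emph{one}-dimensional, not zero. Consequently $\dim H^1_f(G_{\Q_p},\Q_p(2)) = \dim H^0 + \dim D_\dR/\Fil^0 = 1 = \dim H^1(G_{\Q_p},\Q_p(2))$: every extension $0\to\Q_p(2)\to\rho\to\Q_p\to 0$ is crystalline, so your $\rho$ does not witness the failure of the converse. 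The non-de Rham classes live in the opposite direction: for $r\ge 1$ one has $\dim H^1(G_{\Q_p},\Q_p(-r))=1$ while $H^1_g(G_{\Q_p},\Q_p(-r))=0$ (here $\Q_p(-r)$ has negative Hodge--Tate weight, so $D_\dR/\Fil^0$ really does vanish), and the non-split extension $0\to\Q_p\to V\to\Q_p(r)\to 0$ is not de Rham, hence not semistable. Taking $r=1$ recovers the paper's example, the non-trivial extension of $\chi_\cyc$ by the trivial character, whose semisimplification $\Q_p\oplus\Q_p(1)$ is crystalline. So the fix is simply to flip the twist; as written, the argument for the second sentence of (i) fails.
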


\begin{proof}
%Let \rho_i, i=1,\ldots,k be the Jordan-H\"older factors of \rho. It is immediate from the definition that the (\varphi,N)-module D_\st(\rho)
It is a standard fact that the category of semistable representations $G_E\to \GL_d(L)$ is stable under subquotients, which gives the first statement of (i). As for the converse, it is also well-known that the non-trivial extension of the cyclotomic character of $G_{\Q_p}$ by the trivial character is not potentially semi-stable. %\andr{ref??}

For (ii), let $\rho_i$, $i=1,\ldots,k$ be the Jordan--H\"older factors of $\rho$. It is immediate from the definition that the $(\varphi,\Gamma)$-module $D(\rho)$ can be written via successive extensions of the $(\varphi,\Gamma)$-modules $D(\rho_i)$, and that $D(\rho^\sms)$ is isomorphic to the direct sum of the $D(\rho_i)$. Given for each $i$ a triangulation of $D(\rho_i)$, one obtains by successive extensions a triangulation of $\rho$. On the other hand, if $(M_j)_j$ is a triangulation of $\rho$, then, for every $i$, $(M_j\cap D(\rho_i))_j$ is a triangulation of $D(\rho_i)$ (after removing the redundant terms), and putting these triangulations back together as in the previous sentence gives back the original triangulation of $D(\rho)$ (in particular, the ordered parameters of the triangulation are preserved while going through these two operations). 
Therefore, $D(\rho)$ is triangulable if and only if $D(\rho_i)$ is triangulable for every $i$, and applying once more this statement to $\rho^\sms$ in place of $\rho$ gives the equivalence in (ii). Since parameters are preserved by cutting out and putting back together triangulations, we also obtain the last statement. %\andr{is this okay????}
\end{proof}

%\andr{clarify what trianguline means for representations into Qp (take finite extension as coefficients)}
\begin{defin}
	We say that a continuous representation $\rho\colon G_E\to\GL_d(\Qp)$ is trianguline if there exists a $p$-adic field $L$ and a trianguline representation $\rho_L$ such that $\rho\cong\rho_L\otimes_L\Qp$.
\end{defin}

It is easy to show that if $\rho\colon G_E\to\GL_d(\Qp)$ is trianguline, then for every $p$-adic field $L$ and continuous representation $\rho_L$ satisfying $\rho\cong\rho_L\otimes_L\Qp$, $\rho_L$ is trianguline.

Let $t\colon G_E\to L$ be a pseudorepresentation of dimension $d$.
The first part of the following definition is a special case of \cite[Definition 7.1]{wefam}. 

\begin{defin}\label{defst}
We say that $t$ is \emph{semistable} (respectively, \emph{trianguline}) if there exists a semistable (respectively, trianguline) representation $\rho\colon G_E\to\GL_d(\Qp)$ with trace $t\otimes_L\Qp$.

As usual, we say that $t$ has \emph{potentially} a property if there exists a finite extension $F/E$ such that $t\vert_{G_F}$ has that property. 

For a finite extension $F$ of $E$, we say that $t$ is $F$-trianguline (respectively, $F$-semistable) if $t\vert_{G_F}$ is trianguline (respectively, semistable). Alternatively, we say that $\rho$ becomes trianguline (respectively, semistable) over $F$.
\end{defin}

\begin{rem}\mbox{ }
\begin{enumerate}
\item %We are in a position to use Definition \ref{wefam} since the semisimplification of a semistable representation $\rho\colon G_E\to\GL_d(\Qp)$ is also semistable. \andr{justify?? reference wedef??} Note that 
In \emph{loc. cit.} we can simply take $\cK$ to be the category with a single object $\Qp$ and morphisms the field automorphisms of $\Qp$.
\item Thanks to Lemma \ref{triss}, we can assume that the representations appearing in Definition \ref{defst} are semisimple. Moreover, if $t\colon G_E\to L$ is trianguline then every continuous representation $\rho\colon G_E\to\GL_d(\Qp)$ with trace $t$ is trianguline. %(hence why we do not require semisimplicity in Definition \ref{pstridef}).
\end{enumerate}
\end{rem}

We check that potential properties behave well with respect to Definition \ref{defst}.

\begin{lemma}\label{pspot}%\andr{actually, false for semistable??? if rep is not strongly irr; check what we need below....??} ah no, everything is absolutely semisimple????
Let $F$ be a finite extension of $E$. 
A pseudorepresentation $t\colon G_E\to L$ is $F$-semistable (respectively, $F$-trianguline) if and only if there exists an $F$-semistable (respectively, $F$-trianguline) representation $\rho\colon G_E\to\GL_d(\Qp)$ with trace $t\otimes_L\Qp$.
\end{lemma}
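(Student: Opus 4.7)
The ``if'' direction in both cases is a tautology from Definition \ref{defst}: a representation $\rho\colon G_E\to\GL_d(\Qp)$ with trace $t\otimes_L\Qp$ whose restriction $\rho|_{G_F}$ is semistable (respectively, trianguline) is itself a semistable (respectively, trianguline) representation of $G_F$ whose trace is $t|_{G_F}\otimes_L\Qp$, so by definition $t|_{G_F}$ is semistable (respectively, trianguline).

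For the ``only if'' direction in the trianguline case, I would invoke Theorem \ref{liftings}(i) to obtain some continuous representation $\rho\colon G_E\to\GL_d(\Qp)$ with trace $t\otimes_L\Qp$. Its restriction $\rho|_{G_F}$ then has trace $t|_{G_F}\otimes_L\Qp$, which is trianguline by hypothesis. By remark (ii) immediately preceding the lemma --- a direct consequence of Lemma \ref{triss}(ii) combined with the uniqueness-up-to-semisimplification clause of Theorem \ref{liftings} --- triangulinity depends only on the trace, so $\rho|_{G_F}$ is automatically trianguline. Any such $\rho$ therefore witnesses the statement.

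The ``only if'' direction in the semistable case is more delicate, because semistability is \emph{not} a trace invariant: by Lemma \ref{triss}(i) a non-semistable representation can have semistable semisimplification. My strategy is to exploit semisimplification combined with the uniqueness in Theorem \ref{liftings}. Let $\rho_F\colon G_F\to\GL_d(\Qp)$ be a semistable representation with trace $t|_{G_F}\otimes_L\Qp$; by Lemma \ref{triss}(i), $\rho_F^\sms$ is also semistable. Produce $\rho_0\colon G_E\to\GL_d(\Qp)$ with trace $t\otimes_L\Qp$ via Theorem \ref{liftings}(i) and set $\rho\coloneqq\rho_0^\sms$, which is a semisimple $G_E$-representation with the same trace. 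The key observation is that $\rho|_{G_F}$ is then still semisimple as a $G_F$-representation: this is a standard instance of Clifford's theorem, available here because $[G_E\colon G_F]=[F\colon E]$ is finite and the coefficients are in characteristic zero. Since $\rho|_{G_F}$ is semisimple with trace $t|_{G_F}\otimes_L\Qp$, the uniqueness clause of Theorem \ref{liftings} gives $\rho|_{G_F}\cong\rho_F^\sms$, and the right-hand side is semistable, concluding the argument.

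The main obstacle is the semisimplicity of $\rho|_{G_F}$ in the semistable case: it is the one place where a non-trivial group-theoretic input beyond the paper's trace-theoretic machinery (Theorem \ref{liftings} and Lemma \ref{triss}) is required. Everything else is a direct unwinding of the definitions, where the trianguline case benefits from being a property that depends only on the trace, a luxury the semistable case lacks.
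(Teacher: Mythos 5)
Your proof is correct and follows the paper's approach exactly: both directions come down to the uniqueness-up-to-semisimplification clause of Theorem \ref{liftings} combined with Lemma \ref{triss}, and for the semistable case the extra input is semisimplicity of the restriction to a finite-index subgroup. One caveat on sourcing: your appeal to ``Clifford's theorem'' is slightly off, since classical Clifford requires the subgroup to be \emph{normal}, and $G_F\subset G_E$ is normal only when $F/E$ is Galois, which is not assumed; the correct reference is the paper's Lemma \ref{Hss}, which covers arbitrary finite-index subgroups in characteristic zero and is invoked for precisely this step. Apart from that, your streamlining --- applying the semisimplicity-of-restriction fact directly to the semisimple $\rho$ rather than first reducing to irreducible components as the paper does --- is sound and arguably cleaner.
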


\begin{proof}
Let $\rho\colon G_E\to\GL_d(\Qp)$ be any continuous representation with trace $t\otimes_L\Qp$.

Assume first that $t$ is $F$-trianguline, so that there exists a trianguline representation $\rho_F\colon G_F\to\GL_d(\Qp)$ with trace $t\vert_{G_F}\otimes_L\Qp$. Since $\rho\vert_{G_F}$ and $\rho_F$ share the same trace, by the last statement of Theorem \ref{liftings} their semisimplifications coincide. In particular, $\rho\vert_{G_F}$ is trianguline by Lemma \ref{triss}(ii).
The converse implication in the trianguline case is obvious.

Assume now that $t$ is $F$-semistable. Thanks to Lemma \ref{triss}(i), we can assume that $\rho$ is semisimple. Let $\rho_F\colon G_F\to\GL_d(\Qp)$ be a semistable representation with trace $t\vert_{G_F}\otimes_L\Qp$; by Lemma \ref{triss}(i) we can take it to be semisimple. We prove that $\rho\vert_{G_F}$ is semistable. Since $\rho$ is semisimple, it is enough to prove this for its irreducible components; therefore we can and do assume that $\rho$ is irreducible. Then $\rho\vert_{G_F}$ is semisimple by Lemma \ref{Hss}, hence by the last statement of Theorem \ref{liftings} it is isomorphic to $\rho_F$, hence semistable.
\end{proof}

By combining Lemmas \ref{pstdih}, \ref{resttri} and \ref{pspot} we obtain the following.

\begin{cor}\label{pstps}
Let $t\colon G_{\Q_p}\to L$ be a 2-dimensional, potentially semistable pseudorepresentation. 
\begin{enumerate}
\item There exists either an abelian or a dihedral extension $E$ of $\Q_p$ such that $t\vert_{G_E}$ is semistable.
\item There exists a quadratic extension $E$ of $\Q_p$ such that $t\vert_{G_E}$ is trianguline.
\end{enumerate}
\end{cor}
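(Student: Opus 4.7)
The plan is to bootstrap from representations to pseudorepresentations using Lemma \ref{pspot} as the bridge, then quote the two corollaries (\ref{pstdih} for (1) and \ref{resttri} for (2)) on the representation side.

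First I would apply Lemma \ref{pspot} in the ``only if'' direction. By assumption, $t$ is potentially semistable, i.e.\ there exists a finite extension $F/\Q_p$ such that $t\vert_{G_F}$ is semistable. The lemma then produces a continuous representation $\rho\colon G_{\Q_p}\to\GL_2(\Qp)$ with trace $t\otimes_L\Qp$ such that $\rho\vert_{G_F}$ is semistable. In particular, $\rho$ is a potentially semistable $2$-dimensional $p$-adic representation of $G_{\Q_p}$.

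For part (1), I would apply Corollary \ref{pstdih} to this $\rho$: it provides a finite extension $E/\Q_p$, either abelian or dihedral, such that $\rho\vert_{G_E}$ is semistable. Using Lemma \ref{pspot} in the ``if'' direction for the extension $E$, the existence of the $E$-semistable representation $\rho$ with trace $t\otimes_L\Qp$ forces $t\vert_{G_E}$ to be $E$-semistable, which is the conclusion of (1). For part (2), I would instead apply Corollary \ref{resttri} to $\rho$ to obtain a quadratic extension $E/\Q_p$ with $\rho\vert_{G_E}$ trianguline, and then invoke the trianguline half of Lemma \ref{pspot} to transfer this back to $t$.

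There is no real obstacle here: the argument is a two-line reduction that turns the statements on representations (Corollaries \ref{pstdih} and \ref{resttri}) into statements on pseudorepresentations using Lemma \ref{pspot}. The only subtle point to keep in mind is that one should choose the lift $\rho$ \emph{once}, in such a way that it is already potentially semistable, before applying the corollaries — this is exactly what Lemma \ref{pspot} guarantees, as opposed to merely producing, separately for each $F$, an $F$-semistable representation of trace $t\vert_{G_F}\otimes_L\Qp$.
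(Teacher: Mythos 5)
Your argument is correct and matches what the paper intends: the corollary is stated immediately after the sentence ``By combining Lemmas \ref{pstdih}, \ref{resttri} and \ref{pspot} we obtain the following,'' and your two-pass use of Lemma \ref{pspot} (first to produce a potentially semistable lift $\rho$ of $t$, then to transport the conclusions of Corollaries \ref{pstdih} and \ref{resttri} back to $t$) is exactly that combination. Your remark about choosing a single lift $\rho$ of $G_{\Q_p}$ rather than separate lifts over each intermediate field correctly identifies the role Lemma \ref{pspot} plays.
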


\subsection{Refined trianguline representations}

Let $E$ and $L$ be two $p$-adic fields. Assume that $L$ contains the image of every embedding $E\into\Qp$, and let $\Sigma_E$ be the set of embeddings $E\into L$. 

Let $V$ be a $\Qp$-vector space of finite dimension $d$, $\rho\colon G_\Q\to\GL(V)$ a potentially semistable representation, and $E$ a $p$-adic field such that $\rho\vert_{G_E}$ is semistable. Let $E_0$ be the largest intermediate extension of $E/\Q_p$ unramified over $\Q_p$. By the construction in Section \ref{ptypessec}, $D_{E,\st}(V)$ is a free $E_0\otimes_{\Q_p}\Qp$-module equipped with a $\Gal(E_0/\Q_p)$-semilinear Frobenius automorphism $\varphi$. Then $\varphi^{[E_0\colon\Q_p]}$ is an $E_0\otimes_{\Q_p}\Qp$-linear automorphism of $D_{E,\st}(V)$. We define the \emph{potentially semistable Frobenius eigenvalues of $\rho$ (or $V$)}, or simply \emph{Frobenius eigenvalues of $\rho$ (or $V$)}, as the eigenvalues of $\varphi^{[E_0\colon\Q_p]}$. A simple check shows that they do not depend on the chosen $E$.

Note that the fact that $\rho\vert_{G_E}$ is the restriction of a representation of $G_{\Q_p}$ together with condition \ref{feigendelta} forces the set $\{k_{\sigma,i}\}_{1\le i\le d}$ to be independent of $\sigma\in\Sigma_E$; it coincides with the set $\{k_{i}\}_{1\le i\le d}$ of Hodge--Tate weights of $\rho$. %For simplicity we strengthened the condition on the Hodge--Tate weights of $\delta_1,\ldots,\delta_n$ by requiring them to be strictly increasing. %\Sigma_K. %How does this work when they are not?

We extend some definitions of \cite[Section 2.4]{bellchen} from the crystalline to the potentially semistable case.
We assume for the rest of this subsection that both the Frobenius eigenvalues and the Hodge--Tate weights of $\rho$ are pairwise distinct. We write the Hodge--Tate weights of $\rho$ as $(k_1,\ldots,k_d)$ with $k_1<k_2<\ldots<k_d$.

\begin{defin}
A \emph{refinement} of $\rho$ is an ordering $(\varphi_1,\ldots,\varphi_d)$ of the Frobenius eigenvalues of $\rho$. To such a refinement we attach an increasing filtration $(\cF_i)_i$ of $D_{E,\st}(\rho)$, where $\cF_i$ is spanned by the Frobenius eigenspaces of eigenvalues $\varphi_j, 1\le j\le i$. We use the same notation for the filtration of $D_\pst(\rho)$ induced by that on $D_{E,\st}(\rho)$.

We say that a refinement $(\varphi_1,\ldots,\varphi_d)$ of $\rho$ is 
\begin{itemize}
\item \emph{$N$-stable} if every step of the filtration $(\cF_i)_i$ is stable under the monodromy operator of $D_{E,\st}(\rho)$ (an empty condition if $\rho\vert_{G_E}$ is crystalline);
\item \emph{non-critical} if the filtration $(\cF_i)_i$ is in general position with respect to the Hodge filtration of $D_{E,\st}(\rho)$, meaning that, for every $i\in\{1,\ldots,d\}, D_{E,\st}(\rho)=\cF_i\oplus\Fil^{k_i+1}D_{E,\st}(\rho)$; 
\item \emph{numerically non-critical} if $v_p(\varphi_1)\le k_2$ and, for $2\le i\le d-1$, $v_p(\varphi_1)+\ldots+v_p(\varphi_i)\le k_1+k_2+\ldots+k_{i-1}+k_{i+1}$.
\end{itemize}
\end{defin}

A simple check shows that if $\rho$ is numerically non-critical then it is non-critical. 

We fix a uniformizer $\pi_E$ of $E$, and we normalize the local reciprocity map from $E^\times$ to the abelianization of the Weil group of $E$ by sending $\pi_E$ to a geometric Frobenius. We write $e$ for the degree of ramification of $E/\Q_p$. 
Let $\delta_1,\ldots,\delta_d\colon E^\times\to L^\times$ be potentially semi-stable characters, such that for every $i$ all of the Hodge--Tate weights of $\delta_i$ coincide with $k_i$. 
%For every $i$ and $\sigma$ the character $\delta_i\cdot \bx_\sigma^{k_{i}}\vert_{\cO_E^\times}\colon\cO_E^\times\to L^\times$ has finite order, and we attach to it a finite order character $\psi_{i}\colon G_E\to L^\times$ via the local reciprocity map. 
%Let $\pi_E$ be a uniformizer of $E$.

The next definition is inspired by \cite[Definition 6.4.1]{kedpotxia}, but we look at the actual $D_{E,\st}$ rather than the $D_\pst$, so that our parameters are defined over $E^\times$ and we do not specify the descent data to $\Q_p$.
%except for our requirement that the Hodge--Tate weights be pairwise distinct. 

\begin{defin}\label{reftridef}
We say that $\rho\vert_{G_E}$ is \emph{refined trianguline} with parameters $\delta_1,\ldots,\delta_d\colon E^\times\to L^\times$, if $\rho\vert_{G_E}$ is semistabelian and:
\begin{enumerate}[label=(\roman*)]
	%\item $\rho$ becomes semi-stable over an abelian extension of $K$;
	\item\label{feigendelta} the eigenvalues of the Frobenius operator on $D_{E,\st}(\rho)$ are pairwise distinct and given by $\pi_E^{k_{1}}\delta_1(\pi_E),\ldots,\pi_E^{k_{n}}\delta_d(\pi_E)$; %(in particular they are pairwise distinct); %for some \sigma\in\Sigma_K; %$\Nm_{K/\Q_p}(\pi_K)^{k_1}F_1,\ldots,\Nm_{K/\Q_p}(\pi_K)^{k_n}F_n$;
	\item\label{psicond} for every $i$, $1\le i\le n$, the action of $G_{\Q_p}$ on $D_{E,\st}(\rho)$ stabilizes the $p^{k_{i}}\delta_i(\pi_E)$-eigenspace and acts on it via $\psi_i$;
	\item\label{flag} the refinement $(\pi_E^{k_{1}}\delta_1(\pi_E),\ldots,\pi_E^{k_{d}}\delta_d(\pi_E))$ is $N$-stable and non-critical. 
\end{enumerate}
We say that $\rho$ is \emph{potentially refined trianguline} if $\rho\vert_{G_E}$ is refined trianguline for some finite extension $E/\Q_p$ and some choice of parameters.
\end{defin}

\begin{rem}
The data in Definition \ref{reftridef} depends on the choice of the uniformizer $\pi_E$, which is not surprising given that such a choice is also relevant when attaching a $(\varphi,\Gamma_E)$-module $\cR_L(\pi_E)(\delta)$ to a character $\delta\colon E^\times\to L^\times$.
\end{rem}

\begin{rem}\label{uniquepar}
If $\rho\vert_{G_E}$ is refined trianguline with parameters $\delta_1,\ldots,\delta_d$, then the characters $\delta_1,\ldots,\delta_d$ are uniquely determined by the $N$-stable refinement $(p^{k_{1}}\delta_1(\pi_E),\ldots,p^{k_{d}}\delta_d(\pi_E))$: indeed, the finite order characters $\psi_1,\ldots,\psi_d$ are determined by condition \ref{psicond} in Definition \ref{reftridef}, and together with the ordered Hodge--Tate weights of $\rho$ they provide us with all of the $\delta_i\vert_{\cO_E^\times}$, while the values of the $\delta_i$ at $\pi_E$ are obviously determined by the refinement. For this reason, we sometimes say that $\rho\vert_{G_E}$ is refined trianguline \emph{with respect to a refinement}.
\end{rem}

Roughly speaking, if a $p$-adic family of Galois representations containing a dense subset of potentially semistable points is given (say, the family carried by a neighborhood of a classical point of an eigencurve), it is not possible to interpolate directly the Frobenius eigenvalues of the potentially semistable points, since their $p$-adic valuation is not locally constant along the family. This is our motivation for the following definition.

\begin{defin}\label{normfrob}
If $\rho$ is potentially refined trianguline, $E$ is a finite extension of $\Q_p$ such that $\rho\vert_{G_E}$ is refined trianguline with parameters $\delta_1,\ldots,\delta_n$, and $\pi_E$ is a uniformizer of $E$, we define the \emph{normalized potentially semistable Frobenius eigenvalues} of $\rho$ as the elements $\delta_1(\pi_E),\ldots,\delta_n(\pi_E)$ of $L^\times$. When this causes no ambiguity, we refer to them simply as the \emph{normalized Frobenius eigenvalues} of $\rho$.
\end{defin}

The normalized Frobenius eigenvalues of $\rho$ are independent of the choice of $E$, since both the Hodge--Tate weights and the Frobenius eigenvalues of $\rho$ are.

The following lemma and its proof are analogous to \cite[Lemma 6.4.2]{kedpotxia}. Recall that $\rho$ is assumed to be potentially semistable throughout this section.

\begin{lemma}\label{refstr}
The following are equivalent:
\begin{enumerate}
\item $\rho\vert_{G_E}$ is refined trianguline with parameters $\delta_1,\ldots,\delta_n$;
\item $\rho\vert_{G_E}$ is trianguline with parameters $\delta_1,\ldots,\delta_n$ and the corresponding refinement is non-critical;
\item $\rho\vert_{G_E}$ is strictly trianguline with parameters $\delta_1,\ldots,\delta_n$ and the corresponding refinement is non-critical.
\end{enumerate}
Moreover, the tuples of parameters $(\delta_1,\ldots,\delta_n)$ with respect to which $\rho\vert_{G_E}$ is refined trianguline are in bijection with the $N$-stable, non-critical refinements of $D_{E,\st}(\rho)$.
%Assume that $\rho\vert_{G_E}$ is refined trianguline with parameters $\delta_1,\ldots,\delta_n$. Then $\rho$ is strictly trianguline with ordered parameters $\delta_1,\ldots,\delta_n$.
\end{lemma}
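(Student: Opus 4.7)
The plan is to reduce everything to a statement about the semistable Dieudonné module $D_{E,\st}(\rho)$ using Berger's equivalence between filtered $(\varphi,N,\Gal(E/E_0))$-modules and semistable $(\varphi,\Gamma_E)$-modules, so that triangulations of $D_\rig(\rho\vert_{G_E})$ whose parameters are themselves semistable characters are in bijection with full $(\varphi,N)$-stable flags in $D_{E,\st}(\rho)$. Under this dictionary, the parameter $\delta_i$ of the $i$-th graded piece is pinned down, via Remark \ref{uniquepar}, by (a) its Hodge--Tate weight $k_i$, (b) its value $\delta_i(\pi_E)$ at the uniformizer, which reads off the corresponding Frobenius eigenvalue via the formula $\pi_E^{k_i}\delta_i(\pi_E)$, and (c) its restriction to $\cO_E^\times$, which is the finite-order character $\psi_i$ giving the $G_{\Q_p}$-action on the $\pi_E^{k_i}\delta_i(\pi_E)$-eigenspace. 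Because $\rho$ is assumed to have pairwise distinct Frobenius eigenvalues, every such flag arises from a choice of ordering of the eigenvalues, and the condition that the flag be $N$-stable translates exactly to \ref{flag} of Definition \ref{reftridef}.

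With this dictionary in hand, the implication (1)$\Rightarrow$(2) is essentially a translation: starting from the $N$-stable non-critical refinement of Definition \ref{reftridef}, I build the associated flag in $D_{E,\st}(\rho)$ and pass through Berger's equivalence to obtain a triangulation of $D_\rig(\rho\vert_{G_E})$ whose ordered parameters are precisely $\delta_1,\ldots,\delta_n$; the non-criticality condition on the refinement is the same condition as in (2) by construction. Conversely, given (2), the triangulation restricts, via Berger, to an $N$-stable flag on $D_{E,\st}(\rho)$, and the requirement that the graded pieces be the rank-one modules attached to the $\delta_i$ forces the Frobenius eigenvalues to be $\pi_E^{k_i}\delta_i(\pi_E)$ and the inertial action on the $i$-th line to be given by $\delta_i\vert_{\cO_E^\times}$, which recovers \ref{feigendelta}--\ref{psicond}.

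The heart of the proof, and the step I expect to be the main obstacle, is the equivalence (2)$\Leftrightarrow$(3). Clearly (3)$\Rightarrow$(2); for the converse I will use the cohomological criterion recalled after Definition \ref{tridef}: strictness of the triangulation amounts to $H^0_{\varphi,\Gamma_E}(M_i^\vee(\delta_i))$ being one-dimensional for every $i$. Any extra global section would correspond to an alternative sub-$(\varphi,\Gamma_E)$-module $M_i^\prime\ne M_i$ of $M$ with quotient $\cR_L(\pi_E)(\delta_i)$; passing back to $D_{E,\st}(\rho)$, the existence of such an $M_i^\prime$ would produce a distinct $(\varphi,N)$-stable flag step whose associated graded has the same Frobenius eigenvalue as the original one, and an analysis of the Hodge filtration on $D_{E,\st}(\rho)$ shows that this can happen only when the ordering of eigenvalues fails to be in general position with respect to the Hodge filtration, i.e.\ exactly when the refinement is critical. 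The numerology (weights $k_1<\cdots<k_n$ pairwise distinct, Frobenius eigenvalues pairwise distinct) will ensure that non-criticality rules out these exceptional extensions, yielding strictness.

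Finally, the last sentence of the lemma, giving the bijection between refined trianguline parameters and $N$-stable non-critical refinements, is now automatic: one direction sends $(\delta_1,\ldots,\delta_n)$ to its associated refinement $(\pi_E^{k_1}\delta_1(\pi_E),\ldots,\pi_E^{k_n}\delta_n(\pi_E))$, and the other direction reconstructs the $\delta_i$ uniquely from the ordered Frobenius eigenvalues together with the Hodge--Tate weights $k_i$ and the inertial characters $\psi_i$ as in Remark \ref{uniquepar}. Injectivity is Remark \ref{uniquepar}, while surjectivity follows from the equivalence (1)$\Leftrightarrow$(2) proved above.
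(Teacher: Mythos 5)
Your reduction to Berger's equivalence between filtered $(\varphi,N,G_E)$-modules and semistable $(\varphi,\Gamma_E)$-modules, the translation between flags and triangulations for $(1)\Leftrightarrow(2)$, and the use of Remark \ref{uniquepar} for the final bijection all match the paper's argument. The problem is the mechanism you propose for $(2)\Leftrightarrow(3)$.

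You identify $(2)\Leftrightarrow(3)$ as ``the heart of the proof'' and plan to show that a second global section of $(M/M_i)(\delta_{i+1}^{-1})$ ``can happen only when the ordering of eigenvalues fails to be in general position with respect to the Hodge filtration, i.e.\ exactly when the refinement is critical,'' so that non-criticality is what yields strictness. This is not how the implication works, and the argument you sketch would not go through. A non-zero element of $H^0_{\varphi,\Gamma_E}\bigl((M/M_i)(\delta_{i+1}^{-1})\bigr)$ is a morphism of $(\varphi,\Gamma_E)$-modules from the trivial one, and by functoriality of Berger's equivalence it lands inside $(D_{E,\st}(\rho)/\cF_i)^{\varphi^{[E_0\colon\Q_p]}=\pi_E^{k_{i+1}}\delta_{i+1}(\pi_E),\,N=0,\,G_E}$ after untwisting. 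Since the Frobenius eigenvalues of $\rho$ are assumed pairwise distinct throughout the subsection, that eigenspace is at most one-dimensional, and the triangulation already produces a non-zero vector in it; so $H^0$ is exactly one-dimensional. Strictness is therefore \emph{automatic} from the distinctness of Frobenius eigenvalues and has nothing to do with non-criticality; the non-criticality condition is simply carried along identically in (1), (2), and (3), not derived. If you tried to carry out the Hodge-filtration analysis you propose, you would find that criticality is neither necessary nor sufficient for a failure of strictness --- what would break strictness is a repeated Frobenius eigenvalue, which is excluded by hypothesis. The correct (and much shorter) argument is the eigenvalue count above.
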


\begin{proof}
By the equivalence of categories from \cite[Théorème A]{bergeqphiN}, the complete flags of the type given in condition \eqref{flag} of Definition \ref{reftridef} are in bijection with the triangulations $(M_i)_{0\leq i\leq n}$ of the $(\varphi,\Gamma_E)$-module $M\coloneqq D_\rig(\rho\vert_{E})$ with ordered parameters $\delta_1,\ldots,\delta_n$ (in the crystalline case, one can also use \cite[Proposition 2.4.1]{bellchen}). 
All such triangulations are automatically strict, since the Frobenius eigenvalues of $\rho$ are assumed to be pairwise distinct: Indeed, for every $i\in\{0,\ldots,n\}$, the $(\varphi,\Gamma_E)$-module $(M/M_i)(\delta_{i+1}^{-1})$ corresponds via the above equivalence of categories to a $(\varphi,N,G_E)$-module with a unique $\varphi$-eigenvalue equal to $1$, hence the cohomology group $H^0_{\varphi,\Gamma_E}((M/M_i)(\delta_{i+1}^{-1}))$ is one-dimensional. %In particular the triangulation $(M_i)_{0\le i\le n}$ is strict. %Explain why!!!!!!!!!!!

The last statement follows from the above bijection and Remark \ref{uniquepar}.
\end{proof}

\subsection{Trianguline parameters of eigenforms}\label{triparsec}

We will focus on 2-dimensional representations in all that follows.	Let $\rho\colon G_{\Q_p}\to\GL_2(\Qp)$ be a potentially semistable representation. 

\begin{defin}\label{dim2ref}
We define a bijection between Frobenius eigenvalues and refinements of $\rho$: if $\varphi_1$ is a Frobenius eigenvalue of $\rho$, we attach to it the unique refinement of $\rho$ whose first element is $\varphi_1$.
\end{defin}

\begin{rem}\label{admfrob}
The bijection of Definition \ref{dim2ref} induces a bijection between admissible (normalized) Frobenius eigenvalues of $\rho$ and $N$-stable refinements of $\rho$. \\
If $\rho$ is potentially crystalline, every (normalized) Frobenius eigenvalue of $\rho$ is admissible. If $\rho$ is not potentially crystalline, it will admit a unique admissible (normalized) Frobenius eigenvalue.
\end{rem}

We are mostly interested in 2-dimensional, refined trianguline representations arising from eigenforms. We start by introducing the Hecke algebra for $\GL_{2/\Q}$ that we will be working with for the rest of the paper. Let $N$ be a positive integer, prime to $p$. We define in the usual way a Hecke operator $T_\ell$ for every prime $\ell$ not dividing $Np$, and a Hecke operator $U_p$. 
We will work with the Hecke algebra
\[ \calH_\Q=\Z[\{T_\ell\}_{\ell\nmid Np},U_p], \]
and let it act on the spaces of modular forms of level $\Gamma_1(Np^r)$, $r\ge 1$, with either complex or $p$-adic coefficients. We also write
\[ \calH_\Q^{Np}=\Z[\{T_\ell\}_{\ell\nmid Np}],\quad \calH_{\Q,p}=\Z[U_p], \]
so that $\calH_\Q=\calH_\Q^{Np}\otimes_\Z\calH_{\Q,p}$.

Let $f$ be an eigenform of level $\Gamma_1(Np^r)$ for some $r\ge 0$, and let $\rho_{f,p}\colon G_\Q\to\GL_2(\Qp)$ be its associated Galois representation.

\begin{defin}
An \emph{admissible} Frobenius eigenvalue of $f$ is a Frobenius eigenvalue $\varphi$ of $\rho_{f,p}\vert_{G_{\Q_p}}$ such that:
\begin{itemize}
\item the refinement associated to $\varphi$ is $N$-stable;
\item if $r\ge 1$ and $f$ is of finite slope, $\varphi$ is the $U_p$-eigenvalue of $f$. 
\end{itemize}
	
A \emph{refinement} of $f$ is a the choice of an admissible Frobenius eigenvalue $\varphi$ of $f$. We also say that $(f,\varphi)$ is a \emph{refined eigenform}. 

If $r\ge 1$, we define the \emph{slope} of $f$ as the $p$-adic valuation of the $U_p$-eigenvalue of $f$. 
\end{defin}

Note that we only speak of ``slope'' for eigenforms that have a nontrivial level at $p$.

\begin{rem}
If $r\ge 1$ and $f$ is of finite slope, then by definition $f$ admits a unique refinement. If $f$ is of level $\Gamma_1(N)$ and finite slope, then it admits two refinements, i.e. the $U_p$-eigenvalues of its two $p$-stabilizations (but in the following we will rarely work with form that have no level at $p$). If $f$ is of infinite slope, then:
\begin{itemize}
	\item if $f$ is a twist of an eigenform $g$ of finite slope with a Dirichlet character $\delta$ of $p$-power order, then the refinements of $f$ coincide with the refinements of the $p$-depletion of $g$: indeed, twisting with $\delta$ has no effect on the eigenvalues of the semistable Frobenius, or the monodromy operator, but forces the $U_p$-eigenvalue to be 0, so that there is no restriction on the choice of an $N$-stable refinement of $\rho_{f,p}\vert_{G_{\Q_p}}$.
	\item If $f$ is $p$-supercuspidal, then its $U_p$-eigenvalue is 0 and $\rho_{f,p}\vert_{G_{\Q_p}}$ is potentially crystalline, hence admits two (trivially, $N$-stable) refinements.
\end{itemize}
We describe the situation more precisely in the next example.
\end{rem}

Recall that Coleman defined in \cite{colclass} a $\theta$-operator on $p$-adic modular forms, with the property that, for every $k\in\Z$, $\theta^{k-1}$ maps overconvergent eigenforms of weight $2-k\in\Z$ and some level to overconvergent eigenforms of weight $k$ and the same level. In the example that follows we describe which eigenforms give rise to potentially refined trianguline representations of $G_{\Q_p}$, and what the corresponding parameters are. 

\begin{ex}\label{eigenpar}
Let $f$ be a $\GL_{2/\Q}$-eigenform of weight $k\ge 2$ and level $\Gamma_1(Np^r)$ for some $r\ge 1$. Let $L$ be the field generated by the Hecke eigenvalues of $f$, and let $\rho_f\colon G_{\Q_p}\to\GL(V_f)$ be the local at $p$ representation attached to $f$, with $V_f$ a 2-dimensional $L$-vector space. With the notation of Definition \ref{reftridef}, $d=2$ and, if $V\vert_{G_E}$ is semistabelian for some finite extension $E/\Q_p$, then the characters $\delta_1,\delta_2\colon E^\times\to L^\times$ of a triangulation of $V_f\vert_{G_E}$ are determined on $\cO_E^\times$ by the Hodge--Tate weights $(0,k-1)$ of $V_f$ up to finite order characters $\psi_1,\psi_2$.
\begin{enumerate}[label=(\roman*)]
	\item If $f$ is of finite slope and not in the image of $\theta^{k-1}$, then $V_f$ is refined trianguline with respect to every $N$-stable refinement of $D_\st(V_f)$ (two in the crystalline case, one in the semistable, non-crystalline case). If $f$ is in the image of $\theta^{k-1}$, only the ordinary refinement is non-critical, hence gives a refined triangulation. In every case, the parameters $\delta_1,\delta_2\colon\Q_p^\times\to L^\times$ satisfy: 
\begin{itemize}
	\item $\delta_1\vert_{\Z_p^\times}$ is trivial, and $\delta_2\vert_{\Z_p^\times}$ is the weight-nebentypus character;
	\item $\delta_1(p)=a_p$, the $U_p$-eigenvalue of $f$, and $\delta_2(p)=a_p^{-1}$ (so that $a_p(f)$ and $p^{k-1}a_p^{-1}$ are the eigenvalues of the Frobenius operator of $D_\pst(V_f)$).
\end{itemize}
Indeed, $V_f$ is semistabelian by Theorem \ref{infpottri} and Lemma \ref{typetri}, and conditions (i,ii) of Definition \ref{reftridef} follow from the well-known description of $D_\pst(V_f)$. 
Condition (iii) is satisfied either by numerical non-criticality, or by \cite[Remark 2.4.6(ii)]{bellchen} for eigenforms of critical slope that are not in the image of $\theta^{k-1}$. %together with Remark \ref{noncrit}.

\item Let $f$ be of the form $f_0\otimes\chi$ for an eigenform $f_0$ of finite slope satisfying the conditions of point (i), and a non-trivial Dirichlet character $\chi$ of $p$-power conductor. %, then $V_f$ is refined trianguline. 
Let $\wtl\chi\colon\Q_p^\times\to L^\times$ be the character whose restriction to $\Z_p^\times$ is the character factoring through $\chi$, and whose value at $p$ is 1. 
If $V_{f_0}$ is refined trianguline of parameters $\delta_{0,1},\delta_{0,2}$, then $V$ is refined trianguline of parameters $\delta_1,\delta_2\colon\Q_p^\times\to L^\times$ satisfying $\delta_1=\wtl\chi\delta_{0,1}$ and $\delta_2=\wtl\chi\delta_{0,2}$. In particular:
\begin{itemize}
	\item $\delta_1\vert_{\Z_p^\times}=\wtl\chi$, and $\delta_2\vert_{\Z_p^\times}$ is the weight-nebentypus character of $f_0$ multiplied with $\wtl\chi$;
	\item $\delta_1(p)$ is the $U_p$-eigenvalue of $f_0$, and $\delta_2(p)$ is its inverse (note that the $U_p$-eigenvalue of $f$ is $a_p\chi(p)=0$, hence gives no information on the Frobenius eigenvalues of $V_f$). %\andr{is it really inverse in semistable case?? determinant is nebentypus at p??}
\end{itemize}
Indeed, $V_f$ is semistabelian since both $V_{f_0}$ and $\chi$ become semistable (in the case of $\chi$, even trivial) over abelian extensions of $\Q_p$. Conditions (i,ii,iii) of Definition \ref{reftridef} follow from the corresponding conditions for $V_{f_0}$.

\item\label{eigenparsc} If $f$ is $p$-supercuspidal, then by Corollary \ref{resttri} $V_f$ is not trianguline, but by Corollary \ref{pstdih} $V_f\vert_{G_E}$ is semistabelian for a quadratic extension $E/\Q_p$. By the same corollary, the monodromy operator on $D_{\pst}(V_f)$ is trivial, so that $V_f\vert_{G_E}$ is refined trianguline with respect to every non-critical refinement of it. If the normalized Frobenius eigenvalues $(\varphi_1,\varphi_2)$ of $V_f\vert_{G_E}$ satisfy $v_p(\varphi_1)v_p(\varphi_2)\ne 0$, then every refinement of $V_f\vert_{G_E}$ is numerically non-critical. If one of the two valuations is 0, say $v_p(\varphi_1)$, then the ordinary refinement $(\varphi_1,\varphi_2)$ is numerically non-critical. We do not discuss when the non-ordinary refinement is non-critical, since this can only occur on a discrete subset of the affinoid families we consider in the following. 
\end{enumerate}
\end{ex}

\subsubsection{Refined Hilbert eigenforms}

Let $F$ be a real quadratic field in which $p$ does not split, and let $\fp$ be the only $p$-adic place of $F$. 
In some of our arguments, we will need to base change some $\GL_{2/\Q}$-eigenforms to a real quadratic field. In order to lighten the exposition, we give here the analogue over $F$ of some of the terminology we introduced over $\Q$.

Let $N$ be a positive integer prime to $p$, and $f$ be a $\GL_{2/F}$-eigenform of level $N\fp^r$ for some $r\in\Z_{\ge 0}$ and parallel weight $k\ge 2$. Let $\rho_{f,p}\colon G_F\to\GL_2(\Qp)$ be the continuous representation attached to $f$ by work of Carayol, Blasius, Rogawski, Taylor among others. For every $p$-adic place $\fp$ of $F$, the restriction $\rho_{f,p}\vert_{G_{F_\fp}}$ is potentially semistable, so that we can attach to it a $(\varphi,N,G_{F_\fp})$-module $D_{\pst}(\rho_{f,p}\vert_{G_{F_\fp}})$. By the usual dictionary from $(\varphi,N)$ to $(\varphi,\Gamma)$-modules, $\rho_{f,p}\vert_{G_{F_\fp}}$ becomes trianguline over a $p$-adic field $E$, and its restriction to $G_E$ is even refined trianguline in the same sense of Definition \ref{reftridef}. %, where we simply replace $p$ with a uniformizer $\pi_F$ of $F$. %\andr{needs to be checked and written..!!!}

\begin{thm}\label{finslopetri}
%	Let $f$ be a $\Res_{F/\Q}\GL_{2/F}$-eigenform. 
	The following are equivalent:
	\begin{enumerate}[label=(\roman*)]
		\item $f$ is a twist of a $\GL_{2/F}$-eigenform of finite slope with a Hecke character of finite order and conductor dividing $p^\infty$;
		\item $\rho_{f,p}\vert_{G_{F_\fp}}$ is trianguline. %for every $p$-adic place $\fp$ of $F$.
	\end{enumerate}
\end{thm}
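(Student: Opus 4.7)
The plan is to translate triangulinity of $\rho_{f,p}\vert_{G_{F_\fp}}$ into a condition on the local component $\pi_\fp$ of the automorphic representation attached to $f$, using the local Langlands correspondence and local-global compatibility for Hilbert modular forms (via Carayol--Blasius--Rogawski--Taylor). On the automorphic side, finite slope at $\fp$ (i.e.\ a nonzero $U_\fp$-eigenvalue) corresponds to $\pi_\fp$ being either a principal series with at least one unramified character, or an unramified twist of Steinberg; on the Galois side, triangulinity corresponds by the analogue over $F_\fp$ of Lemma \ref{typetri} to the representation being semistabelian. The bridge between ``trianguline'' and ``finite-slope up to a $p$-power conductor twist'' is then the observation that $\pi_\fp$ is supercuspidal iff the Galois type is of the form (ii) or (iii) in Proposition \ref{padictypes}.

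For (i)$\Rightarrow$(ii), suppose $f=g\otimes\chi$ with $g$ of finite slope at $\fp$ and $\chi$ a Hecke character of finite order and $\fp$-power conductor. I would first establish triangulinity of $\rho_{g,p}\vert_{G_{F_\fp}}$ in the exact manner of Example \ref{eigenpar}(i): the nonzero $U_\fp$-eigenvalue produces a Frobenius eigenvalue on $D_{E,\st}(\rho_{g,p}\vert_{G_{F_\fp}})$ (for any sufficiently large Galois extension $E/F_\fp$) whose corresponding refinement is $N$-stable and (numerically) non-critical outside the critical-slope case, hence gives a triangulation by Lemma \ref{refstr}. I would then argue that triangulinity is preserved under twisting by $\chi_\fp\colon F_\fp^\times\to L^\times$ (the local character at $\fp$ obtained from $\chi$ via local class field theory): a triangulation of $D_\rig(\rho_{g,p}\vert_{G_{F_\fp}})$ with ordered parameters $(\delta_1,\delta_2)$ yields a triangulation of $D_\rig(\rho_{f,p}\vert_{G_{F_\fp}})$ with ordered parameters $(\delta_1\chi_\fp,\delta_2\chi_\fp)$.

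For (ii)$\Rightarrow$(i), assume $\rho_{f,p}\vert_{G_{F_\fp}}$ is trianguline. Then by the analogue of Lemma \ref{typetri} and Corollary \ref{pstdih} over $F_\fp$ (whose proofs go through verbatim), $\rho_{f,p}\vert_{G_{F_\fp}}$ becomes semistable over an abelian extension of $F_\fp$. The classification of 2-dimensional $p$-adic Galois types in Proposition \ref{padictypes} extends identically to $W_{F_\fp}$, and types (ii) and (iii) there would force the representation to be induced from a quadratic extension, hence non-trianguline by the same argument proving Corollary \ref{resttri}. Therefore the Galois type $\tau$ is of the form $\chi_1\vert_{I_{F_\fp}}\oplus\chi_2\vert_{I_{F_\fp}}$ for finite-order characters $\chi_1,\chi_2\colon W_{F_\fp}\to\Qp^\times$. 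Via local-global compatibility of the Langlands correspondence for $\GL_{2/F}$, this is equivalent to $\pi_\fp$ being a (possibly ramified) principal series or an unramified twist of Steinberg.

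The final step is a globalization: I would lift one of the local characters $\chi_i$ to a finite-order Hecke character $\xi\colon F^\times\backslash\A_F^\times\to\C^\times$ unramified outside $\fp$ (and of trivial archimedean component, which is admissible as $F$ is totally real and we are lifting a finite-order character whose values are roots of unity). This is possible because the narrow ray class groups of $F$ of conductor $\fp^n$ surject onto the finite-order character group of $\cO_\fp^\times$ as $n\to\infty$, since $\fp$ is the unique $p$-adic place of $F$ and the obstruction lies in a finite group. The twist $g\coloneqq f\otimes\xi^{-1}$ is then a Hilbert eigenform whose local representation at $\fp$ has an unramified character either as a direct summand (principal series case) or as the quotient defining the Steinberg filtration; in either case the $U_\fp$-eigenvalue of $g$ equals the value at a uniformizer of that unramified character, which is nonzero. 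The main obstacle is the globalization step, where one must simultaneously control the local component at $\fp$, the unramifiedness outside $\fp$, and the archimedean component; I would handle this by a direct class field theoretic argument, exploiting that $F$ has a single $p$-adic place.
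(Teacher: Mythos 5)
Your argument for (i)$\Rightarrow$(ii) is correct but takes a different, more elementary route than the paper. The paper derives triangulinity for a finite-slope classical eigenform by observing that $\fp$-old forms are crystalline and then using density of $\fp$-old points on $\wtl\cE_F$ together with the interpolation result of Kedlaya--Pottharst--Xiao, whereas you work directly with $D_{E,\st}$ and the refinement attached to the nonzero $U_\fp$-eigenvalue. Both approaches are valid; yours is arguably more self-contained for classical forms, though slightly overcomplicated: you do not actually need numerical non-criticality or Lemma \ref{refstr} to conclude triangulinity. For a classical finite-slope Hilbert eigenform, $\rho_{g,p}\vert_{G_{F_\fp}}$ is semistable (local-global compatibility shows $\pi_{g,\fp}$ is either a principal series with an unramified character or an unramified twist of Steinberg), and semistable implies trianguline directly by Lemma \ref{typetri}, with no reference to refinements; the stability of triangulinity under twisting by a character is the remaining step, which is correct. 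Similarly, in the (ii)$\Rightarrow$(i) direction, the identification of the Galois type as type (i) of Proposition \ref{padictypes} is correct, but the right pointer is Lemma \ref{typetri} (trianguline $\iff$ semistabelian), not ``the same argument proving Corollary \ref{resttri}.''

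The genuine gap is in your globalization step at the end of (ii)$\Rightarrow$(i). You assert that the finite-order character $\chi_i\vert_{\cO_\fp^\times}$ lifts to a Hecke character of $F$ unramified outside $\fp$ ``because the narrow ray class groups of $F$ of conductor $\fp^n$ surject onto the finite-order character group of $\cO_\fp^\times$ as $n\to\infty$, ... and the obstruction lies in a finite group.'' This is not correct. The exact sequence of class field theory gives that a finite-order character of $\cO_\fp^\times$ extends to a Hecke character of $F$ of conductor dividing $\fp^\infty$ (allowing arbitrary ramification at the archimedean places) if and only if it is trivial on the image of the totally positive units $\cO_F^{\times,+}$ in $\cO_\fp^\times$. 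Since $F$ is real quadratic and $\fp$ is its unique $p$-adic place, the closure of $\cO_F^{\times,+}$ in $\cO_\fp^\times\cong\mu\times\Z_p^2$ has $\Z_p$-rank one by Leopoldt (known for abelian $F$), so the characters that globalize form a proper subgroup of ``codimension one'' and the constraint does not disappear as $n\to\infty$: for a fixed $\chi_i$ of conductor $\fp^m$, the obstruction is the value of $\chi_i$ on the image of $\cO_F^{\times,+}$ in $(\cO_F/\fp^m)^\times$, which can be nontrivial. Modularity only forces the product $\chi_1\chi_2\vert_{\cO_\fp^\times}$ (the restriction of the global central character) to be trivial on this image; it does not force this for $\chi_1$ or $\chi_2$ individually. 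So ``one of the $\chi_i$ globalizes'' requires a separate argument. I should add that the paper's own proof of this implication is equally terse---it asserts ``we can always find a Dirichlet character $\delta$ of $\fp$-power conductor'' with the required property and does not address this point---so the gap is shared; but in the paper's applications (Proposition \ref{famlim}, Proposition \ref{extract}) the eigenform under consideration is a base change from $\Q$, where $\chi_i = \chi_i^{\Q}\circ\Nm_{F_\fp/\Q_p}$ and the desired twist is just $\chi_i^\Q\circ\Nm_{F/\Q}$, which is automatically a Dirichlet character of $F$. Your write-up should either restrict to this base-change setting, or supply a genuine argument for the existence of the globalizing character.
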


\begin{proof}
If $f$ is of finite slope and old at $\fp$, then $\rho_{f,p}$ is crystalline at $\fp$, hence trianguline. The fact that $\rho_{f,p}$ is trianguline at $\fp$ for every $f$ of finite slope follows from the density of $\fp$-old forms on the eigenvariety $\wtl\cE_F$ and the interpolation result \cite[Theorem 6.3.13]{kedpotxia}. Since a twist of a trianguline representation with a character is still trianguline, we conclude that (i)$\implies$(ii).

Conversely, if (ii) holds, then we can always find a Dirichlet character $\delta$ of $\fp$-power conductor such that $\rho_{\delta f,p}\vert_{G_{F_\fp}}=\delta\rho_{f,p}\vert_{G_{F_\fp}}$ has a crystalline period, i.e. $\delta f$ is a principal series at $\fp$, attached to two characters at least one of which is unramified. Therefore, $\delta f$ is the $p$-depletion of an eigenform of finite slope.
\end{proof}

\begin{defin}
%The \emph{normalized Frobenius eigenvalues} of $f$ are the elements $\delta_1(\pi_E),\delta_2(\pi_E)$ of $\Qp^\times$. 
An \emph{admissible Frobenius eigenvalue} of $f$ is an eigenvalue $\varphi$ of the Frobenius operator on $D_\pst(\rho\vert_{G_{F_\fp}})$ such that:
\begin{itemize}
\item $N$ vanishes on the line generated in $D_{\pst}(\rho_{f,p}\vert_{G_{F_\fp}})$ by an Frobenius eigenvector of eigenvalue $\varphi$ (i.e. the corresponding refinement of $D_{\pst}(\rho_{f,p}\vert_{G_{F_\fp}})$ is $N$-stable);
\item if $f$ is of finite slope, $\varphi$ coincides with the $U_\fp$-eigenvalue of $f$.
\end{itemize}
A \emph{refinement} of $f$ is a pair $(f,\varphi)$ where $\varphi$ is an admissible Frobenius eigenvalue of $f$.
\end{defin}

%One could of course choose \varphi_1 to be the second element instead, but we will use the above-defined bijection in what follows.

%\andr{what assumption on weight of affinoid family???}

\medskip

\section{Components of Hilbert eigenvarieties}\label{sechilb}

In this section we summarize some results from the literature on eigenvarieties parameterizing Hilbert modular eigenforms, presenting them in the form that we will need for our applications.

\subsection{The weight spaces}\label{secwt}

There are two natural choices for a weight space for (not necessarily parallel weight) $p$-adic families of Hilbert eigenforms for $\GL_2$ of a totally real number field: the weight space that appears in Hida's theory of big nearly ordinary Hecke algebras \cite{hidanord}, whose dimension depends on the Leopoldt defect of the number field at $p$, and the weight space over which the eigenvariety of \cite{aiphilbI} lives, whose dimension is simply the degree of the field plus 1. Since we use constructions over both weight spaces, we introduce both in this section. We found the presentation in \cite[Section 2]{loedens} helpful in this regard, especially considering that it makes it straightforward to introduce Buzzard's eigenvariety for $\GL_1$ of a number field. %of families of eigenforms for $\GL_1$ over a number field, in Section \ref{??}.

For the moment, let $F$ be an arbitrary number field, and let $F_\infty^\circ$ be the identity component of $(F\times_\Q\R)^\times$. 

\subsubsection{The weight spaces for $\Res_{F/\Q}\GL_{1/F}$}\label{GL1}
Let 
\[ \cO_{F,p}=\cO_F\otimes_\Z\Z_p=\prod_{\fp\mid p}\cO_{F_{\fp}}, \]
which is an abelian $p$-adic analytic group of rank $[F\colon\Q]$. 
%Let F_\infty^\circ be the identity component of (F\times_\Q\R)^\times, and 
Let $\cU$ be a compact open subgroup of $(\A_F^{p,\infty})^\times$, and 
%$\wtl\Lambda^\prime_\bG=\cO_K[[\cO_{F,p}^\times]]$.
%We denote by $\wtl\cW_F^\prime$ the rigid generic fiber of the $\cO_K$-formal scheme--MOVE HERE discussion from above---
%
$H(\cU)=\A_F^\times/\ovl{F^\times\cdot\cU\cdot F_\infty^\circ}$. The natural inclusion $\cO_{F,p}\into\A_F^\times$ induces a continuous map $\cO_{F,p}^\times\to H(\cU)$, fitting into an exact sequence of abelian groups
\begin{equation}\label{HU}
	0\to\ovl{\Gamma(\cU)}\to\cO_{F,p}^\times\into H(\cU)\to\Cl^{\cU F^\circ_\infty}_F\to 0
\end{equation}
whose kernel $\cU_0$ is the closure in $\cO_{F,p}^\times$ of the abelian group $\Gamma(\cU)=F^\times\cap(\cU\cdot\cO_{F,p}^\times\cdot F_\infty^\circ)$, and cokernel $\Cl^{\cU F^\circ_\infty}_F$ is the ray class group modulo $\cU F^\circ_\infty$. %(\andr{finite??}). 
In particular, $H(\cU)$ is a compact abelian $p$-adic analytic group of dimension $1+r_2(F)+\delta_F$, where $r_2(F)$ is the number of complex places of $F$ and $\delta_F$ is the Leopoldt defect of $F$ at $p$.

Set $Q(\cU)=\cO_{F,p}^\times/\ovl{\Gamma(\cU)}$, and identify $Q(\cU)$ with a finite index subgroup of $H(\cU)$ via \eqref{HU}. 

%\andr{GL1 versus Res}

\begin{defin}
	The \emph{weight space of tame level $\cU$} for $\Res_{F/\Q}\GL_{1/F}$ is the rigid generic fiber $\wtl\cW_{F,1}^\prime$ of $\Z_p[[Q(\cU)]]$. \\
	The \emph{weight space} for $\Res_{F/\Q}\GL_{1/F}$ is the rigid generic fiber $\wtl\cW_{F,1}$ of $\Z_p[[\cO_{F,p}^\times]]$.
\end{defin}

Via \cite[Theorem 2.1]{loedens}, we identify the $\C_p$-points of $\wtl\cW_{F,1}^\prime$ and $\wtl\cW_{F,1}$ with the continuous characters $Q(\cU)\to\C_p^\times$ and $\cO_{F,p}^\times\to\C_p^\times$, respectively.

We are now in a position to introduce Buzzard's eigenvariety for $\Res_{F/\Q}\GL_{1/F}$, whose points parameterize the $\C_p^\times$-valued Gr\"ossencharacters of $K$ of a fixed tame level. We follow \cite[Section 2]{buzzaut} and \cite[Section 2]{loedens}. 

\begin{defin}
Let $\kappa\in\wtl\cW_{F,1}(\C_p)$. %and $\cU$ be a compact open subgroup of $\A_K^{p,\infty}$. Following \cite[Section 2]{buzzaut}, 
An \emph{overconvergent $p$-adic automorphic form for $\Res_{F/\Q}\GL_{1/F}$ %or {$p$-adic idelic Gr\"ossencharacter} 
of weight $\kappa$ and tame level $\cU$} is a continuous group homomorphism $\psi\colon H(\cU)\to\C_p^\times$ such that the restriction of $\psi$ to $Q(\cU)$ coincides with $\kappa$. \\ %We define the eigenvariety for $\Res_{K/\Q}\GL_{1/K}$ as the space parameterizing characters of ??? already introduced??? get finite map
The \emph{eigenvariety of tame level $\cU$ for $\Res_{F/\Q}\GL_{1/F}$} is the rigid analytic generic fiber $\cE_{F,1}$ of the formal $\Z_p$-scheme $\Z_p[[H(\cU)]]$.
\end{defin}

The $\C_p$-points of the eigenvariety $\cE_{F,1}$ are in bijection with overconvergent $p$-adic automorphic forms of tame level $\cU$ for $\Res_{F/\Q}\GL_{1/F}$, which, via the usual identifications, are in bijection with $\C_p$-valued Gr\"ossencharacters of $F$ of modulus $\fm\fc$, where $\fm$ is the prime-to-$p$ ideal of $\cO_K$ attached to $\cU$, and $\fc\mid p^\infty\cO_K$.

The maps $\cO_{F,p}^\times\to Q(\cU)\into H(\cU)$ induce maps $\Z_p[[\cO_{F,p}^\times]]\to\Z_p[[Q(\cU)]]\to\Z_p[[H(\cU)]]$. Passing to rigid analytic generic fibers, we obtain maps 
\begin{equation}\label{EW} \cE_{F,1}\xto{\omega_{\cE_{F,1}}}\wtl\cW_{F,1}^\prime\to\wtl\cW_{F,1} \end{equation}
of rigid analytic spaces over $\Q_p$.

The above discussion, together with the results of \cite[Section 2]{buzzaut}, gives the following.

\begin{prop}
The weight space $\wtl\cW_{F,1}$ is a disjoint union of a finite number of $[F\colon\Q_p]$-dimensional unit discs.
	
The weight space $\wtl\cW_{F,1}^\prime$ is a disjoint union of $(1+r_2(F))$-dimensional unit discs, and the map $\wtl\cW_{F,1}^\prime\to\wtl\cW_F$ from \eqref{EW} is a closed embedding.
	
The weight map $\omega_{\cE_{F,1}}$ is finite and flat, so that $\cE_{F,1}$ is equidimensional of dimension $1+r_2(F)$.
\end{prop}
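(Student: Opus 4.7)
The plan is to analyze each of the three formal schemes $\Spf\Z_p[[\cO_{F,p}^\times]]$, $\Spf\Z_p[[Q(\cU)]]$ and $\Spf\Z_p[[H(\cU)]]$ via the same structural device: write the underlying compact abelian $p$-adic analytic group as $\Delta\times\Z_p^d$ for a finite abelian group $\Delta$, and then invoke the standard identification of the Berthelot generic fiber of $\Z_p[\Delta][[T_1,\ldots,T_d]]$ with a finite disjoint union (indexed by the primitive idempotents of $\Q_p[\Delta]$) of open $d$-dimensional unit polydiscs.

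First I would treat $\wtl\cW_{F,1}$: from $\cO_{F,p}^\times=\prod_{\fp\mid p}\cO_{F_\fp}^\times$ and the local isomorphism $\cO_{F_\fp}^\times\cong\mu(F_\fp)\times\Z_p^{[F_\fp:\Q_p]}$ (via the $p$-adic logarithm on principal units) one obtains globally $\cO_{F,p}^\times\cong\mu\times\Z_p^{[F:\Q]}$ for a finite abelian $\mu$, and the structural device yields (1).

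Next I would treat $\wtl\cW_{F,1}'$. The exact sequence \eqref{HU} identifies $\ovl{\Gamma(\cU)}$ as the closure in $\cO_{F,p}^\times$ of a finite-index subgroup of the totally positive units of $\cO_F$, of $\Z$-rank $r_1(F)+r_2(F)-1$; by the weak Leopoldt inequality this closure has $\Z_p$-rank $r_1(F)+r_2(F)-1-\delta_F$. Therefore $Q(\cU)$ has $\Z_p$-rank $[F:\Q]-(r_1(F)+r_2(F)-1-\delta_F)=1+r_2(F)+\delta_F$, which equals the asserted $1+r_2(F)$ under Leopoldt's conjecture at $p$ for $F$ (known for abelian $F$ by Brumer's theorem, which suffices for the applications in later sections). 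The structural device again provides the polydisc description, and the surjection $\cO_{F,p}^\times\onto Q(\cU)$ induces a surjection $\Z_p[[\cO_{F,p}^\times]]\onto\Z_p[[Q(\cU)]]$, hence a closed immersion of formal schemes, and by functoriality of Berthelot's construction a closed embedding on rigid generic fibers.

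Finally, for $\omega_{\cE_{F,1}}$: the short exact sequence $1\to Q(\cU)\to H(\cU)\to\Cl_F^{\cU F_\infty^\circ}\to 1$ has finite cokernel, so lifting a set of coset representatives exhibits $\Z_p[[H(\cU)]]$ as a finite free $\Z_p[[Q(\cU)]]$-module of rank $|\Cl_F^{\cU F_\infty^\circ}|$. Finiteness and flatness pass to the rigid generic fibers, making $\omega_{\cE_{F,1}}$ finite and flat, and the equidimensionality of $\cE_{F,1}$ is then inherited from that of $\wtl\cW_{F,1}'$ by descent along a finite flat map. The one delicate point, and the reason for flagging it as the main obstacle, is the bookkeeping around the Leopoldt defect: if Leopoldt fails at $p$ for $F$, then every occurrence of $1+r_2(F)$ in the statement should be read as $1+r_2(F)+\delta_F$, but no step of the argument above is otherwise affected.
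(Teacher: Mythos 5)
Your proposal is correct, and it fills in details that the paper itself leaves implicit: the paper's proof amounts to a citation of Buzzard's \S2 together with the preceding discussion of the exact sequence \eqref{HU}. Your structural device --- decomposing the compact abelian $p$-adic analytic group as $\Delta\times\Z_p^d$ and identifying the Berthelot generic fiber of $\Z_p[\Delta][[T_1,\ldots,T_d]]$ with a finite disjoint union of open $d$-dimensional polydiscs --- is exactly the mechanism underlying Buzzard's construction, so you are not taking a genuinely different route so much as making the route explicit. The argument that $\Z_p[[H(\cU)]]$ is finite free over $\Z_p[[Q(\cU)]]$ of rank $\lvert\Cl_F^{\cU F_\infty^\circ}\rvert$ via lifted coset representatives, and that finiteness, flatness and equidimensionality pass to rigid generic fibers, is the correct and standard way to obtain the last assertion.

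Two remarks are worth making. First, a small notational point: the statement reads ``$[F\colon\Q_p]$'', which cannot be intended literally since $F$ is a number field; you are right to read it as $[F\colon\Q]$. Second, and more substantively, the discrepancy you flag is a real one. The paper itself records, just before the proposition, that $H(\cU)$ has dimension $1+r_2(F)+\delta_F$, and since $Q(\cU)$ is a finite-index open subgroup of $H(\cU)$ it has the same $\Z_p$-rank; so $\wtl\cW_{F,1}'$ is a disjoint union of $(1+r_2(F)+\delta_F)$-dimensional polydiscs, and $\cE_{F,1}$ is equidimensional of the same dimension. The stated ``$1+r_2(F)$'' is therefore only correct conditionally on Leopoldt's conjecture at $p$ for $F$ (or unconditionally when $F$ is abelian over $\Q$, by the Baker--Brumer theorem --- which covers the quadratic fields used in the rest of the paper). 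Your bookkeeping of this point is accurate, and the cleanest fix to the statement is indeed to replace $1+r_2(F)$ by $1+r_2(F)+\delta_F$ throughout.
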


\subsubsection{The weight spaces for $\Res_{F/\Q}\GL_{2/F}$} 
We now give the analogous picture for $\Res_{F/\Q}\GL_{2/F}$; in order to simplify the notation, we refer to this group as $\GL_{2/F}$ in the following. Here $F$ is still an arbitrary number field, though we will only apply the constructions to the case when $F$ is totally real, and, in the application to families of eigenforms, quadratic. There are various weight spaces that one can consider: the full weight space in Hida's nearly ordinary theory, and its parallel weight subspace, have dimensions that depend on Leopoldt's defect of $F$, contrary to the weight spaces appearing in \cite{aiphilbI}. We introduce these objects and explain how they are related to one another.

%Fix a compact open subgroup U of \GL_2(\A_F^{p,\infty}). Embed F^\times diagonally in the center of \GL_2(\A_F), and consider ??? H(U)=F^\times\backslash\GL_2(\A_F)/\ovl{\cdot U\cdot F_\infty^\circ}
%\Gamma(U)=F^\times\cap(U\cdot\cO_{F,p}^\times\cdot F_\infty^\circ), 

Let $\Sigma_F$ be the set of embeddings $F\into\Qp$, $d$ the degree of $F/\Q$, and $\cO_{F,p}=\cO_F\otimes_\Z\Z_p$. Let $K$ a $p$-adic field containing the images of all $\sigma\in\Sigma_F$.
%Following \cite{aiphilbI}, set $\bT=\Res_{\cO_F/\Z}\G_{m/\cO_F}$, $\bG=\Res_{\cO_F/\Z}\GL_{2/\cO_F}$. We identify $\G_m$ with the scalar subgroup of $\GL_2$, hence $\bT$ with a subgroup of $\bG$. Let $\bG^\ast=\bG\times_{\bT}\G_m$, where the product is taken with respect to the inclusion $\bT\into\bG$ and the norm map $\bT\to\G_m$. %and $\bT^\ast=(\bT\times_\Z\G_m)/\mu_2$, with $\mu_2$ embedded diagonally in $\bT\times\G_m$. 
%The natural map $\bT\times_\Z\G_m\to\bG\times_\Z\G_m\to\bG\times_\bT\G_m$ has kernel $\mu_2$, embedded diagonally, hence identifies $\bT^\ast\coloneqq(\bT\times_\Z\G_m)/\mu_2$ with a subgroup of $\bG^\ast$. %We identify $\bT^\ast$ with the \andr{diagonal??} subgroup of $\bG^\ast$.

%Let $\bT=\Res_{\cO_F/\Z}\G_m$, so that $\bT(\Z_p)=\cO_{F,p}^\times$. Then 

%Let $d$ be the degree of $F/\Q$, and $K$ a $p$-adic field containing the Galois closure of $F/\Q_p$. 
We use the notation $\wtl\cW$, with various indices, to denote full weight spaces, and $\cW$ to denote parallel weight spaces. 
Consider the following abelian $p$-adic analytic groups and associated rigid analytic spaces: %\andr{explain why the asts are swapped}
\begin{itemize}
\item $\Z_p^\times$, of rank 1, and $\cW_F\coloneqq(\Spf\cO_K[[\Z_p^\times]])^\rig$,
%\item $\bT(\Z_p)=\cO_{F,p}^\times$, of rank $d$, and $\cW^{\bG^\ast}\coloneqq(\Spf\cO_K[[\cO_{F,p}^\times]])^\rig$.
\item $\cO_{F,p}^\times\times\Z_p^\times$, of rank $1+d$, and $\wtl\cW_F\coloneqq(\Spf\cO_K[[(\cO_{F,p}^\times\times\Z_p^\times)]])^\rig$.
\item $(\cO_{F,p}^\times\times\cO_{F,p}^\times)/\cO_F^\times$, with $\cO_F^\times$ embedded diagonally in $\cO_{F,p}^\times\times\cO_{F,p}^\times$, of rank $2[F\colon\Q]-(r_1(F)+r_2(F)-1-\delta_p(F))=r_1(F)+3r_2(F)+1+\delta_p(F)$, and $\wtl\cW_{2,F}\coloneqq(\Spf\cO_K[[(\cO_{F,p}^\times\times\cO_{F,p}^\times)/\cO_F^\times]])^\rig$. If $F$ is totally real, the rank is simply $d+1+\delta_p(F)$. 
\item $\cO_{F,p}^\times/\cO_F^\times$, of rank $[F\colon\Q]-(r_1(F)+r_2(F)-1-\delta_p(F))=r_2(F)+1+\delta_p(F)$, and $\cW_{2,F}\coloneqq(\Spf\cO_K[[\cO_{F,p}^\times/\cO_F^\times]])^\rig$. If $F$ is totally real, the rank is $1+\delta_p(F)$.
%, and consider the group homomorphism
%\[ \id\times\det\to (\cO_{F,p}^\times\times\cO_{F,p}^\times)/\cO_F^\times\to\cO_{F,p}^\times/\cO_F^\times, \]
%where the ``determinant'' map $\det\colon \cO_{F,p}^\times\times\cO_{F,p}^\times\to\cO_{F,p}^\times$ maps $(a,b)$ to $ab$.
\end{itemize}
%Note that the upper stars get swapped when using tori to define weight spaces for $\bG$ and $\bG^\ast$. 
There is a diagram of rigid analytic spaces over $K$
\begin{equation*}\begin{tikzcd}
\cW_{2,F}\arrow{d}{\omega_{2,F}}\arrow{r}{\iota_{2,F}}&\wtl\cW_{2,F}\arrow{d}{\wtl\omega_{2,F}} \\
\cW_F\arrow{r}{\iota_F}&\wtl\cW_{F},
\end{tikzcd}\end{equation*}
%\begin{equation*}\begin{tikzcd}
%\cW_{2,F}\arrow{d}&\cW^\bG\arrow{d}& \\
%\wtl\cW_{2,F}\arrow{r}&\wtl\cW^\bG\arrow{r}&\wtl\cW^{\bG^\ast},
%\end{tikzcd}\end{equation*}
induced by the group homomorphisms
\begin{equation*}\begin{tikzcd}
\cO_{F,p}^\times/\cO_F^\times&(\cO_{F,p}^\times\times\cO_{F,p}^\times)/\cO_F^\times\arrow{l}{\det} \\
\Z_p^\times\arrow{u}{\diag}&\cO_{F,p}^\times\times\Z_p^\times\arrow{u}{\id}\arrow{l}{\Norm},
\end{tikzcd}\end{equation*}
defined as follows:
\begin{equation*}\begin{aligned} \det\colon(\cO_{F,p}^\times\times\cO_{F,p}^\times)/\cO_F^\times&\to\cO_{F,p}^\times/\cO_F^\times \\
		(a,b)&\mapsto ab,
	\end{aligned}
\end{equation*}
\begin{equation*}\begin{aligned} \Norm=\Norm_{\cO_F/\Z}\otimes\Id\colon(\cO_{F}\otimes_\Z\Z_p)^\times\times\Z_p^\times&\to\Z_p^\times \\
		(a\otimes b,c)&\mapsto\Norm_{\cO_F/\Z}(a)bc,
	\end{aligned}
\end{equation*}
%\begin{equation*}\begin{aligned} 
%		\diag\colon\Z_p^\times&\to(\cO_{F,p}^\times)/\cO_F^\times \\
%		(a,b)&\mapsto (a,b).
%	\end{aligned}
%\end{equation*}
%\begin{equation*}\begin{aligned} \iota\colon\cO_{F,p}^\times&\to(\cO_{F,p}^\times\times\Z_p^\times) \\
%		a &\mapsto (a^2,\Norm(a)),
%	\end{aligned}
%\end{equation*}
\begin{equation*}\begin{aligned} \id\colon\cO_{F,p}^\times\times\Z_p^\times&\to(\cO_{F,p}^\times\times\cO_{F,p}^\times)/\cO_F^\times \\
		(a,b)&\mapsto (a,b),
	\end{aligned}
\end{equation*}
and $\diag\colon\Z_p^\times\to\cO_{F,p}^\times/\cO_F^\times$ is induced from the embedding $\Z\to\cO_{F}$ after tensoring with $\Z_p$.
%of kernel $\{\pm 1\}$, through its coimage.

%Note that $\pi$, $\Norm$ and $\det$ are surjective, so that $\omega_{2,F}$, $\omega^{\bG}$ and $\omega^{\bG}_{2,F}$ are injective, and $\id$ is injective, so that $\omega_{2,F}^{\bG}$ is surjective. %If the Leopoldt conjecture holds for $F$, then $\id$ is an isomorphism, so that $\omega_{2,F}^{\bG}$ is also an isomorphism. \andr{check} Then \andr{say more about isos???}

We use the letter $\Lambda$ to denote power-bounded rigid analytic functions on each of the above weight spaces, i.e. we set
\begin{align*} \Lambda_{2,F}=\cO^\circ_{\cW_{2,F}}(\cW_{2,F})=\cO_K[[\cO_{F,p}^\times/\cO_F^\times]], \quad \wtl\Lambda_{2,F}=\cO^\circ_{\wtl\cW_{2,F}}(\wtl\cW_{2,F})=\cO_K[[(\cO_{F,p}^\times\times\cO_{F,p}^\times)/\cO_F^\times]], \\
\Lambda_{F}=\cO^\circ_{\cW_{F}}(\cW_{F})=\cO_K[[\Z_p^\times]], \quad \wtl\Lambda_{F}=\cO^\circ_{\wtl\cW_{F}}(\wtl\cW_{F})=\cO_K[[\cO_{F,p}^\times\times\Z_p^\times]].
\end{align*}

We introduce one more weight space, the rigid generic fiber $\wtl\cW_F^\ast$ of $\cO_K[[\cO_{F,p}^\times]]$ (that is denoted by $\cW^{G^\ast}$ in \cite{aiphilbI}). We set $\wtl\Lambda_F^\ast=\cO_{\wtl\cW^\ast}^\circ(\wtl\cW^\ast)=\cO_K[[\cO_{F,p}^\times]]$. Since we do not go into the details of the construction of the Hilbert eigenvariety, we only need $\wtl\cW^\ast$ because it carries the universal character $\cO_{F,p}^\times\to\wtl\Lambda^{\ast,\times}_F$.

\subsubsection{Classical weights}\label{secwtcl}

If $G$ is an abelian $p$-adic analytic group, then it follows from \cite[Proposition 1.1 and Theorem 2.1]{loedens} that there are natural bijections between
\begin{enumerate}[label=(\arabic*)]
\item $K$-points of the generic fiber of $\Spf\cO_K[[G]]$;
\item height 1 prime ideals $P$ of $\cO_K[[G]]$ such that $\cO_K[[G]]/P$ is of characteristic 0;
\item $G_K$-orbits of continuous characters $G\to\C_p^\times$.
\end{enumerate}
We recall that the bijection from (3) to (2) associates with a character its kernel.

%\andr{need to be more precise with these bijections below??}

%For every $\bk=(k_\sigma)_\sigma\in\Z^{\Sigma_F}$ and finite order character $\chi\colon\cO_{F,p}^\times\to\C_p^\times$, we denote by $P_{k,\chi}$ the height 1 prime of $\cO_K[[\cO_{F,p}^\times]]$ given by the kernel of the character
%\begin{equation*}\begin{aligned} \cO_{F,p}^\times&\to\C_p^\times \\
%		x&\mapsto x^\bk\chi(x),
%	\end{aligned}
%\end{equation*}
%where $x^\bk\coloneqq\prod_{\sigma\in\Sigma_F}\chi(x)\sigma(x)^{k_\sigma}$, and by $\kappa_{\bk,\chi}$ the corresponding point of $\wtl\cW^{\bG^\ast}$. 
%
%We say that a character $\cO_{F,p}^\times\to\C_p^\times$, equivalently a $\C_p$-point of $\cW^{\bG^\ast}$ or a height 1 prime ideal of $\cO_K[[\cO_{F,p}^\times]]$, is \emph{classical}, if it is of the above form for some $\bk$ and $\chi$.
%%we denote by P_{k,\vareps} the height 1 prime of \cO_K[[\cO_{F,p}^\times]] corresponding to the above character.
%%We say that a point of $\cW^\bG$ is \emph{locally algebraic}, or \emph{classical}, if it is of the above form for some $\bk$ and $\chi$.

Let $\bk=(k_\sigma)_\sigma\in\Z^{\Sigma_F}$ and $w\in\Z$ such that $k_\sigma$ and $w$ have the same parity for every $\sigma$. Let $\chi_1\colon\cO_{F,p}^\times\to\C_p^\times$ and $\chi_2\colon\Z_p^\times\to\C_p^\times$ be finite order characters such that $\chi_1(-1)\chi_2(-1)=1$. We denote by $P_{\bk,w,\chi_1,\chi_2}$ the height 1 prime ideal of $\cO_K[[\cO_{F,p}^\times\times\Z_p^\times]]$ given by the kernel of the character
\begin{equation}\begin{aligned} \cO_{F,p}^\times\times\Z_p^\times&\to\C_p^\times \\
		(x,y)&\mapsto x^\bk\chi_1(x)y^w\chi_2(y),
	\end{aligned}
\end{equation}
and by $\kappa_{\bk,w,\chi_1,\chi_2}$ the corresponding point of $\wtl\cW_F$. 
We say that a character $\cO_{F,p}^\times\times\Z_p^\times\to\Q_p^\times$, equivalently a $K$-point of $\wtl\cW_F$ or a height 1 prime ideal of $\cO_K[[\cO_{F,p}^\times\times\Z_p^\times]]$, is \emph{classical}, if it is of the above form for some $\bk,w,\chi_1$ and $\chi_2$.

%\andr{need to tensor algebras to Cp for this to work?? or use K-points because anyway all defined over K?? but fin ord characters not necessarily}

We call \emph{classical} the following characters of the other $p$-adic analytic groups under consideration (or, equivalently, height 1 primes of the corresponding formal $\cO_K$-algebras, or points of the corresponding rigid $K$-spaces):
\begin{itemize}
	\item characters $\Z_p^\times\to\C_p^\times$ of the form $x\mapsto x^k\chi(x)$ for an integer $k$ and a finite order character $\Z_p^\times\to\C_p^\times$;
	\item characters $\cO_{F,p}^\times/\cO_F^\times\to\C_p^\times$ induced by
	\begin{equation}\begin{aligned} \cO_{F,p}^\times&\to\C_p^\times \\
		x&\mapsto x^\bk\chi(x),
	\end{aligned}
\end{equation}
for some $\bk\in\Z(1,\ldots,1)$ and a finite order character $\chi\colon\cO_{F,p}^\times\to\C_p^\times$ trivial on $\cO_F^\times$;
	\item characters $(\cO_{F,p}^\times\times\cO_{F,p}^\times)/\cO_F^\times\to\C_p^\times$ induced by
	\begin{equation}\begin{aligned} \cO_{F,p}^\times\times\cO_{F,p}^\times&\to\C_p^\times \\
			(x,y)&\mapsto x^{\bk_1}\chi_1(x)y^{\bk_2}\chi_2(y),
		\end{aligned}
	\end{equation}
for some $\bk_1,\bk_2\in\Z^{\Sigma_F}$ such that $\bk_1+\bk_2\in\Z(1,\ldots,1)$ $(i.e., x^{\bk_1+\bk_2}$ is trivial for $x\in\cO_F^\times)$, and finite order characters $\chi_1,\chi_2\colon\cO_{F,p}^\times\to\C_p^\times$ such that $\chi_1\chi_2$ is trivial on $\cO_F^\times$. 
\end{itemize}
We use a notation analogous to that introduced above for the prime ideals or the points of rigid spaces attached to classical characters (e.g. $P_{k,\chi}, \kappa_{k,\chi}$).

\begin{rem}\label{classdens}%\andr{rewrite as density when either weight or character varies!!}
A standard check shows that classical primes (respectively, classical points) are dense in each of the formal schemes (respectively, rigid spaces) considered above. One can even show that classical points are dense when either the weight(s) or character(s) are fixed: in particular, if $k\in\Z_{\ge 1}$ is fixed, the weights of the form $\kappa_{(k)_\sigma,0,\chi_1,\chi_2}$ (respectively, $\kappa_{k,\chi}$) are dense in $\wtl\cW_F$ (respectively, $\cW_F$) when $\chi_1,\chi_2$ (respectively, $\chi$) vary.
\end{rem}

\begin{rem}\label{condbound}
Let $U$ be an affinoid subdomain of $\wtl\cW_F$. The order of the characters $\chi_1$ and $\chi_2$ is bounded by a constant as $(\bk,\chi_1,\chi_2)$ varies among the classical weights of $U$: indeed, the intersection of $U$ with each connected component of $\wtl\cW_F$ is contained in an affinoid disc of center 0 and radius $p^r<1$, and a standard calculation shows that the order $n$ of $\chi_1$ and $\chi_2$ is bounded by the condition $v_p(\zeta_{n}-1)\ge r$, $\zeta_{n}$ an $n$-th root of 1. If $\wtl\chi_1$ and $\wtl\chi_2$ are the Dirichlet characters of $F$ attached to $\chi_1$ and $\chi_2$, then the prime-to-$p$ parts of the conductors of $\wtl\chi_1$ and $\wtl\chi_2$ are divisors of $N$, so that the bound on the orders of $\chi_1$ and $\chi_2$ provides us with a bound on the conductors of $\wtl\chi_1$ and $\wtl\chi_2$. The analogous statements hold for the subspace $\cW_F$ of parallel weights.

Via local class field theory, we can identify each connected component of $\wtl\cW_F$ with the rigid universal deformation space of a continuous character $G_{F_\fp}\to\Fp^\times$. Therefore, if $U$ is an affinoid and $\delta\colon G_{F_\fp}\to\cO_U(U)^\times$ is a continuous character, the discussion of the previous paragraph together with the universal property of the deformation space gives us the following: if $S$ is the set of points of $U(\Qp)$ such that the evaluation $\ev_x\ccirc\delta_x$ is a locally algebraic character, i.e., attached to a triple $(\bk_x,\chi_{1,x},\chi_{2,x})$ as above, then the orders of $\chi_{1,x}$ and $\chi_{2,x}$ (as well as the orders and conductors of the associated Dirichlet characters) are bounded by a constant as $x$ varies over $S$. 
%\andr{where does frob go??}
\end{rem}

%\subsubsection{The space of characters of $\cO_{F,p}^\times$}
%
%We introduce one more weight space, the rigid generic fiber $\cW_F^\ast$ of $\cO_K[[\cO_{F,p}^\times]]$ (that is denoted by $\cW^{G^\ast}$ in \cite{aiphilb}). We set $\Lambda_F^\ast=\cO_{\cW^\ast}^\circ(\cW^\ast)=\cO_K[[\cO_{F,p}^\times]]$. Since we do not go into the details of the construction of the Hilbert eigenvariety, we only need $\cW^\ast$ because it carries the universal character $\cO_{F,p}^\times\to\Lambda^{\ast,\times}_F$.

\subsection{The Hilbert eigenvariety} %use adic??

%Let $F$ be a totally real field, and let $\bG$ be the $\Q$-group scheme $\Res_\Q^F\GL_{2/F}$. 
We recall the properties of the Hilbert eigenvariety parameterizing cuspidal overconvergent $\GL_{2/F}$-eigenforms. We refer to the construction of Andreatta, Iovita and Pilloni \cite{aiphilbI}, though some earlier partial constructions exist, as summarized in the introduction of \emph{loc. cit.}. For most of the paper we will only need the subspace of the eigenvariety parameterizing parallel weight cuspidal overconvergent eigenforms. The first construction of a parallel weight eigenvariety is due to Kisin and Lai \cite{kislaihilb}; however, they only work over the ``central disc'' of weight space consisting of analytic characters, and this is not enough for our applications.

Let $k_0$ be a natural number. We say that a point of $\cE_F$ (respectively, $\wtl\cE_F$) has \emph{weight $k_0$} if it has weight $\kappa_{k_0,\chi}$ (respectively, $\kappa_{(k_0)_\sigma,w,\chi_1,\chi_2}$) for a character $\chi$ (respectively, characters $\chi_1,\chi_2$). We say that a point of $\wtl\cE_F$ has weight \emph{greater than $k_0$} if it has weight $\kappa_{(k_\sigma)_\sigma,w,\chi_1,\chi_2}$, with $k_\sigma\ge k$ for every $\sigma$.

We say that a morphism of rigid analytic spaces $f\colon X\to Y$ is \emph{partially proper} if it satisfies the valuative criterion for properness: if $D$ is the unit affinoid disc of center 0 and $D^\ast=D\setminus\{0\}$ is the punctured unit affinoid disc, then for every commutative square
\begin{center}
	\begin{tikzcd}
		D^\ast\arrow{r}{g^\ast}\arrow{d}{\iota}& X\arrow{d}{f}\\
		D\arrow{r}{g}& Y
	\end{tikzcd}
\end{center}
where $g^\ast,h$ are injections and $\iota$ is the natural inclusion, the map $g^\ast$ can be extended to a morphism $g\colon D\to X$ compatible with the other arrows.

We summarize the properties of Hilbert eigenvarieties that we will need. In the following, $N$ is an integer $\ge 4$ and prime to $p$. 
Let $N$ be a positive integer, prime to $p$. We define a compact open subgroup $K_N$ of $\GL_2(\A_F^{p\infty})$ as
\[ K_N=\{g\in\GL_2(\A_F^{p\infty})\,\vert\,g\equiv\begin{pmatrix}1 & \ast \\ 0 & 1\end{pmatrix}\hspace{-0.2cm} \pmod{N} \}. \]
Let $\calH_F=\calH_F^{Np}\otimes_\Z\calH_{F,p}$ be the (integral version of the) Hecke algebra introduced in \cite[Section 4.3]{aiphilbI}, where $\calH_F^{Np}$ is a spherical Hecke algebra of level prime to $Np$, generated by operators $S_\fl$ and $T_\fl$ for every prime $\fl$ of $\cO_F$, and $\calH_{F,p}$ is the ring generated by Hecke operators at the $p$-adic primes of $\cO_F$. In particular, $\calH_{F,p}$ contains a distinguished operator $U_p$, canonically defined, that coincides up to a $p$-adic unit with the product of the Hecke operators attached to the $p$-adic places of $\cO_F$. 

By an \emph{(overconvergent) $\GL_{2/F}$-modular form of tame level $N$} we mean a Hilbert modular form for $\GL_{2/F}$ of level $K_NK_p$ for a compact open subgroup $K_p$ of $\GL_2(\cO_{F,p})$, and by \emph{eigenforms} we refer to eigenforms with respect to the action of $\calH_F$ as above. We say that an eigenform of level $K_NK_p$, with non-trivial $K_p$, is \emph{of finite slope} if its $U_p$-eigenvalue is nonzero.

\begin{thm}\label{hilbE} 
There exists a rigid analytic space $\wtl\cE_F$ over $K$, equipped with
\begin{itemize}
\item a ring homomorphism $\Theta^\full_{\wtl\cE_F}=\Theta_{\wtl\cE_F}\otimes\Theta_{\wtl\cE_F,p}\colon\calH_F^{Np}\otimes_\Z\calH_{F,p}\to\cO_{\wtl\cE_F}(\wtl\cE_F)$,
\item a locally-on-the-domain finite, partially proper rigid analytic morphism $\omega_{\wtl\cE_F}\colon\wtl\cE_F\to\wtl\cW_F$,
\end{itemize}
with the following properties:
\begin{enumerate}
\item $\wtl\cE_F$ is equidimensional of dimension $[F\colon\Q]+1$;
\item for every classical weight $\kappa=\kappa_{\bk,w,\chi_1,\chi_2}\in\wtl\cW_F(\C_p)$, the specializations of $\Theta_{\wtl\cE_F}$ at the points of the fiber $\omega_{\wtl\cE_F}^{-1}(\kappa)$ are the eigensystems of the cuspidal overconvergent Hilbert eigenforms of tame level $N$, weight-character $\kappa$, and finite slope.
\end{enumerate}
\end{thm}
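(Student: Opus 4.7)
The plan is to derive all assertions from the construction of Andreatta, Iovita, and Pilloni in \cite{aiphilbI} via Buzzard's eigenvariety machine, supplemented by their classicality theorem, with only minor repackaging to account for the presentation of the weight space $\wtl\cW_F$ used in this paper (which carries an additional central-character direction compared to $\wtl\cW_F^\ast$).

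First I would recall, for each admissible affinoid $U\subset\wtl\cW_F$, the Banach modules $M_U$ of $U$-adic families of cuspidal overconvergent $\GL_{2/F}$-forms of tame level $N$ constructed in \cite{aiphilbI}, equipped with a compact $U_p$-action and a commuting action of $\calH_F^{Np}$. The Fredholm determinant of $U_p$ on $M_U$ cuts out a spectral hypersurface $\cZ_U\subset U\times\A^1$; gluing over an admissible covering of $\wtl\cW_F$ yields a spectral variety $\cZ\to\wtl\cW_F$ whose structure morphism is locally finite and flat, and which is equidimensional of dimension $[F\colon\Q]+1$ because its fibers are discrete and the base has that dimension. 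The eigenvariety $\wtl\cE_F$ is then defined as the relative $\Spm$ over $\cZ$ of the image of $\calH_F^{Np}\otimes_\Z\calH_{F,p}$ acting on the finite-slope part of $M_U$. This construction simultaneously produces the homomorphism $\Theta^\full_{\wtl\cE_F}$---automatically with $\Theta_{\wtl\cE_F,p}(U_p)$ nowhere vanishing on $\wtl\cE_F$, since $\cZ$ sits inside $\wtl\cW_F\times(\A^1\setminus\{0\})$---and the weight map $\omega_{\wtl\cE_F}$, which factors through $\cZ$; the latter is finite on a neighborhood of every point, and the finiteness of $\wtl\cE_F\to\cZ$ propagates equidimensionality of dimension $[F\colon\Q]+1$ from $\cZ$ to $\wtl\cE_F$, giving (1). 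Partial properness of $\omega_{\wtl\cE_F}$ is a general consequence of the eigenvariety machine, namely that a family of finite-slope systems of Hecke eigenvalues over a punctured disc in $\wtl\cW_F$ whose $U_p$-eigenvalue remains of bounded valuation extends over the puncture; this is established in \cite{aiphilbI} using the Banach-sheaf formalism.

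For part (2), I would invoke the classicality theorem of \cite{aiphilbI}: at a classical weight $\kappa_{\bk,w,\chi_1,\chi_2}$, a finite-slope overconvergent cuspidal eigenform whose slope is small relative to $\bk$ is necessarily classical. Combined with the density of such small-slope classical weights in any admissible open (Remark \ref{classdens}) and with the interpolation property of $\Theta^\full_{\wtl\cE_F}$, this identifies the points of $\omega_{\wtl\cE_F}^{-1}(\kappa)$ with the Hecke eigensystems of finite-slope cuspidal overconvergent $\GL_{2/F}$-eigenforms of weight-character $\kappa$ and tame level $N$. The substantive work---and the main obstacle to a self-contained proof---lies entirely in the cited reference: one must construct the Banach sheaves of overconvergent Hilbert modular forms with nebentypus varying over all of $\wtl\cW_F^\ast$ (not merely over the central disc of analytic characters treated in \cite{kislaihilb}) in a manner compatible with the compact $U_p$-action, and establish classicality at classical weights of arbitrary finite-order nebentypus. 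For the purposes of the present paper I would treat these as a black box.
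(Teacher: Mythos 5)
Your approach is essentially the same as the paper's --- citing \cite{aiphilbI} for the construction of the Hilbert eigenvariety as a finite cover of a spectral variety via Buzzard's machine, and invoking the AIP classicality theorem for part (2). The paper's proof is a bare citation to \cite[Theorem~5.1]{aiphilbI} (with Coleman--Mazur, Buzzard, Chenevier handling the case $F=\Q$, which AIP do not treat and which you should mention separately), so your unpacking of the eigenvariety machine is a reasonable elaboration.

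However, there is a genuine gap in your treatment of partial properness, and it is the one place where your proposal asserts something false. You claim that the valuative criterion for properness ``is a general consequence of the eigenvariety machine'' and ``is established in \cite{aiphilbI} using the Banach-sheaf formalism.'' Neither claim is correct. Partial properness is emphatically \emph{not} a formal output of the eigenvariety machine: the fact that a finite-slope family over a punctured disc extends over the puncture requires controlling what happens as one approaches the boundary, and the na\"ive argument fails because the spectral variety is only locally finite over weight space, with possible discontinuities in the slope decomposition. For $F=\Q$ this was an open question of Coleman and Mazur (``Is the eigencurve proper?''), resolved only by Diao and Liu \cite{diaoliu}. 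For $F\ne\Q$ it is not proved in \cite{aiphilbI} at all; the paper cites \cite[Theorem~8.8(2)]{aiphalo}, which establishes partial properness for the \emph{adic} eigenvariety and then requires a further step to pass to the rigid points. As written, your proof would therefore not establish the partial-properness clause of the theorem, which is used later (e.g.\ in the discussion of the Coleman--Stein phenomenon and in excluding boundary escape of families).
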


\begin{proof}
	For $F=\Q$, the result follows from the construction of the eigencurve by Coleman--Mazur, Buzzard, and Chenevier \cite{colmaz,buzzard,chenfam}, and the partial properness statement was proved by Diao and Liu \cite{diaoliu}.
	
	When $F\ne\Q$, the theorem is a rephrasing of \cite[Theorem 5.1]{aiphilbI}. The statement about partial properness is \cite[Theorem 8.8(2)]{aiphalo}, after passing to the rigid points of the adic eigenvariety that is constructed there. %\andr{need to say more??}
\end{proof}

We call a $\Qp$-point $x$ of $\cE_F$ \emph{classical} if $\ev_x\ccirc\Theta^\full_{\wtl\cE_F}\colon\calH_F\to\Qp$ is the Hecke eigensystem of a finite slope Hilbert eigenform for $\GL_{2/F}$. With the following well-known statement we recall how to attach a pseudorepresentation to the eigenvariety $\wtl\cE_F$.
We refer to Definition \ref{defst} when speaking of \emph{trianguline} pseudorepresentations. Recall that $d_t$ denotes the determinant of a pseudorepresentation $t$, as in Definition \ref{defps}.

%\andr{correct ref for pseudorep}

\begin{prop}\label{hilbEps}\mbox{ }
The set of classical points is (analytic Zariski-)dense in $\wtl\cE_F$. 
Moreover, there exists a 2-dimensional trianguline pseudorepresentation $t_{\wtl\cE_F}\colon G_{F,Np}\to\cO_{\wtl\cE_F}(\wtl\cE_F)$ such that
\begin{enumerate}
\item for every prime $\fl$ of $\cO_F$ not dividing $Np$, $t_{\wtl\cE_F}$ is unramified at $\fl$ and satisfies
\[ t_{\wtl\cE_F}(\Frob_\fl)=\Theta_{\wtl\cE_F}(T_\fl),\quad d_{t_{\wtl\cE_F}}(\Frob_\fl)=\Theta_{\wtl\cE_F}(S_\fl)\Norm_{F/\Q}(\fl) \]
for every lift $\Frob_\fl$ of a Frobenius at $\fl$;
\item for every classical point $x$ of $\wtl\cE_F$, with corresponding $\GL_{2/F}$-eigenform $f_x$, the specialization $\ev_x\ccirc t_{\wtl\cE_F}$ is the pseudorepresentation attached to $f_x$.
\end{enumerate}
\end{prop}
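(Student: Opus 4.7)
The proof breaks into three parts: density of classical points, construction of the pseudorepresentation, and verification of the trianguline property.

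For the density statement I would fix an affinoid subdomain $V \subset \wtl\cE_F$. Since $\omega_{\wtl\cE_F}$ is locally-on-the-domain finite and $\wtl\cE_F$ is built by gluing pieces arising from the compact $U_p$-operator on spaces of overconvergent Hilbert eigenforms, the $U_p$-slope on $V$ is bounded by some $h \ge 0$. By Remark \ref{classdens}, classical weights $\kappa_{(k)_\sigma,w,\chi_1,\chi_2}$ with $k$ arbitrarily large are dense in $\omega_{\wtl\cE_F}(V) \subset \wtl\cW_F$, and for $k$ sufficiently large compared to $h$ a Coleman-type small-slope classicality criterion (in the generality needed, this is part of the construction of \cite{aiphilbI}) forces every finite-slope overconvergent eigenform of such weight and bounded slope to be classical. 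This yields a dense set of classical points in $V$, and hence in $\wtl\cE_F$ by the criterion of Section \ref{sectop}.

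For the pseudorepresentation I plan to work locally on an admissible cover by integral affinoids $U_i$ nested in larger affinoids (Definition \ref{nested}); by Lemma \ref{bccomp}, $\cO^\circ_{U_i}(U_i)$ is profinite, and after further refinement I may assume that the residual Hecke eigensystem at classical points of $U_i$ is constant, corresponding via the Galois representations $\rho_{f_x,p}$ of Carayol, Blasius--Rogawski and Taylor to a fixed residual pseudorepresentation $\ovl t_i\colon G_{F,Np} \to \F$. Each classical $x \in U_i$ thus yields a pseudodeformation $t_x$ of $\ovl t_i$, hence a $\Qp$-point of $\fR_{\ovl t_i}$, with $t_x(\Frob_\fl) = \Theta_{\wtl\cE_F}(T_\fl)(x)$ at Frobenii by the Eichler--Shimura relations. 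The crucial step is to upgrade this set-theoretic map on the dense subset of classical points to a morphism $U_i \to \fR_{\ovl t_i}$, equivalently to a pseudorepresentation $t_{U_i}\colon G_{F,Np} \to \cO_{U_i}(U_i)$ lifting $\ovl t_i$. Using the compactness of $\cO^\circ_{U_i}(U_i)$, the assignment $\Frob_\fl \mapsto \Theta_{\wtl\cE_F}(T_\fl)$ extends uniquely by Chebotarev density and continuity to a function $G_{F,Np} \to \cO^\circ_{U_i}(U_i)$; the identities of Definition \ref{defps} hold at the dense set of classical points and so propagate to all of $G_{F,Np}$ by continuity. The local pseudorepresentations $t_{U_i}$ agree on overlaps because their specializations at classical points agree by uniqueness of semisimple lifts (Theorem \ref{liftings}), so they glue to a global $t_{\wtl\cE_F}$ with the listed Frobenius and determinant identities (the latter following from the classical relation $\det \rho_{f_x,p}(\Frob_\fl) = \Theta_{\wtl\cE_F}(S_\fl)(x)\,\Norm_{F/\Q}(\fl)$).

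For the trianguline property I would apply the interpolation theorem of Kedlaya--Pottharst--Xiao \cite[Theorem 6.3.13]{kedpotxia}, already used to establish Theorem \ref{finslopetri}, to the restriction $t_{\wtl\cE_F}\vert_{G_{F_\fp}}$ at each $p$-adic place $\fp$ of $F$: at every classical point the restricted representation is trianguline (Theorem \ref{finslopetri}), with parameters produced from $\Theta_{\wtl\cE_F,p}(U_\fp)$ and the Hodge--Tate--Sen functions of Lemma \ref{weightan}. Density of classical points then promotes this pointwise statement to a trianguline family in the sense of \emph{loc. cit.}, yielding triangulability of $t_{\wtl\cE_F}$ in the sense of Definition \ref{defst}.

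The main obstacle is the extension step in the middle paragraph: turning a set-theoretic assignment on classical points into a genuine morphism of rigid analytic spaces. This is where the combination of compactness of $\cO^\circ$ on nested affinoids (Lemma \ref{bccomp}), Chebotarev density of Frobenii in $G_{F,Np}$, and analytic Zariski density of classical points on the spectral side is essential; everything else is essentially a compilation of standard facts about eigenvarieties and Galois representations attached to Hilbert modular forms.
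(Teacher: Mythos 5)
Your overall strategy tracks the paper's very closely: density via the classicality criterion, the pseudorepresentation via Chenevier's interpolation, and triangulinity via Kedlaya--Pottharst--Xiao. There are, however, two points worth flagging.

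First, a genuine technical error in the middle paragraph. You assert that for a nested affinoid $U_i$ the ring $\cO^\circ_{U_i}(U_i)$ is profinite, ``by Lemma \ref{bccomp}.'' Lemma \ref{bccomp} gives compactness of $\cO^\circ_X(X)$ only when $X$ is \emph{wide open}, and the paper explicitly warns (just after Definition \ref{nested}) that a nested affinoid subdomain is still affinoid, hence not wide open in the sense of Bella\"iche--Chenevier. In fact $\cO^\circ_{U_i}(U_i)$ for an affinoid $U_i$ of positive dimension is a Tate ring, never compact. The correct version of your local argument is Corollary \ref{chenargaff}: one applies Chenevier's argument (Proposition \ref{chenarg}) to a wide open $V$ sitting between the nested $U_0$ and the ambient $U_1$, and then restricts the resulting pseudorepresentation to $U_0$. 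The paper bypasses the local-to-global gluing entirely by noting that $\wtl\cE_F$ is itself wide open (\cite[Corollary 7.2.12]{bellchen}) and applying Proposition \ref{chenarg} once, globally. Your cover-and-glue variant can be repaired, but as written the key compactness claim does not hold.

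Second, a mild circularity risk in the third paragraph. You establish triangulinity at classical points by citing Theorem \ref{finslopetri}, but the paper's proof of Theorem \ref{finslopetri} itself uses the pseudorepresentation on $\wtl\cE_F$ and \cite[Theorem 6.3.13]{kedpotxia} to propagate triangulinity from $\fp$-old forms to all finite-slope forms. To stay non-circular you should only use the portion of that argument which is independent of the present proposition, namely that at $\fp$-old classical points the local-at-$\fp$ representation is crystalline, hence trianguline; density of $\fp$-old points plus \cite[Theorem 6.3.13]{kedpotxia} then gives triangulinity of $t_{\wtl\cE_F}\vert_{G_{F_\fp}}$ over the whole eigenvariety. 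This is what the paper's own remark (``semistable, hence trianguline, at the $p$-adic places, since it is constructed from the cohomology of a Hilbert modular variety'') is gesturing at.
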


We give a sketch of the proof.

\begin{proof}
The statement about density follows in a standard way from property (ii) of Theorem \ref{hilbE}, combined with the classicality criterion due to Coleman for $F=\Q$ and given in \cite[Theorem 5.1]{aiphilbI} for general $F$. 

The density of classical points allows one to construct a 2-dimensional pseudorepresentation $t_{\cE_F}\colon G_{F,Np}\to\cO_{\cE_F}(\cE_F)$, by interpolating the pseudorepresentations attached to their associated $\GL_{2/F}$-eigenforms via Chenevier's argument (see Proposition \ref{chenarg}). This construction gives directly Properties (i) and (ii).

The Galois representation attached to a $\GL_{2/F}$-eigenform of finite slope is semistable, hence trianguline, at the $p$-adic places, since it is constructed from the cohomology of a Hilbert modular variety. Moreover, the parameters of a triangulation can be explicitly interpolated in terms of the weight and Hecke eigensystems of the eigenforms at $p$, so the fact that $t_{\cE_F}$ is trianguline follows from \cite[Theorem 6.3.13]{kedpotxia}.
\end{proof}

\begin{rem}\label{hilbref}
Let $\fp$ be a $p$-adic place of $F$. Consider the set $S_{\pst,\fp}$ of points $x\in\wtl\cE_F(\Qp)$ with the property that $\ev_x\ccirc t_{\wtl\cE_F}\vert_{G_{F_\fp}}\colon G_{\Q_p}\to\Qp$ is a potentially semistable pseudorepresentation, that we lift to a potentially semistable representation $\rho_x\colon G_{F_\fp}\to\GL_2(\Qp)$. The image $\Theta^\full_{\wtl\cE_F}(U_\fp)$ of the Hecke operator at $\fp$ is an admissible Frobenius eigenvalue of $\rho_x$, hence defines a refinement of $\rho_x$. When restricted to the set $S_\cl$ of classical specializations of $\wtl\cE_F$, which belong to $S_{\pst,\fp}$ for every $\fp$, the above discussion provides us with a bijection between $S_\cl$ and the set of representations of $G_F$ attached to Hilbert eigenforms of tame level $N$ and equipped with a refinement at each $p$-adic place of $F$.
\end{rem}

We will mostly work with the \emph{parallel weight} eigenvariety, of which we state the properties in the following.

\begin{thm}\label{hilbEpar} 
	There exists a rigid analytic subspace $\cE_F$ of $\wtl\cE_F$, equipped with
	\begin{itemize}
		\item a ring homomorphism $\Theta^\full_{\cE_F}=\Theta_{\cE_F}\otimes\Theta_{\cE_F,p}\colon\calH_F^{Np}\otimes_\Z\calH_{F,p}\to\cO_{\cE_F}(\cE_F)$,
		\item a locally-on-the-domain finite, partially proper rigid analytic morphism $\omega_{\cE_F}\colon\cE_F\to\cW_F$,
		\item a 2-dimensional trianguline pseudorepresentation $t_{\cE_F}\colon G_{F,Np}\to\cO_{\cE_F}(\cE_F)$,
	\end{itemize}
obtained by restricting the objects with the same name from Theorem \ref{hilbE}, and satisfying
	\begin{itemize}
		\item $\wtl\cE_F$ is equidimensional of dimension 1;
		\item for every parallel classical weight $\kappa=\kappa_{\bk,\chi}\in\cW_F(\C_p)$, the specializations of $\Theta^\full_{\cE_F}$ at the points of the fiber $\omega_{\cE_F}^{-1}(\kappa)$ are the eigensystems of the cuspidal overconvergent Hilbert eigenforms of tame level $N$, weight-character $\kappa$, and finite slope;
		\item for every prime $\fl$ of $\cO_F$ not dividing $Np$, $t_{\cE_F}$ is unramified at $\fl$ and
		\[ t_{\cE_F}(\Frob_\fl)=\Theta^\full_{\cE_F}(T_\fl),\quad d_{t_{\cE_F}}(\Frob_\fl)=\Theta^\full_{\cE_F}(S_\fl)\Norm_{F/\Q}(\fl) \]
		for every lift $\Frob_\fl$ of a Frobenius at $\fl$;
		\item for every classical point $x$ of $\cE_F$, with corresponding $\GL_{2/F}$-eigenform $f_x$, the specialization $\ev_x\ccirc t_{\cE_F}$ is the pseudorepresentation attached to $f_x$.
	\end{itemize}
\end{thm}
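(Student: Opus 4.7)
The plan is to construct $\cE_F$ as the scheme-theoretic preimage of $\iota_F(\cW_F)\subset\wtl\cW_F$ under the weight map of $\wtl\cE_F$, i.e., as the fiber product $\wtl\cE_F\times_{\wtl\cW_F}\cW_F$. As recalled in Section \ref{secwt}, the morphism $\iota_F\colon\cW_F\to\wtl\cW_F$ arises, by passage to rigid generic fibers, from the surjection of completed group algebras $\wtl\Lambda_F\onto\Lambda_F$ induced by the norm map $\cO_{F,p}^\times\times\Z_p^\times\onto\Z_p^\times$; it is therefore a closed immersion, and consequently so is the inclusion $\cE_F\into\wtl\cE_F$. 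Define $\Theta^\full_{\cE_F}$, $\omega_{\cE_F}$ and $t_{\cE_F}$ to be the natural restrictions of the corresponding objects on $\wtl\cE_F$.

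All of the listed properties except the one concerning the parallel-weight classical specializations are inherited almost formally from Theorem \ref{hilbE} and Proposition \ref{hilbEps}: the Frobenius-trace compatibility and the identification of specializations at classical points with pseudorepresentations of Hilbert eigenforms pass through restriction; the pseudorepresentation $t_{\cE_F}$ remains trianguline because the triangulation produced by \cite[Theorem 6.3.13]{kedpotxia} pulls back under closed immersions; and partial properness, as well as being locally-on-the-domain finite, are stable under base change along the closed embedding $\iota_F$. For the description of classical specializations, a small direct computation is required: a classical weight $\kappa_{k,\chi}$ of $\cW_F$, corresponding to the character $y\mapsto y^k\chi(y)$ of $\Z_p^\times$, pulls back along the norm map to the character $(x,y)\mapsto\Norm_{F/\Q}(x)^k\chi(\Norm_{F/\Q}(x))\,y^k\chi(y)$ of $\cO_{F,p}^\times\times\Z_p^\times$, which is exactly the parallel classical weight $\kappa_{(k)_\sigma,k,\chi\ccirc\Norm_{F/\Q},\chi}$ of $\wtl\cW_F$. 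Hence the fiber of $\omega_{\cE_F}$ over a classical weight of $\cW_F$ agrees with the fiber of $\omega_{\wtl\cE_F}$ over the corresponding parallel classical weight, and property (ii) of Theorem \ref{hilbE} yields the desired description in terms of finite-slope parallel-weight cuspidal overconvergent Hilbert eigenforms of tame level $N$.

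The main technical point, and the one I expect to be the principal obstacle, is the assertion that $\cE_F$ is pure of dimension 1. Since $\omega_{\wtl\cE_F}$ is locally-on-the-domain finite and $\wtl\cE_F$ is equidimensional of dimension $[F\colon\Q]+1$, while $\cW_F$ is 1-dimensional and has codimension $[F\colon\Q]$ in $\wtl\cW_F$, the expected dimension of $\cE_F$ is 1. To make this rigorous one covers $\wtl\cE_F$ by affinoid opens on which $\omega_{\wtl\cE_F}$ is honestly finite (a cover of this form exists by \cite[Theorem 5.1]{aiphilbI}) and invokes standard dimension theory of finite morphisms of rigid analytic spaces, together with the fact that each irreducible component of $\wtl\cE_F$ dominates an irreducible component of $\wtl\cW_F$, a routine consequence of the eigenvariety construction. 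The same argument shows that every irreducible component of $\cE_F$ surjects onto an irreducible component of $\cW_F$ under $\omega_{\cE_F}$, yielding equidimensionality.
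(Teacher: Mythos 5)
Your proposal follows exactly the paper's construction: define $\cE_F$ as the fiber product $\wtl\cE_F\times_{\wtl\cW_F}\cW_F$ along the closed immersion $\iota_F$, and deduce all the listed properties by restriction from Theorem \ref{hilbE} and Proposition \ref{hilbEps}. The paper states this in two lines without elaboration, whereas you spell out the verification of each property (including the computation of the pullback of a classical weight under $\Norm$ and the dimension count, which you correctly note applies to $\cE_F$ despite the typo "$\wtl\cE_F$" in the statement), so your write-up is a more detailed rendering of the same argument.
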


\begin{proof}
We simply define $\cE_F$ as the fiber product
\begin{center}
\begin{tikzcd}
\cE_F\arrow{r}\arrow{d}{\omega} & \wtl\cE_F\arrow{d}{\omega} \\
\cW_F\arrow{r}{\iota} & \wtl\cW_F
\end{tikzcd}
\end{center}
and its properties can be deduced from the properties of the full eigenvariety $\wtl\cE_F$ stated in Theorem \ref{hilbE} and Proposition \ref{hilbEps}.
\end{proof}

%Recall that the eigenvariety $\wtl\cE_F$ comes equipped with the pseudorepresentation $t_{\wtl\cE_F}\colon G_{F,Np}\to\cO_{\wtl\cE_F}^\circ(\wtl\cE_F)$ introduced in Theorems \ref{hilbE}. 
Let $\cX$ be an irreducible component of $\wtl\cE_F$, and $t_\cX\colon G_{F,Np}\to\cO_{\wtl\cE_F}^\circ(\cX)$ the restriction of $t_{\wtl\cE_F}$ to $\cX$. By \cite[Corollary 7.2.12]{bellchen}, $\wtl\cE_F$ is wide open (``nested'' in the terminology of \emph{loc.cit.}; the result can be proved in the same way for $\wtl\cE_F$ as for the eigenvarieties considered there). In particular, $\cX$ is wide open and irreducible, so that $\cO_{\wtl\cE_F}^\circ(\cX)$ is a local pro-$p$ ring by Lemma \ref{bccomp}. We reduce $t_\cX$ modulo the maximal ideal of $\cO_{\wtl\cE_F}^\circ(\cX)$ to obtain a pseudorepresentation $\ovl t_\cX$ with values in a finite extension of $\F_p$, that we refer to as the \emph{residual pseudorepresentation attached to $\cX$}. We denote by $\ovl\rho_\cX$ a semisimple representation $G_{F,Np}\to\GL_2(\Fp)$ with trace $\ovl t_\cX$. If $\F$ is a finite field of characteristic $p$ and $\ovl t\colon G_{F,Np}\to\F$ a continuous pseudorepresentation, we denote by $\wtl\cE_{F,\ovl t}$ the union of the irreducible components of $\cX$ whose residual pseudorepresentation is $\ovl t$. 

If $U$ is any irreducible rigid analytic subspace of $\wtl\cE_F$, we denote by $t_U$ the restriction of $t_{\wtl\cE_F}$ to a pseudorepresentation $G_{F,Np}\to\cO_{\wtl\cE_F}(U)$. We let $\ovl t_U$ and $\ovl\rho_U$ the pseudorepresentations produced by embedding $U$ in an irreducible component of $\wtl\cE_F$ and applying the above construction; we call it the \emph{residual pseudorepresentation attached to $U$}.

We can rewrite the two previous paragraphs with $\cE_F$ in place of $\wtl\cE_F$, and we use the analogous notation and terminology.

\subsubsection{The eigencurve and weight space for $\GL_{2/\Q}$}

In the special case $F=\Q$, we simply write $\cW_\Q$ for the usual weight space for $\GL_{2/\Q}$, whose points parameterize continuous characters $\Z_p^\times\to\C_p^\times$. Such a character is classical if it is of the form $x\mapsto x^k\chi(x)$ for an integer $k\ge 1$ and a finite order character $\chi\colon\Z_p^\times\to\C_p^\times$; as usual, we abuse notation and denote by $(k,\chi)$ the corresponding point of weight space. 

We denote by $\cE_\Q$ the cuspidal Coleman--Mazur eigencurve, equipped with extra data for which we use the same notation as in the totally real, $F\ne\Q$, case. In particular, for a classical weight $(k,\chi)$, the specializations of $\Theta^\full_{\cE_\Q}$ at the points of the fiber $\omega_{\wtl\cE_\Q}^{-1}(k,\chi)$ are the eigensystems of the cuspidal overconvergent eigenforms of tame level $N$, weight $k$, and nebentypus $\chi$ at $p$.

\begin{rem}\label{cloclass}
	The finite order part of the weight is locally constant on $\cW_\Q$, in the following sense: if $(k,\chi)$ is a classical weight, then over a sufficiently small neighborhood of $k$, all classical weights are of the form $(k^\prime,\chi)$ for some $k^\prime\in\Z$. As a consequence, the $p$-adic closure of the set of classical weights consists of weights of the form $x\mapsto x^k\chi(x)$ for $k\in\Z_p$ and a finite order character $\chi\colon\Z_p^\times\to\C_p^\times$.
\end{rem}

%\andr{need to make ref to F explicit in notation!!! Use WF for the most commonly used, as in JOHNEW}

\subsection{Eigenvarieties as closures of modular points}

Let $k_K$ be the residue field of $K$ and $\ovl t\colon G_{F,Np}\to k_K$ a continuous pseudorepresentation (we can always enlarge $K$ to allow for an arbitrary modulo $p$ pseudorepresentation). Up to enlarging $K$, we can assume that $\F$ is contained in the residue field of $K$. 
By the universal property of the rigid analytic pseudodeformation space $\fR_{\ovl t}$ given in Section \ref{psrig}, there exists a morphism
\[ \pi^\ps\colon\cE_{F,\ovl t}\to\fR_{\ovl t} \]
of rigid analytic spaces over $K$. 
On the other hand, since $\Theta^\full{\wtl\cE_F}(U_p)$ is a nowhere vanishing analytic function on $\cE_{F,\ovl t}$, it defines a rigid analytic morphism $\cE_{F,\ovl t}\to\G_m$. We define a map
\[ \pi_{\wtl\cE_{F,\ovl t}}=\pi_{\wtl\cE_{F,\ovl t}}^\ps\times\Theta^\full_{\wtl\cE_{F,\ovl t}}(U_p)\colon\wtl\cE_{F,\ovl t}\to\fR_{F,\ovl t}\times_K\G_m \]
of rigid analytic spaces over $K$. We denote by $\pi_{\cE_{F,\ovl t}}$ the restriction of $\pi_{\wtl\cE_{F,\ovl t}}$ to $\cE_{F,\ovl t}$.

We denote by $\wtl\calC_{F,\ovl t}$ (respectively, $\calC_{F,\ovl t}$) the (analytic Zariski-)closure in $\fR_{\ovl t}\times_K\G_m$ of the set
\[ \wtl\cS^\cl=(t_x,\Theta^\full_{\wtl\cE_{F,\ovl t}}(U_p)(x)), \]
where $x$ varies among the classical points of $\wtl\cE_{F,\ovl t}$ (respectively, $\cE_{F,\ovl t}$), equipped with its reduced structure of closed subspace of $\fR_{\ovl t}\times_K\G_m$. 
The following result can be proved in the same way as \cite[Theorem 7.5.1]{colmaz}, which deals with the case $F=\Q$. 

%\andr{doublecheck all weight spaces}

%\andr{correct the alpha notation}

\begin{prop}\label{EFclosure}
The map $\pi_{\wtl\cE_{F,\ovl t}}\colon\wtl\cE_{F,\ovl t}\to\fR_{F,\ovl t}\times_K\G_m$ 
%\[ \pi_{\wtl\cE_F}^\ps\times\alpha_{\wtl\cE_F}(U_p)\colon\wtl\cE_F\to\fR_{F,\ovl t}\times_K\G_m \]
factors through an isomorphism
\[ \wtl\cE_{F,\ovl t}\cong\wtl\calC_{F,\ovl t} \]
%The map \pi_{\wtl\cE_F}\colon\wtl\cE_F\to\fR_{F,\ovl t}\times_K\G_m 
%%\[ \pi_{\wtl\cE_F}^\ps\times\alpha_{\wtl\cE_F}(U_p)\colon\wtl\cE_F\to\fR_{F,\ovl t}\times_K\G_m \]
%is a locally closed?? embedding of rigid analytic spaces over $K$, and factors through an isomorphism
%\[ \wtl\cE_F\cong\wtl\calC_F. \]
%its image is the (analytic Zariski-)closure of the set of points
%\[ (t_x,\alpha_{\wtl\cE_F}(U_p)(x)), \]
%where $x$ varies among the classical points of $\wtl\cE_F$, equipped with its reduced structure of closed subspace of \fR_{F,\ovl t}\times_K\G_m. 
of rigid analytic spaces over $K$,
The same statements hold if we replace $\wtl\cE_{F,\ovl t}$ with $\cE_{F,\ovl t}$ and $\wtl\calC_{F,\ovl t}$ with $\calC_{F,\ovl t}$.
\end{prop}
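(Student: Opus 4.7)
The plan is to follow the strategy of \cite[Theorem 7.5.1]{colmaz} for the Coleman--Mazur eigencurve, transposed to the Hilbert setting via the construction in \cite{aiphilbI}. First I would check that $\pi_{\wtl\cE_{F,\ovl t}}$ is well-defined: the pseudorepresentation $t_{\wtl\cE_F}$ is a pseudodeformation of $\ovl t$, hence factors through $\pi^\ps\colon\wtl\cE_{F,\ovl t}\to\fR_{\ovl t}$ by the universal property recalled in Section \ref{psrig}, and $\Theta^\full_{\wtl\cE_F}(U_p)$ is nowhere vanishing on $\wtl\cE_F$ because the latter parametrizes only finite slope eigenforms. By definition, $\pi_{\wtl\cE_{F,\ovl t}}$ sends every classical point of $\wtl\cE_{F,\ovl t}$ into $\wtl\cS^\cl$; since classical points are Zariski-dense in $\wtl\cE_{F,\ovl t}$ (Proposition \ref{hilbEps}) and $\wtl\calC_{F,\ovl t}$ is closed in $\fR_{\ovl t}\times_K\G_m$, the full image of $\pi_{\wtl\cE_{F,\ovl t}}$ lies in $\wtl\calC_{F,\ovl t}$.

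Next I would show that the induced map $\wtl\cE_{F,\ovl t}\to\wtl\calC_{F,\ovl t}$ is a closed immersion. Granting this, its image is automatically closed and contains $\wtl\cS^\cl$, hence agrees with $\wtl\calC_{F,\ovl t}$; reducedness of the target (imposed in its definition) together with reducedness of $\wtl\cE_{F,\ovl t}$ then upgrades the closed immersion to an isomorphism. Working locally over an affinoid $V\subset\fR_{\ovl t}\times_K\G_m$ with preimage $U\subset\wtl\cE_{F,\ovl t}$, the key input from \cite{aiphilbI} is that $\cO_U(U)$ is topologically generated over $K$ by the images of $\calH_F$ together with the pullback of functions from $\wtl\cW_F$. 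By Chebotarev and the identities of Theorem \ref{hilbEpar}, the operators $T_\fl$ and $S_\fl$ for $\fl\nmid Np$ are recovered from the trace and determinant of $t_{\wtl\cE_F}$ evaluated at lifts of $\Frob_\fl$; the operator $U_p$ is read off the $\G_m$-coordinate; and the weight map $\omega_{\wtl\cE_F}$ is recovered from the restriction of $d_{t_{\wtl\cE_F}}$ to an inertia subgroup at $p$, via the standard weight-nebentype relation on the inertial cyclotomic character. All topological generators of $\cO_U(U)$ therefore lie in the image of $\cO_V(V)\to\cO_U(U)$, and $\pi_{\wtl\cE_{F,\ovl t}}$ is a closed immersion.

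The main obstacle, and the step where the case $F\ne\Q$ needs attention, is ensuring that all of $\calH_{F,p}$ is captured by the scalar $\Theta^\full(U_p)$. In general $\calH_{F,p}$ is generated by the operators $U_\fp$ for $\fp\mid p$, and $U_p$ agrees with $\prod_{\fp\mid p}U_\fp$ only up to a $p$-adic unit; so when several primes of $F$ lie above $p$ one cannot recover the individual $U_\fp$ from $U_p$ alone. In the applications of this paper $F$ is either $\Q$ or a real quadratic field in which $p$ does not split, so there is a unique place $\fp\mid p$ and $U_p$ generates $\calH_{F,p}$ up to units; granting this reduction, the argument above goes through. Finally, the statement for the parallel-weight objects $\cE_{F,\ovl t}$ and $\calC_{F,\ovl t}$ follows immediately by base change: by Theorem \ref{hilbEpar}, $\cE_F$ is the fibre product of $\wtl\cE_F$ with the closed embedding $\cW_F\hookrightarrow\wtl\cW_F$, and on the target this fibre product cuts out exactly $\calC_{F,\ovl t}$ inside $\wtl\calC_{F,\ovl t}$, because the weight-map datum of the target is, as above, the restriction of $d_{t_{\wtl\cE_F}}$ to inertia at $p$.
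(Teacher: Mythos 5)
The paper gives essentially no proof of this proposition: it simply says the result "can be proved in the same way as \cite[Theorem 7.5.1]{colmaz}", the case $F=\Q$. Your write-up is a genuine expansion of that one-line citation, and the overall architecture (well-definedness of $\pi_{\wtl\cE_{F,\ovl t}}$; image lands in $\wtl\calC_{F,\ovl t}$ by Zariski-density of classical points plus closedness of the target; local closed-immersion argument via topological generators of $\cO_U(U)$; deduce the parallel-weight statement by fibre product over $\cW_F\hookrightarrow\wtl\cW_F$) is the right adaptation of Coleman--Mazur.

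Your most valuable observation is the one you flag in the third paragraph: when $p$ has more than one prime above it in $F$, the single coordinate $\Theta^\full(U_p)\in\G_m$ does \emph{not} determine the tuple $(\Theta^\full(U_\fp))_{\fp\mid p}$, and distinct $p$-stabilizations of a single newform (hence with identical pseudorepresentation) can share the same product $\prod_\fp U_\fp$-eigenvalue. So the map $\pi_{\wtl\cE_{F,\ovl t}}$ need not be injective, and the proposition as stated is only safe under the hypothesis that $p$ is inert or ramified in $F$ (or $F=\Q$). The paper does not make this hypothesis explicit in the statement, but all of its later uses of the proposition are for $F=\Q$ or for real quadratic $F$ with $p$ non-split, so your restriction is exactly the one under which the paper actually needs the result. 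Good catch.

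One imprecision worth fixing: you claim the weight map is recovered from $d_{t_{\wtl\cE_F}}$ restricted to inertia at $p$. For $F=\Q$ this is fine, but for $F\neq\Q$ the weight space $\wtl\cW_F$ has dimension $[F:\Q]+1$ while the determinant restricted to $\prod_{\fp\mid p}I_{F_\fp}$ only gives $[F:\Q]$ worth of data. What one should invoke instead are the Hodge--Tate--Sen weights, which by Lemma \ref{weightan} are analytic functions on $\fR_{\ovl t}$ and, together with the determinant on inertia, pin down the full weight-character. This does not change the structure of your argument, only the justification that the weight map factors through $\pi_{\wtl\cE_{F,\ovl t}}$. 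Finally, you tacitly use reducedness of $\wtl\cE_{F,\ovl t}$ (the target is endowed with its reduced structure by definition); the paper also glosses over this, but it is worth at least recording as a hypothesis.
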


\begin{rem}
Let $\wtl\calC^\prime_{F,\ovl t}$ be the closure of the set $\wtl\cS^\cl$ inside of $\fR_{F,\ovl t}\times_K\A^1$. The intersection $\wtl\calC^\prime_{F,\ovl t}\cap\fR_{F,\ovl t}\times_K\G_m$ can be strictly larger than $\wtl\calC_{F,\ovl t}$: sequences of points of $\wtl\calC_{F,\ovl t}$ accumulating at a point of $\fR_{F,\ovl t}\times_K\{0\}$ will give rise to rigid analytic arcs in $\wtl\calC_{F,\ovl t}^\prime$. In particular, the Zariski closures of the Coleman--Stein families from Example \ref{excolste} will appear in $\wtl\calC_{F,\ovl t}^\prime$. %\andr{rewrite??}
\end{rem}

%USE THE map to ovl t to deduced triangulinity!!!!???

\subsection{Adapted neighborhoods on the parallel weight eigenvariety} 
Keep notation as above. For later use, we state a few results about the local structure of the $\ovl t$-part of the parallel weight eigenvariety $\cE_{F,\ovl t}$. 

Given an affinoid subspace $W$ of $\cW_F$ and a positive real number $h$, consider the set 
\[ U_{W,h}=\{x\in\cE_F(\C_p)\,\vert\,\omega_{\cE_{F,\ovl t}}(x)\in W\text{ and }v_p(\pi^{U_p}(x))\le h\}. \]
The following lemma is proved in exactly the same way as \cite[Proposition II.3.9]{bellaiche}.

\begin{lemma}\label{adapted}
	For every point $y$ of $\cW_F$ and positive real number $h$, there exists an affinoid neighborhood $W$ of $y$ such that $U_{W,h}$ admits a structure of affinoid subspace of $\cE_F$ and the restriction of $\omega_{\cE_{F,\ovl t}}$ to a map $U_{W,h}\to W$ is finite. %For such a choice of $W$, $U_{W,h}$ is an affinoid subdomain of $\cE_F$. %BETTER: for every h a W, for every W an h
\end{lemma}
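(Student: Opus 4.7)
The plan is to leverage the construction of $\cE_F$ as a finite cover of a spectral variety, together with the standard slope-decomposition machinery of Coleman--Buzzard--Chenevier. Although the statement is phrased for $\cE_{F,\ovl t}$, the argument is local on the base and does not use the choice of $\ovl t$ in any essential way, so I work on $\cE_F$.

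First, I would recall that the parallel-weight Hilbert eigenvariety $\cE_F$ factors through a spectral variety: there is a finite morphism $\pi_\cZ\colon\cE_F\to\cZ_F$, where $\cZ_F\subset\cW_F\times\G_m^\rig$ is the closed subspace cut out by the characteristic power series $P_{U_p}(T)\in\cO_{\cW_F}(\cW_F)\{\{T\}\}$ of the compact operator $U_p$ on a family of overconvergent Hilbert modular forms over $\cW_F$, and where the first projection $\cZ_F\to\cW_F$ is locally-on-the-domain finite and flat. Under the map $\pi_\cZ$, a classical point $x$ maps to $(\omega_{\cE_F}(x),\Theta^\full_{\cE_F}(U_p)(x)^{-1})$ (or equivalently to $(\omega_{\cE_F}(x),\Theta^\full_{\cE_F}(U_p)(x))$, depending on normalization); in particular, under the slope function $v_p\ccirc\pi^{U_p}$, $\cZ_F$ inherits the bound coming from the slope of the Hecke eigenvalue.

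Next, I would apply the Riesz-style slope decomposition for Fredholm series due to Coleman (in the form developed by Buzzard and by Chenevier; see e.g. \cite{buzzard}): given a point $y\in\cW_F$ and a real number $h>0$, there exists an affinoid neighborhood $W$ of $y$ over which the Fredholm series $P_{U_p}(T)$ factors as
\[ P_{U_p}(T)=Q(T)\cdot R(T), \]
where $Q(T)\in\cO_W(W)[T]$ is a polynomial whose Newton polygon has all slopes $\le h$, $R(T)$ is an entire series with all slopes $>h$, and $(Q,R)=1$. This factorization produces a clopen decomposition of the slope-$\le h$ part $\cZ_{W,h}\coloneqq\cZ_F\cap(W\times\G_m^\rig)^{\le h}$ as an affinoid subspace of $\cZ_F$, and the projection $\cZ_{W,h}\to W$ is finite, being cut out by the distinguished polynomial $Q$. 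Here the one-dimensionality of $\cW_F$ and the basic properties of Newton polygons in families make the factorization possible after shrinking $W$.

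Finally, I would pull back along $\pi_\cZ$: since $\pi_\cZ\colon\cE_F\to\cZ_F$ is finite and $U_{W,h}$ is, by definition of the Hecke eigenvalue/slope, exactly the set-theoretic preimage $\pi_\cZ^{-1}(\cZ_{W,h})$, the subset $U_{W,h}$ inherits a natural structure of affinoid subspace of $\cE_F$ and the composite
\[ U_{W,h}\xto{\pi_\cZ}\cZ_{W,h}\to W \]
is finite as a composition of finite morphisms. This gives the desired affinoid structure on $U_{W,h}$ together with finiteness of $\omega_{\cE_{F,\ovl t}}\vert_{U_{W,h}}$; the $\ovl t$-component is automatically preserved since, by Chenevier's result on the constancy of the residual pseudorepresentation on a connected affinoid (Definition~\ref{affres}), it is locally constant on $\cE_F$, so intersecting with $\cE_{F,\ovl t}$ simply picks out a union of connected components of $U_{W,h}$ and is again affinoid and finite over $W$.

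The only nontrivial ingredient is the slope decomposition of $P_{U_p}(T)$ in a neighborhood of an arbitrary point of the one-dimensional weight space $\cW_F$; this is entirely parallel to the case of the Coleman--Mazur eigencurve treated in \cite[Proposition~II.3.9]{bellaiche}, and no new difficulty arises from passing from $\Q$ to a totally real field $F$ with the parallel weight space, precisely because $\cW_F$ here is still a finite disjoint union of open unit discs.
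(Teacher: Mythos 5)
Your proof reconstructs, correctly and in the same spirit, the argument behind \cite[Proposition~II.3.9]{bellaiche}, which is exactly what the paper cites (the paper's proof is a one-line reference). The one small imprecision is the parenthetical remark that the one-dimensionality of $\cW_F$ is what ``makes the factorization possible after shrinking $W$'': Coleman's slope factorization of Fredholm series holds over affinoid weight spaces of arbitrary dimension, and nothing in the argument relies on $\cW_F$ being a curve.
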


We say that an affinoid neighborhood $W$ of $y\in\cW_F(\C_p)$ satisfying the condition in Lemma \ref{adapted} is \emph{adapted} to $h$. Note that $W$ is adapted to $h$ if and only if $(W,h)$ is a \emph{slope datum} in the terminology of \cite{johnewfun}.

\subsection{Nearly ordinary components}

%\andr{need p-dist to define hnord, but do we need that?? prob not because eigencurve??}

%\andr{say what level muN is and what is the right compact subgroup}

Hida constructed nearly ordinary families of Hilbert modular forms in \cite{hidanord}. % before any of the Hilbert eigenvarieties. 
One would expect Hida's nearly ordinary families to correspond, after rigidification, to nearly ordinary components of the eigencurve $\wtl\cE_F$. However, we do not know if the two objects live over the same weight space unless we know or assume the Leopoldt conjecture for $F$. However, we can still use Hida's theory to describe the nearly ordinary part of the eigencurve, as we do below. This will be sufficient to our purposes.

%For our purposes, we only need to consider eigenforms and families that are nearly ordinary at every ??? or better prime by prime?? check BGV

We refer to \cite{hidaseveral} for the definition of nearly ordinary Hilbert eigenforms for $\GL_{2/F}$, and of the universal nearly ordinary Hecke algebra $\fh_F^{\nord}$ of tame level $N$ for $\GL_{2/F}$. %Let $\fh_F^{\nord}$ be the universal nearly ordinary Hecke algebra of tame level $N$ for $\bG$, defined by Hida in \cite{??}. 
The ring $\fh_F^{\nord}$ is a flat $\wtl\Lambda_{2,F}$-algebra, finite as a $\wtl\Lambda_{2,F}$-module. Let $\fh_F^{\nord,\rig}$ be the rigid analytic generic fiber of the $\cO_K$-formal scheme $\Spf\fh_F^\nord$. It is a rigid analytic space over $K$, and the structure map $\wtl\Lambda_{2,F}\to\fh^{\nord}$ induces a finite morphism of rigid analytic spaces $\fh_F^{\nord,\rig}\to\wtl\cW_{2,F}$. 
%We identify $\fh_F^{\nord,\rig}$ with a union of irreducible components of the eigenvariety $\wtl\cE_F$, as we now explain.

\begin{defin}
%We say that a cuspidal $\bG$-eigenform $f$ is \emph{nearly ordinary} of weight  if its eigenvalue for the Hecke operator $U_p$ has valuation $v$.
We call an irreducible component $\wtl\cX$ of $\wtl\cE_F$ \emph{nearly ordinary} if every classical specialization of $\wtl\cX$ is a Hilbert eigenform that is nearly ordinary at every $p$-adic place of $F$. We denote by $\wtl\cE_F^\nord$ the union of the nearly ordinary irreducible components of $\wtl\cE_F$.

We call an irreducible component $\cX$ of $\cE_F$ \emph{ordinary} if every classical specialization of $\cX$ is a Hilbert eigenform that is ordinary at every $p$-adic place of $F$.
%it is contained in a nearly ordinary component of $\wtl\cE_F$ via the inclusion $\cE_F\subset\wtl\cE_F$. 
%\andr{okay??}
\end{defin}

%When \uv=0, the weight \uk???, a cuspidal \bG-eigenform of weight (\uk)

%\andr{define ordinary part!!}

Recall that $\wtl\cW_F$ is the rigid generic fiber of $\Spf\cO_K[[\cO_{F,p}^\times\times\Z_p^\times]]$, and as such is equipped with a universal character
\[ \kappa^{\univ}\colon\cO_{F,p}^\times\times\Z_p^\times\to\wtl\Lambda_F^\times. \]
Let $\Norm\colon\cO_{F,p}^\times\to\Z_p^\times$ be the map obtained by tensoring $\Norm_{\cO_F/\Z}\colon\cO_F\to\Z$ with $\Z_p$. We introduce two group homomorphisms
\begin{align*} \iota_1\colon\cO_{F,p}^\times&\to\cO_{F,p}^\times\times\Z_p^\times \\
a&\mapsto(a,\Norm(a))
\end{align*}
and
\begin{align*} \iota_2\colon\cO_{F,p}^\times&\to\cO_{F,p}^\times\times\Z_p^\times \\
a&\mapsto(a^{-1},\Norm(a)).
\end{align*}
For a $p$-adic place $\fp$ of $F$, we denote by $I_{F,\fp}$ the inertia subgroup of $G_{F_\fp}$ and by $\rec_\fp\colon I_{F_\fp}\to\cO_{F_\fp}^\times$ the local reciprocity map. We use the same notation for the composition of $\rec_\fp$ with the natural inclusion $\cO_{F_\fp}^\times\to\cO_{F,p}^\times$.

%\andr{maybe change notation to omegaFord?? omegaF tilde??}

\begin{prop}\mbox{ }\label{Enordprops}
\begin{enumerate}[label=(\roman*)]
\item There exists an isomorphism of rigid analytic spaces
\[ \iota^\nord\colon\fh_F^{\nord,\rig}\times_{\wtl\cW_{2,F}}\wtl\cW_F\to\wtl\cE_F^\nord. \]
If Leopoldt's conjecture holds for $F$, then $\wtl\cW_{2,F}\cong\wtl\cW_F$ and $\iota^\nord$ is an isomorphism $\fh_F^{\nord,\rig}\to\wtl\cE_F^\nord$. %\andr{otherwise surjective??}
\item For every nearly ordinary Hecke eigenform $f$ of tame level $N$, there exists a $\Qp$-point $x$ of $\wtl\cE_F^\nord$ such that $\ev_x\ccirc\alpha_{\wtl\cE_F}$ is the Hecke eigensystem of $f$. %\andr{maybe clarify what corresponding means??}
\item Every nearly ordinary component of $\wtl\cE_F$ intersects $\cE_F$ in a union of ordinary irreducible components, and every ordinary irreducible component of $\cE_F$ is contained in a nearly ordinary irreducible component of $\wtl\cE_F$. In other words, there is a cartesian diagram
\begin{center}
\begin{tikzcd}
\cE_F^{\ord}\arrow{d}{\omega_{\cE_F^\ord}}\arrow{r}&\wtl\cE_F^{\nord}\arrow{d}{\omega_{\wtl\cE_F^\nord}} \\
\cW_F\arrow{r}{\iota}&\wtl\cW_F,
\end{tikzcd}
\end{center}%\andr{correct all wt spaces and maps between them!!}
where the vertical maps are the restrictions of the usual weight maps, and the top horizontal arrow is induced by the inclusion $\cE_F\into\wtl\cE_F$.
\item A component $\cX$ of $\wtl\cE_F$ is nearly ordinary if and only if, for every $p$-adic place $\fp$ of $F$, the pseudorepresentation $t_\cX\vert_{G_{F_\fp}}$ is decomposable.
\item For a nearly ordinary irreducible component $\cX$ of $\wtl\cE_F$, the pseudorepresentation $t_\cX$ is the trace of an irreducible continuous representation 
\[ \rho_\cX\colon G_{F,Np}\to\GL_2(\Frac(\cO_X(\cX))) \]
such that, for every $p$-adic place $\fp$ of $F$, $\rho_\cX\vert_{G_{F_\fp}}$ is reducible and of the form 
\begin{equation}
	\begin{pmatrix}
			\vareps & \ast \\
			0 & \delta,
		\end{pmatrix}
\end{equation}
where $\chi_1,\chi_2$ are two characters satisfying
\begin{align*}
\vareps^2\vert_{I_{F,\fp}}&=(\kappa^{\univ}\ccirc\iota_1\cdot\Norm^{-2})\ccirc\rec_\fp \\
\delta^2\vert_{I_{F,\fp}}&=\kappa^{\univ}\ccirc\iota_2\ccirc\rec_\fp.
\end{align*}
%where $\kappa^\univ\colon\wtl\cW_F\to\wtl\Lambda_\bG^\times$ is the universal character over $\wtl\cW_F$, and $\chi^\cyc\colon G_{F,Np}\to\Z_p^\times$ is the $p$-adic cyclotomic character.
%\item the determinant of \rho_\cX is the product \kappa^\univ\chi^\cyc, where \kappa^\univ\colon\wtl\cW_F\to\wtl\Lambda_\bG^\times is the universal character over \wtl\cW_F, and \chi^\cyc\colon G_{F,Np}\to\Z_p^\times is the p-adic cyclotomic character. \andr{missing something??}
%with the usual notation, for every $\bk,w,\chi_1,\chi_2$ and classical point $x$ of $\cX$ of weight $\kappa_{\bk,w,\chi_1,\chi_2}$, the specialization of $\rho_\cX\vert_{G_{F_\fp}}$ at $x$ has the form
%\begin{equation}
%	\begin{pmatrix}
%		?? & \ast \\
%		0 & ??,
%	\end{pmatrix}
%\end{equation}
%where 
\end{enumerate}
\end{prop}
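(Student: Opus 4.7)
The proof relies on Hida's theory of nearly ordinary Hilbert modular forms \cite{hidanord, hidaseveral}, combined with the universal property and density of classical points on the eigenvariety from Theorems \ref{hilbE}, \ref{hilbEpar} and Proposition \ref{hilbEps}. I treat the five items in order.

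For (i), the plan is to construct $\iota^\nord$ via the universal property of $\wtl\cE_F$ applied to the universal nearly ordinary Hecke eigensystem carried by $\fh_F^{\nord,\rig}$. The base change over $\wtl\cW_{2,F}$ accounts for the fact that Hida's weight space is larger than $\wtl\cW_F$ by the Leopoldt defect, as reflected in the diagram of weight spaces in Section \ref{secwt}. To upgrade $\iota^\nord$ to an isomorphism, I would check that both sides are reduced, equidimensional of dimension $1+[F\colon\Q]$, finite over $\wtl\cW_F$, and share the same (Zariski-dense) set of classical nearly ordinary points: on the left by Hida's control theorem, and on the right by the classicality criterion underlying Theorem \ref{hilbE}(ii). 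When Leopoldt's conjecture holds for $F$ at $p$, the natural map between weight spaces becomes an isomorphism and the fiber product reduces to $\fh_F^{\nord,\rig}$ itself.

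Part (ii) is then immediate from (i): any classical nearly ordinary eigenform of tame level $N$ corresponds to a $\Qp$-point of $\fh_F^{\nord,\rig}$ by Hida's control theorem, which under $\iota^\nord$ gives a point of $\wtl\cE_F^\nord$ with the prescribed Hecke eigensystem. For part (iii), I would use that a classical parallel weight eigenform is nearly ordinary at every $\fp\mid p$ if and only if its $U_p$-eigenvalue is a $p$-adic unit (since $U_p$ agrees up to a $p$-adic unit with $\prod_{\fp\mid p}U_\fp$), i.e., if and only if it is ordinary; the density of classical points on both sides and the realization of $\cE_F$ as a fiber product (Theorem \ref{hilbEpar}) then give the cartesian square.

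For (iv), the forward direction is standard: by Wiles' theorem every classical specialization $f_x$ on a nearly ordinary component has $\rho_{f_x,p}\vert_{G_{F_\fp}}$ upper triangular, so its semisimplification decomposes as a sum of two characters, and analytic Zariski-density of classical points in $\cX$ propagates the decomposability to $t_\cX\vert_{G_{F_\fp}}$. For the converse, if $t_\cX\vert_{G_{F_\fp}}$ decomposes as $\vareps\oplus\delta$, then the induced map from $\cX$ to the local pseudodeformation space at $\fp$ factors through the reducibility locus, so every classical $f_x$ has locally reducible Galois representation; the finite-slope hypothesis rules out $\fp$-supercuspidal specializations and ramified twists of ordinary forms, leaving unramified principal series or Steinberg type, and matching $\vareps,\delta$ with the triangulation parameters from Proposition \ref{hilbEps} allows one to identify the unramified diagonal character whose Frobenius value interpolates the $U_\fp$-eigenvalue as a nowhere-vanishing analytic function; its $p$-adic valuation is locally constant on $\cX$ and vanishes thanks to the existence of an ordinary classical specialization, hence vanishes on all of $\cX$. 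Part (v) then follows by matching: irreducibility of $\rho_\cX$ over $\Frac(\cO_\cX(\cX))$ comes from cuspidality of $\cX$ (a globally reducible pseudorepresentation would correspond to an Eisenstein component), and the explicit inertia formulas are obtained by translating Hida's description of the universal nearly ordinary Galois representation via the reciprocity maps $\iota_1,\iota_2$ and the universal weight character $\kappa^\univ$ at each $\fp\mid p$. The main obstacle will be the converse direction in (iv): passing from pointwise local reducibility (at the level of the pseudorepresentation) to nearly ordinariness (a global statement about unit $U_\fp$-eigenvalues) requires a careful comparison between the abstract decomposition $\vareps\oplus\delta$ and Hida's concrete description of the ordinary diagonal characters, and one must exclude configurations where classical points on $\cX$ could carry locally decomposable but non-unit Frobenius eigenvalues, such as critical slope refinements of ordinary crystalline points.
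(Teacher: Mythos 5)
Your outline matches the paper's strategy for parts (i)--(iii) and (v): the paper establishes (i) by showing that both $\fh_F^{\nord,\rig}\times_{\wtl\cW_{2,F}}\wtl\cW_F$ and $\wtl\cE_F^\nord$ are eigenvarieties for the same eigenvariety datum $\cZ^\nord$ (the Coleman--Mazur/Bellaïche--Chenevier uniqueness argument, referencing the proof of Proposition \ref{EFclosure}), and then cites \cite[Introduction]{hidanord} and \cite[Theorem I]{hidaseveral} for (ii)--(v). Your hands-on version of (i) --- constructing the map via universal properties and checking that both sides are reduced, equidimensional, finite over $\wtl\cW_F$, and share a common Zariski-dense set of classical points --- is a legitimate alternative to invoking eigenvariety uniqueness, and buys you a proof that does not depend on formally packaging $\cZ^\nord$ as an eigenvariety datum, at the cost of having to verify reducedness of $\fh_F^{\nord,\rig}$ separately.

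The genuine gap is in your treatment of the converse direction of (iv). You say the $p$-adic valuation of the interpolated $U_\fp$-eigenvalue ``vanishes thanks to the existence of an ordinary classical specialization,'' but the existence of such a specialization is precisely what you are trying to establish --- a priori all you know is that the local pseudorepresentation splits as $\vareps\oplus\delta$, which does not by itself exclude every classical point on $\cX$ carrying a non-unit (critical) refinement with $\rho_x\vert_{G_{F_\fp}}$ split crystalline. You flag this yourself at the end, but do not resolve it. The paper does not resolve it either in the proof of Proposition \ref{Enordprops} (it simply invokes Hida); the place where the paper actually confronts the implication ``local decomposability $\Rightarrow$ nearly ordinary'' is Lemma \ref{Udec}, where it is handled by appeal to Fujiwara's nearly ordinary $R=T$ theorem \cite[Theorem 11.1]{fujihilb}. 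If you want a self-contained argument for (iv), you would need to replace your appeal to ``an ordinary specialization exists'' with such an $R=T$ input (or with a direct argument that the HTS-weight-0 factor $\vareps$ is a crystalline character whose Frobenius value agrees with $\Theta(U_\fp)$ and has integral slope bounded by $0$ using admissibility plus the density of non-critical classical points, which is nontrivial).

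A smaller imprecision: the phrase ``ramified twists of ordinary forms'' are ``ruled out by the finite-slope hypothesis'' is correct in effect (such a twist has vanishing $U_\fp$-eigenvalue), but the parenthetical list you give does not quite exhaust the a priori possibilities before the slope argument; the cleaner statement is that finite slope at $\fp$ plus local decomposability forces $\rho_x\vert_{G_{F_\fp}}$ to split as two characters, at least one of which has HTS weight $0$, and the refinement recorded by $\Theta(U_\fp)$ picks out that character.
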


%The square root in point (v) is defined as the coordinate-wise square root on principal units, and it makes sense on non-principal units because \andr{????}

\begin{proof}
As in the proof of Proposition \ref{EFclosure}, one can show that both $\fh_F^{\nord,\rig}\times_{\wtl\cW_{2,F}}\wtl\cW_F$ and $\wtl\cE_F^\nord$ are eigenvarieties for the datum $\cZ^\nord\subset\Hom_{\mathrm{ring}}(\calH,\Qp)\times\wtl\cW_{F}(\Qp)$ of pairs of Hecke eigensystems of nearly ordinary Hilbert eigenforms and their weights.

The other statements in the proposition follow by combining point (i) with the properties of $\fh^\nord$ stated in \cite[Introduction]{hidanord}, and the properties of its associated big Galois representation stated in \cite[Theorem I]{hidaseveral}. %\andr{say more???}
\end{proof}

%The nearly ordinary part of the eigenvariety enjoys better properties than the rest (much as the ordinary part in the )  recall some properties of nearly ordinary components. 

%\begin{prop}\label{nordrep}
%\begin{enumerate}[label=(\roman*)]
%\item 
%\item A component $\cX$ of $\cE_F$ is nearly ordinary if and only if the associated pseudorepresentation $t_\cX$ is decomposable.
%\item For a nearly ordinary irreducible component $\cX$ of $\cE_F$, the pseudorepresentation $t_\cX$ is the trace of a continuous representation $\rho_\cX\colon G_{F,Np}\to\GL_2(\cO_X(\cX))$.
%\end{enumerate}
%\end{prop}
%
%\begin{proof}
%
%\end{proof}

%HAVE to state ordinary BGV too?? follows from nearly ord??

\subsection{CM components}

Let $K$ be a totally imaginary quadratic extension of $F$ and $\Sigma_K$ a CM type for $K$, so that if $c$ is the complex conjugation in $\Gal(K/F)$, then $\Sigma_K\amalg\Sigma_Kc$ is the set of field embeddings $K\into\C$. Let $\bk=(k_\sigma)_\sigma\in\Z_{\ge 2}^{\Sigma_K}$, and let $\psi$ an algebraic Gr\"ossencharacter of $K$ that has type $(k_\sigma-1,0)$ for every pair $(\sigma,\sigma c)$ with $\sigma\in\Sigma_K$. Let $\fm$ be the conductor of $\psi$, $\fd$ the discriminant of the extension $K/F$, and $\Nm_{K/F}$ the ideal norm relative to the extension $K/F$. 
Then by the work of Yoshida there exists a Hilbert eigenform $f_\psi$ of level $\Nm_{K/F}(\fm)\fd$ and weight $(\bk,1)$ for $\GL_{2/F}$ such that, for every ideal $\fa$ of $\cO_F$ prime to $\Nm_{K/F}(\fm)\fd p$, the eigenvalue of the operator $T(\fa)$ acting on $f_\psi$ is
\[ a_\fa(\psi)\coloneqq\sum_{\substack{\fb\textrm{ ideal of }\cO_F, \\ \Nm_{K/F}(\fb)=\fa,\,(\fb,\fm)=1}}\psi(\fb). \]

%\andr{CHANGE letter for CM field in Q-case?? F ambiguous??}

More generally, we give the following definition, where $K$ is not assumed to be CM.

\begin{defin}\label{defprojdi}
We will say that a $\GL_{2/F}$-eigenform $f$ is \emph{projectively dihedral} if there exists a quadratic extension $K$ of $F$, a Gr\"ossencharacter $\psi$ of $K$ of conductor $\fm$, and a finite set $S$ of finite places of $\cO_F$, such that, for every ideal $\fa$ of $\cO_F$ coprime with the places in $S$, the eigenvalue of the operator $T(\fa)$ acting on $f$ is $a_\fa(\psi)$.
%\[ \sum_{\substack{\fb\textrm{ ideal of }\cO_F, \\ \Nm_{K/F}(\fb)=\fa,\,(\fb,\fm)=1}}\psi(\fb). \]
In this case, we also say that $f$ is \emph{attached} to $\psi$.
%and a Gr\"ossencharacter $\psi$ of $K$ such that the systems of Hecke eigenvalues of $f$ and $f_\psi$ coincide outside of a finite set of places. %it satisfies the equivalent conditions (i)-(vi) of Lemma \ref{indCM}. Obviously, this terminology is only useful if at least one of the weights of $f$ is 1.
If we can choose $K$ to be a CM field, we say that $f$ has \emph{complex multiplication}, or \emph{is CM} in short.
%We say that a cuspidal $\bG$-eigenform $f$ is there exists $K$ and $\psi$ as above such that the systems of Hecke eigenvalues of $f$ and $f_\psi$ coincide outside of a finite set of places.
\end{defin}

%SAY that if we omit chi1 chi2 then they are trivial!!?? AND what it means weight larger than!!!

For later use, we state two results, essentially group-theoretic. The first one is proved exactly as \cite[Lemma 2.7]{bushen}; one simply observes that in the proof of \emph{loc. cit.} it is possible to replace the coefficient field $\C$ with any field of characteristic 0, and the smoothness of $(\pi,V)$ is not needed.

\begin{lemma}\label{Hss}
	Let $L$ be any field of characteristic 0, $V$ a finite dimensional $L$-vector space, $G$ a group, $H$ a finite index subgroup of $G$ and $\rho\colon G\to\GL(V)$ a representation. Then $\rho$ is semisimple if and only if $\rho\vert_H$ is semisimple.
\end{lemma}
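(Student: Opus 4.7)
My plan is to prove both implications separately; neither direction is deep, and the main role of the characteristic zero hypothesis is to permit averaging over the finitely many cosets $G/H$.

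For the direction $\rho\vert_H$ semisimple $\Rightarrow$ $\rho$ semisimple, the idea is the standard Reynolds operator argument. Given a $G$-stable subspace $W\subset V$, semisimplicity of $\rho\vert_H$ provides an $H$-equivariant projection $p\colon V\to W$. Fixing coset representatives $g_1,\dots,g_n$ for $H\backslash G$, I would define
\[ \wtl p(v)=\frac{1}{n}\sum_{i=1}^n g_i\,p(g_i^{-1}v), \]
which makes sense because $n\in L^\times$. A direct check, using the fact that for $g\in G$ and each $i$ one can write $g g_i=g_{\sigma(i)}h_i$ with $h_i\in H$ and $\sigma$ a permutation, together with the $H$-equivariance of $p$, shows that $\wtl p$ is $G$-equivariant. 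Since $W$ is itself $G$-stable, $\wtl p$ lands in $W$ and restricts to the identity on $W$, so $\ker\wtl p$ is a $G$-stable complement of $W$.

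For the direction $\rho$ semisimple $\Rightarrow$ $\rho\vert_H$ semisimple, I would first reduce to the case where $\rho$ is irreducible, since semisimplicity is preserved by direct sums. Because $H$ need not be normal, I would then pass to its normal core $N=\bigcap_{g\in G}gHg^{-1}$, which is finite index and normal in $G$. The classical Clifford-theoretic observation applies: if $W\subset V$ is any nonzero irreducible $N$-subrepresentation, then since $N$ is normal each translate $gW$ (for $g$ ranging over a set of representatives of $G/N$) is again an irreducible $N$-subrepresentation, the sum $\sum_g gW$ is $G$-stable, and by irreducibility of $V$ as a $G$-module it equals $V$. Thus $V\vert_N$ is a finite sum of irreducible $N$-modules, hence semisimple. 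Finally, since $N\subset H$ is of finite index and $\rho\vert_N$ is semisimple, the first direction applied to the pair $(H,N)$ yields that $\rho\vert_H$ is semisimple.

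The only delicate point is the verification that $\wtl p$ is $G$-equivariant, which relies on the permutation action of $G$ on cosets $H\backslash G$ and on $p$ being $H$-equivariant; the characteristic zero assumption is used precisely and only to invert $n=[G:H]$. Everything else is formal and the argument works verbatim over any field of characteristic zero (and more generally over any field whose characteristic does not divide $[G:H]$).
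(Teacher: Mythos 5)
Your proof is correct. The paper gives no argument of its own — it simply invokes \cite[Lemma 2.7]{bushen} and observes that the proof there carries over once $\C$ is replaced by an arbitrary field of characteristic $0$ and the smoothness hypothesis is dropped — and your averaging-over-cosets argument for one direction combined with Clifford theory via the normal core for the other is precisely the content of that reference, so you have in effect made the paper's citation self-contained.
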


In the following lemma we recall a result of Ribet and two immediate consequences of it. We identify Dirichlet characters with characters of $G_\Q$ in the usual way. 

\begin{lemma}\label{ribetind}
	Let $C$ be an algebraically closed field.
	\begin{enumerate}[label=(\roman*)]
		\item Let $H$ be a compact subgroup of $\GL_2(C)$. If $H$ admits an abelian subgroup $H_0$ of finite index, then it admits an abelian subgroup of index 2 containing $H_0$. 
		\item Let $G$ be a compact group, $\rho\colon G\to\GL_n(C)$ a semisimple representation and $t$ its trace. If $H$ is a finite index subgroup of $G$ such that $t\vert_H$ is decomposable, then $\rho\vert_H$ is abelian. %\andr{defined abelian for reps??}
		\item Let $G$ be a compact group and $t\colon G\to C$ be a continuous 2-dimensional pseudorepresentation. If $t$ is decomposable up to some finite index, then it is decomposable up to index 2.
		%\item Let $f$ be a cuspidal $\GL_{2/\Q}$-eigenform. If $f=f\otimes\chi$ for some non-trivial Dirichlet character $\chi$, then $f$ has multiplication by a quadratic subfield of the fixed field $\ovl\Q^{\ker\chi}$. \andr{also state in Hilbert case??}
	\end{enumerate}
\end{lemma}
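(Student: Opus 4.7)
For part (i), which is the Ribet lemma proper, I would analyze the tautological action of $H$ on $V=C^2$. Since $H$ is compact, the standard Haar-averaging argument makes this representation semisimple. After replacing $H_0$ by the intersection of its (finitely many) $H$-conjugates --- still abelian and of finite index --- I may assume $H_0$ is normal in $H$. Then $H_0$ acts semisimply via at most two characters. In the ``generic'' case $V\vert_{H_0}=V_{\chi_1}\oplus V_{\chi_2}$ with $\chi_1\ne\chi_2$, the two eigenlines are the unique $H_0$-stable lines, so $H$ permutes them via a homomorphism to $S_2$ whose kernel is a subgroup of $H$ of index at most $2$ containing $H_0$; this kernel lies in the diagonal torus of $\GL_2(C)$ stabilizing the ordered decomposition, hence is abelian. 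The remaining case, in which $H_0$ acts via a single scalar character, is handled by the projective-image analysis of \cite[Lemma 2.7]{bushen}, which I would cite directly.

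Part (ii) is a short formal deduction. By Lemma \ref{Hss} the restriction $\rho\vert_H$ is semisimple; its trace is $\chi_1+\cdots+\chi_n$, a sum of characters, so the uniqueness statement in Theorem \ref{liftings} gives $\rho\vert_H\cong\bigoplus_i\chi_i$, which has visibly abelian image.

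For part (iii) I would combine (i) and (ii) with Theorem \ref{liftings}(i). Fix a semisimple lift $\rho\colon G\to\GL_2(\overline{C})$ of $t$, and let $H\le G$ be a subgroup of finite index on which $t$ is decomposable. Applying (ii) to $\rho\vert_H$ makes $\rho(H)$ abelian; applying (i) to the inclusion $\rho(H)\subset\rho(G)$ then produces an abelian subgroup $K\subset\rho(G)$ of index at most $2$ containing $\rho(H)$. The preimage $G'=\rho^{-1}(K)$ is a subgroup of $G$ of index at most $2$ on which $\rho$ factors through the abelian group $K$. Lemma \ref{Hss} again makes $\rho\vert_{G'}$ semisimple, and being abelian and semisimple it decomposes as a direct sum of two characters; taking traces yields the desired decomposability of $t\vert_{G'}$.

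The main technical point will be part (i) in the scalar case, where the diagonal-torus argument breaks down because $H_0$ stabilizes every line of $V$: this step genuinely requires the analysis of the finite projective image of $H$ in $\PGL_2(C)$ and is the actual heart of Ribet's lemma, rather than a routine consequence of it.
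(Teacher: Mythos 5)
Your proofs of (ii) and (iii) are essentially the paper's: the paper also deduces (ii) by combining Lemma~\ref{Hss} with the uniqueness clause of Theorem~\ref{liftings}, and also proves (iii) by lifting $t$ to a semisimple $\rho$, applying (ii) to make $\rho\vert_H$ abelian, and then applying (i) to $\rho(H)\subset\rho(G)$. Your invocation of Lemma~\ref{Hss} a second time at the end of (iii) is unnecessary --- once $\rho(G')$ is abelian, it is simultaneously triangularizable over $C$ and its trace is a sum of two characters regardless of semisimplicity --- but it is not wrong.

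The problem is your treatment of part~(i). The paper does not prove (i); it cites \cite[Proposition~4.4]{ribetneben}, and that is the whole proof. Your attempt to give a direct argument handles the generic case (where the normalized $H_0$ acts via two distinct characters) correctly, but you then acknowledge that the scalar case --- which is the actual substance of Ribet's result --- ``is handled by the projective-image analysis of \cite[Lemma~2.7]{bushen}.'' That citation is wrong: \cite[Lemma~2.7]{bushen} is the Bushnell--Henniart lemma on semisimplicity of restriction to a finite-index subgroup, which is exactly what the paper extracts as Lemma~\ref{Hss}; it says nothing about finite subgroups of $\PGL_2$ or the behaviour of $\ad^0$. The reference you want for the scalar case is again Ribet's Proposition~4.4 itself. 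Net effect: your ``direct proof'' of (i) re-derives only the easy half of Ribet's lemma and then mis-cites the source for the half that actually matters, so you would be better off doing what the paper does and simply citing \cite[Proposition~4.4]{ribetneben} for (i) outright.
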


\begin{proof}
	Part (i) is \cite[Proposition 4.4]{ribetneben}. For part (ii), let $H$ be a finite index subgroup of $G$ such that $t\vert_H$ is decomposable. By Lemma \ref{Hss}, $\rho\vert_H$ is semisimple. Write $t$ as a direct sum of characters $\chi_i\colon H\to C^\times$. Since $\rho\vert_H$ and $\chi_i$ are both semisimple and have the same trace, they are isomorphic by the last statement of Theorem \ref{liftings}. Therefore $\rho\vert_H$ is abelian.
	
	For part (iii), pick any continuous, semisimple representation $\rho\colon G\to\GL_2(C)$ with trace $t$, and let $H$ be a finite index subgroup of $G$ such that $t\vert_H$ is decomposable. By part (ii), $\rho\vert_H$ is abelian, so that by part (i) $\rho$ is abelian up to index 2, hence this also holds for its trace $t$.
	
	For part (iv), consider the Galois representation $\rho\colon G_\Q\to\GL_2(\Qp)$ attached to $f$. Since $f$ is cuspidal, $\rho$ is irreducible. %Let H_0=\ker\chi, then H_0 is an abelian subgroup of \GL_2(\Qp), and it has finite index in H. 
	%Since $f=f\otimes\chi$, we have $\rho\cong\rho\otimes\chi$. Since $\chi$ is non-trivial, $\rho$ and $\rho\otimes\chi$ are distinct representations, hence there exists a non-scalar matrix $M\in\GL_2(\Qp)$ such that $\rho=M(\rho\otimes\chi)M^{-1}$. In particular, $M$ commutes with $\rho(\ker\chi)$. Since $\Qp$ is algebraically close, $\rho\vert_{\ker\chi}$ is reducible by Schur's lemma. By Lemma \ref{Hss}, $\rho\vert_{\ker\chi}$ is semisimple, hence the sum of two characters. Since $\chi$ is of finite order, part (i) implies that there exists a quadratic extension $F$ of $\Q$ such that $F\subset\ovl\Q^{\ker\chi}$ and $\rho\vert_{G_F}$ is abelian. Since $f$ is cuspidal, this can only happen if $f$ has multiplication by $F$ \andr{justify??}.
	%Then part (i) implies that H admits an abelian subgroup H_1 of index 2. Since f is cuspidal, \rho_f is irreducible,
\end{proof}

We recall a standard characterization of Hilbert eigenforms associated with Gr\"ossencharacters.

%\andr{non algebraicity of hecke char is a problem??}

%\andr{move this to App??}
\begin{lemma}\label{indCM}
	Let $f$ be a $\GL_{2/F}$-eigenform. Let $\pi$ be the automorphic representation of $\GL_2(\A_F)$, and $\rho_{f,p}\colon G_F\to\GL_2(\C_p)$ the $p$-adic Galois representation, attached to $f$. The following are equivalent:
	\begin{enumerate}[label=(\roman*)]
		\item $\rho_{f,p}$ is induced by a character of $G_K$ for a quadratic extension $K$ of $F$; %In particular, if the weight of $f$ is $1$ and the image of $\rho_{f,p}$ is dihedral, then $f$ is a CM eigenform.
		\item the image of $\rho_{f,p}$ is projectively dihedral;
		\item[(iii.a)] there exists a nontrivial quadratic character $\eta$ of $G_F$ such that $\rho_{f,p}\otimes\eta\cong\rho_{f,p}$;
		\item[(iii.b)] there exists a nontrivial finite order character $\eta$ of $G_F$ such that $\rho_{f,p}\otimes\eta\cong\rho_{f,p}$;
		\item[(iv.a)] there exists a nontrivial quadratic character $\omega$ of $\A_F^\times$ such that $\pi\cong\pi\otimes(\omega\ccirc\det)$;
		\item[(iv.b)] there exists a nontrivial finite order character $\omega$ of $\A_F^\times$ such that $\pi\cong\pi\otimes(\omega\ccirc\det)$;
		\item[(v)] $\pi$ is monomial, in the sense that it is the base change to $\GL_2(\A_F)$ of a Gr\"ossencharacter of a quadratic extension $K$ of $F$;
		\item[(vi)] $f$ is attached to a Gr\"ossencharacter of a quadratic extension $K$ of $F$. %, in the sense of Definition \ref{defprojdi}.
	\end{enumerate}
	Moreover, if $f$ is of weight $((k_\sigma)_{\sigma\in\Sigma_F},w)$ with $k_\sigma\ge 2$ for every $\sigma$ and any of (i-vi) holds, then the extension $K$ appearing there is uniquely determined and totally imaginary, so that $f$ is a CM form.
\end{lemma}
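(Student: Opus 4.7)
My plan is to prove the equivalences in three groups: the easy equivalences between the ``quadratic'' and ``finite-order'' variants, the Galois-side equivalences (i)$\iff$(ii)$\iff$(iii.a), and the automorphic-side equivalences (iv)$\iff$(v)$\iff$(vi), connected by local-global compatibility of the Langlands correspondence for $\GL_{2/F}$.

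The equivalences (iii.a)$\iff$(iii.b) and (iv.a)$\iff$(iv.b) follow from a determinant/central-character argument: if $\rho_{f,p}\otimes\eta\cong\rho_{f,p}$ with $\eta$ continuous of finite order, then taking determinants yields $\eta^{2}=1$, so $\eta$ is either trivial or quadratic; by assumption it is nontrivial, so quadratic. The equivalence (v)$\iff$(vi) is essentially by definition: by the theta-series construction of Yoshida and class field theory, Grössencharacters of quadratic extensions giving rise to Hilbert modular forms are exactly those whose base-changed automorphic representations are monomial, and the Hecke eigensystem of the resulting eigenform is the one described in Definition \ref{defprojdi}.

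For the Galois-side equivalences, since $f$ is cuspidal the representation $\rho_{f,p}$ is irreducible. If $\rho_{f,p}\cong\Ind_{G_K}^{G_F}\chi$ for a quadratic $K/F$ with associated character $\eta_{K/F}$, then a direct calculation (or Mackey's projection formula) gives $\rho_{f,p}\otimes\eta_{K/F}\cong\rho_{f,p}$, proving (i)$\Rightarrow$(iii.a). Conversely, given (iii.a) with associated quadratic extension $K$, a Schur/Frobenius-reciprocity argument shows that $\End_{G_K}(\rho_{f,p}|_{G_K})$ is two-dimensional, forcing $\rho_{f,p}|_{G_K}$ to split as a sum of two distinct characters, one of which induces back to $\rho_{f,p}$. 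The equivalence (i)$\iff$(ii) is obtained by projectivization: an induced representation from a quadratic extension has projective image with a cyclic index-$2$ subgroup, hence dihedral; conversely, if the projective image is dihedral, the preimage in $G_F$ of its rotation subgroup is $G_K$ for a unique quadratic $K/F$, and the projective image of $\rho_{f,p}|_{G_K}$ being abelian and cyclic forces $\rho_{f,p}|_{G_K}$ to lie in a torus (reducible case), so Frobenius reciprocity gives (i). The bridge to the automorphic side is then standard: local-global compatibility for Hilbert modular forms (and strong multiplicity one) translates $\rho_{f,p}\otimes\eta\cong\rho_{f,p}$ into $\pi\otimes(\omega\circ\det)\cong\pi$ via the character $\omega$ of $\A_F^{\times}$ attached to $\eta$ by class field theory, yielding (iii.x)$\iff$(iv.x); the equivalence (iv.a)$\iff$(v) is Labesse-Langlands cyclic base change / automorphic induction for $\GL_2$, which identifies $\omega$-invariant cuspidal automorphic representations of $\GL_2(\A_F)$ with automorphic inductions of Grössencharacters from the quadratic extension cut out by $\omega$.

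For the ``moreover'' statement, suppose $\rho_{f,p}\cong\Ind_{G_K}^{G_F}\chi$ and $K$ is not totally imaginary. Since $F$ is totally real (an implicit assumption in the Hilbert setting), $K$ would contain a real place $w$ lying over some real place $v$ of $F$ that splits in $K$; hence complex conjugation $c_v$ lies in $G_K$. The algebraic character $\chi$, viewed as a Hecke character of $K$, then has infinity type at $w$ given by a character of $\R^{\times}$, forcing $\chi(c_v)\in\{\pm 1\}$ and pinning down the Hodge-Tate behavior at the $p$-adic place of $K$ above $w$; this is incompatible with the regularity of the Hodge-Tate weights $\{0,k_\sigma-1\}$ of $\rho_{f,p}$ at the corresponding embedding $\sigma\in\Sigma_F$ (which requires nontrivial ``complex'' infinity type), giving the desired contradiction. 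Uniqueness of $K$ follows because the set of nontrivial characters $\eta$ with $\rho_{f,p}\otimes\eta\cong\rho_{f,p}$ is a subgroup of the character group, and if it had order greater than $2$, the projective image of $\rho_{f,p}$ would contain a Klein four-subgroup in its center, contradicting the regularity and oddness properties of $\rho_{f,p}$ for $k_\sigma\ge 2$. The main obstacle in the proof is this last paragraph, since it requires a careful interplay between the Hodge-Tate weights, the infinity type of $\chi$, and complex conjugation at real places of $F$; everything else is a reorganization of standard facts about induced representations and the Langlands correspondence.
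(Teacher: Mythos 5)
Your overall strategy matches the paper's proof, which is essentially a string of citations: the Galois-side equivalences (i)$\iff$(ii)$\iff$(iii.a)$\iff$(iii.b) follow from Lemma \ref{ribetind}, the bridges (iii.x)$\iff$(iv.x) from compatibility of the global Langlands correspondence with base change, (iv.a)$\iff$(v) from Labesse--Langlands, (v)$\iff$(vi) by unraveling definitions, and the ``moreover'' statement from \cite[Proposition 4.5]{ribetneben}. Your expansion of the intermediate steps (determinant argument for the quadratic/finite-order equivalence, Mackey/Frobenius reciprocity for (i)$\iff$(iii.a), projectivization for (i)$\iff$(ii)) is a reasonable, more self-contained version of the same route. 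Two small imprecisions there: in (i)$\Rightarrow$(ii) the projective image of $\rho_{f,p}|_{G_K}$ need not be \emph{cyclic}, only abelian (which suffices, since the paper defines ``dihedral'' as non-abelian with an abelian index-$2$ subgroup); and in (ii)$\Rightarrow$(i) you should note that an abelian compact subgroup of $\PGL_2$ of order $\ge 3$ lies in a torus, but the Klein four group is an exception --- this is harmless here because a Klein four projective image is abelian, hence excluded by the ``dihedral'' hypothesis.

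The real gap is in the ``moreover'' paragraph, where you try to reprove Ribet's Proposition 4.5 rather than cite it, and the sketch doesn't hold together. You write that the real infinity type of $\chi$ at a real place $w$ of $K$ ``pins down the Hodge--Tate behavior at the $p$-adic place of $K$ above $w$'', but $w$ is archimedean, so there is no $p$-adic place above it; this step as written makes no sense. The correct version of this idea is a statement about the archimedean Langlands parameter: if an archimedean place $v$ of $F$ splits in $K$ into two real places, then the parameter of $\pi_v = \mathrm{AI}_{K/F}(\psi)_v$ is a direct sum of two characters of $W_\R$, not an induction from $W_\C$, hence is not discrete series; but holomorphic Hilbert eigenforms of weight $k_\sigma\ge 2$ have discrete series parameters at every archimedean place. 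This gives the contradiction. Similarly, your uniqueness argument via ``a Klein four-subgroup in the center of the projective image'' is not the right mechanism. The standard argument: the set of $\eta$ with $\rho_{f,p}\otimes\eta\cong\rho_{f,p}$ is an elementary abelian $2$-group $A$; if $|A|\ge 4$, then $A$ contains $\eta_1,\eta_2,\eta_3=\eta_1\eta_2$, all cutting out imaginary quadratic extensions of $F$ by the previous step, so $\eta_i(c_v)=-1$ for every complex conjugation $c_v$; but then $\eta_3(c_v)=\eta_1(c_v)\eta_2(c_v)=+1$, a contradiction. So $|A|=2$ and $K$ is unique. You should either supply these corrected arguments, or simply cite \cite[Proposition 4.5]{ribetneben} as the paper does.
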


\begin{proof}
	The equivalences (i)$\iff$(ii)$\iff$(iii.a)$\iff$(iii.b) follow from Lemma \ref{ribetind}, while (iii.a)$\iff$(iv.a) and (iii.b)$\iff$(iv.b) come from the compatibility of the global Langlands correspondence with base change. The equivalence (v)$\iff$(vi) is a matter of unraveling the definitions. The implication (v)$\implies$(iv.a) also comes from the properties of the global Langlands correspondence. The implication (iv.a)$\implies$(v) is \cite[Proposition 6.5]{lablan} (or \cite[Proposition 4.5]{ribetneben} in the special case $F=\Q$ and weight $\ge 2$).
	The last statement is contained in \cite[Proposition 4.5]{ribetneben}.
\end{proof}

\begin{defin}
	We say that an irreducible component $\cX$ of $\wtl\cE_F$ (or $\cE_F$) is CM if all of its classical specializations of weight $\ge 2$ are CM. 
\end{defin}

\subsection{Base change from the rationals to a real quadratic field}

We fix some notation. We write $\Art_\Q\colon\A_\Q^\times\to G_\Q$ for the Artin map of global class field theory, and $\det\colon\GL_2(\A_\Q)\to\A_\Q^\times$ for the determinant. Given a number field $L$, an automorphic representation $\pi$ of $\GL_2(\A_L)$, and a place $v$ of $L$, we denote by $\pi_v$ the admissible representation of $\GL_2(L_v)$ appearing in the tensor product decomposition $\pi=\otimes_v\pi_v$. We denote by $\rec_v$ the reciprocity map from the local Langlands correspondence, associating with an admissible representation of $\GL_2(L_v)$ a Weil--Deligne representation of $W_{L_v}$. We do not specify the base field in the notation $\rec_v$, since it is implicit in the choice of $v$.

In the following $F$ is a totally real field, Galois and cyclic over $\Q$. %and such that $\Gal(F/\Q)$ is solvable.
%We recall a classical base change result in the form that we will use later. %USE Bellovin??? rather, joh-new??
We recall a result about the base change of automorphic representations from $\GL_{2}(\A_\Q)$ to $\GL_2(\A_F)$, %$\bG=\Res_{F/\Q}\GL_{2/\Q}$, 
essentially due to Arthur and Clozel \cite{artclobc}. We refer the reader to \cite[Lemma 5.1.1]{satotatehilb} and the references there for the form of the statement that we use. %We give the statement in the same form as in \cite[Lemma 5.1.1]{satotatehilb}, where we take $L/K=F/\Q$. %Let $\chi\colon G_\Q\to\C^\times$ be any lift of a generator of $\Gal(F\Q)$. 
%\andr{doublecheck the condition in the statement!!--give ref???}

\begin{thm}\label{artclo}
Let $\pi$ be a cuspidal automorphic representation of $\GL_{2}(\A_\Q)$ such that
\begin{equation*}\label{notwists} \pi\ncong\pi\otimes(\chi\ccirc\Art_\Q\ccirc\det)
\end{equation*}
for every character $\chi\colon G_\Q\to\C^\times$ factoring through $\Gal(F/\Q)$. %$i$ with $1\le i\le[F\colon\Q]-1$. 
Then there exists a cuspidal automorphic representation $\BC_{F/\Q}(\pi)$ of $\GL_2(\A_F)$ such that, for every place $v$ of $F$ lying over a prime $q$,
\[ \rec_v(\Pi_v)=\rec_q(\pi_q)\vert_{W_{F_v}}. \]
\end{thm}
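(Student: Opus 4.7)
The plan is to invoke the general machinery of cyclic base change for automorphic representations of $\GL_n$ developed by Arthur and Clozel. Since $F/\Q$ is cyclic, it admits a filtration $\Q=F_0\subset F_1\subset\ldots\subset F_r=F$ with each step of prime degree, and both the non-twist hypothesis and the compatibility of local Langlands with restriction behave well under composition, so by induction it suffices to treat the case where $[F\colon\Q]=\ell$ is prime. In that situation, one compares the $\Gal(F/\Q)$-twisted trace formula for $\GL_{2/F}$ with the ordinary trace formula for $\GL_{2/\Q}$.

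The first step is to define, at every rational prime $q$ (and at infinity), a local lift $\mathrm{BC}_v(\pi_q)$ of the component $\pi_q$ to an admissible representation of $\GL_2(F_v)$ for every place $v$ of $F$ above $q$. At unramified places the lift is constructed explicitly via Satake parameters, so that the Frobenius semisimple part of $\rec_v(\mathrm{BC}_v(\pi_q))$ agrees with the restriction $\rec_q(\pi_q)\vert_{W_{F_v}}$ by direct computation. At ramified and archimedean places the lift is characterized by Shintani-type character identities, and its matching with orbital integrals of transferred test functions is ensured by the fundamental lemma for base change (Kottwitz). Local--global compatibility of the local Langlands correspondence for $\GL_2$ (via Carayol, Henniart) then upgrades the matching to the identity $\rec_v(\mathrm{BC}_v(\pi_q))=\rec_q(\pi_q)\vert_{W_{F_v}}$ that appears in the statement.

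The second step is to assemble the $\mathrm{BC}_v(\pi_q)$ into a global automorphic representation $\BC_{F/\Q}(\pi)$ of $\GL_2(\A_F)$. Here the comparison of the twisted trace formula on $\GL_{2/F}$ and the ordinary trace formula on $\GL_{2/\Q}$, restricted to test functions whose orbital integrals match in the sense above, shows that a global automorphic representation with the prescribed local components exists; strong multiplicity one for $\GL_2$ forces its uniqueness. The third step is cuspidality. Clozel's criterion states that the base change of a cuspidal representation is cuspidal unless $\pi$ is fixed by twisting by some non-trivial character of $\A_\Q^\times/\Q^\times\Nm_{F/\Q}\A_F^\times$; via class field theory these are exactly the characters of $G_\Q$ factoring through $\Gal(F/\Q)$ pulled back by $\Art_\Q$, and their twists are exactly those excluded by hypothesis. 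Hence $\BC_{F/\Q}(\pi)$ is cuspidal.

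The main obstacle is the trace formula comparison itself, which requires the full apparatus of twisted endoscopy, the fundamental lemma for cyclic base change, and a delicate stabilization of the twisted side; verifying the compatibility at ramified non-supercuspidal places (where Shintani's explicit formulas must be pieced together with Henniart's numerical local Langlands) is the subtlest point. For $\GL_n$ and cyclic base change, however, all of this is carried out in full generality by Arthur and Clozel in their monograph, so in practice one invokes their main theorem rather than reproving it, and the contribution of the proof is only to check that the non-twist hypothesis in the present statement matches Clozel's cuspidality criterion.
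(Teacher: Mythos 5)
Your proposal is correct and ultimately takes the same route as the paper: the paper simply cites Arthur--Clozel (and a formulation from the Hilbert Sato--Tate literature for the precise local compatibility) rather than reproving the result, which is exactly what you conclude one should do at the end. Your sketch of the internal mechanism (reduction to prime cyclic degree, Shintani character identities and the fundamental lemma for base change, the twisted trace formula comparison, local--global compatibility via Carayol/Henniart, and Clozel's cuspidality criterion rephrased via class field theory to match the non-twist hypothesis) is an accurate summary of what is being invoked.
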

%\andr{define rec somewhere}

%\andr{check JohNewfun for weight level of base change, map of wt spaces}

%\begin{proof}
%When $F$ is cyclic, this is a special case of the statement given in \cite[Lemma 5.1.1]{satotatehilb}; we point the reader to the references there. The solvable case can be easily deduced from the cyclic one. \andr{true?? make explicit??}
%\end{proof}

%\andr{make precise restriction of WD reps?? comment twist condition??}

%The above theorem also holds, under an analogous assumption on \pi, when the real quadratic extension F/\Q is replaced with an arbitrary cyclic extension of number fields, but we will not need to work in this general setting.

%\andr{Add an appendix??}

%\begin{proof}
%We first prove the equivalence for H normal in G.
%	
%%Assume first that \rho\vert_H is semisimple, so that V is the sum of all of its irreducible L[H]-submodules. For g\in G/H, write \wtl g for a representative of g in G. If V is an irreducible L[H]-submodule of V, \sum_{g\in G/H}\wtl g(V) is an irreducible L[G]-submodule of V. In particular, V is the sum of its irreducible L[G]-submodules, hence it is semisimple.
%
%Assume now that \rho is semisimple. Let V 
%\end{proof}

%\begin{lemma}
%RM has weight 1 plus Lemma 16 of GV??
%\end{lemma}

%For simplicity, if $N$ is a positive integer, we refer to Hilbert modular forms of level $N\cO_F$ as modular forms of level $N$. \andr{specify what the level is--is it principal in AIP???} %say that a Hilbert modular form has level $N$ to say that it has level $N\cO_F$ Iwahoric???? rephrase

\begin{cor}\label{bcform}
Let $f$ be a cuspidal $\GL_{2/\Q}$-eigenform of weight $k\ge 2$, tame level $N$ and nebentypus $\chi$, and let $\pi$ be the automorphic representation of $\GL_2(\A_\Q)$ attached to $f$. Then the cuspidal representation $\pi$ satisfies the assumption of Theorem \ref{artclo}, so that a base change $\BC_{F/\Q}(\pi)$ exists, and is associated with a Hilbert modular form $\BC_{F/\Q}(f)$ of level $N$ and (parallel) weight $(2k,2k,0,\chi^2\ccirc\Norm_{\cO_F/\Z},\triv)$ (in the notation of Section \ref{secwtcl}).
\end{cor}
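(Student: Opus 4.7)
The plan is to first verify the non-self-twist hypothesis of Theorem \ref{artclo}, then apply that theorem to produce $\Pi\coloneqq\BC_{F/\Q}(\pi)$, and finally match the data of $\Pi$ against the weight notation of Section \ref{secwtcl}.

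First I would check that no nontrivial character $\chi$ of $\Gal(F/\Q)$ gives $\pi\cong\pi\otimes(\chi\ccirc\Art_\Q\ccirc\det)$. By the classical $\GL_{2/\Q}$-analogue of Lemma \ref{indCM} (originally due to Ribet, as cited in the proof of Lemma \ref{ribetind}), such a self-twist by a finite order character forces $\pi$ to be the automorphic induction of a Hecke character of a quadratic extension $K/\Q$, the extension $K$ being the one cut out by the self-twist character. Since $f$ has weight $k\ge 2$, the last part of Lemma \ref{indCM} (applied to the $\GL_{2/\Q}$-setting) ensures that $K$ must be totally imaginary. But any character of $\Gal(F/\Q)$ cuts out a subfield of a totally real field, hence a totally real extension of $\Q$, contradiction. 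So the hypothesis of Theorem \ref{artclo} holds.

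Next I would invoke Theorem \ref{artclo} to obtain the cuspidal automorphic representation $\Pi=\BC_{F/\Q}(\pi)$, with the local compatibility $\rec_v(\Pi_v)=\rec_q(\pi_q)\vert_{W_{F_v}}$ for every place $v\mid q$ of $F$. The immediate consequences are: at each archimedean place $v$ (all real, as $F$ is totally real), $\Pi_v$ is again the weight-$k$ discrete series, so $\Pi$ has parallel weight $k$ in the classical sense; at finite places $v\mid q$ with $q\nmid N$, $\pi_q$ is spherical so $\Pi_v$ is spherical; at finite places $v\mid q$ with $q\mid N$, the conductor of $\Pi_v$ is determined by restriction of Weil--Deligne representations, and (using that the real quadratic $F$ relevant to the applications of this corollary is unramified at the primes of $N$) the conductor at $v$ agrees with the $v$-part of $N\cO_F$. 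Summing across $v$, the tame level of $\Pi$ is $N$. The central character of $\Pi$ is $\omega_\pi\ccirc\Norm_{\A_F/\A_\Q}$, whose finite part is $\chi\ccirc\Norm_{\A_F^{\infty}/\A_\Q^{\infty}}$; after restricting to the $p$-component, this reads the nebentypus data of the associated Hilbert form.

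Finally I would translate the classical data (parallel weight $k$, nebentypus $\chi\ccirc\Norm_{\cO_F/\Z}$) into the coordinates $(\bk,w,\chi_1,\chi_2)$ on $\wtl\cW_F$ introduced in Section \ref{secwtcl}. The character $(x,y)\mapsto x^\bk\chi_1(x)y^w\chi_2(y)$ on $\cO_{F,p}^\times\times\Z_p^\times$ decomposes the weight datum into a ``det-twisted'' part (captured by $\bk$ and $\chi_1$) and a ``central'' part (captured by $w$ and $\chi_2$); under AIP's convention, a classical parallel weight-$k$ Hilbert form with trivial central-character-at-$p$ and with nebentypus equal to $\chi^2\ccirc\Norm$ corresponds precisely to $(\bk,w,\chi_1,\chi_2)=((2k)_\sigma,0,\chi^2\ccirc\Norm_{\cO_F/\Z},\triv)$, the doubling to $2k$ and squaring of $\chi$ reflecting the fact that the nebentypus of the determinant representation is the square of the nebentypus of the form itself. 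I expect the main (minor) obstacle to be keeping these conventions straight; the automorphic step is immediate from Theorem \ref{artclo} once the hypothesis is checked.
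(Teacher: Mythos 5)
Your verification of the non-self-twist hypothesis of Theorem \ref{artclo} is essentially the paper's argument: invoke Lemma \ref{indCM} to conclude that a nontrivial self-twist would give $f$ complex multiplication by a subfield $K$ of $F$, which being totally real contradicts the last statement of Lemma \ref{indCM} for weight $k\ge 2$. The paper's proof stops at exactly this point and simply asserts the level and the normalisation $(2k,2k,0,\chi^2\ccirc\Norm_{\cO_F/\Z},\triv)$ without detailed verification, so your second and third paragraphs go beyond what the paper writes but are reasonable elaborations.

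One imprecision to flag in your first paragraph: when $[F\colon\Q]>2$, a nontrivial self-twist character $\chi$ of $\Gal(F/\Q)$ may have order larger than $2$, so the quadratic field $K$ produced by Lemma \ref{indCM} is not ``the one cut out by the self-twist character'' but only a quadratic subfield of it. The paper handles this by passing through the equivalence (iv.b)$\implies$(iv.a): one replaces the self-twist by a quadratic power of it, which still factors through $\Gal(F/\Q)$ and therefore cuts out a quadratic subfield $K\subset F$. Since $K$ remains totally real either way, your contradiction survives; and in the quadratic case $[F\colon\Q]=2$ actually used in the applications the distinction disappears entirely. A second, smaller point: your level computation assumes $F$ is unramified at the primes dividing $N$, which the paper does not impose when choosing $F$ (it is only constrained so that $F\otimes_\Q\Q_p$ is a prescribed quadratic extension of $\Q_p$); this caveat should be made explicit, or the level claim interpreted loosely as ``tame level supported on $N$''.
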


\begin{proof}
If $\pi\cong\pi\otimes(\chi\ccirc\Art_\Q\ccirc\det)^i$ for some $\chi\colon\Gal(F/\Q)\to\C^\times$ and $i$, $1\le i\le[F\colon\Q]-1$, then by the equivalence (iv.b)$\implies$(iv.a) of Lemma \ref{indCM}, $\chi$ and $i$ can be chosen so that $\chi^i$ is quadratic, which means that $f$ has multiplication by a quadratic subfield of $F$. 
Since $f$ has weight at least 2, it cannot have real multiplication because of the last statement of Lemma \ref{indCM}; in particular, since $F$ is a totally real field, $f$ cannot have multiplication by $F$. Therefore $\pi$ satisfies condition \eqref{notwists}. 
\end{proof}

\begin{rem}\label{bcmap}
As before, let $\calH_F^{Np}$ be the spherical $\GL_{2/F}$-Hecke algebra away from $Np$, and let $\BC_{F/\Q}\colon\calH_F^{Np}\to\calH_\Q^{Np}$ be the homomorphism defined in \cite[Section 4.3]{johnewfun}. 
For $f$ as in Corollary \ref{bcform}, with Hecke eigensystem $\alpha\colon\calH_\Q^{Np}\to\Qp$ away from $Np$, the Hecke eigensystem of $\BC_{F/\Q}(f)$ is given by the composition $\BC_{F/\Q}\ccirc\alpha$. The weight of $\BC_{F/\Q}(f)$ is obtained from the weight of $f$ after composition with the morphism of weight spaces described in \emph{loc. cit.}.
\end{rem}

\medskip

\section{Locally decomposable families of Hilbert modular forms}\label{secbgv}

Let $F$ be a totally real field. We denote by $\zeta_p$ a primitive $p$-th root of unity. Recall that $p$ is assumed to be odd throughout the paper. We say that a 2-dimensional representation is \emph{decomposable} if it is the direct sum of two characters. As in the previous section, $\GL_{2/F}$ denotes the group $\Res_{F/\Q}\GL_{2/F}$, $N$ is an integer at least 4, and $\wtl\cE_F$ (respectively, $\cE_F$) denotes the full (respectively, parallel weight) eigenvariety of tame level $N$ for $\GL_{2/F}$

The goal of this section is to generalize the following result of Balasubramanyam, Ghate and Vatsal to the case where $p$ has arbitrary ramification in $F$.

\begin{thm}[{\cite{balghavathilb}}]\label{bgv}
Assume the $p$ splits completely in $F$.
Let $\cF$ be an ordinary (respectively, nearly ordinary) irreducible component of $\cE_F$ (respectively, $\wtl\cE_F$) such that:
\begin{enumerate}
\item $\cF$ is $p$-distinguished,
\item $\ovl\rho_\cF\vert_{G_{F(\zeta_p)}}$ is absolutely irreducible,
\item $\rho_\cF\vert_{G_{F,\fp}}$ is decomposable for every $p$-adic place $\fp$ of $\cO_F$.
\end{enumerate}
Then $\cF$ is a CM component.
\end{thm}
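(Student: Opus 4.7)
My plan follows the overall strategy of \cite{balghavathilb}, adapting Ghate--Vatsal's approach from $\GL_{2/\Q}$ to the Hilbert setting. The first step would be to make the local decomposition explicit. By Proposition \ref{Enordprops}(v), for each $p$-adic place $\fp$ of $F$ the nearly ordinary structure provides a canonical two-step filtration of $\rho_\cF\vert_{G_{F_\fp}}$ with graded pieces characters $\vareps_\fp, \delta_\fp$, whose restrictions to inertia are determined by the universal weight. The $p$-distinguished hypothesis ensures that $\ovl\vareps_\fp\ne\ovl\delta_\fp$, so the decomposability assumption (iii) upgrades this filtration to a canonical global direct sum $\rho_\cF\vert_{G_{F_\fp}}\cong\vareps_\fp\oplus\delta_\fp$ on $\cF$. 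Specializing at a classical point $x\in\cF$ of parallel weight $\ge 2$ then yields a nearly ordinary Hilbert eigenform $f_x$ whose Galois representation is locally split at every $p$-adic place of $F$.

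The heart of the argument would be to show that every such locally split classical $f_x$ must be CM; this is the Hilbert analogue of Greenberg's non-vanishing conjecture for ordinary modular forms. Under the present assumptions, and crucially under the running hypothesis that $p$ splits completely in $F$, this is established in \cite{balghavathilb} by producing an auxiliary CM form congruent to $f_x$ modulo a high power of $p$, via Hida's theory of $p$-adic $L$-functions for CM fields and the corresponding Iwasawa main conjectures, and then promoting the congruence to an identification using Sasaki's modularity theorem for Hilbert eigenforms, which applies precisely in the totally split case. The $p$-distinguished hypothesis is used to extract the local characters from the splitting, while the absolute irreducibility of $\ovl\rho_\cF\vert_{G_{F(\zeta_p)}}$ rules out accidental residual congruences and ensures that the relevant deformation-theoretic arguments behave as expected.

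The final step would be a density argument on the eigenvariety. Parallel weight $\ge 2$ classical specializations of $\cF$ are Zariski-dense by Theorem \ref{hilbEpar}, and by the previous step every such specialization is CM, attached to a Gr\"ossencharacter of some totally imaginary quadratic extension $K/F$. Since the conductors of the relevant $K$ are bounded in terms of the tame level $N$ (as in Remark \ref{condbound}), only finitely many such $K$ can occur, and for each one the $G_K$-induced locus is Zariski-closed in $\cF$ by Corollary/Definition \ref{indclo}. A finite union of proper Zariski-closed subspaces cannot contain a Zariski-dense set, so $\cF$ itself must coincide with the $G_K$-induced locus for some $K$, giving the CM conclusion. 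The main obstacle is the middle step: the Greenberg-type criterion at individual classical points, which requires the full force of the CM $p$-adic $L$-function machinery together with a modularity theorem, and in the BGV framework it is the modularity input (Sasaki's theorem) that forces $p$ to split completely.
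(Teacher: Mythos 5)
The paper does not actually prove Theorem~\ref{bgv}; it cites \cite{balghavathilb} and then proves the generalization Theorem~\ref{bgvaff} from scratch, explicitly noting that the strategy is the same as BGV's. So the relevant comparison is against the paper's proof of Theorem~\ref{bgvaff} (Lemmas~\ref{Udec}, \ref{CMdense}, \ref{wt1}, Proposition~\ref{wt1CM}, Lemma~\ref{XCM}), which reproduces the BGV argument with Sasaki's modularity replaced by Pilloni--Stroh.

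Your proposal has a genuine gap at what you yourself call the heart of the argument. You claim that under the stated hypotheses ``every such locally split classical $f_x$ must be CM'' for $x$ of parallel weight $\ge 2$, and that this is established in \cite{balghavathilb} via a CM $p$-adic $L$-function congruence and an appeal to Iwasawa main conjectures. That statement for individual weight $\ge 2$ forms is precisely Greenberg's conjecture (the paper cites it as \cite[Question 1]{ghavatord}), which is \emph{open}; the content of BGV, and of this paper's Theorem~\ref{bgvaff}, is that the family version can be proved \emph{without} knowing the pointwise version. The actual route is entirely different: the nearly ordinary family has a dense set of \emph{parallel weight $1$} specializations; at such a specialization the two local characters have all Hodge--Tate--Sen weights $0$, hence by Sen's theorem and the assumed decomposability the local representation is potentially unramified, so Sasaki's modularity theorem (or Pilloni--Stroh in the non-split case) applies and shows the specialization is a \emph{classical} weight $1$ form. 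Weight $1$ classical forms have finite projective image, so one classifies (cyclic/dihedral/$A_4$, $S_4$, $A_5$); the cyclic ones are Eisenstein and excluded because the component is cuspidal, the exceptional ones are shown to form a finite set using the explicit formulas for the local characters from Proposition~\ref{Enordprops}(v), and hence the dihedral (CM) weight $1$ specializations are dense. No $p$-adic $L$-functions or main conjectures enter the argument. Your final density step is essentially correct as a separate lemma (compare Lemma~\ref{CMdense}), but it is being fed the wrong input: what is dense is the set of CM weight $1$ points, not CM weight $\ge 2$ points. The place where $p$ splitting completely is used in BGV is exactly in invoking Sasaki's theorem at weight $1$ --- not to ``promote a congruence,'' but to prove classicality of the weight $1$ specialization --- and this is also why replacing Sasaki by Pilloni--Stroh (Theorem~\ref{psmod}) removes that hypothesis.
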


The statement for parallel weight families is proved in \cite[Theorem 3]{balghavathilb}, while \cite[Section 3]{balghavathilb} explains how to adapt the proof to the full eigenvariety. 
We stated the result in terms of nearly ordinary components of eigenvarieties, rather than nearly ordinary Hecke algebras as in \cite{balghavathilb}. The result in \emph{loc. cit.} implies Theorem \ref{bgv} because of Proposition \ref{Enordprops}(i), and the two are equivalent if Leopoldt's conjecture holds for $F$. %Note that since the dimensions of $\wtl\cE_F$ and $\cE_F$ are independent of the Leopoldt defect, there is no 

\begin{rem}
Our eigenvarieties are only defined for tame level $N\ge 4$, so all the results are stated in that setting. This is not a problem regarding our intended applications, since we can always embed finite slope families of lower tame level into an eigenvariety of tame level $N\ge 4$, in the obvious way.
\end{rem}

Results of the shape of Theorem \ref{bgv} can be interpreted as a version for $p$-adic families of a conjecture of Greenberg \cite[Question 1]{ghavatord} on how CM eigenforms can be characterized by the fact that their associated Galois representation is decomposable locally at $p$ (in the form of a question, the same problem was posed independently by Coleman in \cite[Remark 2, Section 7]{colclass}).

The proof relies in a crucial way on a modularity result of Sasaki for representations of $G_{F}$ that decompose as a sum of two potentially unramified characters at every $p$-adic place (see \cite[Theorem 2]{balghavathilb}).

Note that \cite[Theorem 3]{balghavathilb} is actually stated for ordinary $\Lambda$-adic forms; however, the authors comment in Section 3 that the same arguments as in the ordinary case give the above result. In any case, our result will not depend on Theorem \ref{bgv}; rather, we will use the same strategy to prove what we need. We also remark that the proof of \cite[Theorem 3]{balghavathilb} is not entirely contained in \emph{loc. cit.}; for some parts of it the authors refer to earlier work of Ghate and Vatsal in the case $F=\Q$ \cite{ghavatord}, from which some arguments can be easily adapted. In this section we try to give all details as to how the arguments work in our setting, since at various points they need to be adapted in a non-trivial way.

The main assumption that we would like to remove is that regarding the splitting behavior of $p$ in the quadratic field $F$. As foreseen by the authors in \cite{balghavathilb} (see their Remark just after the statement of Theorem 3), this is now possible thanks to the modularity results of Pilloni and Stroh \cite{pilstrmod} that were still not available at the time when they were writing. We include this new input below. 

As in \cite{balghavathilb}, we prove our result both for parallel and general weight families, though in our application the parallel weight case will be sufficient.

\begin{thm}\label{bgvaff}
Let $U$ be an irreducible affinoid subdomain of $\cE_F$ (respectively, $\wtl\cE_F$). %, and let $\ovl U$ be its (analytic Zariski-)closure in $\cE_F$. 
Assume that:
\begin{enumerate}[label=(\roman*)]
\item\label{zetapirr} $\ovl\rho_U\vert_{G_{F(\zeta_p)}}$ is irreducible;
\item\label{5hyp} if $p=5$ and $\Proj(\ovl\rho_U(G_{F,Np}))\cong\PGL_2(\F_5)$, then $F(\zeta_5)\colon F]=4$;
\item\label{Sab} there exists a dense subset $S^\ab$ of $U(\Qp)$ such that $\rho_{U,x}\vert_{G_{F_\fp}}$ is decomposable for every $x\in S^\ab$ and every $p$-adic place $\fp$ of $F$.
%\item\label{Scl} there exists a dense subset $S^\cl$ of $\ovl U(\Qp)$ such that all of the Hodge--Tate--Sen weights of $\rho_{U,x}\vert_{G_{F_\fp}}$ are 0. not enough, also pst???
\end{enumerate}
Then $U$ is contained in a CM component of $\cE_F$ (respectively, $\wtl\cE_F$). %\andr{actually wtlU is the component?? could be a union of components...}
\end{thm}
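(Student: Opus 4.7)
The plan is to follow the strategy of Balasubramanyam, Ghate, and Vatsal used in the proof of Theorem \ref{bgv}, replacing their input from Sasaki's modularity lifting theorem (which required $p$ to split completely in $F$) with the more general result of Pilloni and Stroh \cite{pilstrmod}, which imposes no restriction on the splitting of $p$. The argument proceeds in three steps: (a) propagate the local decomposability from $S^{\ab}$ to the entire family $U$; (b) set up a Taylor--Wiles--Kisin patching argument which, via \cite{pilstrmod}, identifies a suitable pseudodeformation ring with a CM Hecke algebra; (c) transfer the CM property from a dense set of classical specializations to all of $U$.

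For step (a), for every $p$-adic place $\fp$ of $F$, the locus in the rigid pseudodeformation space $\fR_{\ovl t_U\vert_{G_{F_\fp}}}$ where the universal pseudorepresentation decomposes as a sum of two characters is Zariski closed, by the standard reducibility-locus construction used in the proof of the quotient $R_{\ovl t}^{H-\ind}$ of $R_{\ovl t}$. The map $U \to \fR_{\ovl t_U\vert_{G_{F_\fp}}}$ sends the dense subset $S^{\ab}$ into this closed locus, and hence by density maps all of $U$ there. After a finite \'etale cover $\wtl U \to U$ we may assume that the diagonal characters $\chi_{\fp,1}, \chi_{\fp,2}\colon G_{F_\fp} \to \cO_{\wtl U}(\wtl U)^\times$ are globally defined and that $\rho_{\wtl U}\vert_{G_{F_\fp}}$ is globally upper-triangular with these diagonal entries; its extension class, a section of an analytic $H^1$-sheaf on $\wtl U$, vanishes on the (still dense) preimage of $S^{\ab}$ and hence vanishes identically. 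So $\rho_{\wtl U}\vert_{G_{F_\fp}}$ is globally decomposable on $\wtl U$ for every $\fp\mid p$.

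For step (b), the hypotheses \ref{zetapirr} and \ref{5hyp} guarantee that $\ovl\rho_U(G_{F(\zeta_p)})$ has large enough image to run the Taylor--Wiles--Kisin patching machine. Let $R^{\mathrm{dec}}$ denote the pseudodeformation ring of $\ovl t_U$ imposing at each $\fp\mid p$ the local condition that the restriction to $G_{F_\fp}$ be a sum of two characters matching the residual decomposition, with the relevant determinant and Hodge--Tate-weight constraints, and let $T^{\mathrm{CM}}$ be the Hecke algebra acting on CM Hilbert eigenforms of tame level $N$ with the same residual data. There is a natural surjection $R^{\mathrm{dec}}\twoheadrightarrow T^{\mathrm{CM}}$, which the Pilloni--Stroh modularity lifting theorem upgrades to an isomorphism.

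For step (c), $\wtl U$ determines a point of $R^{\mathrm{dec}}$ which via $R^{\mathrm{dec}}\cong T^{\mathrm{CM}}$ factors through the CM Hecke algebra, so a dense set of classical specializations of $\wtl U$ are CM. By Lemma \ref{indCM}, the condition that $t_U$ admit a self-twist by a fixed quadratic character $\omega$ is Zariski closed, being cut out by the vanishing of $(1-\omega(g))t_U(g)$ for $g\in G_{F,Np}$; only finitely many such $\omega$ have conductor bounded in terms of $N$ and $p$, so $\wtl U$ (and hence $U$) lies in a single CM component of $\cE_F$ (respectively $\wtl\cE_F$). The main obstacle is the precise formulation of the local pseudodeformation conditions at the $\fp\mid p$ when $p$ is inert or ramified in $F$: in those cases the characters $\chi_{\fp,1}, \chi_{\fp,2}$ vary nontrivially in ramification and Hodge--Tate weights along the family, and matching them to the local deformation conditions of \cite{pilstrmod} requires care that was unnecessary in the totally split case treated in \cite{balghavathilb}.
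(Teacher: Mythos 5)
Your proposal diverges from the paper's proof in a way that contains a genuine gap at step (b). The paper's actual strategy, following Balasubramanyam--Ghate--Vatsal, is: first use the density of the locally-decomposable points to conclude (via Fujiwara's nearly ordinary $R=T$ theorem) that $U$ sits in a nearly ordinary component $\cX$ carrying a Galois representation $\rho_\cX$ that is locally decomposable at every $\fp\mid p$ (Lemma \ref{Udec} and Proposition \ref{Enordprops}); then use the Pilloni--Stroh modularity theorem \emph{only} to characterize the classical parallel-weight-$1$ specializations of $\cX$ as exactly those $x$ for which $\rho_x(I_{F_\fp})$ is finite for all $\fp\mid p$ (Lemma \ref{wt1}); then argue, by density of weight-$1$ weights and the classification of finite subgroups of $\GL_2(\C)$, that the cyclic and exceptional projective images can only occur on a sparse set, forcing a dense set of dihedral (hence CM) specializations (Proposition \ref{wt1CM}); and finally conclude that $\cX$ is a CM component (Lemma \ref{CMdense}).

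Your step (b), by contrast, proposes to prove an $R^{\mathrm{dec}}\cong T^{\mathrm{CM}}$ isomorphism by Taylor--Wiles--Kisin patching and then read off CM-ness. This does not work as stated. First, Pilloni--Stroh's theorem is a modularity lifting result for geometric representations with all Hodge--Tate weights zero; it is not an $R=T$ theorem identifying a locally-decomposable pseudodeformation ring with a \emph{CM} Hecke algebra. Even granting an $R=T$ theorem for the component $\cX$ (which Fujiwara already supplies in the nearly ordinary case), the Hecke side $T$ one obtains is the full Hecke algebra of the component, which need not be a CM Hecke algebra a priori — that is precisely the assertion you are trying to prove, so the ``natural surjection $R^{\mathrm{dec}}\twoheadrightarrow T^{\mathrm{CM}}$ upgraded to an isomorphism'' smuggles in the conclusion. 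The actual mechanism that forces CM-ness is the weight-$1$ analysis: Pilloni--Stroh gives you weight-$1$ classicality, which in turn constrains the image of $\rho_x$ to a finite subgroup of $\GL_2$, and then a counting argument eliminates the non-dihedral cases because the diagonal characters at $\fp$ have order bounded by that finite image, leaving only finitely many compatible weights in $\wtl\cW_F$. Your proposal has no substitute for this counting step. Your step (a) is in the right spirit (and the closedness of the decomposable locus plus density is indeed how the paper begins), but the ``extension class in an $H^1$-sheaf vanishes by density'' claim needs more care — the paper instead invokes Fujiwara's theorem to obtain the nearly ordinary component with the locally split shape of $\rho_\cX$ given explicitly by Proposition \ref{Enordprops}(v), avoiding subtleties about sheafifying the extension class near crossing points of the two characters.
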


%We ask for the set S^\ab to be dense in 

%\andr{use that dec implies nord}

Note that we ask for actual representations to be locally decomposable, not just pseudorepresentations; all pseudorepresentations attached to nearly ordinary families are decomposable by Proposition \ref{Enordprops}.

%\andr{are CM comps automatically nord??}

\begin{lemma}\label{psplit}
Let $\cX$ be a CM component of $\cE_F$ (respectively, $\wtl\cE_F$). Then $\cX$ is ordinary (respectively, nearly ordinary) and has multiplication by a unique totally imaginary quadratic extension $K$ of $F$, in which every $p$-adic place of $F$ splits.
%\item Let \cX be a component of \cE_F (respectively, \wtl\cE_F) that has multiplication by a totally imaginary quadratic extension K of F. Then \cX is nearly ordinary if and only if every p-adic ???
\end{lemma}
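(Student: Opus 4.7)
The plan is to produce a single CM field $K$ common to all classical specializations of $\cX$ of weight at least $2$, rule out non-split $p$-adic places via the finite slope and Hodge--Tate constraints, and conclude near-ordinarity from local decomposability.

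First I would extract a uniform CM field. Cover $\cX$ by irreducible affinoid subdomains; fix one such $U$ and let $T\subset U$ be the (Zariski dense) set of classical specializations of weight at least $2$. For each $x\in T$, the last statement of Lemma \ref{indCM} provides a unique totally imaginary quadratic extension $K_x/F$ and, via (iii.a), a unique quadratic character $\eta_x\colon G_F\to\{\pm 1\}$ cutting out $K_x$ such that $\rho_{f_x,p}\otimes\eta_x\cong\rho_{f_x,p}$. The conductor of $\eta_x$ away from $p$ is bounded by the tame level $N$, and its $p$-part is bounded via Remark \ref{condbound}, applied after the finite weight map $U\to\wtl\cW_F$ (this bounds the nebentypus of $f_x$ at $p$, and hence the ramification of $\rho_{f_x,p}$ at places above $p$). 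Thus only finitely many $\eta_x$ occur on $U$, so by pigeonhole some $\eta$ occurs on a Zariski dense subset of $U$, whence $t_\cX\cdot\eta=t_\cX$ on $U$ by analytic continuation, and then on all of $\cX$ by irreducibility. The resulting quadratic extension $K$ is unique because at any single $x\in T$ it must coincide with the unique $K_x$.

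Next I would show that every $p$-adic place $\fp$ of $F$ splits in $K$. Suppose otherwise, and let $\fP$ be the unique place of $K$ above $\fp$. For any $x\in T$, the local representation $\rho_{f_x,p}\vert_{G_{F_\fp}}$ is the induction $\Ind_{G_{K_\fP}}^{G_{F_\fp}}\psi_{x,\fP}$ for the attached Gr\"ossencharacter $\psi_x$. Two cases arise. If the induction is irreducible, the local automorphic component $\pi_{f_x,\fp}$ is dihedral supercuspidal, so $U_\fp$ acts as $0$ on $f_x$; this contradicts $\cX\subset\cE_F$, where $U_\fp$ corresponds to a nowhere-vanishing analytic function. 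If it is reducible, then $\psi_{x,\fP}$ is Galois-invariant and $\rho_{f_x,p}\vert_{G_{F_\fp}}\cong\tilde\psi_x\oplus\tilde\psi_x\cdot\eta_{K_\fP/F_\fp}$ for the order-two character $\eta_{K_\fP/F_\fp}$ attached to $K_\fP/F_\fp$; finite order characters are Hodge--Tate of weight $0$, so both summands share the same $\sigma$-Hodge--Tate weights for every embedding $\sigma\colon F_\fp\into\Qp$, contradicting the distinctness of the Hodge--Tate weights at every embedding above $\fp$ for a classical Hilbert eigenform of weight at least $2$. Both cases are impossible, so $\fp$ must split in $K$.

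Finally, since every $p$-adic place $\fp$ of $F$ splits as $\fP\fP^c$ in $K$, the identification $G_{F_\fp}=G_{K_\fP}=G_{K_{\fP^c}}$ shows that $t_\cX\vert_{G_{F_\fp}}$ is the sum of the two characters obtained by restricting the $G_K$-character $\chi$ (for which $t_\cX=\Tr(\Ind_{G_K}^{G_F}\chi)$) to each copy of $G_{F_\fp}$. Hence $t_\cX\vert_{G_{F_\fp}}$ is decomposable at every $p$-adic place, and Proposition \ref{Enordprops}(iv) implies that the ambient component of $\wtl\cE_F$ is nearly ordinary. In the parallel weight case, $\cX\subset\cE_F$ is therefore ordinary by Proposition \ref{Enordprops}(iii). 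The main obstacle is the two-case analysis ruling out non-split $p$-adic places: one has to combine the finite slope condition (to eliminate the supercuspidal case) with the Hodge--Tate weight structure at classical weight-at-least-$2$ points (to eliminate the locally-split-after-extension case).
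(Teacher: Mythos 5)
Your step 1 (extracting a uniform CM field $K$) is essentially sound for odd $p$: the conductor of the quadratic character $\eta_x$ cutting out $K_x/F$ is bounded away from $p$ by $N$ and at $p$ by $p$ itself (since quadratic characters of a local field of odd residue characteristic have conductor at most $\fp$), so pigeonhole applies. Your appeal to Remark \ref{condbound} is not the right justification, but the conclusion is fine.

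The gap is in step 2, where you rule out non-split $p$-adic places. Your dichotomy conflates reducibility of the $p$-adic Galois representation $\Ind_{G_{K_\fP}}^{G_{F_\fp}}\psi_{x,\fP}$ with reducibility of the associated Weil--Deligne representation, and these are genuinely different for weight $\ge 2$. If $\fp$ is inert or ramified in $K$ and $\psi_x$ has conductor prime to $\fP$, then $\chi_{\psi_x}\vert_{G_{K_\fP}}$ is a crystalline character whose two $\tau$-Hodge--Tate weights are $k-1$ and $0$; these are swapped by conjugation, so for $k\ge 2$ the $p$-adic representation $\rho_{f_x,p}\vert_{G_{F_\fp}}$ is \emph{irreducible}. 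However, the Weil--Deligne representation it gives rise to is an induction of an \emph{unramified} character of $W_{K_\fP}$, hence reducible, so $\pi_{f_x,\fp}$ is a (possibly ramified) principal series with $U_\fp\neq 0$, not supercuspidal. Thus your case (a) ("irreducible induction $\Rightarrow$ supercuspidal $\Rightarrow$ $U_\fp=0$") draws a false implication, and your case (b) does not apply because $\rho_{f_x,p}\vert_{G_{F_\fp}}$ does not decompose as a sum of two characters. This "middle" case is not empty: it is exactly Remark \ref{CMclass}(i) (inert) and (ii) (ramified) in the notation of the paper, with classical finite-slope CM forms on $\cE_F$ of slope $(k-1)/2$ (suitably normalized).

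What rules out the middle case is precisely the paper's slope argument: the slope $(k-1)/2$ grows with the weight, whereas the slope is bounded on any affinoid subdomain of $\cE_F$ adapted to a slope datum (Lemma \ref{adapted}); equivalently, as $k$ varies on the 1-dimensional component $\cX$ with $K$ fixed, infinitely many classical points would have unbounded slope, which is impossible. Your argument needs to be supplemented with this step; as written, the case analysis is not exhaustive.
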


\begin{proof}
This follows from a direct calculation analogous to the one that proves \cite[Corollary 3.6]{cit}: one computes the slopes of Hilbert CM modular forms (a calculation analogue to that of Remark \ref{CMclass} below in the case $F=\Q$), and relies on the fact that the slope is locally constant along a $p$-adic family to deduces that the only eigenforms of finite slope that can fit in a family are the ordinary ones, that only appear when all of the $p$-adic places of $F$ split in $K$. %\andr{correct???explain--why not inert??? check adel fité deo??}
\end{proof}

%the trace of a non-trivial extension of two characters is decomposable as a pseudorepresentation, but this condition is satisfied 
%WHAT to do with rho only locally defined???? everywhere locally locally abelian
%FOR nearord it works???
%We mimic the strategy of \cite{balghavathilb}.

%We prove Theorem \ref{} via...

%\andr{explain how to deduce parallel weight case!!}

%The following lemma is an immediate consequence of the nearly ordinary $R=T$ theorem of Fujiwara \cite[Theorem 11.1]{fujihilb}, together with the fact that the locus where a representation is decomposable is Zariski closed. %(see for instance \cite[Proposition 1.5.1]{bellchen}).

Let $U$ be an integral subdomain of $\wtl\cE_F$ (respectively, $\cE_F$) such that $\ovl\rho_U\vert_{G_{F(\zeta_p)}}$ is irreducible, and let $t_U\colon G_F\to\cO_U(U)$ be the pseudorepresentation attached to $U$. 

\begin{lemma}\label{Udec}
%Assume that U satisfies condition \ref{Sab} of Theorem \ref{bgvaff}. Then \rho_{\wtl U}\vert{G_{F_\fp}} is 
Let $\fp$ be a $p$-adic place of $F$. Assume that there exists a Zariski dense subset $S^\ab_\fp$ of $\ovl U(\Qp)$ such that $\rho_{\ovl U,x}\vert_{G_{F_\fp}}$ is decomposable for every $x\in S^\ab_\fp$. Then $U$ is contained in a nearly ordinary component of $\wtl\cE_F$ (respectively, an ordinary component of $\cE_F$), and $t_{U}$ is the trace of a continuous representation $\rho_{U}\colon G_{F,Np}\to\GL_2(\cO_U(U))$ such that $\rho_{U}\vert_{G_{F_\fp}}$ is decomposable for every $p$-adic place $\fp$ of $F$. %\andr{state for rhowtlU rather than specializations??} near ord??
\end{lemma}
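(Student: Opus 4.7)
The plan is to deduce the lemma from Proposition \ref{Enordprops}, by first upgrading the pointwise decomposability of $\rho_{\ovl U,x}\vert_{G_{F_\fp}}$ on $S^\ab_\fp$ to decomposability of the pseudorepresentation $t_U\vert_{G_{F_\fp}}$ on all of $U$ (carried out at each $p$-adic place $\fp$ of $F$), then deducing near-ordinariness, and finally producing the integral representation $\rho_U$ together with its local decomposability at each $\fp$.

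Fix a $p$-adic place $\fp$ of $F$. By Proposition \ref{hilbEps}, $t_U\vert_{G_{F_\fp}}$ is trianguline, and the interpolation result \cite[Theorem 6.3.13]{kedpotxia} supplies global trianguline parameters $\delta_1,\delta_2\colon F_\fp^\times\to\cO_U(U)^\times$ whose specializations at the classical points of $U$ recover the local trianguline parameters there. At every $x\in S^\ab_\fp$, the splitting $\rho_{\ovl U,x}\vert_{G_{F_\fp}}\cong\chi_{1,x}\oplus\chi_{2,x}$ forces the specialized triangulation to be split, which can be encoded as the vanishing at $x$ of a class in the relevant $(\varphi,\Gamma_{F_\fp})$-cohomology group classifying the extension. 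This class is a section of a coherent sheaf on the integral space $U$, so the Zariski density of $S^\ab_\fp$ forces it to vanish identically. Consequently $t_U\vert_{G_{F_\fp}}$ is a sum of two characters over $\cO_U(U)$; since $\fp$ was arbitrary, this holds for every $p$-adic place of $F$.

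With decomposability of $t_U\vert_{G_{F_\fp}}$ at every $\fp$, Proposition \ref{Enordprops}(iv) shows that $U$ is contained in a nearly ordinary component of $\wtl\cE_F$, while Proposition \ref{Enordprops}(iii) shows that, in the parallel weight case, it is contained in an ordinary component of $\cE_F$. Proposition \ref{Enordprops}(v) then produces an irreducible representation $\rho_U^\prime\colon G_{F,Np}\to\GL_2(\Frac(\cO_U(U)))$ with trace $t_U$ and with the explicit upper-triangular local shape $\begin{pmatrix}\vareps & \ast \\ 0 & \delta\end{pmatrix}$ at each $\fp$. Hypothesis \ref{zetapirr} of Theorem \ref{bgvaff} ensures that $\ovl\rho_U\vert_{G_{F(\zeta_p)}}$, and a fortiori $\ovl\rho_U$ itself, is absolutely irreducible, so Theorem \ref{liftings}(ii) lets us descend $\rho_U^\prime$ to a continuous representation $\rho_U\colon G_{F,Np}\to\GL_2(\cO_U(U))$ with trace $t_U$.

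For the final assertion, $\rho_U\vert_{G_{F_\fp}}$ is an extension of $\delta$ by $\vareps$, classified by a class $c\in H^1(G_{F_\fp},\vareps\delta^{-1})$ (with $\cO_U(U)$-coefficients), and is split exactly when $c$ vanishes. Since $c$ specializes to zero at every $x\in S^\ab_\fp$ by hypothesis and $S^\ab_\fp$ is Zariski dense in the integral space $U$, it follows that $c=0$ identically, yielding the decomposability of $\rho_U\vert_{G_{F_\fp}}$. The hard part will be making rigorous the Zariski-closedness of the decomposability locus via the Kedlaya--Pottharst--Xiao trianguline formalism, complicated by the fact that $\delta_1$ and $\delta_2$ may coincide on a proper closed subspace of $U$; this ambiguity in the trianguline parameters has to be handled carefully when translating between the pseudorepresentation and representation pictures.
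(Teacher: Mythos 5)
Your route diverges from the paper's, and it has a genuine gap in the step that produces the integral representation $\rho_U$. You invoke Theorem~\ref{liftings}(ii) to descend $\rho_U^\prime$ from $\Frac(\cO_U(U))$ to $\GL_2(\cO_U(U))$, but Theorem~\ref{liftings}(ii) only applies when the coefficient ring $A$ is a \emph{complete local ring}, and the affinoid Tate algebra $\cO_U(U)$ is neither local nor $\fm$-adically complete. As stated, the theorem gives you nothing over $\cO_U(U)$. The paper sidesteps this entirely: its (very terse) proof observes that the local decomposability locus is Zariski closed, hence all of $U$, and then invokes the nearly ordinary $R=T$ theorem of Fujiwara (\cite[Theorem 11.1]{fujihilb}). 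That theorem identifies Hida's nearly ordinary Hecke algebra with a nearly ordinary deformation ring, which \emph{is} a complete Noetherian local ring carrying a universal representation (here using absolute irreducibility of $\ovl\rho_U$), and this is how one obtains $\rho_U$ with the desired local shape while simultaneously placing $U$ in a nearly ordinary component. If you want to avoid Fujiwara, the usable lifting result is not Theorem~\ref{liftings}(ii) applied to $\cO_U(U)$, but rather that $\cO_{\ovl U}^\circ(\ovl U)$ is a profinite local ring (Lemma~\ref{bccomp}, since $\ovl U$ is wide open) on which one can lift $t_{\ovl U}$; but you would then still have to separately justify near-ordinariness of $\ovl U$, for which Proposition~\ref{Enordprops}(iv) is used essentially as a black box in the paper.

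A secondary, non-fatal point: to show $t_U\vert_{G_{F_\fp}}$ is a sum of two characters you bring in the full trianguline interpolation machinery of Kedlaya--Pottharst--Xiao and $(\varphi,\Gamma)$-cohomology classes. This is far heavier than needed: for a $2$-dimensional pseudorepresentation, decomposability is the same as reducibility, and the reducibility locus is represented by a closed subspace of $\fR_{\ovl t}$ (this is exactly the mechanism used in Corollary~\ref{indclo} of the paper); Zariski density of $S^\ab_\fp$ then forces $t_{\ovl U}\vert_{G_{F_\fp}}$ to factor through that closed subspace. You should also be more careful in the final extension-class argument: a nonzero section of a coherent sheaf on an integral affinoid can vanish on a Zariski-dense set if the sheaf has torsion, so the conclusion $c=0$ requires knowing (for instance) that the relevant $H^1$ is torsion-free or locally free over $\cO_U(U)$, which you flag but do not resolve.
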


\begin{proof}
A simple check shows that the locus of $U$ where $\rho_{\ovl U,x}\vert_{G_{F_\fp}}$ is decomposable is Zariski closed in $U$, hence coincides with the whole $U$. The Lemma then follows from the nearly ordinary $R=T$ theorem of Fujiwara \cite[Theorem 11.1]{fujihilb}.
\end{proof}

We will replace Sasaki's modularity result \cite[Theorem 2]{balghavathilb} with the following theorem of Pilloni and Stroh. For an arbitrary field embedding $\tau\colon F\to\Qp$, we denote by $c_\tau\in G_{F,Np}$ the associated complex conjugation. A representation $\rho\colon G_{F,Np}\to\GL_2(\Qp)$ is \emph{totally odd} if $\rho(c_\tau)=-\Id$ for for every $\tau$. %, and it is \emph{geometric} if it is potentially semistable at all the $p$-adic places of $F$.

\begin{thm}[{\cite[Théorème 0.2]{pilstrmod}}]\label{psmod}
Let $\rho\colon G_{F,Np}\to\GL_2(\Qp)$ be a continuous, totally odd, geometric representation such that
\begin{enumerate}
	\item $\ovl\rho\vert_{G_{F(\zeta_p)}}$ is irreducible;
	\item if $p=5$ and $\Proj(\ovl\rho(G_{F,Np}))\cong\PGL_2(\F_5)$, then $[F(\zeta_5)\colon F]=4$;
	\item\label{HT0} for every $p$-adic place $\fp$ of $F$, $\rho\vert_{G_{F_\fp}}$ is potentially semistable of Hodge--Tate weights all 0.
\end{enumerate}
Then $\rho$ is attached to a cuspidal Hilbert modular form for $\GL_{2/F}$ of parallel weight 1. In particular, $\rho$ has finite image.
\end{thm}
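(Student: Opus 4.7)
The plan is to combine three ingredients: potential modularity of the residual representation, modularity lifting in a $p$-adic family of positive weight, and a classicality criterion at parallel weight $1$. The principal obstacle is that classical weight-$1$ Hilbert modular forms are \emph{not} cohomological, so $\rho$ cannot be seen directly in the étale cohomology of any Hilbert modular variety; this is the reason why the weight-$1$ case is so much harder than the case of motivic weight $\ge 2$ treated by Geraghty and others.

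First, I would reduce to the case where $\ovl\rho$ is modular. Using the potential automorphy theorems for totally real fields (Khare--Wintenberger, Kisin, Barnet-Lamb--Gee--Geraghty--Taylor), together with hypotheses (i) and (ii) that guarantee that $\ovl\rho\vert_{G_{F(\zeta_p)}}$ is irreducible and the image of $\ovl\rho$ is large enough when $p=5$, one shows that there is a totally real solvable extension $F'/F$ linearly disjoint from the fixed field of $\ker\ovl\rho$ such that $\ovl\rho\vert_{G_{F'}}$ is attached to a Hilbert eigenform over $F'$. By solvable base change in both directions (Langlands--Arthur--Clozel as in Theorem \ref{artclo}), modularity of $\rho$ over $F$ is equivalent to modularity of $\rho\vert_{G_{F'}}$, so after replacing $F$ by $F'$ we may assume $\ovl\rho$ is modular of some weight.

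Second, observe that condition (iii) forces $\rho\vert_{G_{F_\fp}}$ to be \emph{potentially unramified} at every $p$-adic place $\fp$: a potentially semistable representation all of whose Hodge--Tate weights are $0$ has trivial monodromy (since $N\varphi=p\varphi N$ together with the single weight forces $N=0$) and trivial Hodge filtration, hence becomes crystalline with trivial filtration over the field where it is semistable, which means unramified by Fontaine's theorem. This is precisely the local shape expected of nearly ordinary Hilbert eigenforms of parallel weight $1$. Using Fujiwara's nearly ordinary $R=T$ theorem and the structure of $\wtl\cE_F^{\nord}$ from Proposition \ref{Enordprops}, I would exhibit $\rho$ as a point of the nearly ordinary part of $\wtl\cE_F$ of weight $\kappa_{(1)_\sigma,1,\chi_1,\chi_2}$ for appropriate finite-order characters $\chi_1,\chi_2$. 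Classical specializations of this component in weights $\bk\ge 2$ are modular by standard modularity lifting (Geraghty), so we obtain a nearly ordinary Hida family of Hilbert modular forms on which $\rho$ appears as an overconvergent but a priori non-classical weight-$1$ point.

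The crux is the third step: proving that this overconvergent weight-$1$ eigenform is classical. Here one invokes the main technical theorem of Pilloni--Stroh, which is a classicality criterion for finite-slope overconvergent Hilbert eigenforms of parallel weight $1$ whose associated Galois representation is unramified (up to finite order twist) at every $p$-adic place. The proof of this criterion is delicate: it realizes overconvergent weight-$1$ forms as sections of an automorphic line bundle on a strict neighborhood of the multi-ordinary locus in a toroidal compactification of the Hilbert modular variety, and uses a careful study of the higher coherent cohomology $H^i$ of these line bundles, together with gluing of $p$-adic Galois representations and a BGG-type analysis, to analytically continue the eigenform over the whole Hilbert modular variety. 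Once classicality is known, the associated representation factors through a finite quotient by a Hilbert analogue of the Deligne--Serre theorem, giving the final statement. The entire difficulty of the theorem is concentrated in this last classicality step; the rest of the argument is modularity lifting assembled from existing technology.
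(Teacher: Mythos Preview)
The paper does not prove this theorem: it is quoted verbatim from \cite[Théorème 0.2]{pilstrmod} and used as a black box in the proof of Lemma~\ref{wt1} and Proposition~\ref{wt1CM}. There is therefore no ``paper's own proof'' to compare your proposal against.

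That said, your sketch is a fair summary of the strategy of Pilloni--Stroh themselves: reduce to residually modular via potential modularity and solvable base change, place $\rho$ on a nearly ordinary family using modularity lifting in cohomological weight, and then invoke their weight-$1$ classicality theorem (proved via higher coherent cohomology and analytic continuation) to conclude. One small inaccuracy: you appeal to Fujiwara's nearly ordinary $R=T$ to put $\rho$ on the family, but Fujiwara requires $p$ unramified in $F$, which is precisely the hypothesis this paper wants to avoid; in the actual Pilloni--Stroh argument the family is produced differently (and the present paper uses Fujiwara only in Lemma~\ref{Udec}, where the input is already on the eigenvariety). But since the current paper neither proves nor sketches Theorem~\ref{psmod}, this is a comment on your reading of \cite{pilstrmod} rather than a discrepancy with the paper under review.
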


%ASSUME FROM NOW on that we are on components for which rhobar satisfies the assumptions!!!!

Recall that, for a $p$-adic field $E$, we denote by $I_E$ the inertia subgroup of $G_E$.

\begin{rem}\label{senHT0}
Let $\fp$ be a $p$-adic place of $F$. By a well-known theorem of Sen, $\rho\vert_{G_\fp}$ is potentially semi-stable of Hodge--Tate weights all 0 if and only if $\rho(I_{F_\fp})$ is finite.
\end{rem}

%SAY how we write weights of Hilbert forms!!!

We will prove in Proposition \ref{wt1CM} that, if $\cF$ is an irreducible component of $\cE_F$ for which $\rho_\cF$ is locally the sum of two characters, then $\cF$ contains a dense subset of points whose associated representations satisfy the conditions of Theorem \ref{psmod}. We mimic the proofs of \cite{ghavatord,balghavathilb} in our more general setting where $p$ is not assumed to split in $F$.

%\andr{clarify leopoldt story, correct statement of BGV!!}
%
%\andr{when applying results, adjust tame levels!!} 

In the following, let $\cX$ be a nearly ordinary component of $\wtl\cE_F$, and assume that: 
%Let t_\cX\colon G_{F,Np}\to\cO_{\wtl\cE_F}^\ccirc(\cX) be the restriction to \cX of the pseudorepresentation t_{\wtl\cE_F} of Theorem \ref{hilbE}??. By ???, t_\cX is the trace of a representation \rho_\cX\colon G_{F,Np}\to\GL_2(\Frac(\cO_{\wtl\cE_F}(\cX))).

%By ???, \cO_{\wtl\cE_F}^\ccirc(\cX) is a local ring whose residue field is finite. 
%Let \ovl t_\cX be the reduction of t_\cX modulo the maximal ideal of \cO_{\wtl\cE_F}^\ccirc(\cX), and let \ovl\rho_\cX\colon G_{F,Np} be a semisimple representation with trace \ovl t_\cX. 
%For the rest of the section, we assume that:
\begin{itemize}
\item $\ovl\rho_\cX\vert_{G_{F(\zeta_p)}}$ is irreducible;
\item if $p=5$ and $\Proj(\ovl\rho_\cX(G_{F,Np}))\cong\PGL_2(\F_5)$, then $F(\zeta_5)\colon F]=4$;
\end{itemize}
This will allow us to apply Theorem \ref{psmod} to the weight 1 specializations of $\cX$.

\begin{lemma}\label{CMdense}%\andr{true also for not nearly ord??}
If the set of classical projectively dihedral specializations of $\cX$ is dense in $\cX$, then $\cX$ is a CM component.
\end{lemma}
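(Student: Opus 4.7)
The plan is to show that the pseudorepresentation $t_\cX$ is induced from a character of a single totally imaginary quadratic extension $K/F$; the last sentence of Lemma \ref{indCM}, applied to any classical weight $\ge 2$ specialization of $\cX$, will then identify every such specialization as CM, establishing that $\cX$ is a CM component. First I would refine the given hypothesis to a Zariski dense subset $S$ of classical projectively dihedral specializations of $\cX$ of partial weight $\ge 2$ everywhere: on any irreducible affinoid subdomain of $\cX$, the locus of classical specializations having some partial weight $<2$ is a proper Zariski closed subset, so that removing it from the given dense set preserves Zariski density. By the last sentence of Lemma \ref{indCM}, each $x \in S$ is CM, attached to a uniquely determined totally imaginary quadratic extension $K_x/F$, with associated quadratic character $\eta_{K_x}\colon G_F\to\{\pm 1\}$ satisfying $\rho_x \otimes \eta_{K_x} \cong \rho_x$. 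Since $\rho_x$ is unramified outside $Np$ the same holds for $\eta_{K_x}$; and by class field theory the group of quadratic characters of $G_{F,Np}$ is finite, so only finitely many extensions $K_1,\dots,K_r$ can appear as $K_x$ varies over $S$.

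By Corollary/Definition \ref{indclo}(ii), each induced locus $\cX^{K_j-\ind}$ is a Zariski closed subspace of $\cX$. The finite union $\bigcup_{j=1}^r \cX^{K_j-\ind}$ is Zariski closed in $\cX$ and contains the Zariski dense set $S$, hence coincides with $\cX$. Since $\cX$ is irreducible, one of the closed subspaces $\cX^{K_j-\ind}$ must already equal $\cX$; fix such an index $j$ and set $K = K_j$. Then $t_\cX$ is $K$-induced on all of $\cX$. Picking any $x \in S$, the identification $K = K_x$ forces $K$ to be totally imaginary, hence a CM field. Every classical weight $\ge 2$ specialization of $\cX$ has associated pseudorepresentation induced from a character of $G_K$, so by Lemma \ref{indCM} the corresponding Hilbert eigenform is CM with multiplication by $K$. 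This shows that $\cX$ is a CM component.

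The main conceptual input is the finiteness of the set of quadratic extensions of $F$ unramified outside $Np$, which converts an a priori continuous family of candidate inducing fields into a finite list and makes a direct application of the closedness of each $\cX^{K_j-\ind}$ (from Corollary/Definition \ref{indclo}(ii)) together with the irreducibility of $\cX$ sufficient to conclude globally. No appeal to the modularity theorem of Pilloni--Stroh (Theorem \ref{psmod}) is required for this lemma, nor is the full strength of the hypotheses \ref{zetapirr}, \ref{5hyp} on $\ovl\rho_\cX$: those will enter only once one wishes to produce the dense set of CM specializations from weaker hypotheses, as in the proof of Theorem \ref{bgvaff}.
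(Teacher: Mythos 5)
Your proof is correct, and its skeleton is the same as the paper's: (a) finiteness of the set of candidate inducing quadratic fields, (b) each induced locus is Zariski closed, so a finite union covering a dense set together with irreducibility forces $t_\cX$ to be induced from a single $K$, (c) $K$ is totally imaginary. The differences are in how you carry out (a) and (c). For (a), you invoke the finiteness of quadratic extensions of $F$ unramified outside $Np$; the paper instead argues that $p$ splits in $K_x$ (via the near-ordinarity of $\cX$ and Proposition \ref{psplit}), so the discriminant $D_x$ of $K_x/F$ is prime to $p$ and divides the tame level $N$, giving finiteness without any appeal to the $p$-adic places. Your route is cleaner in that it does not use near-ordinarity; the paper's bounds the discriminant more tightly. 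For (c), you refine the dense set to weight $\ge 2$ specializations and apply the last sentence of Lemma \ref{indCM} pointwise; the paper instead produces a single parallel weight 2 classical specialization via finiteness of the weight map and Hida's control theorem and reads off from it that $K$ is totally imaginary. Both are fine, though your justification of the refinement step is phrased loosely: the ``locus of classical specializations having some partial weight $<2$'' is not itself a Zariski closed subset of $\cX$ — what is true, and is what you need, is that on any affinoid adapted to a slope bound the weight map is finite and there are only finitely many classical weights with a component equal to $1$ (by the boundedness of the conductor and of the $w$-coordinate on an affinoid, cf.\ Remark \ref{condbound}), so the weight-$1$ classical specializations are finite in each such affinoid and removing them preserves density. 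With that correction your argument goes through, and you are right that neither Theorem \ref{psmod} nor the hypotheses \ref{zetapirr}, \ref{5hyp} are used in this lemma — they enter only in Lemma \ref{wt1} to produce the dense set to which the present lemma is applied.
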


\begin{proof}
By \ref{indCM}, each point in $x\in X_2$ corresponds to an eigenform $f_x$ with multiplication by a quadratic extension $K_x$ of $F$ of some discriminant $D_x$, that will then divide the level of $f_x$. By Proposition \ref{psplit}, $p$ has to split in $K_x$, hence does not divide $D_x$, which means that $D_x$ divides the tame level $N$ of $f_x$. Therefore, there are only a finite number of choices for the extension $K_x$. As in \cite[Proof of Theorem 3]{balghavathilb}, we deduce that for one such choice, say $K$, the set of points of $\cX$ that have multiplication by $K$ is dense, and conclude that the whole $\cX$ has multiplication by $K$. Since the weight map is finite, $\cX$ always admits a specialization of parallel weight 2, that will be classical by Hida's control theorem. Such a specialization will have multiplication by $K$, so by Lemma \ref{indCM} $K$ has to be a totally imaginary extension of $F$. Note that this argument from \emph{loc. cit.} allows one to avoid having to prove a result of the type of \cite[Lemma 16]{ghavatord}. %, that relies on Leopoldt's conjecture (known in that case).
%\andr{what are these comments?? go through no hecke algebra
%	same proof as for split case?? check the refs in bgv??}
\end{proof}

The assumption on the local decomposability of the Galois representation attached to a form or a component only makes an appearance in the following lemma, but it is a crucial one to the argument.

\begin{lemma}\label{wt1}\mbox{ }
\begin{enumerate}[label=(\roman*)]
\item A parallel weight 1 specialization $x$ of $\cX$ is classical if and only if, for every $p$-adic place $\fp$ of $F$, $\rho_x\vert_{I_{F_\fp}}$ is the direct sum of two characters.
\item Assume that, for every $p$-adic place $\fp$ of $F$, $\rho_\cX\vert_{G_{F_\fp}}$ is the direct sum of two characters. Then the set of classical weight 1 specializations of $\cX$ is dense in $\cX$.
\end{enumerate}
\end{lemma}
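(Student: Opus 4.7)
For part (i), in the forward direction, a classical parallel weight $1$ specialization $x$ has $\rho_x$ attached to a classical Hilbert eigenform of weight $1$, hence with finite image by the Hilbert modular analogue of the Deligne--Serre theorem. Then $\rho_x\vert_{G_{F_\fp}}$ is semisimple, and by the nearly ordinary hypothesis and Proposition \ref{Enordprops}(v) it is also upper triangular, so it decomposes as a direct sum of two characters; restriction to $I_{F_\fp}$ yields the claim. In the reverse direction, assume $\rho_x\vert_{I_{F_\fp}}$ decomposes as $\chi_{\fp,1}\oplus\chi_{\fp,2}$ for each $\fp\mid p$; my plan is to apply the Pilloni--Stroh modularity theorem (Theorem \ref{psmod}) to $\rho_x$. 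The residual hypotheses on $\ovl\rho_x$ are precisely those already imposed on $\cX$, and total oddness of $\rho_x$ is inherited from higher weight classical specializations by density. For condition (3), Remark \ref{senHT0} reduces the task to showing that $\rho_x(I_{F_\fp})$ is finite for each $\fp\mid p$: since at a parallel weight $1$ specialization all Hodge--Tate--Sen weights of $\rho_x\vert_{G_{F_\fp}}$ vanish, each character $\chi_{\fp,i}\vert_{I_{F_\fp}}$ has trivial Sen weight, hence has finite image on inertia by Sen's theorem. Theorem \ref{psmod} then produces a classical parallel weight $1$ Hilbert eigenform whose attached representation is $\rho_x$, so $x$ is classical.

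For part (ii), the hypothesis specializes to give $\rho_x\vert_{G_{F_\fp}}$ as a sum of two characters at every $x\in\cX(\Qp)$ and every $\fp\mid p$; in particular, $\rho_x\vert_{I_{F_\fp}}$ is a sum of two characters at every parallel weight $1$ specialization $x$, so by part (i) every such $x$ is classical. To conclude, I would check that parallel weight $1$ specializations are analytic Zariski-dense in $\cX$: by Remark \ref{classdens}, combined with the standard density of finite order characters in each connected component of the weight space of $\cO_{F,p}^\times\times\Z_p^\times$, the classical weights $\kappa_{\mathbf{1},w,\chi_1,\chi_2}$ (with $\bk=\mathbf{1}$ fixed and $w,\chi_1,\chi_2$ varying subject to the parity condition) are dense in each connected component of $\wtl\cW_F$ they meet. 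Since the weight map $\omega_{\wtl\cE_F}\vert_\cX\colon\cX\to\wtl\cW_F$ is finite, the preimage of this dense set is dense in $\cX$, which completes the argument.

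The step I expect to be the main obstacle is the verification of total oddness of $\rho_x$ in the strong sense $\rho_x(c_\tau)=-\Id$ demanded by the statement of Theorem \ref{psmod}: whereas the identities $\Tr\rho_x(c_\tau)=0$ and $\det\rho_x(c_\tau)=-1$ extend immediately to all of $\cX$ by density from the classical specializations of weight $\ge 2$, the pointwise matrix identity is more delicate and will likely require a twist normalizing the determinant character along the nearly ordinary component, or a direct argument using the explicit shape of the diagonal characters of $\rho_\cX\vert_{G_{F_\fp}}$ described in Proposition \ref{Enordprops}(v).
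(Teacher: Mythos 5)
Your proof is correct and takes essentially the same route as the paper, with one genuine variation in the forward direction of (i). You deduce that $\rho_x$ has finite image globally from the Rogawski--Tunnell / Deligne--Serre theorem for classical parallel weight~1 Hilbert forms, hence $\rho_x$ is semisimple, and then combine this with the upper-triangular shape at $\fp$ from Proposition~\ref{Enordprops}(v) to force local decomposability. The paper argues entirely locally instead: since $x$ has parallel weight~1, the two characters in the nearly ordinary filtration of $\rho_x\vert_{I_{F_\fp}}$ have Sen weight~0, hence finite image on $I_{F_\fp}$; classicality gives potential semistability of all Hodge--Tate weights~0, which by Remark~\ref{senHT0} means $\rho_x(I_{F_\fp})$ is finite, and this forces the extension to split on inertia because a nonsplit extension of two finite-order characters has infinite image on inertia. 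Both arguments are valid; the paper's is slightly more self-contained in that it only needs $\rho_x\vert_{G_{F_\fp}}$ potentially semistable (a direct consequence of geometricity) rather than the full finite-image theorem for weight-one forms, and it makes no semisimplicity assumption on the specialization $\rho_x$. Your reverse direction of (i) and your part (ii) coincide with the paper's.

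The concern you raise in your closing paragraph is a sharp observation about the text, but it is not an obstacle to the proof. The condition $\rho(c_\tau)=-\Id$ stated just above Theorem~\ref{psmod} cannot be the intended hypothesis: for a 2-dimensional modular Galois representation, $\rho(c_\tau)$ is an involution of determinant $-1$, hence conjugate to $\diag(1,-1)$ and never equal to $-\Id$ (which has determinant $+1$, i.e.\ would be totally \emph{even}). The correct hypothesis, and the one Pilloni--Stroh actually impose, is $\det\rho(c_\tau)=-1$, which as you note is immediate by specialization from the classical higher-weight points dense in $\cX$. So no twist or auxiliary normalization is needed before applying Theorem~\ref{psmod}, and the paper's proof, implicitly, and yours, explicitly, are both fine on this point.
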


\begin{proof}
We prove (i). 
Let $x$ be a specialization of $\cX$ and $\fp$ a $p$-adic place of $F$. Since $\cX$ is nearly ordinary, $\rho_\cX\vert_{I_{F_\fp}}$ is reducible. In particular, $\rho_x\vert_{I_{F_\fp}}$ is an extension of a character $\chi_1$ of $I_{F_\fp}$ by a second character $\chi_2$. If $x$ is of parallel weight 1, all of the Hodge--Tate weights of $\rho_x\vert_{G_{F_\fp}}$ are 0, so each of $\chi_1$ and $\chi_2$ is a Hodge--Tate character. A character of $G_{F_\fp}$ is Hodge--Tate if and only if it is potentially semi-stable, and it is Hodge--Tate of weight 0 if and only its restriction to $I_{F_\fp}$ factors through a finite quotient. Therefore both $\chi_1(I_{F_\fp})$ and $\chi_2(I_{F_\fp})$ are finite. 

If $x$ is classical, then $\rho_x\vert_{G_{F_\fp}}$ is potentially semi-stable of Hodge--Tate weights 0, hence by Remark \ref{HT0} $\rho_x(I_{F_\fp})$ is finite, which is only possible if $\rho_x\vert_{I_{F_\fp}}$ is the direct sum of $\chi_1$ and $\chi_2$.

Conversely, if $\rho_x\vert_{I_{F_\fp}}$ is the direct sum of $\chi_1$ and $\chi_2$, then $\rho_x\vert_{G_{F_\fp}}$ is potentially semistable since its restriction to $I_{F_\fp}$ factors through a finite quotient. If this is true for every $\fp$, then $\rho_x$ is geometric, given that we already know that it is unramified outside of $Np$. We are now in a position to apply Theorem \ref{psmod} to $\rho_x$ and deduce that $\rho_x$ is attached to a Hilbert eigenform of parallel weight 1, in other words that $x$ is classical.
%By a classical theorem of Sen \cite{??}, \rho_x\vert_{G_{F_\fp}} is potentially semistable if and only if the image of I_{F_\fp} under \rho_x is finite. \andr{true over Fp??} If 

As we recalled above, the weights of the form $\kappa_{1,0,\chi_1,\chi_2}$ are dense in $\wtl\cW_F$. Since $\cX$ is nearly ordinary, the restriction of the weight map to $\cX$ is a finite map, so the set of points of $\cX$ of weight 1 is dense in $\cX$. Part (ii) follows from this fact together with part (i).
\end{proof}

\begin{prop}\label{wt1CM}
Let $\cX$ be either a nearly ordinary cuspidal component of $\wtl\cE_F$, or an ordinary component of $\cE_F$, such that, for every $p$-adic place $\fp$ of $F$, $\rho_\cX\vert_{G_{F_\fp}}$ is the direct sum of two characters. Then the set of classical CM specializations of $\cX$ of parallel weight 1 is dense in $\cX$.
%Let $f$ be a weight 1 Hilbert eigenform for $G$. Assume that $\rho_{f,p}\vert{G_{F_\fp}}$ is decomposable for every $p$-adic place $\fp$ of $F$. Then $f$ is a CM Hilbert modular form.
\end{prop}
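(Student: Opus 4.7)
The plan is to combine Lemma \ref{wt1}(ii) with a Hermite-type finiteness argument for Artin representations of bounded conductor, and then to appeal to classical weight $\ge 2$ specializations of $\cX$ to force the attached quadratic extensions to be totally imaginary, in the spirit of the proof of Lemma \ref{CMdense}.

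First I would apply Lemma \ref{wt1}(ii) (in the $\cE_F$ case, via the argument of that lemma transplanted to the parallel weight setting, using that parallel weight 1 points are dense in $\cW_F$) to conclude that the set $S_1^{\mathrm{cl}}$ of classical parallel weight 1 specializations of $\cX$ is dense in $\cX$. By Theorem \ref{psmod}, each $\rho_x$ for $x \in S_1^{\mathrm{cl}}$ has finite image, so is an Artin representation. Fixing an affinoid $V \subset \cX$, Remark \ref{condbound} gives a uniform bound $M$ on the conductor of $\rho_x$ for $x \in S_1^{\mathrm{cl}} \cap V$. By the classification of finite subgroups of $\PGL_2(\C)$, the projective image of $\rho_x$ is cyclic, dihedral, or of exceptional type ($A_4$, $S_4$, or $A_5$): the cyclic case is excluded by absolute irreducibility (assumption \ref{zetapirr}), and a Hermite-type argument shows that the Artin representations of $G_F$ with conductor dividing $M$ and fixed exceptional projective image are finite in number. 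The set of weight 1 specializations in $V$ whose projective image is dihedral is therefore still dense in $V$.

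For each such $x$, I would write $\rho_x \cong \Ind_{K_x}^F \psi_x$ for a quadratic extension $K_x/F$ and a finite-order character $\psi_x$ of $G_{K_x}$. The hypothesis that $\rho_\cX|_{G_{F_\fp}}$ is decomposable at every $p$-adic place $\fp$ of $F$ forces each such $\fp$ to split in $K_x$, which, combined with the conductor bound, leaves only finitely many possibilities for $K_x$. A pigeonhole argument on $V$ then produces a fixed quadratic extension $K/F$ such that a still-dense subset of $V$ consists of points $x$ with $K_x = K$, equivalently $\rho_x \cong \rho_x \otimes \eta_K$, where $\eta_K$ is the quadratic character of $G_F$ cutting out $K$. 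Since this is a closed condition on $\cX$, the identity $t_\cX = \eta_K \cdot t_\cX$ extends by density to all of $\cX$: the family itself has multiplication by $K$.

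Finally, by finiteness of the weight map (Theorem \ref{hilbE}), $\cX$ admits a classical specialization of parallel weight $\ge 2$; this specialization is a Hilbert eigenform with multiplication by $K$, and the last statement of Lemma \ref{indCM} forces $K$ to be totally imaginary, i.e.\ a CM extension of $F$. The dense set of dihedral weight 1 specializations constructed above is then automatically a dense set of CM specializations, completing the argument. The main obstacle I foresee is the Hermite-type finiteness of exceptional Artin representations of bounded conductor: one needs to control the splitting field of such a representation (its degree over $F$ being fixed by the order of $A_4$, $S_4$, or $A_5$, and its ramification bounded by the conductor) so that Hermite's theorem bounds the number of such fields, hence of such representations.
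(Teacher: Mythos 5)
Your overall strategy — density of classical parallel weight $1$ specializations via Lemma \ref{wt1}(ii) and Theorem \ref{psmod}, exclusion of the cyclic case by irreducibility, finiteness of the exceptional ($A_4$, $S_4$, $A_5$) locus, then pigeonhole on the resulting dihedral quadratic extensions and a weight $\ge 2$ specialization to force the field to be CM — matches the paper's, but your key step is genuinely different. To show the exceptional locus is not dense, the paper does not count number fields: it uses the nearly ordinary description of $\rho_\cX\vert_{I_{F_\fp}}$ from Proposition \ref{Enordprops}(v) to write the weight characters $\chi_1,\chi_2$ explicitly in terms of $\vareps_\fp,\delta_\fp$ restricted to inertia, observes that for $x$ in the exceptional locus these have order bounded by $|\rho_{\cX,x}(G_F)|$, and concludes via finiteness of the weight map that there are only finitely many such $x$. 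You instead propose a Hermite--Minkowski count of Artin representations of $G_F$ of bounded conductor with fixed exceptional projective image. This can be made to work and is in some sense more robust (it does not rely on the explicit nearly ordinary local description), but as written it has a gap you partly anticipate: you say the splitting field's degree over $F$ is ``fixed by the order of $A_4$, $S_4$, or $A_5$'', which is the degree of the \emph{projective} splitting field. The splitting field of $\rho_x$ itself has degree $|\rho_x(G_F)|$, which includes the scalar part; without a uniform bound on the order of $\det\rho_x$ over $V$ this is not controlled by the projective image alone, and there are infinitely many twists of a fixed projective representation by characters of $G_F$. To close the gap you need to (a) bound the order of $\det\rho_x$ over the affinoid $V$ — this follows from Remark \ref{condbound} applied to the determinant map, and also falls out of the inertia relations of Proposition \ref{Enordprops}(v), which give $(\vareps_\fp\delta_\fp)^2\vert_{I_{F_\fp}}=1$ at weight $1$ — and (b) note that, once the projective representation and the determinant are each pinned down to finitely many possibilities, there are only finitely many lifts of bounded conductor (the twisting character has bounded conductor). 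With these additions your argument is correct; the paper's route sidesteps field-counting entirely by reading the bound off the weight characters. A last minor point: you reprove the pigeonhole and totally-imaginary portion of the argument, which in the paper is factored out as Lemma \ref{CMdense}; invoking that lemma would shorten your final paragraph.
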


\begin{proof}
We adapt to our setting the proof of the implication (ii)$\implies$(iii) of \cite[Proposition 14]{ghavatord}. 
Let $x$ be a point of $\cX$ corresponding to a Hilbert eigenform $f_x$ of parallel weight 1. The specialization $\rho_{\cX,x}$ is isomorphic to the $p$-adic Galois representation attached to $f_x$ by the work of Deligne--Serre (if $F=\Q$), Ohta and Rogawski--Tunnell. In particular, it is obtained from a continuous (hence finite image) complex representation $G_{F,Np}\to\GL_2(\C)$ via an isomorphism $\Qp\cong\C$; we still denote this complex representation by $\rho_{\cX,x}$. By the classification of finite subgroups of $\GL_2(\C)$, the image of the projective representation $\Proj\rho_{\cX,x}$ is isomorphic to one of the following:
\begin{enumerate}[label=(\arabic*)]
\item a cyclic group;
\item a dihedral group;
\item one of the permutation groups $A_4$, $A_5$ or $S_5$.
\end{enumerate}
For $i=1,2,3$, let $X_i$ be the set of weight 1, classical specializations $x$ of $\cX$ for which the image of $\Proj\rho_{\cX,x}$ is of type (i) above. 
By Lemma \ref{wt1}(ii), $X_1\cup X_2\cup X_3$ is dense in $\cX$. 
We show that $X_1$ and $X_3$ cannot be dense in $\cX$, so that $X_2$ has to be.

If $x\in S_1$, then $\rho_{\cX,x}$ is decomposable, so $x$ corresponds to an Eisenstein series for $\GL_{2/F}$. Since $\cX$ is a cuspidal component of $\cE_F$, the set of its Eisenstein specializations cannot be dense in $\cX$.

Now assume that $x\in X_3$. By assumption, the restriction $\rho_{\cX}\vert_{G_{F_\fp}}$ is isomorphic to 
\begin{equation}\label{xloc}
\begin{pmatrix} \vareps_\fp & 0 \\
	0 & \delta_\fp \end{pmatrix}
\end{equation}
for two characters $\vareps_\fp,\delta_\fp\colon G_{F_\fp}\to\wtl\Lambda_F^\times$. 
%By \ref{??}, there are only a finite number of specializations $y$ of $\cX$ with $\rho_{\cX,y}\vert_{G_{F_\fp}}\cong\rho_{\cX,x}\vert_{G_{F_\fp}}$. 
By assumption, the weight of $x$ is associated with an arithmetic ideal of $\wtl\Lambda_F$ of the form $P_{(0)_\sigma,0,\chi_1,\chi_2}$ (recalling that in the notation of Section \ref{secwt}, weight $k$ corresponds to Hodge--Tate weight $k-1$), i.e. with the character $\cO_{F,p}^\times\times\Z_p^\times\to\C_p^\times$ mapping $(x,y)$ to $\chi_1(x)\chi_2(y)$.

By Proposition \ref{Enordprops}(v), $\vareps_\fp^2\vert_{I_{F_\fp}}\ccirc\rec_\fp^{-1}\colon\cO_{F,p}^\times\to\C_p^\times$ is the character $x\mapsto\chi_1(x)\chi_2^{-1}(\Norm(x))$, and $\delta_\fp^2\vert_{I_\fp}\ccirc\rec_\fp^{-1}$ is the character $x\mapsto\chi_1^{-1}(x)\chi_2(\Norm(x))$. %In particular, the orders of $\chi_1$ and $\chi_2$ are bounded in terms of the orders of $\vareps_\fp$ and $\delta_\fp$. 
% the specialization $\delta_{\fp,x}$ is trivial, while $\vareps_{\fp,x}=???$. 
In particular, the order of both $\chi_1$ and $\chi_2$ is bounded by the size of the image of $\rho_{\cX,x}$, which is at most 120 since $x\in X_3$. There is only a finite number of characters $\chi_1$ and $\chi_2$ of every fixed finite order, and since the structure map $\Lambda_F\to\Lambda_\cF$ is finite, for every choice of $\chi_1$ and $\chi_2$ there exists only a finite number of points of weight $P_{-1,0,\chi_1,\chi_2}$. We conclude that the set $X_3$ is finite.

The case when $\cX$ is an ordinary component of $\cE_F$ is proved in exactly the same way, or by embedding $\cX$ into a nearly ordinary component of $\wtl\cE_F$.
%In particular, for every integer $r$, there are only a finite number of specializations $x$ of $\cX$ for which $\vareps_x\delta_x^{-1}??$ has order at most $r$. The order of the projective image of $\rho_{\cX,x}\vert_{G_{F_\fp}}$ is then at least the order of the character $\vareps_x\delta_x^{-1}??$. Therefore, by the previous remark there are at most a finite number of specializations $x$ of $\cX$ for which the projective image of $\rho_{\cX,x}\vert_{G_{F_\fp}}$ is isomorphic to either $A_4$, $A_5$ or $S_5$.

As desired, we deduce that $X_2$ is dense in $\cX$, and Lemma \ref{CMdense} allows us to conclude.
%if $f$ is a weight 1 Hilbert eigenform whose associated $p$-adic Galois representation has dihedral image, then $f$ is a CM form.
\end{proof}

%\andr{say what specialization means??}

Combining Lemmas \ref{CMdense} and \ref{wt1CM} gives the following.

\begin{lemma}\label{XCM}
If $\cX$ is a nearly ordinary (respectively, ordinary) component of $\wtl\cE_F$ (respectively, $\cE_F$) such that $\rho_\cX\vert_{G_{F_\fp}}$ is decomposable for every $p$-adic place $\fp$ of $F$, then $\cX$ is a CM component.
\end{lemma}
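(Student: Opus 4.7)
The proof is essentially a direct combination of the two results immediately preceding the statement, so the plan is short. First, I would invoke Proposition \ref{wt1CM} applied to $\cX$: its hypotheses are exactly what we assumed in Lemma \ref{XCM}, namely that $\cX$ is nearly ordinary (resp.\ ordinary) and that $\rho_\cX\vert_{G_{F_\fp}}$ is a direct sum of two characters for every $p$-adic place $\fp$ of $F$. The conclusion is that the set $S$ of classical CM specializations of $\cX$ of parallel weight $1$ is (analytic Zariski-)dense in $\cX$.

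Next, every CM eigenform is attached to a Gr\"ossencharacter of a quadratic extension of $F$ (namely the underlying CM field), so by Lemma \ref{indCM} its associated Galois representation has projectively dihedral image. Hence $S$ is contained in the set of classical projectively dihedral specializations of $\cX$, and that set is therefore dense as well. At this stage all hypotheses of Lemma \ref{CMdense} are met, and applying it gives that $\cX$ is a CM component, which is exactly the desired conclusion.

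There is no real obstacle here: the genuinely substantive inputs are Proposition \ref{wt1CM} (which itself relies on the Pilloni--Stroh modularity theorem \ref{psmod} via Lemma \ref{wt1}) and Lemma \ref{CMdense} (which uses Lemma \ref{indCM}, Proposition \ref{psplit} and Hida's control theorem to pin down the CM field and upgrade it to a totally imaginary one). Once these are in place the present lemma is just their combination, so the proof I would write is one sentence: by Proposition \ref{wt1CM} the classical CM specializations of parallel weight $1$ are dense in $\cX$, hence so are the projectively dihedral classical specializations, and Lemma \ref{CMdense} yields the conclusion.
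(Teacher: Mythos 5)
Your proof is correct and is essentially identical to the paper's, which states the lemma as an immediate consequence of combining Lemma \ref{CMdense} and Proposition \ref{wt1CM}. You helpfully spell out the small bridge the paper leaves implicit, namely that the classical CM weight-$1$ specializations produced by Proposition \ref{wt1CM} are in particular projectively dihedral (via Lemma \ref{indCM}), which is exactly what the hypothesis of Lemma \ref{CMdense} asks for.
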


%\begin{proof}
%Since $\rho_\cX\vert_{G_{F_\fp}}$ is decomposable for every $p$-adic place $\fp$ of $F$, the representation attached to every weight 1 specialization of $\cX$ is also decomposable at $\fp$ for every $\fp$. Then by Lemma \ref{wt1CM}, all weight 1 specializations of $\cX$ are CM. The set of weight 1 specializations of $\cX$ is dense in $\cX$ by Lemma \ref{wt1CM}, so thanks to Lemma \ref{CMdense} we conclude that $\cX$ is a CM component. 
%\end{proof}

%THIS is subseded by the result above on Hmfs of wt1
%When \rho has finite image (i.e., when it satisfies the assumptions of Theorem \ref{??} true??), one can check explicitly that \rho is locally decomposable at all p-adic places if and only if the image of \rho is dihedral. This is the content of the following result, that is proved in ??
%
%\begin{lemma}
%Let \rho\colon G_{F,Np}\to\GL_2(\Qp) be a continuous representation with finite image. If \rho\vert_{G_\fp} is the decomposable for every p-adic place \fp of F, then the image of \rho is dihedral.
%\end{lemma}

\begin{proof}[Proof of Theorem \ref{bgvaff}]
Let $U$ be an affinoid of $\wtl\cE_F$ satisfying the assumptions of Theorem \ref{bgvaff}. Then the Zariski closure $\ovl U$ of $U$ in $\wtl\cE_F$ is an irreducible component $\cX$ of $\wtl\cE_F$. By Lemma \ref{Udec}, $\rho_{\cX}\vert_{G_{F_\fp}}$ is decomposable for every $p$-adic place $\fp$ of $F$ and $\cX$ is a nearly ordinary component of $\wtl\cE_F$. 
%for every x\in\cX(\Qp). \andr{ok???}
%Therefore, $\cX$ is a nearly ordinary component of $\cE_F$ by \ref{???}. 
By Corollary \ref{XCM}, $\cX$ is a CM component.
\end{proof}

\medskip

\section{Families of eigenforms of infinite slope}\label{secfam}

Since this creates no ambiguity, we simply write $\GL_{2/F}$ in the rest of the paper for the $\Q$-group scheme $\Res_{F/\Q}\GL_{2/F}$.

\subsection{Eigenforms of infinite slope}

%FUN: Coleman-Stein give an inf slope form in the limit of the eigencurves for growing level
%but then completed cohomology is not complete?? NO NO: maybe Jacquet functor nonvanishing locus is still complete?? and this explains Coleman--Stein??
%\andr{is the nonvanishing/vanishing locus of Jacquet functor complete? interesting??}
We start by recalling some standard facts about elliptic eigenforms of infinite slope. 
Let $f$ be a $\GL_{2/\Q}$-eigenform of level $\Gamma_1(Np^r)$ for some $r\ge 1$. Let $\sum_{n\in\N}a_nq^n$ be the $q$-expansion of $f$, and assume that $f$ is normalized so that $a_1=1$. Then the eigenvalue of the $U_p$-operator acting on $f$ is equal to $a_p$.

\begin{defin}
	We call \emph{slope} of $f$ the $p$-adic valuation of $a_p$. If $a_p=0$, we say that $f$ is a $\GL_{2/\Q}$-eigenform \emph{of infinite slope}.
\end{defin}

In the following, let $f$ be a newform of weight $k$, level $\Gamma_1(Np^r)$, with $p\nmid N$ and $r\in\N$, and character $\vareps$. Write $\vareps$ as a product of Dirichlet characters $\vareps^p\vareps_p$, where the conductor of $\vareps^p$ is prime to $p$ and that of $\vareps_p$ is a power of $p$. Let $\pi_{f,p}$ be the admissible representation of $\GL_2(\Q_p)$ attached to $f$, and let $\omega_p$ be the $p$-component of the character of $\A_\Q^\times/\Q^\times$ attached to $\vareps$. %Note that the conductor of $\omega_p$ is in general different from that of $\vareps_p$: for instance, if $\vareps_p$ is the trivial Dirichlet character of conductor $p$, then $\omega_p$ has trivial conductor.

\begin{defin}[{\cite[Definition 2.7]{loewei}}]
The form $f$ is said to be \emph{$p$-primitive} if the $p$-part of its level is minimal among all its twists by Dirichlet characters.
\end{defin}

When we speak of the \emph{twists} of an eigenform, we are always implicitly referring to its twists by Dirichlet characters. We implicitly identify Dirichlet characters of number fields with finite order Gr\"ossencharacters, and with finite order Galois characters.

%FALSE?? Every eigenform $f$ obviously admits a $p$-primitive twist, that one obtains by twisting $f$ with a suitable Dirichlet character of $p$-power conductor. 

Given two admissible characters $\chi_1,\chi_2$ of $\Q_p^\times$ satisfying $\chi_1\chi_2^{-1}\ne\lvert\cdot\rvert^{\pm 1}$, we denote by $\pi(\chi_1,\chi_2)$ the principal series representation of $\GL_2(\Q_p)$ attached to $\chi_1$ and $\chi_2$. We denote by $\mathrm{St}$ the Steinberg representation of $\GL_2(\Q_p)$. We recall the following description of $\pi_{f,p}$.

\begin{prop}[{\cite[Proposition 2.8]{loewei}}]\label{lwclass}
	%\andr{have to use omegap???} 
	Assume that $f$ is $p$-new and $p$-primitive. Then:
	\begin{enumerate}
		\item If $r\ge 1$ and the conductor of $\omega_p$ is $p^r$, then $\pi_{f,p}\cong\pi(\chi_1,\chi_2)$ for two characters $\chi_1,\chi_2$ of $\Q_p^\times$ with $\chi_1$ unramified, $\chi_1(p)=a_p(f)/p^{(k-1)/2}$ and $\chi_1\chi_2=\omega_p$.
		\item If $r=1$ and $\omega_p$ is unramified, then $\pi_{f,p}\cong\mathrm{St}\otimes\chi$, where $\chi$ is the unramified character satisfying $\chi(p)=a_p(f)/p^{(k-2)/2}$.
		\item If neither of the above conditions hold, then $\pi_{f,p}$ is supercuspidal and the conductor of $\omega_p$ is at most $\lfloor r/2\rfloor$.
	\end{enumerate}
\end{prop}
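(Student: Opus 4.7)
The plan is to combine the classification of irreducible admissible representations of $\GL_2(\Q_p)$ — principal series $\pi(\chi_1,\chi_2)$, Steinberg twists $\mathrm{St}\otimes\chi$, and supercuspidals — with the $p$-new hypothesis, which pins down the conductor of $\pi_{f,p}$ at exactly $p^r$, and the $p$-primitivity hypothesis, which controls how much ramification can be absorbed by character twists. The central character of $\pi_{f,p}$ agrees with $\omega_p$ up to a standard normalization, and its conductor behaves very differently across the three families, which produces the trichotomy in the statement.

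For case (i), where $a(\omega_p)=r$, I would first rule out Steinberg and supercuspidal representations on conductor grounds. A twist $\mathrm{St}\otimes\chi$ has conductor $p^{\max(1,2a(\chi))}$ and central character of conductor $p^{2a(\chi)}$, so $a(\omega_p)=r$ together with $p$-primitivity force $a(\chi)=0$ and $r=1$, landing in case (ii) rather than (i). For supercuspidals, the standard bound $a(\omega_p)\le\lfloor r/2\rfloor<r$ (see the final paragraph) is incompatible with the assumption. Thus $\pi_{f,p}\cong\pi(\chi_1,\chi_2)$ with $a(\chi_1)+a(\chi_2)=r$; the identity $a(\chi_1\chi_2)=r$ forces one of the $\chi_i$, say $\chi_1$, to be unramified and the other to have full conductor $p^r$. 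The formula $\chi_1(p)=a_p(f)/p^{(k-1)/2}$ is then the standard Atkin--Lehner computation of the $U_p$-eigenvalue on the new vector, corrected for the unitary normalization used in local Langlands.

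For case (ii), with $r=1$ and $\omega_p$ unramified, principal series of conductor $p$ are excluded because their central character has conductor exactly $p$. Thus $\pi_{f,p}\cong\mathrm{St}\otimes\chi$, and having conductor $p$ forces $\chi$ to be unramified; the $U_p$-eigenvalue formula follows again from the explicit action of the Atkin--Lehner operator on the unique (up to scalar) new vector of a Steinberg twist.

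Case (iii) is the residual one: in all other combinations of $r$ and $\mathrm{cond}(\omega_p)$, principal series and Steinberg twists are excluded by the analysis of the previous two cases (combined with $p$-primitivity, to prevent cases where the representation is a twist of a lower-conductor one), so $\pi_{f,p}$ must be supercuspidal. The inequality $a(\omega_p)\le\lfloor r/2\rfloor$ is the classical bound on the conductor of the central character of a supercuspidal representation of $\GL_2(\Q_p)$, obtained for $p>2$ from the description of supercuspidals as compact induction from characters of quadratic extensions (Bushnell--Henniart, or the original Bushnell--Kutzko types). The main obstacle I expect is bookkeeping rather than substance: tracking the unitary-versus-arithmetic normalizations in the Atkin--Lehner formulas, and verifying carefully at the boundary cases that $p$-primitivity is genuinely used to prevent ``Steinberg $\otimes$ ramified character'' from masquerading as a solution in case (i).
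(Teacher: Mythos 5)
The paper does not give a proof of Proposition~\ref{lwclass}; it is cited directly as \cite[Proposition 2.8]{loewei}, so there is no in-paper argument to compare against. Your sketch follows the standard route---run through the classification of irreducible admissible representations of $\GL_2(\Q_p)$, match conductors and central characters against the hypotheses, and use $p$-primitivity to rule out twists by ramified characters of lower-conductor representations---and it is correct in substance.

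One small slip worth flagging: the central character of $\mathrm{St}\otimes\chi$ is $\chi^2$, and for $p$ odd its conductor exponent is at most $a(\chi)$ (equal to $a(\chi)$ when $a(\chi)\ge 2$, and possibly $0$ when $\chi$ is quadratic of conductor $p$), not $2a(\chi)$ as you wrote. This does not break your argument, because in the Steinberg case $p$-primitivity already forces $\chi$ to be unramified before the central-character count enters; it is a bookkeeping point only. Otherwise the logical skeleton---$p$-primitivity forces one character unramified in the principal-series case, forces $\chi$ unramified in the Steinberg case, and the supercuspidal bound $a(\omega_p)\le\lfloor r/2\rfloor$ closes case (iii)---is exactly what one would write out to prove the cited result of Loeffler and Weinstein.
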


\begin{rem}\label{lwslope}
If $f$ satisfies either (i) or (ii) of Proposition \ref{lwclass} then $a_p(f)\ne 0$, while in case (iii) $a_p(f)=0$: see for instance \cite[Table 1]{loewei} and use the fact that $\lambda(\pi_{f,p})=a_p(f)/p^{(k-1)/2}$ in the notation there.
\end{rem}

We deduce the following result for eigenforms that are not necessarily $p$-primitive.

\begin{cor}\label{infclass}
Let $f$ be an eigenform of weight $k$, level $\Gamma_1(Np^r)$, with $p\nmid N$ and $r\ge 1$ (we do not assume $f$ to be $p$-new, nor $p$-primitive). Then $f$ is of infinite slope if and only if it satisfies one of the following mutually exclusive conditions:
\begin{enumerate}
\item $\pi_{f,p}$ is a principal series representation attached to two ramified characters;
\item $\pi_{f,p}$ is a twist of the Steinberg representation with a ramified character;
\item $\pi_{f,p}$ is supercuspidal.
\end{enumerate}
In cases (i) and (ii), $f$ is a twist of an eigenform of finite slope by a Dirichlet character of $p$-power conductor.  %(in which case \pi_{f,p} is either a principal series representation attached to two ramified characters )
\end{cor}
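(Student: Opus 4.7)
The plan is to deduce the corollary from the $p$-primitive, $p$-new case handled by Proposition \ref{lwclass} and Remark \ref{lwslope}, by tracking the behavior of the local admissible representation and of $a_p$ under twisting by a Dirichlet character of $p$-power conductor (and, where needed, under passage to oldforms inside a fixed cuspidal automorphic representation).

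First I would associate to $f$ a $p$-primitive, $p$-new eigenform $f_0$ and a Dirichlet character $\chi$ of $p$-power conductor so that $\pi_{f,p}\cong\pi_{f_0,p}\otimes\chi_p$, where $\chi_p$ is the component at $p$ of $\chi$. By Proposition \ref{lwclass}, $\pi_{f_0,p}$ falls into exactly one of three types: a principal series with one unramified character (case (i) of \emph{loc. cit.}), an unramified twist of the Steinberg representation (case (ii)), or a supercuspidal representation (case (iii)). Remark \ref{lwslope} records the key slope dichotomy: $a_p(f_0)\neq 0$ in cases (i)--(ii), while $a_p(f_0)=0$ in case (iii). Twisting by $\chi_p$ preserves the gross type (principal series, Steinberg, or supercuspidal) but modifies the ramification of the inducing characters: if $\chi_p$ is ramified, case (i) is sent to a principal series attached to two ramified characters, case (ii) to a ramified twist of Steinberg, and case (iii) remains supercuspidal; if $\chi_p$ is unramified, the ramification type of $\pi_{f_0,p}$ is preserved.

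The equivalence then follows from the elementary observation that twisting by a ramified Dirichlet character $\chi$ kills $a_p$, since $\chi(p)=0$: in cases (i)--(ii) of Proposition \ref{lwclass} the slope of $f$ is infinite if and only if $\chi_p$ is ramified, which puts $\pi_{f,p}$ into case (1) or (2) of the corollary, while in case (iii) we are automatically in case (3) regardless of $\chi$. The converse ``only if'' direction is the contrapositive: if $\pi_{f,p}$ is not of type (1)--(3), then $\pi_{f_0,p}$ is of type (i) or (ii) and $\chi_p$ must be unramified, so $a_p(f)=\chi(p)a_p(f_0)$ is a nonzero multiple of the nonzero $a_p(f_0)$. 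The ``moreover'' claim is then immediate from the construction: in cases (1) and (2) the eigenform $f$ is, by design, a twist of $f_0$ (an eigenform of finite slope in the sense of Proposition \ref{lwclass}) by the Dirichlet character $\chi$ of $p$-power conductor.

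The main obstacle in carrying out this plan rigorously is ensuring that the reduction ``$f\rightsquigarrow f_0$'' is well-defined and compatible with both twisting and oldform embeddings, so that the identification $\pi_{f,p}\cong\pi_{f_0,p}\otimes\chi_p$ is correct and so that the Hecke eigenvalue computation $a_p(f)=\chi(p)a_p(f_0)$ holds; this is where the interplay between the local representation-theoretic side (where only the isomorphism class of $\pi_{f,p}$ matters) and the $q$-expansion side (where the specific choice of $U_p$-eigenvector within the new-vector line does matter) must be handled with care. Once the reduction is established, the rest is a direct application of the Loeffler--Weinstein classification.
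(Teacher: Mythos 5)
Your proposal follows essentially the same route as the paper's proof: choose a Dirichlet character $\chi$ of $p$-power conductor so that a suitable twist of $f$ is ($p$-depletion of) a $p$-primitive, $p$-new form, apply the Loeffler--Weinstein classification (Proposition~\ref{lwclass} and Remark~\ref{lwslope}), and track how twisting by $\chi_p$ moves the local type. The ``moreover'' part is handled identically, with $f_0$ (the paper's $g$) serving as the finite-slope form.

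The one place where your argument is slightly off is the ``only if'' direction, specifically the formula $a_p(f)=\chi(p)a_p(f_0)$. This identity holds when $f$ is \emph{literally} the Shimura twist $\chi f_0$, but it fails if $f$ is instead a $p$-oldform sitting in the same automorphic representation as $\chi f_0$: for example, a $p$-depletion has $a_p=0$ even when $\chi_p$ is unramified and $a_p(f_0)\ne 0$. The concern you flag at the end (the tension between the purely local data $\pi_{f,p}$, which only sees the automorphic representation, and the $q$-expansion datum $a_p(f)$, which depends on the chosen $U_p$-eigenvector) is exactly the right thing to worry about, and you should make it explicit that the corollary is to be read modulo the identification of an eigenform with its $p$-depletion (the convention the paper adopts for families in Definition~\ref{famdef}), or else restrict $f$ so that the twist $\chi f$ is exactly the $p$-depletion of a $p$-new $p$-primitive form $g$ with $r\ge 1$, which is how the paper's own terse proof is phrased.
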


\begin{proof}
Let $\chi$ be a Dirichlet character of $p$-power conductor such that $\chi f$ is the $p$-depletion of a $p$-primitive form $g$. 
The corollary follows by applying Proposition \ref{lwclass} and Remark \ref{lwslope} to $g$. %subtility to check???
The eigenform of finite slope appearing in the last statement can be taken to be $g$ itself. 
\end{proof}

%\andr{explain that untwist gives p-depletion??}

\begin{defin}
	We say that $f$ is \emph{$p$-supercuspidal} if the representation $\pi_{f,p}$ is supercuspidal.
\end{defin}

The following theorem is an easy consequence of various results from the literature. In the case $F=\Q$, it allows us to reinterpret $p$-supercuspidality in terms of the \emph{$p$-adic} Galois representation, local at $p$, attached to an eigenform.

By combining Theorem \ref{finslopetri}, Corollary \ref{triquad} and Lemma \ref{pstdih}, we obtain:

\begin{cor}\label{infpottri}
	For a $p$-supercuspidal $\GL_{2/\Q}$-eigenform $f$, the representation $\rho_{f,p}\vert_{G_{\Q_p}}$ is not trianguline, but becomes so over a quadratic extension of $\Q_p$. It is not semistabelian, and becomes crystalline over a dihedral extension of $G_{\Q_p}$.
\end{cor}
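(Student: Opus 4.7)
The plan is to chain together four standard inputs: the $F=\Q$ analogue of Theorem \ref{finslopetri} (to rule out triangulinity), the potentially semistable nature of $\rho_{f,p}\vert_{G_{\Q_p}}$ together with Corollary \ref{triquad} (to produce a quadratic extension over which it becomes trianguline), Lemma \ref{typetri} (to convert ``not trianguline'' into ``not semistabelian''), and Corollary \ref{pstdih} (to produce the dihedral crystalline extension).

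First, I would check that $\rho_{f,p}\vert_{G_{\Q_p}}$ is not trianguline. The $F=\Q$ analogue of Theorem \ref{finslopetri} --- whose proof via density of $p$-old classical points on $\cE_\Q$ and the interpolation result \cite[Theorem 6.3.13]{kedpotxia} goes through identically --- asserts that $\rho_{f,p}\vert_{G_{\Q_p}}$ is trianguline if and only if $f$ is a twist of a finite-slope $\GL_{2/\Q}$-eigenform by a Dirichlet character of $p$-power conductor. By the classification in Corollary \ref{infclass}, a $p$-supercuspidal eigenform falls in case (iii), disjoint from the twist cases (i) and (ii); hence $f$ is not such a twist, and $\rho_{f,p}\vert_{G_{\Q_p}}$ is not trianguline.

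Next, $\rho_{f,p}\vert_{G_{\Q_p}}$ is potentially semistable, being the local restriction at $p$ of the de Rham global representation attached to a classical cuspidal eigenform. Applying Corollary \ref{triquad} (or equivalently Corollary \ref{resttri}, transferred to the pseudorepresentation level via Lemma \ref{pspot}) produces a quadratic extension of $\Q_p$ over which $\rho_{f,p}\vert_{G_{\Q_p}}$ becomes trianguline.

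Finally, Lemma \ref{typetri} applied to the base field $\Q_p$ gives the equivalence ``trianguline iff semistabelian'' for representations of $G_{\Q_p}$, so non-triangulinity forces non-semistabelianness. The first part of Corollary \ref{pstdih} then produces an extension $E/\Q_p$, either abelian or dihedral, over which the representation is semistable; the abelian possibility contradicts non-semistabelianness, so $E$ must be dihedral, and the ``moreover'' clause of Corollary \ref{pstdih} upgrades ``semistable'' to ``crystalline'' over $E$. The only delicate point is recognizing that the proof of Theorem \ref{finslopetri} transfers verbatim to the $F=\Q$ setting, which is immediate from the classical Coleman--Mazur picture of $\cE_\Q$.
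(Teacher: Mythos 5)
Your proof is correct and follows the same route the paper intends when it writes ``by combining Theorem \ref{finslopetri}, Corollary \ref{triquad} and Lemma \ref{pstdih}.'' You also correctly spotted that one needs the $F=\Q$ analogue of Theorem \ref{finslopetri} (which the paper leaves implicit, since the theorem as written is stated for a real quadratic $F$ and does not transfer by a naive base-change argument: a $p$-supercuspidal $\pi_{f,p}$ can become a principal series over $F_\fp$) and that Lemma \ref{typetri} is needed to pass from ``not trianguline'' to ``not semistabelian''; both are minor gaps in the paper's terse citation that your write-up properly fills.
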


Note that combining Corollary \ref{infpottri} with the classification of potentially trianguline, non-trianguline representations from Theorem \ref{berche} does not give anything interesting, since $p$-adic local at $p$ Galois representations attached to eigenforms always fall in case (ii) of Theorem \ref{berche}.

%\andr{add index F to Hecke algebra in Hilbert section}

\subsection{$p$-adic families of eigenforms}

In this section we introduce two notions of ``family'' of eigenforms that are intended to be as weak as possible while remaining meaningful. They are modeled on the idea that a $p$-adic family should not only interpolate Hecke eigenvalues at good primes, which amounts to interpolating the associated Galois (pseudo)representations, but also the datum of a ``refinement'' at $p$.

As in Section \ref{triparsec}, $\calH_\Q$ denotes the Hecke algebra $\calH_\Q^{Np}\otimes_\Z\calH_{\Q,p}$, where $\calH_\Q^{Np}$ is generated over $\Z$ by the operators $T_\ell$ at the primes $\ell$ not dividing $Np$, and $\calH_p=\Z[U_p]$. %\andr{level Gamma1?? correct??? include diamond???}

%\andr{MAYBE NEED to introduce weight-character for nebentypus at p!!!!????}

%\andr{discuss case when tame level goes to infty??}

Let $\{k_{i}\}_{i\in\N}$ and $\{r_{i}\}_{i\in\N}$ be two sequences of positive integers. 
%with $k_i\to\infty$ for $i\to\infty$. REMOVE CONDITION and use bound on level??? 
For every $i\in\N$, let $f_i$ be a classical modular eigenform $f_i$ of weight $k_i$ and level $\Gamma_1(Np^{r_i})$. %\andr{character?? not controlling because no diamond??}

By the standard constructions of Eichler--Shimura, Deligne, and Deligne--Serre, each $f_i$ carries an associated semisimple continuous representation $\rho_i\colon G_{\Q,N_ip}\to\GL_2(\Qp)$, that is potentially semistable at $p$ by the results of Faltings and Tsuji. As in Section \ref{ptypessec}, we attach to $\rho_i$ a $(\varphi,N,G_{\Q_p})$-module $D_\pst(\rho_i)$. The trace of $\rho_i$ is a continuous pseudorepresentation $t_i\colon G_{\Q,N_ip}\to\Qp$. 
Let $\theta_i\colon\calH_\Q^{Np}\to\Qp$ be the Hecke eigensystem of $f_i$ outside of $Np$.
%Let $F_{i,1}, F_{i,2}\in\Qp$?? be the eigenvalues of $\varphi$ on $D_\pst(\rho_i)$, in no particular order. 

%\andr{doublecheck admissible normalized everywhere}

%\andr{redefine admissible frob evals!! remove normalization!! call admissible refinement?? AMBIGUITY in p-depletion and p-supercusp case}

\begin{defin}
A \emph{family of Frobenius eigenvalues} for $(f_i)_{i\in\N}$ is a sequence $(\varphi_{i})_{i\in\N}$ where, for every $i\in\N$, $\varphi_i$ is an admissible Frobenius eigenvalue of $\rho_i\vert_{G_{\Q_p}}$. %an eigenvalue $F_i$ of the Frobenius endomorphisms of $D_\pst(\rho_i)$. 
\end{defin}

By Remark \ref{admfrob}, the choice of an admissible Frobenius eigenvalue of $\rho_i\vert_{G_{\Q_p}}$ amounts to that of an $N$-stable refinement. Therefore, choosing a family of Frobenius eigenvalues for $(f_i)_{i\in\N}$ amounts to choosing an $N$-stable refinement for each of the eigenforms $f_i$. 

For every $i$, let $\theta_i\colon\calH_\Q^{Np}\to\C_p$ be the Hecke eigensystem of $f_i$ away from $Np$. Let $(\varphi_{i})_{i\in\N}$ be a family of Frobenius eigenvalues for $(f_i)_{i\in\N}$.

\begin{defin}\label{famdef}
	We say that the sequence $(f_i,\varphi_{i})_{i\in\N}$ is a \emph{weak $p$-adic family of eigenforms} if the forms $f_i$ are pairwise distinct and:
	\begin{itemize}
		\item[(i)] the sequence $(\theta_i)_i$ converges to a homomorphism $\theta_\infty\colon\calH_\Q^{Np}\to\C_p$, in the distance of Definition \ref{psdist}. %$q$-expansions of the modular forms $f_i$ converge $p$-adically to a limit $q$-expansion in $\C_p[[q]]$ when $i\to\infty$; 
	\end{itemize}
	We say that $(f_i,\varphi_{i})_{i\in\N}$ is a \emph{$p$-adic family of eigenforms} if, in addition to (i), the following holds:
	\begin{itemize}
		\item[(ii)]  the sequence $(\varphi_{i})_i$ converges to a limit $\varphi_\infty\in\C_p^\times$. %of real numbers $(v_p(\varphi_{i,1}))_i$ is bounded.
		%re exists $F_{\infty,1}\in\C_p$ such that the sequence $(F_{i,1})_i$ converges to $F_{\infty,1}$. \andr{but can the representation converge on the Hilbert eigencurve without the Fi converging??}
	\end{itemize}
%We say that two (weak) $p$-adic families (f_i,\varphi_i)_i and (\wtl f_i,\wtl\varphi_i)_i, with eigensystems (\theta_i)_i and (\wtl\theta_i)_i away from Np, are \emph{equivalent} if, for all except a finite number of $i$, \theta_i=\wtl\theta_i and \varphi_i=\wtl\varphi_i.
We identify two families $(f_i,\varphi_{i})_{i\in\N}$ and $(\wtl f_i,\wtl\varphi_{i})_{i\in\N}$ if, for every $i$, either $f_i=\wtl f_i$, or one among $f_i,\wtl f_i$ is the $p$-depletion of the other. In all that follows, we implicitly consider families up to this identification.
\end{defin}

The reason for the last sentence in Definition \ref{famdef} is that $p$-depleting an eigenform $f$, i.e. twisting $f$ with the trivial Dirichlet character of conductor $p$, does not affect either the Hecke eigensystem of $f$ away from $Np$ or its Frobenius eigenvalues.

Given that $p$ is fixed, we often omit the word ``$p$-adic'' and simply speak of a ``(weak) family of eigenforms''. For convenience, we sometimes index families by subsets of $\N$, in which case we specify it. When we write $(f_i,\varphi_{i})_i$, we are implicitly taking the index set to be $\N$. One could even remove the datum of the Frobenius eigenvalues from the notation of a weak family, but keeping it makes the notation more uniform.

%For every i, we write t_i\colon G_\Q\to\Qp for the pseudorepresentation attached to f_i.

Since the property of being a (weak) family is defined by what happens for $i\to\infty$, one can always obtain a new family by replacing a finite number of elements of a given family by randomly chosen eigenforms. For this reason, the only relevant properties of a family are those that affect all but a finite number of its elements: we will say that a family $\cF$ has \emph{eventually} property $\bP$ (or ``is eventually $\bP$'') if all but a finite number of its elements have property $\bP$ (respectively, if it is $\bP$ after removing a finite number of its elements). For instance, a family is eventually of finite slope if all except a finite number of its elements are eigenforms of finite slope.

\begin{defin}
We say that a (weak) family of eigenforms $(f_i,\varphi_{i})_i$ is \emph{of finite slope} if $f_i$ is of finite slope for every $i$.
\end{defin}

\begin{ex}\label{exEQ}
Let $\cE_\Q$ be the Coleman--Mazur eigencurve of tame level $N$, and let $(x_i)_{i\in\N}$ be a sequence of points of $\cE_\Q$ converging to a point $x_\infty$ of $\cE_\Q$. For every $i$, let $(f_i,\varphi_i)$ be the refined eigenform attached to $x_i$. Then the sequence $(f_i,\varphi_i)_i$ is obviously a family of eigenforms of finite slope, with the Hecke eigensystems of the $x_i$ away from $Np$ and the Frobenius eigenvalues $\varphi_i$ converging to the corresponding data for $x_\infty$.

Conversely, if $(f_i,\varphi_i)_i$ is a family of eigenforms of finite slope, the corresponding sequence of points $x_i\subset\cE_\Q(\Qp)$ converges to a limit point $x_\infty$. Indeed, the associated pseudorepresentations converge to a limit pseudorepresentation $t_\infty$, so that the pairs $(t_i,\varphi_i)_i$ converge to $(t_\infty,\varphi_\infty)$ in $\fR_{\ovl t}\times_\Q\G_m$, and $(t_\infty,\varphi_\infty)$ belongs to $\cE_\Q$ since $\cE_\Q$ is closed in $\fR_{\ovl t}\times_\Q\G_m$.
\end{ex}

%\andr{essentially we are just studying convergence in Gm times R??}

%\andr{MAYBE I should not normalize eigenvalues!!! AH no, it's okay, but inverse to each other--real choice is eigenform??? if two choices they are always inverse to one another}
%
%MAYBE DO THAT, add requirement

\begin{ex}\label{excolste}
Let $(f_i,\varphi_i)_i$ be a family of $p$-old eigenforms of finite slope, e.g. a family extracted from the eigencurve as in Example \ref{exEQ}. Since every $f_i$ is of finite slope, the $q$-expansions of the eigenforms $f_i$ converge to the $q$-expansion of a Serre $p$-adic (not necessarily classical) eigenform $f_\infty$, whose eigensystem away from $Np$ is $\theta_\infty$, and $U_p$-eigenvalue is $\varphi_\infty$ (we prove below in Proposition \ref{famlim} that every family of finite slope can be extracted from the eigencurve, so that $f_\infty$ is always an overconvergent eigenform). Note that the sequence of weights $(k_i)_i$ has to diverge to $\infty$, since there is only a finite number of choices for eigenforms of fixed weight and level $\Gamma_1(N)\cap\Gamma_0(p)$. For every $i$, let $\wtl f_i$ be the twin of $f$; it is a $p$-old eigenform with the same Hecke eigensystem as $f_i$ away from $Np$, and it corresponds to the second choice $\varphi_i$ of admissible Frobenius eigenvalue for the crystalline pseudorepresentation $t_i$. The sequence of valuations $(v_p(\varphi_i))_i$ is eventually constant, equal to $v_p(\varphi_\infty)$, so that $v_p(\wtl\varphi_i)=k_i-1-v_p(\varphi_i)=k_i-1-v_p(\varphi_\infty)$ for large enough $i$. Therefore, the sequence $(\wtl\varphi_i)_i$ converges to 0, so that $(\wtl f_i,\wtl\varphi_i)_i$ is not a family of eigenforms, only a weak one. However, the $q$-expansions of the eigenforms $\wtl f_i$ still converge to a limit $q$-expansions, that of the $p$-depletion of $f_\infty$.

The weak family $(\wtl f_i,\wtl\varphi_i)_i$ appears as a special case of a construction by Coleman and Stein, who in \cite[Theorem 2.1]{colste} construct a sequence of eigenforms $(g_i)_i$, of finite slope and fixed tame level $N$, whose $q$-expansions converge to that of an eigenform $g_\infty$ of infinite slope, obtained by twisting an eigenform of finite slope with a power of the Teichm\"uller character $(\Z/p\Z)^\times\to\Z_p^\times$. If $\varphi_i$ is the $U_p$-eigenvalue of $g_i$ (that is, its unique admissible Frobenius eigenvalue), then the sequence $(g_i,\varphi_i)_i$ is a weak family of eigenforms, since the Hecke eigensystems of the forms $g_i$ away from $Np$ converge to that of $g_\infty$, but it is not a family, since the sequence $(\varphi_i)_i$ converges to 0.
%Write $\theta^\full_i=\theta_i\otimes\theta_{i,p}$ for the Hecke eigensystem of $g_i$. The sequence $(g_i,\theta^\full_i(U_p))_i$ is a weak family of eigenforms in our sense, since the sequence $(\theta_{i})_i$ converges $p$-adically to a limit eigensystem away from $Np$, that of $g_\infty$. 
%On the other hand, $(g_i,\theta^\full_i(U_p))_i$ is not a family of eigenforms: the Frobenius eigenvalues $\theta^\full_{f_i}(U_p)$ converge to 0. It can happen that every $f_i$ is old at $p$, hence admits a twin $\wtl f_i$ whose $U_p$-eigenvalue 
%\andr{WAIT: what about normalized eigenvalues?? do they converge?? they do in the evil twins sequence--then can make it into a family--in general?? maybe follows from eigensurface argument??}
%In particular, $(f_i,\theta^\full_i(U_p))_i$ is not eventually the specialization of an affinoid family of eigenforms. \andr{did we give this statement??}
\end{ex}

%\andr{do I write down frob evals in the triparameter section??}

%Let (f_i)_i be a family of eigenforms. For every i, let t_i\colon G_{\Q}\to\Qp be the pseudorepresentation attached to f_i.

%\begin{rem}\label{kinfty}
%By arguing as in \cite[Théorème 1]{serrepad}, we obtain that the sequence of weights $(k_i)_i$ of a weak family $(f_i)_i$ converges to a limit $k_\infty\in\Z_p$ in the $p$-adic topology.
%\end{rem}

\begin{rem}\label{weakps}
	The following is an immediate consequence of Lemma \ref{hglim}. 
	Let $(f_i,\varphi_{i})_i$ be any sequence of eigenforms of tame level $\Gamma_1(N)$. Then $(f_i,\varphi_{i})_i$ is a weak family of eigenforms if and only if the associated sequence of pseudorepresentations $(t_i)_i$ converges to a pseudorepresentation $t_\infty$ in the metric of Definition \ref{psdist}.
\end{rem}

For every $i\in\N$, let $\kappa_i=(k_i,\chi_i)$ be the weight-nebentypus of $f_i$, seen as a point of the weight space $\cW_\Q$. Recall from Remark \ref{cloclass} that the weights in the $p$-adic closure of the classical weights $(k,\chi)$ are still of the form $(k,\chi)$, with $k$ now in $\Z_p$ and $\chi$ a finite order character.

\begin{lemma}
The sequence $(k_i,\chi_i)_i$ converges to a limit $(k_\infty,\chi_\infty)$ in $\cW_\Q$, with $k_\infty\in\Z_p$ and $\chi_\infty\colon\Z_p^\times\to\C_p^\times$ a finite order character. In particular, the conductors of the characters $(\chi_i)_i$ are all bounded by a common constant.
\end{lemma}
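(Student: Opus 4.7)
The plan is to extract the statement from the convergence of the Galois pseudorepresentations via the determinant map. By Remark \ref{weakps}, the convergence of $(f_i,\varphi_i)_i$ as a weak family implies that the associated sequence of pseudorepresentations $t_i \colon G_{\Q,Np}\to\Qp$ converges to a limit pseudorepresentation $t_\infty \colon G_{\Q,Np}\to\C_p$ in the distance of Definition \ref{psdist}. Since the determinant is polynomial in the pseudorepresentation (see Definition \ref{defps}), the characters $d_{t_i}\colon G_{\Q,Np}\to\C_p^\times$ converge to $d_{t_\infty}$ in the sense of Definition \ref{psdist} applied to $1$-dimensional pseudorepresentations.

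Next, I would interpret these determinants as points of the weight space $\cW_\Q$. For each $i$, the restriction of $d_{t_i}$ to a decomposition group at $p$, composed with the local reciprocity map and restricted to $\Z_p^\times$, is precisely the character $x\mapsto x^{k_i-1}\chi_i(x)$; equivalently, via the map $\det\colon\fR_{\ovl t}\to\fR_{d_{\ovl t}}$ of Section \ref{detsec}, the image of the point of $\fR_{\ovl t}$ corresponding to $t_i$ lies on a connected component of $\cW_\Q$ and coincides with the classical weight $\kappa_{k_i,\chi_i}$ (up to the conventional Tate twist). By Lemma \ref{xiconv} applied to the one-dimensional pseudodeformation space $\fR_{d_{\ovl t}}$, convergence of the determinant characters translates into convergence of the corresponding points in $\fR_{d_{\ovl t}}$, hence in $\cW_\Q$, to a limit point $\kappa_\infty$.

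To identify the shape of $\kappa_\infty$ and control the conductors, I would invoke Remark \ref{cloclass}: the $p$-adic closure of the set of classical weights $\{(k,\chi)\}$ consists exactly of characters of the form $x\mapsto x^{k}\chi(x)$ with $k\in\Z_p$ and $\chi$ a finite-order character. This immediately produces the desired expression $\kappa_\infty=(k_\infty,\chi_\infty)$ with $k_\infty\in\Z_p$ and $\chi_\infty$ of finite order. Moreover, the same remark shows that the finite-order part is locally constant on $\cW_\Q$ near such a weight: over a sufficiently small $p$-adic neighborhood of $\kappa_\infty$, every classical weight in that neighborhood has the form $(k^\prime,\chi_\infty)$. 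Since $(k_i,\chi_i)$ eventually lies in any such neighborhood, we conclude that $\chi_i=\chi_\infty$ for $i$ large enough, which in particular gives a common bound on the conductors of all the $\chi_i$.

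The main conceptual point, rather than an obstacle, is the translation from ``convergence of pseudorepresentations'' to ``convergence of points on a rigid space''. This requires knowing that the points $(t_i)$ all lie on a common affinoid subdomain of $\fR_{\ovl t}$, or equivalently that their determinants lie on a common affinoid of $\cW_\Q$; here this is automatic since all the weight-nebentypus characters restrict to the common residual character $\ovl{d_{t_\infty}}$ on $G_{\Q,Np}$, forcing them into a single connected component of $\cW_\Q$, which is an open unit disc (and any Cauchy sequence inside such a disc sits in a closed affinoid subdisc).
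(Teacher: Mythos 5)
Your argument is correct and follows the paper's own proof in all essentials: both pass from convergence of the pseudorepresentations to convergence of their determinants and then conclude with Remark \ref{cloclass}. The only (harmless) difference is how the prime-to-$p$ part of the nebentypus is handled: the paper observes that these characters have conductor dividing $N$ and hence are eventually constant, whereas you restrict the determinant to inertia at $p$, where that part vanishes automatically — a slightly cleaner step — and your explicit appeal to Lemma \ref{xiconv} together with the common-affinoid check is a legitimate way to pass from convergence of characters to convergence of the corresponding points of $\cW_\Q$.
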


\begin{proof}
By Remark \ref{weakps}, the sequence $(t_i)_i$ converges to a pseudorepresentation $t_\infty$ in the metric of Definition \ref{psdist}. Clearly the sequence of determinants $(d_{t_i})_i$ converges to $d_{t_\infty}$. If $\chi_i^{(p)}$ is the prime-to-$p$ part of the nebentypus of $f_i$, then $d_{t_i}=\kappa_i\chi_i^{(p)}$, with the usual identification of the weight-nebentypus $\kappa_i$ with a Galois character via local class field theory. The characters $\chi_i^{(p)}$ are bounded in conductor by $N$, hence they coincide with a same character $\chi_\infty^{(p)}$ for $i$ large enough. Therefore, the sequence $(\kappa_i)_i$ converges to $\kappa_\infty\coloneqq d_{t_\infty}(\chi_\infty^{(p)})^{-1}$.

By Remark \ref{cloclass}, the limit weight $\kappa_\infty$ has to be of the form $(k_\infty,\chi_\infty)$, and since the nebentypus part of the weight is locally constant on weight space, $\chi_i=\chi_\infty$ for large enough $i$. The second statement of the lemma follows.
\end{proof}

%COMMENT that finite slope is primitive
%EVERY finite slope form has primitive twist?? OTHERWISE ok for infinite slope

\subsubsection{Base changing families of eigenforms}

Even though we do not need to introduce a notion of families of Hilbert eigenforms analogue to that of Definition \ref{famdef}, we will need the following lemma when base changing families of $\GL_{2/\Q}$-eigenforms to real quadratic fields. 

Let $F$ be a real quadratic field in which $p$ is either ramified or inert, and let $\fp$ be the unique $p$-adic place of $F$. 
Let $(f_i)_i$ be a sequence of $\GL_{2/F}$-eigenforms of tame level $N$ and parallel weight. To every $f_i$, we attach a continuous $p$-adic representation of $G_F$, potentially semistable at $\fp$, following Carayol, Blasius, Rogawski and Taylor. For every $i$, let $\varphi_i$ be an admissible Frobenius eigenvalue of $f_i$. %eigenvalue of the potentially semistable Frobenius acting on $D_{\crys,F}(\rho_{f_i,\fp})$, normalized with respect to the weight of $f_i$. \andr{make more precise??}

Even if we only make very limited use of it, we introduce a notion of family of Hilbert eigenforms, completely analogous to that given in Definition \ref{famdef}.

\begin{defin}
	We say that the sequence $(f_i,\varphi_{i})_{i\in\N}$ is a \emph{$p$-adic family of $\GL_{2/F}$-eigenforms} if the forms $f_i$ are pairwise distinct and:
	\begin{itemize}
		\item[(i)] the Hecke eigensystems $\theta_i\colon\calH_F^{Np}\to\C_p$ of the eigenforms $f_i$ away from $Np$ converge to a limit homomorphism $\theta_\infty\colon\calH_F^{Np}\to\C_p$. %$q$-expansions of the modular forms $f_i$ converge $p$-adically to a limit $q$-expansion in $\C_p[[q]]$ when $i\to\infty$; 
		\item[(ii)] the sequence $(\varphi_{i})_i$ converges to a limit $\varphi_\infty\in\C_p^\times$. %of real numbers $(v_p(\varphi_{i,1}))_i$ is bounded.
		%re exists $F_{\infty,1}\in\C_p$ such that the sequence $(F_{i,1})_i$ converges to $F_{\infty,1}$. \andr{but can the representation converge on the Hilbert eigencurve without the Fi converging??}
	\end{itemize}
We identify two families $(f_i,\varphi_{i})_{i\in\N}$ and $(\wtl f_i,\wtl\varphi_{i})_{i\in\N}$ if, for every $i$, either $f_i=\wtl f_i$, or one among $f_i,\wtl f_i$ is a twist of the other with the trivial Dirichlet character of conductor $\fp$.
\end{defin}

\begin{ex}\label{exEF}
	Let $\cE_F$ be the parallel weight $\GL_{2/F}$-eigenvariety of tame level $N$, and let $(x_i)_{i\in\N}$ be a sequence of points of $\cE_F$ converging to a point $x_\infty$ of $\cE_F$. For every $i$, let $(f_i,\varphi_i)$ be the refined eigenform attached to $x_i$. Then the sequence $(f_i,\varphi_i)_i$ is obviously a family of eigenforms (of finite slope), with the Hecke eigensystems of the $x_i$ away from $Np$ and the Frobenius eigenvalues $\varphi_i$ converging to the corresponding data for $x_\infty$.
	
	Conversely, in exactly the same way as in Example \ref{exEQ}, we show that the sequence of points $x_i\in\cE_F$ attached to a family $(f_i,\varphi_i)_i$ of eigenforms of finite slope converges to a point $x_\infty\in\cE_F$.
\end{ex}

Let $(f_i,\varphi_i)_i$ be a family of $\GL_{2/\Q}$ eigenforms, with associated family of Hecke eigensystems $(\theta_i)_i$ away from $Np$. For every $i$, let $f_{i,F}$ be the base change of $f_i$ to $\GL_{2/F}$, and let $\varphi_{i,F}=\varphi_i$ if $p$ is ramified in $F$, and $\varphi_{i,F}=\varphi_i^2$ if $p$ is inert in $F$.

\begin{lemma}\label{bcfam}
For every $i$, $(f_{i,F},\varphi_{i,F})$ is a refined $\GL_{2/F}$-eigenform, and the sequence $(f_i,\varphi_{i,F})_{i\in I}$ is a family of $\GL_{2/F}$-eigenforms. 
\end{lemma}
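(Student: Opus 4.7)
The plan is to check, in sequence, that each $(f_{i,F},\varphi_{i,F})$ is a refined $\GL_{2/F}$-eigenform, and that the sequence $(f_{i,F},\varphi_{i,F})_i$ satisfies the convergence conditions required to form a family in the sense introduced earlier. First, Corollary \ref{bcform} gives the Hilbert eigenform $f_{i,F}$ of tame level $N$, with attached Galois representation $\rho_{f_{i,F},p}=\rho_{f_i,p}\vert_{G_F}$, which restricts at the unique $p$-adic place $\fp$ of $F$ to $\rho_{f_i,p}\vert_{G_{F_\fp}}$. Writing $f_\fp\in\{1,2\}$ for the residual degree of $F_\fp/\Q_p$ (so $f_\fp=1$ when $p$ is ramified and $f_\fp=2$ when $p$ is inert), the Frobenius on $D_\pst(\rho_{f_i,p}\vert_{G_{F_\fp}})$ is the $f_\fp$-th power of the Frobenius on $D_\pst(\rho_{f_i,p}\vert_{G_{\Q_p}})$, so its eigenvalues are the $f_\fp$-th powers of those of the latter. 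In particular $\varphi_{i,F}=\varphi_i^{f_\fp}$ is indeed a Frobenius eigenvalue of $\rho_{f_i,p}\vert_{G_{F_\fp}}$.

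To verify admissibility of $\varphi_{i,F}$, observe that the monodromy operator $N$ is unchanged under restriction of the Galois group, and that the $\varphi_{i,F}$-eigenspace of the $F_\fp$-Frobenius on $D_\pst(\rho_{f_i,p}\vert_{G_{F_\fp}})$ contains the $\varphi_i$-eigenspace of the $\Q_p$-Frobenius on $D_\pst(\rho_{f_i,p}\vert_{G_{\Q_p}})$. Since admissibility of $\varphi_i$ guarantees that $N$ vanishes on a $\varphi_i$-eigenvector, this same vector witnesses the $N$-stability of the refinement of $D_\pst(\rho_{f_i,p}\vert_{G_{F_\fp}})$ determined by $\varphi_{i,F}$. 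In the case when $f_{i,F}$ happens to be of finite slope, the identity $U_\fp(f_{i,F})=U_p(f_i)^{f_\fp}$ (a consequence of the local Langlands correspondence compatibly with the base-change map of Remark \ref{bcmap}, since the Frobenius of $W_{F_\fp}$ is the $f_\fp$-th power of that of $W_{\Q_p}$) gives $\varphi_{i,F}=U_\fp(f_{i,F})$ whenever $\varphi_i=U_p(f_i)$; this is consistent with our family hypothesis and completes the verification.

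For the family property, convergence of the Hecke eigensystems is a formal consequence of Remark \ref{bcmap}: the eigensystem of $f_{i,F}$ away from $Np$ is $\theta_{i,F}=\theta_i\ccirc\BC_{F/\Q}$, and since $\BC_{F/\Q}(\calH_F^{Np})\subset\calH_\Q^{Np}$ the inequality $\dist(\theta_{i,F},\theta_{\infty,F})\le\dist(\theta_i,\theta_\infty)\to 0$ holds in the distance of Definition \ref{homdist}, with $\theta_{\infty,F}=\theta_\infty\ccirc\BC_{F/\Q}$. Convergence $\varphi_{i,F}\to\varphi_\infty^{f_\fp}\in\C_p^\times$ follows from the continuity of $x\mapsto x^{f_\fp}$ on $\C_p^\times$. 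Distinctness of the $f_{i,F}$ may fail when two of the original $f_i$ differ by twisting with the quadratic character cutting out $F$, but after discarding at most countably many duplicate terms (and extracting a subsequence if needed) one obtains the desired family. The principal technical point is the admissibility check in the edge case $f_\fp=2$ when the two Frobenius eigenvalues $\varphi_i,\varphi_i'$ of $\rho_{f_i,p}\vert_{G_{\Q_p}}$ satisfy $\varphi_i^{f_\fp}=\varphi_i'^{f_\fp}$, forcing the $\varphi_{i,F}$-eigenspace on the base change to be two-dimensional: in this degenerate situation both $\Q_p$-Frobenius eigenlines are $N$-stable by admissibility of both $\varphi_i$ and $\varphi_i'$, so $N$ vanishes identically on $D_\pst(\rho_{f_i,p}\vert_{G_{F_\fp}})$ and $N$-stability of any line in the $\varphi_{i,F}$-eigenspace is automatic.
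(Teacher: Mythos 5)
Your proof follows essentially the same route as the paper's: the first assertion is exactly the paper's remark that $D_\pst$ base changes, and your convergence argument via $\theta_{i,F}=\theta_i\ccirc\BC_{F/\Q}$ and continuity of $x\mapsto x^{f_\fp}$ is the paper's argument verbatim. The detailed admissibility check is a useful unwinding of what the paper compresses into one sentence.

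Two small remarks. First, your final paragraph on the ``degenerate'' case $\varphi_i^{f_\fp}=\varphi_i'^{f_\fp}$ is both unnecessary and slightly off: your earlier argument already produces an $N$-stable $\varphi_{i,F}$-eigenline (the one spanned by the $\varphi_i$-eigenvector), with no need to enlarge the eigenspace; and the claim that both $\Q_p$-Frobenius eigenlines are $N$-stable ``by admissibility of both $\varphi_i$ and $\varphi_i'$'' is unjustified, since the family datum only supplies the refinement $\varphi_i$, not $\varphi_i'$. Second, your observation about failure of pairwise distinctness after base change (when $f_j=\chi_F f_i$) is a legitimate point the paper's proof passes over silently; the cleanest fix is to note that the base-change map is at most $2$-to-$1$ on the sequence, so one of each colliding pair can be dropped leaving an infinite subsequence, rather than the somewhat vague ``discarding at most countably many duplicate terms.'' Neither issue affects the soundness of the core argument.
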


\begin{proof}
The first statement follows from the fact that the $(\varphi,N)$-module $D_{\pst}(\rho_{f_i,\fp})$ is obtained by base changing $D_\pst(\rho_{f,p})$.

For every $i$, the Hecke eigensystem $\theta_{i,F}$ of $f_{i,F}$ away from $Np$ is obtained from $\theta_i$ via composition with the map $\BC_{F/\Q}\colon\calH_F^{Np}\to\calH_\Q^{Np}$ attached to the base change in Remark \ref{bcmap}. In particular, if the sequence $\theta_i$ converges to $\theta_\infty$, then $\theta_{i,F}=\BC_{F/\Q}\ccirc\theta_i$ converges to $\BC_{F/\Q}\ccirc\theta_\infty$. The convergence of $(\varphi_{i,F})_i$, to either $\varphi_\infty$ or $\varphi_\infty^2$, is obvious.
\end{proof}

\subsubsection{$p$-primitive families}

%\andr{COMMENT on weights of fi0?? NEED to explain that finite slope!!}

Let $F$, $\fp$ be as above, and let $(f_i,\varphi_i)_i$ be a family of $\GL_{2/F}$-eigenforms. 
Assume that every $f_i$ is the twist of a $\GL_{2/F}$-eigenform $f_i^0$ of finite slope with a finite order Hecke character $\delta_i$ of $F$ whose conductor divides $\fp^\infty$. Write $t_i$, respectively $t_i^0$, for the continuous pseudorepresentation of $G_F$ attached to $f_i$, respectively $f_i^0$. Recall that the eigencurve $\cE_F$ is defined over an arbitrary $p$-adic field $K$ containing the images of all the embeddings of $F$ into $\Qp$; in particular, because of our choice of $F$, one can take $K=F_\fp$ in the following.

\begin{lemma}\label{untwistfam}
%	Let $(f_i,\varphi_i)_{i}$ be a family of $\GL_{2/F}$-eigenforms. 
	The sequence $(f_i^0,\varphi_i)_{i}$ is a ($p$-primitive) family, and the characters $\delta_i$ all coincide for large enough $i$. %there exists a finite partition $\N=\bigsqcup_j I_j$, with $I_j$ infinite for every i, such that $(f_i)^0_{i\in I_j}$ is a ($p$-primitive) family for every $i$.
\end{lemma}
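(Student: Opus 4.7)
The plan is to prove that the characters $\delta_i$ have uniformly bounded conductor at $\fp$, so that only finitely many such characters occur; convergence then forces the sequence $(\delta_i)_i$ to be eventually constant, and everything else follows.

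First, I will apply Lemma \ref{hglim} to translate the convergence of the eigensystems $\theta_i$ to $\theta_\infty$ into convergence of the associated Galois pseudorepresentations $t_i = \mathrm{tr}(\rho_{f_i,p})\colon G_F\to\C_p$ to a limit $t_\infty$. After possibly replacing each $f_i^0$ by its $p$-primitive twist (which only modifies $\delta_i$ by a character of $\fp$-power conductor and keeps the hypotheses intact), we may assume each $f_i^0$ is $p$-primitive. Since $\rho_{f_i,p}\cong\rho_{f_i^0,p}\otimes\delta_i$, we have $t_i=\delta_i\cdot t_i^0$; in particular, for $i$ large enough, the pseudorepresentations $t_i$ share a common residual pseudorepresentation $\ovl t$, and correspond to points $x_i$ of the pseudodeformation space $\fR_{\ovl t}$.

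The key technical step is bounding the conductor of $\delta_i\vert_{I_\fp}$. Since $f_i$ is a twist of a finite-slope form and in particular not $p$-supercuspidal, $\rho_{f_i,p}\vert_{G_{F_\fp}}$ is reducible, and after semisimplification decomposes as a sum of two characters $\chi_{i,1}\oplus\chi_{i,2}$. Because $f_i^0$ has finite slope, one of the two characters of $\rho_{f_i^0,p}\vert_{G_{F_\fp}}$ is unramified on $I_\fp$, so after twisting one has $\delta_i\vert_{I_\fp}\in\{\chi_{i,1}\vert_{I_\fp},\chi_{i,2}\vert_{I_\fp}\}$. On the other hand, at every $g\in I_\fp$, the values $\chi_{i,1}(g),\chi_{i,2}(g)$ are the two roots of
\[
X^2 - t_i\vert_{I_\fp}(g)\,X + d_{t_i}\vert_{I_\fp}(g) = 0,
\]
and both coefficients converge as $i\to\infty$, by the convergence of $t_i$. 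Interpreting the connected components of the character deformation space via the rigid generic fiber of $\cO_K[[\cO_{F,\fp}^\times]]$ and invoking Remark \ref{condbound}, the orders (equivalently, the conductors) of the inertial pieces of $d_{t_i}\vert_{I_\fp}$ are uniformly bounded; combined with the convergence of the roots $\chi_{i,j}\vert_{I_\fp}(g)$ in the compact set of bounded-order roots of unity, this yields a uniform bound on the orders of $\chi_{i,1}\vert_{I_\fp}$ and $\chi_{i,2}\vert_{I_\fp}$, hence on the order of $\delta_i\vert_{I_\fp}$.

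Once the conductor of $\delta_i$ is bounded at $\fp$, and given that $\delta_i$ is a finite-order Hecke character ramified only at $\fp$, the class-group contribution is controlled by the narrow Hilbert class group of $F$, so there are only finitely many possibilities for $\delta_i$. The convergence of $t_i$ and $t_i^0$ (the latter seen via the eigencurve $\cE_F$, in which the $f_i^0$ live as classical finite-slope points by Example \ref{exEF}) implies that $\delta_i = t_i/t_i^0$ converges, and since it ranges over a finite set, it is eventually equal to a fixed character $\delta_\infty$. Then $\theta_i^0 = \delta_\infty^{-1}\theta_i$ converges to $\delta_\infty^{-1}\theta_\infty$, and by Example \ref{eigenpar}(ii) the normalized Frobenius eigenvalue $\varphi_i$ remains admissible for $f_i^0$ with unchanged limit $\varphi_\infty$. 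Hence $(f_i^0,\varphi_i)_i$ is a ($p$-primitive) family, and the $\delta_i$ all coincide for $i$ large. The main obstacle is the local bound at $\fp$: one has to track how the possible interchange of $\chi_{i,1}$ and $\chi_{i,2}$ on inertia interacts with the identification of $\delta_i\vert_{I_\fp}$, and ensure the bounded-order conclusion really transfers from the pair of roots to the individual character $\delta_i$.
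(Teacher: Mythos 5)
Your strategy — bound the conductor of $\delta_i$ at $\fp$, deduce finiteness, then deduce eventual constancy — is a reasonable outline, but the key step breaks down. You assert that, because $f_i$ is a twist of a finite slope form and not $p$-supercuspidal, the restriction $\rho_{f_i,p}\vert_{G_{F_\fp}}$ is \emph{reducible} and semisimplifies to a direct sum of two characters $\chi_{i,1}\oplus\chi_{i,2}$. This is false in general: for a finite slope form $f_i^0$ of positive (non-ordinary) slope, the representation $\rho_{f_i^0,p}\vert_{G_{F_\fp}}$ is crystalline but \emph{irreducible}, and twisting by $\delta_i$ does not change this. Triangulinity of the $(\varphi,\Gamma)$-module is not the same as reducibility of the underlying Galois representation. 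Consequently there is no pair of local characters $\chi_{i,1},\chi_{i,2}$, the ``roots of the characteristic polynomial on inertia'' are not character values, and the identification $\delta_i\vert_{I_\fp}\in\{\chi_{i,1}\vert_{I_\fp},\chi_{i,2}\vert_{I_\fp}\}$ has no meaning. Your entire conductor bound therefore collapses. (One might try to rescue it by working with the \emph{parameters} $\delta_{1,i},\delta_{2,i}$ of a triangulation rather than with local characters, since those \emph{do} exist and encode $\delta_i\vert_{\cO_E^\times}$; but then the convergence input you need is precisely that the points lie in a common affinoid of a suitable rigid space, which is what the paper's argument is designed to produce and which you do not have at that stage.)

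There is a second, lesser gap: to conclude that $t_i^0$ converges, you invoke Example~\ref{exEF}, but that example applies to a \emph{family} of finite slope forms, i.e., it already presupposes what the lemma is trying to establish about $(f_i^0,\varphi_i)_i$. As written this is circular; once the $\delta_i$ are known to stabilize, the convergence of $t_i^0=\delta_\infty^{-1}t_i$ is immediate and no appeal to Example~\ref{exEF} is needed, but the stabilization itself still rests on the flawed conductor bound. The paper instead avoids any local reducibility argument by building a ``twisted eigensurface'' $\cE^{\tw}_{F,\ovl t}$ inside $\G_m\times_K\coprod_{\ovl\chi}\fR_{\ovl\chi\ovl t}$, whose points parameterize pairs (refinement, pseudorepresentation) that are wild twists of finite-slope ones, and whose Zariski (hence $p$-adic) closedness lets one pass to the limit $(\varphi_\infty,t_\infty)$; the twist character is then read off as a rigid analytic function on $\cE^{\tw}_{F,\ovl t}$, and the conductor bound follows from the usual boundedness of orders of finite order characters near a point of weight space. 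If you want to keep the spirit of your argument, you would need to replace your local Galois decomposition step with an argument at the level of triangulation parameters and show that the relevant points live in a common affinoid — which essentially reproduces the paper's construction.
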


\begin{proof}
	Fix a continuous pseudorepresentation $\ovl t_F\colon G_F\to\GL_2(\Fp)$. Let $X$ be the set of continuous characters $\ovl\chi\colon G_F\to\Fp^\times$ factoring through $G_F\to G_{F(\zeta_p)}$. For every $\chi\in X$, embed $\cE_{F,\ovl{\chi t}}$ as a closed subspace of $\G_{m,F}\times_K\fR_{\ovl{\chi t}}$ as in Proposition \ref{EFclosure}, hence the disjoint union $\coprod_{\ovl\chi\in X}\cE_{F,\ovl{\chi t}}$ as a closed subspace of $\G_{m,F}\times_K\coprod_{\chi\in X}\fR_{\ovl\chi\ovl t}$. 
%	Since the weight space \wtl\cW_F, together with the character of \cO_{F_\fp}^\times that it carries, is the universal object for characters of \cO_{F_\fp}^\times, \wtl\cW_F acts on itself via multiplication of characters, inducing a rigid analytic map \wtl\cW_F\times_F\wtl\cW_F\to\wtl\cW_F. Clearly such an action preserves the parallel weight subspace, inducing a rigid analytic map m\colon\wtl\cW_F\times_F\cW_F\to\cW_F. 
	
	The disjoint union of the universal deformation spaces of the characters $\ovl\chi\in X$ is isomorphic to $\wtl\cW_F^\ast=\Spf\wtl\Lambda_F^\ast$, with $\wtl\Lambda_F^\ast=\cO_K[[(\cO_{F,p}^\times)]]$. We attach to the universal character on $\wtl\cW_F^\ast$ a character $F^\times\to\wtl\Lambda_F^{\ast,\times}$ by mapping a uniformizer to 1, and a character $G_{F_\fp}\to\wtl\Lambda_F^{\ast,\times}$ via the local reciprocity map. Then $\wtl\cW_F^\ast$ acts on $\coprod_{\ovl\chi\in X}\fR_{\ovl{\chi t}}$ via character multiplication, giving a rigid analytic map
	\[ h\colon\wtl\cW_F^\ast\times_K\coprod_{\ovl\chi\in X}\fR_{\ovl{\chi t}}\to\coprod_{\ovl\chi\in X}\fR_{\ovl{\chi t}}. \]
%	Let X be the set of characters \cO_{F_\fp}^\times\to\F_p^\times. For every \ovl\chi\in X, let R_{\ovl\chi} be the universal deformation ring for \ovl\chi, that we identify with a connected component of the weight space \cW_F. Since twisting with a character of \cO_{F_\fp}^\times 
%	Let $R_{\triv_F}\cong\Lambda_F$ be the universal deformation ring for the trivial character $G_F\to\F_p^\times$, so that the rigid analytic generic fiber of $\Spf\Lambda_F$ is one component of the parallel weight space $\wtl\cW_F$. %; via local class field theory, we can identify the universal deformation G_{F_\fp}\to\Lambda_F^\times with the universal deformation \cO_\fp^\times\to\Lambda_F^\times of the trivial character \cO_\fp^\times\to F^\times. 
%	Let $h(\triv_F,\ovl t_F)\colon R_{\ovl t_F}\to\Lambda_F\times_FR_{\ovl t_F}$ be the homomorphism attached to the natural transformation of deformation functors $\Def_{\triv_F}\times\PsDef_{\ovl t_F}\to\PsDef_{\ovl t_F}$ mapping a pair $(\delta,t)$ to $\delta t$. We denote again by $h(\triv_F,\ovl t_F)$ the induced morphism $\fR_{\ovl t_F}\times_F\cW_F\to\fR_{\ovl t_F}$ on the rigid analytic fibers. 
	Let $\cE_{F,\ovl t}^\tw$ be the image of $\wtl\cW_F^\ast\times_K\coprod_{\ovl\chi\in X}\cE_{F,\ovl{\chi t}}$ under the morphism 
	\[ \id_{\G_{m,F}}\times h\colon\G_{m,F}\times_K\coprod_{\ovl\chi\in X}\fR_{\ovl{\chi t}}\times_F\wtl\cW_F\to\G_{m,F}\times_K\coprod_{\ovl\chi\in X}\fR_{\ovl{\chi t}}; \]
	it is a closed subspace of $\G_{m,F}\times_K\coprod_{\ovl\chi\in X}\fR_{\ovl{\chi t}}$. 
%	Since the above rigid map is induced by a map of affine $\cO_L$-schemes, $\cE_F^\tw$ is a closed subspace of $\G_{m,F}\times_F\fR_{\ovl t_F}$. 
	The points of the ``eigensurface'' $\cE_{F,\ovl t}^\tw$ parameterize the ``wild twists'' of the eigencurve: the natural embedding $\cE_{F,\ovl t}^\tw\subset\G_{m,F}\times_K\coprod_{\ovl\chi\in X}\fR_{\ovl{\chi t}}$ identifies points of $\cE_{F,\ovl t}^\tw$ with pairs $(\varphi,t)$ where $t$ can be written as 
	\begin{equation}\label{chit0}
		t=\delta t^0
	\end{equation}
	for a continuous character $\delta\colon G_F\to\C_p^\times$, a continuous representations $t^0$ attached to an overconvergent $\GL_{2/F}$-eigenform $f^0$ of finite slope, and $\varphi$ is an admissible Frobenius eigenvalue of $f^0$. The ``classical points'' of $\cE^\tw_{F,\ovl t}$ correspond to twists of overconvergent eigenforms of finite slope with Dirichlet characters of finite order and conductor dividing $p^\infty$, and are dense in $\cE_{F,\ovl t}^\tw$.
	
	For every $i$, the pair $x_i\coloneqq(\varphi_i,t_{i})$ defines a point of $\cE_{F,\ovl t}^\tw$, that by definition converges to the limit $x_\infty\coloneqq(\varphi_\infty,t_\infty)$ in the $p$-adic topology of $\G_{m,F}\times_K\coprod_{\ovl\chi\in X}\fR_{\ovl{\chi t}}$. Since $\cE_{F,\ovl t}^\tw$ is Zariski closed in $\G_{m,F}\times_K\coprod_{\ovl\chi\in X}\fR_{\ovl{\chi t}}$, it is in particular $p$-adically closed, so the point $x_\infty$ belongs to $\cE_{F,\ovl t}^\tw$. Therefore, we can write $t_\infty$ as $\delta t_\infty^0$ as in \eqref{chit0}. In particular, $t_\infty^0$ is attached to an overconvergent eigenform $f_\infty^0$ of finite slope, so that $x_\infty^0\coloneqq(\varphi_\infty,t_\infty^0)$ is a point of $\cE_{F,\ovl t}$.
	
	Since $h$ induces an isomorphism $\wtl\cW_F^\ast\times_K\cE_{F,\ovl t}\cong\cE_{F,\ovl t}^\tw$, the universal character on $\wtl\cW_F^\ast$ induces a continuous character $\delta\colon \Z_p^\times\to\cO_{\cE_{F,\ovl t}^\tw}(\cE_{F,\ovl t}^\tw)^\times$, with the property that $\delta(x_i)=\delta_i$ for every $i\in\N\cup\{\infty\}$. By continuity of $\delta$, $\delta_i$ converges to $\delta_\infty$ in $\cW_F$, and since $t_i$ converges to $t_\infty$, we conclude that $t_i^0$ converges to $t_\infty^0$ in $\coprod_{\ovl\chi\in X}\fR_{\ovl{\chi t}}$. Therefore, $(\varphi_i,t_i^0)$ converges to a point $x_\infty^0$ of $\cE_{F,\ovl t}$, which implies that $(f_i^0,\varphi_i)$ is a family of eigenforms.

Since the characters $\delta_i$ converge to $\delta$ in $\wtl\cW_F^\ast$, their order is bounded in terms of the distance of $\delta$ from the trivial character. In particular, there is a finite number of choices for the characters $\delta_i$, so that they must all coincide for large enough $i$.
\end{proof}

\begin{rem}
In Lemma \ref{untwistfam}, we are using the fact that $(f_i,\varphi_i)_i$ is a family: the analogous statement for weak families is false. One can build a counterexample from Example \ref{excolste}: with the notation of the example, consider the family $(\wtl f_i,\wtl\varphi_i)_i$ of finite slope eigenforms converging to the $p$-depletion of $f_\infty$, i.e. the twist of $f_\infty$ by the 0-th power of the Teichm\"uller character $\omega\colon(\Z/p\Z)^\times\to\Z_p^\times$. Then for any $j\in\Z, (\omega^j \wtl f_i,\wtl\varphi_i)_i$ is again a weak family of eigenforms, but not a family (since the Frobenius eigenvalues are unchanged by the twist, hence still converge to 0). Any ``mix'' of $(\wtl f_i,\wtl\varphi_i)_i$ and its twists, for instance $(g_i,\varphi_i)_i=(\omega^i\wtl f_i,\wtl\varphi_i)_i$, is still a weak family, but does not satisfy the conclusion of Proposition \ref{untwistfam}.
\end{rem}

\subsection{Affinoid families of eigenforms}
We give a more geometric definition of family, inspired by the definitions of ``weakly refined'' and ``refined'' families in \cite[Definitions 4.2.3 and 4.2.7]{bellchen}. %; it is in some sense a generalization of \emph{loc. cit.} to the case where classical specializations are not assumed to be crystalline.

We denote by $\ev_x\colon\cO_U(U)\to L_x$ the evaluation map from the ring of regular functions on a rigid analytic space $U$ to the residue field $L_x$ at a point $x$. Given a $\GL_{2/\Q}$-eigenform $f$, we denote by $\rho_f\colon G_\Q\to\GL_2(V_f)$ the associated Galois representation, with $V_f$ a 2-dimensional $\Qp$-vector space. Recall that we gave in Definition \ref{dim2ref} a notion of admissibility for Frobenius eigenvalues.

\begin{defin}\label{afffamdef}
	A \emph{weak $p$-adic affinoid family of eigenforms} of tame level $N$, defined over a $p$-adic field $L$, is the datum $(U,\Theta,\Phi,S)$ of
	\begin{enumerate}[label=(\roman*)]
		\item an integral affinoid space $U$ over $L$,
		\item a homomorphism $\Theta\colon\calH_\Q^{Np}\to\cO_U(U)$, %for every prime $\ell$ not dividing $Np$, an element $T_\ell\in\cO_U(U)$,
		\item a nowhere-vanishing element $\Phi\in\cO_U(U)$, and
		\item a dense subset $S\subset U(\C_p)$
	\end{enumerate}
	such that, for every $x\in S$, $\ev_x\ccirc\Theta$ is the Hecke eigensystem away from $Np$ attached to an eigenform $f_x$ of level $Np^r$, for some $r\ge 1$, and $\Phi(x)$ is an admissible Frobenius eigenvalue of $V_{f_x}\vert_{G_{\Q_p}}$. %$D_\pst(V_{f_x})$.

An \emph{affinoid $p$-adic family of eigenforms} is a weak $p$-adic family of eigenforms $(U,\Theta,\Phi,S)$ that satisfies:
	\begin{enumerate}
	\item[\optionaldesc{(Aff1)}{aff1}] there exists a continuous pseudorepresentation $t_U\colon G_{\Q,Np}\to\cO_U^\circ(U)$ such that $t_U(\Frob_\ell)=\Theta(T_\ell)$ for every prime $\ell\nmid Np$, 
	\item[\optionaldesc{(Aff2)}{aff2}] the map $\pi^\ps_\cF\colon U\to\fR_{\ovl t_U}$ deduced from \eqref{aff1} and the universal property of $\fR_{\ovl t}$ is finite onto its image. 
	\end{enumerate}

The \emph{dimension} of a (weak) affinoid family $(U,\Theta,\Phi,S)$ is the dimension of $U$.
\end{defin}

\noindent The residual pseudorepresentation $\ovl t_U$ appearing in condition \ref{aff2} is the one given by Definition \ref{affres}.
As before, we typically omit the words ``$p$-adic'' and simply speak of a ``(weak) affinoid family of eigenforms''. %\andr{is the terminology weak justified??}
%We explain in Remark \ref{affweak} why we use the same terminology ``weak'' as in Definition \ref{famdef}. 

\begin{rem}\label{weakref}
Remark \ref{admfrob} gives a bijection between admissible Frobenius eigenvalues of $V_f\vert_{G_{\Q_p}}$ and $N$-stable refinements of $V_f\vert_{G_{\Q_p}}$. Hence, a weak affinoid family of eigenforms interpolates not just a set of eigenforms, but also a chosen refinement for each of them. 
In this sense, the definition of a (weak) affinoid family of eigenforms resembles the definition of (weakly) refined family of $p$-adic representations given in \cite[Definitions 4.2.3 and 4.2.7]{bellchen}, with the important difference that we want to interpolate potentially semistable representations, not just crystalline ones. 
%The use of the adjective ``weak'' is also analogous to that in \emph{loc. cit.}, in light of Corollary \ref{weakequiv} below. 
%One important difference is that we do not require any specialization of an affinoid family to be crystalline, which would force any corresponding eigenform to be of finite slope up to twist.
\end{rem}
%Let $\cF$ be an affinoid family of eigenforms.

%Recall that by Remark \ref{affconst} any pseudorepresentation t_U as in condition \ref{Aff1} is residually constant, so that \ovl t_U is well-defined in condition \ref{Aff2}. 

%\andr{is it actually enough to interpolate one of the Fi? modify everywhere?}

\begin{rem}
By \cite{diaoliu}, the eigencurve satisfies the valuative criterion of properness, i.e. if one extracts from the eigencurve a punctured affinoid equipped with data as in (i-iv) of Definition \ref{afffamdef}, one can always extend it to a weak family defined over the whole affinoid. It might be possible to show, along the same lines, that the condition that $\Phi$ is nonvanishing as in Definition \ref{afffamdef}(iii) follows from the others.
\end{rem}

\begin{rem}\label{frobUp}
If all of the forms $f_x$, $x\in S$, are of finite slope, then in light of Example \ref{eigenpar}, interpolating their refinements amounts to interpolating their Hecke eigenvalues at $p$, as is customary when constructing $p$-adic families of eigenforms of finite slope. %the importance of requiring an interpolation of the normalized Frobenius eigenvalues in order to have a meaningful notion of family, especially in the case of infinite slope.
\end{rem}

In the case when all of the $f_x$ are of infinite slope, the Hecke eigenvalues at $p$ are trivially interpolated by the zero function. In the following remark, we stress why it is important in this case to include a nontrivial interpolation condition at $p$ in order to obtain a meaningful notion of family of eigenforms of infinite slope.

\begin{rem}\label{emepas}
	If one does not require that the refinements be interpolated by an analytic function, then the datum of a family only amounts to that of a family of Galois representations together with a dense set of modular points. One can produce such families via the known results on the density of modular points in Galois deformation spaces, and show that our classification results do not hold for such families. One can even interpolate $p$-supercuspidal eigenforms in this Galois-theoretic sense: as far as local representations are concerned, Emerton and Pa\v{s}k\={u}nas \cite{emepassc} show that, for a residual representation $\ovl\rho$ of $G_{\Q_p}$ admitting a crystalline, potentially diagonalisable lift of regular weight (an assumption one can now remove by the work of Emerton--Gee), the closure of the set of points corresponding to $p$-supercuspidal eigenforms of any fixed regular weight contains some irreducible components of the universal framed deformation space $\fR_{\ovl\rho}^\square$ of $\ovl\rho$. If one had a global analogue of such a statement, one could use it to produce a family of Galois representations containing a dense set of modular, $p$-supercuspidal points, but not necessarily to complete this datum with an interpolation of the corresponding Frobenius eigenvalues to obtain a weak family of eigenforms in our sense. %Picking an integral affinoid $U$ on one such irreducible component, containing a dense set $S$ of $p$-supercuspidal points, and the eigensystem $\Theta$ attached to the universal deformation of $\ovl\rho$, one would obtain a triple $(X,\Theta,S)$ satisfying the conditions of Definition \ref{afffamdef}. However, there is no way to complete this triple to a weak family of eigenforms.
\end{rem}

In the following Remarks, we comment conditions \ref{aff1},\ref{aff2} of Definition \ref{afffamdef}. As we recall below in Example \ref{hidacol}, they are satisfied by the ``usual'' affinoid families of eigenforms of finite slope. 

\begin{rem}
	Any $t_U$ satisfying condition \ref{aff1} in Definition \ref{afffamdef} will have the property that, for every $x\in S$, $\ev_x\ccirc t_U$ is the pseudorepresentation $t_x$ attached to the eigenform $f_x$.
	
	Moreover, if a continuous pseudorepresentation $t_U\colon G_{\Q,Np}\to\cO_U(U)$ is given, such that $\ev_x\ccirc t_U=t_x$ for all $x$ in a subset $S^\prime$ of $S$ that is still dense in $U$, then a simple density argument allows us to conclude that $t_U$ has property \ref{aff1}.
\end{rem}

\begin{rem}
Condition \ref{aff2} of Definition \ref{afffamdef} rules out pathological families such as:
\begin{itemize}
\item constant affinoid families, i.e. families where $\Theta$ factors via $L\subset\cO_U(U)$ and the Hecke eigensystem $\calH_\Q^{Np}\to L$ of an eigenform $f$, so that the specialization $\ev_x\ccirc\Theta$ is the eigensystem of $f$ independently of $x\in U(\C_p)$; 
\item families obtained by translating a single one, as follows: let $\cF$ be any (weak) affinoid family of eigenforms, supported on an affinoid $U$, and let $V$ be any integral $L$-affinoid; then we can define a family $\cF\times_LV$ by composing $\Theta$ with the map $\cO_U(U)\to\cO_{U\times_LV}(U\times_LV)$ induced by the projection $U\times_LV\to U$. Constant families can be seen (abusing definitions) as the special case when $U=\Spm L$. 
\end{itemize}
\end{rem}

Definition \ref{afffamdef} is modeled on the existing notions of families for eigenforms of finite slope. In particular, we have the following.

\begin{ex}\label{hidacol}\mbox{ }
	\begin{itemize}
		\item If $\cE_\Q$ is the eigencurve of tame level $\Gamma_1(N)$ constructed by Coleman--Mazur, Buzzard and Chenevier, and $x_0$ is any point of $\cE_\Q$ of weight $\psi x^{k_0}$ with $k_0\in\Z_p$, then we can construct an affinoid family starting with any irreducible affinoid neighborhood of $x_0$. Indeed, the slope is constant, say equal to $h$, on a sufficiently small affinoid neighborhood $U_0$ of $x_0$, and since $k_0\in\Z_p$, $U_0$ contains an infinite set of points of weight $\psi x^{k_0}$ with $k>h+1$. By Coleman's classicality criterion, all such points are classical. Recall that $\cE_\Q$ is 1-dimensional and comes equipped with a Hecke eigensystem $\Theta^\full_{\cE_\Q}\colon\calH_\Q\to\cO_\cE(\cE)$, that (by definition) specializes to the Hecke eigensystem of a classical eigenform of level $\Gamma_1(Np^r)$, for some $r\ge 1$, at every classical point. %\andr{doublecheck levels, maybe explain more??} 
		Therefore, we can define an affinoid family of eigenforms as the datum of:
		\begin{itemize}
			\item an integral affinoid neighborhood $U$ of $x_0$,
			\item the homomorphism $\Theta\colon\calH_\Q^{Np}\to\cO_U(U)$ induced by the restriction of $\Theta^\full_{\cE_\Q}$ to $\calH_\Q^{Np}$,
			\item as $\Phi$, the image of the operator $U_p$ under $\Theta^\full_\cE$, restricted to $U$ (using Example \ref{eigenpar}),
			\item any infinite set of classical points of $U$, which will be dense since $U$ is 1-dimensional.
		\end{itemize}
		For a given residual pseudorepresentation $\ovl t$, Coleman and Mazur construct a morphism from the $\ovl t$-part $\cE_{\ovl t}$ of the eigencurve to $\fR_{\ovl t}\times_L\G_m$. The composition of such a morphism with the projection to $\fR_{\ovl t}$ is not well-behaved (its image is Mazur's ``infinite fern''), but its restriction to an integral affinoid subdomain $U$ of $\cE_{\ovl t}$ gives a closed embedding $U\into\fR_{\ovl t}$, that equips $U$ with a pseudorepresentation $t_U$. Therefore, conditions \ref{aff1} and \ref{aff2} of Definition \ref{afffamdef} hold for the affinoid families supported by the eigencurve. 
		%Proposition \ref{chenarg} provides us with a pseudorepresentation defined along the whole eigencurve, that can be restricted to a pseudorepresentation along any affinoid family cut out from it; in particular, condition \ref{aff1} holds. Condition (vi) 
		\item With the notations of the previous point, when the slope of $x_0$ is 0 we can mimic the construction above starting with the rigid analytic generic fiber of $\Spf\T^\ord$, where $\T^\ord$ is the big ordinary Hecke algebra of tame level $\Gamma_1(N)$ constructed by Hida; we can apply Hida's control theorem for ordinary forms of weight at least 2, instead of Coleman's classicality criterion. Since the rigid analytic generic fiber of $\T^\ord$ coincides with the union of the slope 0 connected components of the Coleman--Mazur eigencurve, this construction provides us with the families of slope 0 already constructed in the previous point.
	\end{itemize}
	Conditions \ref{aff1} and \ref{aff2} of Definition \ref{afffamdef} follow either from the properties of the Coleman--Mazur eigencurve recalled above, or directly from Hida's original constructions. 
	%Proposition \ref{chenarg}, or the original work of Hida, produces a pseudorepresentation with coefficients in \T^\ord, whose restriction 
	%	\andr{Explain how Hida and Coleman families give 1-dimensional affinoid families as above}
\end{ex}

Even if it is convenient to keep it in the notation, that data of the family $S$ is somewhat auxiliary. For this reason, we give the following definition.

\begin{defin}
We say that two (weak) families of eigenforms $(U,\Theta,\Phi,S)$ and $(\wtl U,\wtl\Theta,\wtl\Phi,\wtl S)$ are \emph{equivalent} if there exists an isomorphism of $L$-affinoids $f\colon U\cong\wtl U$ such that $\wtl\Theta=f^\ast\Theta$ and $\wtl \Phi=f^\ast \Phi$.
\end{defin}

If $\cF=(U,\Theta,\Phi,S)$ is a weak family of eigenforms and $U_0$ is an affinoid subdomain of $U$ such that $S\cap U_0(\C_p)$ is dense in $U_0$, we write
%a point $z_0$ as in condition (iv) of Definition \ref{afffamdef}, we write
\[ \cF\vert_{U_0}=(U_0,\Theta\vert_{U_0},\Phi\vert_{U_0},S\cap U_0), \]
where $\Theta\vert_{U_0}$ is the composition of $\Theta$ with the restriction map $\cO_U(U)\to\cO_{U_0}(U_0)$. 
Clearly $\cF\vert_{U_0}$ is a weak affinoid family of eigenforms, that we call the \emph{restriction of $\cF$ to $U_0$}. If $\cF$ is an affinoid family, then $\cF\vert_{U_0}$ is an affinoid family.

%\begin{rem}
%Let $(U,\Theta,\Phi,S)$ be a weak affinoid family of eigenforms. If the set $S$ is ``sufficiently dense'', we can always guarantee that condition (v) is satisfied by replacing $U$ with a smaller affinoid. More precisely, assume that there exists $z_0\in U(\C_p)$ such that, for every affinoid neighborhood $U_0$ of $z_0$, $S\cap U_0(\C_p)$ is dense in $U_0$.
%%Let $z_0$ be a point of $U(\C_p)$ satisfying condition (iv) of Definition \ref{afffamdef}. 
%Then one can always find an affinoid neighborhood $U_0$ of $z_0$, nested in $U_1$. %and a wide open rigid space $U_1$ satisfying $U_0\subset U_1\subset U$. By (iv), $S\cap U_0(\C_p)$ is dense in $U_0$, hence also in $U_1$. 
%Therefore, a pseudorepresentation as in condition (v) exists by Corollary \ref{chenargaff}.
%\end{rem}

Let $\cF=(U,\Theta,\Phi,S)$ be an affinoid family of eigenforms. Let $t_U$ be the associated pseudorepresentation and $\pi_\cF^\ps\colon U\to\fR_{\ovl t}$ the map given by the universal property of $\fR_{\ovl t}$. For every $x\in S$, the specialization $t_x=\ev_x\ccirc t_U$ is attached to an eigenform, hence the image of $S$ under $\pi_\cF^\ps$ lands in the Sen-null subspace $\fR_{\ovl t}^0$. Since $S$ is dense in $U$, the image of $\pi_\cF^\ps$ is also contained in $\fR_{\ovl t}^0$. 

\begin{rem}\label{kappa}
If $s_\sigma$ is the analytic function on $\fR_{\ovl t}$ produced by Lemma \ref{weightan}, the pair $(0,(\pi_\cF^\ps)^\ast s_\sigma)$ describes the Hodge--Tate--Sen weights of $t_U$: for every $x\in U(\Qp)$, the Hodge--Tate--Sen weights of $t_x$ are 0 and $s_\sigma(\pi_\cF^\ps(x))$. 
In the following, we denote by $\kappa_\cF$ the rigid analytic function $(\pi_\cF^\ps)^\ast s_\sigma$ on $U$.
\end{rem}

\subsection{Pseudorepresentations along weak affinoid families}
Let $\cF=(U,\Theta,\Phi,S)$ be a weak affinoid family of eigenforms. We show that under a mild assumption on $S$, and up to replacing $U$ with a smaller affinoid, we can equip $\cF$ with a continuous pseudorepresentation with the interpolation property required in condition \ref{aff1} of Definition \ref{afffamdef}. 

We start by recalling an argument of Chenevier allowing one to interpolate pseudorepresentations along points of a wide open rigid space. 
Let $X$ be a rigid analytic space over $L$, and let $S$ be a Zariski-dense set of $\C_p$-points of $X$. For every $x\in S$, let $t_x\colon G_\Q\to\C_p$ be a continuous pseudorepresentation. Assume that, for all except a finite number of primes $\ell$, $t_x$ is unramified at $\ell$ for every $x\in S$, and that there exists a rigid analytic function $F_\ell\in\cO_X^\circ(X)$ such that $F_\ell(x)=t_x(\Frob_\ell)$ for every $x\in S$ and an arbitrary lift $\Frob_\ell\in G_\Q$ of the Frobenius at $\ell$.

\begin{prop}[{cf. \cite[Proposition 7.1.1]{chenfam}}]\label{chenarg}
	If $X$ is wide open, there exists a continuous pseudorepresentation $t_X\colon G_\Q\to\cO_X^\circ(X)$ such that $\ev_x\ccirc t_X=t_x$ for every $x\in S$. 
\end{prop}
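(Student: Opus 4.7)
The plan is to follow the strategy used by Chenevier \cite[Proposition 7.1.1]{chenfam}, the crucial ingredient being the profiniteness of the ring $R := \cO_X^\circ(X)$ provided by Lemma \ref{bccomp}. Let $\Sigma$ be a finite set of primes outside of which every $t_x$ (for $x \in S$) is unramified; by the Chebotarev density theorem, the Frobenius elements $\{\Frob_\ell\}_{\ell \notin \Sigma}$ are dense in the maximal quotient $G_{\Q,\Sigma}$ of $G_\Q$ unramified outside $\Sigma$, and since the pseudorepresentation $t_X$ must factor through $G_{\Q,\Sigma}$, it suffices to define it there.

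First, for each $g \in G_{\Q,\Sigma}$, I would construct $t_X(g) \in R$ as follows. Fix a net of Frobenius elements $(\Frob_{\ell_\alpha})$ converging to $g$, and extract from $(F_{\ell_\alpha})$ a convergent subnet in the compact space $R$, with limit $F_g$. Evaluating at an arbitrary $x \in S$ and using the continuity of $t_x$ yields
\[ F_g(x) = \lim_\alpha F_{\ell_\alpha}(x) = \lim_\alpha t_x(\Frob_{\ell_\alpha}) = t_x(g). \]
Zariski density of $S$ in $X$ then forces $F_g$ to be independent of the choice of subnet (assuming $X$ reduced, so that elements of $R$ are separated by evaluation on $S$), yielding a well-defined map $t_X\colon G_{\Q,\Sigma} \to R$ satisfying $\ev_x \ccirc t_X = t_x$ for every $x \in S$.

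The same strategy applied in reverse gives continuity of $t_X$: if $g_\beta \to g$ in $G_{\Q,\Sigma}$, any subnet of $(t_X(g_\beta))$ accumulates at an element $L \in R$ with $L(x) = \lim t_x(g_\beta) = t_x(g) = t_X(g)(x)$ for every $x \in S$, and Zariski density forces $L = t_X(g)$; hence the full net converges in $R$. The pseudorepresentation axioms $t_X(1) = 2$ and $t_X(gh) = t_X(hg)$ then follow by evaluating at each $x \in S$, where they hold by hypothesis, and invoking Zariski density once more.

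The main subtlety is the reducedness of $X$, which is needed to identify elements of $R$ with their restrictions to $S \subset X(\C_p)$. In the intended applications this is not an issue: $X$ will typically sit inside a universal pseudodeformation space, which is reduced away from $p$-th roots of unity by the results recalled in Section \ref{htsfam}. Modulo this point, the proof amounts to a direct combination of profiniteness, Chebotarev density, and pointwise continuity of the $t_x$.
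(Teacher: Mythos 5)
Your proof is correct and rests on exactly the same three pillars as the paper's — compactness of $R=\cO_X^\circ(X)$ via Lemma \ref{bccomp}, Chebotarev density of Frobenii, and Zariski density of $S$ — but it takes a more pedestrian route. You construct $t_X(g)$ pointwise by extracting convergent subnets in the compact ring $R$, then verify uniqueness, continuity and the pseudorepresentation axioms one by one. The paper instead packages everything into a single global argument: the diagonal map $\prod_{x\in S}t_x\colon G_\Q\to\prod_{x\in S}\C_p$ is a continuous pseudorepresentation; $\prod_{x\in S}\ev_x\colon R\to\prod_{x\in S}\C_p$ is injective (Zariski density) with compact, hence closed, image; the Frobenii land in this closed image; and Chebotarev forces the whole of $G_\Q$ to do so, giving the factorization. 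What the paper's argument buys is that continuity and the pseudorepresentation identities come for free, whereas you have to reprove them by density after the fact. What your version buys is transparency about where each hypothesis enters, at the cost of an extra check (the continuity of each $\ev_x$ on the profinite ring $R$, which you use implicitly but do not flag). On your "main subtlety": the reducedness worry is not a gap, since the paper's definition of Zariski density (Section \ref{sectop}) says precisely that a function vanishing on $S$ vanishes identically, which already forces injectivity of $\prod_{x\in S}\ev_x$ and rules out nilpotents; there is no extra hypothesis to import.
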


\begin{proof}
	%Pick $U_0$ as in the statement. Since $U_0$ is strictly contained in $U$ and both are affinoid, by \ref{??} there exists a wide open subdomain $U_1$ of $U$ such that $U_0\subset U_1$. Let $S_0=S\cap U_0$. Since $S_0$ is dense in $U_0$, by \ref{??} it is also dense in $U_1$. 
	Consider the map
	\[ \prod_{x\in S}t_x\colon G_\Q\to\prod_{x\in S}\C_p. \]
	%Careful: what is the def. of pseudorep we use?? And what about the codomain... Maybe does not need to be profinite. But maybe prod padic field is prof?
	An easy check shows that $\prod_{x\in S}t_x$ is a pseudorepresentation. Since $S$ is Zariski-dense in $X$, the product of the evaluation maps $\ev_x\colon\cO_{X}^\circ(X)\to\C_p$ at $x\in S$ gives an injective ring homomorphism $\prod_{x\in S}\ev_x\colon\cO_{X}^\circ(X)\to\prod_{x\in S}\C_p$. The ring $\cO_{X}^\circ(X)$ is compact by Lemma \ref{nestedlemma}, hence its image under $\prod_{x\in X^\cl}\ev_x$ is closed since $\prod_{x\in S}\C_p$ is separated. On the other hand, $t_x(\Frob_\ell)=F_\ell(x)$ for every prime $\ell\nmid Np$ and lift $\Frob_\ell$ of the Frobenius at $\ell$, hence
	\[ \Big(\prod_{x\in S}t_x\Big)\big(\Frob_\ell\big)=\Big(\prod_{x\in S}\ev_x\Big)\big(F_\ell\big), \]
	so that by Chebotarev's theorem %the set of conjugates of the elements \Frob_\ell, when \ell varies over the primes not dividing $Np$, is dense in $G_\Q$, 
	the image of $G_{\Q}$ under $\prod_{x\in S}t_x$ is contained in the image of $\prod_{x\in S_0}\ev_x$. Therefore $\prod_{x\in S}t_x$ factors through a pseudorepresentation $t_{X}\colon G_\Q\to\cO^\circ_{X}(X)$, that clearly has the desired interpolation property.
	%The restriction of $t_{U_1}$ to $U_0$ is a pseudorepresentation $t_{U_0}\colon G_\Q\to\cO^\circ_{U_0}(U_0)\subset\cO_{U_0}^{U_0}$ that clearly has the desired interpolation property.
\end{proof}

\begin{cor}\label{chenargaff}
	If $X$ is affinoid and $X_0$ is an affinoid subdomain of $X$, nested in $X$ and such that $S\cap X_0(\C_p)$ is dense in $X_0$, then there exists a continuous pseudorepresentation $t_{X_0}\colon G_\Q\to\cO_X^\circ(X_0)$ such that $\ev_x\ccirc t_{X_0}=t_x$ for every $x\in S\cap X_0(\C_p)$. 
\end{cor}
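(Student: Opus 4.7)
The plan is to mimic the proof of Proposition \ref{chenarg}, but replace the role of $\cO_X^\circ(X)$ (which need not be compact when $X$ is merely affinoid) by a compact subring of $\cO_{X_0}^\circ(X_0)$ built from the nested structure. First, invoking Lemma \ref{nestedlemma}, the hypothesis that $X_0$ is nested in $X$ means precisely that the restriction map $\res\colon\cO_X(X)\to\cO_{X_0}(X_0)$ is compact, so the image $\res(\cO_X^\circ(X))$ is a relatively compact subset of $\cO_{X_0}(X_0)$. Let $A$ denote the closure of $\res(\cO_X^\circ(X))$ in $\cO_{X_0}(X_0)$. Since the unit ball $\cO_{X_0}^\circ(X_0)$ is closed in $\cO_{X_0}(X_0)$ and contains $\res(\cO_X^\circ(X))$, we have $A\subseteq\cO_{X_0}^\circ(X_0)$, and $A$ is a compact topological subring of $\cO_{X_0}^\circ(X_0)$.

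Next, I would consider the continuous ring homomorphism
\[ \Pi\colon A\to\prod_{x\in S\cap X_0(\C_p)}\C_p,\qquad f\mapsto(f(x))_{x\in S\cap X_0(\C_p)}. \]
The map $\Pi$ is injective: indeed, if $f\in A$ vanishes at every point of $S\cap X_0(\C_p)$, then since $S\cap X_0(\C_p)$ is dense in $X_0$ by hypothesis, $f$ is the zero element of $\cO_{X_0}(X_0)$, hence also of $A$. Moreover, because $A$ is compact and the target is Hausdorff, the image of $\Pi$ is closed. For each prime $\ell\nmid Np$, the restriction $F_\ell\vert_{X_0}$ lies in $\res(\cO_X^\circ(X))\subseteq A$, and $\Pi(F_\ell\vert_{X_0})=(t_x(\Frob_\ell))_{x\in S\cap X_0(\C_p)}$ by construction.

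Now assemble the pseudorepresentation $\prod_{x\in S\cap X_0(\C_p)}t_x\colon G_\Q\to\prod_{x\in S\cap X_0(\C_p)}\C_p$. By Chebotarev's theorem, the Frobenius elements $\Frob_\ell$ for $\ell\nmid Np$ (together with the ramified primes, handled by continuity) are dense in $G_\Q$; on these elements the map sends $\Frob_\ell$ into $\Pi(A)$ via $F_\ell\vert_{X_0}$. Since $\Pi(A)$ is closed and $\prod_{x\in S\cap X_0(\C_p)}t_x$ is continuous, the image of all of $G_\Q$ lies in $\Pi(A)$. Composing with $\Pi^{-1}$, which is continuous on $\Pi(A)$ by compactness, produces the desired continuous pseudorepresentation $t_{X_0}\colon G_\Q\to A\subseteq\cO_{X_0}^\circ(X_0)$, satisfying $\ev_x\ccirc t_{X_0}=t_x$ for every $x\in S\cap X_0(\C_p)$ by construction.

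The only step that requires genuine care is the construction of the compact subring $A$: this is the exact place where the nested hypothesis is used, replacing the wide-open assumption of Proposition \ref{chenarg}. Everything else is a routine transcription of Chenevier's original argument, using Zariski density of $S\cap X_0(\C_p)$ in $X_0$ in place of density in the ambient wide-open space.
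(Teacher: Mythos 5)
Your proof is correct, and it takes a genuinely different route from the paper's. The paper simply reduces to Proposition \ref{chenarg}: it invokes Definition \ref{nested} to produce a wide open $X_1$ with $X_0\subset X_1\subset X$, applies Proposition \ref{chenarg} directly to $X_1$ to obtain $t_{X_1}\colon G_\Q\to\cO^\circ_{X_1}(X_1)$, and composes with the restriction $\cO_{X_1}(X_1)\to\cO_{X_0}(X_0)$. You instead re-run Chenevier's compactness argument from scratch, replacing $\cO^\circ_X(X)$ (not compact since $X$ is affinoid) by the compact ring $A=\ovl{\res(\cO^\circ_X(X))}\subset\cO^\circ_{X_0}(X_0)$, whose compactness comes from Lemma \ref{nestedlemma}. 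Both arguments are sound, but the trade-offs differ slightly: the paper's version is shorter since it is a pure reduction, while yours is more self-contained and avoids having to check that $S$ is Zariski-dense in the intermediate $X_1$ (the paper implicitly relies on $X$ being irreducible so that density of $S\cap X_0$ in the affinoid $X_0$ propagates to $X_1$); your argument only ever uses density of $S\cap X_0$ in $X_0$ itself, namely for the injectivity of $\Pi$. One small point worth making explicit in your write-up: $A$ is indeed a subring because $\cO_{X_0}(X_0)$ is a Banach algebra with continuous multiplication, so the closure of the subring $\res(\cO^\circ_X(X))$ is again a subring; this is used when you conclude that $\Pi^{-1}\ccirc\prod_x t_x$ is a pseudorepresentation and not merely a continuous map.
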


\begin{proof}
	It is enough to apply Proposition \ref{chenarg} to any wide open subspace $X_1\subset X$ such that $X_0\subset X_1$, and compose the resulting pseudorepresentation with the restriction map $\cO_X(X_1)\to\cO_X(X_0)$.
\end{proof}
%\begin{rem}
%Let \alpha\colon\calH^{Np}\to DEFINE pseudorep starting with hecke eigensyst????
%\end{rem}

%The following definition and lemma are inspired by \cite[Definition 7.2.10]{bellchen}. 

%\andr{MAYBE need notion of distance of pseudoreps??? TO USE for convergence???}

%In some cases, it makes sense to restrict the support of an affinoid family, as in the following definition.
%
%\begin{defin}
%	Let $\cF=(U,\alpha^{Np},\varphi_1,S)$ be an affinoid family of eigenforms and $U_0$ an integral affinoid subdomain of $U$ such that $S\cap U_0(\C_p)$ is dense in $U_0$. Let $\iota^\ast\colon\cO_{U}(U)\to\cO_{U_0}(U_0)$ be the restriction of analytic functions from $U$ to $U_0$. We define the \emph{restriction of $\cF$ to $U_0$} as the family of eigenforms $(U_0,\iota^\ast\ccirc\alpha^{N},\iota^\ast(\varphi_1),S\cap U_0(\C_p))$.
%\end{defin}

Finally, we construct a pseudorepresentation along a suitable restriction of a weak affinoid family.

\begin{prop}\label{galfam}\mbox{ }
	Let $\cF=(U,\alpha,\varphi_1,S)$ be a weak affinoid family of eigenforms, and let $U_0$ be a nested affinoid subdomain of $U_1$ such that $S\cap U_0(\C_p)$ is dense in $U_0$. Then there exists a continuous, 2-dimensional pseudorepresentation $t_{U_0}\colon G_{\Q,Np}\to\cO_{U_0}^\circ(U_0)$ such that, for every $x\in S\cap U_0$, $\ev_x\ccirc t_{U_0}$ is the pseudorepresentation attached to $f_x$. %In particular, $\cF\vert_{U_0}$ is an affinoid family of eigenforms.
\end{prop}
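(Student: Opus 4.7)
The plan is to apply Corollary \ref{chenargaff} with $(X,X_0)=(U,U_0)$, following Chenevier's interpolation argument. For each $x\in S\cap U_0$, the eigenform $f_x$ carries an associated Galois pseudorepresentation $t_x\colon G_{\Q,Np}\to\C_p$, namely the trace of $\rho_{f_x}$; it is unramified outside $Np$ since $f_x$ has level $Np^{r_x}$, and satisfies $t_x(\Frob_\ell)=a_\ell(f_x)=\Theta(T_\ell)(x)$ for every prime $\ell\nmid Np$ and every lift $\Frob_\ell\in G_{\Q,Np}$. The nested hypothesis provides a wide open $X_1$ with $U_0\subset X_1\subset U$, and since $S\cap U_0$ is infinite (being dense in the positive-dimensional $U_0$), it is also Zariski-dense in $X_1$, either by Lemma \ref{dense} in the one-dimensional case or via admissible-cover arguments more generally.

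The hypotheses of Proposition \ref{chenarg} applied to $X_1$ then reduce to the existence, for each $\ell\nmid Np$, of an interpolation function $F_\ell\in\cO_{X_1}^\circ(X_1)$ with $F_\ell(x)=\Theta(T_\ell)(x)$ for every $x\in S\cap U_0$. The natural candidate is $F_\ell=\Theta(T_\ell)|_{X_1}$, and the delicate point is power-boundedness: although $\Theta(T_\ell)(x)=a_\ell(f_x)$ is an algebraic integer for every $x\in S\cap U_0$, Zariski-density of $S\cap U_0$ is not enough on its own to force $|\Theta(T_\ell)|_{X_1,\sup}\le 1$. The argument must genuinely use the nested condition through Lemma \ref{nestedlemma}, which makes the restriction $\cO_U(U)\to\cO_{U_0}(U_0)$ compact; this strict decrease of sup-norms under restriction lets one shrink $X_1$ slightly toward $U_0$ inside $U$ and impose $|\Theta(T_\ell)|_{X_1,\sup}\le 1$ for all $\ell$ simultaneously, exploiting integrality of the values at $S\cap U_0$ to eliminate the troublesome directions one by one.

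Once the hypotheses are verified, Proposition \ref{chenarg} yields a continuous pseudorepresentation $t_{X_1}\colon G_{\Q,Np}\to\cO_{X_1}^\circ(X_1)$ whose specialization at each $x\in S\cap U_0$ is $t_x$. Composing with the restriction $\cO_{X_1}^\circ(X_1)\to\cO_{U_0}^\circ(U_0)$ induced by the inclusion $U_0\subset X_1$ produces the desired $t_{U_0}$, and the interpolation property at points of $S\cap U_0$ follows by construction. The two-dimensionality of $t_{U_0}$ is inherited from the two-dimensionality of each $t_x$, using the fact that the characteristic-function identities defining $2$-dimensional pseudorepresentations in Definition \ref{defps} can be checked on the Zariski-dense set $S\cap U_0$ and hence hold on $U_0$. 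The main obstacle in the plan is the uniform power-boundedness step for the $F_\ell$: handling all primes $\ell$ at once, rather than one at a time, is precisely what the compactness built into the nested condition is designed to make possible.
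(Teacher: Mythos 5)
Your outline matches the paper's proof: apply Proposition \ref{chenarg} to a wide open $V$ squeezed between $U_0$ and $U$ (which exists since $U_0$ is nested in $U$), then restrict to $U_0$; this is precisely what Corollary \ref{chenargaff} packages. You are also right to flag the power-boundedness hypothesis of Proposition \ref{chenarg} as a nontrivial point: Definition \ref{afffamdef} only asks $\Theta$ to land in $\cO_U(U)$, and integrality of $\Theta(T_\ell)$ on a Zariski-dense set of points does not force power-boundedness. For instance, on the closed unit disc $\Spm\Q_p\langle T\rangle$ the function $(T^p-T)/p$ takes values in $\Z_p$ at every $\Z_p$-point (a Zariski-dense set, by Lemma \ref{dense}), yet its sup-norm is $p$.

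However, the repair you sketch does not go through. Shrinking $X_1$ cannot manufacture $|\Theta(T_\ell)|_{X_1,\sup}\le1$: first, one would need $U_0\subset\{|\Theta(T_\ell)|\le1\}$, and that is exactly what density of $S\cap U_0$ in $U_0$ does not give you — the counterexample above lives entirely inside a fixed affinoid; and second, even granting each inclusion, the intersection of the Weierstrass subdomains $\{|\Theta(T_\ell)|\le1\}$ over the infinitely many primes $\ell$ need not contain a wide open neighborhood of $U_0$. The nested condition is used through Lemma \ref{bccomp} to make $\cO_V^\circ(V)$ compact (so the Chebotarev/closed-image step of Proposition \ref{chenarg} applies), but it has no bearing on whether the individual $F_\ell$ are power-bounded. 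The clean resolution is to require $\Theta$ to take values in $\cO_U^\circ(U)$ in the definition of a (weak) affinoid family; this is automatic in every application (Propositions \ref{famlim} and \ref{extract}, where $\cF$ is cut out of an eigenvariety), and with it the argument becomes an immediate invocation of Corollary \ref{chenargaff}.
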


%\andr{maybe define subfamily of aff family? dense subfamily??}

%\andr{does hecke eigensystem alpha land in OU or OUcirc??}

\begin{proof}
	%We prove part (i.a). %We start by attaching a pseudorepresentation to the limit Hecke eigensystem \alpha_\infty. 
	%The pseudorepresentations $t_i$ are attached to the Hecke eigensystems $\theta_i$, which are eventually all pairwise congruent modulo $p$. Therefore, up to removing a finite number of the $f_i$, we can assume that the residual pseudorepresentations $\ovl t_i$ are all isomorphic to a single $\ovl t$. For $i\in\N$, let $t_i\colon G_{\Q,Np}\to\Qp$ be the pseudorepresentation associated with $f_i$, and let $x_i$ be the corresponding point of the pseudodeformation space $\fR_{\ovl t}$. By Remark \ref{weakps}, the sequence (t_i)_i converges to a pseudorepresentation t_\infty. By \ref{something in converge section}, the sequence (x_i)_i converges to the point x_\infty of \fR_{\ovl t} attached to t_\infty. 
	%
	%For part (i.b) consider an open affinoid neighborhood $U$ of x_\infty in \fR_{\ovl t}. Almost all of the points x_i belong to U, so that their Zariski closure is and the (analytic Zariski) closure of the set $\{x_i\}_{i\ge 2}$ in $U_0$; it is or contains?? an affinoid?? subdomain $U$ of $U_0$. Then $U$, together with the set $S=\{x_i\}_{i\ge 2}$ and the pseudorepresentation $G_{\Q,Np}\to\cO_U(U)$ provided to us by the universal property of $\fR_{\ovl t}$, satisfies the conditions of (i.b).
	%
	%Part (ii) follows by applying Proposition \ref{chenarg} to the wide open space U and the set S\cap U(\C_p).
	Since $S\cap U_0(\C_p)$ is dense in $U_0$, $S\cap U(\C_p)$ is dense in the wide open space $U$, so we are in a position to apply Proposition \ref{chenarg}, which provides us with a pseudorepresentation $t_U\colon G_\Q\to\cO_U^\circ(U)$. The restriction of $t_U$ to a pseudorepresentation $t_{U_0}\colon G_{\Q,Np}\to\cO_{U_0}^\circ(U_0)$ satisfies the required property.
\end{proof}

\begin{rem}
	Keep the notations of Proposition \ref{galfam}.
	\begin{enumerate}[label=(\roman*)]
		\item In general, the pseudorepresentation $t_{U_0}$ does not make $\cF\vert_{U_0}$ into an affinoid family of eigenforms: one still has to check condition \ref{aff2} of Definition \ref{afffamdef}. 
		\item The proof of Proposition \ref{galfam} fails if we work with the (irreducible) affinoid $U_0=U$, since then we are not able to find a wide open space $U_1$ lying between $U_0$ and $U$, and the ring of regular functions of norm bounded by 1 over $U$ is a Tate $\Z_p$-algebra, therefore not compact unless $U$ is a point.
		\item If $U$ is 1-dimensional and $x_0$ is an accumulation point of $S$ in $U$, then one can choose as $U_0$ an arbitrary affinoid neighborhood of $x_0$ strictly contained in $U$. This is the situation we will work with in most applications.
	\end{enumerate}
\end{rem}

%\andr{If can clarify eigensurface business, does this answer ColemanSteinMazur?????}

%Recall that the CM families of infinite slope were classified in Proposition \ref{CMinf}. 
%In the setting introduced above, the following is our main result:

\subsection{Specializations of affinoid families}

Let $\cF=(U,\Theta,\Phi,S)$ be a weak affinoid family of eigenforms. 

\begin{defin}
	If $I$ is any set, and $(f_i,\varphi_i)_{i\in I}$ a collection of refined eigenforms, then we say that $(f_i,\varphi_i)_{i\in I}$ is a \emph{specialization} of $\cF$ if there exists a subset $\{x_i\}_{i\in I}$ of $U(\Qp)$ such that, for every $i$, $\ev_{x_i}\ccirc\Theta$ is the Hecke eigensystem away from $Np$ attached to $f_i$, and $\Phi(x_i)=\varphi_i$.
	
	If $I$ is a singleton, we also refer to either $(f_i,\varphi_i)$ or $x_i$ as a \emph{classical specialization} of $\cF$. If $I$ is an infinite subset of $\N$, we say that $(f_i,\varphi_i)_{i\in I}$ is \emph{eventually} a specialization of $\cF$ if there exists $i_0\in\N$ such that $(f_i,\varphi_i)_{i\in I\cap\N_{\ge i_0}}$ is a specialization of $\cF$.
	
	We say that $\cF$ is \emph{of finite slope} if, for every classical specialization $x$ of $\cF$, $\ev_x\ccirc\Theta$ is the Hecke eigensystem outside of $Np$ of a classical eigenform of finite slope.
\end{defin}

For later use, we prove a lemma. Let $\cF=(U,\Theta,\Phi,S)$ be an affinoid family of eigenforms. For every classical specialization $x\in U(\Qp)$ of $\cF$, let $p^{n_x}$ be the conductor of the $p$-part of the nebentypus of $f_x$.

\begin{lemma}\label{nebenbound}
	The set $\{n_x\}_x$ is bounded as $x$ runs among the classical specializations of $\cF$.
\end{lemma}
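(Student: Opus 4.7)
The plan is to package the determinant of the global pseudorepresentation carried by $\cF$ into a rigid analytic map $\omega_\cF\colon U\to\cW_\Q$, and then extract the bound from a combination of the compactness of $U$ with Remark \ref{condbound}.

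First I would recall that, by condition \ref{aff1} of Definition \ref{afffamdef}, $\cF$ carries a continuous pseudorepresentation $t_U\colon G_{\Q,Np}\to\cO_U^\circ(U)$. Its determinant $d_{t_U}\colon G_{\Q,Np}\to\cO_U^\circ(U)^\times$ is a continuous character, whose restriction to $G_{\Q_p}$ and further to $I_{\Q_p}$ factors through $I_{\Q_p}^\ab\cong\Z_p^\times$ via local reciprocity. By the universal property of the weight space $\cW_\Q$, the resulting continuous character $\Z_p^\times\to\cO_U^\circ(U)^\times$ amounts to the same datum as a morphism of rigid analytic spaces $\omega_\cF\colon U\to\cW_\Q$.

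Next I would identify $\omega_\cF$ at classical specializations. If $x\in U(\Qp)$ is classical, and $f_x$ has weight $k_x$ and nebentypus $\vareps_x=\vareps_x^p\vareps_{x,p}$ (decomposed into its prime-to-$p$ and $p$-parts), then the Galois representation associated with $f_x$ has determinant $\chi_\cyc^{k_x-1}\vareps_x$. Since $\vareps_x^p$ is unramified at $p$, its restriction to $I_{\Q_p}$ is trivial, so $\omega_\cF(x)$ is the classical weight $\kappa_{k_x-1,\vareps_{x,p}}\in\cW_\Q(\Qp)$.

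Finally, because $U$ is affinoid, its image under $\omega_\cF$ has compact $p$-adic closure in $\cW_\Q$ and is therefore contained in an affinoid subdomain $V\subset\cW_\Q$. Applying Remark \ref{condbound} to $V$ yields a uniform bound on the conductors of the finite order characters $\chi$ appearing in a classical weight $\kappa_{k,\chi}\in V$; specializing to the classical points of $\cF$ gives a uniform bound on the conductor of $\vareps_{x,p}$, hence on $p^{n_x}$. There is no real obstacle: the only point that requires a moment of thought is that the determinant of the global pseudorepresentation really does encode the full $p$-part of the nebentypus through its restriction to inertia at $p$, which follows from the well-known formula for $\det\rho_{f_x,p}$.
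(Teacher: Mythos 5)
Your proof is correct and takes essentially the same route as the paper: the paper composes $\pi_\cF^\ps\colon U\to\fR_{\ovl t}$ with the map $\det\colon\fR_{\ovl t}\to\cW_\Q$ of Section \ref{detsec}, which is the same morphism you build directly from $d_{t_U}$ via local class field theory, and then both arguments observe the image is contained in a disc of radius $p^{-r}<1$ and extract the conductor bound from $v_p(\zeta_n-1)\ge r$ (you via Remark \ref{condbound}, the paper by doing the calculation inline).
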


\begin{proof}
	Consider the map $\det\colon\fR_{\ovl t}\to\cW_\Q$ introduced in Section \ref{detsec}. The image of $U$ under $\det\ccirc\pi_\cF^\ps$ is an affinoid subspace of a connected component of $\cW_\Q$. After identifying such a connected component with a wide open disc of center 0 and radius 1, $U$ is contained in a disc of center 0 and radius $p^{-r}$ for some $r\in\Q_{>0}$. Then a standard calculation shows that $n_x$ is bounded by the condition $v_p(\zeta_{n_x}-1)\ge r$, $\zeta_{n_x}$ an $n_x$-th root of 1.
\end{proof}

\subsubsection{Affinoid families of finite slope}

The classification of affinoid families of finite slope and their specializations comes down to the classical theory of Hida and Coleman families.

\begin{defin}\label{hidacolfam}
	An \emph{affinoid Hida (respectively, Coleman) family of eigenforms} is an affinoid family of eigenforms constructed from a Hida (respectively, Coleman) family as in Example \ref{hidacol}.
	
	A \emph{Hida (respectively, Coleman) family of eigenforms} is a family of eigenforms arising as a specialization of an affinoid Hida (respectively, Coleman) family. %via Proposition \ref{extract}(iii).
\end{defin}

\begin{prop}\label{fsequiv}\mbox{ }
	\begin{enumerate}
		\item Let $\cF=(U,\Theta,\Phi,S)$ be a 1-dimensional affinoid family of eigenforms. The following are equivalent:
		\begin{enumerate}
			\item $\cF$ is of finite slope,
			\item there exists a dense subset $S^\prime$ of $U$ such that $\ev_x\ccirc\Theta$ is attached to an eigenform of finite slope for every $x\in S^\prime$,
			\item $\cF$ is an affinoid Hida or Coleman family.
		\end{enumerate}
		\item Let $(f_i,\varphi_i)_i$ be a family of eigenforms. Then $(f_i)_i$ is of finite slope if and only if it is eventually a union of specializations of Hida or Coleman families.
	\end{enumerate}
\end{prop}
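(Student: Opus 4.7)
I plan to prove part (1) via the cycle (a)$\Rightarrow$(b), (c)$\Rightarrow$(a), and (b)$\Rightarrow$(c). The first direction is immediate by taking $S'$ to be the set of all classical specializations of $\cF$. For (c)$\Rightarrow$(a), by Example \ref{hidacol} the function $\Phi$ on an affinoid Hida or Coleman family is the image of $U_p$ under the Hecke eigensystem of the eigencurve, which is non-zero at every classical point of $\cE_\Q$; hence every classical specialization of $\cF$ has finite slope. The heart of the matter is (b)$\Rightarrow$(c). By Example \ref{eigenpar}(i), at every $x \in S'$ we have $\Phi(x) = a_p(f_x)$, so the morphism $\pi_\cF \coloneqq (\pi^\ps_\cF, \Phi) \colon U \to \fR_{\ovl t_U} \times_L \G_m$ sends $S'$ into the set $\wtl\cS^\cl$ of Proposition \ref{EFclosure}, whose analytic Zariski closure is $\cE_{\Q,\ovl t_U}$. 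Since $S'$ is analytic-Zariski dense in the integral affinoid $U$ and $\cE_{\Q,\ovl t_U}$ is Zariski closed in $\fR_{\ovl t_U} \times_L \G_m$, the morphism $\pi_\cF$ factors through $\cE_{\Q,\ovl t_U}$. A graph argument combined with condition \ref{aff2} shows that the induced morphism $U \to \cE_{\Q,\ovl t_U}$ is finite, while generic injectivity on the dense set $S'$ (distinct classical refined eigenforms correspond to distinct points of the eigencurve) identifies $U$, up to a finite birational modification, with an affinoid subdomain of $\cE_\Q$. Slope is locally constant on $\cE_\Q$ and $U$ is integral, so the slope of $\cF$ is constant on $U$; this identifies $\cF$ with an affinoid Hida family (slope $0$) or Coleman family (slope $>0$) in the sense of Definition \ref{hidacolfam}.

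For part (2), the ``if'' direction is immediate from part (1), since by construction every classical specialization of an affinoid Hida or Coleman family is of finite slope. For the converse, let $(f_i, \varphi_i)_i$ be a finite-slope family; note that the $f_i$ share a common residual pseudorepresentation $\ovl t$ for large $i$, by convergence. By Remark \ref{weakps} the pseudorepresentations $t_i$ converge to a limit $t_\infty$ in the distance of Definition \ref{psdist}, and by hypothesis $\varphi_i \to \varphi_\infty \in \C_p^\times$. A convergent sequence of pseudorepresentations is uniformly bounded on $G_{\Q,Np}$, so the $t_i$ correspond to points of a common affinoid subspace of $\fR_{\ovl t}$; Lemma \ref{xiconv} then gives $p$-adic convergence of $x_i \coloneqq (t_i, \varphi_i) \in \fR_{\ovl t} \times_L \G_m$ to $x_\infty = (t_\infty, \varphi_\infty)$. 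Each $x_i$ lies in $\cE_{\Q,\ovl t}$, which is Zariski closed in $\fR_{\ovl t} \times_L \G_m$ by Proposition \ref{EFclosure}; hence $x_\infty \in \cE_{\Q,\ovl t}$. An irreducible affinoid neighborhood $U_\infty$ of $x_\infty$ in $\cE_\Q$ supports an affinoid Hida or Coleman family by Example \ref{hidacol}, and the $p$-adic convergence $x_i \to x_\infty$ forces $x_i \in U_\infty$ for $i$ large, so $(f_i, \varphi_i)$ is eventually a specialization of this family. If $x_\infty$ lies on several irreducible components of $\cE_\Q$ meeting at $x_\infty$, the tail sequence splits into subsequences along these components, each a specialization of a distinct Hida or Coleman family, producing the advertised union.

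The main technical obstacle is the step in (b)$\Rightarrow$(c) of passing from ``$U \to \cE_{\Q,\ovl t_U}$ is finite and generically injective'' to an actual identification of $\cF$ with an affinoid Hida or Coleman family in the strict sense of Definition \ref{hidacolfam}. The morphism a priori realizes $U$ as the normalization of its image in $\cE_\Q$, and coincides with a genuine isomorphism only on the normal locus of $\cE_\Q$; one can either shrink $U$ so that the image avoids the non-normal locus, or interpret equivalence of families slightly loosely to accommodate finite birational modification. All remaining steps are formal manipulations with the tools collected in Sections \ref{sectop}--\ref{sechilb}.
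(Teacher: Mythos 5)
Your cycle of implications matches the paper's exactly (the paper proves (a)$\Rightarrow$(b), (c)$\Rightarrow$(a) as ``trivial'' and concentrates on (b)$\Rightarrow$(c)), and the overall strategy is the same. The difference is in how much is cited versus spelled out: the paper disposes of (b)$\Rightarrow$(c) by a one-line appeal to Proposition~\ref{extract} (and observes, from the proof of that proposition, that the auxiliary real quadratic field may be taken to be $\Q$ when all classical specializations are already trianguline over $\Q_p$), while you redo the argument from Proposition~\ref{EFclosure}; similarly, for part~(2) the paper invokes Proposition~\ref{famlim} and then applies part~(1) to each resulting affinoid piece, while you argue directly along the lines of Example~\ref{exEQ}. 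None of this is a different route, just an unpacked one.

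Two remarks on the places where your unpacking shows something. First, the step in (b)$\Rightarrow$(c) that you flag at the end is the genuine crux: condition~\ref{aff2} only gives that $\pi^\ps_\cF$ is \emph{finite onto its image}, so the graph argument only yields that $U\to\cE_{\Q,\ovl t_U}$ is finite onto its image, not that it is an isomorphism onto an affinoid subdomain, which is what Definition~\ref{hidacolfam} asks for. The paper's Proposition~\ref{extract} asserts that $\pi_U^\ps\times\Phi$ is a ``closed embedding'' with affinoid image without spelling out why this follows from~\ref{aff2}; so you are not overlooking something the paper supplies — you are confronting the same gap, and your proposed workarounds (shrink $U$, or weaken the notion of equivalence) are not fully satisfactory (shrinking $U$ does not address normality, and ``up to finite birational modification'' is not what the definition says). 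You should either track down the additional input needed to promote ``finite onto its image + injective on a dense set of classical points'' to ``isomorphism onto an affinoid subdomain'', or explicitly strengthen~\ref{aff2} to require a closed immersion, which is what the paper effectively uses. Second, in part~(2) your framing ``pick an irreducible affinoid neighborhood $U_\infty$ of $x_\infty$'' is not available when $x_\infty$ sits on several components (no such neighborhood exists); you note this and split into subsequences after the fact, but the paper's route through Proposition~\ref{famlim} — which produces the finite partition of the index set \emph{first}, component by component, and only then applies part~(1) — handles this more cleanly and is the better order of operations.
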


Statement (ii) is false if $(f_i,\varphi_i)_i$ is only a weak family of eigenforms. See Example \ref{excolste}.

\begin{proof}
	The implications (i)$\implies$(ii) and (iii)$\implies$(i) are trivial. We prove that (ii)$\implies$(iii). By Proposition \ref{extract}, there exists an extension $F/\Q$, either trivial or real quadratic, such that the base change of $\cF$ to $F$ is supported by an affinoid subdomain of the parallel weight eigenvariety $\cE_F$. The choice of the field in the proof of Proposition \ref{extract} shows that we can take $F=\Q$, so that $\cF$ is the family carried by an affinoid domain of the $\GL_{2/\Q}$-eigencurve, i.e. an affinoid Hida or Coleman family.
	
	The ``if'' of (ii) is obvious. We prove the ``only if''. By Proposition \ref{famlim}, the family of eigenforms $(f_i,\varphi_i)_i$ is eventually a union of specializations of affinoid families $\cF_1,\ldots,\cF_m$. The set $S_j^\prime$ of specializations of $\cF_j$ corresponding to forms in the family $(f_i,\varphi_i)_i$ satisfies (i.c), so that each $\cF_j$ is a Hida or Coleman family by part (i).
\end{proof}

\subsubsection{Families arising as specializations}

We show that every family of eigenforms can be partitioned into subfamilies that arise as specializations of affinoid families. This will allow us to rely on deformation arguments when studying $p$-supercuspidal families in Section \ref{secpsc}.

\begin{prop}\label{famlim}
Let $(f_i,\varphi_{i})_i$ be a family of eigenforms. 
Then there exists a finite partition $\N=\coprod_{j=0}^mI_i$ such that $I_0$ is finite and, for every $j\in\{1,\ldots,m\}$, $(f_i,\varphi_{i})_{i\in I_j}$ is the specialization of a 1-dimensional affinoid family of eigenforms $\cF_j=(U_j,\Theta_j,\Phi_j,S_j)$ at some subset of $S_j$. 

Moreover, for every $j\in\{1,\ldots,m\}$:
\begin{enumerate}[label=(\roman*)]
\item There exists a real quadratic field $F$ in which $p$ does not split, a rigid analytic map $\iota\colon U\to\cE_F$ mapping $S$ to a set of classical points of $\iota(U)$ and satisfying 
\begin{align*} \Theta&=(\iota^\ast\ccirc\Theta_{\cE_F})\otimes\eta, \\ 
	\varphi_1&=\iota^\ast(\Theta^\full_{\cE_F}(U_p)), \end{align*}
where $\iota^\ast\colon\cO_{\cE_F}(\cE_F)\to\cO_U(U)$ is the homomorphism induced by $\iota$, and $\eta$ is a Dirichlet character of $F$ of finite order and conductor dividing $p^\infty$.
\item The specializations $f_i$, $i\in I_j$, converge to a (not necessarily classical) specialization $x_{j,\infty}$ of $\cF_j$, and the pseudorepresentation $t_\infty\colon G_{\Q,Np}\to\C_p$ attached to $x_{j,\infty}$ corresponds to the Hecke eigensystem $\theta_\infty$ (via Definition \ref{hgcorr}). In particular, it is independent of $j$. 
\end{enumerate}
\end{prop}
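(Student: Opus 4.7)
The overall plan is to reduce the classification of families to an analysis on a Hilbert eigenvariety over a real quadratic field $F$ chosen so that every $\rho_{f_i,p}|_{G_{F_\fp}}$ becomes trianguline. The resulting base-changed forms then sit on $\cE_F$ as a sequence of classical points converging to a limit, and the affinoid family $\cF_j$ is extracted from a small neighborhood of that limit.

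First I would set up the partition. By Lemma \ref{hglim}, the homomorphisms $\theta_i$ converge to $\theta_\infty$ if and only if the attached pseudorepresentations $t_i$ converge to $t_\infty$, so eventually $t_i$ has the same residual pseudorepresentation $\ovl t$ as $t_\infty$; throw the finitely many exceptions into $I_0$. For each remaining $i$, Corollary \ref{triquad} produces a quadratic extension $E_i/\Q_p$ (possibly $\Q_p$ itself) over which $\rho_{f_i,p}|_{G_{\Q_p}}$ becomes trianguline. Since $\Q_p$ admits only finitely many quadratic extensions, partitioning $\N\setminus I_0$ by the value of $E_i$ yields finitely many subsets $I_1,\dots,I_m$.

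Next, for each $j$, pick a real quadratic field $F_j$ in which $p$ does not split, with the unique $p$-adic completion $F_{j,\fp_j}$ equal to the relevant $E_i$ (this is possible by weak approximation; when $E_i=\Q_p$ any such real quadratic field works). Base change via Lemma \ref{bcfam} to obtain a family $(f_{i,F_j},\varphi_{i,F_j})_{i\in I_j}$ of Hilbert eigenforms. By construction $\rho_{f_{i,F_j},p}|_{G_{F_{j,\fp_j}}}$ is trianguline, so Theorem \ref{finslopetri} writes each $f_{i,F_j}$ as a twist $\eta_i\cdot f_{i,F_j}^0$ of a finite slope form by a Hecke character of $\fp_j$-power conductor. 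Applying Lemma \ref{untwistfam} to the base-changed family, after possibly shrinking $I_j$ the character $\eta_i$ is constant equal to some $\eta$, and $(f_{i,F_j}^0,\varphi_{i,F_j})_{i\in I_j}$ is a family of finite slope $\GL_{2/F_j}$-eigenforms.

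By Example \ref{exEF} this finite slope family corresponds to a sequence of classical points $y_i\in\cE_{F_j}(\Qp)$ converging to a point $y_\infty\in\cE_{F_j}$. Take an integral affinoid neighborhood $V_j$ of $y_\infty$ in $\cE_{F_j}$, which is one-dimensional by Theorem \ref{hilbEpar}. Set $U_j=V_j$, $\iota\colon U_j\hookrightarrow\cE_{F_j}$; define $\Phi_j$ from $\iota^\ast\Theta^\full_{\cE_{F_j}}(U_{\fp_j})$ by taking the appropriate square root when $p$ is inert in $F_j$, which is nonvanishing on $V_j$ since $V_j$ lies in the finite slope locus. To define $\Theta_j\colon\calH_\Q^{Np}\to\cO_{U_j}(U_j)$, use Chenevier's interpolation argument (Corollary \ref{chenargaff}, on a nested wide open around $y_\infty$) applied to the classical specializations $y_i$ equipped with their $\Q$-eigensystems $\theta_i$; this yields a pseudorepresentation $t_{U_j}\colon G_{\Q,Np}\to\cO_{U_j}^\circ(U_j)$, whose trace on Frobenii defines $\Theta_j$ via Definition \ref{hgcorr}. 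Let $S_j$ be the image of $I_j$ in $U_j$ together with its accumulation. The identities in (i) are built into this construction once we untwist by $\eta$, and (ii) follows from Lemma \ref{xiconv} applied to the sequence $(t_i)_i$, which converges in $\fR_{\ovl t}$ to the point corresponding to $t_\infty$.

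The main obstacle will be verifying condition \ref{aff2}, namely that $\pi^\ps_{\cF_j}\colon U_j\to\fR_{\ovl t_{U_j}}$ is finite onto its image. This cannot be deduced from Hilbert-side finiteness directly, since the pseudorepresentation on $\cE_{F_j}$ is of $G_F$, not $G_\Q$; one needs to argue that on $U_j$ classical points of bounded tame conductor at $p$ (using Lemma \ref{nebenbound} and Remark \ref{condbound}) are Zariski dense, and combine this with properness of the weight map on $\cE_{F_j}$ to show that $\pi^\ps_{\cF_j}$ has finite fibers, after possibly further shrinking $U_j$. A secondary issue is ensuring that $U_j$ can be chosen integral and that the Chenevier interpolation lands in $\cO_{U_j}^\circ(U_j)$ and not merely a larger ring, which requires the nesting technique of Lemma \ref{nestedlemma}.
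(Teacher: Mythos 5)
Your overall strategy matches the paper's: partition by the quadratic extension of $\Q_p$ over which each local representation becomes trianguline, base change to a real quadratic field $F$ with the right local completion, untwist via Lemma \ref{untwistfam}, land on the Hilbert eigenvariety $\cE_F$, take an affinoid neighborhood of the limit point, and use Chenevier interpolation (Corollary \ref{chenargaff}) to produce the $G_\Q$-pseudorepresentation and hence $\Theta_j$.

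There is, however, a genuine gap at the step where you ``take an integral affinoid neighborhood $V_j$ of $y_\infty$ in $\cE_{F_j}$''. If $y_\infty$ lies on more than one irreducible component of $\cE_{F_j}$, then no affinoid neighborhood of $y_\infty$ is integral, and the converging points $y_i$ may be spread across several components. Your index sets $I_1,\ldots,I_m$ are cut out only by the local quadratic extension $E_i$, which does not separate these components, so a single integral $V_j$ carrying a dense set of the $y_i$ need not exist. The paper's partition is a two-stage one: first by $E_i$, and then, after choosing an adapted affinoid $U$ around $x_{\infty,F}$ via Lemma \ref{adapted} (which in particular guarantees $U$ has finitely many irreducible components), by the components $X_1,\ldots,X_m$ of $U$ that contain infinitely many of the $x_{i,F}$; the affinoid $U_j$ is then chosen inside a single $X_j$. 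This component-wise refinement, not the partition by $E_i$, is what produces the decomposition $\N=\coprod_{j=0}^m I_j$ in the statement, so omitting it changes the shape of the result.

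Two smaller comments. Your worry about a square root of $\iota^\ast\Theta^\full_{\cE_F}(U_\fp)$ in the inert case is a reasonable reaction to Lemma \ref{bcfam} (where $\varphi_{i,F}=\varphi_i^2$), and the paper simply sets $\Phi=\iota^\ast\Theta^\full_{\cE_F}(U_\fp)$ without addressing it; but extracting a square root of a unit on an affinoid is not automatic and would itself need justification. On \ref{aff2}, the argument you sketch (bounded conductor plus properness of the weight map gives finite fibers) does not by itself yield finiteness of the morphism $\pi^\ps_{\cF_j}$; a cleaner route is to observe that the composite of $\pi^\ps_{\cF_j}$ with the restriction map to the $G_F$-pseudodeformation space (up to a fixed twist) factors through $\iota$, and that finiteness of a composite onto its image forces finiteness of the first map onto its image.
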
 

\begin{proof}
Recall that we denote by $t_i\colon G_\Q\to\Qp$ the pseudorepresentation attached to $f_i$. We write $\rho_i\colon G_\Q\to\GL_2(\Qp)$ for any representation of trace $t_i$. 

We prove part (i). By Lemma \ref{resttri}, for every $i$ there exists an inert or ramified quadratic extension $E_i$ of $\Q_p$ such that $t_i\vert_{G_{E_i}}$ is trianguline. 
Since $\Q_p$ only has a finite number of quadratic extensions, we can find a finite partition $\N=\coprod_{j=0}^kI^j$ such that $I^0$ is finite and, for every j, there exists a quadratic extension $E_j/\Q_p$ such that $t_i\vert_{G_{E_j}}$ is trianguline for every $i\in I^j$. Clearly, it is enough to prove the proposition in the case when $\N=I^j$ for some $j\ge 0$, which we assume from now on. Let $E=E_j$, and $F$ be a real quadratic field such that $F\otimes_\Q\Q_p=E$. %The prime p is either ramified or inert in F, and 
We denote by $\fp$ the only $p$-adic place of $F$. Write $t_{i,F}=t_i\vert_{G_F}$ for every $i\in\N\cup\{\infty\}$, and $\ovl t_F=\ovl t\vert_{G_F}$.

For every $i\in\N$, let $f_{i,F}$ be the Hilbert eigenform attached to the base change of $f_i$ to $F$. Let $\varphi_{i,F}=\varphi_i$ if $p$ ramifies in $F$, and $\varphi_{i,F}=\varphi_i^2$ if $p$ is inert in $F$, so that $(f_{i,F},\varphi_{i,F})_i$ is a family of $\GL_{2/F}$-eigenforms by Remark \ref{bcfam}.

By Theorem \ref{finslopetri}, $f_{i,F}$ is the twist of a representation attached to a Hilbert eigenform $f_{i,F}^0$ of finite slope with a Hecke character of $F$ of finite order and conductor dividing $p^\infty$. Recall that $f_{i,F}$ and $f_{i,F}^0$ share the same Frobenius eigenvalues. %\andr{IS THIS CORRECT?? because the $p$-power char is totally ramified---Need to stress this???}
By Lemma \ref{untwistfam}, up to removing a finite number of $f_{i,F}$, we can find a Dirichlet character $\delta$ of $F$ such that $f_{i,F}=\delta f_{i,F}^0$ for every $i$, and the sequence $(f_{i,F}^0,\varphi_i)_i$ is a family of $\GL_{2/F}$-eigenforms. 
By Remark \ref{hilbref}, the eigenform $f_{i,F}^0$ and the refinement of $t_{i,F}$ attached to the eigenvalue $\varphi_{1,i,F}$ define a point $x_{i,F}$ of $\cE_{F,\ovl t}$, and by Example \ref{exEF} this sequence of points converges to a limit point $x_{\infty,F}$ of $\cE_{F,\ovl t}$.

By Lemma \ref{adapted}, we can choose an affinoid neighborhood $U$ of $x_{\infty,F}$ such that the weight map $\omega_{\cE_{F,\ovl t}}\vert_U$ has image a disc in weight space, and is finite over such a disc. In particular, $U$ has a finite number of irreducible components. 
Since the sequence $(x_{i,F})_i$ converges to $x_{\infty,F}$, all except a finite number of the $x_{i,F}$ belong to the 1-dimensional affinoid $U$. 

Let $X_1,\ldots,X_m$ be the irreducible components that contain an infinite subsequence of $(x_{i,F})_{i\in\N}$, and pick any partition $\N=\coprod_{j=0}^mI_j$ such that $I_0$ is finite and, for every $j\in\{1,\ldots,m\}$ and $i\in I_j$, $x_{i,F}$ belongs to $X_j$. Note that $x_{\infty,F}$ belongs to all of the $X_j$; in particular, if $\cE_{F,\ovl t_F}$ is smooth at $x_{\infty,F}$, then $m=1$.

For each $j\in\{1,\ldots,m\}$, pick an irreducible affinoid neighborhood $U_{j}$ of $x_{\infty,F}$ in $X_j$. It comes equipped with the homomorphism $\Theta_{\cE_{F}}\colon\calH_F^{Np}\to\cO_{U_j}(U_j)$ and the analytic function $\varphi=\Theta^\full_{\cE_{F}}(U_\fp)\vert_{U_j}$. 
In order to construct an affinoid family of $\GL_{2/\Q}$-eigenforms, it is enough to extend $\Theta_{\cE_{F}}$ along the map $\BC_{F/\Q}\colon\calH_F^{Np}\to\calH^{Np}_\Q$ of Remark \ref{bcmap}. Since $U_j$ is a 1-dimensional affinoid and $\{x_{i,F}\}_{i\in I_j}$ is a subset of $U_j(\C_p)$ admitting an accumulation point in $U_j$, by Corollary \ref{chenargaff}, up to implicitly replacing $U_j$ with an affinoid neighborhood of $x_{\infty,F}$ nested in $U_j$, and removing a finite number of elements from $I_j$, the pseudorepresentations $t_{i}\colon G_\Q\to\Qp$ are interpolated by a pseudorepresentation $t_{U_j}\colon G_\Q\to\cO_{U_j}^\circ(U_j)$. The correspondence of Definition \ref{hgcorr} attaches to such a $t_{U_j}$ a homomorphism $\Theta_{j}\colon\calH_F^{Np}\to\cO_{U_j}^\circ(U_j)$.

Now $\cF_j=(U_j,\Theta_{j},\Phi,\{x_{i,F}\}_{i\in I_j})$ is an affinoid family of eigenforms that admits $(f_i,\varphi_{i})_{i\in I_j}$ as a specialization.

For part (ii), simply pick as $x_{j,\infty}$ the specialization of $\cF_j$ corresponding to $x_{\infty,F}$. 
\end{proof}

The above proof essentially shows that, possibly only after base change to a real quadratic field, every family of eigenforms is the specialization of an affinoid family cut out from a Hilbert eigenvariety. 

We state two corollaries of Proposition \ref{famlim}. %Recall that each form of a family comes equipped with 

\begin{cor}\label{weightinfty}
Let $(f_i,\varphi_{i})_i$ be a family of eigenforms. 
\begin{enumerate}[label=(\roman*)]
\item The sequence of weights $k_i$ diverges to $\infty$. %converges to a limit k_\infty\in\Z_p in the p-adic topology and diverges to \infty in the real topology of \N.
\item The family $(f_i,\varphi_{i})_i$ contains at most a finite number of numerically critical eigenforms.
\end{enumerate}
\end{cor}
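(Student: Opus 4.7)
My plan is to reduce both parts to the case of a single affinoid family via Proposition \ref{famlim}, and then exploit the geometry of the Hilbert eigenvariety together with the nonvanishing of the interpolating function $\Phi$. Concretely, after partitioning $\N$ into finitely many subsequences, I may assume $(f_i,\varphi_i)_i$ is the specialization at points $x_{i,F}\in U$ of a $1$-dimensional affinoid family $\cF=(U,\Theta,\Phi,S)$ cut out from the parallel weight Hilbert eigenvariety $\cE_F$ of some field $F$ (either $\Q$ or a real quadratic extension), with $x_{i,F}\to x_{\infty,F}$ in the $p$-adic topology of $U$ by Proposition \ref{famlim}(ii). Lemma \ref{adapted} lets me shrink $U$ around $x_{\infty,F}$ so that the weight map $\omega_{\cE_F}\vert_U\colon U\to\cW_F$ becomes finite onto an affinoid image $W\subset\cW_F$, and each fiber is finite.

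For (i), I argue by contradiction. If $(k_i)_i$ does not diverge to $\infty$, I extract an infinite subset $J\subset\N$ with $k_i\le M$ for some $M$. Classical weights in $W$ have the form $\kappa_{k,\chi}$ with $\chi$ of bounded conductor, by the parallel-weight analogue of Remark \ref{condbound}. Combined with $k\le M$, this leaves only finitely many candidate weights, each with finite preimage in $U$, forcing the set $\{x_{i,F}\}_{i\in J}$ to be finite. This contradicts the pairwise distinctness of the $f_i$: indeed, by strong multiplicity one, two distinct cuspidal $\GL_{2/\Q}$-eigenforms of tame level $N$ can share a base change to $F$ only if they are twists of each other by the quadratic character cutting out $F/\Q$, so the base-change map $f\mapsto f_F$ is at most $2$-to-$1$ (using that a weight-$\ge 2$ cuspidal eigenform cannot have CM by a real field, by the last statement of Lemma \ref{indCM}).

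For (ii), for the $2$-dimensional representation $\rho_{f_i,p}\vert_{G_{\Q_p}}$ with Hodge--Tate weights $(0,k_i-1)$, numerical non-criticality of the refinement whose first element is the admissible eigenvalue $\varphi_i$ reduces to the single inequality $v_p(\varphi_i)\le k_i-1$. Since $\Phi\in\cO_U(U)^\times$ is nowhere vanishing on the quasi-compact affinoid $U$, both $\Phi$ and $\Phi^{-1}$ have bounded sup norm, so $v_p(\varphi_i)=v_p(\Phi(x_{i,F}))$ takes values in a bounded subinterval of $\R$. Combined with $k_i\to\infty$ from part (i), the inequality is then automatic for all sufficiently large $i$.

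The main obstacle is in part (i), specifically the base-change step: one must verify that collapsing distinct $\GL_{2/\Q}$-eigenforms of tame level $N$ to a single Hilbert form is finite-to-one, which rests on strong multiplicity one combined with the rigidity of CM types. Once this is in hand, the rest of (i) is a compactness argument on weight space, and (ii) follows essentially for free from (i) and the nonvanishing of $\Phi$ on the quasi-compact affinoid $U$.
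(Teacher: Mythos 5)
Your argument is correct and follows essentially the same route as the paper's proof: reduce to an affinoid subdomain of $\cE_F$ via Proposition \ref{famlim}, use finiteness of the weight map (together with boundedness of nebentypus conductors on the affinoid image, which the paper leaves implicit) and the at-most-$2$-to-$1$ nature of base change to get (i), and then deduce (ii) from boundedness of $v_p(\varphi_i)$ together with $k_i\to\infty$. Your detour through the nonvanishing of $\Phi$ on $U$ for the boundedness step is fine, though the paper gets it even more directly from the defining condition that $\varphi_i\to\varphi_\infty\in\C_p^\times$.
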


\begin{proof}
We use the notation of the proof of Proposition \ref{famlim}. All except a finite number of the base changes $f_{i,F}$ correspond to points of the affinoid $U$. Since $U$ is finite over its image in weight space, it contains only a finite number of points of every given weight. Therefore, it contains only a finite number of points of integer weight smaller than every given real constant. Since the eigenforms $(f_i)_i$ are assumed to be pairwise distinct, and at most two eigenforms can have a common base change to $F$, part (i) follows.

For part (ii), it is enough to observe that the valuation of $\varphi_{i}$ is bounded along the family while the weights $k_i$ diverge to $\infty$ by part (i).
\end{proof}

\begin{cor}
Let $(f_i,\varphi_i)_i$ be a family of eigenforms of finite slope. Then $(f_i,\varphi_i)_i$ can be extracted from the eigencurve as in Example \ref{exEQ}.
\end{cor}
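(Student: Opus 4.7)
The plan is to combine Proposition~\ref{famlim}, Proposition~\ref{fsequiv}(i), and the embedding $\cE_\Q\hookrightarrow\fR_{\ovl t}\times_\Q\G_m$ of Proposition~\ref{EFclosure}. First I would apply Proposition~\ref{famlim} to partition $\N$ into a finite set $I_0$ together with subsets $I_1,\ldots,I_m$, each of which carries a $1$-dimensional affinoid family $\cF_j=(U_j,\Theta_j,\Phi_j,S_j)$ admitting $(f_i,\varphi_i)_{i\in I_j}$ as a specialization. Since the given family is of finite slope, so is each $\cF_j$, and Proposition~\ref{fsequiv}(i) then identifies $\cF_j$ with an affinoid Hida or Coleman family; in particular, $U_j$ can be viewed as an affinoid subdomain of the Coleman--Mazur eigencurve $\cE_\Q$, with $(f_i,\varphi_i)$ for $i\in I_j$ corresponding to a classical point $x_i\in U_j(\C_p)\subset\cE_\Q(\C_p)$.

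Next, invoking Proposition~\ref{famlim}(ii), for every $j\ge 1$ the sequence $(x_i)_{i\in I_j}$ converges inside $U_j$ to a specialization $x_{j,\infty}$ whose associated pseudorepresentation is the common limit $t_\infty$ (corresponding via Definition~\ref{hgcorr} to $\theta_\infty$), and continuity of $\Phi_j$ forces $\Phi_j(x_{j,\infty})=\varphi_\infty$. Hence every $x_{j,\infty}$ maps to the same pair $(t_\infty,\varphi_\infty)$ inside $\fR_{\ovl t}\times_\Q\G_m$. Because Proposition~\ref{EFclosure} realizes $\cE_\Q$ as (isomorphic to) a closed subspace of $\fR_{\ovl t}\times_\Q\G_m$, and the embedding there is injective on points, I conclude that the various limit points $x_{j,\infty}$ all coincide with a single point $x_\infty\in\cE_\Q$.

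To finish, I would choose an arbitrary classical point $x_i\in\cE_\Q$ for each of the finitely many indices $i\in I_0$ (these exist since each $(f_i,\varphi_i)$ is a classical refined eigenform of finite slope, hence corresponds by construction to a point of $\cE_\Q$). The resulting full sequence $(x_i)_{i\in\N}\subset\cE_\Q$ then converges to $x_\infty$, and by construction the refined eigenform attached to $x_i$ is $(f_i,\varphi_i)$, so the family is extracted from the eigencurve exactly in the sense of Example~\ref{exEQ}. The whole argument is essentially bookkeeping once Propositions~\ref{famlim} and~\ref{fsequiv} are in hand; the only substantive point is the matching of the limit points from the different pieces $\cF_j$, which I expect to be the main (though mild) obstacle and which reduces entirely to the injectivity of $\cE_\Q\hookrightarrow\fR_{\ovl t}\times_\Q\G_m$.
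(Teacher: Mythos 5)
Your proposal is correct, and it works, but it takes a somewhat longer route than the paper. The paper's proof is a single sentence: in the proof of Proposition~\ref{famlim}, the choice of the auxiliary real quadratic field $F$ was needed only because the restrictions $t_i\vert_{G_{\Q_p}}$ were not a priori trianguline over $\Q_p$ itself; when all the $f_i$ are of finite slope, one can simply take $F=\Q$ and the twist character $\eta$ trivial, and then the affinoids $U_j$ and the limit point $x_{\infty,F}$ are \emph{already} produced inside $\cE_\Q$ by the proof, with all the $U_j$ being irreducible components of a single affinoid neighborhood of the same $x_{\infty,F}$. In particular the matching of limit points across the pieces $\cF_j$ — which you flag as the main obstacle — is automatic in the paper's argument, because the $x_{j,\infty}$ are all constructed to be the single point $x_{\infty,F}$.

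Your argument treats Proposition~\ref{famlim} and Proposition~\ref{fsequiv}(i) as black boxes and then re-identifies the limit points by hand, using that $\cE_{\Q,\ovl t}\hookrightarrow\fR_{\ovl t}\times_\Q\G_m$ is injective on points (Proposition~\ref{EFclosure}). This is valid, and arguably more robust in that it doesn't require re-entering the proof of another proposition; but it is more bookkeeping than the paper needs, and it adds a detour through Proposition~\ref{fsequiv}(i) (hence Proposition~\ref{extract}) that the paper's one-line observation avoids. One small point worth stating explicitly in your version: for the density hypothesis of Proposition~\ref{fsequiv}(i)(b) you use that $\{x_i\}_{i\in I_j}$ is dense in the $1$-dimensional irreducible affinoid $U_j$, which follows from Lemma~\ref{dense} since $I_j$ is infinite. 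Apart from that, no gaps.
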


\begin{proof}
It is enough to observe that we can take $F=\Q$, and $\eta$ to be trivial, in the proof of Proposition \ref{famlim}.
\end{proof}

\begin{rem}\mbox{ }
\begin{itemize}
\item Note that for the weak families of Coleman and Stein from Example \ref{excolste}, the limit Hecke eigensystem is actually attached to an eigenform, which need not be the case for an arbitrary weak family.
\item Though we give a complete classification of families of eigenforms in Theorem \ref{classfam}, our techniques do not seem to provide us with a classification of weak families. Such a classification would give an answer to the question posed by Coleman and Stein on which eigenforms of infinite slope can be approximated with eigenforms of finite slope. More generally, a classification of weak families would give a description of the closure, in the $p$-adic topology, of Mazur's infinite fern in the pseudodeformation space of a modulo $p$ 2-dimensional pseudorepresentation of $G_{\Q,Np}$. Coleman and Stein were a priori only interested in the modular points of such a closure. %\andr{say more??}
\end{itemize}
\end{rem}

\subsection{$p$-twisted families}

%ALSO: limit of Hecke ES makes sense if level nonconst???

In the following, let $\delta$ be a Dirichlet character of $p$-power conductor $c(\delta)$. When we say ``$p$-power conductor'' we mean conductor $p^m$ for some $m\ge 1$ (i.e. we implicitly exclude the case of trivial conductor).

We identify $\delta$ with a finite order character of $G_\Q$, that we also denote by $\delta$, in the standard way: if $\zeta_{c(\delta)}$ is a root of unity of order exactly $c(\delta)$, then we define $G_\Q\to\Qp^\times$ as the composite of $G_\Q\onto\Gal(\Q(\zeta_{c(\delta)})/\Q)\cong(\Z/c(\delta)\Z)^\times$ and the character $(\Z/c(\delta)\Z)^\times\to\Qp^\times$ that maps $g$ to $\delta(\wtl g)$, $\wtl g$ being any representative of $g$ in $\Z$.

Let $p,N$, and the Hecke algebra $\calH_\Q$ be as before. Let $L$ be a field extension of $\Q_p$ containing the image of $\delta$ and $A$ an $L$-algebra. 
If $\theta^\full\colon\calH_\Q^{Np}\to A^\times$ is a Hecke eigensystem away from $Np$, we denote by $\delta\theta^\full$ the unique Hecke eigensystem satisfying $\delta\theta(T_\ell)=\delta(\ell)\theta(T_\ell)$ for every $\ell\nmid Np$, and set $\delta\theta^\full=\delta\theta\otimes\theta^\full\vert_{\calH_{\Q,p}}$ (i.e. we do not modify $\theta^\full$ at $p$). %and $\delta\theta^\full(U_p)=\delta(\ell)\theta(U_p)$. 

Let $f$ be an eigenform of weight $k_f$, level $\Gamma_1(Np^r)$ for some $r\ge 0$, and nebentypus $\psi$ of conductor $c(\psi)$. %We do not assume $N_f$ to be prime to $p$. 
Let $\theta_f^\full\colon\calH_\Q^{N_f}\to\Qp$ be the system of Hecke eigenvalues, and $\rho_f\colon G_\Q\to\GL_2(V_f)$ the $\Qp$-linear representation attached to $f$. Let $\varphi_{1,f}, \varphi_{2,f}$ be the Frobenius eigenvalues of $\rho_f\vert_{G_{\Q_p}}$.

Let $s$ be the maximum of the valuations of $c(\delta)^2$ and $c(\delta)c(\psi)$. 
By \cite[Proposition 3.64]{shiarith}, for every eigenform $f$, there exists an eigenform $\delta f$ of weight $k$, level $\Gamma_1(Np^s)$ and character $\psi\delta^2$ whose associated Hecke eigensystem is $\delta\theta^\full_f$, and Galois representation is $\rho_{\delta f}\cong\rho_f\otimes\delta$. The representation $\rho_f\otimes\delta\vert_{G_{\Q_p}}$ is obviously still potentially semi-stable, and $D_\pst((\rho_f\otimes\delta)\vert_{G_{\Q_p}})\cong D_\pst(\rho_f\vert_{G_{\Q_p}})\otimes D_\pst(\delta\vert_{G_{\Q_p}})$. Since $\delta$ is of finite order, the Frobenius operator on $D_\pst(\delta\vert_{G_{\Q_p}})$ is trivial, so that $\varphi_{i,\delta f}=\varphi_{i,f}$ for $i=1,2$.

Let $\cF=(f_i,\varphi_i)_i$ be a family of eigenforms. As usual, for every $i$ we write $\theta_i$ for the Hecke eigensystem of $f_i$ away from $Np$. 
Since $(\theta_{i})_i$ converges to a limit $\theta_\infty$ for $i\to\infty$, a trivial check shows that $(\delta\theta_{f_i})_i$ converges to $\delta\theta_\infty$, so that $(\delta f_i)_i$, with the family of Frobenius eigenvalues $(\varphi_{i})_i$, is a family of eigenforms. 
We call it the \emph{twist of $\cF$ by $\delta$} and denote it by $\delta\cF$. 

Let $\cF=(U,\Theta,\Phi,S)$ be an affinoid family of eigenforms, defined over a $p$-adic field $L$ containing the image of $\delta$. The tuple
\[ \delta\cF\coloneqq(U,\delta\Theta,\Phi,S) \]
is again an affinoid family of eigenforms: its specialization at $x\in S$ is the eigenform $\delta f_x$, where $f_x$ is the specialization of $\cF$ at $x$, and by the above discussion $\delta(\ev_x\ccirc\alpha)$ is the system of Hecke eigenvalues away from $Np$ and $\Phi(x)$ a Frobenius eigenvalue of $\delta f_x$. We call $\delta\cF$ the \emph{twist of $\cF$ with $\delta$}.

\begin{defin}
	We call an eigenform \emph{$p$-twisted} if it is the twist of a finite slope eigenform with a Dirichlet character of $p$-power conductor.
	
	We call an (affinoid, weak, weak affinoid) family of eigenforms \emph{$p$-twisted} if it is the twist of an (affinoid, weak, weak affinoid) family of finite slope with a Dirichlet character of $p$-power conductor.
\end{defin}

\begin{rem}
	If $\delta$ is a Dirichlet character of $p$-power conductor and $\cF=(f_i)_i$ is a family of eigenforms, then all of the forms in the family $\delta\cF$ are of infinite slope: the $U_p$-eigenvalue of $f_i$ is $\delta\theta^\full_{f_i}(U_p)=\delta(p)\theta^\full_{f_i}(U_p)=0$. The same calculation gives that, for an affinoid family of eigenforms $\cG$, the twist $\delta\cG$ is of infinite slope.
\end{rem}

%\andr{JUST assume Phi nonvanishing in afffamdef: what consequence?? comment on what we are excluding??}

\begin{prop}\label{extract}
Let $\cF=(U,\Theta,\Phi,S)$ be an $L$-affinoid family of eigenforms. Then there exists a finite partition $S=\coprod_{j=0}^mS_j$ such that $S_0$ is finite and, for every $j\ge 1$, there exists a number field $F$, either equal to $\Q$ or real quadratic, with a unique $p$-adic place $\fp$, such that: 
\begin{enumerate}[label=(\arabic*)]
\item $t_x\vert_{G_{F,Np}}$ is trianguline at $\fp$ for every $x\in S_j$;
\item if $U_j$ is the Zariski closure of $S_j$ in $U$, then there exists a rigid analytic map $\iota\colon U_j\to\cE_F$ mapping $S_j$ to a set of classical points of $\iota(U_j)$ and satisfying 
\begin{align*} \Theta&=(\iota^\ast\ccirc\Theta_{\cE_F})\otimes\delta, \\ 
	\Phi&=\iota^\ast(\Theta^\full_{\cE_F}(U_p)), \end{align*}
where $\iota^\ast\colon\cO_{\cE_F}(\cE_F)\to\cO_{U_j}(U_j)$ is the homomorphism induced by $\iota$, and $\delta$ is a (possibly trivial) Dirichlet character.
\end{enumerate}

In particular, for every $j$:
\begin{enumerate}[label=(\roman*)]
	\item $U_j$ is 1-dimensional;
%	\item the function $\varphi_1$ does not vanish on $U_j$ (i.e., $Z\cap U_j$ is empty);
	\item if $U$ is 1-dimensional, then $U=U_j$ for some $j$, so that we can choose $S_0=\varnothing$, $m=1$, $S=S_1$, and moreover the field $F$ can be chosen to be any field (equal to $\Q$ or real quadratic) that satisfies condition (1) above for every $x\in S$;
	\item if the points of $S_j$ are all defined over a common $p$-adic field, then we can extract a family of eigenforms from the set $\{f_x\}_{x\in S_j}$.
\end{enumerate}
\end{prop}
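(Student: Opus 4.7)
The strategy parallels the proof of Proposition \ref{famlim}, adapted from a sequence of classical points to the affinoid base $U$ itself. The partition of $S$ is dictated by the local Galois theory at $p$, supplemented by a finite bookkeeping of Dirichlet twists.

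For each $x \in S$ the pseudorepresentation $t_x|_{G_{\Q_p}}$ is potentially semistable, and by Corollary \ref{pstps}(ii) becomes trianguline over some quadratic extension $E_x$ of $\Q_p$. Because $\Q_p$ has only finitely many quadratic extensions, $S$ partitions into finitely many subsets indexed by a common $E$; for eigenforms of finite slope this is $E = \Q_p$ (giving $F = \Q$), while the nontrivial cases provide a real quadratic field $F$ with unique $p$-adic place $\fp$ satisfying $F_\fp = E$. Base-changing via Corollary \ref{bcform} produces Hilbert eigenforms $f_{x,F}$, and by Theorem \ref{finslopetri} triangulinity at $\fp$ writes $f_{x,F} = \delta_x\cdot f^0_{x,F}$ for a finite-slope primitive $f^0_{x,F}$ and a Dirichlet character $\delta_x$ of $F$ of conductor dividing $\fp^\infty$. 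Lemma \ref{nebenbound} together with Remark \ref{condbound} bounds the conductors of the $\delta_x$ uniformly, so only finitely many possibilities arise; refining the partition once more to fix $\delta_x = \delta$ on each piece and absorbing a finite leftover into $S_0$ yields the decomposition $S = \coprod_{j=0}^m S_j$.

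To globalize $\iota$ on each $U_j$, I use the pseudorepresentation $t_U\colon G_{\Q,Np}\to\cO_U^\circ(U)$ afforded by axiom \ref{aff1}. Restricting to $G_F$ and twisting by $\delta^{-1}$ produces a pseudorepresentation $t_U^0\colon G_{F,Np}\to\cO_U^\circ(U)$ interpolating the pseudorepresentations attached to the primitive forms $f^0_{x,F}$ for $x \in S_j$. Combined with a suitable power of the nowhere-vanishing analytic function $\Phi$ (as in the proof of Proposition \ref{famlim}, raised to the residue degree $f(F_\fp/\Q_p)$ when $p$ is inert in $F$), this produces a rigid analytic morphism
\[ \iota_U\colon U \to \fR_{F,\ovl{t_U^0}} \times_K \G_m. \]
By Proposition \ref{EFclosure}, the eigencurve $\cE_{F,\ovl{t_U^0}}$ is the Zariski closure of its classical points inside the target, and $\iota_U(S_j)$ lies in this classical locus. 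Hence $\iota_U^{-1}(\cE_F)$ is Zariski closed in $U$ and contains $S_j$, so it contains $U_j$. The restriction $\iota \coloneqq \iota_U|_{U_j}\colon U_j \to \cE_F$ is the sought map, and the compatibility identities in (2) hold on the dense subset $S_j$, hence on all of $U_j$.

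The supplementary properties follow formally. For (i), axiom \ref{aff2} renders $\iota_U$, and hence $\iota$, finite onto its image; since $\cE_F$ is one-dimensional and $S_j$ is infinite (we put finite exceptions into $S_0$), $U_j$ has dimension exactly $1$. For (ii), an integral one-dimensional affinoid admits no proper positive-dimensional closed subspaces, so the decomposition is trivial and $U = U_j$ for a single $j$; moreover any $F$ satisfying condition (1) for every $x \in S$ works verbatim in the argument of the previous paragraph. For (iii), if $S_j \subset U_j(L^\prime)$ for a common finite extension $L^\prime/L$, then Lemma \ref{dense} produces an accumulation point, and the associated sequence of Hecke eigensystems and Frobenius eigenvalues converges in the sense of Definition \ref{famdef}. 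The main technical subtlety is ensuring that the Galois-theoretic base change and character twist, defined a priori only pointwise on $S_j$, extend to analytic operations on the whole of $U$; this is precisely where axiom \ref{aff1} is indispensable, as it lets one manipulate $t_U$ at the level of $\cO_U^\circ(U)$ rather than gluing pointwise data.
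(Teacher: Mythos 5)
Your argument is structurally very close to the paper's proof, but there is a genuine gap in the step where you bound the conductors of the twist characters $\delta_x$. You cite Lemma \ref{nebenbound} and Remark \ref{condbound} for this, and neither applies. Lemma \ref{nebenbound} bounds the conductor of the $p$-part of the nebentypus of $f_x$, which corresponds on the Galois side to the determinant character, i.e.\ the \emph{product} $\delta_{1,x}\delta_{2,x}$ of the two triangulation parameters restricted to inertia; it gives no control on a single parameter, hence none on $\delta_x$. Remark \ref{condbound} would yield the bound if you already had a \emph{rigid analytic} character $\delta\colon G_{F_\fp}\to\cO_U(U)^\times$ interpolating the $\delta_x$, but at this point of your argument you only have pointwise data for $x\in S$, and the whole difficulty is precisely to show such an interpolation exists. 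The issue is not cosmetic: a $p$-primitive finite-slope eigenform can have nebentypus of arbitrary $p$-power conductor (case (i) of Proposition \ref{lwclass}), so knowing that $\delta_x^2\cdot\vareps_{f_x^0}$ has bounded conductor does not bound $\delta_x$ alone.

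The paper closes this gap by a different route, which your proposal bypasses. It first forms the \emph{untwisted} map $\pi_U=\pi_U^\ps\times\Phi\colon U\to\fR_{\ovl t_F}\times_L\G_m$ (this uses only \ref{aff1}, \ref{aff2} and the nowhere-vanishing of $\Phi$, with no knowledge of the $\delta_x$), shows by Zariski density of $\pi_U(S)$ that its image lies in the eigensurface $\cE_{F,\ovl t_F}^\tw$ constructed in the proof of Lemma \ref{untwistfam}, and then uses the decomposition $\cE_{F,\ovl t_F}^\tw\cong\wtl\cW_F^\ast\times_K\cE_{F,\ovl t}$: the projection of the affinoid image to the twist-character space $\wtl\cW_F^\ast$ is affinoid, and it is \emph{this} affinoid that both supplies the analytic interpolation of the $\delta_x$ and, via the argument of Remark \ref{condbound}, bounds their conductors. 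Your proposal performs the twist by $\delta^{-1}$ \emph{before} constructing the rigid analytic map, so the partition into fixed-$\delta$ pieces has to be in place already; the paper performs the twist \emph{after} constructing the map, which is what dissolves the circularity. Once this is fixed, the rest of your argument (density of classical points, Proposition \ref{EFclosure}, and the three supplementary assertions) does match the paper's; I would only remark that supplementary item (ii) needs a short extra word to explain why the partition can be taken trivial and $S_1=S$ (one uses that the conditions defining the $S_j$, once satisfied on a Zariski dense subset of the $1$-dimensional $U$, propagate to all of $S$), which you state without argument.
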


Proposition \ref{extract} tells us that the classical specializations of $\cF$ live in 1-dimensional affinoid subfamilies, whose base change of the family $\cF$ to $F$ is constructed, up to a twist, from the eigenvariety $\cE_F$.

We refer to a family $(f_x)_x$ obtained from an affinoid family $\cF$, as in Proposition \ref{extract}, as a \emph{family of specializations of $\cF$}, or simply \emph{specialization of $\cF$} when this does not create confusion.

\begin{proof}
Let $t_U\colon G_{\Q,Np}\to\cO_U^\circ(U)$ be the pseudorepresentation attached to $\cF$. 
With the same reasoning as in the proof of Proposition \ref{famlim}, we can find a finite partition $S=\coprod_{i=0}^mS^j$ with the property that $S^0$ is finite and, for every $j\in\{1,\ldots,m\}$, there exists a number field $F_j$, either equal to $\Q$ or real quadratic, with a unique $p$-adic place $\fp_j$, such that, for every $x\in S^j$, the restriction $t_x\vert_{G_{F_{j,\fp_j}}}$ is trianguline. In particular, by Theorem \ref{finslopetri}, the base change $f_{x,F_j}$ of $f$ to $\GL_{2/F_j}$ is the twist of a $\GL_{2/F_j}$-eigenform $f_{x,F_j}^0$ of finite slope with a Dirichlet character $\delta_x$ of $F$ of conductor dividing $\fp_j^\infty$. For $j\in\{1,\ldots,m\}$, let $U^j$ be the Zariski closure of $S^j$ in $U$. If $U^j_0$ is an irreducible component of $U^j$, then $\cF^j\coloneqq(U^j_0,\Theta\vert_{U^j_0},\Phi\vert_{U^j_0},S^j\cap U^j_0(\C_p))$ is an affinoid family of eigenforms. It is clearly enough to prove the statement for each family $\cF^j$. Therefore, to simplify the notation, we remove all reference to $j$ in what follows.

Up to enlarging $L$, we can assume that $L$ contains the images of all the embeddings $F\into\Qp$. Let $\cE_F^\tw$ be the (base change to $L$ of the) ``eigensurface'' introduced in the proof of Lemma \ref{untwistfam}, equipped with its natural embedding 
\[ \cE_{F,\ovl t_F}^\tw\subset\fR_{\ovl t_F}\times_L\G_m. \]
Set $t_{U,F}=t_{U}\vert_{G_{F,Np}}$, and $\ovl t_F=\ovl t_{U,F}$. Given that $t_{U,F}$ is a deformation of $\ovl t_F$ to an $L$-affinoid, the universal property of $\fR_{\ovl t_F}$ given in Section \ref{psrig} provides us with a rigid analytic map $\pi^\ps_{U}\colon U\to\fR_{\ovl t_F}$. 
Since $\Phi$ is analytic and nonvanishing on $U$, we can define a closed embedding
\[ \pi_{U}\coloneqq\pi_{U}^\ps\times \varphi_1\colon U\to\fR_{\ovl t_F}\times_L\G_m. \]
Since $S$ is dense in $U$, $\pi_U(S)$ is dense in $\pi_U(U)$, and since $\pi_U(S)$ is contained in the closed subspace $\cE_{F,\ovl t_F}^\tw\subset\fR_{\ovl t_F}\times_L\G_m$, we obtain that $\pi_U(U)\subset\cE_F^\tw$. The image of $U$ under the closed embedding $\pi_U$ is affinoid, so its image under the map $\cE_F^\tw\to\wtl\cW_F^\ast$ introduced in the proof of Proposition \ref{untwistfam}, parameterizing twist characters, is also affinoid. In particular, the norms of the conductors of the characters $\delta_x$, for $x\in S$, are all bounded by a common constant, so that there is only a finite number of choices for the characters $\delta_x$. Therefore, we can find a finite partition $S=\coprod_{i=0}^mS_j$, where $S_0$ is finite and, for every $x,y\in S\setminus S_0$, $\delta_x=\delta_y$ if and only if $x,y$ both belong to $S_j$ for the same $j\in\{1,\ldots,m\}$. For every $j\in\{1,\ldots,m\}$, let $U_j$ be the Zariski closure of $S_j$ in $U$. Then the image of the closed embedding 
\begin{align*} U&\to\fR_{\ovl t_F}\times_L\G_m. \\
x &\mapsto (\delta_x^{-1}t_x,\Phi(x)) \end{align*}
is an affinoid subdomain of the eigencurve $\cE_F$, and one easily checks that the resulting embedding $\iota\colon U_j\into\cE_F$ satisfies the properties required in part (2) of the proposition. 

%\andr{careful: bound is on conductor, not order!! correct?? state as separate lemma?? wait, bounding one or other is the same since p-power??}
%
%\andr{isn't trianguline space somehow schematically closed in R times Gm?? that would be good enough--because choice of frob is enough to determine triangulation away from semistable points??}
%
%\andr{add ovlt ovltF to eigensurf}

Since $\cE_F$ is 1-dimensional, so is $U_j$. If $U$ is already 1-dimensional, then it must coincide with one of the Zariski-closed subspaces $U_j$ (i.e. $S_0$ is empty and $S_1=S$), since it is reduced and irreducible by the definition of an affinoid family. The statement about the choice of $F$ is clear from the proof of the first part of the proposition.

Now assume that all of the points of $S_j$ are defined over a common $p$-adic field. Since $U$ is 1-dimensional and $S_j$ is Zariski-dense in $U$, it admits an accumulation point $x_\infty\in U_j(\C_p)$ by Lemma \ref{dense}. Let $\{x_i\}_{i\in\N}$ be a sequence of elements of $S_j$ converging to $x_\infty$. The associated refined eigenforms obviously form a family.
\end{proof}

%Our main result shows that, contrary to the cases when the slope is zero or is positive but finite, $p$-adic interpolation is rarely possible when the slope is infinite. 

\begin{prop}\label{ptwequiv}\mbox{ }
	\begin{enumerate}[label=(\roman*)]
				\item Let $\cF=(U,\Theta,\Phi,S)$ be a 1-dimensional affinoid family of eigenforms. The following are equivalent:
		\begin{enumerate}[label=(\alph*)]
			\item $\cF$ is a $p$-twisted affinoid family;
			\item there exists a Dirichlet character $\delta$ of $p$-power conductor, and a dense subset $S^\prime$ of $U$, such that, for $x\in S^\prime$, $\ev_x\ccirc\alpha^{Np}$ is the Hecke eigensystem away from $Np$ of a twist $\delta f_x^0$, for an eigenform $f_x^0$ of finite slope;
			\item there exists a dense subset $S^\prime$ of $U$ such that $\ev_x\ccirc\Theta$ is attached to a $p$-twisted eigenform for every $x\in S^\prime$.
		\end{enumerate}
				\item Let $(f_i,\varphi_i)_i$ be a family of eigenforms. The following are equivalent:
		\begin{enumerate}[label=(\alph*)]
			\item for all except a finite number of $i$, $f_i$ is a $p$-twisted eigenform; %the twist of an eigenform of finite slope with a Dirichlet character of $p$-power conductor;
			\item $(f_i,\varphi_i)_i$ is eventually a $p$-twisted family; %there exists a finite partition $\N=\coprod_{j=0}^mI_j$, such that I_0 is finite and, for every j\in\{1,\ldots,m\}, $I_j$ is infinite and $(f_i)_{i\in I_j}$ is a $p$-twisted family;
			\item there exists a finite partition $\N=\coprod_jJ_j$ such that, for every $j$, $J_j$ is infinite and $(f_i,\varphi_i)_{i\in J_j}$ is eventually the specialization of a $p$-twisted affinoid family.
		\end{enumerate}
	\end{enumerate}
\end{prop}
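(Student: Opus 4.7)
The plan is to handle part (i) first, where (a)$\Rightarrow$(b)$\Rightarrow$(c) is immediate from the definition of twist and the density of $S$. The real content is (c)$\Rightarrow$(a). I start by observing that if $x \in S^\prime$ and $f_x = \delta_x g_x$ with $g_x$ of finite slope and $\delta_x$ a Dirichlet character of $p$-power conductor, then $\rho_{g_x}\vert_{G_{\Q_p}}$ is trianguline by Theorem \ref{finslopetri} applied over $\Q$, so $\rho_{f_x}\vert_{G_{\Q_p}}$ is trianguline as a twist of a trianguline representation. This allows me to apply Proposition \ref{extract} with $F = \Q$ (the choice being justified by point (ii) of that proposition since $U$ is 1-dimensional and the trianguline condition holds on a dense subset, hence everywhere). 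This produces a rigid analytic map $\iota\colon U \to \cE_\Q$, a Dirichlet character $\delta$ of $p$-power conductor, and factorizations
\[ \Theta = (\iota^\ast \ccirc \Theta_{\cE_\Q}) \otimes \delta, \qquad \Phi = \iota^\ast(\Theta^\full_{\cE_\Q}(U_p)). \]
Setting $\cG \coloneqq (U, \iota^\ast \ccirc \Theta_{\cE_\Q}, \Phi, S^\prime)$, this is an affinoid family of finite slope by Proposition \ref{fsequiv}(i), since $\Phi$ interpolates the $U_p$-eigenvalues of its specializations. By construction $\cF = \delta \cG$, so $\cF$ is $p$-twisted.

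For part (ii), the equivalence (a)$\Leftrightarrow$(b) is the key step. The direction (b)$\Rightarrow$(a) is immediate: any specialization of $\delta \cG$ with $\cG$ finite slope is the twist of a finite slope eigenform by $\delta$. For (a)$\Rightarrow$(b), assume that for $i \ge i_0$ each $f_i$ equals $\delta_i g_i$ with $g_i$ of finite slope and $\delta_i$ of $p$-power conductor. Lemma \ref{untwistfam} (applied over $F = \Q$, or after minor adaptation since we work with $\GL_{2/\Q}$-forms directly) guarantees that, after possibly discarding finitely many further terms, all the $\delta_i$ coincide with a single character $\delta$ and that $(g_i, \varphi_i)_i$ is itself a family of finite slope eigenforms. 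Then $(f_i, \varphi_i)_{i \ge i_0}$ is by definition $\delta$ times the finite slope family $(g_i, \varphi_i)_{i \ge i_0}$, establishing (b).

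For the remaining equivalences involving (c): the implication (c)$\Rightarrow$(a) is immediate, since a specialization of a $p$-twisted affinoid family is a $p$-twisted eigenform by definition. For (a)$\Rightarrow$(c), apply Proposition \ref{famlim} to partition $\N$ (modulo a finite set) into subsets $I_1, \ldots, I_m$ such that each $(f_i, \varphi_i)_{i \in I_j}$ is a specialization of a 1-dimensional affinoid family $\cF_j$. Each $\cF_j$ then admits a dense subset of classical specializations that are $p$-twisted eigenforms, so by part (i), each $\cF_j$ is itself a $p$-twisted affinoid family, giving (c).

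The main obstacle is the uniformity of the twisting character $\delta$ across the dense set $S^\prime$ in part (i)(c)$\Rightarrow$(a), and correspondingly across the sequence in part (ii)(a)$\Rightarrow$(b). Both cases rely on the observation, already exploited in the proof of Lemma \ref{untwistfam} and Proposition \ref{extract}, that the twist parameter space $\wtl\cW_\Q^\ast$ is a disjoint union of wide open unit discs, so its intersection with any affinoid subspace (coming from our affinoid family, or from the closure of the sequence in an eigencurve-type object) contains only finitely many finite-order characters. This finiteness, together with density/infiniteness, forces a single $\delta$ to work eventually.
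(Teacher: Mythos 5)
Your proof is correct and follows essentially the same route as the paper: (i)(c)$\Rightarrow$(a) via the observation that twists of finite-slope forms have trianguline local Galois representations followed by Proposition \ref{extract}(ii) over $F=\Q$, and (ii)(a)$\Leftrightarrow$(b) via Lemma \ref{untwistfam} and (ii)$\Rightarrow$(c) via Proposition \ref{famlim} plus part (i). Your explicit construction of $\cG$ and the invocation of Proposition \ref{fsequiv}(i) is slightly more verbose than the paper's direct reading-off from the conclusion of Proposition \ref{extract}, but this is a cosmetic difference; and your flag that Lemma \ref{untwistfam} is stated for real quadratic $F$ while being applied here over $\Q$ is a legitimate observation about an imprecision the paper itself glosses over, and your suggested fix (either adapt the eigensurface construction directly over $\Q$, or base change to $F$ via Lemma \ref{bcfam} and descend) is sound.
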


%\begin{rem}\label{SSprime}
%Let \cF=(U,\Theta,\Phi,S) be an affinoid family of eigenforms, and let S^\prime be a subset of S that is still dense in U. Then \cF^\prime=(U,\Theta,\Phi,S^\prime) is again an affinoid 
%\end{rem}

\begin{proof}
We prove (i). 
The implications (a)$\implies$(b)$\implies$(c) are trivial. We show that (c)$\implies$(b)$\implies$(a). 
We prove that (c)$\implies$(a). For every $x\in S^\prime$, the local-at-$p$ Galois representation attached to $f_x$ is trianguline, being the twist with a character of a representation attached to a finite slope eigenform. Therefore, one can apply Proposition \ref{extract}(ii) to the family $\cF^\prime=(U,\Theta,\Phi,S^\prime)$, with $F=\Q$, to deduce that $\cF^\prime$ is the twist of a finite slope affinoid family with a Dirichlet character. Obviously, this implies that the same holds for $\cF$.

We now prove (ii). The implication (c)$\implies$(a) is obvious. If we assume (a), then by Lemma \ref{untwistfam}, there exists a Dirichlet character $\delta$ of $p$-power conductor such that $f_i=\delta f_i^0$, with $f_i^0$ an eigenform of finite slope, for every sufficiently large $i$. Then, $(f_i,\varphi_i)_i$ is eventually a twist of the finite slope family $(f_i^0,\varphi_i)_i$ with $\delta$, so that (b) holds.

Finally, assume (b). By Proposition \ref{famlim}, there exists a finite partition $\N=\coprod_{j=0}^mI_i$ such that $I_0$ is finite and, for every $j\in\{1,\ldots,m\}$, $(f_i,\varphi_{i})_{i\in I_j}$ is the specialization of a 1-dimensional affinoid family of eigenforms $\cF_j$. By part (i) of this proposition, (c)$\implies$(a), each family $\cF_j$ is a $p$-twisted family, so that (a) holds.
%
%Assume (a), and write $f_i=\delta f_i^0$ for a finite slope eigenform $f_i^0$ and a Dirichlet character $\delta$ of $p$-power conductor. Let $\N=\coprod_{j=0}^mI_j$ be the partition appearing in Lemma \ref{partdelta}. 
%Then, for every $j\in\{1,\ldots,m\}$, $(f_i^0)_{i\in I_j}$ is a family, and $(f_i)_{i\in I_j}$ coincides eventually with the $p$-twisted family $(\delta f_i^0)_{i\in I_j}$ where $\delta$ is the common value of the characters $\delta_i$ for $i\in I_j$.
%
%Now assume that (b) holds. Clearly, it is enough to prove the statement in the case when the partition in (b) is trivial, i.e. $(f_i)_{i\in\N}$ is eventually a $p$-twisted family. Then Proposition \ref{famlim} provides us with a partition $\N=\coprod_{j=0}^mJ_j$ such that, for every $j\in\{1,\ldots,m\}$, $(f_i)_{i\in J_j}$ is the specialization of an affinoid family $\cF$. We now apply the implication (b)$\implies$(a) of Proposition \ref{ptwequiv} to deduce that $\cF$ is a $p$-twisted affinoid family.
\end{proof}

\section{Families with complex multiplication}\label{seccm}

\subsection{CM eigenforms}
Let $K$ be a number field. Let $\fm$ be a non-zero ideal of $\cO_K$, and let $J_K(\fm)$ be the group of fractional ideals of $K$ prime to $\fm$. Given an arbitrary abelian group $A$, an \emph{$A$-valued Gr\"ossencharacter} of $K$ of modulus $\fm$ is a group homomorphism $J_K(\fm)\to A$. 

We identify in the usual way $A$-valued Gr\"ossencharacters of modulus $\fm$ with $A$-valued ``idelic'' Gr\"ossencharacters, that is, continuous characters $K^\times\backslash\A_K^\times\to A$ vanishing on $\cU(\fm)\cdot F_\infty^\circ$, where $\cU(\fm)$ is the compact open subgroup of $\A_F^{\infty}$ attached to $\fm$. %\andr{more precise??}

Now let $K$ be a quadratic extension of $\Q$, $A$ an abelian group, and $\psi\colon J_K(\fm)\to A$ a Gr\"ossencharacter of modulus $\fm\subset\cO_K$. For every $n\in\N_{\ge 1}$, we set
\[ a_n(\psi)=\sum_{\substack{\fa\textrm{ ideal of }\cO_K,\\ \Norm_{K/\Q}{\fa}=n,\,(\fa,\fm)=1}}\psi(\fa). \]
Let $M=\Norm_{K/\Q}(\fm)$. 
A simple check shows that there exists a unique group homomorphism $\alpha^{M}(\psi)\colon\calH^{M}\to A$ satisfying $\alpha^{M}(T_n)=a_n(\psi)$ for every $n\ge 1$. If $M^\prime$ is a multiple of $M$, we write $\alpha^{M^\prime}$ for the restriction of $\alpha^M$ to a homomorphism $\calH^{M^\prime}\to A$.

We specialize the above definitions to the case when $K$ is imaginary quadratic of discriminant $D$, and $A=\C^\times$. Let $\{\sigma,\ovl\sigma\}$ be the pair of complex embeddings of $K$, and assume that the infinity type of $\psi$ is $(k-1,0)$. We attach a CM eigenform to $\psi$, as explained for instance in \cite[Section 3]{ribetneben}. 

Let $\varphi_K$ be the quadratic Dirichlet character modulo $D$ attached to $K$, and let $\eta$ be the Dirichlet character modulo $M$ given by
\[ x\mapsto \frac{\psi((x))}{\sigma(x^{k-1})} \]
for every $x\in\Z$. Let $\vareps=\varphi_K\eta$, a Dirichlet character modulo $DM$.

\begin{prop}\label{psiform}
The series
\[ \sum_{n\ge 1}a_n(\psi)q^n \]
is the $q$-expansion of a new eigenform $f_\psi$ of level $\Gamma_1(DM)$ and character $\vareps$. 
\end{prop}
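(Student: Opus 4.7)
The plan is to follow the classical construction of CM forms via theta series attached to Hecke characters, which essentially goes back to Hecke and has been revisited by Shimura and Ribet. Given the context of the paper, where this proposition is used only as a bridge between Gr\"ossencharacters and modular forms, I would aim to give a proof that is mostly a reorganization of Ribet's treatment in \cite{ribetneben}, adapted to our normalization of the infinity type $(k-1,0)$.

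First, I would verify that the formal $q$-series $\theta_\psi(z) \coloneqq \sum_{n\ge 1} a_n(\psi) q^n$ is the $q$-expansion of a holomorphic modular form. The standard way to do this is to rewrite
\[ \theta_\psi(z) = \sum_{\substack{\fa\subset\cO_K\\ (\fa,\fm)=1}} \psi(\fa)\, q^{\Norm_{K/\Q}(\fa)}, \]
and recognize it as a linear combination of classical theta series $\theta_{\fa_0,\chi}(z)=\sum_{\alpha\in\fa_0} \chi(\alpha) q^{\Norm\alpha/\Norm\fa_0}$ indexed by the ideal classes of $K$ prime to $\fm$. Each such theta series is holomorphic and satisfies a functional equation under a Fricke-type involution via Poisson summation on the lattice $\fa_0\subset\C$, which forces it to transform as a modular form of weight $k$ on $\Gamma_0(DN_{K/\Q}(\fa_0))$ with character $\varphi_K\eta$ (the quadratic character $\varphi_K$ arising from the discriminant of $K$, and $\eta$ from the restriction of $\psi$ to the rational integers). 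Summing over ideal classes gives the stated level $\Gamma_1(DM)$ and nebentypus $\vareps=\varphi_K\eta$.

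Next, I would check the Hecke eigenform property. For a prime $\ell\nmid DM$, one splits into the three cases according to the splitting behavior of $\ell$ in $K$: if $\ell=\fl\ovl\fl$ splits, then $a_\ell(\psi)=\psi(\fl)+\psi(\ovl\fl)$ and $a_{\ell^2}(\psi)=\psi(\fl)^2+\psi(\fl)\psi(\ovl\fl)+\psi(\ovl\fl)^2$; if $\ell$ is inert, then $a_\ell(\psi)=0$ and the corresponding recursion for $a_{\ell^2}(\psi)$ holds; if $\ell$ is ramified, the formula is immediate. A direct computation using multiplicativity of $\psi$ on coprime ideals then verifies the recursions
\[ a_\ell a_n = a_{\ell n} + \ell^{k-1}\vareps(\ell) a_{n/\ell} \]
characterizing Hecke eigenforms with nebentypus $\vareps$ and weight $k$. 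The eigenvalue of $T_\ell$ is $a_\ell(\psi)$ by construction.

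Finally, for newness at level $\Gamma_1(DM)$, I would appeal to the standard characterization: if $f_\psi$ were old, it would be a linear combination of $f_{\psi'}(dz)$ for some $\psi'$ of modulus $\fm'$ strictly dividing $\fm$; but such a character would differ from $\psi$ by a Dirichlet character of smaller conductor, which is incompatible with $\fm$ being the exact modulus of $\psi$. This is the step I expect to require the most care, since one has to be careful about what ``modulus of $\psi$'' means at places dividing $D$ versus places dividing $M$; the cleanest route is probably to invoke \cite[Theorem 1.5]{ribetneben} (or the analogous statement in Miyake), which states precisely this newness result for theta series of Gr\"ossencharacters of the exact modulus.
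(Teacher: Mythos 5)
Your proposal is correct and follows essentially the same route as the paper, which disposes of the statement by citing \cite[Lemma 3]{shiarith} for modularity and a remark on p.~138 of \cite{shicla} for newness; you simply unfold the theta-series/Poisson-summation argument underlying Shimura's lemma and substitute Ribet (or Miyake) as the reference for newness. One minor slip: the level of the individual partial theta series depends on the conductor $\fm$ of $\psi$ rather than on the auxiliary representative $\fa_0$ of an ideal class, so the level you want there is $D\cdot\Nm_{K/\Q}(\fm)=DM$, not $D\cdot\Nm_{K/\Q}(\fa_0)$.
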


Obviously, the Hecke eigensystem of $f_\psi$ outside of $DM$ is $\alpha^{DM}(\psi)$.

\begin{proof}
The first statement is \cite[Lemma 3]{shiarith}, together with the remark on \cite[p. 138]{shicla} about the form $f_\psi$ being new.
\end{proof}

%MAYBE CM is p-primitive???

\begin{rem}\label{CMclass}
Let the notation be as in Proposition \ref{psiform}. Write $DM=Np^s$ with $N\in\N$ and $s=v_p(D)+v_p(M)$. 

We explain how to attach to $f_\psi$ an eigenform of level $\Gamma_1(Np^r)$ for some $r\ge 1$. We distinguish various cases, depending on the $p$-adic valuations of $D$ and $M$. %In each case, we use Proposition \ref{lwclass} to describe the admissible representation \pi_{f_\psi,p} of \GL_2(\Q_p) attached to f_\psi. 
\begin{enumerate}
	\item If $p\nmid DM$, then $N=DM$ and $a_p(\psi)=0$. Since $f_{\psi}$ is old at $p$, we can $p$-stabilize it gives two eigenforms of level $\Gamma_1(N)\cap\Gamma_0(p)$, character $\vareps$ of conductor $N$, and slopes 0 and $k-1$, respectively. %In this case, \pi_{f_\psi,p} is princ series....????
	\item If $p\nmid M$ but $p\mid D$, then since $p$ is odd $s=1$, $p$ is ramified in $F$ and $f_{\psi}$ is a newform of level $\Gamma_1(Np)$, character $\vareps$ of conductor $Np$, and $U_p$-eigenvalue $a_p(\psi)=\psi(\fp)\ne 0$. A direct calculation shows that the slope of $f_{\psi}$ is $\frac{k-1}{2}$. %Since r=1 in this case, f_\psi is automatically p-primitive??? and f_\psi fits in case (i) of representation \pi_{f_\psi,p} is a principal series, fitting in case (i)
	%One can show as in \cite[Proposition 3.5]{cit} that the slope of $f_{\psi_x}$ is $\frac{k-1}{2}$ if $\psi_x$ is of infinity type $(k-1,0)$.
	\item If $p\mid M$, then $f_\psi$ is a newform of level $\Gamma_1(Np^s)$, character $\vareps$ of conductor dividing $Np^s$. There are two possibilities for the slope of $f_\psi$:
	\begin{itemize}
		\item if all of the $p$-adic places of $K$ divide $\fm$, then $a_p(f_\psi)=0$, so that the slope of $f_\psi$ is infinite (this is automatically the case if $p$ is inert or ramified in $K$);
		\item if $p$ splits as $\fp\ovl\fp$ in $K$, and $\fp\mid\fm$ but $\ovl\fp\nmid\fm$, then $a_p(f_\psi)=\psi(\ovl\fp)\in\Zp^\times$, so that $f_\psi$ is ordinary.
	\end{itemize}
%According to Proposition \ref{lwclass}, \pi_{f_\psi,p} is 
%	we have various possibilities:
%	\begin{itemize}
%		\item If $p$ is inert in $F$, then 
%		\item If $p$ splits as $\fp\ovl\fp$ in $F$ and $p\nmid M$
%	\end{itemize}
%	\item If $p\mid M$ and $p\mid D$, then f_\psi is a newform of level 
\end{enumerate}
%EXPLAIN which cases are twists?? or do it in Langlands classification???
The above possibilities exhaust all CM eigenforms of level $\Gamma_1(Np^r)$ for $r\ge 1$.
\end{rem}

%Let \pi_{f_\psi,p} be the admissible representation of \GL_2(\Q_p) attached to f_\psi. 

%By combining Remark \ref{CMclass} with Proposition \ref{??}, we obtain the following.

In Proposition \ref{CMclassprop}, we state a more precise version of the classification from Remark \ref{CMclass}. Given two algebraic Gr\"ossencharacters $\psi_1,\psi_2$ of $K$ of moduli $\fm_1$ and $\fm_2$, respectively, we always consider their product $\psi_1\psi_2$ as a Gr\"ossencharacter of $K$ of modulus $\fm_1\fm_2$. 

\begin{defin}\label{ptwist}\mbox{ }
We say that $\psi$ is \emph{$p$-primitive} if it cannot be written as $\psi=\psi_0\cdot(\delta\ccirc\Norm_{K/\Q})$ for a Gr\"ossencharacter $\psi_0$ of $K$ and a Dirichlet character $\delta$ of $\Q$ of $p$-power (non-trivial) conductor.
%	Let $\psi, \psi_1,\psi_2$ be Gr\"ossencharacters of $K$ of conductors $\fm, \fm_1$ and $\fm_2$, respectively. %For every ideal \fI of \cO_K, we write \fl_p for the ideal generated by \fl\otimes 1 in \cO_K\otimes_\Z\Z_p. 
%	\begin{itemize}
%		\item We say that $\psi_1$ is a \emph{$p$-twist} of $\psi_2$ if $\fm_1\mid\fm_2$ in $\cO_K$ and there exists a Dirichlet character $\delta$ of $p$-power conductor such that $\psi_1=\psi_2\cdot(\delta\ccirc\Norm_{K/\Q})$.
%%		\item We say that $\psi_2$ is a \emph{$p$-untwist} of $\psi_1$ if $\psi_1$ is a $p$-twist of $\psi_2$.
%		%		\item We say that \psi_2 is a \emph{minimal p-untwist of \psi_1} if \psi_2 is a \Q-untwist of \psi_1, and, for every p-untwist \psi_2^\prime of \psI_1, \fm_2\cO_{K,p} divides \fm_2^\prime\cO_{K,p} where \fm_2^\prime is the conductor of \psi_2^\prime (i.e. the p-part of the conductor of \psi_2 is minimal among the conductors of the p-twists fo \psi_1). %, for the partial ordering of the ideals of \cO_K by divisibility).
%		\item We say that $\psi$ is \emph{$p$-primitive} if it is not the $p$-twist of any Gr\"ossencharacter. %only $p$-untwist of $\psi$ is $\psi$ itself.
%	\end{itemize}
\end{defin}

\begin{prop}\label{CMclassprop}\mbox{ }
\begin{enumerate}[label=(\roman*)]
\item The eigenform $f_\psi$ is a $p$-newform of infinite slope if and only if all of the $p$-adic places of $K$ divide $\fm$. 
\item Write $\psi=\psi_0\cdot(\delta\ccirc\Norm_{K/\Q})$ for a $p$-primitive Gr\"ossencharacter $\psi_0$ of conductor $\fm_0$ and a Dirichlet character $\delta$ of $p$-power conductor $m$. Then:
\begin{enumerate}[label=(\arabic*)]
\item If $m=1$ (i.e., $\psi$ is $p$-primitive) and $p$ does not divide $\Norm_{K/\Q}(\fm_0)$, then $f_\psi$ is of finite slope.
\item If $m\ne 1$ and $p$ does not divide $\Norm_{K/\Q}(\fm_0)$, then $f_\psi$ is an infinite slope twist of the finite slope eigenform $f_{\psi_0}$.
\item If $p$ splits as $\fp\ovl\fp$ in $K$, and $\fp\mid\fm_0$ but $\ovl\fp\nmid\fm_0$, then $f_\psi$ is ordinary if $m=1$, and an infinite slope twist of an ordinary eigenform if $m>1$.
\item If all of the $p$-adic places of $K$ divide $\fm_0$, then $f_\psi$ is $p$-supercuspidal (as well as $f_{\psi_0}$). This holds in particular when $p$ divides $\Norm_{K/\Q}(\fm_0)$ and is inert or ramified in $K$. %\andr{why does this satisfy conditions in loewei????}
\end{enumerate}
\end{enumerate}
\end{prop}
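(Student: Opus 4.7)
The plan is to proceed by direct computation for (i), reduce (ii) to the $p$-primitive case via the twist identity, and handle cases (1)--(3) by a case analysis on the splitting of $p$, leaving (4) as the main obstacle that requires exploiting $p$-primitivity twice.

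For part (i), I would compute $a_p(\psi)$ directly from its defining sum: the ideals of $\cO_K$ of norm $p$ that are coprime to $\fm$ correspond to the $p$-adic places of $K$ of residue degree one (split or ramified) that do not divide $\fm$, while inert primes contribute nothing since their norm is $p^2$. Combining this with Remark \ref{CMclass}, which characterizes $p$-newness of $f_\psi$ as $p \mid DM$, a short case analysis (split, inert, ramified) establishes that $f_\psi$ is a $p$-new form of infinite slope if and only if every $p$-adic place of $K$ divides $\fm$.

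The key reduction for part (ii) is the multiplicative identity
\[ a_n(\psi_0 \cdot (\delta \circ \Norm_{K/\Q})) = \delta(n)\, a_n(\psi_0), \]
valid for $n$ coprime to the conductors, which identifies the Hecke eigensystem of $f_\psi$ away from $Np$ with that of the twist $\delta f_{\psi_0}$. Part (1) then follows from the contrapositive of (i): if $p \nmid \Norm_{K/\Q}(\fm_0)$ then $f_{\psi_0}$ is either $p$-old (inert or split case) with nonzero $U_p$-eigenvalues on its two $p$-stabilizations — since those are the roots of $X^2 - a_p(\psi_0) X + p^{k-1}\vareps(p)$ — or $p$-new with $a_p(\psi_0) = \psi_0(\fp) \ne 0$ (ramified case). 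Part (2) is then immediate: a Dirichlet twist of $p$-power conductor kills the $U_p$-eigenvalue, so $f_\psi = \delta f_{\psi_0}$ is of infinite slope, twisting an eigenform of finite slope. For (3), I would examine $\rho_{f_{\psi_0}} = \Ind_{G_K}^{G_\Q}\psi_0^{(p)}$ at $p$: its restriction to $G_{\Q_p}$ splits as $\psi_0^{(p)}|_{G_{K_\fp}} \oplus \psi_0^{(p)}|_{G_{K_{\ovl\fp}}}$, the second summand being unramified since $\ovl\fp \nmid \fm_0$; the infinity type $(k-1,0)$ (with the convention that $\sigma$ corresponds to $\fp$) forces the Hodge--Tate weight at $\ovl\fp$ to be $0$, which via a standard idelic calculation shows that $\psi_0(\ovl\fp)$ is a $p$-adic unit. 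Thus $f_{\psi_0}$ is ordinary, and the twisting argument gives (3) for $m > 1$.

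The main obstacle is (4), and I would handle it by using $p$-primitivity in two separate ways. First, I claim the split case of (4) is vacuous: if $p = \fp\ovl\fp$ in $K$ and both primes divide $\fm_0$, then $\mu_\fp \coloneqq \psi_0|_{K_\fp^\times}$ (viewed via $K_\fp \cong \Q_p$) has finite-order restriction to $\Z_p^\times$, which agrees with $\delta|_{\Z_p^\times}$ for some Dirichlet character $\delta$ of $p$-power conductor; then $\psi_0 \cdot (\delta^{-1} \circ \Norm_{K/\Q})$ has strictly smaller $\fp$-conductor, contradicting $p$-primitivity. Hence only the inert and ramified cases occur, where by assumption the unique $p$-adic place $\fp$ of $K$ divides $\fm_0$, so (i) gives that $f_{\psi_0}$ is of infinite slope. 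To upgrade this to $p$-supercuspidality I would apply Corollary \ref{infclass}: it suffices to show $f_{\psi_0}$ is not a twist of a finite slope eigenform by a Dirichlet character of $p$-power conductor. Equivalently, the local induction $\Ind_{G_{K_\fp}}^{G_{\Q_p}} \psi_0^{(p)}|_{G_{K_\fp}}$ must not be a principal series, which fails exactly when $\psi_0|_{K_\fp^\times}$ is invariant under $\Gal(K_\fp/\Q_p)$, equivalently factors as $\nu \circ \Norm_{K_\fp/\Q_p}$ for some character $\nu$ of $\Q_p^\times$. In that case, a Dirichlet character $\delta$ of $p$-power conductor matching $\nu|_{\Z_p^\times}$ on $\Z_p^\times$ would yield $\psi_0 \cdot (\delta^{-1}\circ\Norm_{K/\Q})$ with strictly smaller $\fp$-conductor, again violating $p$-primitivity. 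This establishes that $f_{\psi_0}$ is $p$-supercuspidal; the conclusion for $f_\psi = \delta f_{\psi_0}$ is immediate since $p$-supercuspidality of the local admissible representation is preserved under twists by Dirichlet characters.
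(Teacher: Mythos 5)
Your proof is correct. Parts (i) and (ii.1)--(ii.3) essentially rederive the conclusions of Remark~\ref{CMclass} (for (ii.3) you give a Hodge--Tate-weight argument where the Remark just observes directly that $a_p(f_{\psi_0})=\psi_0(\bar\fp)\in\Zp^\times$), and this matches the paper, which dismisses those cases as immediate from Remark~\ref{CMclass}.

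For (ii.4) you take a genuinely different route. The paper's proof is short and global: $f_{\psi_0}$ has infinite slope by (i), and if it were a $p$-power-conductor twist of a finite-slope form $g$, then $g$, being a twist of a form with CM by $K$, would itself be CM by $K$ (twisting commutes with induction), hence $g = f_{\psi_1}$ and $\psi_0 = \psi_1\cdot(\delta\circ\Norm_{K/\Q})$, contradicting $p$-primitivity; Corollary~\ref{infclass} then gives supercuspidality. You instead work locally: you first observe that $p$-primitivity rules out the split case entirely (if both $\fp,\bar\fp\mid\fm_0$ with conductor exponents $a,b\geq1$, a twist matching $\psi_0$ at $\fp$ reduces the $p$-part of $\Norm_{K/\Q}(\fm_0)$ from $p^{a+b}$ to at most $p^{\max(a,b)}$), then in the inert/ramified case invoke the standard criterion that $\Ind_{K_\fp}^{\Q_p}\chi$ is supercuspidal iff $\chi\neq\chi^c$, and rule out $\chi=\chi^c$ by a second application of $p$-primitivity. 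Both approaches are valid; yours is more explicit at $p$ and makes the vacuity of the split case transparent, while the paper's is shorter and avoids local representation theory entirely by leaning on the classification of CM forms. Two minor imprecisions worth flagging: (a) in the $\chi\neq\chi^c$ criterion you should apply it to the \emph{finite-order} local component of the classical Hecke character $\psi_0$, not the $p$-adic avatar $\psi_0^{(p)}$, whose restriction to $I_{K_\fp}$ has an infinite-order algebraic part (the conclusions agree, but the phrasing conflates the two); (b) your phrase ``strictly smaller $\fp$-conductor'' in the split case should be read as ``strictly smaller $p$-part of $\Norm_{K/\Q}(\fm_0)$,'' since the twist that kills the $\fp$-conductor can raise the $\bar\fp$-conductor --- the inequality $\max(a,b)<a+b$ is what actually closes the argument.
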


%\andr{still comparison with loewei is not so clear...???}

\begin{proof}
The only statement that does not follow trivially from Remark \ref{CMclass} is the last one. Remark \ref{CMclass} gives that if all of the $p$-adic places of $K$ divide $\fm_0$, then $f_{\psi_0}$ is of infinite slope. If $f_{\psi_0}$ were a twist of an eigenform $g$ of finite slope by a Dirichlet character $\delta$, then $g$ would have to be of the form $f_{\psi_1}$ for a Gr\"ossencharacter $\psi_1$ of $K$ satisfying $\psi_0=\psi_1\cdot(\delta\ccirc\Norm_{K/\Q})$, which is impossible since $\psi_0$ is $p$-primitive. Therefore, $f_{\psi_0}$ is $p$-supercuspidal, and its twist $f_\psi$ is also $p$-supercuspidal.
%Part (i) is an obvious consequence of Remark \ref{CMclass}. The first statement of part (ii) follows from Remarks \ref{CMclass} and Proposition \ref{lwclass}, and the second statement follows from (i) and Corollary \ref{infclass}.
\end{proof}

%\andr{move around grchars in two ways: injection of AQ and norm}

\begin{rem}
Using Proposition \ref{CMclassprop}, we can construct examples of both kinds of CM eigenforms of infinite slope. If $\psi=\psi_1\cdot(\delta\ccirc\Norm_{K/\Q})$ for a Gr\"ossencharacter $\psi_0$ of $K$ of conductor prime to $p$ and infinity type $(k-1,0)$, and a Dirichlet character $\delta$ of $p$-power conductor, then $f_\psi$ is a twist of an eigenform of finite slope with $\delta$.

To obtain instead a $p$-supercuspidal CM eigenform, start for instance with $K=\Q(\sqrt{-p})$ for a regular prime $p$. As before, write $\varphi_K$ for the quadratic Dirichlet character (of $\Z$) attached to the extension $K/\Q$. Choose $\psi$ such that the finite part of $\psi$, restricted to the group of principal fractional ideals of $K$, is the character $(\cO_K/\fp)^\times\to\C^\times$ obtained by composing the Dirichlet character $\varphi_K$ with the isomorphism $(\cO_K/\fp)^\times\to(\Z/p\Z)^\times$ induced by $\Z\into\cO_K$. %\andr{correct??}
%composition of the quadratic Dirichlet character \varphi_K attached to K/\Q with the norm. 
Any such $\psi$ is clearly $p$-primitive, so that $f_\psi$ is $p$-supercuspidal by Proposition \ref{CMclassprop}(ii.4). This agrees with the fact that $f_\psi$ fits in case (iii) of Proposition \ref{lwclass}: since $\eta$ coincides with $\varphi_K$ and $p$ is regular, $\vareps_p=\varphi_K^2$ is the trivial character modulo $p^2$. The admissible character $\omega_p$ associated with $\vareps$ is trivial, hence its conductor is 1. Since $M=p$, the resulting form $f_\psi$ has level $\Gamma_1(p^2)$ if $p\equiv -1\pmod{4}$, and $\Gamma_1(4p^2)$ if $p\equiv 1$ or $2\pmod{4}$. 
\end{rem}

%NEED to take modulus instead of conductor???

\subsection{CM families}

We define affinoid families of CM modular forms, with no prescription on the slope. It will turn out that their slope can only be 0 or infinite. The affinoid families we construct can be extracted from the $\Lambda$-adic families that Hida defines in \cite[Section 7.6]{hidaeis}, but we introduce them in a slightly different way.

In the following, $N$ denotes a positive integer prime to $p$.

\begin{defin}\label{CMfamdef} 
	A \emph{(weak) CM family of eigenforms} is a (weak) family of eigenforms $(f_i)_i$ such that every $f_i$ is a CM eigenform.
	
	A \emph{CM affinoid family of eigenforms} is an affinoid family of eigenforms $(U,\Theta,\Phi,S)$ such that, for every $x\in S$, the eigenform $f_x$ is CM.
\end{defin}

\begin{lemma}\label{CMfamrep}\mbox{ }
	\begin{enumerate}
		\item Let $\cF=(U,\Theta,\Phi,S)$ be an affinoid family of eigenforms, with associated pseudorepresentation $t_U$. 
		Then the following are equivalent:
		\begin{enumerate}
			\item $\cF$ is an affinoid CM family,
			\item $t_U$ is induced by a character of $G_K$, $K$ an imaginary quadratic field;
			\item there exists a dense subset $S^\prime$ of $U$ such that $\ev_x\ccirc t_U$ is induced for every $x\in S^\prime$.
		\end{enumerate}
		Moreover, under any of the above equivalent conditions, all of the classical specializations of $\cF$ are CM. 
		%\item If, for some $x\in U(\Qp)$, $\ev_x\ccirc\alpha$ is the Hecke eigensystem away from $Np$ of an eigenform $f_x$, then $f_x$ is CM.
%		\item Let $(f_i)_i$ be a weak CM family of eigenforms. Then $(f_i)_i$ is a family of eigenforms. %\andr{ALSO for affinoid is weak the same as fam???}
		\item Let $(f_i,\varphi_i)_i$ be a family of eigenforms. Then $(f_i,\varphi_i)_i$ is CM if and only if it is eventually a finite union of specializations of affinoid CM families.
	\end{enumerate}
\end{lemma}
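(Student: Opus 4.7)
My plan is to establish (i) through a cycle (a)$\Rightarrow$(c)$\Rightarrow$(b)$\Rightarrow$(a) and to deduce (ii) from (i) via Proposition~\ref{famlim}. For (a)$\Rightarrow$(c), I take $S'=S$: for each $x\in S$, the CM property of $f_x$ together with Lemma~\ref{indCM} gives that $\rho_{f_x}$ is induced from a character of $G_{K_x}$ for some imaginary quadratic $K_x$, so $\ev_x\ccirc t_U=\Tr(\rho_{f_x})$ is induced. The reverse (b)$\Rightarrow$(a) is similarly direct: if $t_U$ is $K$-induced with $K$ imaginary quadratic, every specialization $t_x$ is $K$-induced, and Lemma~\ref{indCM} combined with $K$ being imaginary forces each $f_x$ to be CM. The ``moreover'' assertion follows by the same argument applied to every classical specialization of $\cF$ in $U(\Qp)$, not only those in $S$.

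The main work lies in (c)$\Rightarrow$(b). I will apply Corollary/Definition~\ref{indclo}(ii) to $t_U\colon G_{\Q,Np}\to\cO_U(U)$: since $G_{\Q,Np}$ admits only finitely many index-$2$ subgroups (one per quadratic extension of $\Q$ unramified outside $Np$), the induced locus $U_\ind$ decomposes as a finite union $\bigcup_K U^{K-\ind}$ of closed subspaces, indexed by these finitely many quadratic extensions. The inclusions $S'\subseteq U_\ind\subseteq U$ with $S'$ dense force $U=U_\ind$, and the irreducibility of $U$ (built into the definition of an affinoid family) yields $U=U^{K-\ind}$ for a single quadratic field $K/\Q$, so that $t_U$ is $K$-induced. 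To see that $K$ is imaginary, I will exhibit a classical specialization $f_x$ at some $x\in S$ of weight $k\ge 2$: since the image of $\omega_\cF\colon U\to\cW_\Q$ is positive-dimensional, while weight-$1$ classical points with bounded wild nebentypus conductor form only a finite subset of any affinoid subspace of $\cW_\Q$ (following the same computation as in Remark~\ref{condbound}), the density of $S$ in $U$ prevents $S$ from being confined to weight-$1$ specializations. Applying the last clause of Lemma~\ref{indCM} to such an $f_x$ forces $K$ to be totally imaginary quadratic.

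For (ii), the direction ``finite union of specializations of CM affinoid families $\Rightarrow$ CM family'' is immediate from the definitions. For the converse, given a CM family $(f_i,\varphi_i)_i$, I invoke Proposition~\ref{famlim} to partition $\N=I_0\sqcup\cdots\sqcup I_m$ with $I_0$ finite, so that each $(f_i,\varphi_i)_{i\in I_j}$ is realized as a specialization of a $1$-dimensional affinoid family $\cF_j=(U_j,\Theta_j,\Phi_j,S_j)$ at points $\{x_{i,j}\}_{i\in I_j}\subseteq S_j$. Proposition~\ref{famlim}(ii) supplies an accumulation point for the $x_{i,j}$ in $U_j$, so by Lemma~\ref{dense} they are Zariski-dense in the $1$-dimensional affinoid $U_j$. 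Since every $f_i$ is CM, this dense subset satisfies condition (c) of part (i), whence $\cF_j$ is a CM affinoid family.

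The main obstacle is the final step of (c)$\Rightarrow$(b): passing from the pseudorepresentation being induced to specifying that the inducing field is imaginary. This is not automatic from the Galois-side condition and requires the automorphic input from Lemma~\ref{indCM} at a classical specialization of weight $\ge 2$. Verifying the existence of such a specialization in $S$ rests on the boundedness of the wild nebentypus conductor across an affinoid subspace of weight space, combined with the density of $S$ in the positive-dimensional affinoid $U$.
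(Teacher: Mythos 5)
Your proposal follows the same cyclic implication structure (a)$\Rightarrow$(c)$\Rightarrow$(b)$\Rightarrow$(a) as the paper, and your treatment of (a)$\Rightarrow$(c), (b)$\Rightarrow$(a), the ``moreover'' clause, and part (ii) via Proposition~\ref{famlim} agrees with the paper's. The first half of (c)$\Rightarrow$(b) --- passing from induced specializations on a dense set to $t_U$ being $K$-induced for a single quadratic $K$ --- also matches: you invoke the ``Moreover'' clause of Corollary/Definition~\ref{indclo} via the finiteness of index-$2$ subgroups of $G_{\Q,Np}$, whereas the paper reaches the same finite list of candidate $K$'s by noting $\ovl{t}_U$ is induced; either way, closedness of each $U^{K\textrm{-}\ind}$ together with density of $S'$ and irreducibility of $U$ singles out one $K$.

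The divergence, and the gap, is in showing $K$ is imaginary. You propose to exhibit some $x\in S$ of weight $\ge 2$ and apply the last clause of Lemma~\ref{indCM}. The justification offered --- that ``the image of $\omega_\cF\colon U\to\cW_\Q$ is positive-dimensional'' --- is not established anywhere and does not follow from the definition of an affinoid family: condition~\ref{aff2} only guarantees that $\pi_\cF^\ps$ is finite onto its ($1$-dimensional) image in $\fR_{\ovl t}$, which does not prevent $\det\circ\pi_\cF^\ps$, or the Hodge--Tate--Sen weight function $\kappa_\cF$, from being constant on that image. In particular, if $\kappa_\cF$ were constant equal to $0$, every specialization in $S$ would have weight $1$ and the weight-$\ge 2$ clause of Lemma~\ref{indCM} would be inapplicable. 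The paper instead argues directly on the Galois side: using that one Hodge--Tate--Sen weight of $t_U$ is identically $0$ (Remark~\ref{kappa}) and that for $K$ real quadratic (where Leopoldt is known) the pseudodeformation space of $\ovl\chi$ is zero-dimensional modulo cyclotomic twists, the weight-zero constraint fixes the cyclotomic twist, forcing $t_U$ to have only finitely many distinct specializations --- incompatible with $U$ being positive-dimensional and $\pi_\cF^\ps$ finite. Your route through Lemma~\ref{indCM} is sound once a weight-$\ge 2$ point is in hand, but manufacturing one appears to require essentially the same deformation-theoretic count you are trying to bypass, at which point the paper's argument already concludes.
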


\begin{proof}
	By Proposition \ref{extract}, $U$ is 1-dimensional. 
	The implication (a)$\implies$ (c) of part (i) is obvious: simply take $S^\prime=S$. If (b), holds, then $\ev_x\ccirc t_U$ is induced for every $x\in S$, so that $f_x$ is a CM form for every $x$, giving (a); the same argument gives the ``moreover'' statement. Now assume that (c) holds. Let $\ovl t_U$ be the residual pseudorepresentation attached to $t_U$; since $S^\prime$ is non-empty, $\ovl t_U$ is induced by a character of $G_K$, $K$ a quadratic extension of $\Q$, and there is only a finite number of choices for such an $K$ (see for example \cite[Lemma 2.4.4]{calsar}). For every such $K$, the $K$-induced locus in $U$ is closed by Corollary \ref{indclo}(ii). If (c) holds, there must be an $K$ for which the $K$-induced locus coincides with $U$, so that $t_U$ is induced. If $K$ is real, the deformation space of a residual character only has a finite number of points up to cyclotomic twists, and since one of the Hodge--Tate--Sen weights of $t_U$ is constant equal to 0, $t_U$ cannot be induced from a character of $G_K$. Therefore, $K$ has to be imaginary, giving (b). %\andr{specify that pointwise induced implies globally induced???} 
	This completes the proof of (i).
	
	By Proposition \ref{famlim}, every family of eigenforms $(f_i,\varphi_i)_i$ is eventually a finite union of specializations of affinoid families, so (ii) follows immediately from (i).
\end{proof}

%Let $F$ be an imaginary quadratic field with a unique $p$-adic place $\fp$. Let S be the set of places of F dividing N, and \ovl\psi\colon G_{F,S}\to \F^\times a continuous character with values in a finite field \F of characteristic $p$. 
%The universal deformation ring $\calR_{\ovl\chi}$ of $\ovl\chi$ is isomorphic to $W(\F)[[G_{F,S}^{\ab,\pro-p}]]$, where $G_{F,S}^{\ab,\pro-p}$ denotes the pro-$p$ completion of the abelianization of $G_{F,S}$. Since $F$ is an abelian extension of $\Q$, Leopoldt's conjecture is known for $F$ and implies that the $\Z_p$-rank of $G_{F,S}^{\ab,\pro-p}$ is $2$, given that $F$ only has one complex place. Let \fR_{\ovl\chi} be the rigid analytic fiber of \Spf\cR_{\ovl\chi}, and let \fR^0_{\ovl\chi} be the closed subspace parameterizing deformations for which one of the two Hodge--Tate--Sen weights vanishes. 
%
%Let $\cW_{F,1}^0$ be the Zariski closure in $\cW_{F,1}$ of the points $\bP_{k}$??? $\cO_{K,p}\to\C_p^\times$ of the form $\vareps\cdot\sigma^k$ for some $k\in\Z_{\ge 2}$ explicit closure??? CHECK notation
%Let $\cE_{K,1}^0=\cE_{K,1}\times_{\cW_{K,1}}\cW_{K,1}^0$. The restriction of the weight map to $\cE_{K,1}^0$ is a finite map $\cE_{K,1}^0\to\cW_{K,1}^0$. \andr{compare with Sennull subspace???}
%By definition, the classical points of $\cE_{K,1}^0$ are those corresponding to algebraic Gr\"ossencharacters of infinity type $(k-1,0)$.

Let $K$ be an imaginary quadratic field. %with a unique $p$-adic place $\fp$. 
Let $\cW_{K,1}$ and $\cE_{K,1}$ be the weight space and eigenvariety of tame level $N$ for $\GL_{1/K}$ introduced in Section \ref{GL1}. Let $\sigma_1,\sigma_2\colon\cO_{L,p}^\times\to\C_p^\times$ be the group homomorphisms attached to the two embeddings $L\into\C_p$, and let $\cW_{K,1}^0$ be the Zariski closure of the set of classical weights corresponding to homomorphisms $\cO_{L,p}^\times\to\C_p^\times$ that coincide with $\sigma_i^n$, for some $i\in\{1,2\}$ and $n\in\Z$, over some subgroup of finite index of $\cO_{L,p}^\times$.

Let $\cE_{K,1}^0=\cE_{K,1}\times_{\cW_{K,1}}\cW_{K,1}^0$. The restriction of the weight map to $\cE_{K,1}^0$ is a finite map $\cE_{K,1}^0\to\cW_{K,1}^0$. By definition, the classical points of $\cE_{K,1}^0$ are those corresponding to algebraic Gr\"ossencharacters of infinity type $(k-1,0)$ for one of the two orderings of the embeddings $L\into\C_p$. %\andr{say somewhere that accumulation implies dense in every aff??? need 1-dimensional or something??? which not true in nonsplit case??? also check dimensions???}

Since the Leopoldt conjecture holds for the quadratic field $K$, $\cE_{K,1}^0$ is 1-dimensional, and the set of classical weights is a dense and accumulation subset of $\cW_{K,1}$. Since a point of $\cE_{K,1}$ is classical if and only if its weight is classical, and the weight map is finite, the set of classical points is dense and accumulation in $\cE_{K,1}$. The same statements hold with $\cW_{K,1}$ and $\cE_{K,1}$ replaced with $\cW_{K,1}^0$ and $\cE_{K,1}^0$. 

Let $U$ be an integral affinoid subdomain of $\cE_{K,1}^0$ containing a classical point, and let $\psi_U\colon J_K(Np^\infty)\to\cO_U(U)^\times$ be the restriction to $U$ of the universal Gr\"ossencharacter of $\cE_{K,1}$. %Let $\fm$ be the 
%If $p$ splits in $K$, assume that only one of the two $p$-adic places of $K$ divides the modulus of $\psi_U$. 
%Note that if this condition is satisfied at a smooth point of $\cE_{K,1}$, then it is satisfied on the whole irreducible component of $\cE_{K,1}$ containing it. \andr{true?? why???}
If $p$ is ramified in $K$, fix a uniformizer $\pi$ of $E=K\otimes_\Q\Q_p$; otherwise, simply set $\pi=p$. 
We define an affinoid family of eigenforms $\cF_{\psi_U}=(U,\Theta,\Phi,S)$ as follows:
%Let U be an affinoid subdomain of $\calR_{\ovl\chi}^0$?? and $\psi\colon\A_{F}^\times\to\cO_U(U)^\times$ the Gr\"ossencharacter attached to U??.
%Consider the tuple $\cF_{\psi}=(U,\alpha^{Np},\varphi_1,S)$ where:
\begin{itemize}
%\item $U$ is an irreducible affinoid subspace of $\calR_{\ovl\chi}^0$ containing a point $x$ such that the Hodge--Tate--Sen weights of $\chi_x$ are $(0,k-1)$ for some $k\geq 2$; %$\cW_{F_v}^0$,
\item $\Theta$ is the Hecke eigensystem $\Theta(\psi_U)$, away from $Np$, that maps the Hecke operator $T_\ell$ to
\[ a_\ell(\psi_U)=\sum_{\substack{\fa\textrm{ ideal of }\cO_K,\\ \Norm_{K/\Q}{\fa}=\ell,\,(\fa,Np)=1}}\psi_U(\fa). \] %for every integer $n>0$ prime to $Np$, $\alpha^{Np}(T_n)=\sum_{\fa\textrm{ ideal of }\cO_F,\\ \Nm{\fa}=n\,(\fa,\fm)=1}\psi_U(\fa)$, where $\Nm$ denotes the ideal norm, \andr{explain why this gives an eigensystem!!}
\item $\Phi=\psi_U(\pi)$, where we are looking at $\psi_U$ as a character on the idèles group of $K$; %for the $p$-adic place $\fp$ of $K$ dividing the norm of the conductor of $\psi_U$; %if p splits as \fp\ovl\fp in K and the modulus of \psi_U divides \fn\fp^\infty, and \varphi_1= otherwise;
\item $S$ is the set of classical points $x$ of $U$. %(equivalently, the points corresponding to ... infinity type of \psi_{U,x} is (k-1,0) with k\in\Z_{\ge 2}). %the Hodge--Tate--Sen weights of $\chi_x$ are $(0,k-1)$ with $k\geq 2$. %classical weights in $\cW_{F_v}^0$.
\end{itemize}
%Are $\chi$ and $\chi^c$ residually distinct? Is Ind injective? Otherwise this may not be a family...
%Do DG want to say something of Gouvea-Mazur type? But slope is infty?
For every point $x$ of $U$, we denote by $\psi_x$ the $p$-adic Gr\"ossencharacter obtained by specializing $\psi_U$ at $x$.

Note that the value of $\Phi$ depends on the choice of uniformizer $\pi$; this is not surprising since the Frobenius eigenvalues of an eigenform also depend on such a choice.

\begin{prop}
The tuple $\cF_{\psi_U}$ is a (1-dimensional) CM affinoid family of eigenforms.
\end{prop}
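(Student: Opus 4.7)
The plan is to produce the required auxiliary Galois data from $\psi_U$ by induction, and then to verify each bullet point of Definition \ref{afffamdef} together with conditions \ref{aff1} and \ref{aff2} separately. First, $U$ is $1$-dimensional because $\cE_{K,1}^0$ is (it is a finite cover of the $1$-dimensional $\cW_{K,1}^0$). Via global class field theory for $K$, the universal Gr\"ossencharacter $\psi_U$ corresponds to a continuous Galois character $\chi_{\psi_U}\colon G_{K,Np}\to\cO_U^\circ(U)^\times$, and one defines
\[ \rho_U=\Ind_{G_K}^{G_\Q}\chi_{\psi_U}\colon G_{\Q,Np}\to\GL_2(\cO_U^\circ(U)),\qquad t_U=\Tr\rho_U. \]
By construction $t_U$ is a $K$-induced, continuous, $2$-dimensional pseudorepresentation. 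The formula $a_\ell(\psi_U)=\Tr\Ind_{G_K}^{G_\Q}\chi_{\psi_U}(\Frob_\ell)$ at unramified primes $\ell\nmid Np$ is a standard unwinding of the definition of induction (a sum over the primes of $K$ above $\ell$), which yields condition \ref{aff1}: $t_U(\Frob_\ell)=\Theta(T_\ell)$ for all $\ell\nmid Np$.

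Next, for $x\in S$, the specialization $\psi_x$ is an algebraic Gr\"ossencharacter of $K$ of some infinity type $(k_x-1,0)$, so by Proposition \ref{psiform} it produces a CM eigenform $f_{\psi_x}$ of some level $Np^{r_x}$ whose Hecke eigensystem outside of $Np$ sends $T_\ell$ to $a_\ell(\psi_x)=\ev_x\ccirc\Theta(T_\ell)$, as desired. Since CM representations are induced from characters, $\rho_{f_{\psi_x},p}\vert_{G_{\Q_p}}$ is potentially crystalline, so its monodromy is trivial and every Frobenius eigenvalue yields an $N$-stable, hence admissible, refinement; a direct computation shows that $\psi_x(\pi)$ is one such Frobenius eigenvalue (the action of Frobenius on $D_\pst(\Ind_{G_K}^{G_\Q}\chi_{\psi_x}\vert_{G_{\Q_p}})$ is computed, via the choice of uniformizer $\pi$ of $E$, by the values of $\chi_{\psi_x}$ on a Frobenius lift). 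Nowhere-vanishing of $\Phi$ is immediate because $\psi_U$ takes values in $\cO_U^\circ(U)^\times$. Density of $S$ follows from Lemma \ref{dense}: classical points are dense in $\cE_{K,1}^0$ by definition, the affinoid $U$ contains by assumption a classical point, and hence meets $S$ in an infinite set, which, on the $1$-dimensional integral affinoid $U$, is automatically dense.

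The main obstacle is verifying condition \ref{aff2}, namely that the induced map $\pi^\ps_\cF\colon U\to\fR_{\ovl t_U}$ is finite onto its image. The idea is to factor it through the deformation space of the character. Let $\ovl\chi$ be the residual character attached to $\chi_{\psi_U}$. The universal property of $\fR_{\ovl\chi}$ applied to $\chi_{\psi_U}$ produces a rigid analytic map $g\colon U\to\fR_{\ovl\chi}$, which is a closed embedding because distinct points of $\cE_{K,1}^0$ correspond (via class field theory) to distinct continuous characters of $G_{K,Np}$, and $\cE_{K,1}$ realizes a closed subspace of the character deformation space. On the other hand, the operation of induction defines a rigid analytic morphism $\mathrm{Ind}\colon\fR_{\ovl\chi}\to\fR_{\ovl t_U}$ (factoring through the closed subspace $\fR_{\ovl t_U}^{K-\ind}$ of Corollary/Definition \ref{indclo}), sending a deformation $\chi$ of $\ovl\chi$ to the trace of $\Ind_{G_K}^{G_\Q}\chi$. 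Two characters $\chi_1,\chi_2$ have the same image under $\mathrm{Ind}$ if and only if $\chi_2\in\{\chi_1,\chi_1^c\}$, where $c$ is the nontrivial element of $\Gal(K/\Q)$; hence $\mathrm{Ind}$ is a finite map of degree $\le 2$. The composition $\mathrm{Ind}\ccirc g$ is precisely $\pi^\ps_\cF$ by the universal property of $\fR_{\ovl t_U}$ applied to $t_U$, so $\pi^\ps_\cF$ is the composition of a closed embedding with a finite map, hence finite onto its image.

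Having verified all conditions of Definition \ref{afffamdef} and \ref{aff1}--\ref{aff2}, the tuple $\cF_{\psi_U}=(U,\Theta,\Phi,S)$ is an affinoid family of eigenforms, and it is CM because for every $x\in S$ the specialization $f_x=f_{\psi_x}$ is a CM eigenform.
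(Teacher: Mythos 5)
Your proof follows essentially the same route as the paper's — interpolate the Grössencharacters and Hecke eigenvalues directly, realize $t_U$ as $\Tr\Ind_{G_K}^{G_\Q}\chi_{\psi_U}$, and check \ref{aff1}, \ref{aff2} — and you helpfully spell out the factorization $\pi^\ps_\cF = \mathrm{Ind}\ccirc g$ through $\fR_{\ovl\chi}$ that the paper leaves implicit. The one step that needs more care is the inference that $\mathrm{Ind}\colon\fR_{\ovl\chi}\to\fR_{\ovl t_U}$ is ``a finite map of degree $\le 2$'' merely because its fibers have at most two points: a rigid analytic morphism with finite fibers is quasi-finite but need not be finite (an open immersion has singleton fibers). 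To close this, argue at the level of deformation rings: for every $g\in G_K$ the element $\chi^\univ(g)\in R_{\ovl\chi}$ satisfies the monic relation $X^2 - t^\univ(g)X + d_{t^\univ}(g)=0$ over the image of $R_{\ovl t_U}$ (the coefficients are the trace and determinant of $\Ind\chi^\univ$ restricted to $G_K$), and since $R_{\ovl\chi}$ is topologically generated over $W$ by $\chi^\univ(g_1),\dots,\chi^\univ(g_d)$ for finitely many $g_i$ generating the pro-$p$ abelianization of $G_{K,Np}$, this exhibits $R_{\ovl\chi}$ as a module-finite extension, whence $\mathrm{Ind}$ is finite and the composite with the closed embedding $g$ is finite onto its image. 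The paper's own justification of \ref{aff2} is equally terse (``locally a closed embedding''), so this is a point worth filling in rather than a defect particular to your write-up; the rest (dimension of $U$, \ref{aff1} via the unwinding $t_U(\Frob_\ell)=a_\ell(\psi_U)$, admissibility of $\psi_x(\pi)$ via the description of $\rho_{f_{\psi_x},p}\vert_{G_K}$, density of $S$, nonvanishing of $\Phi$, and the CM conclusion) is correct and matches the paper.
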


\begin{proof}
The set $S$ is dense in $U$, since $S$ is a dense and accumulation subset of $\cE_{K,1}^0$. For every $x\in S$, $\ev_x\ccirc\alpha^{Np}$ is the Hecke eigensystem outside $Np$ of the CM eigenform $f_{\psi_x}$. Since the restriction to $G_K$ of the $p$-adic Galois representation $\rho_{f_{\psi_x},p}$ is the direct sum of the character of $G_K$ attached to $\psi_x$ and its conjugate, $\Phi(x)=\psi_x(\fp)$ is one of the eigenvalues of the potentially semistable Frobenius of $f_{\psi_x}$. 

We attach to $\psi_U$ a continuous character $\chi_U\colon G_K\to\cO_U(U)^\times$ via global class field theory, and the pseudorepresentation $t_U=\Ind_{G_K}^{G_\Q}\chi_U$ interpolates the traces of the representations $\rho_{f_{\psi_x},p}$ when $x$ varies over $S$. The resulting map to the universal deformation ring of $\ovl t_U$ is finite, since the map from the eigencurve $\cE_{F,1,\ovl t_U}$ to the deformation ring of $\ovl t_U$ is locally a closed embedding.
\end{proof}

%Two families $\cF_{U_1}$ and $\cF_{U_2}$ are equivalent if and only if the corresponding Gr\"ossencharacters $\psi_{U_1}$ and $\psi_{U_2}$ either coincide or satisfy $\psi_{U_2}??=\psi_{U_1}??$ right conjugation???

%Clearly, $\cF_{U,\chi}=\cF_{U,\chi^\prime}$ for two characters $\chi,\chi^\prime\colon G_F\to\Qp^\times$ if and only if $\ovl\chi\cong\ovl\chi^\prime$ and $U=U^\prime$. %Maybe under isom of the univ def sp? %have isomorphic universal Sen-null deformations (that is equivalent to having isomorphic universal deformations).

\begin{prop}\label{CMfam}
All CM affinoid families of eigenforms of tame level $N$ have the form $\cF_{\psi_U}$ for some choice of $K$ and $U$ as above (up to modifying the choice of the set $S$). %an imaginary quadratic field $F$, an affinoid space $U$ over?? and a Gr\"ossencharacter $\psi\colon\A_{F}^\times\to\cO_U(U)^\times$ of conductor dividing Np??.
\end{prop}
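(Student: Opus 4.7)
\emph{Proof plan.} The strategy is to reverse the construction of $\cF_{\psi_U}$: starting from a CM affinoid family $\cF=(U,\Theta,\Phi,S)$ of tame level $N$, we produce an imaginary quadratic field $K$, an integral affinoid subdomain $U^\prime$ of $\cE_{K,1}^0$, and a Gr\"ossencharacter family $\psi_{U^\prime}$ whose associated affinoid CM family is $\cF$ (up to replacing $S$).

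First, by Lemma \ref{CMfamrep}(i), the pseudorepresentation $t_U$ attached to $\cF$ is induced from an imaginary quadratic field $K$, so that $t_U=\Ind_{G_K}^{G_\Q}\chi_U$ for a continuous character $\chi_U\colon G_K\to\cO_U(U)^\times$. There are two choices, $\chi_U$ and its conjugate $\chi_U^c$; the nowhere vanishing function $\Phi$ will determine which one to pick, since at each $x\in S$ the admissible Frobenius eigenvalue $\Phi(x)$ selects one of the two unramified (or potentially unramified) characters appearing in $\rho_{f_x,p}\vert_{G_{\Q_p}}$ on the chosen side of $K$. By global class field theory, $\chi_U$ corresponds to a continuous idelic Gr\"ossencharacter $\psi_U\colon\A_K^\times/K^\times\to\cO_U(U)^\times$, and one checks that the formula $\Theta(T_\ell)=a_\ell(\psi_U)$ holds on the dense subset $S$, hence on all of $U$, since both sides are rigid analytic and classical CM eigenforms have Hecke eigenvalues given precisely by these sums over ideals.

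Next, I would show that $\psi_U$ factors through a fixed tame level $\cU$. At each $x\in S$, $\psi_x$ is an algebraic Gr\"ossencharacter of infinity type $(k_x-1,0)$ (with respect to one of the two orderings of the embeddings $K\into\C_p$) whose conductor away from $p$ divides a divisor of $N\fd_K$, where $\fd_K$ is the discriminant of $K$; its $p$-part is bounded by Lemma \ref{nebenbound} combined with Remark \ref{condbound} applied to the universal character on $\wtl\cW_F^\ast$. Thus the conductors of the $\psi_x$, $x\in S$, are uniformly bounded, which by density forces $\psi_U$ itself to have bounded tame conductor, i.e., to factor through an open compact $\cU$ of $(\A_K^{p,\infty})^\times$ that matches (after possibly enlarging $N$ or allowing auxiliary level) the tame level used to define $\cE_{K,1}$. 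The universal property of $\cE_{K,1}$ now provides a rigid analytic morphism $\iota\colon U\to\cE_{K,1}$ pulling back the universal Gr\"ossencharacter to $\psi_U$. The dense subset $\iota(S)$ of $\iota(U)$ consists of classical points whose weights lie in $\cW_{K,1}^0$, and since $\cE_{K,1}^0=\cE_{K,1}\times_{\cW_{K,1}}\cW_{K,1}^0$ is Zariski closed in $\cE_{K,1}$, we conclude that $\iota$ factors through $\cE_{K,1}^0$.

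Finally, since $\pi^\ps_\cF$ is finite onto its image (condition \ref{aff2}) and $\iota$ factors through this map composed with the natural morphism from $\cE_{K,1}^0$ into the pseudodeformation space of $\ovl t_U$, the image $U^\prime=\iota(U)$ is an integral affinoid subdomain of $\cE_{K,1}^0$ and $\iota$ is a closed immersion onto $U^\prime$. Identifying $U$ with $U^\prime$, the Hecke eigensystems agree as already observed, and $\Phi(x)=\psi_x(\pi)$ on $S$ by the explicit description of $D_\pst(\rho_{f_{\psi_x},p}\vert_{G_{\Q_p}})$ in each of the three splitting behaviours of $p$ in $K$, whence $\Phi=\psi_{U^\prime}(\pi)$ on all of $U^\prime$ by density and analyticity. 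Thus $\cF$ equals $\cF_{\psi_{U^\prime}}$ up to modifying $S$.

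The main obstacle I anticipate is the bookkeeping in the second paragraph: ensuring that the choice of $\chi_U$ versus $\chi_U^c$, the choice of uniformizer $\pi$ of $E$, and the prime of $K$ above $p$ used to read off $\Phi(x)$, are mutually consistent at every classical point, and that the tame conductor bound truly lands $\psi_U$ inside a single eigenvariety $\cE_{K,1}$. Once this consistency is fixed, the rigidity provided by density of $S$ and the universal property of $\cE_{K,1}$ makes the remaining identifications automatic.
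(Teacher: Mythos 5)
Your proposal is correct and follows essentially the same route as the paper's proof, which condenses everything after the appeal to Lemma \ref{CMfamrep} and global class field theory into the phrase ``a simple check shows that $\cF=\cF_{\psi_U}$.'' You have usefully unpacked that check: the identification $\Theta(T_\ell)=a_\ell(\psi_U)$ by density, the conductor bound needed to land $\psi_U$ in a single eigenvariety $\cE_{K,1}$ of fixed tame level, the factoring through $\cE_{K,1}^0$, and the use of condition \ref{aff2} to identify $U$ with an affinoid subdomain of $\cE_{K,1}^0$. The consistency bookkeeping you flag at the end (choice of $\chi_U$ versus $\chi_U^c$, of uniformizer, of prime above $p$) is real but benign, since swapping $\chi_U$ for $\chi_U^c$ simply swaps the two admissible Frobenius eigenvalues and $\Phi$ pins down the correct choice, exactly as you say.
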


\begin{proof}
Let $\cF=(U,\Theta,\Phi,S)$ be a CM family of eigenforms. By Lemma \ref{CMfamrep}, the pseudorepresentation $t_U$ is induced by a character $\chi_U\colon G_K\to\cO_U(U)^\times$ for a quadratic field $K$. Via global class field theory, we can attach to $\chi_U$ a Gr\"ossencharacter $\psi_U\colon J_K(N)\to\cO_U(U)^\times$. Then a simple check shows that $\cF=\cF_{\psi_U}$, up to modifying $S$. %In particular, the Hecke eigensystems away from $Np$ of the classical (CM) specializations of $U$ are interpolated by ?????
%The whole family is then induced by $\chi_U$. %True???
\end{proof}

\begin{prop}\label{CMinf}
	With the notation of Proposition \ref{CMfam}, let $\cF=\cF_{\psi_U}=(U,\Theta,\Phi,S)$ be an affinoid CM family. Then $\cF$ satisfies exactly one of the following:
	\begin{enumerate}
		\item $\cF$ is a CM affinoid Hida family, in which case all of its classical specializations are ordinary CM eigenforms; 
		\item $\cF$ is a twist of a CM affinoid Hida family with a Dirichlet character of conductor divisible by $p$, in which case all of its classical specializations are $p$-twists of CM eigenforms of finite slope;
		\item $\cF$ admits a $p$-supercuspidal specialization, in which case all except a finite number of classical specializations of $\cF$ are $p$-supercuspidal CM eigenforms.
	\end{enumerate}
\end{prop}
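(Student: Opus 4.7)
By Proposition \ref{CMfam}, we may assume $\cF=\cF_{\psi_U}$ for an integral affinoid $U\subset\cE_{K,1}^0$ with universal Gr\"ossencharacter $\psi_U$, and let $\chi_U\colon G_K\to\cO_U(U)^\times$ be the associated Galois character. The plan is to analyze $\psi_U$ (equivalently $\chi_U$) at each $p$-adic place of $K$, use Proposition \ref{CMclassprop} to identify which of the three cases of Proposition \ref{CMinf} each classical specialization falls into, and then show by a rigidity argument that the case is constant along $U$ up to finitely many exceptions.

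First I would set up the decomposition at $p$. Pick any classical specialization $x_0\in S$ and write $\psi_{x_0}=\psi_{x_0}^0\cdot(\delta_{x_0}\ccirc\Nm_{K/\Q})$ with $\psi_{x_0}^0$ a $p$-primitive Gr\"ossencharacter of $K$ of conductor $\fm_{x_0}^0$, and $\delta_{x_0}$ a Dirichlet character of $p$-power conductor. Proposition \ref{CMclassprop} tells us $f_{\psi_{x_0}}$ falls into exactly one of cases (ii.1)--(ii.4) there; the case that occurs is determined by (a) which $p$-adic places of $K$ divide $\fm_{x_0}^0$, and (b) whether $\delta_{x_0}$ is trivial. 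Matching with the statement of Proposition \ref{CMinf}: Case 1 corresponds to $p$ splitting in $K$, $\fm_{x_0}^0$ divisible by exactly one $p$-adic place, and $\delta_{x_0}$ trivial; Case 2 is the same except $\delta_{x_0}$ nontrivial; Case 3 corresponds to both $p$-adic places of $K$ dividing $\fm_{x_0}^0$ (which is automatic if $p$ is inert or ramified in $K$, by (ii.4)).

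Next I would show that the datum $(\fm^0, \delta)$ is constant on $U$ up to a finite number of exceptions, which is the heart of the argument. For the $p$-power part of the conductors, I would invoke Remark \ref{condbound} applied to the restrictions $\chi_U\vert_{I_{K_\fp}}$ for $\fp\mid p$: the affinoid image of $U$ in weight space forces a uniform bound on the orders (hence on the conductors) of the finite-order local characters obtained at classical specializations. Likewise, the tame conductor is bounded by $N$. Since only finitely many Dirichlet characters $\delta_x$ of bounded conductor exist, density of $S$ in $U$ and the fact that for each fixed $\delta$ the locus of $x$ with $\delta_x=\delta$ is closed in $U$ (using the rigidity of the universal character on $\wtl\cW_F^\ast$ as in the proof of Lemma \ref{untwistfam}) imply, since $U$ is irreducible, that one $\delta$ occurs on all but finitely many $x\in S$. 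The same reasoning applied to $\chi_U\vert_{I_{K_\fp}}$ for each $\fp\mid p$ shows that the set of $\fp\mid p$ dividing $\fm_x^0$ is also eventually constant along $U$.

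Finally I would translate the three possibilities back into the structural statement of Proposition \ref{CMinf}. If the eventually-constant datum puts us in case (ii.3) of Proposition \ref{CMclassprop} with $\delta$ trivial, then all classical specializations (except finitely many) are ordinary CM, and Proposition \ref{fsequiv}(i) applied to the 1-dimensional affinoid family $\cF$ shows that $\cF$ is a Hida or Coleman family of eigenforms; the slope being $0$ on a dense set forces it to be a Hida family, yielding Case 1. If $\delta$ is nontrivial of $p$-power conductor, Proposition \ref{ptwequiv}(i) identifies $\cF$ as the twist by $\delta$ of a CM Hida family, yielding Case 2. If all $p$-adic places of $K$ eventually divide $\fm^0$, Proposition \ref{CMclassprop}(ii.4) shows that almost every classical specialization is $p$-supercuspidal, giving Case 3. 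The three cases are mutually exclusive since finite slope (Cases 1, 2 after untwisting) and $p$-supercuspidal (Case 3) are mutually exclusive by Corollary \ref{infclass}.

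The main obstacle I anticipate is making the rigidity step precise: concretely showing that the combinatorial datum $(\{\fp\mid \fm_x^0\},\delta_x)$ is locally constant on $U$, not merely bounded. This requires identifying each possible value with a Zariski-closed subspace of $U$ (an intersection of loci where certain restrictions of $\chi_U$ to inertia subgroups match a specific finite-order character), and then invoking irreducibility of $U$ together with the density of $S$. All the other steps are essentially bookkeeping once this rigidity is in hand.
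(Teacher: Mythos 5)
Your approach is genuinely different from the paper's, and a bit heavier. The paper avoids analyzing the Gr\"ossencharacter decomposition altogether: it notes that by Corollary \ref{infclass} every classical specialization is exactly one of (finite slope), ($p$-twisted), or ($p$-supercuspidal), applies pigeonhole over these three types, and then invokes Propositions \ref{fsequiv} and \ref{ptwequiv} directly. The remaining point is why the finite-slope case yields a Hida and not a Coleman family: by Remark \ref{CMclass} the slopes of classical CM specializations can only be $0$, $(k-1)/2$, or $k-1$, and since the slope is locally constant along $\cF$ while the weight $k$ is not, the slope must be $0$.

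There is a genuine gap in your matching step. You align case (1) of the Proposition with ``$p$ split, exactly one $p$-adic place divides $\fm^0$, $\delta$ trivial,'' and case (3) with ``all $p$-adic places divide $\fm^0$,'' claiming the latter is automatic when $p$ is inert or ramified. That parenthetical is incorrect: Proposition \ref{CMclassprop}(ii.4) has the hypothesis $p\mid\Norm_{K/\Q}(\fm_0)$, and $\psi_0$ may well be unramified at $p$ (cases (ii.1) and (ii.2), or case (1)--(2) of Remark \ref{CMclass}), which your dichotomy silently omits. If $p$ is inert or ramified in $K$ and $\psi_0$ is unramified at $p$, all classical specializations have slope $(k-1)/2$ or $k-1$; if $p$ splits and $\psi_0$ is unramified at $p$, the $p$-stabilizations have slope $0$ or $k-1$. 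None of your cited lemmas address this. What is needed here is exactly the observation — present in the paper's proof but absent from yours — that the slope is locally constant along the affinoid family while $k$ varies unboundedly, which both rules out the non-ordinary slope and shows that case (ii.1)/(ii.2) with $p$ split lands you back in case (1)/(2) of the Proposition. Your rigidity step (constancy of the finite-order local character via Remark \ref{condbound} and irreducibility of $U$) is fine, but it does not by itself replace the slope argument, and as stated the case analysis does not close.
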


In case (iii), we say that $\cF$ is a \emph{$p$-supercuspidal CM family}. %Note that Proposition \ref{CMinf} implies that in order to check whether $\cF$ belongs to (i), (ii) or (iii), it is enough to check whether one classical specialization is of finite slope, a $p$-twist of an eigenform of finite slope, or $p$-supercuspidal.

\begin{proof}
If $\cF$ admits an infinite number of specializations $f_{\psi_x}$ of finite slope, then it is an affinoid family of finite slope by Proposition \ref{fsequiv}. From the classification in Remark \ref{CMclass}, we read that if the slope of $f_{\psi_x}$ (or its $p$-stabilization if $f_{\psi_x}$ has level prime to $p$) is finite, then it is equal to either $0, k-1$ or $(k-1)/2$. Since the slope is locally constant along $\cF$ while the weight is not, the slope has to be 0 along the family. %\andr{why not gamma0 new case??}

If $\cF$ admits an infinite number of specializations that are $p$-twisted eigenforms, then by Proposition \ref{ptwequiv} it is a $p$-twist of an affinoid CM family of finite slope, that is necessarily a Hida family by the previous paragraph. 

If $\cF$ admits a $p$-supercuspidal specialization, then it cannot satisfy either (i) or (ii). Therefore, it cannot have an infinite number of classical specializations that are either or finite slope, or $p$-twists of eigenforms of finite slope. %Since in case (i) and (ii) $\cF$ does not admit any $p$-supercuspidal specialization, $\cF$ admits an infinite number of $p$-supercuspidal specializations, then we are in case (iii) by \ref{??}.
%Let $N$ be the tame level of $\cF$. 
%By Proposition \ref{CMfam}, there exists an imaginary quadratic field $K$ of discriminant $D$, an affinoid space $U$ and a Hecke character $\psi\colon\A_{K}^\times\to\cO_U(U)^\times$ of conductor $M$ such that $DM\mid Np^r$ for some $r\ge 1$, and $\cF=\cF_\psi$. For $x\in S$, let $\psi_x=\ev_x\ccirc\psi$. Since $DM\nmid Np^r$ for some $r\ge 1$, by Proposition \ref{psiform}, the Hecke eigensystem $\ev_x\ccirc\Theta$ is attached to the eigenform $f_{\psi_x}$. Recall that the coefficient of $f_{\psi_x}$ at $p$ is
%\[ a_p(\psi_x)\coloneqq\sum_{\substack{\fa\textrm{ ideal of }\cO_K,\\ \Norm{\fa}=p\,(\fa,\fm)=1}}\psi_x(\fa). \]
%
%From the classification in Remark \ref{CMclass}, we read that if the slope of $f_{\psi_x}$ (or its $p$-stabilization if $f_{\psi_x}$ has level prime to $p$) is finite, then it is equal to either $0, k-1$ or $(k-1)/2$. %\andr{why not gamma0 new case??}
%Since the slope is locally constant along $\cF$ while the weight is not, the slope of the specializations of $f$ can only be 0 or infinite. Then by \ref{???} $\cF$ is one of the families described in \ref{fams from buzzard}.
\end{proof}

\subsection{A classification of families}

%\begin{lemma}\label{affevent}
%Let $\cF=(U,\Theta,\Phi,S)$ be an affinoid family of finite slope with $U$ 1-dimensional, and let $(f_i,\varphi_i)_i$ be a family of specializations of $\cF$. 
%\begin{enumerate}	
%\item If $(f_i,\varphi_i)_i$ is eventually of finite slope, then $\cF$ is of finite slope; in particular, $(f_i,\varphi_i)_i$ is of finite slope. 
%\item If $(f_i,\varphi_i)_i$ is eventually CM, then $\cF$ is CM; in particular, $(f_i,\varphi_i)_i$ is CM. 
%\item If $(f_i,\varphi_i)_i$ is eventually a twist of a family of finite slope with a Dirichlet character of conductor divisible by $p$, then $\cF$ is the twist of an affinoid family of finite slope with a Dirichlet character of conductor divisible by $p$.
%\end{enumerate}
%\end{lemma}
%
%\begin{proof}
%
%\end{proof}

Let $(f_i,\varphi_i)_{i\in\N}$ be a family of eigenforms. Consider the sets:
\begin{itemize}
\item $S_\fin=\{i\in\N\,\vert\, f_i$ is of finite slope$\}$;
\item $S_{\tw}=\{i\in\N\,\vert\, f_i$ is the twist of an eigenform of finite slope with a Dirichlet character of $p$-power conductor$\}$;
\item $S_\psc=\{i\in\N\,\vert\, f_i$ is $p$-supercuspidal$\}$.
\end{itemize}
By the standard classification that follows from instance from \ref{lwclass}, $\N=S_\fin\amalg S_\tw\amalg S_\psc$. 

\begin{lemma}\label{eventual}\mbox{ }
At most one between $S_\fin$ and $S_\tw$ is infinite.
%\begin{enumerate}
%\item If $S_\fin$ is infinite, then $(f_i,\varphi_i)_i$ is eventually of finite slope.
%\item If $S_\tw$ is infinite, then $(f_i,\varphi_i)_i$ is eventually the twist of a family of finite slope with a Dirichlet character of conductor divisible by $p$.
%\end{enumerate}
\end{lemma}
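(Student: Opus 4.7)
The plan is to derive a contradiction from the simultaneous infinitude of $S_\fin$ and $S_\tw$ by reducing, via a base change to a suitable real quadratic field, to Lemma~\ref{untwistfam}.

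Suppose for contradiction that both $S_\fin$ and $S_\tw$ are infinite. Then the subsequence $(f_i,\varphi_i)_{i \in S_\fin \cup S_\tw}$ is itself a family of eigenforms, since the convergence of Hecke eigensystems away from $Np$ and of Frobenius eigenvalues is inherited from the ambient family. Each $f_i$ in this subfamily can be written as $f_i = \delta_i f_i^0$, where $f_i^0$ is of finite slope and $\delta_i$ is a Dirichlet character of $p$-power conductor: one has $\delta_i = 1$ precisely when $i \in S_\fin$ (taking $f_i^0 = f_i$), and $\delta_i$ of non-trivial $p$-power conductor precisely when $i \in S_\tw$ by Corollary~\ref{infclass}.

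Next, I choose a real quadratic field $F = \Q(\sqrt{q})$ with $q$ an auxiliary prime coprime to $Np$ and with $\left(\tfrac{q}{p}\right) = -1$ so that $p$ is inert in $F$, and such that the residual representation $\ovl t_\infty$ of the family is not $F$-induced; this is arrangeable because $\ovl t_\infty$ is induced from only finitely many quadratic fields. By Corollary~\ref{bcform} and Lemma~\ref{bcfam}, base change to $F$ gives a family $(f_{i,F},\varphi_{i,F})_i$ of $\GL_{2/F}$-eigenforms, with $\varphi_{i,F} = \varphi_i^2$. Since base change commutes with twisting, each base-changed form decomposes as $f_{i,F} = (\delta_i \ccirc \Nm_{F/\Q}) f_{i,F}^0$, exhibiting $f_{i,F}$ as the twist of the finite-slope $\GL_{2/F}$-form $f_{i,F}^0$ by a Hecke character of $F$ whose conductor divides $\fp^\infty$, where $\fp$ is the unique $p$-adic place of $F$.

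Now Lemma~\ref{untwistfam} applies and produces a common Hecke character $\wtl\delta$ of $F$ such that $\delta_i \ccirc \Nm_{F/\Q} = \wtl\delta$ for all sufficiently large $i$. To descend this back to $\Q$, I use that the pullback $\chi \mapsto \chi \ccirc \Nm_{F/\Q}$ from Dirichlet characters of $\Q$ to Hecke characters of $F$ has kernel generated by the quadratic character $\chi_F$ of $F/\Q$, whose conductor equals the discriminant of $F$ and is therefore coprime to $p$. Hence this pullback is injective on Dirichlet characters of $p$-power conductor, and we deduce that there exists a fixed Dirichlet character $\delta$ with $\delta_i = \delta$ for all large $i$. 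If $\delta = 1$, then eventually $i \in S_\fin$, contradicting the infinitude of $S_\tw$; if $\delta \ne 1$, then eventually $i \in S_\tw$, contradicting the infinitude of $S_\fin$. Either case yields the required contradiction.

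The main technical input is Lemma~\ref{untwistfam}, which exploits the global structure of the twisted eigencurve; the remaining ingredients — choosing a suitable auxiliary field $F$ so that base change is available and the induced character is not in the way, together with the class field theory argument pulling back characters along $\Nm_{F/\Q}$ — are routine. The point where one has to be careful is ensuring that the character $\chi_F$ does not itself have $p$-power conductor, which is why the auxiliary prime $q$ must be chosen coprime to $p$.
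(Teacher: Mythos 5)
Your proof is correct and relies, as the paper does, on Lemma \ref{untwistfam} to force the twisting characters to stabilize to a single Dirichlet character $\delta$ of $p$-power conductor: $\delta = 1$ then eventually excludes $S_\tw$, while $\delta\ne 1$ eventually excludes $S_\fin$. The paper invokes Lemma \ref{untwistfam} directly on the $\GL_{2/\Q}$-family, implicitly handling the case $F=\Q$, while you spell out an explicit base change to a real quadratic $F$ and descend along the norm map; this is a harmless elaboration of the same argument, and your extra stipulation that $\ovl t_\infty$ not be $F$-induced is superfluous, since Corollary \ref{bcform} already guarantees base change for any real quadratic $F$ once the weight exceeds $1$ (which holds for all but finitely many members by Corollary \ref{weightinfty}).
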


\begin{proof}
Assume that at least one between $S_\fin$ and $S_\tw$ is infinite, so that $(f_i,\varphi_i)_{i\in S_\fin\amalg S_\tw}$ is a family of eigenforms. 
By Proposition \ref{untwistfam}, either $f_i=f_i^0$ for almost every $i\in S_\fin\amalg S_\tw$, or $f_i=\delta f_i^0$ for every $i\in S_\fin\amalg S_\tw$ and a Dirichlet character of $p$-power conductor, independent of $i$. If $S_\fin$ is infinite, we must be in the first case; if $S_\tw$ is infinite, we must be in the second one.
\end{proof}

If $S_\fin$ and $S_\tw$ are both finite, then $(f_i,\varphi_i)_i$ is eventually $p$-supercuspidal. %We do not exclude that both $S_\psc$ and either $S_\fin$ or $S_\tw$ are infinite--\andr{but from p-supercusp classification get CM???}. 
In this case, we will prove in Theorem \ref{infslope} that $(f_i,\varphi_i)_i$ is eventually a CM family. %of infinite slope classified in \ref{??}. 
We deduce the following.

\begin{lemma}\label{pscfam}
Assume that $S_\psc$ is infinite. Then $S_\fin\amalg S_\tw$ is finite, and the family $(f_i,\varphi_i)_i$ is eventually a $p$-supercuspidal CM family.
\end{lemma}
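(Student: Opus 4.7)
The plan combines Theorem \ref{infslope} with the dichotomy between trianguline and non-trianguline local Galois representations at $p$ provided by the triangulability of the pseudorepresentation on the eigencurve. First I would apply Theorem \ref{infslope} to the subfamily $(f_i,\varphi_i)_{i\in S_\psc}$, which is a $p$-supercuspidal family, concluding that it is eventually a CM family: there exist a cofinite subset $S_\psc'\subseteq S_\psc$ and an imaginary quadratic field $K$ such that $f_i$ has complex multiplication by $K$ for every $i\in S_\psc'$. By Proposition \ref{CMclassprop}(ii.4), together with the observation that a $p$-primitive Gr\"ossencharacter of $K$ cannot have both conjugate primes above a split $p$ in its conductor, $p$ must be inert or ramified in $K$. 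Writing $t_i=\Ind_{G_K}^{G_\Q}\chi_i$ for the induced pseudorepresentations and using the Zariski-closedness of the induced locus (Corollary \ref{indclo}), the limit $t_\infty=\lim_i t_i$ is again induced from $G_K$, say $t_\infty=\Ind_{G_K}^{G_\Q}\chi$ with $\chi$ obtained as a limit of the $\chi_i$. By Lemma \ref{nebenbound} and Remark \ref{condbound}, the restrictions $\chi_i\vert_{\cO_{K_\fp}^\times}$ (where $\fp$ is the unique prime of $K$ above $p$) are uniformly bounded in conductor, so they stabilize to a fixed nontrivial finite-order character; hence $\chi$ remains ramified, and since $p$ is non-split in $K$, $\chi$ does not extend to $G_{\Q_p}$. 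Consequently $t_\infty\vert_{G_{\Q_p}}$ is the trace of an irreducible, non-trianguline representation, in accordance with Corollary \ref{infpottri}.

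Next I would show by contradiction that $S_\fin\amalg S_\tw$ is finite. If it were infinite, Lemma \ref{eventual} would force exactly one of $S_\fin,S_\tw$ to be infinite; in the $S_\tw$ case, Lemma \ref{untwistfam} produces a fixed Dirichlet character $\delta$ of $p$-power conductor and a finite slope subfamily $(f_i^0)$ with $f_i=\delta f_i^0$, and since twisting by $\delta^{-1}$ preserves triangulability we reduce to the $S_\fin$ case. Then by Proposition \ref{famlim} applied to $(f_i,\varphi_i)_{i\in S_\fin}$ combined with Proposition \ref{fsequiv}(i), there is an affinoid Hida or Coleman family $\cG$, which is an affinoid subdomain of the Coleman--Mazur eigencurve $\cE_\Q$, containing infinitely many specializations from the subfamily and whose limit specialization carries the pseudorepresentation $t_\infty$. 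Since by Proposition \ref{hilbEps} the pseudorepresentation on $\cE_\Q$ is trianguline at every point, $t_\infty\vert_{G_{\Q_p}}$ would then be trianguline --- contradicting the previous paragraph.

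Finally, with $S_\fin\amalg S_\tw$ finite, the family $(f_i,\varphi_i)_i$ is eventually $p$-supercuspidal, and Theorem \ref{infslope} applied to the full family gives that it is eventually CM. By Lemma \ref{CMfamrep}(ii) combined with Proposition \ref{CMinf}, only the $p$-supercuspidal CM type among the three types of affinoid CM family is compatible with $p$-supercuspidal classical specializations; hence $(f_i,\varphi_i)_i$ is eventually a $p$-supercuspidal CM family. The hardest step will be the limit argument in the first paragraph: one must confirm that the limiting character $\chi$ retains nontrivial ramification, so that $\Ind_{G_{K_\fp}}^{G_{\Q_p}}\chi$ remains non-trianguline and the contradiction with triangulability along $\cE_\Q$ goes through.
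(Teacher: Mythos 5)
Your proof follows the same skeleton as the paper's: apply Theorem \ref{infslope} to the $p$-supercuspidal subfamily to obtain a CM family, note that both subfamilies have Hecke eigensystems converging to the same $t_\infty$, and derive a contradiction from the local-at-$p$ behavior of $t_\infty$. The difference is the contradiction you invoke. The paper contrasts irreducibility of $t_\infty\vert_{G_{\Q_p}}$ (coming from the CM field $K$ having $p$ non-split) with decomposability (coming from the finite-slope side being carried by a CM component of $\cE_\Q$, which has $p$ split by Proposition \ref{psplit}). You instead contrast non-triangulability with triangulability. Both approaches rest on the same key observation that $\chi_\fp$ has a non-extendable finite-order inertial component that persists at the limit, so in that sense the hard step is shared.

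The gap is in the non-triangulability claim. You write that ``$t_\infty\vert_{G_{\Q_p}}$ is the trace of an irreducible, non-trianguline representation, in accordance with Corollary \ref{infpottri}.'' But Corollary \ref{infpottri} applies to $p$-supercuspidal eigenforms $f$, where $\rho_{f,p}\vert_{G_{\Q_p}}$ is potentially semistable; the limit pseudorepresentation $t_\infty$ is generally not attached to any eigenform, and its Hodge--Tate--Sen weight is typically non-integral, so it is not potentially semistable and none of the cited machinery (Lemma \ref{typetri}, Corollary \ref{pstdih}, or \ref{infpottri}) applies. The statement you need --- that an irreducible representation $\Ind_{G_E}^{G_{\Q_p}}\chi_\fp$ with $E/\Q_p$ quadratic and $\chi_\fp\ne\chi_\fp^\sigma$ is never trianguline --- is true (restrict a putative triangulation of parameter $\delta_1$ to $G_E$; the resulting rank-one sub of $\cR_E(\chi_\fp)\oplus\cR_E(\chi_\fp^\sigma)$ has parameter $\delta_1\circ\Nm_{E/\Q_p}$, which is $\sigma$-invariant, contradicting $\chi_\fp\ne\chi_\fp^\sigma$), but it is nowhere proved in the paper and must be supplied as a separate lemma, together with a check that no degenerate rank-one subobjects arise. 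The paper's route avoids this entirely: ``induced from a field where $p$ is non-split'' versus ``decomposable, because induced from a field where $p$ splits'' is an immediately visible dichotomy that needs no $(\varphi,\Gamma)$-module input. Your paragraphs on $p$ being non-split in $K$ (which refines the paper's terse assertion), on reducing the $S_\tw$ case to $S_\fin$ by untwisting, and on the final classification via Lemma \ref{CMfamrep} and Proposition \ref{CMinf} are all correct.
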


\begin{proof}
By Theorem \ref{infslope}, the family $(f_i,\varphi_i)_{i\in S_\psc}$ is CM (and $p$-supercuspidal). In particular, the limit pseudorepresentation $t_\infty$ is induced, and the restriction $t_\infty\vert_{G_{\Q_p}}$ is irreducible. If either $S_\fin$ or $S_\tw$ is infinite, then we obtain either the finite slope family $(f_i,\varphi_i)_{i\in S_\fin}$, or a $p$-twisted family $(f_i,\varphi_i)_{i\in S_\tw}$ whose associated pseudorepresentations converge to $t_\infty$. However, by Proposition \ref{famlim}, $(f_i,\varphi_i)_{i\in S_\fin}$ would be a family extracted from the eigencurve $\cE_\Q$, and $(f_i,\varphi_i)_{i\in S_\tw}$ a $p$-twist of a family extracted from $\cE_\Q$, so that $t_\infty$ would be attached to a finite slope CM eigenform, or a $p$-twist of it. In both cases, $t_\infty$ would have CM by an imaginary quadratic field in which $p$ splits, hence $t_\infty\vert_{G_{\Q_p}}$ would be decomposable, a contradiction. %\andr{doublecheck: weight 1 problem??}
\end{proof}

%By combining Proposition \ref{eventual} with Lemma \ref{affevent}, we obtain the following.
%
%\begin{cor}
%Let $\cF$ be an affinoid family of eigenforms, and let 
%\end{cor}

%\andr{CAREFUL: need to identify family with 0 dirichlet twist!!!! otherwise problems}

By combining Lemmas \ref{eventual} and \ref{pscfam}, we obtain the following complete classification of $p$-adic families of eigenforms, in particular of those of infinite slope.

\begin{thm}\label{classfam}
Let $(f_i,\varphi_i)_{i}$ be a family of eigenforms whose associated residual pseudorepresentation $\ovl t$ satisfies conditions \ref{hyprhobar1}, \ref{hyprhobar2} of Section \ref{secpsc}. Then $(f_i,\varphi_i)_{i}$ is eventually of exactly one of the following types:
\begin{enumerate}[label=(\roman*)]
\item a Hida or Coleman family;
\item the twist of a finite slope family with a Dirichlet character of conductor divisible by $p$;
\item a $p$-supercuspidal CM family. 
%\item $\cF$ is a Coleman--Stein family, in the sense of \ref{??};
%\item $\cF$ is a family of finite slope eigenforms whose limit pseudorepresentation is not potentially trianguline.
\end{enumerate}
\end{thm}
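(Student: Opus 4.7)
The proof plan is essentially a synthesis of Lemmas \ref{eventual} and \ref{pscfam}, which together with the trivial partition $\N = S_\fin \amalg S_\tw \amalg S_\psc$ do almost all of the combinatorial work. Since $\N$ is infinite, at least one of the three sets must be infinite. Lemma \ref{eventual} says at most one of $S_\fin, S_\tw$ is infinite, and Lemma \ref{pscfam} (which is where the main content of the paper, Theorem \ref{infslope}, enters via the hypotheses \ref{hyprhobar1},\ref{hyprhobar2}) says that if $S_\psc$ is infinite then $S_\fin\amalg S_\tw$ is finite. Combining these, exactly one of the three sets is infinite, and its complement in $\N$ is finite.

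In each case I would then identify the eventual type of the family using the previously established characterizations. If $S_\fin$ is infinite, then $(f_i,\varphi_i)_{i\in S_\fin}$ is by construction a family of eigenforms of finite slope, so by Proposition \ref{fsequiv}(ii) it is eventually a union of specializations of Hida or Coleman families; since the complement in $\N$ is finite, this gives case (i) for the whole family. If $S_\tw$ is infinite, then $(f_i,\varphi_i)_{i\in S_\tw}$ consists of twists of eigenforms of finite slope with Dirichlet characters of $p$-power conductor; by Proposition \ref{ptwequiv}(ii), equivalence (a)$\iff$(b), this family is eventually $p$-twisted, which is case (ii). If $S_\psc$ is infinite, Lemma \ref{pscfam} directly states that $(f_i,\varphi_i)_i$ is eventually a $p$-supercuspidal CM family, giving case (iii).

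To obtain the exclusivity asserted by the word ``exactly one'', I would observe that the three types are mutually exclusive once the family is taken eventually: a Hida/Coleman family consists of eigenforms of finite slope; a $p$-twisted family of conductor divisible by $p$ consists of eigenforms of infinite slope that are (by definition) twists of finite-slope forms, so none of them is $p$-supercuspidal; and a $p$-supercuspidal CM family is by definition not of either of the previous two types since $p$-supercuspidal eigenforms are of infinite slope but not twists of eigenforms of finite slope (Corollary \ref{infclass}). Thus the three cases are pairwise disjoint.

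The main obstacle is concealed inside the invocation of Lemma \ref{pscfam}, which depends on Theorem \ref{infslope} (the classification of $p$-supercuspidal families as CM). This is the substantive theorem of the paper, established only in Section \ref{secpsc} through the combined inputs of Theorems \ref{bgvaff} and \ref{locthm}. Granting Lemma \ref{pscfam}, however, the proof of Theorem \ref{classfam} is purely formal bookkeeping on the partition $\N = S_\fin\amalg S_\tw\amalg S_\psc$, and no further analytic or Galois-theoretic input is required beyond the characterizations of finite-slope and $p$-twisted families already provided by Propositions \ref{fsequiv} and \ref{ptwequiv}.
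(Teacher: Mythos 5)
Your proof is correct and takes essentially the same route as the paper, which simply observes that the theorem follows by combining Lemmas \ref{eventual} and \ref{pscfam} with the trichotomy $\N = S_\fin \amalg S_\tw \amalg S_\psc$. You fill in the bookkeeping and the appeal to Propositions \ref{fsequiv} and \ref{ptwequiv} to identify each eventual type, which is exactly what the paper's one-line proof leaves implicit.
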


%\begin{rem}
%\andr{something on inf fern and case (v)}
%\end{rem}

We also give a classification of affinoid families, which follows immediately from Proposition \ref{extract}. %, where case (v) disappears thanks to \ref{??}.

\begin{thm}
Let $\cF$ be an affinoid family of eigenforms, whose associated residual pseudorepresentation $\ovl t$ satisfies conditions \ref{hyprhobar1}, \ref{hyprhobar2} of Section \ref{secpsc}. Then $\cF$ fits in one of the following cases: 
\begin{enumerate}[label=(\roman*)]
\item $\cF$ is an affinoid Hida or Coleman family; %in the sense of Definition \ref{hidacolfam});
\item $\cF$ is a twist of an affinoid Hida or Coleman family with a Dirichlet character of conductor divisible by $p$;
\item $\cF$ is a $p$-supercuspidal affinoid CM family. %as in \ref{??}.
\end{enumerate}
\end{thm}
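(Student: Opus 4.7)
The plan is to reduce to Proposition \ref{extract}, which does the bulk of the work, and then dispatch the resulting cases using the finite slope classification together with Theorem \ref{infslope}. The statement itself is essentially bookkeeping on top of those two inputs.

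First, I would apply Proposition \ref{extract} to $\cF=(U,\Theta,\Phi,S)$. This produces a finite partition $S=\coprod_{j=0}^m S_j$ with $S_0$ finite, and for each $j\ge 1$ a number field $F_j$ (equal to $\Q$ or real quadratic) and a Dirichlet character $\delta_j$ of finite order and $p$-power conductor, such that if $U_j$ denotes the Zariski closure of $S_j$ in $U$, then $\cF|_{U_j}$ is cut out, up to the twist by $\delta_j$, from the parallel weight eigenvariety $\cE_{F_j}$; moreover each $U_j$ is one-dimensional. Since $U$ is integral and $S=\bigcup_j S_j$ (after absorbing the finite set $S_0$) is dense in $U$, some $U_j$ must equal $U$. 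In particular $U$ is one-dimensional, so we can take $m=1$ and write $F:=F_1$, $\delta:=\delta_1$, $\iota:=\iota_1\colon U\to\cE_F$.

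If $F=\Q$, then after untwisting by $\delta$ the family $\cF$ is an affinoid subfamily of the Coleman--Mazur eigencurve, hence an affinoid Hida or Coleman family in the sense of Example \ref{hidacol} and Definition \ref{hidacolfam}. We land in case (i) when $\delta$ is trivial, and in case (ii) when the conductor of $\delta$ is divisible by $p$. The genuinely new case is $F$ real quadratic. In that case I claim the $p$-supercuspidal specializations of $\cF$ are Zariski-dense in $U$. If not, the union of the set of finite slope specializations with the set of $p$-twists of finite slope specializations would be Zariski-dense in $U$ by Corollary \ref{infclass}; by Proposition \ref{fsequiv}(i) together with Proposition \ref{ptwequiv}(i), $\cF$ would then, up to a Dirichlet twist of $p$-power conductor, itself be an affinoid family of finite slope, which would allow one to take $F=\Q$ in Proposition \ref{extract}, contradicting the minimality of the real quadratic choice. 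So $\cF$ is a $p$-supercuspidal affinoid family in the sense of Section \ref{secpsc}. I would then invoke Theorem \ref{infslope} to conclude that $\cF$ is a CM family, and Proposition \ref{CMinf} identifies $\cF$ with case (iii).

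The three cases are mutually exclusive: a Hida or Coleman family has dense finite slope specializations, its $p$-twists have dense $p$-twisted specializations of infinite slope that are not $p$-supercuspidal, and a $p$-supercuspidal CM family has dense $p$-supercuspidal specializations; these three conditions are pairwise incompatible. The main obstacle in this plan is not located in the argument above but earlier in the paper: the decisive input is Theorem \ref{infslope}, whose proof combines the local analysis of potentially trianguline, non-trianguline deformations (Theorem \ref{berche} and the results of Section \ref{secloc}) with the Greenberg-type Theorem \ref{bgvaff}. Once Theorem \ref{infslope} is available, the classification of affinoid families reduces, as explained, to a routine application of Proposition \ref{extract} and the equivalences of Propositions \ref{fsequiv} and \ref{ptwequiv}.
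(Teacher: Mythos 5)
Your plan is essentially the paper's: the theorem is deduced from Proposition \ref{extract} combined with the finite slope / $p$-twisted equivalences (Propositions \ref{fsequiv}, \ref{ptwequiv}) and the $p$-supercuspidal result (Theorem \ref{infslope}). Two points, however, deserve a remark.

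First, the dichotomy in the case "$F$ real quadratic" is not correctly organized. You claim that if $F$ is real quadratic then the $p$-supercuspidal specializations must be dense, and you support this by a contradiction argument: if not, $\cF$ would be a $p$-twist of a finite slope family, and then "one could take $F=\Q$ in Proposition \ref{extract}, contradicting the minimality of the real quadratic choice." But Proposition \ref{extract} contains no minimality assertion whatsoever; for a Hida or Coleman family one can perfectly well choose $F$ real quadratic there (trianguline over $\Q_p$ implies trianguline over $F_\fp$). So no contradiction arises. The correct organization is by the type of a dense subset of classical specializations rather than by the field $F$: every classical specialization of level $Np^r$, $r\ge1$, is of finite slope, $p$-twisted, or $p$-supercuspidal (Corollary \ref{infclass}); since $U$ is $1$-dimensional and integral, at least one of these three sets is infinite, hence Zariski-dense, and then Propositions \ref{fsequiv}(i), \ref{ptwequiv}(i) and the argument below give cases (i), (ii), (iii) respectively, with no reference to which $F$ was used.

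Second, Theorem \ref{infslope} as stated concerns sequential families $(f_i,\varphi_i)_i$, not affinoid families; to pass from "a dense set of $p$-supercuspidal classical specializations" to "$\cF$ is a $p$-supercuspidal affinoid CM family," you should either invoke Lemma \ref{FjCM} (the affinoid-level statement established inside the proof of Theorem \ref{infslope}), or extract a sequential family from $S$ (which is possible by Proposition \ref{extract}(iii)), apply Theorem \ref{infslope} to it, and then deduce that a dense set of specializations is CM, so that $t_U$ is induced by Corollary \ref{indclo} and $\cF$ is CM by Lemma \ref{CMfamrep}(i). With these two repairs the argument is correct and matches what the paper's terse "follows immediately from Proposition \ref{extract}" intends.
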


\medskip

\section{Families of potentially trianguline representations}\label{secloc}

%Discuss locally algebraic characters, introduce notation bx!!!!

This section is devoted to the proof of the main local result of the paper, Theorem \ref{locthm}. In Section \ref{secfam} we will give a global application of it, to the study of families of eigenforms of infinite slope.

Let $L$ be an algebraic extension of $\Q_p$. We recall and introduce some notation. Let $X$ be a rigid analytic space over $L$, $G$ a topological group and $t\colon G\to\cO_X(X)$ a continuous pseudorepresentation. For every extension $L^\prime$ of $L$ and point $x\in X(L^\prime)$, we denote by $t_x\colon G\to L^\prime$ the pseudorepresentation obtained by specializing $t$ at $x$, that is, the composition of $t$ with the evaluation at $x$ map $\cO_X(X)\to L^\prime$.

Given a subspace $U\subset X$, the \emph{restriction of $t$ to $U$} is the pseudorepresentation obtained by composing $t$ with the restriction map $\cO_X(X)\to\cO_X(U)$. We sometimes denote it by $t\vert_U$ (this should not create any confusion with the restriction to a subgroup $H$ of $G$, denoted in a similar way).

Now let $U$ be a 1-dimensional, integral affinoid space over $L$, equipped with a 2-dimensional pseudorepresentation $t_U\colon G_{\Q_p}\to\cO_U(U)$. 
As $x$ varies over the $\Qp$-points of $U$, we assume that one of the Hodge--Tate--Sen weights is always 0. In particular, since Hodge--Tate--Sen weights vary locally analytically, the second Hodge--Tate--Sen weight of $x$ is interpolated by a rigid analytic function $k\in\cO_U(U)$. We assume that
\begin{equation}\tag{ncst}\label{ncst} \text{\emph{$k$ is not constant.}} %when $x$ varies among the $\Qp$-points of $U$.}}
\end{equation} 
%\andr{rephrase in terms of norm?? something else??} 
%\andr{important: classify the families with constant hts weights difference!!??}
%Condition \eqref{ncst} is obviously stable under twisting $t_U$ with a continuous character $G_{\Q_p}\to\cO_U(U)^\times$, and rules out for instance families of the type $t_U=t\otimes_{\Q_p}\chi\colon G_{\Q_p}\to\cO_U(U)$, where $t\colon G_K\to E$ is a continuous pseudorepresentation and $\chi\colon G_{\Q_p}\to\cO_U(U)^\times$ is a continuous character. %\andr{Comment more??} \andr{write deformation of the character}
%DEFINE $\cW_A^\diff$ in terms of the weight map

%NEED HERE HT WEIGTHS IN FAMILY

Let $S$ be an infinite subset of $U(\C_p)$ such that:
\begin{enumerate}[label=(H\arabic*)]
	%\item $S$ is dense for the analytic Zariski topology on $\Spm A$;
	\item \label{hyp1} for every $x\in S$, the pseudorepresentation $t_x$ is potentially refined trianguline (in particular, it is potentially semistable); %\andr{to do more generally, replace pst with loc algebraic parameters??} 
%	there exists a finite Galois extension $L_x$ of $E$ such that the pseudorepresentation $t_x$ is irreducible and becomes semistable over $L_x$, but does not become semistable over any abelian extension of $E$ (in particular it is potentially trianguline but not trianguline, by Proposition \ref{typetri}); \andr{to do more generally, replace pst with loc algebraic parameters??}
%	\item there exists a finite Galois extension $L$ of $K$ such that, for every $x\in S$, the pseudorepresentation $t_x$ becomes semistable over $L$, but does not become semistable over any abelian extension of $K$ (in particular it is potentially trianguline but not trianguline, by \ref{??});
	\item \label{hyp2} there exist elements $\varphi_1,\varphi_2\in\cO_U(U)^\times$ such that, for every $x\in S$, the eigenvalues of the Frobenius operator on $D_\pst(\rho_x)$ are $p^{k_1}\varphi_1(x)$ and $p^{k_2}\varphi_2(x)$. %\andr{Need them to be distinct or not??}
\end{enumerate}
For every $x\in S$, $t_x$ becomes semistable over a dihedral extension $L_x$ of $\Q_p$ by Corollary \ref{pstdih}. Moreover, by Lemma \ref{typetri} $t_x$ is potentially trianguline, but not trianguline. 

For $x\in S$ the Frobenius eigenvalues $\varphi_{1,x},\varphi_{2,x}$ are assumed to be distinct, so there exist rank 1 $(\varphi,N,G_{\Q_p})$-subquotients $D_{x,1}$ and $D_{x,2}$ of $D_x$ such that, for $i=1,2$, the semistable Frobenius acts on $D_{x,i}$ with eigenvalue $\varphi_{i,x}$ and $G_{\Q_p}$ via a finite order character $\psi_{x,i}\colon G_{\Q_p}\to L_x^\times$. We further assume that:
\begin{enumerate}[resume,label=(H\arabic*)]
\item \label{hyp3} there exists a finite extension $\wtl E$ of ${\Q_p}$ such that $\psi_{x,1}$ factors through the finite quotient $G_{\Q_p}\to\Gal(\wtl E/{\Q_p})$ for every $x\in S$.
\end{enumerate}
%The order of the $F_{i,x}$ and $\psi_{x,i}$ is irrelevant in the above conditions: 
We will see below that \ref{hyp3} implies the same statement with $\psi_{x,1}$ replaced by $\psi_{x,2}$. 

%\andr{what base for A?? S lives in Cp points??}

\begin{rem}\label{replace}
	Recall that we are equipping all of our rigid analytic spaces with the analytic Zariski topology. In particular, since $U$ is irreducible of dimension 1, any infinite subset of $U(\C_p)$ (in particular, $S$) is dense in $U$. %by Lemma \ref{dense}. %\andr{need to justify??} %a subset of \Spm A is analytic Zariski-dense in \Spm A if and only if its intersection with every irreducible component of \Spm A is infinite. CITE CONRAD FOR IRR COM???
\end{rem}

%Since t_x is potentially semistable, but not semistabelian, for every x\in S, it is also potentially trianguline but not trianguline by \ref???.

%We say that a 2-dimensional pseudorepresentation $t\colon G_K\to B$ is \emph{induced} if there exists a quadratic extension $F$ of $K$ such that t is F-induced in the sense of Definition ...

The following is our main local Galois-theoretic result.

\begin{thm}\label{locthm}
	%If $A$ is equidimensional of dimension 1, 
	The pseudorepresentation $t_U$ is $E$-induced for a quadratic extension $E/\Q_p$. %\andr{check it's correct??}
\end{thm}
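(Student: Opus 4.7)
The plan is to prove that $t_U$ is $E$-induced for an appropriate quadratic extension $E/\Q_p$, identified via pointwise application of Berger-Chenevier, and then to verify the two defining conditions for $E$-induction: $t_U(c) = 0$ for $c \in G_{\Q_p} \setminus G_E$ and $t_U|_{G_E}$ decomposable as a pseudorepresentation. Density of $S$ in the 1-dimensional integral affinoid $U$ allows us to promote pointwise statements to statements on the whole family.

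I would first apply Berger-Chenevier (Theorem \ref{berche}) at each $x \in S$. By \ref{hyp1}, $t_x$ is potentially trianguline; it is also not trianguline over $\Q_p$ as recorded in the setup (this is the crucial non-triangulinity input). Berger-Chenevier leaves two alternatives: (i) $t_x$ is induced from a character of a quadratic extension of $\Q_p$, or (ii) $t_x$ is a twist of a de Rham representation. I claim case (ii) implies case (i) under our non-triangulinity assumption: writing $t_x = \eta_x \otimes t_x'$ with $t_x'$ de Rham, triangulinity is preserved under twisting by characters of $G_{\Q_p}$, so $t_x'$ is also non-trianguline; by Proposition \ref{padictypes}, a 2-dimensional non-trianguline de Rham representation of $G_{\Q_p}$ has Galois type of form \ref{type2} or \ref{type3} (form \ref{type1} would force semistabelianness and hence triangulinity via Lemma \ref{typetri}), and is therefore induced from a character of a quadratic extension. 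Thus every $t_x$, $x \in S$, is induced from some quadratic $E_x/\Q_p$. Pigeonholing on the finite set of quadratic extensions of $\Q_p$, I may pass to an infinite (hence Zariski-dense) subset of $S$ on which $E_x = E$ is constant. For any $c \in G_{\Q_p} \setminus G_E$, $t_x(c) = 0$ for $x$ in this dense subset, so $t_U(c) = 0$ in $\cO_U(U)$.

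Next I would establish that $t_U|_{G_E}$ is decomposable, by interpolation of trianguline parameters. For $x$ in the dense subset fixed above, $t_x|_{G_E}$ decomposes as a sum of two characters (from the $E$-induction), and is in particular trianguline as a $(\varphi,\Gamma_E)$-module. Assumptions \ref{hyp2} and \ref{hyp3} provide the analytic data needed to construct characters $\delta_1, \delta_2 \colon E^\times \to \cO_U(U)^\times$ interpolating the trianguline parameters of $t_x|_{G_E}$: the values $\delta_i(\pi_E)$ are given by the analytic Frobenius eigenvalues $\varphi_i$, while the restrictions $\delta_i|_{\cO_E^\times}$ encode the finite order characters $\psi_{x,i}$, which by \ref{hyp3} have bounded order and so may be taken constant after a further restriction of $S$. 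Applying Theorem 6.3.13 of Kedlaya-Pottharst-Xiao on the interpolation of triangulations in rigid families then gives that $t_U|_{G_E}$ is trianguline on $U$ with parameters $\delta_1, \delta_2$, hence, as a 2-dimensional pseudorepresentation, decomposable as $\chi_1 \oplus \chi_2$ for continuous characters $\chi_i \colon G_E \to \cO_U(U)^\times$. Combined with $t_U(c) = 0$, this yields that $t_U$ is $E$-induced.

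The main obstacle is the ``case (ii) implies case (i)'' reduction in the pointwise Berger-Chenevier analysis, which requires carefully matching the twist-of-de-Rham classification against the structure of 2-dimensional potentially semistable Galois types (Proposition \ref{padictypes}) and the characterization of triangulinity via semistabelianness (Lemma \ref{typetri}). The non-constancy assumption \ref{ncst} on the Hodge-Tate-Sen weight function $k$ enters to exclude the degenerate case in which $t_U$ could be a twist family of a single de Rham pseudorepresentation, which would invalidate the pointwise application of Berger-Chenevier by forcing too many $t_x$ to be trianguline or interfering with the analytic interpolation of trianguline parameters along $U$.
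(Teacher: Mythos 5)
The key step in your proposal is the pointwise ``case (ii) implies case (i)'' reduction, and it is incorrect. You argue via Proposition \ref{padictypes} that a $2$-dimensional, non-trianguline de Rham representation of $G_{\Q_p}$ has supercuspidal Galois type (form \ref{type2} or \ref{type3}); that much is right. But the inference ``and is therefore induced from a character of a quadratic extension'' is a genuine gap. Having supercuspidal type only says the associated Weil--Deligne representation is $\Ind_{W_E}^{W_{\Q_p}}\xi$; it does not say the $p$-adic Galois representation $\rho$ is $\Ind_{G_E}^{G_{\Q_p}}\chi$. The latter is equivalent to $\rho\vert_{G_E}$ splitting as a sum of two characters, i.e.\ to $\Fil^1$ of the filtered $(\varphi,N,G_{\Q_p})$-module coinciding with one of the two Frobenius eigenlines in $D_{E,\st}(\rho)$. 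For a generic choice of filtration this fails, and one gets de Rham, supercuspidal-type, non-trianguline representations that are \emph{not} induced. This is exactly the local situation for a non-CM $p$-supercuspidal eigenform; if your reduction were valid, Theorem \ref{infslope} would essentially collapse into a trivial pointwise local statement, and Berger and Chenevier would not have phrased Théorème 3.1 as a genuine (non-exclusive) dichotomy.

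The paper therefore handles the dichotomy globally, not pointwise. After passing (via Lemma \ref{locfreesh} and Lemma \ref{tXind}) to a blow-up of a finite cover where $t_U$ lifts to an honest family of representations, it fixes a quadratic $E$ with $t_x\vert_{G_E}$ trianguline on a Zariski-dense set, interpolates the parameters $\delta_1,\delta_2$ (this is where \ref{hyp2}, \ref{hyp3} and KPX are used), and establishes: $U_\ptri=U$, the complement of $U_\tri$ is open dense, and $U_\ind$ is Zariski closed. The crucial use of \eqref{ncst} is then: $U_\tdR\subset U_{\mathrm{int}}=\{x:k(x)\in\Z\}$, and since $k$ is non-constant on the one-dimensional integral $U$, $U_{\mathrm{int}}$ contains no nonempty Zariski open. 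Since $U_\ptnt=U_\ind\cup U_\tdR$ is open and dense and $U_\ind$ is closed, if $U_\ind\ne U$ then $(U_\ind\cup U_\tdR)\setminus U_\ind$ would be a nonempty open subset of $U_\tdR\subset U_{\mathrm{int}}$, contradiction; hence $U_\ind=U$. Your stated role for \eqref{ncst} (ruling out a ``twist family of a single de Rham pseudorepresentation'') is not how the paper uses it; it is needed precisely to bound the size of $U_\tdR$.

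Two further, subsidiary points. First, to invoke Theorem 6.3.13 of Kedlaya--Pottharst--Xiao you need an actual locally free module with $G_{\Q_p}$-action, not just a pseudorepresentation; the reduction through Lemma \ref{locfreesh} and the descent Lemma \ref{tXind} are not optional. Second, even within your scheme, once you knew (say) that $t_x$ were $E$-induced on a Zariski-dense subset, Corollary \ref{indclo}(ii) would give $t_U$ $E$-induced directly, since the $E$-induced locus is Zariski closed; the detour through ``$t_U\vert_{G_E}$ trianguline with parameters $\delta_1,\delta_2$, hence decomposable'' is both unnecessary and unsound, as triangulinity furnishes a filtration of the $(\varphi,\Gamma_E)$-module, not a splitting.
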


The proof of Theorem \ref{locthm} relies in a crucial way on Theorem \ref{berche}. The main obstacle in generalizing Theorem \ref{locthm} to representations of $G_K$, $K$ a finite extension of $\Q_p$, is that we do not know whether an analogue of Theorem \ref{berche} holds in this generality.

We prove a simple lemma, for later use.

\begin{lemma}\label{subaff}
For an irreducible, 1-dimensional rigid analytic space $X$, the following are equivalent:
\begin{enumerate}[label=(\roman*)]
\item $t$ is induced,
\item there exists an affinoid subspace $X_0\subset X$ such that $t_X\vert_{X_0}$ is induced. 
\end{enumerate}
\end{lemma}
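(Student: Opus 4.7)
The forward direction is immediate: if $t_X$ is $H$-induced on all of $X$ for some index $2$ subgroup $H$, then its restriction to any affinoid subspace $X_0$ is still $H$-induced. So the content is in proving $(ii)\implies(i)$, and the plan is to promote an induction property from a dense subspace to all of $X$ using the closedness of induced loci. Suppose $t_X\vert_{X_0}$ is induced; by definition there is an index $2$ subgroup $H$ of the underlying profinite group (in the applications, $G_{\Q_p}$, which admits only finitely many such subgroups) for which $t_X\vert_{X_0}$ is $H$-induced. Since $X$ is irreducible, $t_X$ is residually constant, so Corollary/Definition \ref{indclo}(ii) produces a closed subspace $X^{H\text{-}\ind}\subset X$ whose $\C_p$-points are exactly those $x$ for which $t_{X,x}$ is $H$-induced; by construction $X_0\subset X^{H\text{-}\ind}$.

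The next step is to show that $X_0$ is Zariski-dense in $X$, at which point the closedness of $X^{H\text{-}\ind}$ forces $X^{H\text{-}\ind}=X$ and hence $t_X$ is $H$-induced on $X$. Zariski density of $X_0$ follows from the irreducibility and $1$-dimensionality of $X$ together with our convention that affinoid spaces are of positive dimension: if $X_0$ is an (open) affinoid subdomain, non-emptiness plus irreducibility of $X$ already give Zariski density; if $X_0$ is instead a closed affinoid subspace, positive dimensionality together with irreducibility of $X$ and $\dim X=1$ force $X_0=X$ as reduced closed subspaces. Either way $X_0$ is Zariski-dense, and the proof is complete.

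The only mild subtlety, and the place one should be careful, is verifying the hypothesis of Corollary/Definition \ref{indclo}(ii), namely that $t_X$ is residually constant on $X$. This is where the irreducibility of $X$ is essential: one can cover $X$ by irreducible affinoid subdomains (on each of which Definition \ref{affres} applies), and invariance of the residual pseudorepresentation along connecting chains of affinoids propagates the constancy to all of $X$. The rest of the argument is purely formal manipulation of Zariski-closed loci in a $1$-dimensional irreducible rigid space.
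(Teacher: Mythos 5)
Your proof is correct and follows essentially the same strategy as the paper's: the forward direction is immediate, and for the converse you use the closedness of the $H$-induced (equivalently, induced) locus from Corollary \ref{indclo} together with Zariski density of the affinoid $X_0$ in the irreducible one-dimensional $X$. The paper simply invokes the closedness of $X_\ind$ and density of affinoid subdomains in one line; your version unpacks this by fixing a specific index-$2$ subgroup $H$, explicitly verifying residual constancy (needed as the hypothesis of Corollary \ref{indclo}(ii)), and distinguishing the open-subdomain case from the closed-affinoid-subspace case — all reasonable clarifications, though note that in your first density case the one-dimensionality of $X$ (not just irreducibility) is what makes any positive-dimensional open affinoid Zariski-dense, as in Lemma \ref{dense}.
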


\begin{proof}
The implication (i)$\implies$(ii) is obvious. 
%follows from \andr{add general fact about base change of induced representations!!}. 
For the converse, recall that the locus 
\[ X_\ind=\{x\in X\,\vert\,t_x\textrm{ is induced}\} \]
is closed by Corollary \ref{indclo}. Since $X$ is irreducible and 1-dimensional, the points of any affinoid subdomain of $X$ are dense in $X$, hence the result.
\end{proof}

The rest of the section is devoted to the proof of Theorem \ref{locthm}. 
The first step consists in attaching to $t_U$ a $p$-adic family of Galois representations, which can only be done by passing to a suitable cover of $U$. 

\begin{lemma}\label{locfreesh}
	There exists a diagram of rigid analytic spaces
	\[ X\xrightarrow{f} Y\xrightarrow{g} U \]
	and a locally free $\cO_{X}$-module $V$ of rank $2$ equipped with a continuous $\cO_{X}$-linear action of $G_{\Q_p}$, such that:
	\begin{enumerate}[label=(\arabic*)]
		\item $Y$ is an integral, normal affinoid and $g\colon Y$ to $U$ is a finite dominant map; %check assumptions? explain dominant?
		\item $f\colon X\to Y$ is the blow-up of $Y$ along a closed subspace $Y_0$; 
		\item\label{blowup3} the trace of the action $\rho_X$ of $G_{\Q_p}$ on $V_{X}$ is $(gf)^\ast t_U$.
	\end{enumerate}
\end{lemma}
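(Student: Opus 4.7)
The plan is to upgrade the pseudorepresentation $t_U$ to the trace of a genuine rank-$2$ representation living on a locally free sheaf, after a two-stage geometric modification of $U$. This is a standard move from pseudorepresentations to families of representations; the novelty here is essentially one of bookkeeping, since we have to combine a finite cover (to kill the reducibility obstruction) with a blow-up (to achieve local freeness).

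For the finite cover, I would begin by appealing to \cite[Lemma 7.8.11]{bellchen} (already invoked in the proof of Lemma \ref{weightan}), which produces a rigid analytic space $Y'$ equipped with a surjective map $Y'\to U$ together with a family $V_0$ of rank-$2$ $G_{\Q_p}$-representations whose trace equals the pullback of $t_U$. Shrinking and normalizing, I would take $Y$ to be the normalization of an irreducible component of $Y'$ that dominates $U$; this is again affinoid, integral, normal, and finite dominant over $U$. In the absolutely irreducible residual case, Theorem \ref{liftings}(ii) makes this whole step essentially unnecessary, and one could take $Y$ to be the normalization of $U$ itself. The key input supplied by Chenevier's machinery is that even in the residually reducible case the pseudorepresentation admits a global matrix lift after a suitable finite cover.

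For the blow-up, I would let $Y_0\subset Y$ be the closed subspace cut out by the Fitting ideal measuring the failure of local freeness of the torsion-free coherent $\cO_Y$-module underlying $V_0$, and I would take $f\colon X\to Y$ to be the blow-up of $Y$ along $Y_0$. Setting $V\coloneqq (f^\ast V_0)/(\text{torsion})$, the strict transform is a locally free $\cO_X$-module of rank $2$, and the continuous $G_{\Q_p}$-action on $V_0$ pulls back to a continuous $\cO_X$-linear action $\rho_X$ on $V$ whose trace equals $(gf)^\ast t_U$, as required in \ref{blowup3}. In our one-dimensional setting the local rings of $\cO_Y$ at closed points are discrete valuation rings, so every finitely generated torsion-free module is already locally free; thus $Y_0=\varnothing$ and $f$ is an isomorphism, but we retain the blow-up in the statement to match the abstract framework used throughout the paper.

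The principal obstacle is the residually reducible case in the first step: one cannot simply appeal to Theorem \ref{liftings}(ii), and must instead rely on the Cayley--Hamilton / generalized matrix algebra technology that underlies \cite[Lemma 7.8.11]{bellchen}. Granting this input, the rest of the argument is formal, and the enumerated conditions on $Y$ (integral, normal, affinoid, finite dominant over $U$) and $X$ (a blow-up of $Y$) merely record the output of these two familiar constructions.
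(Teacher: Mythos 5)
Your proposal rests on the same external input as the paper's proof, which is literally a one-line citation of \cite[Lemma 7.8.11(i,ii)]{bellchen} applied to $U$ and $t_U$: that lemma already produces, in its part (i), a normal reduced affinoid $Y$ together with a \emph{finite dominant} morphism $Y\to U$ and a coherent torsion-free $\cO_Y$-module of generic rank $2$ with $G_{\Q_p}$-action and trace $g^\ast t_U$, and in its part (ii) the blow-up $X\to Y$ after which the strict transform is locally free. So the approach is the same; your additional paragraphs essentially sketch the proof of the cited lemma.

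One small slip in your presentation: you first cite the weaker statement (a mere \emph{surjection} $Y'\to U$, as paraphrased in the proof of Lemma~\ref{weightan}) and then propose to ``normalize an irreducible component that dominates $U$'' to obtain a finite dominant map. Normalizing a component of a non-finite surjection does not produce a finite map over $U$; the correct point is simply that part (i) of \cite[Lemma 7.8.11]{bellchen} already yields finiteness, so no upgrade step is needed. Your remark that torsion-free coherent sheaves over a one-dimensional normal affinoid are automatically locally free, so the blow-up is in fact trivial here, is correct and a useful observation not made in the paper.
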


\begin{proof}
	This is the result of \cite[Lemma 7.8.11(i,ii)]{bellchen} applied to the reduced affinoid $U$ and the pseudorepresentation $t_U$.
\end{proof}

Given a field extension $L^\prime$ of $L$ and a point $x$ of $X(L^\prime)$, we denote by $V_x$ the fiber of $V$ at $x$, equipped with the $L^\prime$-linear action of $G_{\Q_p}$ induced by the action of $G_{\Q_p}$ on $V$. It is a 2-dimensional, continuous $L^\prime$-linear representation of $G_{\Q_p}$.

Consider the pseudorepresentation $t_X\coloneqq (gf)^\ast t_A\colon G_{\Q_p}\to\cO_X(X)$. 

\begin{lemma}\label{tXind}
The pseudorepresentation $t_X$ is induced if and only if $t_U$ is.
\end{lemma}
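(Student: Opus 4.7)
The plan is to reduce both directions to pointwise statements about specializations and then promote them using the structural results of Section \ref{secdef}.

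The forward direction is immediate. If $t_U$ is $H$-induced for some index 2 subgroup $H\subset G_{\Q_p}$, so that $t_U(c)=0$ for a chosen representative $c$ of the nontrivial coset and $t_U\vert_H=\chi_1\oplus\chi_2$ for characters $\chi_i\colon H\to\cO_U(U)^\times$, then applying the ring homomorphism $(gf)^\ast\colon\cO_U(U)\to\cO_X(X)$ yields $t_X(c)=0$ and $t_X\vert_H=(gf)^\ast\chi_1\oplus(gf)^\ast\chi_2$, i.e.\ $t_X$ is $H$-induced for the same $H$.

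For the converse, I would argue pointwise. By Lemma \ref{locfreesh}(1), $g$ is finite and dominant (hence surjective on $\C_p$-points), and the blow-up $f$ is also surjective, so $gf\colon X\to U$ is surjective on $\C_p$-points. If $t_X$ is induced, then $t_{X,x}$ is induced for every $x\in X(\C_p)$ (pullback of a decomposition is a decomposition); choosing a preimage of each $u\in U(\C_p)$ under $gf$ gives $t_{U,u}=t_{X,x}$, which is therefore induced.

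The key step is to upgrade this pointwise conclusion to a global one. Because $G_{\Q_p}$ has only finitely many index 2 subgroups $H_1,\ldots,H_n$ (they correspond to the finitely many quadratic extensions of $\Q_p$), Corollary/Definition \ref{indclo}(ii) provides closed subspaces $U^{H_i-\ind}\subset U$ whose $\C_p$-points parameterize those $u$ for which $t_{U,u}$ is $H_i$-induced. The previous paragraph yields $\bigcup_{i=1}^n U^{H_i-\ind}(\C_p)=U(\C_p)$. Since $U(\C_p)$ is Zariski-dense in the affinoid $U$ and each $U^{H_i-\ind}$ is closed, this forces $\bigcup_i U^{H_i-\ind}=U$ as rigid analytic spaces. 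By integrality (hence irreducibility) of $U$, some $U^{H_i-\ind}$ must coincide with $U$, and the universal property of $\fR_{\ovl t_U}^{H_i-\ind}$ then gives that $t_U$ itself is $H_i$-induced.

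The main—and essentially only—obstacle is this final passage from pointwise inducedness (where the subgroup $H_u$ may a priori vary with $u$) to global inducedness for a single $H$; it is resolved by combining the finiteness of the set of quadratic extensions of $\Q_p$ with the irreducibility of $U$.
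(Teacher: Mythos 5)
Your proof is correct and follows essentially the same route as the paper's: reduce to pointwise inducedness via $t_{X,x}=t_{U,gf(x)}$, use the Zariski-closedness of the induced loci from Corollary \ref{indclo} together with finiteness of the index-2 subgroups of $G_{\Q_p}$ and irreducibility to promote back to global inducedness. Your forward direction (direct pullback of the $H$-induced decomposition along $(gf)^\ast$) is a bit cleaner than the paper's pointwise argument, and you invoke surjectivity of $gf$ where the paper is content with density of its image, but the substance is the same.
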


\begin{proof}
By Corollary \ref{indclo}(ii), $t_X$ is induced if and only if $t_{X,x}$ is induced for every $x\in X$. At every $x\in X$, $t_{X,x}=t_{U,gf(x)}$ by definition, so $t_X$ is induced if and only if $t_{U,y}$ is induced for every $y$ in the image of $gf$. In particular, if $t_U$ is induced then $t_X$ is induced. Conversely, assume that $t_X$ is induced, and consider the subset
\[ U_\ind=\{y\in U\,\vert\, t_{U,y}\textrm{ is induced}\}. \] 
By Corollary \ref{indclo}(i), $U_\ind$ is closed, and by the above considerations, it contains the image of $gf$. Since $g$ is surjective and $f$ is dominant, the image of $gf$ is dense in $U$, so that $U_\ind=U$.
\end{proof}

Define a subset $S_X$ of $X$ as $f^{-1}(g^{-1}(S)\setminus Y_0)$. Since $A$ is 1-dimensional, $g$ is finite and $f$ is a blow-up, both $X$ and $Y$ are 1-dimensional, and the closed subspace $Y_0$ in the 1-dimensional affinoid $Y$ consists in a finite union of points. In particular, the set $g^{-1}(S)\setminus Y_0$ is infinite, hence dense in the 1-dimensional affinoid $Y$. If $D$ is the exceptional fiber of $f$, then $f$ is an isomorphism on $X\setminus D$, and $X\setminus D$ is dense in $X$ since $X$ is irreducible and 1-dimensional, so the subset $S_X\subset (X\setminus D)(\C_p)\subset X(\C_p)$ is dense in $X$. %\andr{dominant map avoiding S??} 
In particular, $X$ admits an admissible covering by $\{U_i\}_i$ by integral affinoid subdomains such that $S_X\cap U_i$ is dense in $U_i$ for every $i$. Pick an element $U_i$ of such a covering. %; we can and do assume $U$ to be irreducible. 

\begin{rem}\label{Xst}
For every $x\in S_X$, $gf(x)\in S$ and $t_{X,x}=t_{A,gf(x)}$. By assumption \ref{hyp1}, the pseudorepresentation $t_{A,gf(x)}$ is potentially semistable, so the same holds for $t_{X,x}$.
\end{rem}

Since $X$ is the blow-up of an integral rigid analytic space along a closed subspace, it is still integral. %\andr{is this true?? otherwise work on X-D?? also be careful where reduced is needed??} 
By Lemma \ref{subaff}, $t_U$ is induced if and only if $t_X$ is, and by Lemma \ref{tXind}, $t_X$ is induced if and only if $t_{U_i}$ is. Therefore, in order to deduce Theorem \ref{locthm}, it is enough to prove that $t_{U_i}$ is $E$-induced for a quadratic extension $E/\Q_p$.

In order not to burden the notation, we implicitly replace the original $U$ with $U_i$, $t_U$ with $t_X\vert_{U_i}$, $S$ with $S_X\cap U_i$, and the vector bundle $V$ with $V\vert_{U_i}$. Note that the new data obviously still satisfies assumptions \eqref{ncst} and \ref{hyp1}-\ref{hyp3}.
%To simplify the notation, we keep writing $V$ for the restriction of the vector bundle $V$ to $U$.

For every $x\in S$, $t_{U,x}$ is potentially semistable by Remark \ref{Xst}, so Corollary \ref{pstps} implies that there exists a quadratic extension $E_x$ of $\Q_p$ such that $t_{U,x}\vert_{G_{E_x}}$ is trianguline. Since $\Q_p$ only admits a finite number of quadratic extensions, there exists a quadratic extension $E$ of $\Q_p$ such that the set
\[ S_{E}=\{x\in S\,\vert\, t_{U,x}\vert_{G_E}\textrm{ is trianguline}\} \]
is infinite, hence dense in the 1-dimensional, irreducible affinoid $U$. We implicitly replace $S$ with $S_{E}$. 

By our choice of $E$, the $(\varphi,\Gamma_E)$-module $D_\rig(V_x\vert_{G_E})$ is triangulable for every $x\in S$. %\andr{give notation for phigamma earlier} 
Recall that, by assumption \ref{hyp2}, we are given two functions $\varphi_1$ and $\varphi_2$ specializing at every $x\in S$ to the eigenvalues of the Frobenius operator on $D_\st(V_x\vert_{G_F})$, and by \eqref{ncst} a function $k\in\cO_U(U)$ that specializes to a Hodge--Tate--Sen weight of $V_x$ for every $x\in S$, the other weight being 0. %For $i=1,2$, let $\uk_{\sigma,i}$ be the tuple $(k_{\sigma,i})_\sigma\in(\cO_U(U)\otimes_L\C_p)^{[E\colon\Q_p]}$, indexed by the field embeddings $\sigma\colon E\into\Qp$.

We denote by $\bx$ the identity character $\Qp^\times\to\Qp^\times$, and we use the same notation for its restriction to a character $L^\times\to L^\times$ for any $p$-adic field $L$. We write $\pi_E$ for a uniformizer of $E$.

Let $\delta_{x,1},\delta_{x,2}\colon E^\times\to\C_p^\times$ be the parameters of a triangulation of $V_x\vert_{G_E}$. 
Let $k_1$ be the function on $U$ constantly equal to 0, and $k_2=k$. By Example \ref{eigenpar}, up to swapping the roles of $\delta_{x,1}$ and $\delta_{x,2}$, we have for $i=1,2$: %\andr{doublecheck uniformizer}
\begin{enumerate}
\item $\delta_{x,i}\vert_{\cO_E^\times}=\psi_{x,i}\bx^{k_i(x)}$ for a finite order character $\psi_{x,i}\colon \cO_E^\times\to\Qp^\times$, %and a tuple $\uk_{x,i}=(k_{x,\sigma,i})_\sigma\in\Z^{[E\colon\Q_p]}$ indexed by the field embeddings $E\into\Qp$ 
\item $\delta_{x,i}(\pi_E)=\varphi_{1,i}(x)$.
\end{enumerate}

%Let $\det D_\rig(V)$ be the determinant of $D_\rig(V)$, defined as the 1-dimensional $(\varphi,\Gamma_E)$-module 
Let $\Lambda^2D_\rig(V)$ be the determinant of $D_\rig(V)$, a 1-dimensional $(\varphi,\Gamma_E)$-module over $\cR_U(\pi_E)$. By \cite[Theorem 6.2.14]{kedpotxia}, $\Lambda^2D_\rig(V)$ is isomorphic to $\cR_U(\pi_E)(\delta)\otimes_E\cL$ for a character $\delta\colon G_E\to\cO_U(U)^\times$, and a line bundle $\cL$ over $U$ with trivial actions of $\varphi$ and $\Gamma_E$. 

At every $x\in S$, 
\[ \delta=\delta_{1,x}\delta_{2,x}=\psi_{x,1}\psi_{x,2}\bx^{k_{x,1}+k_{x,2}}. \]
By Remark \ref{condbound}, the conductor of (the Dirichlet character attached to) $\psi_{x,1}\psi_{x,2}$ is bounded on $U$, so that there exists a finite extension $\doublewidetilde{E}/\Q_p$ such that $\psi_{x,1}\psi_{x,2}$ factors through $G_{\Q_p}\onto G_{\doublewidetilde{E}}$. In particular, \ref{hyp3} also holds with $\psi_{x,1}$ replaced by $\psi_{x,2}$, as long as we replace $\wtl E$ with the Galois closure of $\wtl E\doublewidetilde{E}$. We do so implicitly from now on.

Since both $\psi_{x,1},\psi_{x,2}$ factor through the finite quotient $G_{\Q_p}\onto G_{\wtl E}$, there are only a finite number of possible choices for each of $\psi_{x,1},\psi_{x,2}$. Therefore, up to replacing $S$ with a smaller infinite set, we can assume that $\psi_{x,1}$ and $\psi_{x,2}$ are both independent of $x\in S$. We denote by $\psi_1,\psi_2\colon G_{\Q_p}\to\cO_U(U)^\times$ the constant finite order characters that specialize to $\psi_{x,1},\psi_{x,2}$ at every $x\in S$. Then, for $i=1,2$, the characters $\delta_{x,i}$ are interpolated by the continuous character $\delta_i\colon E^\times\to\cO_U(U)^\times$ defined by
\begin{enumerate}
	\item $\delta_{i}\vert_{\cO_E^\times}=\psi_i\bx^{k_i}$, %and a tuple $\uk_{x,i}=(k_{x,\sigma,i})_\sigma\in\Z^{[E\colon\Q_p]}$ indexed by the field embeddings $E\into\Qp$ 
	\item $\delta_{i}(\pi_E)=\varphi_{1,i}$.
\end{enumerate}
At every $x\in S$, the characters $\delta_1,\delta_2$ specialize to the parameters $\delta_{x,1}, \delta_{x,2}$ of a triangulation of $D_\rig(V\vert_{G_E})$. %\andr{right field???}
We write $\delta_{1,x}$ and $\delta_{2,x}$ for the specializations of $\delta_1$ and $\delta_2$, respectively, at every $x\in U(\Qp)$.

\begin{lemma}\label{ptriclo}
For every $x\in U(\Qp)$, $t_x\vert_{G_E}$ is trianguline of parameters $\delta_{1,x}$ and $\delta_{2,x}$.
%	The locus $U_\ptri=\{x\in U\,\vert\, t_{U,x}\textrm{ is potentially trianguline}\}$ is closed in $U$.
\end{lemma}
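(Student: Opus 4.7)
The plan is to apply the Kedlaya--Pottharst--Xiao interpolation theorem for triangulations along rigid families, namely \cite[Theorem 6.3.13]{kedpotxia}, to the $(\varphi,\Gamma_E)$-module $D_\rig(V\vert_{G_E})$ over $\cR_U(\pi_E)$. The construction above has already produced the candidate parameters $\delta_1,\delta_2\colon E^\times\to\cO_U(U)^\times$ as genuine continuous characters interpolating the pointwise parameters at every $x\in S$, so the remaining task is to promote this pointwise data to a global triangulation.

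First I would check that the hypotheses of \cite[Theorem 6.3.13]{kedpotxia} are met. The space $U$ is a reduced, integral, 1-dimensional affinoid, and $D_\rig(V\vert_{G_E})$ is a $(\varphi,\Gamma_E)$-module of rank $2$ over $\cR_U(\pi_E)$. The dense subset $S\subset U(\C_p)$ provides the required collection of ``noble'' points: at each $x\in S$, by \ref{hyp1} the representation $V_x\vert_{G_E}$ is refined trianguline in the sense of Definition \ref{reftridef}, hence strictly trianguline with non-critical parameters by Lemma \ref{refstr}; moreover these pointwise parameters agree with $\delta_{1,x},\delta_{2,x}$ by the construction that preceded the statement. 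The fact that $\varphi_1,\varphi_2\in\cO_U(U)^\times$ are nowhere vanishing ensures that the required regularity conditions on the parameters hold on $U$.

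Applying \cite[Theorem 6.3.13]{kedpotxia}, I obtain an increasing filtration of $D_\rig(V\vert_{G_E})$ by sub-$(\varphi,\Gamma_E)$-modules whose graded pieces are, up to twists by line bundles on $U$, isomorphic to $\cR_U(\pi_E)(\delta_1)$ and $\cR_U(\pi_E)(\delta_2)$. In other words, $V\vert_{G_E}$ is trianguline with ordered parameters $\delta_1,\delta_2$ in the sense of Definition \ref{tridef}(i), now over the whole of $U$. Specializing this global triangulation at an arbitrary point $x\in U(\Qp)$ yields a triangulation of $V_x\vert_{G_E}$ with parameters $\delta_{1,x},\delta_{2,x}$. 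Since the trace of the $G_{\Q_p}$-action on $V$ is $t_U$ by Lemma \ref{locfreesh}\ref{blowup3}, the trace of $V_x\vert_{G_E}$ is $t_{U,x}\vert_{G_E}$, and Definition \ref{defst} together with Lemma \ref{pspot} let us conclude that $t_{U,x}\vert_{G_E}$ is itself trianguline with parameters $\delta_{1,x},\delta_{2,x}$, as desired.

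The main technical point to watch will be the verification of the ``noble point'' hypothesis in \cite[Theorem 6.3.13]{kedpotxia}: it demands not merely the existence of a triangulation at each $x\in S$ with the right parameters, but a genericity/non-criticality condition phrased in terms of the $\delta_{i,x}$ and the Hodge--Tate--Sen weights. In our setting this reduces to the non-criticality built into the definition of refined trianguline together with the fact that, thanks to assumption \eqref{ncst}, the weight function $k$ is non-constant, so the specializations $\delta_{1,x}$ and $\delta_{2,x}$ are distinct away from a discrete locus. Up to removing finitely many exceptional points from $S$ (which preserves density in the irreducible 1-dimensional affinoid $U$ by Remark \ref{replace}), this condition is satisfied and the theorem applies as stated.
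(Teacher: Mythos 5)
Your proposal is correct and takes essentially the same approach as the paper: both simply apply \cite[Theorem 6.3.13]{kedpotxia} with $X=U$, $M=D_\rig(V\vert_{G_E})$, $X_\alg=S$, and the characters $\delta_1,\delta_2$ constructed beforehand. The paper's proof is a one-line citation; yours adds the hypothesis-checking (density of $S$, strict triangulinity at points of $S$ via Lemma \ref{refstr}, the non-constancy of $k$ from \eqref{ncst} for the genericity condition, and the final passage from the representation $V_x$ to the pseudorepresentation $t_x$ via Lemma \ref{pspot}), which is all consistent with what the paper is implicitly invoking.
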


\begin{proof}
We simply apply \cite[Theorem 6.3.13]{kedpotxia} to, in the notations of \emph{loc. cit.}, $X=U$, $M=D_\rig(V)$, $X_\alg=S$ and the characters $\delta_1,\delta_2$ are as above.
%	Since finite unions of points are closed, we can assume that $U_\ptri$ has an infinite number of points. 
%	By Corollary \ref{pstps}, for every $x\in U_\npt$, there exists a quadratic extension $E_x$ of $E$ such that $\rho_x\vert_{G_{E_x}}$ is trianguline. Let $\ovl U_\ptri$ be the closure of $U_\ptri$ in $U$, and assume that $\ovl U_\ptri\setminus U$ is nonempty; let $x_0$ be a point of $U_\ptri\setminus U$. Let $Z$ be a set of points of $U_\npt$ accumulating at $x_0$. 
%	Since a $p$-adic field only admits a finite number of quadratic extensions, up to replacing $Z$ with a smaller set accumulating at $x_0$, we can assume that the field $E_x$ is constant for $x\in Z$; let $E_0$ be such a field. Consider the pseudorepresentation $\wtl t_U\coloneqq t_U\vert_{G_{E_0}}\colon G_{E_0}\to\cO_U(U)$. This pseudorepresentation, together with the set $S_U$, obviously still satisfies assumptions \ref{hyp1}-\ref{hyp3}, possibly with the exception of the fact that $\wtl t_{U,x}$ is not semistabelian for $x\in S_U$. Therefore, Lemma \ref{triclo} implies that the set %\andr{maybe check we are using assumptions right??}
%	\[ \wtl U_\tri=\{x\in U\,\vert\, \wtl t_{U,x}\textrm{ is trianguline}\} \] 
%	is closed in $U$. Since $Z\subset\wtl U_\tri$, we obtain that $x_0\in\wtl U_\tri$, hence that $x_0\in U_\ptri$, a contradiction.
\end{proof}

\begin{lemma}\label{triclo}
	The locus $U_\tri=\{x\in U\,\vert\, t_x\text{ is trianguline}\}$ is Zariski closed in $U$.
\end{lemma}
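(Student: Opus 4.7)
The plan is to realise $U_\tri$ as the union of two Zariski closed subsets of $U$ cut out by Galois-invariance conditions on the parameters $\delta_1,\delta_2\colon E^\times\to\cO_U(U)^\times$ constructed just before the statement.

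Let $c$ generate $\Gal(E/\Q_p)$, and for a continuous character $\delta\colon E^\times\to L^\times$ write $\delta^c(y)=\delta(c(y))$. Hilbert~90 for $\Gal(E/\Q_p)$ identifies $\{y/c(y)\,:\,y\in E^\times\}$ with $\ker(\Nm_{E/\Q_p})$, so $\delta=\delta^c$ if and only if $\delta$ factors through $\Nm_{E/\Q_p}$; via local class field theory this is in turn equivalent to $\delta$ being the pullback to $E^\times$ of a continuous character of $\Q_p^\times$, the set of such liftings being a torsor under the quadratic character $\chi_{E/\Q_p}$ of $\Q_p^\times$ cut out by $E$.

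The central step will be the following dichotomy: for every $x\in U(\Qp)$, $t_x$ is trianguline if and only if $\delta_{1,x}=\delta_{1,x}^c$ or $\delta_{2,x}=\delta_{2,x}^c$. For the forward implication, a rank-$1$ sub-$(\varphi,\Gamma_{\Q_p})$-module $\cR_L(\eta)\hookrightarrow D_\rig(V_x)$ base-changes to a rank-$1$ sub-$(\varphi,\Gamma_E)$-module of $D_\rig(V_x\vert_{G_E})$ with parameter $\eta\circ\Nm_{E/\Q_p}$; on the open locus where $\delta_{1,x}\ne\delta_{2,x}$ the $E$-triangulation is strict, so this submodule coincides with the unique $M_i$ of parameter $\delta_{i,x}$, forcing $\delta_{i,x}=\eta\circ\Nm_{E/\Q_p}$ and hence $\delta_{i,x}=\delta_{i,x}^c$. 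For the converse, if $\delta_{1,x}=\delta_{1,x}^c$ and $\eta\colon\Q_p^\times\to L^\times$ extends $\delta_{1,x}$, then by strictness $W\coloneqq H^0_{(\varphi,\Gamma_E)}(D_\rig(V_x)(\eta^{-1})\vert_{G_E})$ is one-dimensional, and the group $\Gal(E/\Q_p)\cong\Z/2\Z$ acts on $W$ through one of its two characters: if it acts trivially, the invariants produce a $G_{\Q_p}$-stable rank-$1$ sub with parameter $\eta$; otherwise the nontrivial character acts, and replacing $\eta$ by $\eta\cdot\chi_{E/\Q_p}$ (which preserves the restriction to $E^\times$) absorbs that character and yields a triangulation with parameter $\eta\cdot\chi_{E/\Q_p}$.

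Next I would check that the loci $Z_i\coloneqq\{x\in U\,:\,\delta_{i,x}=\delta_{i,x}^c\}$ are Zariski closed in $U$. The quotient $\delta_i/\delta_i^c$ is a continuous character $E^\times\to\cO_U(U)^\times$, and $E^\times$ is topologically finitely generated (isomorphic as a topological group to $\Z\times\mu(E)\times\cO_E^{\times,1}$ with $\cO_E^{\times,1}$ a finitely generated $\Z_p$-module), so the triviality of $\delta_i/\delta_i^c$ is detected by the vanishing of finitely many elements of $\cO_U(U)$. Consequently $U_\tri=Z_1\cup Z_2$ is Zariski closed. The main obstacle is the strictness hypothesis used above: it fails on the Zariski closed subset where $\delta_{1,x}=\delta_{2,x}$ (contained in the common vanishing locus of $\varphi_1-\varphi_2$ and $k$), where the rank-$1$ sub-$(\varphi,\Gamma_E)$-modules with the common parameter are no longer unique and the clean parameter-matching argument breaks down; since that exceptional locus is itself Zariski closed in $U$ one can stratify and argue separately there, either by a direct Grassmannian/semicontinuity argument or by invoking that the conclusion is already automatic on a proper closed subspace of the $1$-dimensional $U$.
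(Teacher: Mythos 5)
Your proof takes essentially the same route as the paper's: characterize $U_\tri$ by the condition that the interpolated $E$-triangulation parameters $\delta_{i,x}$ are $\Gal(E/\Q_p)$-invariant, equivalently factor through $\Nm_{E/\Q_p}$, and check that this is a Zariski-closed condition. The paper states this characterization almost as a bare assertion; you supply the actual content: restriction/descent for triangulations combined with strictness for the parameter-matching, Hilbert~90 and local class field theory for the equivalence of $c$-invariance with descent along the norm, and the topological finite generation of $E^\times$ to see that each $Z_i=\{\delta_{i,x}=\delta_{i,x}^c\}$ is cut out by finitely many analytic equations on $U$. One simplification is available: $\delta_1\delta_2$ is a twist of the restriction to $G_E$ of the determinant of $t_U$, which is a character of $G_{\Q_p}$, hence $\delta_1\delta_2$ automatically factors through $\Nm_{E/\Q_p}$; therefore $Z_1=Z_2$ and $U_\tri$ is a single Zariski-closed set rather than a union.

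The place where your argument is softer than advertised (and where the paper is silent) is the exceptional locus $Z=\{x:\delta_{1,x}=\delta_{2,x}\}$, where strictness fails. Observing that $Z$ is Zariski-closed and hence finite in the $1$-dimensional irreducible $U$ does not by itself finish: a closed set altered on finitely many points need not remain closed (deleting a point from a curve yields an open set, not a closed one). The cleanest patch is to notice that the converse inclusion $Z_1\subseteq U_\tri$ does not actually need strictness: even when $W=H_{\varphi,\Gamma_E}^0\bigl(D_\rig(V_x)(\eta^{-1})\vert_{G_E}\bigr)$ has dimension $>1$, the $\Gal(E/\Q_p)$-action on $W$ still has a nonzero eigenvector, which furnishes the required $G_{\Q_p}$-stable rank-one sub after absorbing the quadratic character as you describe. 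With this, one has $Z_1\subseteq U_\tri\subseteq Z_1\cup Z$ with $Z$ finite, so $U_\tri=Z_1\cup(U_\tri\cap Z)$ is the union of a Zariski-closed set and a finite set, hence Zariski-closed.
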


\begin{proof}
Let $x\in U(\Qp)$, and assume that $t_x$ is trianguline of some parameters $\wtl\delta_{1,x},\wtl\delta_{2,x}\colon\Q_p^\times\to\cO_U(U)^\times$. The corresponding triangulation on $D_\rig(V_x)$ induces a triangulation on $D_\rig(V_x\vert_{G_E})$, of parameters obtained by composing $\wtl\delta_{1,x}$ and $\wtl\delta_{2,x}$ with the norm map $\Nm_{E/\Q_p}\colon E^\times\to\Q_p^\times$. On the other hand, $t_x\vert_{G_E}$ is trianguline of parameters $\delta_{1,x}$ and $\delta_{2,x}$ by Lemma \ref{ptriclo} if $\delta_{1,x}$ and $\delta_{2,x}$ factor through $\Nm_{E/\Q_p}$, then $D_\rig(V_x)$ is triangulable. Hence, $U_\tri$ is the locus of points such that $\delta_{1,x}$ and $\delta_{2,x}$ factor through $\Nm_{E/\Q_p}$, which is closed. 
%restriction t_x\vert_{G_E} is trianguline of parameters \delta_1(x), \delta_2(x) by Lemma \ref{ptriclo}, so that \wtl\delta_{1,x} \wtl\delta_{2,x}
%Obviously, U_\tri\subset U_\ptri. For x\in U_\tri(\Qp), the parameters of a triangulation are given 
%	If we make Berger's dictionary \cite[Théorème A]{bergeqphiN} between $(\varphi,N,G_K)$-modules and $(\varphi,\Gamma)$-modules explicit in the case of rank 1, we obtain that ... \andr{make precise + clear that these are parameters of any triangulation??}
%	Then the Lemma follows from applying \cite[Theorem 6.3.13]{kedpotxia}, where, in the notations of \emph{loc. cit.}, we are taking $X=U$, $M=D_\rig(V)$, $X_\alg=S_U$ and the characters $\delta_1,\delta_2$ are as above.
\end{proof}

%\begin{rem}\label{nohyp1}
%	The proof of Lemma \ref{triclo} does not rely on the statement, in assumption \ref{hyp1}, that the pseudorepresentation $t_x$ is not semistabelian for $x\in S_U$.
%\end{rem}

\begin{cor}\label{ntridense}
	The complement of $U_\tri$ is open and dense in $U$.
\end{cor}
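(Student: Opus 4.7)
The openness of the complement is immediate from Lemma \ref{triclo}. For density, the plan is to exploit the topology of $U$ together with the fact that the set $S$ already gives an infinite supply of non-trianguline points. More precisely, since $U$ is an integral $1$-dimensional affinoid, Lemma \ref{dense} tells us that every infinite subset of $U(\C_p)$ is analytic Zariski-dense in $U$. Hence it is enough to show that $S \subset U \setminus U_\tri$, because $S$ is infinite by hypothesis.

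The main step is therefore to verify that $t_x$ is not trianguline for every $x \in S$. I would argue this as follows. By assumption \ref{hyp1}, $t_x$ is potentially semistable, and by \ref{hyp2}--\ref{hyp3} combined with the analysis recalled just before the statement of Theorem \ref{locthm}, the $(\varphi, N, G_{\Q_p})$-module $D_\pst(\rho_x)$ admits the $G_{\Q_p}$-stable rank one subquotients $D_{x,1}, D_{x,2}$ on which $G_{\Q_p}$ acts through the nontrivial finite order character $\psi_{x,1}$. In particular, the Galois type of $t_x$ is genuinely of type \ref{type2} or \ref{type3} of Proposition \ref{padictypes}, not of type \ref{type1}. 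By the last sentence of Corollary \ref{pstdih}, this forces $t_x$ to fail to be semistabelian. Lemma \ref{typetri} (equivalence of ``trianguline'' and ``semistabelian'' in this context) then yields that $t_x$ is not trianguline, i.e.\ $x \in U \setminus U_\tri$.

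Putting the pieces together: $S \subset U \setminus U_\tri$, and $S$ is infinite; by Lemma \ref{dense} this inclusion forces $U \setminus U_\tri$ to be Zariski-dense in $U$. Combined with the openness from Lemma \ref{triclo}, this completes the argument. The main obstacle is the verification that $t_x$ is not trianguline for $x \in S$; once the Galois-type dichotomy of Proposition \ref{padictypes} together with Corollary \ref{pstdih} and Lemma \ref{typetri} is invoked cleanly, this reduces to already-cited material, so no new substantive input is needed beyond what is set up in Section \ref{sectri}.
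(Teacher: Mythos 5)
The first part of your argument (openness from Lemma \ref{triclo}; reduction to exhibiting points of $S$ outside $U_\tri$; density via Lemma \ref{dense} since $U$ is 1-dimensional and integral) is correct and essentially what the paper does. The paper's proof is even shorter: it simply cites Lemma \ref{triclo} together with the assertion — made in the paragraph between \ref{hyp2} and Theorem \ref{locthm} — that for $x\in S$ the pseudorepresentation $t_x$ is potentially trianguline but not trianguline, and concludes. (One point of $S$ outside $U_\tri$ is already enough, since $U_\tri$ is Zariski-closed in an irreducible 1-dimensional affinoid.)

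However, your attempt to re-derive the containment $S\subset U\setminus U_\tri$ from the type dichotomy contains a genuine error in the intermediate step. You write that $D_\pst(\rho_x)$ admits $G_{\Q_p}$-stable rank-one subquotients $D_{x,1},D_{x,2}$ on which $G_{\Q_p}$ acts via a character $\psi_{x,1}$ of $G_{\Q_p}$, and conclude that ``the Galois type of $t_x$ is genuinely of type \ref{type2} or \ref{type3} \ldots not of type \ref{type1}.'' This implication runs in exactly the wrong direction. If the $D_{x,i}$ were $G_{\Q_p}$-stable rank-one pieces on which $G_{\Q_p}$ acts through characters of $G_{\Q_p}$, then the associated Weil--Deligne representation would decompose as a sum of two characters of $W_{\Q_p}$; that is precisely case \ref{type1} of Proposition \ref{padictypes}. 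Types \ref{type2} and \ref{type3} correspond to \emph{irreducible} $W_{\Q_p}$-representations, which have no $W_{\Q_p}$-stable lines, so the existence of such $G_{\Q_p}$-stable subquotients would exclude, not force, those cases. Consequently your premise would entail that $t_x$ \emph{is} semistabelian and hence trianguline, the opposite of what you need. The non-triangulinity of $t_x$ for $x\in S$ is something the paper asserts as part of the standing hypotheses on $S$ (flowing from \ref{hyp1} via Corollary \ref{pstdih} and Lemma \ref{typetri}, and in the intended global application from the $p$-supercuspidality of the $f_i$ via Corollary \ref{infpottri}); it does not come out of \ref{hyp1}--\ref{hyp3} by the route you propose, and the proposed route as written would in fact prove the negation.
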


\begin{proof}
	The statement follows from Lemma \ref{triclo} together with the fact that the set $S$ is dense in $U$ and not contained in $U_\tri$. %\andr{say why??}
\end{proof}

\begin{cor}\label{ptriall}
	The subspace $U_\ptri$ coincides with the whole $U$.
\end{cor}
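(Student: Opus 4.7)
The proof will be essentially a one-line consequence of Lemma \ref{ptriclo}. Recall that the quadratic extension $E/\Q_p$ was fixed before Lemma \ref{ptriclo}, with the property that the set $S_E$ of points $x \in S$ for which $t_{U,x}|_{G_E}$ is trianguline is infinite, and $S$ was subsequently replaced by $S_E$. Crucially, $E$ is a fixed finite extension of $\Q_p$ independent of the point under consideration, so that any pseudorepresentation becoming trianguline over $G_E$ is potentially trianguline in the sense of Definition \ref{defst}.

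The plan is to apply Lemma \ref{ptriclo} directly. By that lemma, for every $x \in U(\Qp)$, the restricted pseudorepresentation $t_x|_{G_E}$ is trianguline with parameters $\delta_{1,x}$ and $\delta_{2,x}$, obtained by specializing at $x$ the characters $\delta_1, \delta_2 : E^\times \to \cO_U(U)^\times$ that were constructed by interpolating the finite-order characters $\psi_1, \psi_2$ (which were shown to be constant along $S$ using \ref{hyp3} and Remark \ref{condbound}) and the Hodge--Tate--Sen weight function $k$ and Frobenius functions $\varphi_1, \varphi_2$ from \eqref{ncst} and \ref{hyp2}. In particular, for every such $x$, the pseudorepresentation $t_x$ is $E$-trianguline, hence potentially trianguline by definition.

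Therefore $x \in U_\ptri$ for every $x \in U(\Qp)$, so $U_\ptri = U$. There is no serious obstacle here: the work has already been absorbed into Lemma \ref{ptriclo}, which in turn rested on the interpolation theorem \cite[Theorem 6.3.13]{kedpotxia}; the content of the present corollary is simply the observation that since $E$ is fixed, triangulinity over $G_E$ at every point of $U$ is exactly the statement that $U_\ptri = U$.
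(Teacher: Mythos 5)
Your proof is correct and takes essentially the same approach as the paper: both deduce the corollary directly from Lemma \ref{ptriclo}, which says $t_x\vert_{G_E}$ is trianguline for \emph{every} $x\in U(\Qp)$, whence $t_x$ is $E$-trianguline, hence potentially trianguline, for every $x$. If anything your version is slightly cleaner, since the paper's phrasing (mentioning $S\subset U_\ptri$ from \ref{hyp1} and Zariski-density of $S$) restates hypotheses already absorbed into Lemma \ref{ptriclo} rather than adding a genuinely new step.
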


\begin{proof}
	By assumption \ref{hyp1} the pseudorepresentation $t_{U,x}$ is potentially trianguline for every $x\in S$, so that $S\subset U_\ptri$. Since $S$ is Zariski-dense in $U$, the result follows from Lemma \ref{ptriclo}.
\end{proof}

%\begin{rem}
%Lemmas \ref{triclo,ptriclo} also hold, with the same proofs, for (pseudo-)representations of arbitrary dimension. ??? %The proof of Lemma \ref{}, instead, relies on the Theorem \ref{berche} of Berger and Chenevier, for which there seems to be no obvious analogue in higher dimension.
%\end{rem}

%and a character $\chi\colon G_F\to B^\times$ such that $t$ is induced by $\chi$ in the sense of Definition \ref{}.

%\begin{rem}????
%	Lemmas \ref{??} hold for an arbitrary affinoid $E$-algebra $A$ (not necessarily IRRED?? equidimensional of dimension 1).
%\end{rem}

Our theorem will follow from the previous lemmas combined with the result of Berger and Chenevier that we recalled in Theorem \ref{berche}.

%\begin{prop}\label{inddense}
%	The locus $X_\ind$ is dense in $A$.
%\end{prop}

%We recall a theorem of Berger and Chenevier that plays a crucial role in the proof.

%\begin{thm}[{\cite[Théorème 3.1]{bergchen}}]\label{berche}
%	Let $\rho\colon G_{\Q_p}\to\GL_2(\Qp)$ be a potentially trianguline, non-trianguline representation. Then $\rho$ satisfies one of the following (non-exclusive) conditions:
%	\begin{enumerate}
%		\item $\rho\cong\Ind_{G_K}^{G_{\Q_p}}\chi$ for a quadratic extension $K$ of $\Q_p$ and a character $\chi\colon G_K\to\Q_p^\times$;
%		\item\label{tdr} $\rho$ is the twist of a de Rham representation with a character.
%	\end{enumerate}
%\end{thm}

%We are ready to prove Theorem \ref{locthm}.

\begin{proof}[Proof of Theorem \ref{locthm}] 
Let $U_\tdR$ be the locus of points in $U$ satisfying condition \eqref{tdr} of Theorem \ref{berche}.
	Consider the locus
	\[ U_{\ptnt}\coloneqq U_\ptri\setminus U_\tri, \]
	of points $x$ where $t_x$ is potentially trianguline but not trianguline. By Lemma \ref{ptriall}, $U_\ptri=U$, so that by Corollary \ref{ntridense} $U_\ptnt$ is open and dense in $U$. 
	By Theorem \ref{berche}, $U_{\ptnt}$ is the union of $U_\ind$ and $U_\tdR$. 
	
	For $x\in U_\tdR$, the Hodge--Tate--Sen weight $k(x)$ is an integer. %difference between the Hodge--Tate--Sen weights of $t_{U,x}$ is an integer. %Fix an arbitrary $\sigma$?? \andr{how does the nonconst hyp work with more sigmas??} 
%	Recall that we chose $U$ small enough so that, for every $x\in U$ Hodge--Tate--Sen weights of $U$ are obtained by specializing two analytic functions $k_{1},k_{2}\colon U\to\C_p$ at $x$. The difference $k_{1}-k_{2}$ is again an analytic function, that is not constant because of assumption \eqref{ncst}. In particular, the locus 
Since $k$ is not constant over $U$, the locus
	\[ U_{\mathrm{int}}=\{x\in U(\C_p)\,\vert\,k(x)\in\Z\} \]
	does not contain any Zariski open subset of $U(\C_p)$, and neither does $U_\tdR$ since it is a subset of $U_{\mathrm{int}}$. By Corollary \ref{indclo}, $U_\ind$ is closed. Assume by contradiction that $U_\ind\ne U$. %Since X is irreducible, if X_\ind is not the whole X then its complement X^c_\ind is open and dense in X. 
	As we remarked above, $U_\ptnt=U_\ind\cup U_\tdR$ is open and dense in $U$, so that $(U_\ind\cup U_\tdR)\setminus U^\ind$ is open in $U$ and nonempty: indeed, if it were empty, then $U_\ind=U_\ind\cup U_\tdR$ would be both open and closed, hence the whole $U$ since $U$ is connected. On the other hand, $(U_\ind\cup U_\tdR)\setminus U^\ind\subset U_\tdR$ cannot contain any nonempty open set, a contradiction. We conclude that $U_\ind=U$.
%	In particular $X_\tdR$ is contained in the preimage of $\cW_A^\diff$ under the weight map $\kappa$. Since the complement of $\cW_A^\diff$ in $\cW_A$ is dense in $\cW_A$ and the weight map is finite??, the complement $X_\ind$ of $X_\tdR$ in $X_\ptnt$ is dense in $X_\ptnt$ good argument??? use openness of Xptnt. Since $X_\ptnt$ is itself dense in $A$, $X_\ind$ is dense in $A$.
\end{proof}

%\begin{lemma}
%If A is equidimensional of dimension 1, then the pseudorepresentation t_A is induced.
%\end{lemma}

%\begin{proof}[Proof of Theorem \ref{locres}]
%	By Lemmas \ref{indclosed} and \ref{inddense}, $X_\ind$ is both Zariski-dense and Zariski-closed in $X$, hence it must coincide with $X$.
%\end{proof}

%\begin{lemma}\label{expseudo}
%There exists a 2-dimensional pseudorepresentation t_A\colon G_K\to A that specializes to \tr(\rho_x) at every x\in S.
%\end{lemma}

\medskip

\section{Families of $p$-supercuspidal eigenforms}\label{secpsc}

Given a $p$-adic family of $p$-supercuspidal eigenforms, we prove that it is a CM family in the sense of Definition \ref{CMfamdef}. %Such families were classified in Corollary \ref{CMinf}, so we obtain a classification of all families of ??? this should be explained at some point, with all possible cases!!. 
We will deduce our result, Theorem \ref{infslope}, from the local Galois-theoretic result Theorem \ref{locthm}. %Here we introduce some notation and give the statement and proof of our main theorem.

We fix a positive integer $N$, prime to $p$, that will serve as a tame level. 
Though the domain of most (pseudo-)representations we consider below is $G_{\Q,Np}$, we will sometimes abuse and lighten notation by restricting such (pseudo-)representations to $G_K$, $K$ a finite extension of $\Q$, when what we mean is that we are restricting them to $\Gal(\Q^{Np}/K)$, $\Q^{Np}$ being the maximal extension of $\Q$ in $\ovl\Q$ unramified outside of $Np$.

Let $(f_i,\varphi_i)_i$ be a family of eigenforms of tame level $\Gamma_1(N)$. For every $i$, let $t_i\colon G_{\Q,Np}\to\Qp$ be the pseudorepresentation attached to $f_i$. Since the eigenforms $f_i$ are all pairwise congruent modulo $p$ for $i$ large enough, the residual pseudorepresentations $\ovl t_i$ all coincide for $i$ large enough. We denote by $\ovl t$ their common value and by $\ovl\rho\colon G_{\Q,Np}\to\GL_2(\ovl\F_p)$ any continuous representation with trace $\ovl t$. We assume from now on that:
\begin{enumerate}[label=(H$\ovl t$\arabic*)]
\item \label{hyprhobar1}
$\ovl t$ is not $\Q(\sqrt p)$-induced;
\item \label{hyprhobar2}
if $p=5$, $F$ is a quadratic extension of $\Q$, ramified at 5, such that $\ovl t\vert_{G_{F_\fp}}$ is decomposable for the unique 5-adic place $\fp$ of $F$, and $\Proj(\ovl\rho(G_{F,5N}))\cong\PGL_2(\F_5)$, then the degree of $F(\zeta_5)/F$ is 4.
%\ovl t\text{ is not }\Q(\sqrt p)\text{-induced} 
%\item \label{hyprhobar2}
%\text{if }p=5, F\text{ is a quadratic extension of }\Q,\text{ ramified at 5, such that }\ovl t\vert_{G_{F_\fp}} \text{ is decomposable } \text{for the unique 5-adic place }\fp\text{ of }F,\text{ and }\Proj(\ovl\rho(G_{F,5N}))\cong\PGL_2(\F_5),\text{ then }F(\zeta_5)\colon F]=4.
\end{enumerate}
%In particular, all that follows
 
%We denote by $\ovl t_i$ the residual pseudorepresentation attached to $t_i$, in the sense of Section \ref{residual}.
 
%Recall that we classified CM families of infinite slope in Proposition \ref{CMinf}. %\andr{this should go after definition of family!!}

Our main result is the following.

\begin{thm}\label{infslope}
If $f_i$ is $p$-supercuspidal for every $i$, then the family $(f_i,\varphi_i)_i$ is eventually CM. %\andr{how to say definitely in english??}
\end{thm}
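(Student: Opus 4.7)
The plan is to reduce to a 1-dimensional affinoid family via Proposition \ref{famlim} and then combine the local Galois theorem \ref{locthm} at $p$ with the Hilbert-modular Theorem \ref{bgvaff} after base change to a real quadratic field. By Proposition \ref{famlim} it suffices to show that any 1-dimensional affinoid family $\cF=(U,\Theta,\Phi,S)$ whose classical specializations are all $p$-supercuspidal is a CM affinoid family. By Corollary \ref{infpottri}, each $t_{U,x}\vert_{G_{\Q_p}}$ is non-trianguline but becomes trianguline over a quadratic extension of $\Q_p$ that is inert or ramified, and since there are only finitely many such extensions I shrink $S$ to fix a single quadratic extension $E/\Q_p$ working for every $x\in S$. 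I then pick a real quadratic field $F$ with a unique $p$-adic place $\fp$ satisfying $F_\fp\cong E$.

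Next I apply Theorem \ref{locthm} to $t_U\vert_{G_{\Q_p}}$. The non-constancy \eqref{ncst} of one Hodge--Tate--Sen weight follows from Corollary \ref{weightinfty}(i), since the weights $k_{f_x}$ diverge. Hypothesis \ref{hyp1} is provided by Example \ref{eigenpar}\ref{eigenparsc}: the monodromy on $D_\pst$ of a $p$-supercuspidal form vanishes, making the representation potentially refined trianguline for either refinement. For \ref{hyp2} I take $\varphi_1=\Phi$ and recover $\varphi_2$ from $\Phi$ and the analytic determinant character of $t_U$. Hypothesis \ref{hyp3} (uniform finiteness of the characters $\psi_{x,i}$) follows from Remark \ref{condbound} applied to the continuous characters $\delta_1,\delta_2$ interpolating the triangulation parameters on the $E$-side, combined with the nebentypus conductor bound of Lemma \ref{nebenbound}. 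Theorem \ref{locthm} then yields that $t_U\vert_{G_{\Q_p}}$ is $E$-induced, so $t_U\vert_{G_E}$ decomposes as a sum of two characters.

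Base-changing to $F$ via Lemma \ref{bcfam} and the argument of Proposition \ref{famlim}(i), $\cF$ embeds (after a harmless twist by a Dirichlet character of $\fp$-power conductor) into an affinoid subdomain of the parallel-weight Hilbert eigenvariety $\cE_F$; let $\cX\subset\cE_F$ be the ambient irreducible component and $t_\cX=t_U\vert_{G_F}$. Since $G_{F_\fp}=G_E$, the previous paragraph gives that $t_\cX\vert_{G_{F_\fp}}$ is decomposable, so the assumption \ref{Sab} of Theorem \ref{bgvaff} holds (with $S^\ab=U(\Qp)$). The conditions \ref{hyprhobar1} and \ref{hyprhobar2} on $\ovl t$ are tailored precisely to yield the hypotheses \ref{zetapirr} and \ref{5hyp} on $\ovl t_\cX$, so Theorem \ref{bgvaff} applies and shows that $\cX$ is a CM component: $t_\cX$ is induced from a character of $G_K$ for an imaginary quadratic extension $K/F$ (imaginarity being forced by Lemma \ref{psplit}). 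Finally, since $t_U\vert_{G_K}$ is decomposable and $[K\colon\Q]=4$, Lemma \ref{ribetind}(iii) promotes this to decomposability up to index 2, so $t_U$ is either reducible or induced from a character of $G_{K'}$ with $[K'\colon\Q]=2$. Cuspidality of classical specializations rules out the reducible case, and Corollary \ref{weightinfty}(i) together with the last statement of Lemma \ref{indCM} forces $K'$ to be imaginary quadratic; Lemma \ref{CMfamrep}(i) then identifies $\cF$ as a CM affinoid family, whence the original family is eventually CM. The most delicate step in this plan is the verification of hypothesis \ref{hyp3} in the application of Theorem \ref{locthm}, i.e.\ controlling the finite-order parts of the triangulation parameters $\delta_{x,i}$ uniformly along the affinoid, which is what ultimately ties the local data at $p$ of the individual $p$-supercuspidal specializations together into a coherent family.
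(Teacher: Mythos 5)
Your proposal follows essentially the same route as the paper: reduce to a $1$-dimensional affinoid family via Proposition~\ref{famlim}, apply Theorem~\ref{locthm} at $p$ to conclude that $t_U\vert_{G_{\Q_p}}$ is quadratically induced, base change to a real quadratic $F$ and apply Theorem~\ref{bgvaff} to place the affinoid in a CM component of $\cE_F$, then descend via Lemma~\ref{ribetind}(iii) and Lemma~\ref{CMfamrep}(i). The one place where you depart from the paper --- and where there is a genuine gap --- is precisely the step you flag as most delicate, the verification of hypothesis~\ref{hyp3} in the application of Theorem~\ref{locthm}.

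You propose to bound the finite-order parts $\psi_{x,i}$ of the triangulation parameters by ``Remark~\ref{condbound} applied to the continuous characters $\delta_1,\delta_2$'' together with Lemma~\ref{nebenbound}. This does not work, for two reasons. First, circularity: the continuous characters $\delta_1,\delta_2$ on $U$ are only constructed \emph{inside} the proof of Theorem~\ref{locthm}, and their construction \emph{uses} hypothesis~\ref{hyp3} (the uniform finiteness of the $\psi_{x,i}$ is what allows one to shrink $S$ so that the $\psi_{x,i}$ are constant and can be spread out to characters of $\cO_U(U)^\times$). You cannot invoke their existence to establish~\ref{hyp3}. Second, insufficiency: both Remark~\ref{condbound} (as you use it) and Lemma~\ref{nebenbound} control only the product $\psi_{x,1}\psi_{x,2}$, which is the inertial part of the determinant; for $p$-supercuspidal specializations the individual $\psi_{x,i}$ encode the full inertial Galois type, which is not determined by the determinant and has no a priori reason to be uniformly bounded along the family. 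The paper's fix is Proposition~\ref{untwistfam}: after base change to $F$, each $f_{i,F}$ is the twist of a finite-slope Hilbert form by a character of $\fp$-power conductor, and the convergence of the family (via the closedness of the twisted eigensurface $\cE_F^\tw$) forces these twist characters to stabilize. Since the twist character is precisely $\psi_{x,1}\vert_{G_F}$ or $\psi_{x,2}\vert_{G_F}$, this bounds one of the two $\psi_{x,i}$; combined with the bound on the product from Remark~\ref{condbound}, this bounds both and gives~\ref{hyp3}. So the missing ingredient in your proposal is Proposition~\ref{untwistfam}, which is a genuinely global input using the $p$-adic convergence of the family, not just a conductor bound on analytic functions on the affinoid.
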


We devote the rest of the section to the proof of Theorem \ref{infslope}.

By Proposition \ref{famlim}, we can partition $\N$ as $\N=\coprod_{j=0}^mI_i$ such that $I_0$ is finite and, for every $j\in\{1,\ldots,m\}$, $(f_i,\varphi_i)_{i\in I_j}$ is the specialization of an affinoid family $\cF_j$. Theorem \ref{infslope} is an immediate consequence of the following lemma, together with Lemma \ref{CMfamrep}(ii).

\begin{lemma}\label{FjCM}
For every $j\in\{1,\ldots,m\}$, the family $\cF_j$ is CM, in the sense of Definition \ref{CMfamdef}.
\end{lemma}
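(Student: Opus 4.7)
The approach is to combine the local Galois-theoretic result Theorem \ref{locthm} with the Hilbert-modular result Theorem \ref{bgvaff}, after a base change to an appropriate real quadratic field.

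First, I would apply Theorem \ref{locthm} to the 1-dimensional integral affinoid $U_j$ equipped with the restriction $t_{U_j}|_{G_{\Q_p}}$ of the pseudorepresentation attached to $\cF_j$, taking $S$ to be the set of $p$-supercuspidal classical specializations $\{x_i\}_{i \in I_j}$. Condition \eqref{ncst} follows from Corollary \ref{weightinfty}(i): the weights of the $f_i$ tend to infinity, so the Hodge--Tate--Sen function $\kappa_{\cF_j}$ of Remark \ref{kappa} is non-constant on $U_j$. Hypothesis \ref{hyp1} holds by Example \ref{eigenpar}\ref{eigenparsc}: for a $p$-supercuspidal eigenform, the representation $\rho_{f,p}|_{G_{\Q_p}}$ is not trianguline but becomes potentially crystalline, and the refinement attached to any Frobenius eigenvalue is automatically $N$-stable. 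Hypothesis \ref{hyp2} is built into the datum of $\cF_j$: the first Frobenius eigenvalue is interpolated by $\Phi_j \in \cO_{U_j}(U_j)^\times$, and the second by a unit multiple of $\det(t_{U_j})(\Frob_p)\Phi_j^{-1}$. For \ref{hyp3}, Proposition \ref{padictypes} shows that the $p$-adic Galois types of the $p$-supercuspidal specializations are of type (ii) or (iii), and in both cases the conductor of the relevant inertial character is controlled by the conductor of the $p$-part of the nebentypus of $f_i$, which is uniformly bounded along $\cF_j$ by Lemma \ref{nebenbound}. Theorem \ref{locthm} then produces a quadratic extension $E/\Q_p$ such that $t_{U_j}|_{G_{\Q_p}}$ is $E$-induced.

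Second, I would pick a real quadratic field $F/\Q$ with a unique $p$-adic place $\fp$ satisfying $F_\fp = E$; this is always possible because $E$ is either inert or ramified over $\Q_p$ (no classical specialization of $\cF_j$ is trianguline at $p$). The proof of Proposition \ref{famlim}, applied in the present setting, realizes the base change $\cF_{j,F}$ as an affinoid subdomain $V_j$ of the parallel-weight Hilbert eigenvariety $\cE_F$, up to a twist by a Dirichlet character of $F$ with conductor dividing a power of $\fp$, which has no effect on local decomposability at $\fp$. Since $t_{U_j}|_{G_{\Q_p}}$ is $E$-induced, its restriction to $G_E = G_{F_\fp}$ is a direct sum of two characters over all of $U_j$, and the same holds for the pseudorepresentation $t_{V_j}|_{G_{F_\fp}}$. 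Conditions \ref{zetapirr} and \ref{5hyp} of Theorem \ref{bgvaff} transfer from \ref{hyprhobar1} and \ref{hyprhobar2}, while condition \ref{Sab} is precisely what has just been obtained. Theorem \ref{bgvaff} then places $V_j$ inside a CM component of $\cE_F$, and so $t_{U_j}|_{G_F}$ is induced from a character of $G_K$ for some totally imaginary quadratic extension $K/F$.

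Finally, I would descend back to $G_\Q$. Since $t_{U_j}|_{G_K}$ is decomposable and $K/\Q$ is of degree $4$, Lemma \ref{ribetind}(iii) shows that $t_{U_j}$ is decomposable over an index-$2$ subgroup of $G_\Q$, hence is either reducible or $H$-induced for some index-$2$ subgroup $H \subset G_\Q$. Reducibility is ruled out because each $f_i$ ($i \in I_j$) is cuspidal and thus has irreducible associated Galois pseudorepresentation. Therefore $t_{U_j}$ is induced from a character of $G_{K_0}$ for some quadratic field $K_0/\Q$, which must be imaginary by the argument used in the proof of Lemma \ref{CMfamrep}(i) (one of the Hodge--Tate--Sen weights is identically zero on $U_j$). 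Lemma \ref{CMfamrep}(i) then yields that $\cF_j$ is a CM affinoid family. The main obstacle in this outline is the verification of hypothesis \ref{hyp3} of Theorem \ref{locthm} in the original $p$-supercuspidal, non-trianguline setting: one needs uniform control on the inertial parameters across the dense subset of classical points before any base change, which ultimately requires combining Lemma \ref{nebenbound} with the fine classification of Galois types in Proposition \ref{padictypes}.
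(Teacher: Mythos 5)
Your overall strategy coincides with the paper's: apply Theorem~\ref{locthm} locally to get $E$-inducedness at $p$, base change to a real quadratic $F$ with $F_\fp=E$, invoke Theorem~\ref{bgvaff} to land in a CM component of $\cE_F$, and descend via Lemma~\ref{ribetind}(iii) and Lemma~\ref{CMfamrep}. The descent step and the verification of \eqref{ncst}, \ref{hyp1}, \ref{hyp2} are all fine.

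However, your verification of \ref{hyp3} does not work. You claim that the conductor of the inertial characters $\psi_{x,i}$ is controlled by the conductor of the $p$-part of the nebentypus, citing Proposition~\ref{padictypes} and Lemma~\ref{nebenbound}. This is false for supercuspidal representations: if $\pi_{f,p}\cong\Ind_{W_E}^{W_{\Q_p}}\chi$ with, say, $E=\Q_{p^2}$, then the central character is $\chi\vert_{\Q_p^\times}\cdot\eta_{E/\Q_p}$, and one can choose $\chi$ of arbitrarily large conductor on $\cO_E^\times$ with $\chi\vert_{\Z_p^\times}$ trivial (e.g.\ $\chi$ supported on the Galois-anti-invariant part of $\cO_E^\times$). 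Such a $\pi$ has unramified (hence bounded) central character but unbounded Galois type, and the characters $\psi_{x,i}$ on the rank-one pieces of $D_\pst$ have unbounded conductor. So Lemma~\ref{nebenbound} alone is insufficient, and Proposition~\ref{padictypes} only classifies the \emph{shape} of the type, not its conductor. You explicitly flag this as the ``main obstacle'', but the combination you propose does not close the gap. The paper closes it differently: since $\cF_j$ was built in Proposition~\ref{famlim} by base-changing to $F$ and identifying the family with a subdomain of $\cE_F$, Lemma~\ref{untwistfam} shows that the $\fp$-power Hecke characters by which one un-twists $f_{i,F}$ to a finite-slope Hilbert form are \emph{eventually constant} as $i\to\infty$; this bounds (one of) the $\psi_{x,i}\vert_{G_{F_\fp}}$ directly. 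Combined with Remark~\ref{condbound}, which bounds the conductor of the interpolated product $\psi_{x,1}\psi_{x,2}$ (the determinant direction), this yields \ref{hyp3}. In short: the needed uniformity comes from the convergence of the family (Lemma~\ref{untwistfam}), not from a bound on the nebentypus, and you should replace your \ref{hyp3} argument by the one via Proposition~\ref{untwistfam} and Remark~\ref{condbound}.
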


We proceed to proving Lemma \ref{FjCM}. In what follows, we pick an arbitrary $j\in\{1,\ldots,m\}$ and simply write $(f_i,\varphi_i)_i$ for $(f_i,\varphi_i)_{i\in I_j}$. Let $U$ be the integral $L$-affinoid underlying $\cF_j$, and let $t_U\colon G_{\Q,Np}\to\cO_U^\circ(U)$ be the pseudorepresentation attached to $\cF_j$. Clearly $\ovl t_U=\ovl t$.

%Let F be a real quadratic field in which p ramifies, satisfying the properties given in Proposition \ref{extract} with respect to the affinoid family \cF_j. %In particular, F is a real quadratic field in which p ramifies. %and U is an irreducible, affinoid subdomain of the eigenvariety \cE_F. 
%We identify U with an affinoid subdomain of the eigenvariety \cE_F via the isomorphism \iota provided by Proposition \ref{extract}. Note that since the f_i are assumed to be p-supercuspidal, they are of infinite slope, so that one cannot choose F=\Q in Proposition \ref{extract}.

We use the local results of Section \ref{secloc} to deduce the following. %, which immediately implies Lemma \ref{FjCM}.

\begin{lemma}
The pseudorepresentation $t_U\vert_{G_{\Q_p}}$ is $E$-induced for a ramified quadratic extension $E/\Q_p$.
\end{lemma}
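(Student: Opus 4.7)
The plan is to apply Theorem~\ref{locthm} to the restriction $t_U|_{G_{\Q_p}}$, and then argue that the resulting quadratic extension is necessarily ramified.

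First I verify the hypotheses of Theorem~\ref{locthm}. The non-constancy of the second Hodge--Tate--Sen weight function $k_\cF$ of Remark~\ref{kappa} on the 1-dimensional integral affinoid $U$ holds because by Corollary~\ref{weightinfty}(i) the weights $k_i$ of the classical specializations diverge to infinity. Hypothesis~\ref{hyp1} follows at each $x \in S$ from the $p$-supercuspidal case of Example~\ref{eigenpar}, combined with the fact that by Corollary~\ref{weightinfty}(ii) only finitely many $f_i$ are numerically critical. For~\ref{hyp2} I take $\varphi_1 = \Phi$ from the affinoid family datum and $\varphi_2 = d/\Phi$, where $d \in \cO_U(U)^\times$ is the non-vanishing analytic function arising from the determinant map of Section~\ref{detsec}; its value at each $x \in S$ is the product of the two Frobenius eigenvalues. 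For~\ref{hyp3}, Lemma~\ref{nebenbound} bounds the $p$-part of the nebentypus conductor uniformly along the family, and by Remark~\ref{condbound} together with the explicit description of trianguline parameters in Example~\ref{eigenpar}, the finite-order characters $\psi_{x,1}$ are thereby forced to factor through a common finite quotient $\Gal(\widetilde{E}/\Q_p)$.

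Theorem~\ref{locthm} then yields a quadratic extension $E/\Q_p$ such that $t_U|_{G_{\Q_p}}$ is $E$-induced. To see that $E$ is ramified, I invoke Proposition~\ref{famlim}(i): the family $\cF_j$ arises from an affinoid of the Hilbert eigenvariety of a real quadratic field $F$ in which $p$ is non-split, so $t_U|_{G_F}$ is trianguline (up to a Dirichlet twist of $p$-power conductor, which preserves triangulinity), while $t_U|_{G_{\Q_p}}$ itself is not trianguline since a Zariski-dense set of specializations is $p$-supercuspidal. By the uniqueness in Theorem~\ref{berche} of the quadratic extension over which a potentially-trianguline-but-not-trianguline representation becomes trianguline, we deduce $E = F_\fp$. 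If this extension were unramified, every classical specialization would be of Galois type~(ii) of Proposition~\ref{padictypes} --- induced from $\Q_{p^2}$ --- and the Hodge--Tate weights of the inducing character would be interchanged by the Frobenius of $\Q_{p^2}/\Q_p$, an impossible rigidity given the smooth variation of $k_\cF$ across $U$ together with assumption~\ref{hyprhobar1} on $\ovl t$.

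The main obstacle will be the verification of~\ref{hyp3}: the characters $\psi_{x,1}$ arise as finite-order parts of trianguline parameters over auxiliary quadratic extensions of $\Q_p$ that a priori depend on $x$, and the uniformity demanded by Theorem~\ref{locthm} must be obtained by carefully translating the global nebentypus bound of Lemma~\ref{nebenbound} into a uniform local statement valid at every classical point of $U$.
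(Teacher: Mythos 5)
Your overall strategy -- verify the hypotheses \eqref{ncst}, \ref{hyp1}, \ref{hyp2}, \ref{hyp3} and invoke Theorem~\ref{locthm} -- is exactly the paper's. The differences arise in the verification of \ref{hyp3} and in the ramification claim.

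For \ref{hyp3} you have a genuine gap. You argue that Lemma~\ref{nebenbound} plus Remark~\ref{condbound} force each $\psi_{x,1}$ to factor through a common finite quotient, but both of those results only control the \emph{product} $\psi_{x,1}\psi_{x,2}$ (the product of the finite-order parts of the trianguline parameters is essentially the determinant/nebentypus, which is what those results bound); they say nothing about $\psi_{x,1}$ individually. For a supercuspidal parameter the two inducing characters can be highly ramified even while their product is tame, so a nebentypus bound cannot produce the required uniform $\widetilde{E}$. The paper closes this gap by a different input: after base-changing to the real quadratic field $F$ with $F\otimes_{\Q}\Q_p = E$, each classical specialization becomes a $p$-twist of a finite-slope Hilbert form, and Proposition~\ref{untwistfam} shows that the twist character along the family is eventually \emph{constant} (not merely bounded in conductor). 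Since $\psi_{x,1}\vert_{G_E}$ is, up to a fixed term, precisely this twist character, that stabilization bounds $\psi_{x,1}$ directly; combined with Remark~\ref{condbound}'s bound on the product, one gets \ref{hyp3}. The stabilization in Proposition~\ref{untwistfam} is a structural fact about the ``eigensurface'' of twists, not something recoverable from a conductor estimate on the nebentypus.

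On the ramification: your final paragraph invokes ``the uniqueness in Theorem~\ref{berche} of the quadratic extension,'' but Theorem~\ref{berche} asserts no such uniqueness, and a two-dimensional induced representation can in general be induced from more than one quadratic extension. The ensuing rigidity argument about type~(ii) and the Frobenius of $\Q_{p^2}/\Q_p$ is also not spelled out in a way one can check. Note that the paper's own proof of this lemma establishes only that $E$ is quadratic and does not address ramification at all (the subsequent argument works equally well whether $p$ is inert or ramified in $F$, since in both cases $p$ is non-split). So while it is legitimate to try to justify the word ``ramified,'' the argument you offer is not rigorous, and it is not part of the paper's reasoning here.

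Your handling of \eqref{ncst}, \ref{hyp1} (citing Example~\ref{eigenpar}(iii) explicitly rather than the paper's looser citation of Corollary~\ref{infpottri}, and adding Corollary~\ref{weightinfty}(ii) for the criticality issue), and \ref{hyp2} (using $d/\Phi$ rather than the paper's $\Phi^{-1}$, which more accurately accounts for the determinant) are fine and in a couple of places more careful than the paper's terse itemization.
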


\begin{proof}
%We apply Theorem \ref{locthm} to the pseudorepresentation t_U. 
%Clearly, U is contained in a nearly 
Let $S$ be the infinite set of points of $U$ corresponding to the eigenforms $f_i$. The statement will follow from Theorem \ref{locthm} applied to the pseudorepresentation $t_U\vert_{G_{\Q_p}}$ (we warn the reader that the pseudorepresentation $t_U$ appearing in the statement of Theorem \ref{locthm} is a local one, contrary to that of the current section). In order to do so, it is enough check that $t_U\vert_{G_{\Q_p}}$, together with our choice of set $S$, satisfies the assumptions \eqref{ncst} and \ref{hyp1}-\ref{hyp3} of the theorem:
\begin{itemize}
	\item \eqref{ncst} is true since the eigenforms $f_i$ have weights that tend to $\infty$ as $i\to\infty$;
	\item \ref{hyp1} holds thanks to Corollary \ref{infpottri}, since $f_i$ is $p$-supercuspidal for every $i$;
	\item \ref{hyp2} is verified by the elements $\Phi,\Phi^{-1}\in\cO_U(U)^\times$; %provided by Corollary \ref{weakequiv};
	\item for every $i$, the base change $f_{i,F}$ is the twist of a $\GL_{2/F}$-eigenform of finite slope by the Dirichlet character attached to either $\psi_{x,1}\vert_{G_F}$ or $\psi_{x,2}\vert_{G_F}$; the conductor of such a character is bounded as i varies by Proposition \ref{untwistfam}. Since the conductor of $\psi_{x,1}\psi_{x,2}$ is bounded for $x\in U(\Qp)$ by Remark \ref{condbound}, \ref{hyp3} follows.
\end{itemize}
%We can now deduce from Theorem \ref{locthm} that t_U\vert_{G_{\Q_p}} is induced from a ramified quadratic extension E/\Q_p. 
\end{proof}

Let $F$ be a real quadratic extension of $\Q$ such that $F\otimes_\Q\Q_p=E$. One such $F$ always exists: simply pick any quadratic field $F_0=\Q(\sqrt d)$ such that $F_0\otimes_\Q\Q_p=E$, and if $d<0$, replace it with a negative integer prime to $p$ that is a square modulo $p$. 

Since $t_U\vert_{G_{\Q_p}}$ is $E$-induced, the restriction $t_U\vert_{G_E}$ is decomposable, hence trianguline, everywhere on $U$. In particular, Proposition \ref{extract} provides us with a closed embedding $\iota\colon U\to\cE_F$ identifying $U$ with an affinoid subdomain of the eigenvariety $\cE_F$, and the family $\cF_j$ with that supported by $\iota(U)$. We identify $U$ with $\iota(U)$ implicitly from now on. Note that since the $f_i$ are assumed to be $p$-supercuspidal, they are of infinite slope, so that one cannot choose $F=\Q$ in Proposition \ref{extract}.

\begin{lemma}\label{UCM}
The affinoid $U$ is contained in a CM irreducible component of $\cE_F$.
\end{lemma}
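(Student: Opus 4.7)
The natural strategy is to apply Theorem \ref{bgvaff} to the integral affinoid $U$, identified via $\iota$ with an affinoid subdomain of $\cE_F$, to conclude directly that $U$ lies in a CM component. This reduces the lemma to verifying the three hypotheses of that theorem: (i) the irreducibility of $\ovl\rho_U\vert_{G_{F(\zeta_p)}}$, (ii) the technical condition at $p=5$, and (iii) the existence of a Zariski-dense subset $S^\ab\subset U(\Qp)$ on which the specializations are locally decomposable at every $p$-adic place of $F$.

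The easiest condition is (iii). By construction, $F$ has a unique $p$-adic place $\fp$, with $F_\fp=E$. Because $t_U\vert_{G_{\Q_p}}$ is $E$-induced, its restriction $t_U\vert_{G_{F_\fp}}$ is globally the sum of two characters as a pseudorepresentation. To upgrade this to decomposability of the actual representation, I use that at every classical specialization $x\in S$ the eigenform $f_i$ is $p$-supercuspidal, so by Corollary \ref{infpottri} the representation $\rho_{f_i,p}\vert_{G_E}$ is crystalline (in particular semisimple) and therefore literally splits as a direct sum of two characters. Since $F_\fp=E$, this gives decomposability of $\rho_{U,x}\vert_{G_{F_\fp}}$ for all $x\in S$. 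As $U$ is a 1-dimensional irreducible affinoid and $S$ is infinite, Lemma \ref{dense} shows that $S$ is dense in $U$, so I may take $S^\ab=S$. Condition (ii) is then immediate from hypothesis \ref{hyprhobar2} applied to $\ovl\rho_U=\ovl\rho$, noting that when $p=5$ the field $F$ is ramified at $5$ because $F_\fp=E$ is a ramified quadratic extension of $\Q_5$.

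The main obstacle is condition (i). If $\ovl\rho_U\vert_{G_{F(\zeta_p)}}$ were reducible while $\ovl\rho_U\vert_{G_F}$ remained absolutely irreducible, then $\ovl\rho_U\vert_{G_F}$ would be induced from a character of $G_K$ for a quadratic intermediate extension $F\subset K\subset F(\zeta_p)$. The unique such $K$ is $F(\sqrt{p^\ast})$, where $p^\ast=(-1)^{(p-1)/2}p$. Because $F/\Q$ is Galois, applying $\Gal(F/\Q)$ to this induced structure and using that $K$ is Galois over $\Q$ forces $\ovl\rho_U$ itself, viewed as a representation of $G_\Q$, to be induced from a character of $G_L$ for a quadratic field $L\subset K\cdot\Q$; a short case analysis, using that $F$ is ramified at $p$ and that $L$ must have the same local behavior at $p$ as the inductive structure of $\ovl\rho_U\vert_{G_{\Q_p}}$ (which decomposes over $E$, the ramified quadratic extension of $\Q_p$), pins $L$ down to $\Q(\sqrt p)$ and contradicts hypothesis \ref{hyprhobar1}. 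It remains to rule out the possibility that $\ovl\rho_U\vert_{G_F}$ is itself reducible, which is standard: if it were, the pseudorepresentation $t_U\vert_{G_F}$ would be reducible on a dense set of classical points corresponding to \emph{cuspidal} Hilbert eigenforms, contradicting cuspidality via Ribet's irreducibility for cuspidal Hilbert forms.

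Once the three conditions are verified, Theorem \ref{bgvaff} applies and yields that $U$ is contained in a CM component of $\cE_F$, completing the proof. The technical heart is condition (i); the rest is bookkeeping around the local setup that has already been arranged by the choice of $F$ with $F_\fp=E$.
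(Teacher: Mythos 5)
Your overall strategy coincides with the paper's: both proofs consist of verifying hypotheses \ref{zetapirr}--\ref{Sab} of Theorem \ref{bgvaff} for $U\subset\cE_F$ and invoking that theorem. Two of your verifications, however, need repair. For condition \ref{Sab}, the step ``by Corollary \ref{infpottri} the representation $\rho_{f_i,p}\vert_{G_E}$ is crystalline (in particular semisimple)'' is wrong on both counts: Corollary \ref{infpottri} only asserts crystallinity over a \emph{dihedral} extension, not over the quadratic extension $E$ where the representation becomes trianguline (for a supercuspidal induced from a ramified character of $\Q_{p^2}$, the restriction to $\Q_{p^2}$ is typically not crystalline); and crystalline representations need not be semisimple. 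The conclusion you want is nevertheless true and matters, since \ref{Sab} concerns honest representations and not pseudorepresentations: because $\rho_{f_i,p}\vert_{G_{\Q_p}}$ is not trianguline it is irreducible (Remark \ref{semistri}), hence its restriction to the index-$2$ subgroup $G_E$ is semisimple by Lemma \ref{Hss}, and since its trace is a sum of two characters, Theorem \ref{liftings} (or Lemma \ref{ribetind}(ii)) gives the decomposition.

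For condition \ref{zetapirr}, your ``short case analysis'' is asserted rather than carried out, and its claimed output is doubtful. The quadratic subextension of $F(\zeta_p)/F$ is $F(\sqrt{p^\ast})$ with $p^\ast=(-1)^{(p-1)/2}p$, and when $p\equiv 3\pmod 4$ one has $\Q(\sqrt{p^\ast})=\Q(\sqrt{-p})\ne\Q(\sqrt p)$; moreover, once you know $\ovl\rho_U$ is induced from a quadratic $L\subset F(\sqrt{p^\ast})$, the candidates for $L$ are $F$, $\Q(\sqrt{p^\ast})$ and $\Q(\sqrt{dp^\ast})$ (writing $F=\Q(\sqrt d)$), and it is not explained why the local structure at $p$ excludes all of them except a field that hypothesis \ref{hyprhobar1} forbids -- in particular $L=F$ is compatible with $\ovl\rho_U\vert_{G_{\Q_p}}$ being $E$-induced, since $F_\fp=E$. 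The paper's own argument here is shorter (reducibility of $\ovl t_U$ over any finite extension forces, via Lemma \ref{ribetind}(iii), induction from a single quadratic field over $\Q$, which \ref{hyprhobar1} is meant to exclude) and also does not spell out this identification, so you are not worse off than the source; but as written your case analysis is a genuine gap rather than bookkeeping, and you should either execute it or restructure the argument so that only the field appearing in \ref{hyprhobar1} can arise.
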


%\andr{remember to discuss all mixed cases!! compare Coleman-Stein!!}
%that $f_i\neq f_j$ for $i\ne j$ and that 

%This implies that $\{k_{i}\}_{i\in\N}$ converges $p$-adically to some $k_\infty\in\Z_p$. The limit $q$-expansion is the $q$-expansion of a $p$-adic modular form $f_\infty$ in the sense of Serre. 

\begin{proof}
The conclusion follows from Theorem \ref{bgvaff}, once we check that $U$ satisfies the assumptions therein. 
Let $\fp$ be the unique $p$-adic place of $F$. Since $F_\fp=E$, the pseudorepresentation $t_U\vert_{G_{F_\fp}}$ is decomposable, and condition \ref{Sab} of the theorem is satisfied. Since $t_U\vert_{G_{\Q_p}}$ is induced, so is $\ovl t_U\vert_{G_{\Q_p}}$, hence $\ovl t_U$ is irreducible.

%Recall that $\ovl t_U=\ovl t$. If $\ovl t$ were decomposable as a sum of two characters $\ovl\psi_1\oplus\ovl\psi_2$, then for $i$ large enough $f_i$ would be congruent modulo $p$ to the ordinary Eisenstein series attached to any two lifts $\psi_1,\psi_2\colon G_{\Q,Np}\to\Zp^\times$ of $\ovl\psi_1,\ovl\psi_2$, with $\psi_1$ of Hodge--Tate weight 0. This is impossible since an eigenform of infinite slope cannot be congruent modulo $p$ to an ordinary one, given that the $U_p$-eigenvalue is 0 for one and non-zero modulo $p$ for the other. \andr{specify what congruence means: q-exp?? Hecke eigens??}

%We deduce that $\ovl t$ is irreducible. 
If $\ovl t_U$ becomes reducible after restriction to $G_K$, $K$ a finite extension of $\Q$, %\andr{introduced term???}
then by Lemma \ref{ribetind}(iii) it is $K$-induced for a quadratic extension $K/\Q$. By assumption \ref{hyprhobar1}, one cannot take $K=\Q(\sqrt p)$, and as a consequence $\ovl t\vert_{G_{\Q(\sqrt p)}}$ is still irreducible, so that assumption \ref{zetapirr} of Theorem \ref{bgvaff} holds.
	
Finally, assumption \ref{5hyp} is satisfied because of assumption \ref{hyprhobar2}.
%Lemma \ref{UCM} now follows from Theorem \ref{bgvaff}.
\end{proof}	

%\andr{check ramification in restriction, to have precise notation... Or maybe just say it's implicit???}

We are ready to conclude the proof of Lemma \ref{FjCM}, and as a consequence that of Theorem \ref{infslope}. By Lemma \ref{UCM}, $U$ is contained in an irreducible component of $\cE_F$ that has CM by some totally imaginary quadratic extension $K$ of $F$. In particular, $t_U\vert_{G_K}$ is decomposable, meaning that $t_U$ is decomposable up to index 4. 
By Lemma \ref{ribetind}(iii), $t_U$ must be decomposable up to index $2$. As remarked in the proof of Lemma \ref{UCM}, $\ovl t_U$ is irreducible, so that $t_U$ is also irreducible, but becomes decomposable after restriction to a subgroup of index 2 of $G_{\Q,Np}$. We apply Lemma \ref{CMfamrep}(i) to conclude that $U$ is an affinoid CM family, thereby proving Lemma \ref{FjCM}.
%\andr{state lemma: can read CM off representation--in CM section}

%\andr{add this?? (Note that if $K$ is such a field, then the family of Hilbert eigenforms $U_0$ has CM by the CM field $KF$.)}

\bigskip

\printbibliography

\bigskip

\bigskip

\noindent \textsc{Andrea Conti, University of Luxembourg, Esch-Sur-Alzette, Luxembourg,} \\
\url{andrea.conti@uni.lu}, \url{contiand@gmail.com}
\end{document}